\DeclareRobustCommand{\SkipTocEntry}[5]{}
\newcommand{\ONE}{I}
\newcommand{\TWO}{II}
\newcommand{\rR}{\mathbf R}
\newcommand{\nupt}{\nu_{\partial M}}
\newcommand{\Mm}{\mathcal{M}}
\newcommand{\length}{\operatorname{length}}
\newcommand{\flux}{\operatorname{Flux}}    
 \newcommand{\graph}{\operatorname{graph}}
 \newcommand{\Cc}{\mathcal C}
 \newcommand{\MM}{\mathcal{M}}
 \newcommand{\slope}{\operatorname{slope}}
 \newcommand{\FF}{\mathcal{F}}
 \newcommand{\sS}{\mathbf{S}}
 \newcommand{\RR}{\mathbf{R}}  
 \newcommand{\ZZ}{\mathbf{Z}}  
 \newcommand{\BB}{\mathbf{B}}  
 \newcommand{\CC}{\mathbf{C}}  
 \newcommand{\dist}{\operatorname{dist}}
 \newcommand{\area}{\operatorname{area}}
 \newcommand{\eps}{\epsilon}
 \newcommand{\Tan}{\operatorname{Tan}}
 \newcommand{\genus}{\operatorname{genus}}
\newcommand{\interior}{\operatorname{interior}}
\newcommand{\vv}{\mathbf v}
\newcommand{\ee}{\mathbf{e}}
\newcommand{\grad}{\nabla}
 \newcommand{\Div}{\operatorname{div}}
\newcommand{\pdf}[2]{\frac{\partial #1}{\partial #2}}
\newcommand{\talpha}{\tilde\alpha}
\def\QED{\hfill$\Box$}   
\def\begfig {
\begin{figure}[b]
\small }
\def\endfig {
\normalsize
\end{figure}
}
    \newtheorem{THEOREM} {Theorem}         
    \newtheorem{theorem}    {Theorem}       [section]
    \newtheorem{lemma}      [theorem]       {Lemma}
    \newtheorem{corollary}  [theorem]     {Corollary}
    \newtheorem{proposition}       [theorem]       {Proposition}
    \newtheorem{alt-claim}    [theorem]       {Claim}                     
    \newtheorem*{claim}{Claim}
    \newtheorem*{theorem*}{Theorem}
    \newtheorem*{corollary*}{Corollary}
    \theoremstyle{definition}
    \newtheorem{definition}  [theorem] {Definition}
    \theoremstyle{definition}
    \newtheorem{remark}   [theorem]       {Remark}
    \newtheorem*{remark*}{Remark}
\begin{document}

\renewcommand{\thesubsection}{\thetheorem}

\title{Helicoidal minimal surfaces of prescribed genus}
\subjclass[2010]{Primary: 53A10; Secondary: 49Q05, 53C42}
\author{David Hoffman}
\address{Department of Mathematics\\ Stanford University\\ Stanford, CA 94305}
\email{dhoffman@stanford.edu}
\author{Martin Traizet}
\address{Laboratoire de Math\'{e}matiques et Physique Th\'{e}orique,Universit\'{e} Fran\c cois Rabelais, 37200 Tours, France}
\email{Martin.Traizet@lmpt.univ-tours.fr}
\author{Brian White}
\address{Department of Mathematics\\ Stanford University\\ Stanford, CA 94305}
\thanks{The research of the second author was partially supported by ANR-11-ISO1-0002.
The research of the third author was supported by NSF grants~DMS--1105330
and DMS~1404282.}
\email{bcwhite@stanford.edu}
\date{April 6, 2013. Revised February, 2016.}
\begin{abstract}
For every genus $g$, we prove that $\sS^2\times\RR$
contains complete, properly embedded, genus-$g$ minimal
surfaces whose two ends are asymptotic to helicoids of any prescribed pitch.   
We also show that as the radius
of the $\sS^2$ tends to infinity, these examples converge smoothly to complete, properly embedded minimal surfaces in
$\RR^3$ that are helicoidal at infinity.   
We prove that helicoidal surfaces in $\RR^3$ of every prescribed genus occur as such limits of examples 
in $\sS^2\times\RR$.
\end{abstract}                

\maketitle


In 2005, Meeks and Rosenberg proved that the only complete, non flat, properly embedded minimal
surface of genus zero and one end is the standard helicoid~\cite{MeeksRosenbergUniqueness}.
Subsequently, Bernstein-Breiner~\cite{bernstein-breiner-conformal} 
     and Meeks-Perez~\cite{meeks-perez-end} proved that 
any complete, non flat, properly embedded minimal surface in $\rR^3$
of finite genus $g$ with one end must be asymptotic to a helicoid at infinity.
We call such a surface a genus-$g$ helicoid.
Until 1993, the only known example was the helicoid itself.
In that year, 
 Hoffman, Karcher and Wei \cite{hoffman-karcher-wei} discovered a genus-one minimal surface
 asymptotic to a helicoid at infinity (see Figure \ref{figure1}, left), and numerical 
 computations gave compelling evidence that it was embedded.
 Subsequently,
Weber, Hoffman and Wolf proved existence of an embedded example, i.e., of 
a genus-one helicoid\cite{weber-hoffman-wolf}.   
In~\cite{hoffman-white-genus-one}, Hoffman and White gave
a different proof for the existence of a genus-one helicoid.

A genus-$2$ helicoid was computed numerically by the second author in 1993
while he was a postdoc in Amherst (see Figure \ref{figure1}, right).
Helicoids of genus up to six have been computed by Schmies \cite{schmies} using the
theoretical techniques developed by Bobenko \cite{bobenko}.
These surfaces were computed using the Weierstrass Representation and
the period problem was solved numerically.
However, there was no proof that the period problem could be solved for genus $2$ or higher.

In this paper we prove:

\begin{THEOREM}\label{MAIN}
For every $g$, there exist genus-$g$ helicoids in $\rR^3$.
\end{THEOREM}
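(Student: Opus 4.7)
The plan, as suggested by the abstract, is to produce genus-$g$ helicoids in $\RR^3$ as limits of complete, properly embedded, genus-$g$ minimal surfaces in $\sS^2(R)\times\RR$ (with $\sS^2(R)$ the round sphere of radius $R$) whose two ends are asymptotic to helicoids of a fixed pitch, letting $R\to\infty$. The reason to pass through $\sS^2(R)\times\RR$ is that horizontal slices are compact, so the helicoidal screw motion has compact quotient; one is then working on an essentially compact three-manifold with two helicoidal ends, where variational and barrier methods are genuinely available in a way they are not in $\RR^3$.

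For the construction in $\sS^2(R)\times\RR$, I would impose the full symmetry group of a standard helicoid: invariance under a one-parameter screw motion (which in $\sS^2(R)\times\RR$ has closed orbits after quotienting by the pitch) together with the $180^\circ$ rotations about horizontal geodesic axes crossing the helicoid axis, and a horizontal reflection symmetry across a single ``equatorial'' slice. In a fundamental domain for these symmetries the problem becomes a Plateau-type/minimization problem for a disk whose boundary consists of geodesic arcs, arranged so that Schwarz reflection across those arcs recovers a closed, embedded surface of the desired topology. To force $g$ handles, I would allow $g$ extra symmetric ``pinches'' in the boundary configuration and look for solutions as the pinch parameters vary; a continuity/degree argument in the parameter space then yields, for each sufficiently large $R$, a smooth embedded minimal surface $M_R$ of genus exactly $g$ whose two ends are asymptotic to helicoids of the prescribed pitch by the imposed symmetry.

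With the family $\{M_R\}$ in hand, I would let $R\to\infty$ and apply the standard compactness theory for properly embedded minimal surfaces with locally bounded genus (Colding--Minicozzi, together with the Meeks--Rosenberg characterization of the helicoid to identify any one-ended, genus-$0$ blow-up limits) to extract a smooth subsequential limit $M_\infty\subset\RR^3$. This $M_\infty$ inherits screw-motion invariance about a vertical axis as well as the rotational and reflection symmetries. If it is a complete properly embedded minimal surface of finite genus $g$ with one end, then by Bernstein--Breiner and Meeks--Perez it is automatically asymptotic to a helicoid, which is exactly a genus-$g$ helicoid as required by Theorem \ref{MAIN}.

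The main obstacle, where I expect the hardest technical work to lie, is genus accounting in the limit $R\to\infty$: a priori, handles of $M_R$ could escape to infinity along the vertical axis or collapse at points of curvature concentration, and either scenario would drop the genus of $M_\infty$. Escape is ruled out by engineering $M_R$ so that its horizontal reflection plane contains all $g$ handles, confining them to a uniformly bounded vertical slab independent of $R$. Collapse is ruled out by a blow-up argument: at any sequence of points where the curvature of $M_R$ blows up, rescaling produces a complete properly embedded minimal surface in $\RR^3$; the hypotheses force this blow-up to have one end and genus $0$, so by Meeks--Rosenberg it must be a plane or a standard helicoid, neither of which carries a local handle, giving a contradiction. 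Together, these two arguments show that all $g$ handles of $M_R$ survive at finite scale in $M_\infty$ and that no spurious handles appear, so $M_\infty$ has genus exactly $g$ and is the desired genus-$g$ helicoid.
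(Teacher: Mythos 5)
Your overall architecture — construct helicoidal examples in $\sS^2(R)\times\RR$ and send $R\to\infty$ — is indeed the paper's approach, and your points about symmetry, Schwarz reflection, and invoking Bernstein--Breiner/Meeks--P\'erez for asymptotics are all in line with what is done. But your handle-loss analysis, which you rightly identify as the hardest step, is wrong in both of its claimed mechanisms, and this is precisely the gap that Part~\TWO\ of the paper exists to fill.

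First, the escape direction. The handles in the $\sS^2(R)\times\RR$ examples are confined (by the $Y$-surface property, i.e.\ $\rho_Y$ acting by $-1$ on $H_1$) along a \emph{horizontal} great circle $Y$, not along the vertical axis. Confining them to a bounded vertical slab buys you nothing: that slab is $\{z=0\}$, and as $R\to\infty$ the geodesic $Y$ lying inside it becomes an infinitely long horizontal line. Handles do escape along $Y$ toward spatial infinity in $\RR^3$, and no amount of reflection symmetry stops this. In fact the paper proves (theorem~\ref{DIVERGING-RADII-THEOREM}) that starting from genus-$g$ examples, the limit has genus at most $g/2$ when $g$ is even, so roughly half the handles are lost; you cannot engineer that loss away by choice of symmetry.

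Second, the blow-up picture. At the escaping handles, curvature does concentrate, and the rescaled limits are \emph{catenoids} (theorem~\ref{limit-is-a-catenoid-theorem}), which are two-ended and of genus zero; this is not a contradiction of anything, and Meeks--Rosenberg's classification of one-ended genus-zero surfaces does not apply. The catenoidal necks are exactly the lost handles, and nothing in your argument precludes them.

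What the paper actually does is accept the loss and control it quantitatively. They start with genus $2g$ in $\sS^2(R)\times\RR$, note that the number of handles near each axis is bounded and so the lost handles become small catenoidal necks drifting along $Y$, and then prove (the whole of Part~\TWO) that at most one neck can escape, by computing the horizontal flux (surface tension) of the Killing field $\chi_Y$ at each neck: by $\rho_Y$-symmetry this flux vanishes, but a careful expansion shows it equals a ``gravitational'' force that cannot vanish unless there is a unique neck at the symmetric position. This force-balance computation requires the barrier estimates, the height/flux bounds, and the three-case analysis of sections~\ref{case-1-section}--\ref{case-3-section}; there is no shortcut through a slab-confinement argument. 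Without this step, or something replacing it, your proposal does not yield a genus-$g$ limit.
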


To construct higher genus helicoids in $\rR^3$, we first construct helicoid-like minimal
surfaces of prescribed genus in the Riemannian 3-manifold $\sS^2\times\rR$,
where $\sS^2$ stands for a round sphere. This is achieved by a degree argument.
Then we let the radius of the sphere $\sS^2$ go to infinity and we prove that in the
limit we get helicoids of prescribed genus in $\rR^3$. The delicate part in this
limiting process is to ensure that the limit has the desired topology, in other words
that the handles do not all drift away.

The paper is divided into two parts. In \hyperref[part1]{part~\ONE}, we construct helicoidal minimal surfaces
in $\sS^2\times\rR$, and we prove that they converge to helicoidal minimal surfaces in
$\rR^3$ as the radius goes to infinity.
In \hyperref[part2]{part~\TWO}, we prove that the limit has the desired topology by proving that, if we work
with suitable helicoids of an even genus in $\sS^2\times\rR$ and let the radius go to infinity, then
exactly half of the handles drift away. 

Parts~\hyperref[part1]{\ONE} and~\hyperref[part2]{\TWO}
 are in some ways independent of each other,
and the methods used are very different.
Of course, \hyperref[part2]{part~\TWO} uses some properties of the $\sS^2\times\RR$ surfaces obtained in \hyperref[part1]{part~\ONE}, but
otherwise it does not depend on the way in which those surfaces were obtained.
We have stated those properties as they are needed in \hyperref[part2]{part~\TWO}, so that \hyperref[part2]{part~\TWO} can
be read independently of \hyperref[part1]{part~\ONE}.

\begin{figure}
\begin{center}
\includegraphics[height=60mm]{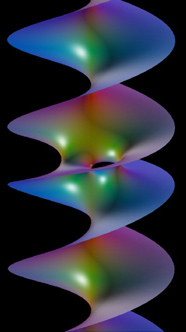}
\hspace{2cm}
\includegraphics[height=60mm]{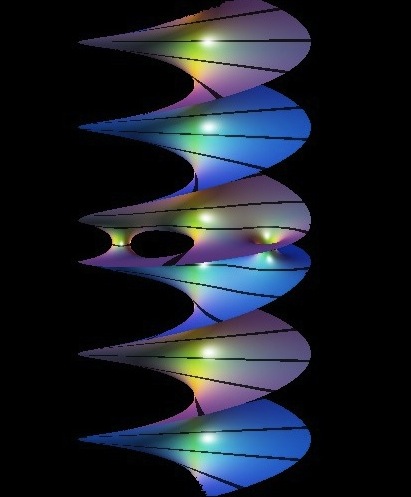}
\end{center}
\caption{
{\bf Left:} A genus-one helicoid, computed by David Hoffman, Hermann Karcher and Fusheng Wei.
{\bf Right:}  A  genus-two helicoid, computed by  Martin Traizet.
Both surfaces were computed numerically using the Weierstrass representation,
and the images were made with Jim Hoffman using
visualization software he helped  to develop.}
\label{figure1}
\end{figure}

\tableofcontents


\bigskip
\section*{\bf Part~\ONE: Genus-\texorpdfstring{$g$}{Lg} Helicoids in \texorpdfstring{$\sS^2\times\RR$}{Lg}}
\label{part1}

\bigskip

The study of complete, properly embedded minimal surfaces in 
  $\Sigma\times \RR$, where $\Sigma$ is a complete Riemannian $2$-manifold, was
initiated by Harold Rosenberg in \cite{rosenberg2002}.
The general theory of such surfaces was further developed by Meeks and Rosenberg
 in~\cite{MeeksRosenbergTheory}.
In the case of $\sS^2\times\RR$, 
 if such a surface has finite topology, then
 either it is a union of horizontal spheres $\sS^2\times\{t\}$, 
or else it is conformally a connected, twice-punctured, 
compact Riemann surface,
with one end going up and the other end going down~\cite{rosenberg2002}*{Theorems~3.3,~4.2,~5.2}.
In that same paper, Rosenberg described a class of such
surfaces in $\sS^2\times \RR$ 
that are very similar to helicoids in $\RR^3$, and hence
are also called helicoids.   
They may be characterized as the complete, non-flat minimal
surfaces in $\sS^2\times\RR$ whose 
horizontal slices are all great circles. 
(See section~\ref{part-1-preliminaries} for a more explicit
description of helicoids in $\sS^2\times\RR$ and for a 
discussion of their basic properties.)

In \hyperref[part1]{part~\ONE} of this paper, we prove the existence of properly embedded minimal
surfaces in $\sS ^2\times \RR$  of  prescribed finite genus, with top and
bottom ends asymptotic to  an end of a helicoid of any
prescribed pitch. (The pitch of a helicoid in $\sS ^2\times \RR$   
is defined in section~\ref{part-1-preliminaries}. The absolute value of the pitch
is twice the vertical distance between successive sheets of the helicoid. 
The sign of the pitch depends on the sense in which the helicoid winds about its axes.) 
Although  the pitch of the helicoid  to which the top end is asymptotic equals 
 pitch of the helicoid to which the bottom is asymptotic, we do not know if these two helicoids coincide;
one might conceivably be a vertical translate of the other.  Each of the surfaces
we produce contains a pair of antipodal vertical lines $Z$ and $Z^*$ (called axes of the surface)
and a horizontal great circle $X$ that intersects each of the axes.  Indeed, for each  of our surfaces, there is a helicoid whose intersection with the surface is precisely 
$X\cup Z \cup Z^*$.

 For every genus, our method produces two examples that are not
 congruent to each other by any orientation-preserving isometry 
 of $\sS^2\times \RR$. 
 The two examples are distinguished by their behavior at the origin $O$:
 one is ``positive" at $O$ and the other is ``negative" at $O$. (The
 positive/negative terminology is explained in section~\ref{sign-section}.) 
 If the genus is odd, the two examples
 are congruent to each other by reflection $\mu_E$ in the totally geodesic cylinder
 consisting of all points equidistant from the two axes.
 If the genus is even, the two examples are not congruent to each other
 by any isometry of $\sS^2\times \RR$, but each one is invariant under the
 reflection $\mu_E$.
 The examples of even genus $2g$ are also invariant under 
 $(p,z)\mapsto (\tilde p, z)$, where $p$ and $\tilde p$ are antipodal points 
  in $\sS^2$, so their quotients under this involution are
 genus-$g$ minimal surfaces in $\RR \mathbf{P}^2\times\RR$ with helicoidal ends. 

For each genus $g$ and for each helicoidal pitch, we prove that as the radius
of the~$\sS^2$ tends to infinity, our examples converge subsequentially
to complete, properly embedded minimal surfaces in $\RR^3$ that are asymptotic
to helicoids at infinity.  
The arguments in \hyperref[part2]{part~\TWO} required to control the genus of the limit (by
preventing too many handles from drifting away)
are rather delicate.  It is much easier to control whether the 
limiting surface has odd or even genus: a limit (as the radius of $\sS^2$
tends to infinity) of ``positive'' examples must have even genus and a limit
of ``negative'' examples must have odd genus.
Such parity control   is sufficient 
(without the delicate arguments of \hyperref[part2]{part~\TWO}) 
to give a new  proof of the existence of a
 genus-one helicoid in $\RR^3$. 
 See the corollary to theorem~\ref{DIVERGING-RADII-THEOREM} in section~\ref{section:main-theorems} for
 details.
 
Returning to our discussion of examples in $\sS^2\times \RR$, we also prove existence of what might be
called periodic genus-$g$ helicoids.  They are properly embedded minimal
surfaces  that are invariant under a screw motion of $\sS^2\times\RR$
and that have fundamental domains of genus $g$.  Indeed, our nonperiodic examples in  $\sS^2\times\RR$ are obtained as limits of the periodic examples as the period  tends to infinity.

As mentioned above, all of our examples contain two vertical axes $Z\cup Z^*$
and a horizontal great circle $X\subset \sS^2\times\{0\}$ at height $0$.   
Let $Y$ be the great circle at height $0$ such that $X$, $Y$, and $Z$ meet
orthogonally at a pair of points $O\in Z$ and $O^*\in Z^*$.
All of our examples are invariant
under $180^\circ$ rotation about $X$, $Y$, and $Z$
 (or, equivalently,  about $Z^*$: rotations about $Z$ are also rotations about $Z^*$).
In addition, the nonperiodic examples (and suitable fundamental domains
of the periodic examples) are what we call ``$Y$-surfaces".  Intuitively, this
means that they are $\rho_Y$-invariant (where $\rho_Y$ is $180^\circ$
rotation about $Y$) and that the handles (if there are any)  occur along $Y$. 
The precise definition is that $\rho_Y$ acts by multiplication by $-1$ on
the first homology group of the surface.
This property is very useful because it means that when we
let the period of the periodic examples tend to infinity, the handles cannot drift
away: they are trapped along $Y$, which is compact.  
In \hyperref[part2]{part~\TWO},    
when we need to control handles drifting off to infinity
as we let the radius of the $\sS^2$ tend to infinity, 
the $Y$-surface property means that 
the handles can only drift off in one direction (namely along $Y$).
 
\hyperref[part1]{Part~\ONE} is organized as follows.
In section~\ref{part-1-preliminaries}, we present the basic facts about helicoids
in $\sS^2\times\RR$.
In section~\ref{section:main-theorems}, we state
the main results.
In section~\ref{sign-section}, we describe what it means
for a surface to be positive or negative at $O$ with respect to $H$.
In section~\ref{section:$Y$-surfaces}, we describe the general properties
of $Y$-surfaces.
In sections~\ref{construction-outline-section}\,--\ref{smooth-count-section},
 we prove existence of periodic genus-$g$ helicoids in 
  $\sS^2\times \RR$.
In sections~\ref{general-results-section} and~\ref{area-bounds-section} 
we present general results we will use in order to establish the existence of limits. 
In section~\ref{nonperiodic-section}, 
we get nonperiodic genus-$g$ helicoids as limit of periodic
examples by letting the period tend to infinity.
In section~\ref{R3-section} and \ref{Proof_of_DIVERGING-RADII-THEOREM}, 
we prove that as the radius of the $\sS^2$ tends to infinity,
our nonperiodic genus-$g$ helicoids in $\sS^2\times\RR$ converge
to properly embedded, minimal surfaces in $\RR^3$ with helicoidal ends.

\stepcounter{theorem}
\section{Preliminaries}\label{part-1-preliminaries}

\addtocontents{toc}{\SkipTocEntry}
\subsection*{Symmetries of $\sS^2\times\RR$}
Let $R>0$ and $\sS^2=\sS^2(R)$ be the sphere of radius $R$.  
Let $C$ be a horizontal great circle in $\sS^2\times \RR$ at height $a$, i.e., a great circle
in the sphere $\sS^2\times \{a\}$ for some $a$.
The union of all vertical lines through points in $C$ is a totally geodesic cylinder.
We let $\mu_C$ denote reflection in that cylinder: it is the 
orientation-reversing isometry of $\sS^2\times\RR$
that leaves points in the cylinder fixed and that interchanges the two components of
the complement. 
If we compose $\mu_C$ with reflection in the sphere $\sS^2\times\{a\}$, i.e., with the isometry 
\[
  (p,z)\in \sS^2\times \RR \mapsto (p, 2a-z),
\]
we get an orientation-preserving involution $\rho_C$ of $\sS^2\times \RR$ whose fixed-point set is precisely $C$.
Intuitively, $\rho_C$ is $180^\circ$ rotation about $C$.

Let $L$ be a vertical line $\{p\}\times\RR$ in $\sS^2\times \RR$
and let $L^*$ be the antipodal line, i.e., the line $\{p^*\}\times\RR$ where $p$ and $p^*$
are antipodal points in $\sS^2$.
Rotation through any angle $\theta$
about $L$ is a well-defined isometry of $\sS^2\times \RR$.  If $\theta$ is not a multiple of $2\pi$,
then the fixed point set of the rotation is $L\cup L^*$. 
Thus any rotation about $L$ is also a rotation about $L^*$.
We let $\rho_L (= \rho_{L^*})$ denote the $180^\circ$ rotation about $L$.

\addtocontents{toc}{\SkipTocEntry}
\subsection*{Helicoids in $\sS^2\times \RR$}
Let $O$ and $O^*$ be a pair of antipodal points in $\sS^2\times\{0\}$,
and let $Z$ and $Z^*$ be the vertical lines in $\sS^2\times\RR$ through those points.
Let $X$ and $Y$ be a pair of great circles in $\sS^2\times\{0\}$ that intersect
orthogonally at $O$ and $O^*$.  Let $E$ be the equator in $\sS^2\times \{0\}$ 
with poles $O$ and $O^*$,
i.e., the set of points in $\sS^2\times\{0\}$ equidistant from $O$ and $O^*$.

Fix a nonzero number $\kappa$ and consider the surface
\[
  H = H_\kappa = \bigcup_{t\in\RR}  \sigma_{2\pi t, \kappa t}X,
\]
where $\sigma_{\theta,v}:\sS^2\times \RR \to \sS^2\times\RR$ is the screw motion
given by rotation by $\theta$ about $Z$ (or, equivalently, about $Z^*$) together with
vertical translation by $v$.   We say that $H$ is the {\em helicoid of pitch $\kappa$
and axes $Z\cup Z^*$ that contains $X$}.

To see that $H$ is a minimal surface, note that it is fibered by horizontal great circles.
Let $p$ be a point in $H$ and let $C$ be the horizontal great circle in $H$ containing $p$.
 One easily checks that the involution $\rho_C$ ($180^\circ$ rotation about $C$) 
 maps $H$ to $H$, reversing its orientation.
It follows immediately that the mean curvature of $H$ at $p$ is $0$.  For if it were nonzero,
it would point into one of the two components of $(\sS^2\times\RR)\setminus H$.
But then by the symmetry $\rho_C$ (which interchanges the two components), it would also point
into the other component, a contradiction.

Unlike helicoids in $\RR^3$, the helicoid $H$ has two axes, $Z$ and $Z^*$.
Indeed, the reflection $\mu_E$ restricts to an orientation-reversing isometry of $H$
that interchanges $Z$ and $Z^*$.

The number $\kappa$ is called the {\em pitch} of the helicoid: its absolute value
is twice the vertical distance between successive sheets of $H$.  Without loss of generality
we will always assume that $\kappa>0$.  As $\kappa$ tends to $\infty$, the helicoid $H_\kappa$ converges smoothly
to the cylinder $X\times\RR$, which thus could be regarded as a helicoid of infinite pitch.

\section{The Main Theorems}\label{section:main-theorems}

 We now state our first main result in a form that
includes the periodic case ($h<\infty$) and the nonperiodic case ($h=\infty$.)
The reader may wish initially to ignore the periodic case.
Here $X$ and $Y$ are horizontal great circles at height $z=0$ that intersect
each other orthogonally at points $O$ and $O^*$, 
 $E$ is the great circle of points at height $z=0$ equidistant from $O$ and $O^*$,
and $Z$ and $Z^*$ are the
vertical lines passing through $O$ and $O^*$.  

\begin{THEOREM}\label{main-S2xR-theorem}
Let $H$ be a helicoid in $\sS^2\times \RR$ that has vertical axes $Z\cup Z^*$
and that contains the horizontal great circle $X$.
For each genus $g\ge 1$ and each height $h\in (0,\infty]$, there
exists a pair $M_{+}$ and $M_{-}$ of embedded
minimal surfaces in $\sS^2\times \RR$ of genus $g$ with the following
properties (where $s\in \{+, -\}$):
\begin{enumerate}[\upshape (1)]
\item If $h=\infty$, then $M_s$ has no boundary, it is properly embedded in $\sS^2\times\RR$, 
  and each of its two ends
  is asymptotic to $H$ or to a vertical translate of $H$.
\item\label{1:boundary-circles} If $h<\infty$, then $M_s$ is a smooth, compact 
  surface-with-boundary in $\sS^2\times[-h,h]$.  Its boundary consists of the 
  two great circles at heights $h$  and $-h$ that intersect $H$ orthogonally at points in $Z$ and in $Z^*$.
\item If $h=\infty$, then
     \[  
      M_s\cap H = Z\cup Z^*\cup X.
     \]
    If $h<\infty$, then 
    \[
       {\rm interior}(M_s)\cap H = Z_h \cup Z_h^* \cup X,
    \]
    where $Z_h$, and $Z^*_h$
   are the portions of $Z$ and $Z^*$ with $|z|<h$.
\item $M_s$ is a $Y$-surface. 
\item $M_s \cap Y$ contains exactly $2g+2$ points.
\item\label{1:sign}
   $M_{+}$ and $M_{-}$ are positive and negative, 
   respectively, with respect to $H$ at $O$.
 \item
If $g$ is odd, then $M_{+}$ and $M_{-}$ 
are congruent to each other by reflection $\mu_E$ in the cylinder $E\times \RR$.
They are not congruent to each other by any orientation-preserving isometry of $\sS^2\times\RR$.
\item\label{even-genus-congruence-assertion}
 If $g$ is even, then
 $M_{+}$ and $M_{-}$ are each invariant under reflection $\mu_E$  in the cylinder 
   $E\times\RR$.  They are not congruent to each other by any isometry of 
   $\sS^2\times\RR$.
\end{enumerate}
\end{THEOREM}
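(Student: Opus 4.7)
The plan is to prove the periodic case $h<\infty$ first, by a degree-theoretic argument on a suitable moduli space of symmetric minimal surfaces with prescribed boundary, and then to obtain the nonperiodic case $h=\infty$ as a subsequential limit as the period tends to infinity. Throughout, the $Y$-surface structure is the central tool: since $Y$ is a compact great circle, handles cannot escape to $z=\pm\infty$, and the rigid count $|M_s\cap Y|=2g+2$ controls the genus. The sign at $O$ with respect to $H$ is a discrete $\ZZ/2$ invariant that persists under all deformations, giving the two distinct families $M_+$ and $M_-$.

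For the periodic case I would set up a parameter space of configurations determined by a tuple of $2g+2$ points on $Y$ together with the forced skeleton $Z_h\cup Z_h^*\cup X$ and the boundary circles at heights $\pm h$. For each such configuration, solve an equivariant Plateau-type problem (with respect to the group generated by $\rho_X$, $\rho_Y$, $\rho_Z$) to produce a $Y$-symmetric minimal surface-with-boundary containing these data; embeddedness and the prescribed intersection with $H$ would be ensured by using $H$ and its translates as barriers via the maximum principle. One then defines a ``period'' or obstruction map whose zeros correspond to surfaces with the desired topological type, and one computes its degree by deforming to a limiting regime (for example very small $h$, or a degenerate symmetric configuration) where the count can be read off explicitly. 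This produces one surface of each sign for each genus.

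To pass to $h=\infty$, I would use curvature estimates away from $Z\cup Z^*$ (available from the sandwiching of $M_s$ between consecutive sheets of $H$) and uniform area bounds on bounded regions, to extract a smooth subsequential limit on compact subsets of $\sS^2\times\RR$. Properness of the limit and helicoidal asymptotics of its two ends would follow from the fact that, outside a fixed compact set, $M_s$ converges to $H$ (or a vertical translate). The crucial observation is that the $Y$-surface property together with $|M_s^{(h)}\cap Y|=2g+2$ forbids handles from drifting vertically: any handle must be crossed by $Y$, and the intersection count is finite and bounded below, so all $2g+2$ intersection points are trapped in the compact curve $Y$ and survive in the limit, certifying that the limit has genus exactly $g$ and still satisfies item~(5). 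Items~(1)--(6) then follow by passing to the limit.

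The main obstacle is ruling out loss of topology at every stage: in the degree argument, one must exclude degenerations (handles shrinking to zero size, intersection points with $Y$ coalescing, loss of embeddedness along the homotopy) so that the count is genuinely a degree; in the limit, one must rule out the $2g+2$ points collapsing to fewer points on $Y$, which would drop the genus. Both are handled by the rigidity coming from the full symmetry group together with $H$ as a barrier. Finally, items~(7) and~(8) follow from a parity analysis of the sign invariant under $\mu_E$: I expect $\mu_E$ to reverse the sign at $O$ precisely when $g$ is odd, which forces $\mu_E M_+ = M_-$ in the odd case and $\mu_E M_\pm = M_\pm$ in the even case; non-congruence then follows by observing that any ambient isometry preserving the whole $X\cup Y\cup Z\cup Z^*$ skeleton and respecting orientations is, up to the symmetries already built into $M_s$, either the identity or $\mu_E$, so the sign invariant distinguishes $M_+$ from $M_-$ under all remaining isometries in the appropriate category.
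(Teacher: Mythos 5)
Your overall architecture matches the paper's: produce the $h<\infty$ surfaces by a mod-2 degree count in a symmetric class, pass to $h=\infty$ by compactness with area bounds, use the compactness of $Y$ together with the $Y$-surface property to keep handles from drifting off, and get the $\mu_E$-congruence assertions from lemma~\ref{parity-sign-lemma}, which says the signs at $O$ and $O^*$ agree iff the $Y$-intersection count in $H^+$ is even. The limiting step and the noncongruence step in your sketch are essentially what the paper does.

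But the degree argument itself has a genuine gap. You propose to parametrize configurations by a tuple of $2g+2$ points on $Y$ and, for each, solve an ``equivariant Plateau-type problem to produce a $Y$-symmetric minimal surface-with-boundary containing these data.'' There is no such Plateau problem: the $Y$-intersection points are not boundary data and cannot be prescribed to a minimal surface. A Plateau or degree-theoretic construction accepts only the boundary curve $\Gamma=Z_h\cup Z_h^*\cup C\cup\rho_YC$; where the resulting surface crosses $Y$ is an output, not an input, so the ``obstruction map'' you would need is not defined on any natural domain. The paper sidesteps this: it fixes $\Gamma$, lets $\Mm^s(\Gamma,n)$ be the set of minimal $Y$-surfaces in $\overline{H^+}$ bounded by $\Gamma$ with sign $s$ at $O$ and exactly $n$ points of $Y^+$, and proves $\card{\Mm^s(\Gamma,n)}$ is odd for every $n$ and $s$. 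The mechanism is to \emph{round} the corners of $\Gamma$ to obtain a smooth embedded curve $\Gamma(t)$ (with an independent choice of sign of the rounding at each of $O$ and $O^*$), to invoke a known mod-2 count for minimal $Y$-surfaces with a smooth boundary in a mean-convex ball (theorem~\ref{Y-degree-theorem}: odd for a disk or a pair of disks, even otherwise), and to track, via proposition~\ref{disagreements-proposition}, how each $S\in\Mm^s(\Gamma,n)$ deforms into a surface bounded by $\Gamma(t)$ whose $Y$-count is $n$, $n+1$, or $n+2$ according to the number of sign disagreements between $S$ and the rounding at $O$ and $O^*$. This yields the recursion $\card{\Mm(\Gamma(t),n)}\equiv f^s(n)+f^{-s}(n-2)\pmod 2$, from which $f^s(n)=1$ follows by induction. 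Deforming ``to small $h$'' or to a ``degenerate symmetric configuration,'' as you suggest, does not replace this: the base case the paper actually needs is the count for the rounded smooth curve, and the induction variable is $n$, not $h$ or a modulus.
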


The positive/negative terminology in assertion~\eqref{1:sign} is explained
in section~\ref{sign-section}, and $Y$-surfaces are defined and discussed
in section~\ref{section:$Y$-surfaces}.

Note that if $h<\infty$, we can extend $M_s$
 by repeated Schwarz reflections to
get a complete, properly embedded minimal surface $\widehat{M}_s$ that
is invariant under the screw motion $\sigma$ that takes $H$ to $H$ (preserving its orientation) and $\{z=0\}$
to $\{z=2h\}$.   The intersection $\widehat{M}_s\cap H$ consists of $Z$, $Z^*$,
and the horizontal circles $H\cap \{z=2nh\}$, $n\in \ZZ$. 
The surfaces $\widehat{M}_s$ are the periodic genus-$g$ helicoids mentioned
in the introduction.

\begin{remark}\label{other-circles-remark}
Assertion~\eqref{1:boundary-circles} states (for $h<\infty$) 
that the boundary $\partial M_s$ consists of two great circles that meet
$H$ orthogonally.   Actually, we could allow $\partial M_s$ to be any $\rho_Y$-invariant pair of great circles
at heights $h$ and $-h$ that intersect $Z$ and $Z^*$.
We have chosen to state theorem~\ref{main-S2xR-theorem} for circles that meet
the helicoid $H$ orthogonally because when we extend $M_s$ by
repeated Schwarz reflections to get a complete, properly embedded surface $\widehat M$, that choice
makes the intersection set $\widehat M\cap H$ particularly simple.
In section~\ref{adjusting-pitch-section}, we explain why the choice does not matter: 
if the theorem is true for one choice, it is also true for any other choice.  
Indeed, in {\em proving} the $h<\infty$ case of theorem~\ref{main-S2xR-theorem}, it will be more convenient
to let $\partial M_s$ be the horizontal great circles $H\cap \{z=\pm h\}$ that lie in $H$.
(Later, when we let $h\to \infty$ to get nonperiodic genus-$g$ helicoids in $\sS^2\times\RR$, the
choice of great circles $\partial M_s$ plays no role in the proofs.)  
\end{remark}

\begin{remark}\label{non-round-remark}
Theorem~\ref{main-S2xR-theorem} remains true if the round metric on $\sS^2$ is replaced
by any metric that has positive curvature, that is rotationally symmetric about the poles $O$ and $O^*$,
and that is symmetric with respect to reflection in the equator of points equidistant from $O$ and $O^*$.
(In fact the last symmetry is required only for the assertions about $\mu_E$ symmetry.)
No changes are required in any of the proofs. 
\end{remark}

In the nonperiodic case ($h=\infty$) of theorem~\ref{main-S2xR-theorem}, 
we do not know whether the two ends of $M_s$ are asymptotic to
opposite ends of the same helicoid. Indeed, it is possible that the top end
is asymptotic to a $H$ shifted vertically by some amount $v\ne 0$; the bottom end would then be asymptotic to $H$ shifted vertically by $-v$.
 Also, we do not know whether
 $M_{+}$ and $M_{-}$ must be asymptotic to each other, or
 to what extent the pair $\{M_{+}, M_{-}\}$
is unique.

Except for the noncongruence assertions, the proof of theorem~\ref{main-S2xR-theorem}
 holds  for all helicoids $H$ including
    $H=X\times\RR$, which may be regarded as a helicoid of infinite pitch.
(When $H=X\times \RR$ and $h=\infty$, theorem~\ref{main-S2xR-theorem} was  proved
 by Rosenberg in section~4 of~\cite{rosenberg2002}  by completely different methods.)  
 When $X=H\times\RR$, the noncongruence
 assertions break down: see section~\ref{noncongruence-appendix}. 
The periodic (i.e., $h<\infty$) case
of theorem~\ref{main-S2xR-theorem} is proved at the end of section~\ref{construction-outline-section},
assuming theorem~\ref{special-existence-theorem}, whose proof is a consequence of the
material in subsequent sections.  
The nonperiodic ($h=\infty$) case is proved in section~\ref{nonperiodic-section}.

Our second main result  lets us take limits as the radius of
the $\sS^2$ tends to infinity. For simplicity we only deal with the
nonperiodic case  ($h=\infty$) here.\footnote{An analogous theorem is
true for the periodic case ($h<\infty$).}

\begin{THEOREM}\label{DIVERGING-RADII-THEOREM}
Let $R_n$ be a sequence of radii tending to
infinity.   For each $n$, let $M_{+}(R_n)$ and $M_{-}(R_n)$ be 
genus-$g$ surfaces in $\sS^2(R_n)\times \RR$ satisfying the list
of properties in theorem~\ref{main-S2xR-theorem}, where $H$ is the helicoid of pitch $1$
and $h=\infty$.
 Then, after passing to
a subsequence, the  $M_+(R_n)$ and $M_{-}(R_n)$ converge smoothly on
compact sets to limits $M_+$ and $M_{-}$ with the
following properties:
\begin{enumerate}
\item\label{2:helicoidlike} $M_{+}$ and $M_{-}$ are complete, properly
embedded minimal surfaces in $\RR^3$ that are asymptotic to the
standard helicoid $H\subset\RR^3$. 
\item\label{2:intersection-property} If $M_s \ne H$, then $M_s\cap H=X\cup Z$ and
  $M_s$ has sign $s$ at $O$ with respect to $H$.
\item\label{2:Y-surface-property}  $M_s$ is a $Y$-surface.
\item\label{2:point-count} 
  $\|M_s\cap Y\| = 2\, \|M_s\cap Y^+\| +1 = 2\,\genus(M_s)+1$.
 \item\label{2:genus-bound} If $g$ is even, then
      $M_{+}$ and $M_{-}$ each have genus at most $g/2$.
      If $g$ is odd, then $\genus(M_{+})+\genus(M_{-})$
       is at most $g$.
 \item\label{2:genus-parity} The genus of $M_{+}$ is even. 
          The genus of $M_{-}$ is odd.
\end{enumerate}
\end{THEOREM}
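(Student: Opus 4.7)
The plan is to extract limit surfaces by standard minimal surface compactness, transfer each structural property via smooth convergence, and obtain the genus bounds by bookkeeping with the $\mu_E$ symmetry.  First I would identify $(\sS^2(R_n) \times \RR, O)$ as pointed Riemannian manifolds converging smoothly on compacta to $\RR^3$ pointed at the origin.  Using the local area bounds from the general results sections together with curvature estimates for embedded minimal surfaces (as in Colding--Minicozzi or White's compactness theorems), I extract a subsequence along which $M_\pm(R_n)$ converges smoothly on compact subsets of $\RR^3$ to properly embedded minimal surfaces $M_\pm$.  Compact portions of $X$, $Y$, $Z$ near $O$ converge to segments of the corresponding coordinate axes in $\RR^3$, and as $R_n \to \infty$ these open up to full lines, while $Z^*$ and the equator $E$ drift to infinity; similarly the helicoid $H_{R_n}$ of pitch $1$ converges smoothly on compacta to the standard helicoid $H \subset \RR^3$ of pitch $1$.

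Next I would transfer the structural properties to the limit.  The symmetries $\rho_X, \rho_Y, \rho_Z$ are ambient isometries converging to the corresponding $180^\circ$ rotations in $\RR^3$, so $M_\pm$ inherits them; in particular the $Y$-surface property (that $\rho_Y$ acts as $-1$ on $H_1$) is preserved under smooth convergence.  The sign at $O$ is a property of the germ of $M_\pm$ at $O$ relative to $H$ and is automatically preserved.  For (2), the identity $M_\pm(R_n) \cap H_{R_n} = X \cup Z \cup Z^*$ yields $M_\pm \cap H \supseteq X \cup Z$ in the limit (since $Z^*$ has escaped), and equality holds because any extra intersection component would be a limit of interior intersection points of $M_\pm(R_n)$ with $H_{R_n}$ that do not exist.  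For (1), each end of $M_\pm(R_n)$ is asymptotic to a vertical translate of $H_{R_n}$ with translate bounded uniformly in $n$ (since $M_\pm(R_n) \supset X \cup Z$); after passing to a further subsequence the translates converge, and a diagonal argument in the smooth convergence shows that the ends of $M_\pm$ in $\RR^3$ are asymptotic to $H$.

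For the point count (4), the $Y$-surface property together with $\rho_Y$-invariance gives $\|M_s \cap Y\| = 2\genus(M_s) + 1$ by a direct homological count, and the rotation $\rho_Z$ (fixing $O$ and swapping $Y^+$ and $Y^-$) gives $\|M_s \cap Y\| = 2\|M_s \cap Y^+\| + 1$, so $\genus(M_s) = \|M_s \cap Y^+\|$.  For the genus bound (5), I exploit $\mu_E$: the great circle $Y$ in $\sS^2(R_n)$ is split by the two points $Y \cap E$ into two arcs of equal length, one containing $O$ and the other $O^*$.  Of the $2g+2$ points in $M_\pm(R_n) \cap Y$, the $O$-arc contains $g+1$ (by $\mu_E$-invariance in the even case, by $\mu_E$-congruence of $M_+(R_n)$ with $M_-(R_n)$ in the odd case), and only this arc survives in the limit.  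In the even case, $\mu_E$ further splits the $O$-arc into two halves of $g/2$ points each separated by the arc's midpoint; as this midpoint escapes to infinity only $g/2$ points survive on $Y^+$ in $\RR^3$, yielding $\genus(M_\pm) \le g/2$.  In the odd case, $M_+(R_n) = \mu_E(M_-(R_n))$ distributes the $g+1$ points in a complementary fashion and gives $\genus(M_+) + \genus(M_-) \le g$.  Property (6) then follows from preservation of the sign at $O$: the sign encodes the parity of the number of intersections on $Y^+$ near $O$, which by the above count is the parity of $\genus(M_s)$.

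The main obstacle I expect is property (1): establishing the helicoidal asymptotics of the limit requires uniform (in $n$) estimates on how closely $M_\pm(R_n)$ approximates $H_{R_n}$ at large scales.  In $\sS^2(R_n) \times \RR$ the scale at which such an approximation is established grows with $R_n$, and one must verify that these estimates are compatible with the smooth convergence to $\RR^3$ so that the limiting ends are genuinely asymptotic to the standard helicoid rather than merely close on arbitrarily large compact sets.  The genus bookkeeping for (5), while requiring careful tracking of the action of $\mu_E$ on $Y$, ultimately reduces to a combinatorial argument once the symmetries have been transferred to the limit.
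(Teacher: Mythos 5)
Your overall strategy — extract limits by compactness, then transfer each property via smooth convergence and bookkeep the genus using $\mu_E$ — is the paper's strategy, and most of your transfers are correct. But there are two places where you assert something that is in fact the nontrivial content of the proof, and one place where your worry is misplaced.

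The genuine gap is in your treatment of \eqref{2:genus-parity}. You write that ``the sign encodes the parity of the number of intersections on $Y^+$ near $O$,'' but you give no argument for this, and it is not a tautology: a priori the number of points of $M_s(R_n)\cap Y^+$ that stay bounded could have either parity, independent of the sign at $O$. The paper proves the parity claim by a topological argument on the open surface $S = M_s\cap H^+$: by Proposition~\ref{Y-surface-topology-propostion}, $\|S\cap Y\| = \|M_s\cap Y^+\|$ has the same parity as the number of ends of $S$; after removing a large finite cylinder, $S$ splits into two multigraph components, one whose closure meets $Z^+\cup X^+$ and one whose closure meets $Z^-\cup X^-$; and these two components lie in the same end of $S$ exactly when arbitrarily small neighborhoods of $O$ contain a path in $S$ from $Z^+$ to $X^-$, i.e.\ exactly when $M_s$ is negative at $O$. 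That is what links sign to parity. Without something of this kind your proof of \eqref{2:genus-parity} is incomplete — and \eqref{2:genus-parity} is precisely what makes the whole theorem useful (it is what gives the genus-one helicoid corollary).

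Two smaller issues. For \eqref{2:Y-surface-property}, the statement ``$\rho_Y$ acts as $-1$ on $H_1$'' does not automatically pass to the limit: if $\gamma$ is a cycle in $M_s$ and $\gamma_n\to\gamma$ in $M_s(R_n)$, the regions $W_n\subset M_s(R_n)$ with $\partial W_n = \gamma_n\cup\rho_Y\gamma_n$ could a priori be unbounded in the limit. The paper closes this via the maximum principle: each $W_n$ lies in the smallest horizontal slab containing $\gamma_n\cup\rho_Y\gamma_n$, so the limit region $W$ is contained in a slab, and since $M_s$ has one helicoidal end it meets any slab in a compact set. For \eqref{2:helicoidlike}, your worry about uniform large-scale estimates is unnecessary: once properness and embeddedness of the limit are established (the paper uses area-blowup-set arguments and the halfspace theorem for this), one shows the limit has exactly one end, and the helicoidal asymptotics then follow from classification results for properly embedded finite-genus one-ended minimal surfaces (Bernstein--Breiner or Meeks--P\'erez), or from a more elementary argument in \cite{hoffman-white-geometry} that uses $X\cup Z\subset M_s$ and bounded curvature. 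No diagonal argument across scales is needed.

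Finally, in \eqref{2:genus-bound} your symmetry bookkeeping is morally right but the roles of the isometries are scrambled: $\mu_E$ interchanges the $O$-arc and the $O^*$-arc of $Y$ (it does not split the $O$-arc in half), while it is $\rho_X$ or $\rho_Z$ that fixes $O$ and produces the symmetric $Y^+$/$Y^-$ split. The paper's bookkeeping is cleaner: write $a$, $b$ for the number of points of $M_+(R_n)\cap Y^+$ that stay bounded distance from $Z$ and from $Z^*$ respectively; then $\genus(M_+) = a$ by smooth convergence and \eqref{2:point-count}, and in the even case $\mu_E$-invariance of $M_+(R_n)$ forces $a = b$ so $a\le g/2$, while in the odd case $M_-(R_n) = \mu_E M_+(R_n)$ gives $\genus(M_-) = b$ so $a+b\le g$.
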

Here if $A$ is a set, then $\|A\|$ denotes the number of elements of $A$.

Theorem~\ref{DIVERGING-RADII-THEOREM} is proved in section~\ref{Proof_of_DIVERGING-RADII-THEOREM}.

As mentioned earlier, theorem~\ref{DIVERGING-RADII-THEOREM} gives a new proof of the existence of genus-one helicoids
in $\RR^3$:

\begin{corollary*}
If $g=1$ or $2$, then $M_{+}$ has genus $0$ and $M_{-}$ has genus $1$.
\end{corollary*}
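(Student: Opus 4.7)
The plan is to combine the upper bound on the genera of the limits (assertion~\eqref{2:genus-bound}) with the parity constraint (assertion~\eqref{2:genus-parity}) from theorem~\ref{DIVERGING-RADII-THEOREM}. Since $\genus(M_{+})$ must be a nonnegative even integer and $\genus(M_{-})$ must be a positive odd integer, each of the cases $g=1$ and $g=2$ leaves essentially one possibility, and so the corollary reduces to pure bookkeeping.

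For $g=1$, which is odd, assertion~\eqref{2:genus-bound} gives $\genus(M_{+})+\genus(M_{-})\le 1$. By assertion~\eqref{2:genus-parity}, $\genus(M_{-})$ is odd, hence $\ge 1$, and $\genus(M_{+})$ is a nonnegative even integer, hence $\ge 0$. The inequality then forces equality in both: $\genus(M_{+})=0$ and $\genus(M_{-})=1$.

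For $g=2$, which is even, assertion~\eqref{2:genus-bound} gives $\genus(M_{+})\le 1$ and $\genus(M_{-})\le 1$ individually. Since $\genus(M_{+})$ is even and at most $1$, it is $0$; since $\genus(M_{-})$ is odd and at most $1$, it is exactly $1$.

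No separate difficulty arises in writing down this argument: all of the nontrivial work, namely establishing the parity of $\genus(M_{s})$ and the genus bound in terms of $g$, has been absorbed into theorem~\ref{DIVERGING-RADII-THEOREM} itself. The payoff is that, combined with assertion~\eqref{2:helicoidlike} of that theorem, $M_{-}$ is a complete, properly embedded minimal surface in $\RR^3$ asymptotic to the standard helicoid and of genus exactly $1$, providing the advertised new proof of the existence of a genus-one helicoid in $\RR^3$.
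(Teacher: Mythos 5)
Your argument is correct and is exactly the paper's own proof: the paper simply notes that the corollary follows immediately from statements~\eqref{2:genus-bound} and~\eqref{2:genus-parity} of theorem~\ref{DIVERGING-RADII-THEOREM}, which is precisely the bookkeeping you carry out. Nothing further is needed.
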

The corollary follows immediately from statements~\eqref{2:genus-bound}
and~\eqref{2:genus-parity} of theorem~\ref{DIVERGING-RADII-THEOREM}.

In \hyperref[part2]{part~\TWO},
 we prove existence of helicoidal surfaces
of arbitrary genus in $\RR^3$:

\begin{THEOREM}\label{PART1-EXACT-GENUS-THEOREM}
Let $M_{+}$ and $M_{-}$ be the limit minimal surfaces in $\RR^3$
described in theorem~\ref{DIVERGING-RADII-THEOREM}, and suppose that $g$ is even.
 If $g/2$ is even, then $M_{+}$ has genus $g/2$.
 If $g/2$ is odd, then $M_{-}$ has genus $g/2$.
\end{THEOREM}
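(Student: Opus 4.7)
Set $m = g/2$. By theorem~\ref{DIVERGING-RADII-THEOREM}.(\ref{2:genus-bound})--(\ref{2:genus-parity}), $\genus(M_+)$ is an even integer in $[0,m]$ and $\genus(M_-)$ is an odd integer in $[0,m]$. The plan is to prove the matching lower bound $\genus(M_s)\ge m-1$; combined with the parity constraint, this forces the parity-matching surface to have genus exactly $m$.

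The first step is to translate genus into a count of handle pairs that do not escape to infinity along $Y$. By theorem~\ref{main-S2xR-theorem}.(5), the approximating surface $M_s(R_n)$ meets $Y$ in exactly $2g+2$ points. Two of them are the poles $O$ and $O^*$ (which lie on $Z\cup Z^*\subset M_s(R_n)$). The remaining $2g$ fall into $g$ pairs exchanged by $\rho_X$, which restricts on $Y$ (parameterized by signed arclength $t$ from $O$) to the involution $t\mapsto -t$ with fixed points precisely $\{O,O^*\}$. As $R_n\to\infty$, the circle $Y$ of circumference $2\pi R_n$ opens into a horizontal line in $\RR^3$, with $O$ at the origin and $O^*$ receding to infinity. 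Each $\rho_X$-pair at locations $\pm t_i$ either persists in the limit (if $t_i$ stays bounded, producing two finite points of $M_s\cap Y$) or drifts to $\pm\infty$ (if $t_i\to\infty$). Writing $k$ for the number of drifting pairs, theorem~\ref{DIVERGING-RADII-THEOREM}.(\ref{2:point-count}) gives
\[
2\genus(M_s)+1=\|M_s\cap Y\|=1+2(g-k),
\]
so $\genus(M_s)=g-k$. Since $\genus(M_s)\le m$ already forces $k\ge m$, the problem reduces to establishing the ceiling $k\le m+1$.

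This bound is the delicate content of part~\TWO and constitutes the main obstacle. The natural approach is to zoom in on a neighborhood of $O^*$, where the drifting handles necessarily concentrate. For large $R_n$ the ambient manifold $\sS^2(R_n)\times\RR$ near $O^*$ is nearly flat, and $M_s(R_n)$ already contains the vertical line $Z^*$; choosing a chart centered at $O^*$ and passing to a subsequence, one expects to extract a complete, properly embedded minimal surface $M_s^*$ in $\RR^3$ that is helicoidal at infinity, contains the axis $Z^*$, has genus equal to $k$, and inherits the relevant local symmetries from $M_s(R_n)$. The $\mu_E$-symmetry (available for even $g$) determines the sign of $M_s^*$ at $O^*$ from the sign $s$ at $O$, so the parity assertion of theorem~\ref{DIVERGING-RADII-THEOREM}.(\ref{2:genus-parity}) applied to $M_s^*$ pins down the parity of $k$. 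The quantitative half -- showing that at most one extra handle (beyond the forced minimum $m$) can drift away -- is where the real subtlety lies; it must be obtained by a more global count tracking how handle pairs can migrate along the compact portion of $Y$ as the radius grows.

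Granted $k\in[m,m+1]$, the proof concludes at once: the parity of $k$ equals the parity of $\genus(M_s)=g-k$, so $k$ is even for $M_+$ and odd for $M_-$. If $m$ is even, the unique even $k\in\{m,m+1\}$ is $m$, giving $\genus(M_+)=m$; if $m$ is odd, the unique odd $k\in\{m,m+1\}$ is $m$, giving $\genus(M_-)=m$.
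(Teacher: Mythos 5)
Your reduction is correct and is exactly the paper's: setting $m=g/2$, the surfaces $M_s(R_n)$ are $\rho_X$-invariant (they contain $X$), so the $2g$ points of $M_s(R_n)\cap Y$ other than $O,O^*$ fall into $g$ pairs $\{\pm t_i\}$; writing $k$ for the number of pairs with $t_i\to\infty$, the point-count identity of theorem~\ref{DIVERGING-RADII-THEOREM}\eqref{2:point-count} gives $\genus(M_s)=g-k$, and the theorem is equivalent to the ceiling $k\le m+1$ (which, with $k\ge m$ and the parity constraint, pins $k=m$ for the parity-matching sign). In the paper's Part~\TWO\ (where the total genus is written $2g$, so your $k-m$ is their $N$), this is precisely theorem~\ref{maintheorem-N}, namely $N\le1$, and the deduction of theorem~\ref{PART1-EXACT-GENUS-THEOREM} from it is the short parity argument you give — that matches the paper's own proof of theorem~\ref{PART2-MAIN-THEOREM}.

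However, you have not proved the ceiling $k\le m+1$, and that ceiling is the entire substance of the theorem: all of Part~\TWO\ (sections~\ref{case-1-section}--\ref{section46} and the analytic appendix) is devoted to it. What you offer in its place is a sketch that does not survive inspection. Blowing up near $O^*$ does not isolate the $k$ drifting handles: by the $\mu_E$-symmetry of $M_s(R_n)$, the blow-up limit at $O^*$ is $\mu_E$-congruent to $M_s$ itself and has genus $\genus(M_s)=g-k$, not $k$. Indeed the drifting pairs split into two kinds — those whose distance to $O^*$ remains bounded (there are $g-k$ of these, and they make up the genus of the $O^*$-limit) and those whose distance to \emph{both} $O$ and $O^*$ diverges (there are $2k-g$ of these, the paper's $2N$ pairs, and they are invisible in either blow-up). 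So the $O^*$-blow-up produces no new parity or genus information beyond what you already have from $\genus(M_s)$ itself.

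The paper's mechanism for $N\le1$ is a different one that your outline does not anticipate. It first locates the $2N$ escaping pairs, shows (proposition~\ref{Setup-proposition} / theorem~\ref{limit-is-a-catenoid-theorem}) that near each such pair the surface becomes a small catenoidal neck, and then evaluates the flux of the horizontal Killing field $\chi_Y$ on a cycle encircling each neck. That flux must vanish because $M_n$ is a $Y$-surface (lemma~\ref{flux-zero-lemma}), while a delicate blow-up analysis (propositions~\ref{first-case-1-proposition}--\ref{third-case-1-proposition} and their analogues in Cases 2 and 3) computes its leading-order limit as a nontrivial interaction ``force'' between the necks and the poles, supported by barrier estimates, a height estimate (proposition~\ref{proposition-height}) bounding the neck sizes from below, and Cauchy--Pompeieu/Laurent expansions. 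Those balancing equations have no solution unless the single configuration $N=1$, $p_1=i$ (Case~3a) occurs. A purely combinatorial ``global count of how handle pairs migrate along $Y$,'' as you suggest, is not what makes this go through; the proof is analytic, built on flux balancing. As written, your argument establishes the easy wrapper around the theorem but leaves its core claim unproved.
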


The sign here is crucial: if $g/2$ is even, then $M_{-}$ has
genus strictly less than $g/2$, 
and if $g/2$ is odd, then $M_{+}$ has genus strictly less than $g/2$.
(These inequalities follow immediately from statements~\eqref{2:genus-bound} 
and~\eqref{2:genus-parity} of theorem~\ref{DIVERGING-RADII-THEOREM}.)

\section{Positivity/Negativity of Surfaces at \texorpdfstring{$O$}{Lg}}\label{sign-section}

In this section, we explain the positive/negative terminology
used in theorem~\ref{main-S2xR-theorem}. 
Let $H$ be a helicoid that has axes $Z\cup Z^*$ and that contains $X$.
 The set 
 \[
     H\setminus (X \cup Z\cup Z^*)
\]
consists of four
components that we will call quadrants. The axes $Z$ and $Z^*$ are
naturally oriented, and we choose an orientation of $X$ allowing
us to label the components of $X\setminus\{O,O^*\}$ as $X^+$ and
$X^-$.   
We will refer to the  quadrant bounded by $X^+$, $Z^+$ and $(Z^*)^+$ and the
quadrant bounded by $X^-$, $Z^-$, and $(Z^*)^-$ as
the {\em positive quadrants} of $H$.  
The other two quadrants are called the {\em negative quadrants}.
 We orient $Y$ so
that the triple $(X,Y,Z)$ is positively oriented at $O$, and let
 $H^+$ denote the the component of the complement of $H$ that contains
$Y^+$. 

Consider an embedded minimal surface $S$  in $\sS^2\times\RR$ such that
in some open set $U$ containing $O$,
\begin{equation}\label{S-sign-definition}
     (\partial S)\cap U = (X\cup Z) \cap U.
\end{equation}
If $S$ and the two positive quadrants of $H\setminus (X\cup Z)$ are tangent to each other at $O$,
 we say that $S$ is {\em positive} at $O$.
If $S$ and the two negative quadrants of $H\setminus(X\cup Z)$ are tangent 
to each other at $O$, we say that $S$ is {\em negative} at $O$.
(Otherwise the sign of $S$ at $O$ with respect to $H$ is not defined.)

Now consider an embedded minimal surface $M$ in $\sS^2\times\RR$ such that 
\begin{equation}\label{M-sign-definition}
\begin{aligned}
   &\text{The origin $O$ is an interior point of $M$, and} \\
   &\text{$M\cap H$ coincides with $X\cup Z$ in some neighborhood of $O$.}
\end{aligned}
\end{equation}
We say that $M$ is positive or negative at $O$ with respect to $H$ according
to whether $M\cap H^+$ is positive or negative at $O$.

Positivity and negativity at $O^*$ is defined in exactly the same way.

\begin{remark}
A surface $S$ satisfying~\eqref{S-sign-definition} is positive (or negative) at $O$ 
if and only if $\mu_ES$ is positive (or negative)
at $O^*$, where $\mu_E$ denotes reflection in the totally geodesic cylinder
consisting of all points equidistant from $Z$ and $Z^*$.  
Similarly, a surface $M$ satisfying~\eqref{M-sign-definition} is positive (or negative) at $O$ with respect to $H$ if
and only $\mu_EM$ is positive (or negative) at $O^*$ with respect to $H$.
(If this is not clear, note that $\mu_E(H^+)=H^+$ and that $\mu_E(Q)=Q$ for each quadrant $Q$ of $H$.)
\end{remark}



\section{\texorpdfstring{$Y$}{Lg}-surfaces}\label{section:$Y$-surfaces}

As discussed in the introduction, the surfaces we construct
will be $Y$-surfaces.  In this section, we define ``$Y$-surface"
and prove basic properties of $Y$-surfaces.

\begin{definition}\label{Y-Surface}
Suppose $N$ is a Riemannian $3$-manifold that admits an order-two rotation
 $\rho_Y$
about a geodesic $Y$.  An orientable surface $S$ in $N$ 
is called a {\em $Y$-surface}
if $\rho_Y$ restricts to an orientation-preserving isometry of $S$
and if 
\begin{equation}
 \label{-1homology}
  \mbox{ $\rho _Y$ acts  on $H_1(S,\ZZ)$ by
multiplication by $-1$.}
\end{equation}
\end{definition}

 The following proposition shows that the definition of a $Y$-surface is equivalent to two other topological conditions.
 
\begfig
 \hspace{4.0in}
  \vspace{.2in}
  \centerline{
\qquad\qquad\qquad\quad \includegraphics[width=5.05in]{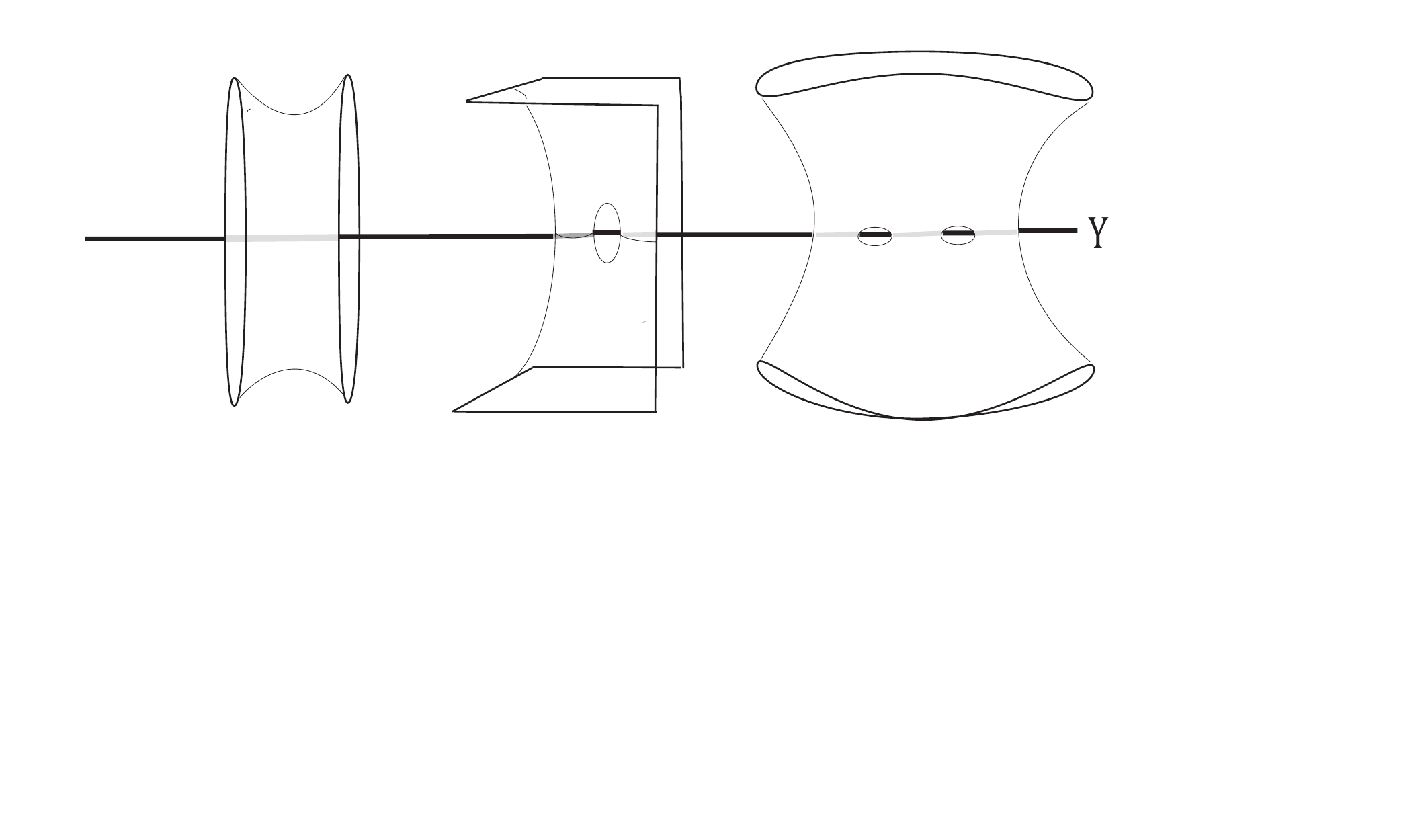}
   }
   \vspace{-2.0 in}
 \begin{center}
 \parbox{5.1in}{
\caption{
{\bf Right:} A $Y$-surface of genus two. 
The number of fixed points of $\rho _Y$ ($180^\circ$ rotation around $Y$) is even (equal to six)
and the number of  boundary components is two.
{\bf Center:} A $Y$-surface of genus one. The number of fixed points of $\rho _Y$ is odd (equal to three) and there is a single boundary component.
{\bf Left:} This annular surface $A$ is not a $Y$-surface. The rotation $\rho_Y$ acts  as the identity  on $H_1(A,\ZZ)$, not as multiplication by $-1$.
 }\label{Y-surface-figure} 
 }
 \end{center}
 \endfig

\newcommand{\Surf}{S}  

\begin{proposition}\label{Y-surface-topology-propostion}
Suppose that $\Surf$ is an open,  orientable Riemannian 
$2$-manifold of finite topology,
that $\rho: \Surf \to \Surf$ is an
 orientation-preserving
 isometry of order two, and that $\Surf/\rho$ is connected.
Then the following are equivalent:
\begin{enumerate}[\upshape(a)]
\item $\rho$ acts by multiplication by $-1$ on the first homology group 
    $H_1(\Surf,\ZZ)$.
\item\label{fixed-points-euler} the quotient $\Surf/\rho$ is topologically a disk. 
\item\label{exactly-fixed-points} $\Surf$ has exactly $2-\chi(\Surf)$ fixed points of $\rho$,
where $\chi(\Surf)$ is the Euler
  characteristic of $\Surf$.
\end{enumerate}
\end{proposition}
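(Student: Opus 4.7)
The plan is to prove (b)$\Leftrightarrow$(c) via the Riemann--Hurwitz formula and (a)$\Leftrightarrow$(c) via the Lefschetz fixed-point theorem. First I would check that the fixed-point set $\operatorname{Fix}(\rho)$ is finite. A positive-dimensional component of $\operatorname{Fix}(\rho)$ would be a $\rho$-invariant geodesic $\gamma$, and the order-two isometry $\rho$ would then act on the normal bundle of $\gamma$ either trivially (forcing $\rho = \operatorname{id}$ by unique continuation of isometries) or by $-1$ (reversing the orientation of $\Surf$, contradicting the hypothesis). Finite topology then forces $\operatorname{Fix}(\rho)$ to be finite. At each fixed point $\rho$ acts on the tangent plane as the half-turn $z \mapsto -z$, so $\Surf/\rho$ inherits the structure of a topological $2$-manifold together with a canonical orientation from $\Surf$.

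For (b)$\Leftrightarrow$(c), I apply Riemann--Hurwitz to the branched double cover $\pi:\Surf\to\Surf/\rho$:
\[
\chi(\Surf) \;=\; 2\,\chi(\Surf/\rho) - F, \qquad F := |\operatorname{Fix}(\rho)|.
\]
Since $\Surf/\rho$ is connected and orientable, $\chi(\Surf/\rho) = 1$ iff $\Surf/\rho$ is a disk, as the disk is the only connected, orientable $2$-manifold of Euler characteristic $1$ (a sphere has $\chi = 2$, and any surface of positive genus or with two or more boundary components has $\chi \le 0$). Rearranging, $\Surf/\rho$ is a disk iff $F = 2-\chi(\Surf)$.

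For (a)$\Leftrightarrow$(c), I would apply the Lefschetz fixed-point theorem to $\rho$ acting on a $\rho$-invariant compact surface-with-boundary $K \subset \Surf$ that is a deformation retract of $\Surf$; such $K$ exists because $\Surf$ has finite topology and $\rho$ is an isometry (one can thicken a $\rho$-invariant spine). The argument of the first paragraph shows $\rho$ has no fixed points on $\partial K$, and each fixed point in the interior contributes local Lefschetz index $+1$, since a half-turn on an oriented plane has local degree $+1$. With $H_0(K;\mathbf{Q}) = \mathbf{Q}$ on which $\rho_*$ acts trivially and $H_2(K;\mathbf{Q}) = 0$ (as $\partial K \neq \emptyset$), the Lefschetz formula reads
\[
F \;=\; L(\rho) \;=\; 1 - \operatorname{tr}\bigl(\rho_*|_{H_1(\Surf;\mathbf{Q})}\bigr).
\]
Decomposing $H_1(\Surf;\mathbf{Q})$ into $\pm 1$-eigenspaces of dimensions $a$ and $b$ with $a+b = b_1(\Surf) = 1-\chi(\Surf)$ and trace $a-b$, the equation becomes $F = 1+b-a$, which equals $2-\chi(\Surf) = 1+a+b$ exactly when $a = 0$, i.e., when $\rho_* = -1$ on $H_1(\Surf;\mathbf{Q})$. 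Because $H_1(\Surf;\ZZ)$ is free abelian (a finite-topology surface is homotopy equivalent to a finite graph), this rational statement upgrades to the integral one required in (a).

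The main obstacle I anticipate is largely bookkeeping: producing the $\rho$-invariant compact deformation retract $K$ and confirming that $\rho$ has no fixed points on $\partial K$ so that the Lefschetz formula applies in its simplest form, together with verifying that the local index at each half-turn fixed point is indeed $+1$. Once those ingredients are in place, Riemann--Hurwitz and Lefschetz dovetail to give the full three-way equivalence.
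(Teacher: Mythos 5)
Your approach is valid and takes a different route from the paper's, which sketches the result via a direct combinatorial count using a $\rho$-invariant triangulation (outsourcing details to \cite{hoffman-white-number}). Packaging the triangulation count as Riemann--Hurwitz and the homology statement as a Lefschetz-number computation is a clean alternative organization of essentially the same information; the Riemann--Hurwitz side in particular is precisely the vertex/edge/face count that an invariant triangulation produces.

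There is one genuine gap in the Lefschetz half. The hypothesis is that $\Surf/\rho$ is connected, \emph{not} that $\Surf$ is. The case where $\rho$ acts freely on a disconnected $\Surf$ (two components interchanged by $\rho$) actually occurs: it is exactly the $k=0$ situation of corollary~\ref{$Y$-corollary}\eqref{$Y$-corollary-disks}, where $\Surf$ is a union of two disks. In that case your compact core $K$ has $H_0(K;\mathbf{Q}) \cong \mathbf{Q}^2$ with $\operatorname{tr}\bigl(\rho_*|_{H_0}\bigr) = 0$ rather than $1$, so the identity $F = 1 - \operatorname{tr}\bigl(\rho_*|_{H_1}\bigr)$ is false; the correct general formula is $F = \operatorname{tr}\bigl(\rho_*|_{H_0}\bigr) - \operatorname{tr}\bigl(\rho_*|_{H_1}\bigr)$, and the eigenspace bookkeeping should use $a+b = b_1(\Surf) = b_0(\Surf) - \chi(\Surf)$, not $1-\chi(\Surf)$. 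With these corrections the algebra still closes: in the disconnected case $F=0$ forces $\chi(\Surf)=2$, both components are disks, $H_1(\Surf)=0$, and (a) holds vacuously; conversely $\rho_*=-\operatorname{id}$ forces $H_1(\Surf)=0$ since $\rho_*$ interchanges the two direct summands of $H_1(\Surf)$ and so cannot restrict to $-\operatorname{id}$ on a nonzero summand. The gap is therefore a missing case split, cheap to repair but necessary: as written, your Lefschetz argument establishes (a)$\Leftrightarrow$(c) only under the stronger hypothesis that $\Surf$ is connected. The remaining ingredients --- finiteness of $\operatorname{Fix}(\rho)$, the local index $+1$ at a half-turn fixed point, the Riemann--Hurwitz count (which \emph{is} correct for disconnected $\Surf$, where the quotient is an honest double cover and $F=0$), and the passage from $\mathbf{Q}$-coefficients to $\ZZ$-coefficients via freeness of $H_1(\Surf;\ZZ)$ --- are all sound.
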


Note that proposition~\ref{Y-surface-topology-propostion} is intrinsic in nature.
It does not require that the orientation-preserving automorphism $\rho$ be a reflection in an ambient geodesic $Y$.   
Proposition~\ref{Y-surface-topology-propostion} is easily proved 
using a $\rho$-invariant triangulation of $S$ whose vertices include
the fixed points of $\rho$; details may be found in \cite{hoffman-white-number}.

\begin{corollary}\label{$Y$-corollary}
Let $S$ be an open, orientable $Y$-surface such that $S/\rho_Y$ is connected.
Let $k$ be the number of fixed points of $\rho_Y:S\to S$.
\begin{enumerate}[\upshape(i)]
\item\label{$Y$-corollary-ends}
           The surface $\Surf$ has either one or two ends, according to
          whether $k$ is odd or even.
\item\label{$Y$-corollary-disks}
       If $k=0$, then $S$ is the union of two disks.
\item\label{$Y$-corollary-genus}
       If $k>0$, then $S$ is connected, and the genus of  $S$ is 
      $(k-2)/2$ if $k$ is even and $(k-1)/2$ if $k$ is odd. 
\end{enumerate}
\end{corollary}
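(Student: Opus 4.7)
The plan is to combine Proposition~\ref{Y-surface-topology-propostion} with a branched covering space analysis of the quotient map $\pi : S \to Q := S/\rho_Y$. By the proposition, $Q$ is topologically a disk and $\chi(S) = 2 - k$. Since $\rho_Y$ is an orientation-preserving isometry of order two on an oriented surface, the differential at each fixed point lies in $\operatorname{SO}(2)$ and has order dividing two; it cannot be the identity (otherwise $\rho_Y$ would be the identity on a whole component, incompatible with the fixed-point count $k = 2 - \chi(S)$ being finite), so it is $-\operatorname{Id}$. In local coordinates $\rho_Y$ is then $z\mapsto -z$ and $\pi$ is modeled by $z\mapsto z^2$. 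Hence $\pi$ is a branched double cover with branch locus exactly the $k$ fixed points of $\rho_Y$.

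For assertion (ii), if $k = 0$ then $\pi$ is an unbranched double cover of the disk $Q$. The disk is simply connected, so the cover is trivial, and $S$ is homeomorphic to a disjoint union of two disks.

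For (i) and (iii), assume $k > 0$. First I verify connectedness: if $S$ had two components, then $\rho_Y$ would either swap them (giving no fixed points, contradicting $k > 0$) or preserve each (making $Q$ disconnected, contradicting the hypothesis). Next I count ends using monodromy. Let $D \subset Q$ be a large closed disk containing all $k$ branch points in its interior, so $Q\setminus\interior(D)$ is a half-open annular neighborhood of the unique end of $Q$ over which $\pi$ is unbranched. The number of components of $\pi^{-1}(Q\setminus\interior(D))$ is determined by the monodromy of the boundary circle $\partial D$ in the deck group $\ZZ/2\ZZ$. In the punctured disk, $\partial D$ is homologous to the sum of small loops around the individual branch points, each of which has non-trivial monodromy, so the monodromy of $\partial D$ equals $k \bmod 2$. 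Hence $S$ has $e = 2$ ends when $k$ is even and $e = 1$ end when $k$ is odd, proving (i). Combining the two Euler characteristic identities $\chi(S) = 2 - 2g - e$ (for a connected genus-$g$ surface with $e$ ends) and $\chi(S) = 2 - k$ gives $k = 2g + e$, so $g = (k-2)/2$ for $k$ even and $g = (k-1)/2$ for $k$ odd, proving (iii). The only mildly delicate step is the monodromy calculation that pins down the number of ends; everything else is either a direct invocation of Proposition~\ref{Y-surface-topology-propostion} or the Euler characteristic bookkeeping.
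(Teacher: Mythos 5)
Your proof is correct, and it takes a genuinely different route from the paper for the key parity statement~\eqref{$Y$-corollary-ends}. The paper argues top-down from Euler characteristics: it first observes that $S\to S/\rho_Y$ has degree two onto a one-ended disk so $e\in\{1,2\}$, then plugs $\chi(S)=2c-2g-e$ together with Proposition~\ref{Y-surface-topology-propostion}\eqref{fixed-points-euler} to get $k\equiv e\pmod 2$, which pins down $e$. You instead compute the monodromy of the branched double cover $\pi:S\to S/\rho_Y$ explicitly: the end of the disk quotient bounds a loop homologous to the sum of the $k$ branch loops, each with nontrivial $\ZZ/2\ZZ$ monodromy, giving $e=1$ or $2$ according to the parity of $k$ directly. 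Similarly, for~\eqref{$Y$-corollary-disks} you deduce triviality of an unbranched double cover over a simply connected base rather than extracting $c\geq 2$ from the Euler-characteristic equation as the paper does. The tradeoff is that the paper's bookkeeping is more elementary (purely Euler characteristic and the definition of a $Y$-surface), while your monodromy argument makes the geometry of the ends more transparent. Your proof of~\eqref{$Y$-corollary-genus} via $k=2g+e$ coincides with the paper's. Two small remarks: in the connectedness step you should note that connectedness of $S/\rho_Y$ already forces $S$ to have at most two components (an order-two homeomorphism of three or more components yields at least two orbits), so you only ever need to treat $c=2$; and you should state explicitly that in the $k=0$ case the two disks supply the two ends required by~\eqref{$Y$-corollary-ends}.
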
  

In particular, $S$ is a single disk if and only if $k=1$.

\begin{proof}[Proof of Corollary~\ref{$Y$-corollary}]
Since $S/\rho_Y$ is a disk, it has one end, and thus $S$ has either one
or two ends.  The Euler characteristic of $S$ is $2c-2g-e$, where $c$
is the number of connected components, $g$ is the genus, and $e$
is the number of ends.  
Thus by proposition~\ref{Y-surface-topology-propostion}\eqref{fixed-points-euler},
\begin{equation}\label{genus-fixed-points}
   2 - k = 2c - 2g - e.
\end{equation}
Hence $k$ and $e$ are congruent are congruent modulo $2$.  
Assertion~\eqref{$Y$-corollary-ends} follows immediately.  
(Figure~\ref{Y-surface-figure} shows two examples of assertion~\eqref{$Y$-corollary-ends}.)

Note that if $S$ has more than one component, then since $S/\rho_Y$
is a disk, in fact $S$ must have exactly two components, each of which
must be a disk.  Furthermore, $\rho_Y$ interchanges the two disks,
so that $\rho_Y$ has no fixed points in $S$, i.e., $k=0$.

Conversely, suppose $k=0$.  Then $e=2$ by Assertion~\eqref{$Y$-corollary-ends},
so from~\eqref{genus-fixed-points} we see that
\[
   2c = 2g + 4.
\]
Hence $2c\ge 4$ and therefore $c\ge 2$, i.e., $S$ has two or more components.
But we have just shown that in that case $S$ has exactly two components,
each of which is a disk. 
This completes the proof of Assertion~\eqref{$Y$-corollary-disks}.

Now suppose that $k>0$.  Then as we have just shown, $S$ is connected,
so~\eqref{genus-fixed-points} becomes $k=2g+e$, or
\begin{equation}\label{genus-vs-fixed-points}
    g =  \frac{k-e}2
\end{equation}
This together with Assertion~\eqref{$Y$-corollary-ends} gives
Assertion~\eqref{$Y$-corollary-genus}.
\end{proof}

\begin{remark}
To apply proposition~\ref{Y-surface-topology-propostion} and corollary~\ref{$Y$-corollary}
 to a compact manifold $M$ with non-empty boundary, one
lets $\Surf=M\setminus \partial M$.  The number of ends of $\Surf$ is equal to the
number of boundary components of $M$.
\end{remark}

\begin{proposition}\label{genus-0-proposition}
 If $S$ is a $Y$-surface in $N$ and if $U$ is an open subset of $S$
such that $U$ and $\rho_YU$ are disjoint, then $U$ has genus $0$.
\end{proposition}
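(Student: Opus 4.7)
The plan is to argue by contradiction, exploiting the intersection pairing on the oriented surface $S$. Suppose that $U$ has positive genus. Then some component of $U$ contains a handle, so we can choose simple closed curves $\gamma, \delta \subset U$ meeting transversally in exactly one point. In particular, their algebraic intersection number in $S$ is $\gamma \cdot \delta = \pm 1$.

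Now I would bring in the $Y$-surface hypothesis: since $\rho_Y$ acts as multiplication by $-1$ on $H_1(S, \ZZ)$, the class $[\gamma + \rho_Y\gamma]$ vanishes, and therefore $\gamma + \rho_Y\gamma$ bounds a compact 2-chain $F$ in $S$. The standard consequence is that $(\gamma + \rho_Y\gamma) \cdot \delta = 0$: after a small perturbation making $\delta$ transverse to $F$, the preimage $F \cap \delta$ is a disjoint union of compact arcs in $F$, and the two endpoints of each such arc lie on $\partial F = \gamma + \rho_Y\gamma$ and contribute opposite signs to the algebraic intersection number. On the other hand, $\rho_Y\gamma \subset \rho_Y U$ is disjoint from $\delta \subset U$ by hypothesis, so $\rho_Y\gamma \cdot \delta = 0$; combining, $(\gamma + \rho_Y\gamma) \cdot \delta = \gamma \cdot \delta = \pm 1$, contradicting the previous vanishing. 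Hence $U$ must have genus $0$.

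The only real obstacle is justifying the bounding-implies-zero-intersection step in the possibly noncompact surface $S$. I would handle this by enlarging $|\gamma|$, $|\delta|$, $|\rho_Y\gamma|$, and $|F|$ into a single compact subsurface $\Sigma \subset S$ whose interior contains all four sets; there Poincar\'e--Lefschetz duality gives a well-defined intersection pairing and the perturbation argument above applies verbatim. I expect this to be a routine technicality rather than a genuine difficulty, so the conceptual heart of the proof is simply the observation that the $Y$-surface condition forces $\gamma + \rho_Y\gamma$ to bound, while the disjointness of $U$ from $\rho_Y U$ kills the cross term $\rho_Y\gamma \cdot \delta$.
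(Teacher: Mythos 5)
Your argument is correct, but it takes a genuinely different route from the paper's. The paper's proof is a one-liner: because $U$ and $\rho_Y U$ are disjoint, the quotient map $S\to S/\rho_Y$ restricts to an embedding of $U$, so $U$ is identified with an open subset of $S/\rho_Y$; and proposition~\ref{Y-surface-topology-propostion} says $S/\rho_Y$ is topologically a disk, hence has genus $0$, and therefore so does any of its open subsets. You instead work directly from the defining property that $(\rho_Y)_*=-1$ on $H_1(S,\ZZ)$, running an intersection-pairing contradiction: if $\gamma,\delta\subset U$ were a dual handle pair, then $[\gamma+\rho_Y\gamma]=0$ forces $(\gamma+\rho_Y\gamma)\cdot\delta=0$, while the disjointness of $U$ and $\rho_Y U$ gives $\rho_Y\gamma\cdot\delta=0$, leaving $\gamma\cdot\delta=0$, contradicting $\gamma\cdot\delta=\pm1$. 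Your compactification step (working in a compact subsurface $\Sigma$ containing $\gamma$, $\delta$, $\rho_Y\gamma$, and the bounding chain) is exactly the right fix for the noncompactness of $S$; one could also simply invoke bilinearity and homotopy invariance of the intersection pairing in place of the explicit arc-counting in $F$, which sidesteps the need to promote the $2$-chain $F$ to an embedded subsurface and the omitted circle components of $F\cap\delta$. The trade-off between the two proofs: the paper's is shorter but leans on proposition~\ref{Y-surface-topology-propostion} and its standing hypotheses (open surface, finite topology, connected quotient), whereas yours is more self-contained, needing only the $H_1$-action from definition~\ref{Y-Surface} and basic intersection theory, so it applies without those side hypotheses.
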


\begin{proof}
Note that we can identify $U$ with a subset of $S/\rho_Y$.  Since $S$ is a $Y$-surface,
$S/\rho_Y$ has genus $0$ (by proposition~\ref{Y-surface-topology-propostion})
and therefore $U$ has genus $0$.
\end{proof}

\section{Periodic genus-\texorpdfstring{$g$}{Lg} 
            helicoids in \texorpdfstring{$\sS^2\times\RR$}{Lg}:
            theorem~\ref{main-S2xR-theorem} for \texorpdfstring{$h<\infty$}{Lg}}
\label{construction-outline-section}  

Let $0<h<\infty$.
Recall that we are trying to construct a minimal surface $M$
in $\sS^2\times [-h,h]$ such that
\[
  {\rm interior}(M) \cap H = Z_h \cup Z_h^* \cup X
\]
(where $Z_h$ and $Z_h^*$ are the portions of  $Z$ and $Z^*$ 
where $|z|< h$)
and such that $\partial M$ is a certain pair of circles at heights $h$
and $-h$. 
Since such an $M$ contains $Z_h$, $Z_h^*$, and $X$, it must
(by the Schwarz reflection principle)
be invariant under $\rho_Z$ (which is the same as $\rho_{Z^*}$)
and under $\rho_X$, the $180^\circ$ rotations about $Z$ and about $X$.
It follows that $M$ is invariant under $\rho_Y$, the composition of $\rho_Z$
and $\rho_X$.
In particular, if we let $S=\text{interior}(M)\cap H^+$ be the portion of 
the interior\footnote{It will be convenient for us to have $S$ be an open
manifold, because although $S$ is a smooth surface, its closure has corners.}
 of $M$ in $H^+$,
then 
\[
    M = \overline{S \cup \rho_Z S} = \overline{S\cup \rho_X S}. 
\]
Thus to construct $M$, it suffices to construct $S$. 
 Note that the boundary
of $S$ is $Z_h \cup Z_h^* \cup X$ together with a great semicircle $C$ in $\overline{H^+}\cap\{z=h\}$
and its image $\rho_YC$ under $\rho_Y$.
Let us call that boundary $\Gamma_C$.  (See Figure~\ref{GammaFigure}.)
Thus we wish to construct embedded minimal surface $S$ in $H^+$ having
specified topology and having boundary $\partial S=\Gamma_C$.
Note we need $S$ to be $\rho_Y$-invariant; otherwise Schwarz reflection in $Z$
and Schwarz reflection in $X$ would not produce the same surface.

We will  prove existence by counting surfaces mod $2$. 
 Suppose for the moment
that the curve $\Gamma_C$ is nondegenerate in the following sense: if $S$ is a smooth
embedded, minimal, $Y$-surface in $H^+$ with boundary $\Gamma_C$, then $S$
has no nonzero $\rho_Y$-invariant jacobi fields that vanish on $\Gamma_C$.
For each $g\ge 0$, the number of such surfaces $S$ of genus $g$
turns out to be even.   Of course, for the purposes of proving existence, this fact is not
 useful, since $0$ is an even number.   However, if instead of considering
all $Y$-surfaces of genus $g$, we consider only those that are positive (or those that
are negative) at $O$, then the number of such surfaces turns out to be odd, and therefore
existence follows.

For the next few sections, we fix a helicoid $H$ and we fix an $h$ with $0<h<\infty$.
Our goal is to prove the following theorem:

\begfig
 \hspace{5.0in}
  \vspace{.2in}
  \centerline{
\includegraphics[width=4.05in]{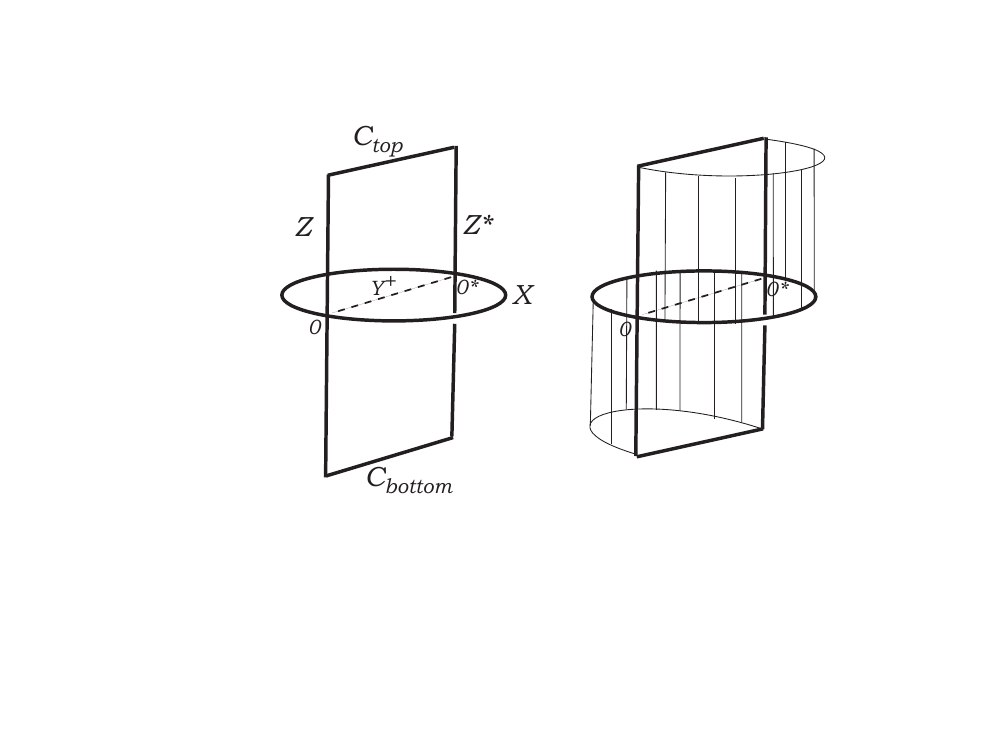}
   }
   \vspace{-1.3 in}
 \begin{center}
 \parbox{5.1in}{
\caption{\label{GammaFigure} {\bf The boundary curve $\Gamma_C$}.
We depict $\sS^2\times\RR$  in these illustrations as $\RR^3$ with  
each horizontal $\sS^2\times\{z\}$ represented as horizontal plane via stereographic projection, 
with one point of the sphere at infinity. Here, that point is the 
antipodal point of the midpoint of the semicircle $Y^+$.
{\bf Right:} For ease of illustration, we have 
chosen the reference helicoid $H$ to be  the vertical cylinder $X\times\RR$, and the semicircle 
$C=C_\text{top}$ to meet $H$ orthogonally.  The geodesics
$X$, $Z$ and $Z^*$ divide $H$ into four components, two of which are shaded. 
The helicoid $H$ divides $\sS^2\times\RR$ into two
components. The component $H^+$ is the interior of the solid cylinder bounded by $H$. 
{\bf Left:} The boundary curve $\Gamma=\Gamma _C$  consists of the great circle  $X$, 
two vertical line segments on the axes $Z\cup Z^*$ of height
$2h$ and two semicircles in $(\sS^2\times\{\pm h\})\cap H^+$. 
Note that $\Gamma$ has $\rho_Y$ symmetry. 
We seek a $\rho_Y$-invariant minimal surface in $H^+$  that has boundary $\Gamma_C$  
and has all of its topology
concentrated along $Y^+$. That is,   
we want a $Y$-surface as defined in section~\ref{section:$Y$-surfaces} with the properties    established in proposition~\ref{Y-surface-topology-propostion}.  According to  theorem~\ref{special-existence-theorem}, there are in fact two such surfaces for every positive genus.
}
 }
 \end{center}
 \endfig

\begin{theorem}\label{special-existence-theorem}
Let $0<h<\infty$, let $C$ be a great semicircle in $\overline{H^+}\cap\{z=h\}$
joining $Z$ to $Z^*$, and let $\Gamma_C$ be the curve given by
\[
   \Gamma_C = Z_h\cup Z_h^*\cup C\cup \rho_YC,
\]
where $Z_h=Z\cap \{|z|\le h\}$ and $Z^*=Z^*\cap\{|z|\le h\}$.  

For each sign $s\in\{+,-\}$ and for each $n\ge 1$, there exists
an open, embedded 
minimal  $Y$-surface $S=S_s$ in $H^+\cap \{|z|<h\}$ such that $\partial S=\Gamma_C$,
such that $Y^+\cap S$ contains exactly $n$ points,
and such that $S$ is 
positive\footnote{Positivity and negativity of $S$ at $O$ were defined in section~\ref{sign-section}.}
 at $O$ if $s=+$ and negative at $O$
if $s=-$.

If $n$ is even, there is such a surface that is invariant under
reflection $\mu_E$ in the totally geodesic cylinder $E\times\RR$.
\end{theorem}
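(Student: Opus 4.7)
Following the outline of section~\ref{construction-outline-section}, the plan is to prove existence by a mod-$2$ parity count. For generic (nondegenerate) $C$, let $N(C, s, n)$ denote the number of embedded minimal $Y$-surfaces $S \subset H^+ \cap \{|z| < h\}$ with $\partial S = \Gamma_C$, $\|Y^+ \cap S\| = n$, and sign $s \in \{+, -\}$ at $O$. Under the stated nondegeneracy (no nonzero $\rho_Y$-invariant Jacobi field on any such $S$ vanishing on $\Gamma_C$), the implicit function theorem makes this count finite. The goal is to show $N(C, s, n) \equiv 1 \pmod 2$; existence follows, and smooth subsequential convergence together with the area and curvature bounds developed in later sections extends the conclusion to non-generic $C$.

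\textbf{Parity invariance under deformation of $C$.} I would regard the admissible semicircles $C$ in $\overline{H^+} \cap \{z = h\}$ as a one-real-parameter family and study the union $\bigcup_t \Mm_t$ of the corresponding moduli sets as $t$ varies. For generic smooth paths $C_t$, transversality renders $\bigcup_t \Mm_t$ a smooth $1$-manifold whose critical values in $t$ are fold points at which exactly two surfaces of the same type $(s, n)$ are born or annihilated simultaneously; the key point is that the kernel Jacobi field at a fold is $\rho_Y$-invariant and, to first order, preserves both the sign at $O$ and the intersection count $\|Y^+ \cap S\|$. Consequently $N(C_t, s, n) \bmod 2$ is locally constant along $t$, provided the cobordism $\bigcup_t \Mm_t$ is compact.

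\textbf{Base case.} To compute $N(C, s, n) \bmod 2$ it suffices to evaluate it at a single convenient reference semicircle and transport the result to the given $C$. A natural choice is $C_0 := H \cap \overline{H^+} \cap \{z = h\}$, so that $\Gamma_{C_0}$ lies entirely on the helicoid $H$. The reflection symmetries of $H$ along $\Gamma_{C_0}$ make the reference problem tractable: for $n = 1$ one has a unique $\rho_Y$-invariant minimal disk with boundary $\Gamma_{C_0}$ (uniqueness by a Plateau/maximum principle argument using $H$ as a barrier), and for larger $n$ one builds positive and negative $Y$-surfaces inductively by adjoining a handle at a point of $Y^+$ that flips the sign at $O$. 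Checking that exactly one surface of each sign-genus type exists in this reference configuration gives parity one at $C_0$, which then propagates to arbitrary $C$ by the cobordism.

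\textbf{Main obstacle: compactness of the cobordism.} The principal difficulty is verifying that $\bigcup_t \Mm_t$ is compact, i.e., that no sequence of surfaces along the deformation can escape, develop curvature singularities, or change topological type in a way that alters $\|Y^+ \cap S\|$ without creating a compensating surface. Boundary escape is ruled out because $\Gamma_{C_t}$ is fixed on $Z \cup Z^* \cup X$ and $H$ acts as a one-sided barrier for surfaces contained in $H^+$. Interior curvature blow-up and topology change are controlled by the uniform area and curvature bounds developed in sections~\ref{general-results-section} and~\ref{area-bounds-section}, together with the $Y$-surface structure: by proposition~\ref{genus-0-proposition}, all topology concentrates along $Y^+$, so any topology change is detected by, and correlates with, a change in $\|Y^+ \cap S\|$. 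Carefully matching such events so that parity is preserved is the main technical content of the argument. The $\mu_E$-invariant refinement for even $n$ is obtained by running the identical parity count within the $\mu_E$-equivariant subclass of $Y$-surfaces.
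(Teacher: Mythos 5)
Your overall framework---nondegeneracy (``bumpiness''), mod-$2$ counting, and a continuity argument reducing to a reference configuration---matches the paper's general strategy, but there is a genuine gap at the ``base case'' that vitiates the whole argument.

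\textbf{The base case is circular.} You claim that at the reference semicircle $C_0 = H \cap \overline{H^+}\cap\{z=h\}$ one ``builds positive and negative $Y$-surfaces inductively by adjoining a handle at a point of $Y^+$ that flips the sign at $O$,'' and that one can thereby ``check'' that exactly one surface of each type exists. But constructing higher-genus minimal $Y$-surfaces with boundary $\Gamma_{C_0}$ is exactly the existence problem being proved; there is no known direct handle-attaching construction that produces such a surface. If such a construction existed, the whole elaborate degree-theoretic machinery would be unnecessary. The paper never computes the count at a reference $C$ by exhibiting surfaces. Instead it (i) rounds the corners of $\Gamma$ to produce a \emph{smooth embedded} curve $\Gamma(t)$ in the mean-convex boundary of $N=\overline{H^+}\cap\{|z|\le h\}$, (ii) invokes a separate degree-theory result for smooth curves (theorem~\ref{Y-degree-theorem}, proved in~\cite{hoffman-white-number}) to get the mod-$2$ count $\|\Mm(\Gamma(t),n)\|$, and (iii) transfers that count back to $\Gamma$ via a nontrivial bijection (theorem~\ref{bridged-approximations-theorem}, theorem~\ref{all-accounted-for-theorem}, propositions~\ref{disagreements-proposition}--\ref{mixed-sign-rounding-proposition}), then runs an induction on $n$ over the two sign-types of rounding. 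Nothing in your proposal recovers this input; the degree theorem for smooth curves is the indispensable ``known value'' that seeds the induction, and it cannot be replaced by an explicit construction.

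\textbf{The deformation step also has unaddressed problems.} You vary $C$ and argue fold invariance of the parity; the paper instead fixes $C$ and varies the \emph{pitch} of the helicoid $H(t)$ in proposition~\ref{tilted-suffices-proposition}, and in that formulation the continuity argument is a short maximum-principle barrier claim (surfaces stay trapped in $\overline{H(t)^+}$, sign at $O$ varies continuously), with no fold analysis or Jacobi-field discussion needed. Your version additionally asserts without proof that at a generic fold the kernel is $\rho_Y$-invariant and that both the sign at $O$ and the count $\|Y^+\cap S\|$ are preserved through the fold. The preservation of $\|Y^+\cap S\|$ is not automatic: as proposition~\ref{disagreements-proposition} and figure~\ref{signs-figure} make clear, the interaction between the boundary near the corners $O,O^*$ and the count is delicate, and this is one of the main reasons the paper introduces the rounding formalism rather than perturbing the boundary curve. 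Also, you quietly assume nondegeneracy holds for generic $C$; the paper achieves nondegeneracy by perturbing the ambient metric to be bumpy (section~\ref{bumpy-section}, lemma~\ref{bumpy-lemma}), which is a cleaner device since the boundary curve keeps the corner structure that the rounding construction requires. Finally, the uniqueness you assert for the $n=1$ disk is wrong as stated: for $n=1$ the two signs at $O$ produce distinct surfaces (interchanged by $\mu_E$), so the $n=1$ ``base'' already needs the mod-$2$ count apparatus rather than a barrier uniqueness argument.
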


Before proving theorem~\ref{special-existence-theorem}, let us show that it implies the periodic case
of theorem~\ref{main-S2xR-theorem} of section~\ref{section:main-theorems}:

\begin{proposition}\label{reduction-to-H^+-proposition}
Theorem~\ref{special-existence-theorem} implies theorem~\ref{main-S2xR-theorem} in the 
periodic case $h<\infty$.
\end{proposition}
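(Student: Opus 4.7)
My plan is to produce $M_s$ from the half-surface $S_s$ given by Theorem~\ref{special-existence-theorem} via Schwarz reflection across its boundary geodesics. I fix $g \ge 1$ and $s \in \{+,-\}$, and take $C$ to be the great semicircle in $\overline{H^+} \cap \{z=h\}$ meeting $H$ orthogonally at $Z \cup Z^*$. Applying Theorem~\ref{special-existence-theorem} with $n = g$ then yields a minimal $Y$-surface $S_s \subset H^+ \cap \{|z| < h\}$ with $\partial S_s = \Gamma_C$, with $|Y^+ \cap S_s| = g$, and with sign $s$ at $O$; when $g$ is even I additionally choose $S_s$ to be $\mu_E$-invariant, as the theorem permits.

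I then set $M_s := \overline{S_s \cup \rho_Z S_s}$. Since $Z \cup Z^*$ consists of vertical geodesics lying in $\partial S_s$, Schwarz reflection makes $M_s$ smooth across these axes. Because $S_s$ is $\rho_Y$-invariant and $\rho_Y$ is the composition of $\rho_Z$ and $\rho_X$, the identity $\overline{S_s \cup \rho_X S_s} = M_s$ also holds, so $M_s$ is smooth across $X$ as well. The resulting $M_s$ is a smooth, embedded, compact minimal surface in $\sS^2 \times [-h, h]$, invariant under each of $\rho_X$, $\rho_Y$, and $\rho_Z$.

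Properties (2)--(6) of theorem~\ref{main-S2xR-theorem} then follow routinely. Property (2) is immediate from the choice of $C$, and (3) from $\overline{S_s} \cap H = Z_h \cup Z_h^* \cup X$ together with the fact that $\rho_Z$ and $\rho_X$ preserve $H$. For (4), I combine $\rho_Y$-invariance with Proposition~\ref{Y-surface-topology-propostion} applied to $\interior(M_s)$ to verify the homological condition. For (5), the count $|M_s \cap Y| = 2\,|S_s \cap Y^+| + 2 = 2g+2$ holds because the extra two points $O, O^*$ lie on $Z \subset M_s$. Property (6) is inherited directly from the sign of $S_s$ at $O$. That $M_s$ has genus $g$ follows from Corollary~\ref{$Y$-corollary}: with $k = 2g+2$ fixed points of $\rho_Y$ on $\interior(M_s)$ and $e = 2$ ends, the formula gives genus $(k - e)/2 = g$.

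For the congruence assertions (7)--(8) the argument splits by parity. When $g$ is even, $M_s$ is $\mu_E$-invariant because $\mu_E$ commutes with $\rho_Z$ and preserves $S_s$; non-congruence of $M_+$ and $M_-$ by any isometry of $\sS^2\times\RR$ then follows from the sign distinction at $O$, since any isometry preserving the marked data $(H, Z \cup Z^*, X)$ must preserve that sign. When $g$ is odd, Theorem~\ref{special-existence-theorem} does not directly yield $\mu_E$-symmetry; instead I plan to show that $\mu_E M_+$ plays the role of $M_-$ by verifying that it satisfies every defining property of $M_-$ and in particular has sign $-$ at $O$. Establishing this sign flip --- relating the sign of $M_+$ at $O$ to its sign at $O^*$ via $\mu_E$, and showing the flip occurs precisely when $|S_+ \cap Y^+|$ is odd --- is the step I expect to be the main obstacle. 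Once it is in place, the orientation-reversing character of $\mu_E$ combined with the sign distinction rules out any orientation-preserving isometry from $M_+$ to $M_-$.
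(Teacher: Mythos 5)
Your proposal follows essentially the same route as the paper: choose $C$ meeting $H$ orthogonally, apply Theorem~\ref{special-existence-theorem}, extend by Schwarz reflection to $M_s = \overline{S_s\cup\rho_Z S_s}$, and for odd $g$ take $M_- = \mu_E M_+$. The "main obstacle" you flag --- that the sign of $S_+$ flips between $O$ and $O^*$ precisely when $\|S_+\cap Y^+\|$ is odd --- is exactly what the paper isolates and proves as Lemma~\ref{parity-sign-lemma} (via a count of boundary components of the geodesic completion); you should also be a bit more careful about smoothness of $M_s$ at the corners $O$, $O^*$ of $\Gamma$, which the paper handles in Lemma~\ref{Schwarz-regularity-lemma} by a tangent-cone and removable-singularity argument rather than bare Schwarz reflection.
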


\begin{proof}
Let $C$ be the great semicircle in $\overline{H^+}\cap \{z=h\}$ that has endpoints on
$Z\cup Z^*$ and that meets $H$ orthogonally at those endpoints.
First suppose $n$ is even, and let $S_s$ for $s\in\{+,-\}$ be the surfaces
given by theorem~\ref{special-existence-theorem}.
Let $M_s$ be the surface obtained by Schwarz
reflection from $S_s$:
\[
   M_s = \overline{S_s \cup \rho_Z S_s} = \overline{S_s\cup \rho_X S_s}, 
\]
(The second equality holds because $S_s$ is $\rho_Y$-invariant and $\rho _Z\circ \rho_Y =\rho_X$.)

By lemma~\ref{Schwarz-regularity-lemma} below, 
  $M_s$ is a smoothly embedded minimal surface.  
  Clearly it is $\rho_Y$-invariant, it lies
   in  $\sS^2\times [-h,h]$,
its interior has the desired intersection with $H$, it has the indicated sign at $O$,
it has $\mu_E$ symmetry, 
and its boundary is the desired pair of horizontal circles.
We claim that $M_s$ is a $Y$-surface.  To see this, note that 
 since $S_s$
is a $Y$-surface, the quotient $S_s/\rho_Y$ is 
topologically a disk by proposition~\ref{Y-surface-topology-propostion}.
The interior of $M_s/\rho_Y$ is two copies of $S_s/\rho_Y$ glued along a common
boundary segment.  Thus the interior of $M_s/\rho_Y$ is also topologically a disk, and therefore
$M_s$ is a $Y$-surface by proposition~\ref{Y-surface-topology-propostion}.

Note that $M_s\cap Y$ has $2n+2$ points: the $n$ points in $S_s\cap Y^+$,
an equal number of points in $\rho_ZS_s\cap Y^-$, and the two
points $O$ and $O^*$.  Thus by corollary~\ref{$Y$-corollary}, 
$M_s$ has genus $n$.  Since $n$
is an arbitrary even number, this completes the proof for even genus, except
for assertion~\eqref{even-genus-congruence-assertion}, 
the assertion that $M_{+}$ and $M_{-}$ are not congruent.

Now let $n$ be odd, and let $S_{+}$ be the surface given by 
theorem~\ref{special-existence-theorem}.
By lemma~\ref{parity-sign-lemma} below, $S_{+}$ is negative at $O^*$,
which  implies that $\mu_E(S_{+})$ is 
negative at $O$.  In this case, we choose our $S_{-}$ to be $\mu_E(S_{+})$.
Exactly as when $n$ is even, we extend $S_{\pm }$ by Schwarz reflection
to get $M_{\pm }$.   As before, the $M_{\pm }$ are $Y$-surfaces
of genus $n$.  The proof that they have the required properties is exactly
as in the case of even $n$, except for the statement that $M_{+}$ and $M_{-}$
are not congruent by any orientation-preserving isometry of $\sS^2\times \RR$.

It remains only to prove the statements about noncongruence of $M_{+}$ and 
 $M_{-}$.  Those statements (which we never use) 
 are proved in section~\ref{noncongruence-appendix}.
\end{proof}

The proof above used the following two lemmas:

\begin{lemma}\label{Schwarz-regularity-lemma}
 If $S$ is a $\rho_Y$-invariant embedded minimal surface in $H^+$  with boundary
$\Gamma$ and with $Y\cap S$ a finite set,  then the Schwarz-extended surface
\[
      M = \overline{S \cup \rho_Z S} = \overline{S \cup \rho_X S}. 
\]
is smoothly embedded everywhere.
\end{lemma}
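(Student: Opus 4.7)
The plan is to verify smoothness of $M$ stratum by stratum. The interiors of $S$ and $\rho_Z S$ are smooth by hypothesis, so the task is to handle the relative interiors of the geodesic arcs in $\partial S \cap (X \cup Z_h \cup Z_h^*)$ and, most delicately, the corner points $O$ and $O^*$.

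First, I would verify that the two expressions $\overline{S \cup \rho_Z S}$ and $\overline{S \cup \rho_X S}$ for $M$ are genuinely equal: since $\rho_X$, $\rho_Y$, $\rho_Z$ form a Klein four-group with $\rho_X = \rho_Y \rho_Z = \rho_Z \rho_Y$, the $\rho_Y$-invariance of $S$ gives $\rho_X S = \rho_Z(\rho_Y S) = \rho_Z S$. Using this flexibility, at a point $p$ in the relative interior of $Z_h$ (or $Z_h^*$) I apply the classical Schwarz reflection principle with the isometric involution $\rho_Z$, whose fixed set is $Z \cup Z^*$, to conclude that $M$ is a smooth minimal surface across $p$. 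At an interior point of $X \setminus \{O,O^*\}$, the same principle applied to $\rho_X$ gives smoothness. The analogous argument at a point in the relative interior of one of the boundary semicircles $C$ or $\rho_Y C$ (which are in $\partial M$) yields smoothness of $M$ as a manifold-with-boundary there.

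The heart of the matter is smoothness at the interior corner $O$ (the argument at $O^*$ is identical). Near $O$, $M$ is invariant under the entire Klein four-group $\{e,\rho_X,\rho_Y,\rho_Z\}$: after reflection, both $X$ and $Z$ are absorbed into the interior of $M$, and the previous step shows $M$ is smooth on a punctured neighborhood of $O$. To upgrade to smoothness at $O$, I would first show that $S$ extends to $O$ with a well-defined tangent plane equal to $T_O X \oplus T_O Z$, which coincides with $T_O H$. This is a boundary-corner regularity statement for an embedded minimal surface whose boundary consists of two smooth geodesic arcs meeting orthogonally at $O$ and which lies on one side of the smooth minimal surface $H$; the tangent plane is forced by the boundary directions, and $C^1$ extension follows from standard boundary regularity for minimal graphs over the tangent plane with prescribed smooth boundary on the coordinate axes. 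Given the common tangent plane, the four Schwarz copies of $S$ fit together as a $C^0$ embedded minimal surface at $O$ whose mass-density there equals $1$; Allard's $\epsilon$-regularity theorem (equivalently, the classical removable-singularity theorem for embedded minimal surfaces of bounded area with an isolated singularity of density one) then upgrades $M$ to $C^\infty$ through $O$.

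The principal obstacle is precisely this corner regularity at $O$ and $O^*$: showing that $S$ has a well-defined tangent plane there and that the iterated Schwarz reflections glue to a smooth, rather than merely continuous or branched, minimal surface. The remaining corners of $\partial S$, namely the four endpoints where the axial arcs $Z_h$, $Z_h^*$ meet the boundary semicircles $C$, $\rho_Y C$, are easier: after $\rho_Z$-reflection the semicircle pair $C \cup \rho_Z C$ becomes a smooth great circle in $\partial M$ at height $h$ (and similarly at $-h$), and the same combination of Schwarz reflection across $Z_h$ (or $Z_h^*$) and $\epsilon$-regularity handles these corners, now as boundary points of the smooth manifold-with-boundary $M$.
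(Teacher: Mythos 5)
Your handling of the smooth pieces of $\partial S$ (interiors of $X$, $Z_h$, $Z_h^*$, and the semicircles) by classical Schwarz reflection is fine, and so is the reduction of the four corners at $z=\pm h$ to boundary points of a smooth circle. The gap is at $O$ and $O^*$, which is the heart of the lemma. You assert that ``$S$ extends to $O$ with a well-defined tangent plane equal to $T_OH$'' via ``a boundary-corner regularity statement,'' but no such off-the-shelf theorem exists to be cited. What you need is that the tangent cone of $S$ at $O$ is a \emph{multiplicity-one} quarter plane; the boundary directions only determine the plane \emph{if} the tangent cone is already known to be a single quarter plane of multiplicity one. A priori the blow-up could have higher density, and establishing that it does not is essentially the content of the lemma, not an input to it. Your later appeal to ``standard boundary regularity for minimal graphs over the tangent plane'' also presupposes that $S$ is a graph over its tangent plane near $O$, which is again part of what must be proved.

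A telling symptom: your argument never uses the hypothesis that $Y\cap S$ is finite. The paper uses it crucially to choose a ball $B$ around $O$ with $B\cap Y\cap S=\emptyset$, so that $\rho_Y$ acts without fixed points on $S\cap B$. Corollary~\ref{$Y$-corollary}\eqref{$Y$-corollary-disks} then shows $S\cap B$ is topologically a union of two disks, hence $M\cap B$ is a disk; Gulliver's removable-singularity theorem~\cite{gulliver-removability} gives that $M$ is a branched minimal immersion at $O$, and embeddedness rules out branching. This topological route completely bypasses the density/tangent-cone question that your argument leaves open. Any proof that does not use the finiteness hypothesis in some form at $O$ should be regarded with suspicion, since that hypothesis is what rules out the pathological accumulation near $O$ that would spoil the reflection argument.
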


\begin{proof}
One easily checks that if $q$ is a corner of $\Gamma$ other than $O$ or $O^*$, 
then the tangent cone to $S$ at $q$ is a multiplicity-one quarter plane.  
 Thus the tangent
cone to $M$ at $q$ is a multiplicity-one halfplane, which implies that $M$ is smooth at $q$
by Allard's boundary regularity theorem~\cite{allard-boundary}.
(In fact, the classical boundary regularity theory~\cite{hildebrandt} suffices here.)

Let $B$ be an open ball centered at $O$ small enough that $B$ contains no points of $Y\cap S$.
Now $S\cap B$ is a $Y$-surface, so by 
corollary~\ref{$Y$-corollary}\eqref{$Y$-corollary-disks}, it is topologically the union of two disks.  It follows
that $M\cap B$ is a disk, 
so $M$ is a branched minimal immersion at $O$ by~\cite{gulliver-removability}.
But since $M$ is embedded, in fact $M$ is unbranched.
\end{proof}

\begin{lemma}\label{parity-sign-lemma}
 Let $S\subset H^+$ be  a $Y$-surface with $\partial S=\Gamma$. 
 Then the signs of $S$ at $O$ and $O^*$ agree or disagree according to whether
 the number of points of $Y\cap S$ is even or odd.
\end{lemma}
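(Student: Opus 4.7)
My plan is to deduce the parity statement from proposition~\ref{Y-surface-topology-propostion} by first computing the number $e$ of boundary components of a smoothed version of $S$ in terms of the signs at $O$ and $O^*$. First I would desingularize $S$ at its two degree-four corners $O$ and $O^*$ of $\Gamma$, where $S$ has two tangent sheets in $T_O H$ (respectively $T_{O^*} H$). The sign of $S$ at the corner determines which pair of tangent quadrants of $H$ the sheets lie in, and hence how the four incident boundary arcs are paired on the two sheets: at $O$, sign~$+$ pairs $\{X^+,Z^+\}$ and $\{X^-,Z^-\}$, while sign~$-$ pairs $\{X^+,Z^-\}$ and $\{X^-,Z^+\}$; at $O^*$ analogous pairings hold with $Z^\pm$ replaced by $(Z^*)^\pm$. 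Splitting each degree-four vertex into two according to its pairing converts $S$ into a compact surface $\tilde S$ whose boundary is a smooth $1$-manifold.

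Next I would enumerate the components of $\partial\tilde S$ by tracing the eight arcs $X^\pm$, $Z^\pm$, $(Z^*)^\pm$, $C$, $\rho_Y C$ through the degree-two corners $(O,\pm h)$, $(O^*,\pm h)$ (which glue $Z^\pm$ to $C$ or $\rho_YC$, and $(Z^*)^\pm$ similarly) and through the pairings at the split corners. I expect a direct check of the four sign combinations to show: if the signs at $O$ and $O^*$ agree, the tracing yields two disjoint loops (one contained in $\{z\ge 0\}$ and one in $\{z\le 0\}$), so $e=2$; if the signs disagree, the tracing yields a single loop running through all eight arcs, so $e=1$. Thus the signs at $O$ and $O^*$ agree if and only if $e$ is even.

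Finally, $\tilde S$ has some genus $g$ and $e$ boundary components, so $\chi(S)=\chi(\tilde S)=2-2g-e$. Since $\rho_Y$ preserves the orientation of $S$, every point $p\in Y\cap S$ is a transverse intersection: if $T_pY\subset T_pS$ then $\rho_Y|_{T_pS}$ would have eigenvalues $+1, -1$ and determinant $-1$, contradicting orientation-preservation. Hence $Y\cap S$ is a finite set, and since the fixed-point set of $\rho_Y$ in $\sS^2\times\RR$ is exactly $Y$, it coincides with the fixed-point set of $\rho_Y$ on $S$. Proposition~\ref{Y-surface-topology-propostion} then gives
\[
    |Y\cap S|\;=\;2-\chi(S)\;=\;2g+e,
\]
whose parity equals the parity of $e$. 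Combining with the previous paragraph yields the lemma.

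The main obstacle is the combinatorial bookkeeping of the boundary tracing in the four sign combinations, which requires keeping careful track of the conventions for $X^\pm, Z^\pm, (Z^*)^\pm$ and of which pairings correspond to positive versus negative quadrants at each corner; everything else is a direct invocation of proposition~\ref{Y-surface-topology-propostion}.
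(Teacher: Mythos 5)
Your proof is correct and follows essentially the same route as the paper's: your desingularized surface $\tilde S$ is the paper's geodesic completion $\widehat{S}$, your boundary-tracing argument is the paper's analysis of how the sign at each corner pairs the incident arcs of $\partial\widehat{S}$, and your invocation of $\card{Y\cap S}=2-\chi(S)=2g+e$ is exactly the content of corollary~\ref{$Y$-corollary}\eqref{$Y$-corollary-ends} which the paper cites to finish.
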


\begin{proof}
Let $\widehat{S}$ be the geodesic completion of $S$.
We can identity $\widehat{S}$ with $\overline{S}=S\cup\partial S$, except
that $O\in \overline{S}$ corresponds to two points in $\widehat{S}$,
and similarly for $O^*$.
Note that the number of ends of $S$ is equal to the number of boundary components of 
 $\partial \widehat{S}$.

By symmetry, we may assume that the sign of $S$ at $O$ is $+$. 
Then at $O$, $Z^+$ is joined in $\partial \widehat{S}$ to $X^+$ and $Z^-$ is joined to $X^-$.
 If the sign of $S$ at $O^*$ is also $+$, then  the same pairing occurs at $O^*$, 
 from which it follows that 
 $\partial \widehat{S}$ has two components and therefore that $S$ has two ends. 
 If the sign of $S$ at $O^*$ is $-$, then  the pairings are crossed, so that
$\partial \widehat{S}$ has  only one component and therefore $S$ has only one end.
Thus $S$ has two ends or one end
according to whether the signs of $S$ at $O$ and $O^*$ are equal or not.
The lemma now follows from corollary~\ref{$Y$-corollary},
 according to which the number of ends of $S$
is two or one according to whether the number of points of $Y\cap S$ is even or odd.
\end{proof}






\section{Adjusting the pitch of the helicoid}\label{adjusting-pitch-section}

Theorem~\ref{special-existence-theorem}  of section~\ref{construction-outline-section} asserts that
the curve $\Gamma_C$ 
bounds various minimal surfaces in $H^+$.  In that theorem, $C=\Gamma\cap \{z=h\}$ is allowed
to be any semicircle in $\overline{H^+}\cap \{z=h\}$ with endpoints in $Z\cup Z^*$.  
In this section, we will show that in order to prove 
theorem~\ref{special-existence-theorem}, it is sufficient to prove it for the special case where
 $C$ is a semicircle in the helicoid $H$.

\begin{theorem}\label{special-existence-theorem-tilted}[Special case of 
theorem~\ref{special-existence-theorem}]
Let $0<h<\infty$ and let $C$ be
 the semicircle in $H\cap\{z=h\}$ joining $Z$ to $Z^*$ such that $C$ and $X^+$
lie in the same component of $H\setminus (Z\cup Z^*)$.
For each sign $s\in\{+,-\}$ and for each $n\ge 1$, there exists
an open embedded 
minimal  $Y$-surface $S=S_s$ in $H^+\cap \{|z|<h\}$ such that 
\[
   \partial S=\Gamma_C :=  Z_h \cup Z_h^* \cup C \cup \rho_YC,
\]
such that $Y^+\cap S$ contains exactly $n$ points,
and such that $S$ is positive at $O$ if $s=+$ and negative at $O$
if $s=-$.

If $n$ is even, there is such a surface that is invariant under
reflection $\mu_E$ in the totally geodesic cylinder $E\times\RR$.
\end{theorem}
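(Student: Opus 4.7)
I would prove this by the mod-$2$ degree count sketched in Section~\ref{construction-outline-section}. For each $n\geq 1$ and $s\in\{+,-\}$, let $\mathcal{M}_n^s(C)$ denote the moduli space of smooth, embedded, minimal $Y$-surfaces $S\subset H^+\cap \{|z|<h\}$ with $\partial S=\Gamma_C$, with $|Y^+\cap S|=n$, and with sign $s$ at $O$. The goal is to show that $|\mathcal{M}_n^s(C)|$ is finite and odd for a generic (nondegenerate) $C$; existence then follows, and the $\mu_E$-invariant case for even $n$ is recovered by running the whole argument inside the subspace of $\mu_E$-symmetric surfaces.

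\textbf{Compactness.} The first task is sequential compactness of $\mathcal{M}_n^s$ in the smooth topology. Uniform area bounds come from enclosing $\Gamma_C$ in a mean-convex region and using the isoperimetric inequality; interior curvature bounds for embedded minimal surfaces of controlled genus come from Colding--Minicozzi style estimates; at smooth arcs of $\Gamma_C$ boundary regularity is classical, and at each corner one invokes Allard's boundary theorem, since each corner has a multiplicity-one quarter-plane tangent cone (the same argument appears in Lemma~\ref{Schwarz-regularity-lemma}). The decisive input is that the $Y$-surface hypothesis combined with the fixed intersection count $|Y^+\cap S|=n$ prevents both handle drift (all topology must lie along the compact geodesic $Y$) and handle collapse (which would strictly decrease the intersection count via Corollary~\ref{$Y$-corollary}).

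\textbf{Transversality and parity invariance.} A generic perturbation of $C$ within the one-parameter family of semicircles joining $Z$ to $Z^*$ in $\{z=h\}$ renders every $S\in\mathcal{M}_n^s$ nondegenerate, in the sense that it admits no nonzero $\rho_Y$-invariant Jacobi field vanishing on $\Gamma_C$; hence $\mathcal{M}_n^s$ is finite. A standard cobordism/bifurcation analysis on generic one-parameter families then shows that $|\mathcal{M}_n^s(C)|\pmod 2$ is constant as $C$ varies through nondegenerate curves: solutions are created and annihilated only in pairs, and the sign condition at $O$ survives because tangency to $H$ at $O$ is a codimension-one condition that can be avoided along a generic path. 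The tilted choice $C\subset H$ singled out in the statement is what makes this clean: it supplies the reference helicoid itself as an auxiliary structure, and the pitch $\kappa$ of $H$ gives a further natural deformation parameter.

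\textbf{Base computation and main obstacle.} To pin down the parity I would deform to a model case---for example, sending the pitch $\kappa\to\infty$ so that $H$ collapses to the flat cylinder $X\times\RR$, or shrinking $h\to 0$---in which the surfaces in $\mathcal{M}_n^s$ can be catalogued explicitly and one expects exactly one surface per sign, built by an iterated handle-addition starting from a half-disk spanning $X\cup Z$. Parity constancy along the deformation then yields that $|\mathcal{M}_n^s|$ is odd for the original data, proving the theorem. The main obstacle is the compactness step: a priori a sequence of embedded minimal $Y$-surfaces of genus $n$ could degenerate by pinching a catenoid neck or by absorbing a handle into the boundary, and only the combination of the $Y$-surface property with the fixed intersection count $|Y^+\cap S|=n$ rules these out and makes the mod-$2$ degree well-defined.
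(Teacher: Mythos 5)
Your overall frame---a mod-$2$ degree count on moduli of minimal $Y$-surfaces with the prescribed intersection number with $Y^+$---matches the paper's, but three of your four steps replace the paper's careful mechanisms with arguments that either fail or leave the real difficulty untouched.

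\textbf{Genericity.} You propose to achieve nondegeneracy by perturbing $C$ in the one-parameter family of great semicircles joining $Z$ to $Z^*$ at height $h$. A one-dimensional space of boundary data cannot give genericity in the Sard--Smale sense: the set of tilts for which some degenerate surface exists need not be discrete, and a single extra parameter buys nothing against a kernel that varies tangentially to the family. (It also moves $C$ off $H$, i.e.\ out of the special case being proved.) The paper instead applies the bumpy metrics theorem to a class of $G$-invariant metrics preserving the minimality of $H$ and the spheres $\{z=\pm h\}$ (Lemma~\ref{bumpy-lemma}); this infinite-dimensional family is what makes generic nondegeneracy actually hold, and the original data are recovered by taking a limit of bumpy metrics (Proposition~\ref{bumpy-suffices-proposition}) using uniform area bounds from the no-closed-minimal-surface hypothesis.

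\textbf{The corners of $\Gamma_C$.} You invoke ``standard cobordism/bifurcation analysis on generic one-parameter families.'' That machinery does not apply here because $\Gamma_C$ is neither smooth nor embedded: $X$ is a full great circle through $O$ and $O^*$, so $\Gamma_C$ self-crosses there, and there are additional corners on $Z$ and $Z^*$. This is precisely the obstruction the paper's ``rounding'' construction (sections~\ref{rounding-section}--\ref{counting-section}) is built to remove: one replaces $\Gamma$ by a one-parameter family of smooth embedded curves $\Gamma(t)$, proves (Theorem~\ref{bridged-approximations-theorem}) that each minimal $Y$-surface with boundary $\Gamma$ extends to a family $S(t)$ with $\partial S(t)=\Gamma(t)$, and proves (Theorem~\ref{all-accounted-for-theorem}) that these account for all $Y$-surfaces bounded by $\Gamma(t)$. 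Moreover, the deformation does \emph{not} preserve the pair $(n,s)$: by Proposition~\ref{disagreements-proposition}, $\|S(t)\cap Y\|$ jumps by $0$, $1$, or $2$ depending on whether the sign of $S$ agrees with the sign of the rounding at $O$ and $O^*$. Your claim that the sign condition ``survives because tangency to $H$ at $O$ is a codimension-one condition'' thus misreads the geometry: the crossing at $O$ is resolved by the rounding, and the bookkeeping of that resolution is the heart of the argument.

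\textbf{Base count.} You propose a limit $\kappa\to\infty$ or $h\to 0$ and an explicit catalogue, but never say how the parity would actually be computed in that limit. The paper's Theorem~\ref{main-count-theorem} instead proceeds by induction on $n$: after rounding, the mod-$2$ count of $Y$-surfaces bounded by the smooth curve $\Gamma(t)$ equals a sum of four terms $f^s(n)+f^\pm(n-1)+f^{-s}(n-2)$ (Props.~\ref{double-positive-rounding-proposition}--\ref{mixed-sign-rounding-proposition}), and one evaluates the left side using Theorem~\ref{Y-degree-theorem}, a mod-$2$ count for minimal $Y$-surfaces bounded by a smooth embedded $\rho_Y$-invariant curve in a mean-convex ball, which is $1$ precisely in the disk cases and $0$ otherwise. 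This recursion, not an explicit model, pins down the parity. Your compactness remarks are essentially correct and align with the paper's use of isoperimetric area bounds plus Gauss--Bonnet total-curvature bounds, but that is the only step of the four that goes through as written.
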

 We will prove that theorem~\ref{special-existence-theorem-tilted}, a special case of theorem~\ref{special-existence-theorem}, is in fact equivalent to it:
 
 \begin{proposition} Theorem~\ref{special-existence-theorem-tilted} implies theorem~\ref{special-existence-theorem}. \label{tilted-suffices-proposition}
 \end{proposition}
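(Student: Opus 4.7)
The plan is to reduce the general case to the tilted case by choosing a reference helicoid adapted to $C$. Given data $(H, C)$ for theorem~\ref{special-existence-theorem}, I would construct a new helicoid $\tilde H$ with vertical axes $Z\cup Z^*$ containing $X$ such that $C\subset\tilde H$. Writing the horizontal great circle at height $h$ containing $C$ as $\sigma_{\alpha,0}X$ for some $\alpha\in[0,\pi)$ (using that $X$ is $\pi$-symmetric about its poles), the pitch $\tilde\kappa=2\pi h/(\alpha+n\pi)$, for any nonnegative integer $n$ making $\tilde\kappa>0$, produces a helicoid $\tilde H$ with $\tilde H\cap\{z=h\}$ equal to the great circle containing $C$. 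With suitable choice of $n$ and of orientation, $C$ becomes the $X^+$-side semicircle of $\tilde H\cap\{z=h\}$, so $(\tilde H, C)$ is tilted data for theorem~\ref{special-existence-theorem-tilted}.

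Applying theorem~\ref{special-existence-theorem-tilted} to $\tilde H$ yields, for each sign $s\in\{+,-\}$ and each $n\ge 1$, an open, embedded minimal $Y$-surface $\tilde S_s\subset\tilde H^+\cap\{|z|<h\}$ with $\partial\tilde S_s=\Gamma_C$, $\|Y^+\cap\tilde S_s\|=n$, sign $s$ at $O$ with respect to $\tilde H$, and (when $n$ is even) invariance under $\mu_E$. Most conditions required by theorem~\ref{special-existence-theorem} are intrinsic to the surface and so transfer to the setup with the original $H$: the boundary $\Gamma_C$, the $Y$-surface property, the point count in $Y^+\cap\tilde S_s$, and $\mu_E$-symmetry. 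The sign of $\tilde S_s$ at $O$ is also independent of the reference helicoid, since it depends only on the tangent plane at $O$ (namely the plane spanned by the tangent vectors to $X$ and $Z$, which is common to $H$ and $\tilde H$) and on the local tangent quadrants at $O$ determined by $X^\pm$ and $Z^\pm$; hence $\tilde S_s$ has sign $s$ at $O$ with respect to $H$ as well.

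The main obstacle is the containment $\tilde S_s\subset H^+\cap\{|z|<h\}$ rather than merely $\tilde H^+\cap\{|z|<h\}$: when the pitches $\kappa$ and $\tilde\kappa$ differ, the halfspaces $H^+$ and $\tilde H^+$ are distinct regions of $\sS^2\times\RR$. To bridge this gap, I would interpolate between the two helicoids through a continuous family $H_\lambda$ ($\lambda\in[0,1]$) with $H_0=\tilde H$, $H_1=H$, all sharing axes $Z\cup Z^*$ and containing $X$, parametrized so that $C\subset\overline{H_\lambda^+}$ throughout. The degree-theoretic mod-2 count of $Y$-surfaces in $H_\lambda^+\cap\{|z|<h\}$ with boundary $\Gamma_C$, refined by prescribed genus and sign $s$ at $O$---the refined count that underlies the proof of theorem~\ref{special-existence-theorem-tilted}---is a homotopy invariant in $\lambda$, granted uniform compactness, regularity, and nondegeneracy as $\lambda$ varies. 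Since the tilted theorem gives an odd count at $\lambda=0$, the count remains odd at $\lambda=1$, yielding the required surface $S_s\subset H^+\cap\{|z|<h\}$. Verifying the homotopy invariance---in particular ensuring that no handles escape or collapse as $\lambda$ varies---is the technical heart of the argument, but is implicit in the degree-theoretic framework once one tracks the dependence on the ambient helicoid.
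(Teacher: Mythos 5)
Your strategy of interpolating through a one-parameter family of helicoids $H_\lambda$ (with $H_0=\tilde H$ the ``tilted'' helicoid containing $C$, and $H_1=H$) is exactly how the paper sets things up. But the paper then closes the argument with an elementary maximum-principle argument that you appear not to have found, and the degree-theoretic homotopy invariance you substitute in its place is a substantial gap rather than a finished proof.

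The key observation you are missing: you do not need to produce a \emph{new} surface for each $\lambda$ and track a mod-$2$ count. The \emph{same} surface $S\subset H(0)^+$ produced by theorem~\ref{special-existence-theorem-tilted} is automatically contained in $H(\lambda)^+$ for every $\lambda\in[0,1]$. The argument is open-and-closed in $\lambda$: the set $T=\{\lambda: S\subset\overline{H(\lambda)^+}\}$ is trivially closed; and if $\lambda\in T$, then since $S\cap Y^+\neq\emptyset$ we have $S\neq H(\lambda)$, so the strong interior and strong boundary maximum principles forbid any interior touching or any tangency along $\Gamma_C$ away from the corners, while at the corners $O,O^*$ the Schwarz-reflected surface $\overline{S\cup\rho_Y S}$ and $H(\lambda)$ are tangent but have different second-order behavior (the order of contact at $O$ is exactly one, since the intersection near $O$ coincides with $X\cup Z$). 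Hence $\lambda$ lies in the interior of $T$, so $T=[0,1]$. The same continuity argument shows the sign of $S$ at $O$ with respect to $H(\lambda)$ is $\lambda$-independent, which handles the sign condition without your separate tangent-plane discussion.

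Your replacement --- homotopy invariance of a refined mod-$2$ count as the ambient helicoid varies --- is not wrong in spirit, but it is far heavier machinery: you would need uniform area bounds, curvature estimates, and a bumpy-metric generic-nondegeneracy statement \emph{parametrically in $\lambda$}, and you would need to reconcile this with the fact that theorem~\ref{special-existence-theorem} concerns the standard (non-bumpy) product metric, so the entire bumpy-perturbation machinery of section~\ref{bumpy-section} would have to be redone in a parametrized form. You acknowledge this as ``the technical heart of the argument'' and hand it off to the framework, but the framework in the paper is established only for a fixed $\Gamma$ and a fixed helicoid. That handoff is the gap. The paper's maximum-principle argument avoids all of it: it never appeals to degree theory for this step, and consequently proposition~\ref{tilted-suffices-proposition} can be proved before any counting takes place.
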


 \begin{proof}[Proof of Proposition~\ref{tilted-suffices-proposition}]
Let $H$ be a helicoid and let $C$ be a great semicircle in
   $\overline{H^+}\cap\{z=h\}$.
We may assume that $C$ does not lie in $H$, as otherwise there is nothing to prove.
Therefore the interior of the semicircle $C$ lies in $H^+$.
Now increase (or decrease) the pitch of $H$  to get a one-parameter family of helicoids
$H(t)$ with $0\le t\le 1$ such that
\begin{enumerate}
\item $H(1)=H$,
\item $C \subset \overline{H(t)^+}$ for all $t\in [0,1]$,
\item $C \subset H(0)$.
\end{enumerate}

\begin{claim} Suppose $S$ is an open, $\rho_Y$-invariant, embedded minimal surface bounded by $\Gamma_C$
with $S\cap Y^+$ nonempty.
If $S$
 is contained in $H(0)^+$, then it is contained in $H(t)^+$ for all $t\in[0,1]$.
Furthermore, in that case
 the sign of $S$ at $O$ with respect to $H(t)$ does not depend on $t$.
\end{claim}

\begin{proof}[Proof of claim] 
Let $T$ be the set of $t\in [0,1]$ for which $S$ is contained in $\overline{H(t)^+}$. 
Clearly $T$ is a closed set.
We claim that $T$ is also open relative to $[0,1]$.
For suppose that $t\in T$, and thus that $S\subset \overline{H(t)^+}$.
Now $S$ is not contained in $H(t)$ since $S\cap Y^+$ is nonempty.
Thus by the strong maximum principle and the strong boundary maximum
principle, $S$ cannot touch $H(t)$, nor is $\overline{S}$ tangent
to $H(t)$ at any points of $\Gamma_C$ other than its corners.

At the corners $O$ and $O^*$, $S$ and $H(\tau)$ are tangent.  However, the curvatures of $H$ and
$
    M:=\overline{S\cup \rho_YS}
$
differ from each other\footnote{Recall that if two minimal surfaces in a $3$-manifold 
are tangent at a point, then the intersection set
near the point is like the zero set of a homogeneous harmonic polynomial.  In particular, it consists
of $(n+1)$ curves crossing through the point, where $n$ is the degree of contact of the two surfaces at the point.
 Near $O$, the intersection of $M$ and $H$ coincides with $X\cup Z$, so their order of contact  
at $O$ is exactly one.} 
 at $O$, and also at $O^*$. 
It follows readily that $t$ is in the interior of $T$ relative to $[0,1]$.  
 Since $T$ is open and closed in $[0,1]$ and is nonempty, $T=[0,1]$.
This proves the first assertion of the claim.  The second follows by continuity.
\end{proof}

By the claim, if theorem~\ref{special-existence-theorem-tilted} 
is true for $\Gamma_C$ and $H(0)$, then theorem~\ref{special-existence-theorem}
 is true for $H=H(1)$ and $\Gamma_C$. 
 This completes the proof of proposition~\ref{tilted-suffices-proposition}.
 \end{proof}


\section{Eliminating jacobi fields by perturbing the metric}\label{bumpy-section}

Our proof involves counting minimal surfaces mod $2$.  Minimal surfaces
with nontrivial jacobi fields tend to throw off such counts.  (A nontrivial jacobi field
is a nonzero normal jacobi field that vanishes on the boundary.)
Fortunately, if we fix a curve $\Gamma$ in a $3$-manifold, then 
a generic Riemannian metric on the $3$-manifold
 will be  ``bumpy" (with respect to $\Gamma$)
 in the following sense: $\Gamma$ will not bound any 
minimal surfaces with
nontrivial jacobi fields.  
 Thus instead of working with the 
standard product metric on
$\sS^2\times\RR$, we will use
 a slightly perturbed bumpy metric and prove
 theorem~\ref{special-existence-theorem-tilted}
for that perturbed metric.  By taking a limit of surfaces as the perturbation
goes to $0$, we get the surfaces whose existence is asserted in
 theorem~\ref{special-existence-theorem-tilted} for the standard metric.
In this section, we explain how to perturb the metric to make
it bumpy, and how to take the limit as the perturbation goes to $0$.

In what class of metrics should we make our perturbations?
The metrics should have $\rho_X$ and $\rho_Z$ symmetry so that we can do Schwarz reflection,
$\rho_Y$ symmetry so that the notion of $Y$-surface makes sense,
and $\mu_E$-symmetry so that the conclusion of theorem~\ref{special-existence-theorem-tilted} makes sense.
It is convenient to use metrics for which the helicoid $H$ and the spheres $\{z=\pm h\}$ are minimal, 
because we will need  
the region $N=\overline{H^+}\cap \{|z|\le h\}$ to be weakly mean-convex.
We will also need to have an isoperimetric inequality hold for minimal surfaces in $N$,
which is equivalent (see remark~\ref{isoperimetric-equivalence-remark}) 
to the nonexistence of any smooth, closed minimal surfaces
in $N$.  Finally, at one point (see the last sentence in section~\ref{smooth-count-section}) 
we will need the two bounded components of $H\setminus\Gamma$
to be strictly stable, so we restrict ourselves to metrics for which they are strictly stable.

The following theorem (together with its corollary) is theorem~\ref{special-existence-theorem-tilted}
with the standard metric on $\sS^2\times\RR$ replaced by 
a suitably bumpy metric in the class of metrics described above,
and with the conclusion strengthened to say that $\Gamma_C$
bounds an odd number of surfaces with the desired properties:

\begin{theorem}\label{main-bumpy-theorem}
Let $\Gamma=\Gamma_C$ be the curve in theorem~\ref{special-existence-theorem-tilted}:
\[
 \Gamma= Z_h \cup Z_h^* \cup C \cup \rho_YC,
\]
 where $C$ is the semicircle in $H\cap\{z=h\}$ joining $Z$ to $Z^*$ such that $C$ and $X^+$
lie in the same component of $H\setminus (Z\cup Z^*)$.
Let $G$ be the group of isometries of $\sS^2\times\RR$ generated
by $\rho_X$, $\rho_Y$, $\rho_Z=\rho_{Z^*}$, and $\mu_E$.
 Let $\gamma$ be a smooth, $G$-invariant Riemannian metric on $\sS^2\times\RR$
such that 
\begin{enumerate}
\item\label{minimal-anchors-hypothesis}
        the helicoid $H$ and the horizontal spheres $\{z=\pm h\}$ are $\gamma$-minimal surfaces.
\item\label{strictly-stable-hypothesis} the two bounded components of $H\setminus \Gamma$ are strictly stable
     (as $\gamma$-minimal surfaces).
\item\label{no-closed-minimal-surface-hypothesis} 
    the region $N:=\overline{H^+}\cap \{|z|\le h\}$ contains no smooth, closed, embedded $\gamma$-minimal surface.
\item\label{bumpy-hypothesis}
    the curve $\Gamma$ does not bound any embedded $\gamma$-minimal $Y$-surfaces
   in $\overline{H^+}\cap \{|z|\le h\}$ with nontrivial $\rho_Y$-invariant jacobi fields.
\end{enumerate}
For each nonnegative
 integer $n$ and each sign $s\in \{+,-\}$, let
\[
    \MM^s(\Gamma,n) = \MM^s_\gamma(\Gamma,n)
\]
denote the set of 
 embedded,  $\gamma$-minimal $Y$-surfaces $S$ in 
  $\overline{H^+}\cap\{|z|\le h\}$ bounded by $\Gamma$ such that $S\cap Y^+$ has exactly $n$
  points and such that $S$ has sign $s$ at $O$.
Then the number of surfaces in $\MM^s(\Gamma,n)$ is odd.
\end{theorem}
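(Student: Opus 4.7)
The plan is to prove Theorem~\ref{main-bumpy-theorem} by a mod-2 degree argument for the moduli space $\MM^s(\Gamma, n)$: establish its finiteness, its parity invariance under a suitable deformation, and compute an explicit odd count at a model endpoint. Finiteness follows from a compactness plus nondegeneracy argument. Hypothesis~\eqref{bumpy-hypothesis} makes each element isolated, since no nontrivial $\rho_Y$-invariant jacobi field vanishes on $\Gamma$. The topology is pinned down: by corollary~\ref{$Y$-corollary}, any $S \in \MM^s(\Gamma, n)$ is connected of genus $\lfloor (n-1)/2 \rfloor$. Hypothesis~\eqref{no-closed-minimal-surface-hypothesis}, equivalent to an isoperimetric inequality on $N$, gives a uniform area bound and prevents limits from shedding closed minimal bubbles. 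Combined with standard curvature estimates for embedded minimal surfaces of bounded area, this yields subsequential smooth limits still lying in $\MM^s(\Gamma, n)$, and isolation then forces finiteness.

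For parity invariance I would introduce a one-parameter deformation $\Gamma_\tau$, $\tau \in [0,1]$, interpolating between $\Gamma_0 = \Gamma$ and a model $\Gamma_1$ for which the count is transparent, for instance by varying the angular position of $C$ within $H$ or by gradually shrinking $h$. Using a Sard-Smale argument inside the class of $G$-invariant metrics for which hypotheses~\eqref{minimal-anchors-hypothesis}--\eqref{no-closed-minimal-surface-hypothesis} hold, I would arrange that bumpiness holds along the whole path except at finitely many $\tau$. The parameterized moduli space is then a smooth 1-manifold mapping properly to $[0,1]$, and $|\MM^s(\Gamma_\tau, n)|$ changes only at fold bifurcations where two smooth branches coalesce on a jacobi-degenerate surface. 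At such a fold the two merging surfaces are $C^1$-close to the degenerate limit and therefore share the same sign at $O$, since positivity and negativity are determined by the $C^1$ tangent configuration near $O$ and are locally constant on each smooth branch. Hence folds preserve the parity of $|\MM^s(\Gamma_\tau, n)|$ for each sign $s$ separately. For the base of the argument I pick $\Gamma_1$ so that $\MM^s(\Gamma_1, n)$ is enumerable: at the smallest admissible $n$ this reduces to counting small perturbations of the two bounded components of $H \setminus \Gamma_1$, which by strict stability~\eqref{strictly-stable-hypothesis} contribute exactly one surface of each sign, and higher $n$ is reached by a secondary deformation that grows handles one at a time from catenoidal necks along $Y^+$, with parity again preserved by the fold classification.

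The main obstacle will be the compactness analysis along the deformation. Surfaces in $\MM^s(\Gamma_\tau, n)$ could in principle degenerate by curvature blow-up, loss of embeddedness, handle drift, or collapse onto $H$. These are controlled by the $Y$-surface property (which traps all topology on the compact $\rho_Y$-fixed arc $Y \cap N$), the strict stability of the bounded components of $H \setminus \Gamma$ (ruling out collapse onto $H$), the absence of closed minimal surfaces in $N$ (ruling out bubbling off), and an order-of-contact analysis at the corners $O$ and $O^*$ extending the tangent-cone reasoning of lemma~\ref{Schwarz-regularity-lemma} to moduli-space limits. Verifying that the sign at $O$ remains well-defined and locally constant across bifurcation events, in particular through the corners of $\Gamma$ where order-of-contact is exactly one, is the most delicate step and is where the bulk of the work of sections~\ref{construction-outline-section}--\ref{smooth-count-section} will lie.
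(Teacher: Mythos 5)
Your proposal pursues a cobordism-style degree argument---finiteness and nondegeneracy on one side, parity preservation along a one-parameter deformation on the other, closing at a computable base case---rather than the paper's strategy, which is to round the corners of $\Gamma$ to a smooth embedded curve $\Gamma(t) \subset \partial N$, apply the smooth-curve $Y$-degree theorem~\ref{Y-degree-theorem} to $\Gamma(t)$, and then transfer the answer back to $\Gamma$ by an induction on $n$. These really are different approaches (the paper's deformation parameter $t$ changes the curve, not the topology; the induction on $n$ is separate from the rounding). But your version has several genuine gaps. First, the base case fails as stated: the two bounded components of $H\setminus\Gamma$ are \emph{not} $Y$-surfaces bounded by $\Gamma$. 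Each is bounded by $\Gamma$ together with a semicircle of $H\cap\{z=\pm h\}$ that is not in $\Gamma$, and $\rho_Y$ swaps the two components rather than preserving either, so neither belongs to $\MM^s(\Gamma,0)$. Strict stability of those panels (hypothesis~\eqref{strictly-stable-hypothesis}) therefore does not directly yield a base count of surfaces in $\MM^s(\Gamma,n)$; in the paper it is used instead to ensure that the region $\Omega(t)\subset H$ bounded by the \emph{rounded} curve $\Gamma(t)$ is strictly stable for small $t$ (Lemma~\ref{strictly-stable-lemma}), which is what Theorem~\ref{Y-degree-theorem} requires when counting surfaces in $\partial N$.

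Second, the step you invoke to reach higher $n$ (``a secondary deformation that grows handles one at a time from catenoidal necks along $Y^+$'') is not a proof. Along a smooth fold in the parameterized moduli space the topological type is locally constant, so you cannot step from $n$ to $n+1$ via folds without a degeneration, and you give no control over which degenerations occur or what topological types they produce. This is precisely where the paper's mechanism is essential: for each $S$ bounded by $\Gamma$ the rounded approximation $S(t)$ satisfies $\|S(t)\cap Y\| = \|S\cap Y\| + \delta(S,\Gamma(t))$ with $\delta\in\{0,1,2\}$ determined by whether the signs of $S$ and the rounding agree at $O$ and $O^*$ (Proposition~\ref{disagreements-proposition}); choosing roundings of specified sign yields a recursion among $f^s(n)$, $f^s(n-1)$, $f^{-s}(n-2)$ that, combined with the known parities from Theorem~\ref{Y-degree-theorem}, closes the induction. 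Nothing in your sketch supplies an analogue of this $n$-recursion. Finally, keeping the corners of $\Gamma_\tau$ throughout the deformation means your Sard-Smale/fold classification must work for a moduli problem whose boundary curve is not smooth and not embedded (it self-intersects at $O$ and $O^*$); the paper sidesteps exactly this by replacing $\Gamma$ with the smooth embedded $\Gamma(t)$ before invoking any standard degree theory, and proves the bridge theorem~\ref{bridged-approximations-theorem} and accounting theorem~\ref{all-accounted-for-theorem} to relate the two counting problems. Your sign-stability-through-folds observation is a reasonable heuristic, but it does not substitute for these structural ingredients.
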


\begin{corollary}\label{extra-symmetry-corollary}
Under the hypotheses of the theorem, if $n$ is even, then the number of 
$\mu_E$-invariant surfaces in $\MM^s(\Gamma,n)$ is odd.
\end{corollary}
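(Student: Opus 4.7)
The plan is to realize $\mu_E$ as an involution on the finite set $\MM^s(\Gamma, n)$ and then conclude by parity: an involution on a finite set has its fixed-point count congruent modulo $2$ to the cardinality of the set, because the non-fixed elements pair off. Theorem~\ref{main-bumpy-theorem} asserts that $|\MM^s(\Gamma, n)|$ is odd, and the $\mu_E$-fixed elements are precisely the $\mu_E$-invariant surfaces we want to count, so the corollary would follow at once.

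The substance of the plan, then, is to verify that when $n$ is even the map $S \mapsto \mu_E S$ sends $\MM^s(\Gamma, n)$ to itself. The routine ingredients are easy to check: $\mu_E$ is a $\gamma$-isometry (since $\mu_E \in G$ and $\gamma$ is $G$-invariant), so $\mu_E S$ is $\gamma$-minimal; $\mu_E$ preserves $H$ and the height function, and since $\mu_E(Y^+) = Y^+$ setwise it preserves the component $H^+$ containing $Y^+$, hence the slab $\overline{H^+} \cap \{|z| \le h\}$; the curve $\Gamma$ is built from $G$-invariant pieces and is in particular $\mu_E$-invariant; $\mu_E$ commutes with $\rho_Y$ (a short computation in coordinates), so the $Y$-surface property is preserved; and $\mu_E(Y^+) = Y^+$ gives $|\mu_E S \cap Y^+| = |S \cap Y^+| = n$.

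The step I expect to be the main obstacle, and the only step that actually uses the parity hypothesis, is checking that $\mu_E$ preserves the sign at $O$. A priori $\mu_E$ swaps the roles of $O$ and $O^*$: by the Remark in Section~\ref{sign-section}, the sign of $\mu_E S$ at $O$ equals the sign of $S$ at $O^*$. The bridge is Lemma~\ref{parity-sign-lemma}: since $S \subset H^+$ while $Y^- \subset H^-$, we have $Y \cap S = Y^+ \cap S$, which has exactly $n$ elements; the lemma then says that the signs of $S$ at $O$ and at $O^*$ agree precisely when $n$ is even. Hence for even $n$, the sign of $\mu_E S$ at $O$ equals the sign of $S$ at $O^*$, which equals the sign $s$ of $S$ at $O$, and the involution is established.
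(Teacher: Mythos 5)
Your proof is correct and follows essentially the same approach as the paper: show $\mu_E$ is an involution of $\MM^s(\Gamma,n)$ for even $n$ via Lemma~\ref{parity-sign-lemma}, then apply the mod-$2$ identity between the fixed-point count of an involution and the cardinality of the set, using Theorem~\ref{main-bumpy-theorem} for the total. The paper's version simply states the step that $\mu_E S\in\MM^s(\Gamma,n)$ without spelling out the routine verifications you list, but the argument is the same.
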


\begin{proof}[Proof of corollary]
Let $n$ be even.  By lemma~\ref{parity-sign-lemma},
 if $S\in \MM^s(\Gamma,n)$, then $S$ also has sign $s$ at $O^*$,
from which it follows that $\mu_E(S)\in \MM^s(\Gamma,n)$.
Thus the number of non-$\mu_E$-invariant surfaces in $\MM^s(\Gamma,n)$ is even because
such surfaces come in pairs ($S$ being paired with $\mu_ES$). 
By the theorem, the total number of surfaces in $\MM^s(\Gamma,n)$ is odd, so therefore
the number of $\mu_E$-invariant surfaces must also be odd.
\end{proof}

\begin{remark}\label{isoperimetric-equivalence-remark}
Hypothesis~\eqref{minimal-anchors-hypothesis} of theorem~\ref{main-bumpy-theorem}
 implies that the compact region $N:=\overline{H^+}\cap\{|z|\le h\}$ is $\gamma$-mean-convex.
It follows (see~\cite{white-isoperimetric}*{\S2.1 and \S5}) that 
condition~\eqref{no-closed-minimal-surface-hypothesis}
is equivalent to the following condition:
\begin{enumerate}
\item[(3$'$)] There is a finite constant $c$ such that $\area(\Sigma)\le c \length(\partial \Sigma)$
                for every $\gamma$-minimal surface $\Sigma$ in $N$.
\end{enumerate}
Furthermore, the proof of theorem~2.3 in~\cite{white-isoperimetric}
 shows that for any compact set $N$, the set of Riemannian metrics  
 satisfying~\thetag{3$'$} 
is open, with a constant $c=c_{\gamma}$ that depends 
upper-semicontinuously on the 
metric\footnote{As explained in \cite{white-isoperimetric}, for any metric $\gamma$,
we can let $c_\gamma$ be the supremum (possibly infinite)
of $|V|/|\delta V|$ among all $2$-dimensional varifolds $V$ in $N$ with $|\delta V|<\infty$,
 where $|V|$ is the mass of $V$ and $|\delta V|$
is its total first variation measure.
 The supremum is attained by a varifold $V_\gamma$
with mass $|V_\gamma|=1$.  Suppose $\gamma(i)\to\gamma$.  By passing to a subsequence, we may
assume that the $V_{\gamma(i)}$ converge weakly to a varifold $V$.
Under weak convergence, mass is continuous and total first variation measure is lower
semicontinuous.  Thus 
\[
   c_\gamma \ge \frac{ |V| }{|\delta V|} 
   \ge \limsup \frac{|V_{\gamma(i)}|}{|\delta V_{\gamma(i)}|}
   = \limsup c_{\gamma(i)}.
\]
This proves that the map $\gamma\mapsto c_\gamma\in (0,\infty]$ is upper semicontinuous,
and therefore also that the set of metrics $\gamma$ for which $c_\gamma<\infty$ is 
an open set.  (The compactness, continuity, and lower semicontinuity results used here
are easy and standard, and are explained in the appendix to~\cite{white-isoperimetric}.
See in particular~\cite{white-isoperimetric}*{\S7.5}.)
}.   
\end{remark}

\begin{proposition}\label{bumpy-suffices-proposition}
Suppose theorem~\ref{main-bumpy-theorem} is true.  
Then theorem~\ref{special-existence-theorem-tilted} is true.
\end{proposition}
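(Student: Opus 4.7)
The plan is to realize the surfaces promised by Theorem~\ref{special-existence-theorem-tilted} as smooth limits of the $\gamma_k$-minimal surfaces furnished by Theorem~\ref{main-bumpy-theorem} applied to a sequence of admissible metrics $\gamma_k$ converging to the standard product metric $\gamma_0$ on $\sS^2\times\RR$. First I would check that $\gamma_0$ itself satisfies hypotheses~(1)--(3) of Theorem~\ref{main-bumpy-theorem}: condition~(1) is immediate; strict stability of the two bounded faces of $H\setminus\Gamma$ in~(2) holds for these small helicoidal sectors; and absence of closed minimal surfaces in $N=\overline{H^+}\cap\{|z|\le h\}$ in~(3) follows from the strong maximum principle applied to the mean-convex foliation of $N$ by horizontal spheres near the poles together with the screw-motion sheets of $H$. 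Next, both~(2) and~(3) are open in the $C^\infty$ topology on the space of $G$-invariant metrics making $H$ and $\{z=\pm h\}$ minimal---(2) by standard spectral perturbation, (3) by the upper semicontinuity of $\gamma\mapsto c_\gamma$ noted in Remark~\ref{isoperimetric-equivalence-remark}---so a whole $C^\infty$-neighborhood $\Uu$ of $\gamma_0$ satisfies~(1)--(3). A $G$-equivariant version of the bumpy-metrics theorem applied to the compact region $N$ and the curve $\Gamma$ then produces a sequence $\gamma_k\in\Uu$ with $\gamma_k\to\gamma_0$, each $\gamma_k$ additionally satisfying~(4).

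For each such $\gamma_k$, Theorem~\ref{main-bumpy-theorem} supplies an embedded $\gamma_k$-minimal $Y$-surface $S_k\in\MM^s_{\gamma_k}(\Gamma,n)$, chosen to be $\mu_E$-invariant by Corollary~\ref{extra-symmetry-corollary} when $n$ is even. The isoperimetric bound $\area_{\gamma_k}(S_k)\le c_{\gamma_k}\length_{\gamma_k}(\Gamma)$, together with the local boundedness of $c_{\gamma_k}$ on $\Uu$, yields a uniform area bound. Combined with embeddedness, fixed boundary, and a fixed compact ambient region, interior curvature estimates for embedded minimal surfaces of bounded area and bounded genus, together with Allard boundary regularity on the smooth strata of $\Gamma$ and a removable-singularities argument at the corners $O$ and $O^*$ in the style of Lemma~\ref{Schwarz-regularity-lemma}, produce a subsequential smooth limit $S_\infty$. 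Embeddedness, $\rho_Y$-invariance, $\mu_E$-invariance when applicable, and the boundary $\partial S_\infty=\Gamma$ all pass to the limit; continuity of the order-of-contact pairing of axes used to define the sign (see the proof of Proposition~\ref{tilted-suffices-proposition}) shows that $S_\infty$ has sign $s$ at $O$.

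The main obstacle is verifying that $\#(S_\infty\cap Y^+)=n$. By the $Y$-surface property and Corollary~\ref{$Y$-corollary}, each $S_k$ has Euler characteristic $\chi(S_k)=2-\#(S_k\cap Y)$, so preservation of the intersection count along $Y$ is equivalent to preservation of the topology of $S_k$ in the limit. This reduces to excluding multiplicity-higher convergence of $S_k$ to $S_\infty$: once multiplicity one is established, each $S_k$ is a normal graph over $S_\infty$ for $k$ large and is therefore diffeomorphic to $S_\infty$. Multiplicity $\ge 2$ is precluded by barrier arguments --- $S_k$ lies in $H^+$, strict stability of the bounded faces of $H\setminus\Gamma$ together with the maximum principle controls the behavior at the boundary strata, and the presence of the axes $Z\cup Z^*$ in $\partial S_k$ with the prescribed sign at $O$ prevents collapse of sheets onto $H$. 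Once preservation of the intersection count is in hand, $S_\infty$ satisfies all of the properties required by Theorem~\ref{special-existence-theorem-tilted}, completing the argument.
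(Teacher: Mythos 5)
Your setup — verifying that the standard product metric lies in the class satisfying hypotheses~(1)--(3), observing that (2) and (3) are $C^\infty$-open (quoting remark~\ref{isoperimetric-equivalence-remark} for (3)), invoking a $G$-equivariant bumpy-metrics theorem to produce $\gamma_k\to\gamma_0$ in $\Gsmall$, extracting the surfaces $S_k\in\MM^s_{\gamma_k}(\Gamma,n)$ (with $\mu_E$-invariance for even $n$ via corollary~\ref{extra-symmetry-corollary}), and deriving the uniform area bound from the isoperimetric inequality and the upper semicontinuity of $c_\gamma$ — all of this matches the paper's argument. Where you diverge is the compactness step, and there the argument has a real gap.

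You attempt a compactness argument directly on the $S_k$, invoking ``interior curvature estimates for embedded minimal surfaces of bounded area and bounded genus'' plus Allard boundary regularity plus a removable-singularities step at the corners, and then you try to rule out higher-multiplicity convergence with informal barrier remarks (``prevents collapse of sheets onto $H$''). This does not work as written. An area-and-genus compactness statement (theorem~\ref{general-compactness-theorem}) explicitly permits multiplicity $>1$ convergence onto a stable limit, and the stability obstruction (corollary~\ref{stable-spheres-corollary}) is stated only for complete surfaces, so you cannot directly invoke it for the bounded $S_k$; nothing in your barrier remarks actually addresses sheet multiplicity, which is the genuine enemy in preserving $\#(S_\infty\cap Y^+)=n$. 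The paper avoids this entirely: it first applies Schwarz reflection to pass from $S_k$ to $M_k=\overline{S_k\cup\rho_Z S_k}$, whose boundary consists of two smooth great circles (so no corner analysis is needed), then derives a \emph{uniform total curvature bound} $\sup_k\int_{M_k}|A_{M_k}|^2<\infty$ via the Gauss--Bonnet theorem (using the area bound, the fixed Euler characteristic, the fixed smooth boundary, and the bounded ambient sectional curvature), and finally applies the compactness theorem of \cite{white-curvature-estimates}*{theorem~3}, which a total curvature bound is strong enough to invoke and which yields \emph{smooth, multiplicity-one} convergence directly. You should restructure the compactness step along these lines: the total curvature bound obtained through Schwarz reflection and Gauss--Bonnet is the missing ingredient that makes the topology — and hence $\#(S_\infty\cap Y^+)=n$ and the sign at $O$ — pass to the limit.
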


\newcommand{\Gbig}{\mathcal{G}_1}
\newcommand{\Gmedium}{\widehat{\mathcal{G}}}
\newcommand{\Gsmall}{\mathcal{G}}

\begin{proof}
Let $\Gbig$ be the space of all smooth, $G$-invariant Riemannian
metrics $\gamma$ on $\sS^2\times\RR$
that satisfy hypothesis~\eqref{minimal-anchors-hypothesis} of 
the theorem.  
Let $\Gmedium$ be the subset consisting of those metrics $\gamma\in \Gbig$ such that
also satisfy hypotheses~\eqref{strictly-stable-hypothesis} 
and~\eqref{no-closed-minimal-surface-hypothesis} 
of  theorem~\ref{main-bumpy-theorem}, and let $\Gsmall$ be the set of metrics 
that satisfy all the hypotheses of the theorem.

We claim that the standard product metric $\gamma$ belongs to $\Gmedium$. 
Clearly it is $G$-invariant and satisfies hypothesis~\eqref{minimal-anchors-hypothesis}.
Note that each bounded component of $H\setminus \Gamma$ is strictly stable, because it is contained
in one of the half-helicoidal components of $H\setminus(Z\cup Z^*)$ and those half-helicoids
are stable (vertical translation induces  a positive jacobi field). 
Thus $\gamma$ satisfies the strict stability
hypothesis~\eqref{strictly-stable-hypothesis}.  
It also satisfies hypothesis~\eqref{no-closed-minimal-surface-hypothesis}
because if $\Sigma$ were a closed minimal surface in $N$, then the height function $z$ would attain
a maximum value, say $a$, on $\Sigma$, which implies by the strong maximum principle that
 the sphere $\{z=a\}$ would be contained in $\Sigma$, contradicting the fact that 
    $\Sigma\subset N\subset \overline{H^+}$.
 This completes the proof that the standard product metric $\gamma$ belongs to $\Gmedium$.
 
By lemma~\ref{bumpy-lemma} below, a generic metric in $\Gbig$ satisfies the bumpiness
hypothesis~\eqref{bumpy-hypothesis} of theorem~\ref{main-bumpy-theorem}.
Since $\Gmedium$ is an open subset of $\Gbig$ (see  remark~\ref{isoperimetric-equivalence-remark}),
it follows that a generic metric in $\Gmedium$ satisfies the bumpiness hypothesis.  
In particular, this means that $\Gsmall$ is a dense subset of $\Gmedium$.

Since the standard metric $\gamma$ is in $\Gmedium$, 
there is a sequence $\gamma_i$ of metrics in $\Gsmall$
that converge smoothly to $\gamma$. 
Fix a nonnegative integer $n$ and a sign $s$.
By theorem~\ref{main-bumpy-theorem},  $\MM^s_{\gamma_i}(\Gamma,n)$
contains at least one surface $S_i$.  If $n$ is even, we choose $S_i$ to
be $\mu_E$-invariant, which is possible by corollary~\ref{extra-symmetry-corollary}.

By remark~\ref{isoperimetric-equivalence-remark},
\begin{equation}\label{isoperimetric-ratio}
  \limsup_i \frac{\area_{\gamma_i}(S_i)}{\length_{\gamma_i}(\partial S_i)} \le c_\gamma
\end{equation}
where $c_\gamma$ is the constant 
 in remark~\ref{isoperimetric-equivalence-remark}
for the standard product metric $\gamma$.   Since
\[
   \length_{\gamma_i}(\partial S_i) = \length_{\gamma_i}(\Gamma)\to \length_\gamma(\Gamma)<\infty,
\]
we see from~\eqref{isoperimetric-ratio} that the areas of the $S_i$ are uniformly bounded.


Let
\[
  M_i = \overline{S_i\cup \rho_Z S_i}
\]
be obtained from $S_i$ by Schwarz reflection.
Of course the areas of the $M_i$ are also uniformly bounded.
Using  the Gauss-Bonnet theorem, the minimality of the $M_i$, and the fact that the sectional curvatures 
of $\sS^2\times\RR$ are bounded, it follows that 
\begin{equation}\label{total-curvature-bound}
  \sup_i \int_{M_i} \beta(M_i,\cdot)\,dA < \infty ,
\end{equation}
where $\beta(M_i,p)$ is the square of the norm of the 
second fundamental  form of $M_i$ at the point $p$.

The total curvature bound~\eqref{total-curvature-bound} implies (see~\cite{white-curvature-estimates}*{theorem~3})
that after passing to a further subsequence, the $M_i$ converge 
smoothly to an embedded minimal surface $M$, which implies that the $S_i$ converge uniformly smoothly to 
a surface $S$ in $N$ with $\partial S=\Gamma$ and with $M=\overline{S\cup\rho_Y S}$.
The smooth convergence $M_i\to M$ implies
that $S\in \MM^s_\gamma(\Gamma,n)$, where $\gamma$ is the standard product
metric.  Furthermore, if $n$ is even, then $S$ is $\mu_E$-invariant.
This completes the proof of theorem~\ref{special-existence-theorem-tilted} (assuming 
theorem~\ref{main-bumpy-theorem}).
\end{proof}

\begin{lemma}\label{bumpy-lemma}
Let $\Gbig$ be the set of smooth, $G$-invariant metrics $\gamma$ on $\sS^2\times \RR$ such
that the helicoid $H$ and the spheres $\{z=\pm h\}$ are $\gamma$-minimal.
For a generic metric $\gamma$ in $\Gbig$, the curve $\Gamma$ bounds no 
embedded, $\rho_Y$-invariant, $\gamma$-minimal surfaces with nontrivial $\rho_Y$-invariant
jacobi fields.
\end{lemma}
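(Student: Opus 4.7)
The plan is to apply an equivariant Sard--Smale transversality argument to a universal moduli space of $\rho_Y$-invariant $\gamma$-minimal surfaces bounded by $\Gamma$, where $\gamma$ ranges over $\Gbig$. I would follow the template of the bumpy metrics theorems in White, \emph{The space of minimal submanifolds for varying Riemannian metrics}, and its equivariant refinements, adapted so that $H$ and $\{z=\pm h\}$ remain minimal and the symmetry group $G$ is preserved.

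First I would topologize $\Gbig$ as follows. Fix the background smooth structure on $\sS^2\times\RR$. For a fixed reference metric $\gamma_0\in\Gbig$, the tangent space $T_{\gamma_0}\Gbig$ consists of smooth symmetric $(0,2)$-tensor fields $\dot\gamma$ that are $G$-invariant and satisfy the linearized minimality condition for $H$ and for $\{z=\pm h\}$; this is a pointwise linear condition on $\dot\gamma$ along those hypersurfaces, hence cuts out a closed linear subspace of the space of $G$-invariant tensors. Using $C^k$ norms for $k$ large and passing to $C^\infty$ by a standard Baire intersection argument, $\Gbig$ becomes a Baire space in which residual subsets are dense.

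Next, for each admissible topological type $\tau$ (genus and boundary configuration compatible with $\rho_Y$-invariance and with $\partial S=\Gamma$), I would form the universal moduli space
\[
  \mathcal M_\tau=\bigl\{(\gamma,S):\gamma\in\Gbig,\ S\subset N\ \text{is a smooth embedded $\rho_Y$-invariant $\gamma$-minimal surface of type $\tau$ with }\partial S=\Gamma\bigr\},
\]
viewed as a set of pairs modulo equivariant reparametrization. Writing the minimal surface equation as an elliptic section of a suitable bundle over the space of $\rho_Y$-equivariant embeddings relative to $\Gamma$ (with appropriate Hölder/Sobolev regularity up to the corners of $\Gamma$, which are standard mixed-boundary points), the implicit function theorem gives $\mathcal M_\tau$ the structure of a Banach manifold, and the natural projection $\pi_\tau\colon\mathcal M_\tau\to\Gbig$ is Fredholm of index $0$. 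At $(\gamma,S)$, the cokernel of $d\pi_\tau$ is canonically identified with the space of $\rho_Y$-invariant jacobi fields on $S$ that vanish on $\Gamma$, i.e.\ precisely the fields ruled out by the lemma.

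The main obstacle, and the only step requiring real work, is the surjectivity of $d\pi_\tau$, equivalently the statement that no nonzero $\rho_Y$-invariant jacobi field $u$ on $S\in\pi_\tau^{-1}(\gamma)$ is $L^2$-orthogonal to the image in normal variations of every admissible perturbation $\dot\gamma\in T_\gamma\Gbig$. Given such a $u\not\equiv 0$, unique continuation for the jacobi equation yields an open set $U\subset S$ on which $u$ is nonvanishing. Because $S$ is $\rho_Y$-invariant with $\partial S=\Gamma$ and $S\ne H$ (since $S$ must meet $Y^+$ in at least $n\ge 0$ isolated points), the strong maximum principle forces $S$ to be distinct from $H$ and from the horizontal spheres $\{z=\pm h\}$ in the interior; thus $U$ contains a point $p\in\operatorname{interior}(N)$ whose $G$-orbit is finite and disjoint from $H\cup\{z=\pm h\}$. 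A localized perturbation $\dot\gamma$ supported in a small neighborhood of $p$, then symmetrized by averaging over the $G$-orbit, can be chosen (by the standard construction, e.g.\ via a conformal bump or a perturbation of Ricci curvature in a tubular neighborhood of $S$ at $p$) so that its infinitesimal effect on the minimality equation pairs nontrivially with $u$. Since $\dot\gamma$ vanishes near $H\cup\{z=\pm h\}$, it lies in $T_\gamma\Gbig$, establishing surjectivity.

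With surjectivity in hand, Sard--Smale gives that the regular values of $\pi_\tau$ form a residual subset $\Gbig^\tau_{\mathrm{reg}}\subset\Gbig$. A metric $\gamma$ is regular for $\pi_\tau$ exactly when every $S\in\pi_\tau^{-1}(\gamma)$ has no nontrivial $\rho_Y$-invariant jacobi field vanishing on $\Gamma$. Taking the countable intersection $\bigcap_\tau\Gbig^\tau_{\mathrm{reg}}$ over the countable family of admissible topological types yields a residual, hence dense, subset of $\Gbig$ of metrics with the desired bumpiness property, proving the lemma.
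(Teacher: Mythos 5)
Your approach is genuinely different from the paper's. You sketch a direct equivariant Sard--Smale transversality argument over a universal moduli space of $\rho_Y$-invariant surfaces bounded by $\Gamma$. The paper instead makes a short \emph{reduction} to the already-established bumpy metrics theorem \cite{white-bumpy}: it obtains, for generic $\gamma\in\Gbig$, the non-equivariant property that the pair of smooth circles $H\cap\{z=\pm h\}$ bounds no embedded $\gamma$-minimal surface in $N$ with a nontrivial jacobi field, and then observes that if $S\subset N$ is $\rho_Y$-invariant, $\partial S=\Gamma$, and $v$ is a nontrivial $\rho_Y$-invariant jacobi field vanishing on $\Gamma$, then Schwarz reflection through $Z\cup Z^*$ (equivalently through $X$) produces the smooth surface $M=\overline{S\cup\rho_Z S}$ bounded by those circles, and $v$ extends (by odd reflection) to a nontrivial jacobi field on $M$ vanishing on $\partial M$, a contradiction. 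This reduction buys two things at once: the equivariance constraint is absorbed into the doubling, and --- crucially --- the corner singularities of $\Gamma$ disappear, so one only ever invokes elliptic theory and transversality for smooth closed boundary curves, which is exactly the setting of \cite{white-bumpy}.

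Your direct approach is conceptually plausible but, as written, it glosses over two points that the paper's reflection trick is specifically engineered to avoid. First, $\Gamma$ has corners at $O$, $O^*$ and at the endpoints of $C$ and $\rho_Y C$; setting up the Banach-manifold structure of the universal moduli space and obtaining the Fredholm/cokernel identification requires elliptic regularity and an implicit-function-theorem framework for minimal surfaces with corner boundary, and you cannot simply appeal to ``standard mixed-boundary points'' -- the bumpy metrics machinery in the literature is established for smooth boundary curves, and extending it to corners would need a separate argument. Second, the $G$-averaging in the surjectivity step needs care: after you symmetrize a localized $\dot\gamma$ over $G=\langle\rho_X,\rho_Y,\rho_Z,\mu_E\rangle$, the contribution to the pairing with $u$ comes from every $g\in G$ with $g^{-1}(p)\in S$. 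The pair $\{e,\rho_Y\}$ always contributes constructively (since $u$ is $\rho_Y$-invariant), but if $\mu_E$ happens to preserve $S$ (which does occur for some of the surfaces under consideration) you pick up an extra contribution at $\mu_E^{-1}(p)$, and unless you track how $u$ transforms under $\mu_E$ (it need not be $\mu_E$-invariant or anti-invariant a priori) you cannot rule out cancellation for your chosen $p$. This can presumably be patched -- choose $p$ generically, or decompose $u$ into $\mu_E$-eigencomponents -- but it is a genuine gap in the proposal as written, and it is precisely the sort of equivariance bookkeeping that the paper's reflection argument makes unnecessary.
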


\begin{proof}
By the bumpy metrics theorem~\cite{white-bumpy}, a generic metric $\gamma$ in $\Gbig$ has the property
\begin{enumerate}
\item[(*)]\label{circles-bumpy-item}
The pair of circles $H\cap\{z=\pm h\}$ bounds no embedded $\gamma$-minimal
 surface in $H\cap\{|z|\le h\}$ with a nontrivial jacobi field.
\end{enumerate}
Thus it suffices to prove that if $\gamma$ has the property~\thetag{*},
and if $S\subset N$ is an embedded, $\rho_Y$-invariant, $\gamma$-minimal
surface with boundary $\Gamma$, then $S$ has no nontrivial $\rho_Y$-invariant jacobi field.

Suppose to the contrary that $S$ had such a nontrivial jacobi field $v$.  Then
$v$ would extend by Schwarz reflection to a nontrivial jacobi field on 
$
  M:= \overline{S\cup \rho_Y S},
$
contradicting~\thetag{*}.
\end{proof}


\section{Rounding the curve \texorpdfstring{$\Gamma$}{Lg} 
              and the family of surfaces \texorpdfstring{$t\mapsto S(t)$}{Lg}}
\label{rounding-section}

Our goal for the next few sections is to prove theorem~\ref{main-bumpy-theorem}. 
The proof is somewhat involved.  
It will be completed in section~\ref{smooth-count-section}.
From now until the end of section~\ref{smooth-count-section}, we fix a helicoid $H$ in $\sS^2\times\RR$
    and a height $h$ with $0<h<\infty$.
We let $\Gamma=\Gamma_C$ be the curve in theorem~\ref{main-bumpy-theorem}.
We also fix a  Riemannian metric on $\sS^2\times\RR$ that satisfies the
hypotheses of theorem~\ref{main-bumpy-theorem}.
In particular, in sections~\ref{rounding-section}\,--\,\ref{smooth-count-section}, 
 every result is with respect to that Riemannian metric.
In reading those sections, it may be helpful to imagine that the metric
is the standard product metric.  (In fact, for the purposes of proving theorem~\ref{main-S2xR-theorem},
the metric may as well be arbitrarily close to the standard product metric.)
Of course, in carrying out the proofs in sections~\ref{rounding-section}\,--\,\ref{smooth-count-section}, 
we must take care to use no property of the metric
other than those enumerated in theorem~\ref{main-bumpy-theorem}.

Note that theorem~\ref{main-bumpy-theorem} is about counting minimal surfaces mod $2$.
The  mod $2$ number of embedded minimal surfaces of a given topological type bounded by a smoothly 
embedded, suitably  bumpy curve is rather well understood. 
 For example, if the curve lies on the boundary
of a strictly convex set in $\RR^3$, the number is $1$ if the surface is a disk and is $0$ if not.
Of course the curve $\Gamma$ in theorem~\ref{main-bumpy-theorem} 
is neither smooth nor embedded, so to take advantage of such results,
we will round the corners of $\Gamma$ to make a smooth embedded curve, and we will 
use information
about the mod $2$ number of various surfaces bounded by the rounded curve to deduce
information about mod $2$ numbers of various surfaces bounded by the original curve $\Gamma$.

In this section, we define the notion of a ``rounding''. 
A rounding of $\Gamma$ is a one-parameter family 
  $t\in (0,\tau]\mapsto \Gamma(t)$ of smooth embedded curves (with
certain properties) that converge to $\Gamma$ as $t\to 0$.
Now if $\Gamma$ were smooth and bumpy, then by the implicit function theorem, any smooth
minimal surface $S(0)$ bounded by $\Gamma$ would extend uniquely to a one-parameter family
$t\in [0,\tau']\mapsto S(t)$ of minimal surfaces with $\partial S(t)\equiv \Gamma(t)$ (for some possibly smaller 
  $\tau'\in (0,\tau]$.)

It is natural to guess that this is also the case even in our situation, when $\Gamma$ is neither smooth
nor embedded.   
In fact, we prove that the guess is 
correct\footnote{The correctness of the guess can be viewed as a kind of bridge theorem. 
Though it does not quite follow from the bridge theorems in~\cite{smale-bridge} or  
in~\cites{white-stable-bridge,white-unstable-bridge}, we believe the proofs
there could be adapted to our situation.  However, the proof here is shorter and more elementary than
those proofs.
(It takes advantage of special properties of our surfaces.)}.
The proof is  still based on the implicit function
theorem, but the corners make the proof significantly more complicated.  However, the idea of the proof
is simple: we project the rounded curve $\Gamma(t)$ to a curve in the 
surface
\[  
  M:=\overline{S\cup \rho_ZS}
\] 
by the nearest point projection. We already have a minimal surface bounded by that projected curve:
it bounds a portion $\Omega(t)$ of $M$.  Now we smoothly isotope the projected curve back to $\Gamma(t)$,
and use the implicit function theorem to make a corresponding isotopy through minimal surfaces of $\Omega(t)$
to the surface $S(t)$ we want.  Of course we have to be careful to verify that we do not encounter
nontrivial jacobi fields on the way.

We also prove that, roughly speaking, the surfaces $S(t)$  (for the various $S$'s bounded by $\Gamma$)
account for {\em all} the minimal $Y$-surfaces bounded by
$\Gamma(t)$ when $t$ is sufficiently small.  
The precise statement (theorem~\ref{all-accounted-for-theorem}) is 
slightly more complicated because the larger the genus of the surfaces, the 
smaller one has to choose $t$.

Defining roundings, proving the existence of the associated one-parameter families $t\mapsto S(t)$
of minimal surfaces as described above, and proving basic properties of such families take up
the rest of this section and the following section.  Once we have those tools, 
the proof of theorem~\ref{main-bumpy-theorem}
 is not so hard: it is carried out
in section~\ref{counting-section}.  

To avoid losing track of the big picture, 
the reader may find it helpful initially to skip 
sections~\ref{Gamma(s,t)-definition}--\ref{eigenfunction-claim} 
(the proof of theorem~\ref{bridged-approximations-theorem})
as well as the proofs in section~\ref{additional-properties-section}, 
and then to read section~\ref{counting-section}, which contains the heart
of the proof of theorem~\ref{main-bumpy-theorem} and therefore also
(see remark~\ref{periodic-case-done-remark}) of the periodic case of theorem~\ref{main-S2xR-theorem}.

\begin{lemma}\label{TubularNeighborhoodLemma}
Suppose that  $S$   a minimal embedded $Y$-surface in $N=\overline{H^+}\cap\{|z|\le h\}$ with
 $\partial S=\Gamma$.   
Let 
\[
    V(S,\eps) = \{p\in \sS^2\times\RR: \dist(p,S) <\eps\}.
\]
For all sufficiently small $\eps>0$, the following hold:
\begin{enumerate}
\item\label{nearest-point-item} if $p\in V(S,\eps)$, then there is a unique point $\pi(p)$ in $\overline{S\cup\rho_Z S}$ nearest to $p$.
\item\label{unique-in-V-item}
if $S'$ is a $\rho_Y$-invariant minimal surface in $\overline{V(S,\eps)}$ with $\partial S'=\Gamma$,
  and if $S'$ is smooth except possibly at the corners of $\Gamma$,  then $S'=S$.
\end{enumerate}
\end{lemma}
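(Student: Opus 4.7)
The plan is to Schwarz-reflect $S$ into a smooth closed-up minimal surface and then reduce everything to standard tubular-neighborhood and bumpy-metric arguments. By Lemma~\ref{Schwarz-regularity-lemma} applied to $S$, the Schwarz-extended surface $M := \overline{S \cup \rho_Z S} = \overline{S \cup \rho_X S}$ is a smooth, compact, embedded minimal surface; the pieces $Z_h \cup Z_h^* \cup X$ of $\partial S$ become interior curves of $M$, while $\partial M$ consists of the two smooth horizontal great circles $H \cap \{z = \pm h\}$.

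For part~\eqref{nearest-point-item}, since $M$ is a smooth, compact, embedded surface with smooth boundary in the Riemannian 3-manifold $\sS^2 \times \RR$, the exponential map on a neighborhood of the zero section of the normal bundle of $M$ is a diffeomorphism onto some open set $V(M,\eps_0)$. Nearest-point projection to $M$ is therefore uniquely defined on $V(M,\eps_0)$, and part~\eqref{nearest-point-item} follows because $V(S,\eps) \subset V(M,\eps)$ for every $\eps>0$.

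For part~\eqref{unique-in-V-item}, the first step is to check that $Y \cap S'$ is finite for small $\eps$: since $S$ is a $Y$-surface the set $Y \cap S$ is finite, so $Y \cap V(S,\eps)$ is contained in a union of small arcs, and the whole circle $Y$ cannot fit in such a union, so $Y$ is not contained in $S'$; minimality together with embeddedness then force $Y \cap S'$ to be discrete. Lemma~\ref{Schwarz-regularity-lemma} then applies to $S'$ as well, producing a smooth embedded minimal surface $M' := \overline{S' \cup \rho_Z S'} = \overline{S' \cup \rho_X S'}$ with $\partial M' = \partial M$ and $M' \subset V(M,\eps)$. For $\eps < \eps_0$, nearest-point projection realizes $M'$ as a normal graph over $M$, with graph function $u : M \to \RR$ satisfying the minimal surface equation and $u|_{\partial M} = 0$. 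The $\rho_Z$- and $\rho_X$-invariance of $M'$ forces $u \circ \rho_Z = -u$ and $u \circ \rho_X = -u$ (because $\rho_Z$ and $\rho_X$ are orientation-reversing symmetries of $M$ and therefore reverse the unit normal of $M$ along their fixed sets $Z \cup Z^*$ and $X$), so $u$ vanishes on $Z_h \cup Z_h^* \cup X$ as well. Altogether $u|_\Gamma = 0$.

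To conclude, I suppose for contradiction there is a sequence $\eps_n \to 0$ and corresponding nonzero graph functions $u_n$. Since $u_n$ satisfies the nonlinear minimal surface equation whose linearization at $0$ is the Jacobi operator $L$ on $M$, the normalized functions $v_n := u_n/\|u_n\|_{C^{2,\alpha}(M)}$ satisfy $Lv_n \to 0$ in $C^{0,\alpha}$ (the remainder being quadratic in $u_n$), so interior and boundary Schauder estimates on the smooth surface $M$ yield a subsequential limit $v$ in $C^{2,\alpha}$, which is a $\rho_Y$-invariant Jacobi field on $M$ with $v|_\Gamma = 0$ and $\|v\|_{C^{2,\alpha}} = 1$. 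Its restriction $v|_S$ cannot vanish identically (otherwise $v \circ \rho_Z = -v$ would force $v \equiv 0$ on $M$), so it furnishes a nontrivial $\rho_Y$-invariant Jacobi field on $S$ vanishing on $\partial S = \Gamma$, contradicting hypothesis~\eqref{bumpy-hypothesis} of Theorem~\ref{main-bumpy-theorem}. The key technical point is to perform the analysis on $M$ (where $\Gamma$ lies partly in the smooth interior and partly in the smooth boundary) rather than on $S$, so that the corners of $\Gamma$ at $O$, $O^*$, and at $z = \pm h$, which are corners only from $S$'s perspective, disappear and cause no difficulty.
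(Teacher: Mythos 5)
Your proof of part~\eqref{nearest-point-item} is the same as the paper's (tubular neighborhood of the smooth Schwarz-extended surface $M$). For part~\eqref{unique-in-V-item}, your overall plan — reflect to $M' := \overline{S'\cup\rho_Z S'}$, compare to $M$ as a normal graph, use the symmetries to kill $u$ on $\Gamma$, and extract a nontrivial $\rho_Y$-invariant Jacobi field contradicting hypothesis~\eqref{bumpy-hypothesis} of Theorem~\ref{main-bumpy-theorem} — tracks the paper's argument closely. But there is a genuine gap at the step
\begin{quote}
``For $\eps < \eps_0$, nearest-point projection realizes $M'$ as a normal graph over $M$, with graph function $u:M\to\RR$.''
\end{quote}
Containment of $M'$ in $V(M,\eps)$ means only that the nearest-point projection $\pi\colon M'\to M$ is defined; it does \emph{not} make $\pi|_{M'}$ a diffeomorphism, nor does it give any control on the slope of $M'$ relative to $M$. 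A surface squeezed into a thin tube around $M$, minimal and with the same boundary, could a priori fold or have multiple sheets over parts of $M$. Moreover your subsequent Schauder/linearization step needs more than the graph property: writing the minimal surface equation over $M$ as $L u_n + Q(u_n)=0$ with $Q$ at least quadratic in $u_n$ and its first derivatives, the conclusion $L v_n \to 0$ for $v_n=u_n/\|u_n\|$ requires $u_n\to 0$ in $C^1$, not merely $\|u_n\|_0\to 0$. Nothing in your argument delivers this $C^1$ (or higher) smallness. The paper supplies exactly this missing ingredient: from $\max_{p\in M_n}\dist(p,M)\to 0$ together with $\partial M_n = \partial M$, the extension of Allard's boundary regularity theorem in \cite{white-controlling-area}*{6.1} upgrades the Hausdorff convergence to \emph{smooth} convergence, which is what legitimizes both the normal-graph representation for large $n$ and the passage $f_n/\|f_n\|_0 \to$ a Jacobi field. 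You should insert this step (or an equivalent curvature-estimate argument) before claiming the graph representation and carrying out the linearization; as written, the proof does not close.
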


\begin{proof}[Proof of Lemma~\ref{TubularNeighborhoodLemma}]
Assertion~\eqref{nearest-point-item} holds (for sufficiently small $\eps$) 
because $M:=\overline{S\cup \rho_ZS}$ is a
smooth embedded manifold-with-boundary.

Suppose assertion~\eqref{unique-in-V-item} fails.
Then there is a sequence of minimal $Y$-surfaces $S_n\subset \overline{V(S,\eps_n)}$ 
with $\partial S_n=\Gamma$ such that $S_n\ne S$ and such that $\eps_n\to 0$.
 Let $M_n$ be the closure of $S_n\cap \rho_Z S_n$ or (equivalently) of $S_n\cap \rho_X S_n$. 
 (Note that $\rho_Z S_n=\rho_X S_n$ by the $\rho_Y$-invariance of $S_n$.)
Then $M_n$ is a 
minimal surface with boundary $\partial M_n=\partial M$, $M_n$ is smooth away from $Y$ and from the corners of $\Gamma$, and 
\[
   \max_{p\in M_n}\dist(p,M)\to 0.
\]
Since $M$ is a smooth, embedded manifold with nonempty boundary,
 this implies 
that the convergence $M_n\to M$ is smooth by the extension of 
Allard's boundary regularity theorem in \cite{white-controlling-area}*{6.1}.

A  {\em normal graph} of $f:S\to \RR$ over a hypersurface $S$ in a Riemannian manifold is  the hypersurface
$\{\exp_p(f(p)n(p))\, |\,\, p\in S\}$, where n(p) is a unit normal
vector field on $S\subset N$ and $\exp _p$ is the exponential mapping  at 
$p$.
From the previous paragraph, it follows that for all sufficiently large $n$,  $M_n$ is the 
    normal graph of a function $f_n:M\to \RR$ 
with $f_n\vert\Gamma=0$ such that $f_n\to 0$
smoothly.  
But then 
\[
  \frac{f_n}{\|f_n\|_0}
\]
converges (after passing to a subsequence) to a nonzero jacobi field on $S$ that vanishes
on $\partial S=\Gamma$, contradicting the assumption (hypothesis~\eqref{bumpy-hypothesis}
of theorem~\ref{main-bumpy-theorem}) that 
the Riemannian metric is bumpy with respect to $\Gamma$.
\end{proof}


\stepcounter{theorem}

\addtocontents{toc}{\SkipTocEntry}
\subsection{Roundings of \texorpdfstring{$\Gamma$}{Lg}}\label{roundings-subsection}
 Let $t_0>0$ be less than half the
distance between any two corners of $\Gamma$. For $t$ satisfying
$0<t\leq t_0$, we  can form from $\Gamma$ a smoothly embedded $\rho_Y$-invariant
curve $\Gamma (t)$ in the portion of $H$ with $|z|\le h$ as follows:
\begin{enumerate}
\item If $q$ is a corner of $\Gamma$ other than $O$ or $O^*$, we
replace $\Gamma\cap\BB(q,t)$ by  a smooth curve in $H\cap
\BB(q,t)$ that has the same endpoints as $\Gamma\cap \BB(q,t)$ but
that is otherwise disjoint from $\Gamma\cap\BB(q,t)$. 
\item 
  If $q=O$
or $q=O^*$ we replace $\Gamma\cap \BB(q,t)$ by two smoothly embedded
curves in $H$ that have the same endpoints as $\Gamma\cap\BB(q, t)$ but that
are otherwise disjoint from $\Gamma\cap \BB(q,t)$. See
Figures~\ref{rounding-corners-figure} and \ref{signs-figure}.
\end{enumerate}
Note that $\Gamma(t)$ lies in the boundary of $\partial N$ of the region $N=\overline{H^+}\cap\{|z|\le h\}$.


\begfig
 \hspace{-1.0in}
  \vspace{-2.0in}
  \centerline{
\qquad\qquad\qquad \includegraphics[width=4.05in]{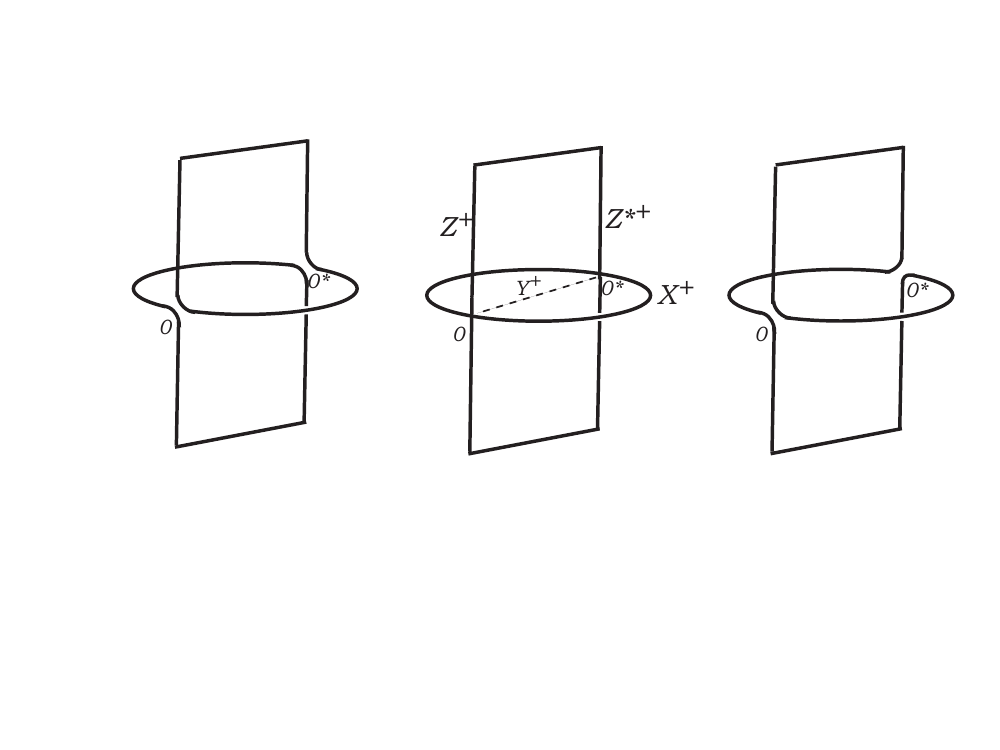}
   }
   \vspace{0.60 in}
 \begin{center}
 \parbox{5.1in}{
\caption{\label{rounding-corners-figure}  {\bf Rounding the corners  of $\Gamma$.}
{\bf Center:} The boundary curve $\Gamma$ as illustrated in Figure~\ref{GammaFigure}.
{\bf Left and Right:} Desingularizations of $\Gamma$. The corners at $O$ and $O^*$ are removed, following the conditions $(1)$ and $(2)$ of \ref{roundings-subsection}. In both cases we have desingularized near $O$ by joining $X^+$ to $Z^+$ and
$X^-$ to $Z^-$. In the language of Definition~\ref{PosNegRounding}, both desingularizations are {\em positive at $O$}.
On the left, the rounding is also positive  at $O^*$. On the right, the rounding is
{\em negative at $O^*$}. Note that when the signs of the rounding agree at $O$ and $O^*$, as they do on the left, 
the rounded curve  has two components; 
when the signs are different, as on the right, the rounded curve is  connected.
 }
 }
 \end{center}
 \endfig

\begin{definition}\label{Rounding1} Suppose $\Gamma(t)\subset H$ is a family of smooth embedded
$\rho _Y$-invariant curves created from $\Gamma$ according to the
recipe above. Suppose  we do this in such a way that that  for each
corner $q$  of $\Gamma$, the curve
\begin{equation}\label{expression}
       (1/t) ( \Gamma (t) - q)
\end{equation}
converges smoothly to a smooth, embedded planar curve $\Gamma'$ as $t\to 0$.
Then we say that the family $\Gamma (t)$ is a {\em rounding of}
$\Gamma$.   
\end{definition}

\begin{remark}\label{meaning-of-translation-remark} 
Since we are working in $\sS^2\times\RR$
 with some Riemannian metric, 
 it may not be immediately obvious what we mean by 
  translation and  by scaling in definition~\ref{Rounding1}. 
 However, there are various ways to make sense of it.  For example, by the Nash embedding
 theorem, we can regard $\sS^2\times\RR$ with the given Riemannian metric as 
 embedded isometrically 
 in some Euclidean space.  In that Euclidean space, the expression~\eqref{expression}
  is well defined, 
and its limit as $t\to 0$ lies in the $3$-dimensional tangent space (at $q$) to $\sS^2\times\RR$,
which is of course linearly isometric to $\RR^3$.
\end{remark}

\begin{remark}
In definition~\ref{Rounding1}, note that if the corner $q$ is $O$ or $O^*$, then $\Gamma'$ consists
of  two components, and $\Gamma'$ coincides with a pair of perpendicular lines outside a disk of radius $1$ about
the intersection of those lines.   In this case, $\Gamma'$ is the boundary of two regions in the plane: one
region is connected, and the other region (the complement of the connected region) consists of two connected components. 
 We refer to each of these regions as a {\bf rounded quadrant pair}.
 If $q$ is a corner other than $O$ or $O^*$, then $\Gamma'$ consists
of a single curve.  In this case, $\Gamma'$ bounds a planar region which, outside of a disk, coincides with a quadrant
of the plane.  We call such a region a {\bf rounded quadrant}.
\end{remark}

\stepcounter{theorem}
\addtocontents{toc}{\SkipTocEntry}
\subsection{The existence of bridged approximations to \texorpdfstring{$S$}{Lg}}
We  will assume until further notice that $\Gamma\subset H$ bounds an embedded
minimal $Y$-surface $S$ in $N=\overline{H^+}\cap\{|z|\le h\}$. As in the previous section we
define  $M= \overline {S\cup\rho _Z S}$. 
For $p\in \sS^2\times\RR$, let $\pi(p)=\pi_M(p)$ be the point in $M$ closest to $p$,
provided that point is unique.   Thus the domain of $\pi$ is the set of all points in $\sS^2\times\RR$
such that there is a unique nearest point in $M$.
Since $M$ is a smooth embedded 
manifold-with-boundary, the domain of $\pi$ contains $M$ in its interior.

Consider a rounding 
 $\Gamma (t)$ of $\Gamma$ with $t\in [0,t_0]$.
 By replacing $t_0$ by a smaller value, we may assume that for all $t\in [0,t_0]$,
 the curve $\Gamma(t)$ is in the interior of the domain of $\pi$ and
 $\pi(\Gamma(t))$ is a smooth embedded curve in $M$.  It follows that $\Gamma(t)$ is the 
 normal graph of a function
 \[
     \phi_t: \pi(\Gamma(t))\to \RR.
 \]
(Here normal means normal to $M$.)
We let $\Omega(t)$ be the domain in $M$ bounded by $\pi(\Gamma(t))$.

\begin{remark}\label{where-is-O-remark}
Suppose that $S$ is positive at $O$, i.e., that it is tangent to the positive quadrants of $H$
(namely the quadrant bounded by $X^+$ and $Z^+$ and the quadrant bounded by $X^-$ and $Z^-$.)
Note that $O$ is in $\Omega(t)$ if and only if
   $\Gamma(t)\cap \BB(O,t)$ lies in the {\em negative} quadrants of $H$, or, equivalently, 
 if and only if $\Gamma(t)\cap \BB(O,t)$ connects $Z^+$ to $X^-$ and $Z^-$ to $X^+$. 
 See figure~\ref{signs-figure}.
\end{remark}

\begin{theorem}\label{bridged-approximations-theorem}
There exists a $\tau>0$ and a smooth one-parameter family $t\in (0,\tau]\mapsto f_t$ of functions
\[  
    f_t: \Omega(t)\to \RR
\]
with the following properties:
\begin{enumerate}
\item The normal graph $S(t)$ of $f_t$ is a $Y$-nongenerate,
minimal embedded $Y$-surface with boundary $\Gamma(t)$,
\item $\|f_t\|_0 + \|Df_t\|_0\to 0$ as $t\to 0$, 
\item $S(t)$ converges smoothly to $S$ as $t\to 0$ except possibly at the corners of $S$,
\item\label{in-H^+-item} $S(t)$ lies in $\overline{H^+}$.
\end{enumerate}
\end{theorem}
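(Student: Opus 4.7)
The plan is to construct $S(t)$ by continuation from the known minimal surface $\Omega(t) \subset M$, interpolating the boundary from $\pi(\Gamma(t))$ to $\Gamma(t)$. For each $t \in (0, t_0]$ and $s \in [0,1]$, let $\Gamma(s,t)$ denote the normal graph of $s\phi_t$ over $\pi(\Gamma(t))$, so that $\Gamma(0,t) = \pi(\Gamma(t))$ and $\Gamma(1,t) = \Gamma(t)$. I seek a smooth two-parameter family of $\rho_Y$-invariant functions $f_{s,t} : \Omega(t) \to \RR$ whose normal graphs $S(s,t)$ in $M$-normal directions are $\gamma$-minimal with boundary $\Gamma(s,t)$, starting from the trivial solution $f_{0,t} \equiv 0$ (for which $S(0,t) = \Omega(t) \subset M$ is minimal) and terminating at $f_t := f_{1,t}$.

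The key tool is the implicit function theorem for the minimal surface operator, acting between $\rho_Y$-invariant $C^{2,\alpha}$ function spaces, coupled with the Dirichlet-type trace map to $\rho_Y$-invariant $C^{2,\alpha}$ boundary values. At a known solution, the linearization is the Jacobi operator, which is an isomorphism to its boundary data precisely when the surface has no nontrivial $\rho_Y$-invariant Jacobi field vanishing on its boundary. Openness in $s$ plus a standard continuation/closedness argument then produces $f_{1,t}$ for each $t$ sufficiently small, provided this nondegeneracy holds uniformly along the family.

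The main obstacle is establishing uniform nondegeneracy as $t \to 0$, since the surfaces $S(s,t)$ concentrate singular behavior near the corners of $\Gamma$. I would argue by contradiction: suppose $v_n$ are $\rho_Y$-invariant Jacobi fields on $S(s_n, t_n)$ with $t_n \to 0$, $\|v_n\|_\infty = 1$, and $v_n \equiv 0$ on $\Gamma(s_n, t_n)$. Away from the corners of $\Gamma$, the surfaces $S(s_n, t_n)$ converge smoothly to $S$, and elliptic estimates extract a subsequential limit $v$, a bounded $\rho_Y$-invariant Jacobi field on $S$ minus its corners, vanishing on $\Gamma$ minus its corners. A removable-singularity argument---using boundedness of $v$, ellipticity of the Jacobi operator, and the fact that $v$ vanishes on smooth arcs approaching each corner---shows $v$ extends to a continuous $\rho_Y$-invariant Jacobi field on $S$ vanishing on $\Gamma$. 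To ensure $v \not\equiv 0$, I would rule out concentration at the corners by a barrier comparison with the leading homogeneous Jacobi modes on the model quarter-plane and rounded-quadrant-pair, showing that a definite fraction of $\|v_n\|_\infty$ is attained on a fixed compact set in $S$ disjoint from the corners. This contradicts hypothesis~\eqref{bumpy-hypothesis} of theorem~\ref{main-bumpy-theorem}.

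Given existence, the remaining conclusions follow fairly directly. Smallness of $\|f_t\|_0 + \|Df_t\|_0$ and smooth convergence of $S(t)$ to $S$ away from corners come from Schauder estimates along the continuation, since $\phi_t$ is $C^{2,\alpha}$-small on compact subsets of $\pi(\Gamma(t))$ avoiding the corners as $t \to 0$. Embeddedness of $S(t)$ is automatic for small $t$, since $S(t)$ is a $C^1$-small normal graph over the embedded surface $\Omega(t) \subset M$. Finally, $S(t) \subset \overline{H^+}$ follows from $C^1$-closeness of $S(t)$ to $S \subset \overline{H^+}$ together with $\partial S(t) = \Gamma(t) \subset H$: at smooth boundary points, $S$ meets $H$ transversely from the $H^+$ side, so for $t$ small the rounded $\Gamma(t)$ together with the Hopf boundary maximum principle (using that $H$ itself is $\gamma$-minimal) prevent $S(t)$ from crossing into $H^-$.
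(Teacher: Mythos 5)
Your overall strategy matches the paper's: you introduce the same two-parameter family $\Gamma(t,s)$ interpolating between $\pi(\Gamma(t))$ and $\Gamma(t)$, continue in $s$ via the implicit function theorem, and handle degeneration as $t\to 0$ by blowing up at corners and appealing to Liouville/maximum-principle arguments on the flat model regions. So the architecture is correct. However, there are two genuine gaps.

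First, for the continuation in $s$ at fixed $t$, openness is indeed handled by nondegeneracy and the implicit function theorem, but you wave at ``closedness'' without providing the a priori bound it requires. If $s_k\to s^*$ and $f_{s_k,t}$ exist, you need a uniform $C^{1}$ (or better) bound to extract a limit $f_{s^*,t}$, and there is nothing a priori preventing the gradient from blowing up near the (images of the) corners of $\Gamma$ along the continuation. The paper resolves this by \emph{building a Lipschitz bound $\le\eta$ into the definition of the continuation set} and then proving (proposition~\ref{rounding-sequence-proposition}) that any such solution with Lipschitz constant $\le\eta$ has, as $t\to 0$, $\|f\|_0+\|Df\|_0\to 0$, so the Lipschitz bound is never saturated; that equality of the relatively closed, relatively open, and uniqueness sets is what closes the continuation. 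You cannot skip this bootstrapping: it is where the argument actually lives.

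Second, and related, your justification of $\|f_t\|_0+\|Df_t\|_0\to 0$ via ``Schauder estimates on compact subsets of $\pi(\Gamma(t))$ avoiding the corners'' does not deliver the $C^0$ bound on $Df_t$ over the whole domain $\Omega(t)$. Near a corner of $\Gamma$ where $M$ and $H$ meet transversally, the boundary function $\phi_t$ is $O(t)$ in $C^0$ but lives on a neighborhood of size $O(t)$, so its gradient is $O(1)$, and compactly-supported Schauder estimates give nothing there. What is actually needed is a blowup argument at a near-corner point where $|Df_n|$ is nearly maximal, showing the rescaled limit is a minimal graph over a quadrant, rounded quadrant, quadrant pair, rounded quadrant pair, or plane, with a planar end; the gradient must then vanish, a contradiction. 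You already understand this corner-blowup mechanism (you invoke it for the nondegeneracy step), so this is more an omission than a conceptual misunderstanding, but as written your proof of assertion (2) of the theorem is incorrect.

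A smaller issue: your argument that $S(t)\subset\overline{H^+}$ is not adequate near the corners. Transversality of $S$ and $H$ at \emph{smooth} boundary points and the Hopf lemma control $S(t)$ only away from the corners of $\Gamma$; at $O$ and $O^*$ the surfaces $S$ and $H$ are \emph{tangent}, and the convergence $S(t)\to S$ is not smooth there, so you have no direct control. The paper instead foliates a neighborhood $U$ of each corner by minimal surfaces with $\overline M\cap U$ as one leaf and uses the strong maximum principle within $U$; some version of this localization is needed.
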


Later (see theorem~\ref{strong-uniqueness-theorem}) we will prove that for small $t$, the surfaces $S(t)$ have a very strong uniqueness property.
In particular, given $S$, the rounding $t\mapsto \Gamma(t)$, and any sufficiently small if $\tau>0$,  
there is a unique family $t\in(0,\tau]\to S(t)$
having the indicated properties.

\begin{remark}\label{in-H^+-remark}
Assertion~\eqref{in-H^+-item} of the theorem follows easily from the preceding assertions, provided
we replace $\tau$ by a suitable smaller number.
To see this, 
note by the smooth convergence $S(t)\to S$ away from corners, each point of $S(t)\cap H^-$  must 
lie within distance $\eps_n$ of the corners of $S$, where $\eps_n\to 0$.
By the implicit function theorem,
each corner $q$ of $S$ has a neighborhood $U\subset \sS\times[-h,h]$
that is foliated by minimal surfaces, one of which is $\overline{M}\cap U$.  For $t$ sufficiently small, the set of
points of $S(t)\cap H^-$ that are near $q$ will be contained entirely in $U$, which violates the maximum
principle unless $S(t)\cap H^-$ is empty.   
\end{remark}

{\bf Idea of the proof of theorem~\ref{bridged-approximations-theorem}}. 
(The details will take up the rest of section~\ref{rounding-section}.)
   The rounding is a one-parameter family of curves $\Gamma(t)$.
We extend the one-parameter family to a two-parameter family $\Gamma(t,s)$ (with $0\le s\le1$)
in such a way 
that $\Gamma(t,1)=\Gamma(t)$ and $\Gamma(t,0)=\pi(\Gamma(t))$.  Now $\Gamma(t,0)$
trivially bounds a minimal $Y$-surface that is a normal graph over $\Omega(t)$, namely $\Omega(t)$
itself (which is the normal graph of the zero function).  We then use the implicit function
theorem to get existence for all $(t,s)$ with $t$ sufficiently small of a minimal embedded $Y$-surface
$S(t,s)$ with boundary $\Gamma(t,s)$.  Then $t\mapsto S(t,1)$ will be the desired one-parameter family
of surfaces.

  \begin{definition}\label{Gamma(s,t)-definition}
For $0\leq t< t_0$, each $\Gamma(t)$  is the normal graph over $\pi(\Gamma (t))$ of a function $\phi _t : \pi(\Gamma (t))\rightarrow \RR$.  For  $0\leq s\leq 1$, define
 \begin{equation}
 \Gamma (t,s): =\,\,\graph \,\, s\phi _t.
  \end{equation}
  
  Note that $\pi (\Gamma (t,s)) =\Gamma (t,0)$.
  \end{definition}

\begin{proposition}\label{two-parameter-proposition}
There is a $\tau>0$ and a smooth two-parameter family
\[
   (t,s)\in (0,\tau]\times[0,1]\mapsto S(t,s)
\]
of\, $Y$-nondegenerate, minimal embedded $Y$-surfaces such that each $S(t,s)$ has boundary $\Gamma(t,s)$
and is the normal graph of a function $f_{t,s}:\Omega(t)\to \RR$  such that
\[
   \|f_{t,s}\|_0 + \|Df_{t,s}\|_0 \to 0
\]
as $t\to 0$.  The convergence $f_{t,s}\to 0$ as $t\to 0$ is smooth away from the corners of $S$.
\end{proposition}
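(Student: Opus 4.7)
The plan is to apply the implicit function theorem, using $f_{t,0}\equiv 0$ as the seed solution: since $\Omega(t)\subset M$ is itself $\gamma$-minimal with boundary $\partial\Omega(t)=\pi(\Gamma(t))=\Gamma(t,0)$, the zero function trivially gives the surface $\Omega(t)$ as a solution at $s=0$. One then deforms through $s\in[0,1]$, at each stage seeking the $\rho_Y$-invariant normal graph over $\Omega(t)$ whose boundary data is $s\phi_t$. Fix a Hölder exponent $\alpha\in(0,1)$, and for each small $t>0$ consider the map
\[
F_t\colon C^{2,\alpha}_{\rho_Y}(\Omega(t))\longrightarrow C^{0,\alpha}_{\rho_Y}(\Omega(t))\times C^{2,\alpha}_{\rho_Y}(\partial\Omega(t)),\qquad F_t(f)=\bigl(\mathbf H(\graph f),\,f|_{\partial\Omega(t)}\bigr),
\]
where $\mathbf H$ is $\gamma$-mean curvature and the subscripts denote $\rho_Y$-invariant functions. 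Then $F_t(0)=(0,0)$ and the linearization $DF_t(0)=(L_{\Omega(t)},\,\operatorname{tr})$ involves the Jacobi operator on $M$ restricted to $\Omega(t)$. The objective is to solve $F_t(f_{t,s})=(0,\,s\phi_t)$ for all $(t,s)\in(0,\tau]\times[0,1]$.

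The crux is the uniform invertibility of $DF_t(0)$: I aim to show that there exist constants $\tau,C>0$ with
\[
\|u\|_{C^{2,\alpha}}\le C\bigl(\|L_{\Omega(t)}u\|_{C^{0,\alpha}}+\|u|_{\partial\Omega(t)}\|_{C^{2,\alpha}}\bigr)
\]
for every $t\in(0,\tau]$ and every $u\in C^{2,\alpha}_{\rho_Y}(\Omega(t))$. Arguing by contradiction, suppose there are $t_i\to 0$ and $u_i$ with $\|u_i\|_{C^{2,\alpha}}=1$, $\|L_{\Omega(t_i)}u_i\|_{C^{0,\alpha}}\to 0$, and $\|u_i|_{\partial\Omega(t_i)}\|_{C^{2,\alpha}}\to 0$. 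Away from the corners of $\Gamma$, $\Omega(t_i)$ converges smoothly to the corresponding portion of $S$, so Schauder estimates furnish a subsequential $C^{2,\alpha'}_{\mathrm{loc}}$-limit $u_\infty$ on $S$ minus corners, a $\rho_Y$-invariant Jacobi field vanishing on the smooth part of $\Gamma$. If the $C^{2,\alpha}$-mass of $u_i$ does not concentrate at any corner, then $u_\infty\not\equiv 0$ and a standard removability argument extends it across the corners of $S$, contradicting the bumpiness hypothesis~\eqref{bumpy-hypothesis} of theorem~\ref{main-bumpy-theorem}. If instead the norm concentrates at some corner $q$, rescale coordinates by $1/t_i$ about $q$ (in the sense of remark~\ref{meaning-of-translation-remark}): the blown-up surfaces $(\Omega(t_i)-q)/t_i$ converge to a rounded quadrant or rounded quadrant pair in the tangent plane, and the rescaled $u_i$ converge to a bounded harmonic function on this planar domain with vanishing boundary data, hence vanish identically — again contradicting the normalization.

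With uniform invertibility in hand, the implicit function theorem, applied with smooth parameter $s\mapsto(0,s\phi_t)$, yields for each $t\in(0,\tau]$ a smooth curve $s\mapsto f_{t,s}$ solving $F_t(f_{t,s})=(0,s\phi_t)$, with $\|f_{t,s}\|_{C^{2,\alpha}}\to 0$ uniformly in $s$ as $t\to 0$; joint smoothness in $(t,s)$ follows since $(\Omega(t),\phi_t)$ depends smoothly on $t$ and the IFT preserves smoothness. The surfaces $S(t,s)=\graph f_{t,s}$ are then automatically $\rho_Y$-invariant, $\gamma$-minimal, and have boundary $\Gamma(t,s)=\graph s\phi_t$; embeddedness follows from the smallness of $\|f_{t,s}\|_{C^1}$ combined with the embeddedness of $\Omega(t)\subset M$, and $Y$-nondegeneracy follows because the Jacobi operator of $S(t,s)$ is a small $C^{0,\alpha}$-perturbation of $L_{\Omega(t)}$ and hence remains invertible on $\rho_Y$-invariant functions with Dirichlet data. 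The main obstacle throughout is the uniform invertibility in the presence of corners: standard Schauder theory applies uniformly only on the smooth part, so the delicate technical step is the blow-up analysis at each corner — correctly identifying the limiting planar Dirichlet problem on a rounded quadrant (or rounded quadrant pair) and invoking uniqueness for bounded harmonic functions there with zero boundary values.
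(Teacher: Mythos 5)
Your approach is genuinely different from the paper's: you aim for a uniform invertibility bound for the linearized operator on $C^{2,\alpha}_{\rho_Y}(\Omega(t))$ followed by a single application of the implicit function theorem, whereas the paper runs an open-closed continuation argument in the parameter rectangle $(0,\tau]\times[0,1]$ resting entirely on a nonlinear compactness/uniqueness/nondegeneracy lemma for Lipschitz graphs (proposition~\ref{rounding-sequence-proposition}). But there is a real gap at the step where you pass from the IFT at $s=0$ to the whole interval $s\in[0,1]$. A single quantitative IFT application requires $\|s\phi_t\|_{C^{2,\alpha}}$ to lie uniformly in a small target ball, and $\phi_t$ is \emph{not} small in $C^{2,\alpha}$: near a corner $q$ the rounding takes place at scale $t$, so $\phi_t(z)\approx t\,\phi'\bigl((z-q)/t\bigr)$ for a fixed profile $\phi'$, whence $\|\phi_t\|_{C^0}=O(t)$ but $\|D\phi_t\|_{C^0}=O(1)$ and $\|\phi_t\|_{C^{2,\alpha}}=O\bigl(t^{-1-\alpha}\bigr)\to\infty$. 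This is consistent with the proposition itself, which claims only $\|f_{t,s}\|_0+\|Df_{t,s}\|_0\to0$ and smooth convergence \emph{away from corners}, not any kind of $C^{2,\alpha}$-smallness. If you instead intend to continue step by step in $s$, you then need invertibility of the linearization at every $f_{t,s}$; your only justification --- that the Jacobi operator of $S(t,s)$ is a small $C^{0,\alpha}$-perturbation of $L_{\Omega(t)}$ --- is circular and in fact false near corners, since $f_{t,s}$ is precisely where it fails to be $C^2$-small. The paper fills this hole with the nondegeneracy and compactness assertions of proposition~\ref{rounding-sequence-proposition}, which operate on the full nonlinear family of Lipschitz graphs.

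There is also a scale mismatch in the blow-up inside your uniform invertibility argument. If $\|u_i\|_{C^{2,\alpha}}=1$ with $\|u_i\|_{C^0}\to 0$ and you set $v_i(x)=u_i(q+t_ix)$, then $\|D^kv_i\|_{C^0}=t_i^k\|D^ku_i\|_{C^0}\to 0$, so $v_i\to0$ locally and no contradiction appears: the $C^{2,\alpha}$ normalization lives at the original scale and does not transport to the blow-up. The concentration scale $\ell_i$ need not equal $t_i$; one must choose it adaptively (for instance $\ell_i=\dist(p_i,\partial\Omega(t_i))$ where $p_i$ realizes the normalized quantity) and separate the regimes $\ell_i/t_i\to0$, $\to c\in(0,\infty)$, $\to\infty$, which give respectively a plane or half-plane, a rounded quadrant (or quadrant pair), and a sharp quadrant (or quadrant pair) in the limit. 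This is exactly what the paper does in the proof of proposition~\ref{rounding-sequence-proposition} and claim~\ref{eigenfunction-claim}, where the sup-norm normalization is used precisely because it survives the blow-up and forces an interior maximum.
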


Theorem~\ref{bridged-approximations-theorem} follows from proposition~\ref{two-parameter-proposition} by setting $S(t):=S(t,1)$.
(See remark~\ref{in-H^+-remark}.)

\begin{proof}[Proof of proposition~\ref{two-parameter-proposition}]
Fix a $\eta>0$ and a $\tau>0$ and consider the following subsets of the domain $D:=(0,\tau]\times[0,1]$:
\begin{enumerate}
\item the relatively closed set $A$ of all $(t,s)\in D$ such that $\Gamma(t,s)$ bounds a minimal embedded $Y$-surface
 that is the normal graph of a function from $\Omega(t)\to \RR$ with Lipschitz constant $\le \eta$.
\item the subset $B$ of $A$ consisting
    of all $(t,s)\in D$ such that $\Gamma(t,s)$ bounds a minimal embedded $Y$-surface
 that is $Y$-nondegenerate and that is the normal graph of a function from $\Omega(t)$ to $\RR$
 with Lipschitz constant $< \eta$.
\item the subset $C$ of $A$ consisting of all 
   $(t,s)\in D$ such that there is exactly one function whose Lipschitz constant is $\le \eta$
  and whose normal graph is a minimal embedded $Y$-surface with boundary $\Gamma(t,s)$.
\end{enumerate}
By proposition~\ref{rounding-sequence-proposition} below, 
we can choose $\eta$ and $\tau$ so that these three sets are equal: $A=B=C$.
Clearly the set $A$ is a relatively closed subset of $(0,\tau]\times[0,1]$.  Also, $A$ is nonempty
since it contains $(0,\tau]\times\{0\}$. (This is because $\Gamma(t,0)$ is the boundary of the minimal
$Y$-surface $\Omega(t)$, which is the normal graph of the zero function on $\Omega(t)$).
By the implicit function theorem, the set $B$ is a relatively open subset of $(0,\tau]\times[0,1]$.

Since $A=B=C$ is nonempty and since it is both relatively closed and relatively open in $(0,\tau]\times [0,1]$,
we must have
\[
   A=B=C=(0,\tau]\times [0,1].
\]
For each $(t,s)\in (0,\tau]\times [0,1]=C$, let $f_{t,s}:\Omega(t)\to \RR$  be the unique function with Lipschitz
constant $\le \eta$ whose normal graph is a minimal embedded $Y$-surface $S(t,s)$ with boundary $\Gamma(t,s)$.
Since $B=C$, in fact $f_{t,s}$ has Lipschitz constant $<\eta$ and $S(s,t)$ is $Y$-nondegenerate.
By the $Y$-nondegeneracy and the implicit function theorem, $S(t,s)$ depends smoothly on $(t,s)$.
Also, 
\begin{equation}\label{C^1-convergence-to-0}
    \|f_{t,s}\|_0 + \|Df_{t,s}\|_0 \to 0
\end{equation}
as $t\to 0$ by proposition~\ref{rounding-sequence-proposition} below.  
Finally, the smooth convergence $S(t,s)\to S$ away from corners 
follows from~\eqref{C^1-convergence-to-0} by standard elliptic PDE.
\end{proof}

\begin{proposition}\label{rounding-sequence-proposition}
There is an $\eta>0$ with the following property.
Suppose $S_n$ is a sequence of minimal embedded $Y$-surfaces with $\partial S_n=\Gamma(t_n,s_n)$
where $t_n\to 0$ and $s_n\in [0,1]$.  Suppose also that each $S_n$ is the normal
graph of a function 
\[
   f_n: \Omega(t_n)\to\RR
\]
with Lipschitz constant $\le \eta$.  
Then 
\begin{enumerate}
\item\label{C^1-item} $\|f_n\|_0 + \|Df_n\|_0\to 0$. (In particular, $Lip(f_n)<\eta$ for all sufficiently large $n$.)
\item\label{nondegenerate-item} $S_n$ is $Y$-nondegenerate for all sufficiently large $n$,
\item\label{unique-function-item} If $g_n$ is a function with Lipschitz constant $\le \eta$
  and if the graph of $g_n$ is a minimal embedded $Y$-surface, then $g_n=f_n$ for all sufficiently large $n$.
\item\label{unique-surface-item} 
If $\Sigma_n$ is a sequence of minimal embedded $Y$-surfaces such that $\partial \Sigma_n = \partial S_n$
and such that $\Sigma_n\subset V(S,\eps_n)$ where $\eps_n\to 0$, then $\Sigma_n=S_n$ for all
sufficiently large $n$.
\end{enumerate} 
\end{proposition}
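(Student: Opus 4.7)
The plan is to prove all four assertions by essentially the same contradiction-and-blow-up argument, leveraging the bumpiness hypothesis (item~\eqref{bumpy-hypothesis} of theorem~\ref{main-bumpy-theorem}) to rule out limit jacobi fields. Fix $\eta>0$ small enough that a $\rho_Y$-invariant normal graph over $\Omega(t)$ with Lipschitz constant $\le\eta$ lies in the tubular neighborhood $V(S,\eps_0)$ of lemma~\ref{TubularNeighborhoodLemma}, and small enough that the linearization of the minimal surface operator is uniformly elliptic with controlled coefficients independent of $t$.

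For~\eqref{C^1-item}, suppose the conclusion fails. Pass to a subsequence so that $a_n:=\|f_n\|_0+\|Df_n\|_0$ is bounded below, and set $\tilde f_n=f_n/a_n$. The surface $S_n$ satisfies the (quasilinear) minimal surface equation over $\Omega(t_n)$, so $\tilde f_n$ satisfies a linear elliptic equation with coefficients converging smoothly (away from the corners of $\Gamma$) to those of the jacobi operator on $S$. Away from the corners, Schauder estimates give uniform $C^{2,\alpha}$ bounds; after passing to a further subsequence, $\tilde f_n$ converges in $C^{2}_\text{loc}$ on $S\setminus\{\text{corners}\}$ to a $\rho_Y$-invariant jacobi field $\tilde f$ on $S$. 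Because $\Gamma(t_n,s_n)$ agrees with $\Gamma$ outside the $O(t_n)$-neighborhood of the corners, $\tilde f$ vanishes on $\Gamma$ away from corners, and the uniform Lipschitz bound $\mathrm{Lip}(f_n)\le\eta$ together with the removable-singularity theorem (the corners are finite sets) shows $\tilde f$ extends to a bona fide $\rho_Y$-invariant jacobi field on $S$ vanishing on all of $\partial S=\Gamma$. By the bumpiness hypothesis $\tilde f\equiv 0$. It remains to contradict $\|\tilde f_n\|_0+\|D\tilde f_n\|_0=1$: this is where the main difficulty lies, because the normalizing norm might be attained in an arbitrarily small neighborhood of a corner $q$. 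To handle this, I rescale $\sS^2\times\RR$ about $q$ by $1/t_n$ (as in remark~\ref{meaning-of-translation-remark}). In the rescaled picture, $(1/t_n)(\Gamma(t_n)-q)$ converges to the planar curve $\Gamma'$ of definition~\ref{Rounding1}, $(1/t_n)(M-q)$ converges to a pair of perpendicular half-planes (the tangent cones of $M$ at $q$), and the rescaled normal graph functions $\tilde f_n$ converge to a bounded jacobi field on a rounded quadrant or rounded quadrant pair in $\RR^3$ with zero boundary values. A standard maximum principle / energy argument on these model planar domains, exploiting that the limiting minimal surface is flat, forces this limit to vanish identically, contradicting the assumption that the $C^1$-mass concentrates at $q$.

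Given~\eqref{C^1-item}, assertion~\eqref{nondegenerate-item} follows by another contradiction-and-blow-up: if $S_n$ carried a nontrivial $\rho_Y$-invariant jacobi field $v_n$ vanishing on $\partial S_n$, normalize $v_n$ and pass to a limit (using the same interior/corner analysis) to obtain a nontrivial $\rho_Y$-invariant jacobi field on $S$ vanishing on $\Gamma$, again contradicting bumpiness. For~\eqref{unique-function-item}, apply the same argument to the difference $f_n-g_n$: it satisfies a linear elliptic equation with coefficients converging to those of the jacobi operator on $S$, vanishes on $\partial \Omega(t_n)$, and by~\eqref{C^1-item} both $f_n$ and $g_n$ go to $0$ in $C^1$, so after renormalization the limit is a nontrivial $\rho_Y$-invariant jacobi field on $S$ vanishing on $\Gamma$, contradiction.

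For~\eqref{unique-surface-item}, the hypothesis $\Sigma_n\subset V(S,\eps_n)$ with $\eps_n\to 0$ together with the boundary regularity extension of Allard's theorem (as used in the proof of lemma~\ref{TubularNeighborhoodLemma}) shows that for large $n$, $\Sigma_n$ is a normal graph over $M$ of a function converging smoothly to zero away from the corners; moreover the $\rho_Y$-invariance and $\partial\Sigma_n=\partial S_n=\Gamma(t_n,s_n)$ let us express $\Sigma_n$ as a normal graph over $\Omega(t_n)$ with Lipschitz constant tending to $0$. In particular its Lipschitz constant is $\le\eta$ for large $n$, so by~\eqref{unique-function-item} $\Sigma_n=S_n$. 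The principal obstacle throughout is the corner analysis in the proof of~\eqref{C^1-item}; everything else is a standard variation on the implicit-function-theorem / blow-up theme once that is secured.
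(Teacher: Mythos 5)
Your overall strategy (contradiction plus blow-up, with the bumpiness hypothesis ruling out limit jacobi fields) matches the spirit of the paper, but there are two genuine gaps and one structural reversal that leave the proposal incomplete.

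First, the rescaling near a corner is at the wrong scale. You propose to dilate about the corner $q$ by $1/t_n$. But the ``$C^1$-mass'' can be concentrated at a distance $R_n$ from $q$ with $R_n/t_n\to\infty$ or $R_n/t_n\to0$, and in either of those regimes the fixed rescaling by $1/t_n$ misses the concentration point entirely (sending it to infinity or collapsing it to the corner). The correct scale is $1/R_n$ where $R_n$ is the distance from the maximizing point $p_n$ to $q$; then, depending on the ratio $t_n/R_n$, the blow-up limit of $\Omega(t_n)$ is a quadrant, a rounded quadrant, a quadrant pair, a rounded quadrant pair, or a full plane, and each of these cases has to be analyzed separately. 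This case analysis is the actual content of the argument, not a technicality.

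Second, the corner analysis is not ``a standard maximum principle / energy argument.'' Once you are at the blow-up scale, you do not in general get a bounded jacobi field on a model planar domain with zero boundary data and then conclude by the maximum principle: boundedness and decay at infinity are not free, and the normalization you use does not control the blown-up amplitude. The paper's argument at this scale is about the blown-up \emph{minimal surface}, not about a linearized field. In the plane case it invokes Bernstein's theorem together with the fact that $Y$ meets each $S(t)$ perpendicularly (which forces the blow-up limit to be a plane parallel to $\Omega'$, hence $Df'\equiv 0$). In the case with boundary it uses a Schwarz reflection, the classification of finite-total-curvature ends as planar or catenoidal, and the maximum principle on the resulting graph. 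None of this is captured by your sketch. For assertion~\eqref{unique-surface-item} the situation is even more serious: to get smooth convergence of $\Sigma_n$ to the blow-up limit $S'$ one must first show $\Sigma'\subset S'$, which the paper does via a non-obvious barrier argument sweeping families of catenoids through $\RR^3\setminus S'$; your phrase ``express as a normal graph over $\Omega(t_n)$ with Lipschitz constant tending to $0$'' elides exactly this step.

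Third, a structural note: you reduce~\eqref{unique-surface-item} to~\eqref{unique-function-item}, but the paper does the reverse (establishing~\eqref{unique-function-item} as a special case of~\eqref{unique-surface-item}). That reversal is not automatically fatal, but combined with the missing catenoid-barrier argument it means neither reduction is actually established in your write-up, and the strongest claim in the proposition —— that $\Sigma_n=S_n$ for \emph{any} $Y$-surface in a shrinking neighborhood, not just one that is a small Lipschitz graph —— is left unproved. You also fold the $C^0$ and $C^1$ estimates in assertion~\eqref{C^1-item} into a single normalization $\tilde f_n=f_n/a_n$, whereas the paper first proves $\|f_n\|_0\to 0$ by a direct compactness argument using Lemma~\ref{TubularNeighborhoodLemma} (the graph limit is a minimal $Y$-surface in $V(S,\eps)$ with boundary $\Gamma$, hence equals $S$), and only then treats the gradient by blow-up. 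Your simultaneous normalization would require showing that the renormalized limit is genuinely a jacobi field on all of $S$ (including a removable-singularity step at the corners that you assert but do not justify under the mere Lipschitz bound), and, as noted above, the contradiction to the corner-concentration scenario is not supplied.
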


\begin{proof}[Proof of proposition~\ref{rounding-sequence-proposition}.]
By lemma~\ref{TubularNeighborhoodLemma}, there is an $\eps>0$
  such that $S$ is the only embedded minimal $Y$-surface in $V(S,\eps)$
with boundary $\Gamma$.
Choose $\eta>0$ small enough that if $f:S\to \RR$ is Lipschitz with Lipschitz constant $\le \eta$ and 
if $f|\Gamma=0$, then the normal graph of $f$ lies in $V(S,\eps)$.  In particular, if the graph of $f$
is a minimal embedded $Y$-surface, then $f=0$.

Since the $f_n$ have a common Lipschitz bound $\eta$, they converge subsequentially
to a Lipschitz function $f:S\to \RR$.  By the Schauder estimates, the convergence is smooth
away from the corners of $\Gamma$, so the normal graph of $f$ is minimal.  Thus by
choice of $\eta$, $f=0$.   This proves that
\[
   \|f_n\|_0 \to 0.
\]

Let
\[
  L = \limsup \|Df_n\|_0.
\]
We must show that $L=0$. By passing to a subsequence, we can
assume that the $\limsup$ is a limit, and we can choose a sequence
of points
 $p_n\in S_n\setminus\partial M_n=S_n\setminus\Gamma(t_n,s_n)$ such that
\[
    \lim | Df_n(p_n)| = L.
\]
By passing to a further subsequence, we can assume that the $p_n$
converge to a point $q\in S$.  If $q$ is not a corner of $S$, then
$f_n\to 0$ smoothly near $q$, which implies that $L=0$.

Thus suppose $q$ is a corner point of $S$, that is, one of the corners of $\Gamma$.  Let $R_n =
\dist(p_n,q)$.
Now translate $S_n$, $\Omega_n$, $Y$, and $p_n$ by $-q$ and dilate
by $1/R_n$ to get $S (t)'$, $\Omega_n'$, $Y_n'$ and $p_n'$.  Note
that $S(n')$ is the normal graph over $\Omega_n'$ of a function
$f_n'$ where the $\|Df_n'\|_0$ are bounded (independently of $n$).

By passing to a subsequence, we may assume that the $\Omega_n'$
converges to a planar region $\Omega'$, which must be one of the
following:
\begin{enumerate}
\item A quadrant 
\item a rounded quadrant. 
\item a quadrant pair.
\item a rounded quadrant pair. 
\item an entire plane.
\end{enumerate}
(If $q$ is $O$ or $O^*$, then (3), (4), and (5) occurs according
to whether $t_n/R_n$ tends to $0$, to a finite nonzero limit, or
to infinity.  If $q$ is one of the other corners, then (1) or (2)
occurs according to whether $t_n/R_n$ tends to $0$ or not.)  We
may also assume that the $f_n'$ converge to a Lipschitz function
$f:\Omega'\to \RR$ and that the convergence is smooth away from
the origin. Furthermore, there is a point $p\in \Omega'$ with
\begin{equation}\label{e:Lpoint}
  \text{$|p|=1$ and $|Df(p)|=L$.}
\end{equation}

Suppose first that $\Omega'$ is a plane, which means that $q$ is
$O$ or  $O^*$, and thus that $Y'$ is the line that intersects the
plane of $\Omega'$ orthogonally. Since $S'$ is a minimal graph
over $\Omega'$,    $S'$ must also be a plane (by Bernstein's
theorem).  Since $Y$ interesects each $S(t)$ perpendicularly, $Y'$
must intersect $S'$ perpendicularly.  Thus $S'$ is a plane
parallel to $\Omega'$, so $Df'\equiv 0$. In particular, $L=0$ as
asserted.

Thus we may suppose that $\partial \Omega'$ (which is also
$\partial S'$) is nonempty.

By Schwartz reflection, we can extend $S'$ to a surface
$S^\dagger$ such that $\partial S^\dagger$ is a compact subset of
the plane $P$ containing $\partial S'$ and such that $S^\dagger$
has only one end, which is a Lipschitz graph over that plane. Thus
the end is either planar or catenoidal.  It cannot be catenoidal
since it contains rays.  Hence the end is planar, which implies
that
\[
   \lim_{x\to\infty} f(x) = 0.
\]
But then $f\equiv 0$ by the maximum principle, so $Df\equiv 0$, and
therefore $L=0$ by \eqref{e:Lpoint}. This completes the proof that $\|Df_n\|_0\to 0$ and thus
the proof of assertion~\eqref{C^1-item}.

For the proofs of assertions~\eqref{nondegenerate-item}--\eqref{unique-surface-item}, 
it is convenient to make the following observation:

\begin{alt-claim}\label{planar-limit-domain-claim}
Suppose that $p_n\in S_n\setminus \partial S_n$ and that $\dist(p_n,\partial S_n)\to 0$.
Translate $S_n$ by $-p_n$ and dilate by $1/\dist(p_n, \partial S_n)$
to get a surface $S_n'$.  Then a subsequence of the $S_n'$ converges to one of the following
planar regions:
\begin{enumerate}[\upshape $\bullet$]
 \item a quadrant,
  \item a rounded quadrant,
\item a quadrant pair, 
 \item a rounded quadrant pair, or
 \item a halfplane.
\end{enumerate}
\end{alt-claim}

The claim follows immediately from the definitions (and the fact that $\|Df_n\|\to 0$) so we omit the proof.

Next we show assertion~\eqref{nondegenerate-item} of proposition~\ref{rounding-sequence-proposition}: 
 that $S_n$ is $Y$-nondegenerate for all sufficiently large $n$.
In fact, we prove somewhat more:

\begin{alt-claim}\label{eigenfunction-claim}
Suppose $u_n$ is an eigenfunction of the Jacobi operator on $S_n$ with eigenvalue $\lambda_n$,
normalized so that 
\[
    \|u_n\|_0 = \max  |u_n(\cdot)| = \max u_n(\cdot) = 1.
\]
Suppose also that the $\lambda_n$ are bounded.  Then (after passing to a subsequence)
the $S_n$ converge smoothly on compact sets to an eigenfunction $u$ on $S$ with
eigenvalue $\lambda=\lim_n\lambda_n$.
\end{alt-claim}

(With slightly more work, one could prove that for every $k$, the $k$th eigenvalue 
 of the jacobi operator on $S_n$ converges to the $k$th eigenvalue
of the jacobi operator on $S$.  However, we do not need that result.)

\begin{proof}
By passing to a subsequence, we can assume that 
the $\lambda_n$ converge to a limit $\lambda$, 
and that the $u_n$ converge smoothly
away from the corners of $S$ to a solution of
\[
    Ju = -\lambda u
\]
where $J$ is the jacobi operator on $S$. To prove the claim, it suffices to show
that $u$ does not vanish everywhere, and that $u$ extends continuously to the corners
of $S$.

Since $u$ is bounded, that $u$ extends continuously to the corners is a standard
removal-of-singularities result.
(One way to see it is as follows.  Extend $u$ by reflection to
the the smooth manifold-with-boundary $M= \overline{S\cup \rho_ZS}$.  Now $u$
solves $\Delta u = \phi u$ for a certain smooth function $\phi$ on $M$.  Let $v$ be the solution
of $\Delta v=\phi u$ on $M$ with $v|\partial M=0$ given by the Poisson formula.  Then $v$
is continuous on $M$ and smooth away from a finite set (the corners of $\Gamma$).
Away from the corners of $M$,  $u-v$ is a bounded harmonic function that vanishes
on $\partial M$.  But isolated singularities of bounded harmonic functions are removable,
so $u-v\equiv 0$.)

To prove that $u$ does not vanish everywhere, let $p_n$ be a point at which $u_n$
attains its maximum:
\[
   u_n(p_n) = 1 = \max_{S_n}|u_n(\cdot)|.
\]
By passing to a subsequence, we can assume that the $p_n$ converge to a point 
 $p\in \overline{S}$.
 We assert that $p\notin \partial S$.  For suppose $p\in\partial S$.
Translate\footnote{See Remark~\ref{meaning-of-translation-remark}.}
  $S_n$  by $-p_n$ and dilate by 
\[
  c_n := \frac1{\dist(p_n, \partial S_n)}= \frac1{\dist(p_n, \Gamma (t_n,s_n))}
\]
to get  $S_n'$.
Let $u_n'$ be the eigenfunction on $S_n'$ corresponding to $u_n$.
Note that $u_n'$ has eigenvalue $\lambda_n/c_n^2$.

We may assume (after passing to a subsequence) that the $S_n'$ converge to one of the 
planar regions $S'$ listed in lemma~\ref{planar-limit-domain-claim}. 
The convergence $S_n'\to S_n$ is smooth except possibly at the corner (if there
is one) of $S'$. 

By the smooth convergence of $S_n'$ to $S'$, the $u_n'$ converge
subsequentially to a jacobi field $u'$ on $S'$ that is smooth except
possibly at the corner (if there is one) of $S'$.   Since $S'$ is flat, $u'$ is a harmonic function.
Note that $u'(\cdot)$ attains its maximum
value of $1$ at $O$.  By the strong maximum principle for harmonic functions, $u'\equiv 1$ on the connected component of $S'\setminus \partial S'$
containing $O$.  But $u'\equiv 0$ on $\partial S'$, a contradiction.
Thus $p$ is in the interior of $S$, where the smooth convergence $u_n\to u$
implies that $u(p)=\lim u(p_n)=1$.

This completes
the proof of claim~\ref{eigenfunction-claim}
(and therefore also the proof of assertion~\eqref{nondegenerate-item} 
in proposition~\ref{rounding-sequence-proposition}.)
\end{proof}

To prove assertion~\eqref{unique-function-item} of proposition~\ref{rounding-sequence-proposition}, 
note that by assertion~\eqref{C^1-item} of the proposition applied to the $g_n$, 
\[
  \|g_n\|_0 + \|Dg_n\|\to 0.
\]
Thus if $\Sigma_n$ is the normal graph of $g_n$, then $\Sigma_n\subset V(S,\eps_n)$ for $\eps_n\to 0$.
Hence assertion~\eqref{unique-function-item} of the proposition is a special case 
of assertion~\eqref{unique-surface-item}.

Thus it remain only to prove assertion~\eqref{unique-surface-item}.  Suppose it is false.  
Then (after passing to a subsequence) there exist embedded minimal $Y$-surfaces
$\Sigma_n\ne S_n$ such that $\partial \Sigma_n=\partial S_n$ and such that
\[
  \text{$ \Sigma_n\subset \overline{V(S,\eps_n)}$ with $\eps_n\to 0$}. \tag{*}
\]
Now~\thetag{*} implies, by the extension of 
Allard's boundary regularity theorem in \cite{white-controlling-area},
that the $\Sigma_n$ converge smoothly to $S$ away from the corners of $S$.
(We apply theorem~6.1 of \cite{white-controlling-area} in 
the ambient space obtained by removing the 
corners of $S$ from $\sS^2\times \RR$.)

  Choose a point $q_n\in \Sigma_n$
that maximizes $\dist(\cdot, S_n)$.  Let $p_n$ be the point in $S_n$ closest to $q_n$.
Since $\partial S_n=\partial \Sigma_n$,
\begin{equation}\label{relative-distances}
    \dist(p_n,q_n) \le \dist(p_n, \partial \Sigma_n) = \dist(p_n, \partial S_n).
\end{equation}
By passing to a subsequence, we may assume that $p_n$ converges to a limit $p\in \overline{S}$.

If $p$ is not a corner of $S$,  then the smooth convergence $\Sigma_n\to S$ away
from the corners implies that there is a bounded $Y$-invariant jacobi field $u$ on
$S\setminus C$ such that $u$ vanishes on $\partial S\setminus C=\Gamma \setminus C$
and such that
\[
   \max |u(\cdot)| = u(p) = 1.
\]
By standard removal of singularities (see the second paragraph
of the proof of claim~\ref{eigenfunction-claim}), 
the function $u$ extends continuously to the corners.
By hypothesis, there is no such $u$. 
Thus $p$ must be one of the corners of $S$ (i.e., one of the corners of $\Gamma$.)
  Translate $S_n$, $\Sigma_n$, and $q_n$ by
$-p_n$ and dilate by $1/\dist(p_n, \partial S_n)$ to get $S_n'$, $\Sigma_n'$, and $q_n'$.

By passing to a subsequence, we can assume that the $S_n'$ converge to one
of the planar regions $S'$ listed in the statement of lemma~\ref{planar-limit-domain-claim}.
We can also assume that the $\Sigma'_n$ converge as sets to a limit set $\Sigma'$,
and that the points $q_n'$ converge to a limit point $q'$.
Note that 
\begin{equation}\label{bounded-distance-away}
  \sup_{\Sigma'} \dist(\cdot, S') = \dist(q',S') \le \dist(O, S') = 1
\end{equation}
by~\eqref{relative-distances}.

We claim that $\Sigma'\subset S'$.  
We prove this using catenoid barriers as follows.
Let $P$ be the plane containing $S'$ 
and consider a connected component $\mathcal{C}$ of the set of catenoids
whose waists are circles in $P\setminus S'$.
(There are either one or two such components according to whether
$P\setminus S'$ has one or two components.)   Note that the ends of each such
catenoid are disjoint from $\Sigma'$ since $\Sigma'$ lies with a bounded distance of $S'$.
By the strong maximum principle,
the catenoids in $\mathcal{C}$ either all intersect $S'$ or else are all disjoint from $S'$.  
Now $\mathcal{C}$ contains catenoids whose waists are unit circles that are arbitrarily far from $S'$.
Such a catenoid (if its waist is sufficiently far from $S'$) is disjoint from $\Sigma'$.  Thus all the
catenoids in $\mathcal{C}$ are disjoint from $\Sigma'$. We have shown that if the waist of catenoid
is a circle in $P\setminus S'$, then the catenoid is disjoint from $\Sigma'$.  The union of all such catenoids
is $\RR^3\setminus S'$, so $\Sigma'\subset S'$ as claimed.

Again by the extension of Allard's boundary regularity theorem
in \cite{white-controlling-area}*{theorem~6.1},
the $\Sigma_n'$ must converge smoothly to $S'$ except at the corner (if there
is one) of $S'$.

The smooth convergence of $\Sigma_n'$ and $S_n'$ to $S'$ implies
existence of a bounded jacobi field $u'$ on $S'$ that is smooth
except at the corner, that takes its maximum value of $1$ at $O$,
and that vanishes on $\partial S'$. 
Since $S'$ is flat, $u'$ is a harmonic function.  By the maximum principle, $u'\equiv 1$
on the connected component of $S'\setminus \partial S'$ containing $O$.
But that is a contradiction since $u'$ vanishes on $\partial S'$.
\end{proof}

 \section{Additional properties 
               of the family \texorpdfstring{$t\mapsto S(t)$}{Lg}}\label{additional-properties-section}
 
 We now prove that the surfaces $S(t)$ of theorem~\ref{bridged-approximations-theorem} 
 have a strong uniqueness property for small $t$:

\begin{theorem}\label{strong-uniqueness-theorem}
Let $t\in (0,\tau]\mapsto S(t)$ be the one-parameter family of minimal $Y$-surfaces given by 
theorem~\ref{bridged-approximations-theorem}.
For every sufficiently small $\eps>0$, there is a $\tau'>0$ with the following property.
For every $t\in (0,\tau']$, the surface $S(t)$ lies in $V(S,\eps)$ 
(the small neighborhood of $S$ defined in section~\ref{rounding-section}) 
and is the unique minimal embedded 
 $Y$-surface in $V(S,\eps)$ with boundary $\Gamma(t)$.
 \end{theorem}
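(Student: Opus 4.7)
\smallskip

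\noindent\textbf{Proof proposal.}
The containment $S(t)\subset V(S,\eps)$ for $t\le \tau'$ is essentially built into theorem~\ref{bridged-approximations-theorem}: since $S(t)$ is the normal graph of $f_t$ with $\|f_t\|_0+\|Df_t\|_0\to 0$ as $t\to 0$, for any preassigned $\eps>0$ we have $S(t)\subset V(S,\eps)$ once $t$ is small enough. So the real content of the theorem is the uniqueness assertion, which I plan to prove by contradiction combined with the local uniqueness lemma~\ref{TubularNeighborhoodLemma} and the uniqueness clause~\eqref{unique-surface-item} of proposition~\ref{rounding-sequence-proposition}.

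Fix $\eps>0$ small enough that lemma~\ref{TubularNeighborhoodLemma} applies, i.e., so that $S$ is the only minimal $Y$-surface in $\overline{V(S,\eps)}$ with boundary $\Gamma$ (smooth except possibly at the corners), and so that the nearest-point projection to $M=\overline{S\cup\rho_Z S}$ is well defined on $V(S,\eps)$. Suppose the theorem fails for this $\eps$. Then there is a sequence $t_n\downarrow 0$ and minimal embedded $Y$-surfaces $\Sigma_n\subset V(S,\eps)$ with $\partial\Sigma_n=\Gamma(t_n)$ and $\Sigma_n\neq S(t_n)$. The first step is to extract a subsequential smooth limit of the $\Sigma_n$. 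Since each $\Sigma_n$ is contained in the bounded region $\overline{V(S,\eps)}\cap N$ and has boundary $\Gamma(t_n)$ of uniformly bounded length, the isoperimetric constant $c_\gamma$ of remark~\ref{isoperimetric-equivalence-remark} gives uniform area bounds; this together with Gauss--Bonnet (and the bound on the ambient sectional curvature) yields a uniform total curvature bound, exactly as in~\eqref{total-curvature-bound}. By the curvature estimates of~\cite{white-curvature-estimates}*{theorem~3}, after passage to a subsequence the $\Sigma_n$ converge smoothly on compact subsets of the interior, and by the extension of Allard's boundary regularity theorem in~\cite{white-controlling-area}*{6.1} the convergence is smooth up to $\Gamma$ away from the corners of $\Gamma$. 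The limit $\Sigma$ is an embedded minimal $Y$-surface in $\overline{V(S,\eps)}$ with $\partial\Sigma=\Gamma$ (smooth except possibly at the corners).

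By the choice of $\eps$ and lemma~\ref{TubularNeighborhoodLemma}\eqref{unique-in-V-item}, we must have $\Sigma=S$. Smooth convergence $\Sigma_n\to S$ away from the corners, together with the fact that each $\Sigma_n$ has boundary inside the fixed tubular neighborhood where the nearest-point projection $\pi_M$ is well defined, lets me write $\Sigma_n$ as the normal graph over a domain $\Omega_n'\subset M$ of a function $g_n$ with $\|g_n\|_0+\|Dg_n\|_0\to 0$; in particular $\Sigma_n\subset V(S,\delta_n)$ with $\delta_n\to 0$. Applying proposition~\ref{rounding-sequence-proposition}\eqref{unique-surface-item} with the two sequences $S_n:=S(t_n)$ and $\Sigma_n$ (both minimal $Y$-surfaces with boundary $\Gamma(t_n)$, both eventually in $V(S,\delta_n)$) then forces $\Sigma_n=S(t_n)$ for all large $n$, contradicting $\Sigma_n\neq S(t_n)$.

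The step I expect to be the main obstacle is the smooth convergence $\Sigma_n\to S$ up to and including the corners of $\Gamma$, where the $\Sigma_n$ are a priori only known to be graphical in a weak sense and the boundary $\Gamma(t_n)$ is itself changing by a rounding. The interior convergence and the convergence up to the smooth part of $\Gamma$ are fairly standard, but handling the corners requires either the boundary version of Allard cited above or a more hands-on removal-of-singularities argument (as in the second paragraph of the proof of claim~\ref{eigenfunction-claim}) to rule out a nontrivial bounded jacobi-field-like defect at the corner points. Once that convergence is in hand, feeding the result into proposition~\ref{rounding-sequence-proposition}\eqref{unique-surface-item} closes the argument.
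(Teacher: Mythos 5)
Your first two sentences (the containment $S(t)\subset V(S,\eps)$ being automatic from theorem~\ref{bridged-approximations-theorem}) match the paper, and your plan of deriving a contradiction via proposition~\ref{rounding-sequence-proposition}\eqref{unique-surface-item} is also the paper's plan. But your route to that proposition has a genuine gap. You fix a single small $\eps$, get $\Sigma_n\subset V(S,\eps)$ with $\partial\Sigma_n=\Gamma(t_n)$, and then try to produce a smooth subsequential limit by deriving an area bound (fine) and then a total curvature bound ``via Gauss--Bonnet, exactly as in~\eqref{total-curvature-bound}.'' That step is not justified: for a compact minimal surface, Gauss--Bonnet converts an area bound into a total-curvature bound only if you also control the Euler characteristic, and nothing in the hypotheses bounds the genus of the competing $Y$-surfaces $\Sigma_n$ in $V(S,\eps)$. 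Everywhere the paper runs the Gauss--Bonnet argument (e.g.\ in the proof of proposition~\ref{bumpy-suffices-proposition}) the surfaces involved lie in $\MM^s_\gamma(\Gamma,n)$ for a fixed $n$, so their topology is bounded a priori; here the $\Sigma_n$ are arbitrary embedded minimal $Y$-surfaces in $V(S,\eps)$ with boundary $\Gamma(t_n)$, and you would need a separate argument (not given) that containment in a fixed-width neighborhood of $S$ plus an area bound forces bounded genus. Without it, the compactness theorems you cite do not apply, the extraction and identification of the limit $\Sigma=S$ are unsupported, and the decisive conclusion $\Sigma_n\subset V(S,\delta_n)$ with $\delta_n\to 0$ is not reached. (Your flagged worry about the corners of $\Gamma$ is real but secondary; the topology bound is the more fundamental missing ingredient.)

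The paper's proof avoids compactness entirely by unwinding the quantifier ``for every sufficiently small $\eps$'': negating it produces a sequence $\eps_n\to 0$, not a single fixed $\eps$, for which no $\tau'$ works. The alternative that $S(t)\not\subset V(S,\eps_n)$ for arbitrarily small $t$ is impossible since $S(t)\to S$, so for each $n$ one gets a $t_n\to 0$ and a second $Y$-surface $\Sigma_n\subset V(S,\eps_n)$ with boundary $\Gamma(t_n)$. That is exactly the hypothesis of proposition~\ref{rounding-sequence-proposition}\eqref{unique-surface-item}, which then yields the contradiction with no limit-extraction needed. Replacing your opening ``fix one $\eps$'' with this observation closes the gap and shortens the argument considerably.
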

 
\begin{proof}
Suppose the theorem is false.  Then there is a sequence of $\eps_n\to 0$
such that, for each $n$, either
\begin{enumerate}
\item there are arbitrarily large $t$ for which $S(t)$ is not contained in $V(S,\eps_n)$, or
\item there is a $t_n$ for which $S(t_n)$ is contained in $V(S,\eps_n)$ but such that
 $V(S,\eps_n)$ contains a second embedded minimal $Y$-surface $\Sigma_n$ with boundary $\Gamma(t_n)$.
 \end{enumerate}
 The first is impossible since $S(t)\to S$ as $t\to 0$.
 Thus the second holds for each $n$.  But (2) contradicts assertion~\eqref{unique-surface-item} of proposition~\ref{rounding-sequence-proposition}. 
\end{proof}

According to theorem~\ref{bridged-approximations-theorem}, 
for each embedded minimal $Y$-surface $S$ bounded by $\Gamma$,
we get a family of minimal surfaces $t\mapsto S(t)$ with $\partial S(t)=\Gamma(t)$.   The following theorem
says, roughly speaking, that as $t\to 0$,  those surfaces account for all
minimal embedded $Y$-surfaces bounded by $\Gamma(t)$.

\begin{theorem}\label{all-accounted-for-theorem}
Let $t\mapsto \Gamma(t)$ be a rounding of $\Gamma$.  Let $S_n$ be a sequence of embedded minimal $Y$-surfaces
in $H^+\cap \{|z|\le h\}$ such that $\partial S_n = \Gamma(t_n)$ where $t_n\to 0$.  Suppose the number of points in
$S_n\cap Y^+$ is bounded independent of $n$.  Then, after passing to a subsequence, the $S_n$ converge
to a smooth minimal embedded $Y$-surface $S$ bounded by $\Gamma$, and $S_n =S(t_n)$ for all sufficiently large $n$,
where $t\mapsto S(t)$ is the one-parameter family given by theorem~\ref{bridged-approximations-theorem}.
\end{theorem}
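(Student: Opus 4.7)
The plan is to extract a smooth subsequential limit $S$ of the sequence $S_n$, identify $S$ as an embedded minimal $Y$-surface bounded by $\Gamma$, and then invoke Theorem~\ref{strong-uniqueness-theorem} to force $S_n = S(t_n)$ for all sufficiently large $n$.

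The first step is to establish uniform area and total-curvature bounds. Since each $S_n$ is minimal in the mean-convex region $N=\overline{H^+}\cap\{|z|\le h\}$, hypothesis~(3) of Theorem~\ref{main-bumpy-theorem}, in the form described in Remark~\ref{isoperimetric-equivalence-remark}, gives
\[
   \area(S_n)\le c_\gamma\,\length(\Gamma(t_n))\le C,
\]
since $\length(\Gamma(t_n))\to \length(\Gamma)<\infty$. The hypothesis that $\|S_n\cap Y^+\|$ is bounded, combined with $\rho_Y$-invariance and Corollary~\ref{$Y$-corollary}, bounds the genus (and hence $\chi$) of $S_n$ uniformly. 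Schwarz-doubling to $M_n:=\overline{S_n\cup\rho_Z S_n}$ yields smooth embedded minimal surfaces of uniformly bounded genus and area, so Gauss--Bonnet together with the ambient sectional curvature bounds on $N$ produces the total-curvature estimate
\[
    \int_{M_n} |A_{M_n}|^2\,dA \le C'.
\]

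Second, I would extract a smooth subsequential limit. By the curvature estimates in \cite{white-curvature-estimates}*{theorem~3} applied on the interior of $N$ minus the finite set of corners of $\Gamma$, and by the sharpened Allard boundary regularity theorem in \cite{white-controlling-area}*{6.1} applied along $\Gamma\setminus\{\text{corners}\}$ (where the boundary $\Gamma(t_n)$ converges smoothly), a subsequence of the $M_n$ converges smoothly, on compact subsets of $N$ away from the corners of $\Gamma$, to a smooth embedded minimal surface $M$. Restricting to $\overline{H^+}$ produces a smooth embedded minimal surface $S$ with $\partial S=\Gamma$. The isoperimetric bound on small balls around each corner of $\Gamma$ prevents mass concentration there, so $S_n\to S$ in Hausdorff distance on all of $N$. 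The $\rho_Y$-invariance and the identity $\|S\cap Y\|=2-\chi(S)$ (Proposition~\ref{Y-surface-topology-propostion}\eqref{exactly-fixed-points}) pass to the limit, so $S$ is an embedded minimal $Y$-surface with boundary $\Gamma$.

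Third, with $S$ in hand, the bumpiness hypothesis~(4) of Theorem~\ref{main-bumpy-theorem} guarantees that $S$ is $Y$-nondegenerate, so Theorem~\ref{bridged-approximations-theorem} supplies the canonical family $t\mapsto S(t)$ with $\partial S(t)=\Gamma(t)$, and Theorem~\ref{strong-uniqueness-theorem} supplies an $\eps>0$ and a $\tau'>0$ such that $S(t)$ is the \emph{unique} embedded minimal $Y$-surface in $V(S,\eps)$ bounded by $\Gamma(t)$ for every $t\in(0,\tau']$. Since $S_n\to S$ in Hausdorff distance and $t_n\to 0$, one has $S_n\subset V(S,\eps)$ and $t_n\le \tau'$ for all sufficiently large $n$, whence $S_n=S(t_n)$.

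The main obstacle will be the Hausdorff convergence $S_n\to S$ \emph{across} the corners of $\Gamma$: the smooth convergence away from corners is standard, but in principle the $S_n$ could develop thin protrusions near a corner $q$ of $\Gamma$ that escape $V(S,\eps)$. Ruling this out requires a blow-up analysis of the type already used in the proof of Proposition~\ref{rounding-sequence-proposition}: one rescales $S_n$ by $1/\dist(p_n,q)$ around a hypothetical protrusion point $p_n$, obtains (via Claim~\ref{planar-limit-domain-claim} and the bounded-curvature limit) a minimal surface whose boundary is a quadrant, rounded quadrant, (rounded) quadrant pair, or halfplane, and then concludes via Bernstein's theorem or a catenoid-barrier argument (exactly as in the last paragraphs of Proposition~\ref{rounding-sequence-proposition}) that the protrusion must lie within $\eps$ of the corresponding blow-up of $S$ at $q$. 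This reduces the global Hausdorff convergence to the boundary regularity already established away from corners.
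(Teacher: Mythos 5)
Your proposal follows the same route as the paper: area bounds from the isoperimetric hypothesis, total-curvature bounds via Gauss--Bonnet, the compactness theorem of~\cite{white-curvature-estimates} to extract a smooth subsequential limit $S$ away from the corners of $\Gamma$, and then Theorem~\ref{strong-uniqueness-theorem} to conclude $S_n = S(t_n)$. The only substantive difference is that you spell out the step where one must verify $S_n\subset V(S,\eps)$ across the corners (via a blow-up argument modeled on Proposition~\ref{rounding-sequence-proposition}), which the paper's terser proof treats as part of the compactness-theorem conclusion; this is a legitimate point to flag, and your treatment of it is sound.
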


\begin{proof}
The areas of the $S_n$ are uniformly bounded by hypothesis 
on the Riemannian metric on $\sS^2\times\RR$:
see~($3''$) in remark~\ref{isoperimetric-equivalence-remark}. 
Using  the Gauss-Bonnet theorem, the minimality of the $S_n$, 
and the fact that the sectional curvatures 
of $\sS^2\times\RR$ are bounded, it follows that 
\[
  \int_{S_n} \beta(S_n, \cdot)\,dA
\]
is uniformly bounded, where $\beta(S_n,x)$ is the square of the norm of the second fundamental form of $S_n$ at $x$.
It follows (see~\cite{white-curvature-estimates}*{theorem~3}) that after passing to a subsequence, 
the $S_n$ converge smoothly
(away from the corners of $\Gamma$)
 to a minimal embedded $Y$-surface $S$ with boundary $\Gamma$. 
By the uniqueness theorem~\ref{strong-uniqueness-theorem}, $S_n=S(t_n)$ for all sufficiently large $n$.
\end{proof}



\section{Counting the number of points in \texorpdfstring{$Y\cap S(t)$}{Lg}}\label{counting-section}

Consider a rounding $t\rightarrow\Gamma(t)$ of a boundary curve $\Gamma$, as specified in 
definition~\ref{Rounding1}. 
There are two qualitatively different ways to do the rounding at the crossings $O$ and $O^*$. We describe
what can happen at $O$ (the same description holds at $O^*$):
\begin{enumerate}
\item Near $O$, each  $\Gamma(t)$ connects points of $Z^+$ to points
of $X^+$ (and therefore points of $Z^-$ to points of $X^-$), or
\item the curve $\Gamma(t)$ connects points of $Z^+$ to points of $X^-$
(and therefore points of $Z^-$ to points of $X^+$.)
\end{enumerate}
\begin{definition} \label{PosNegRounding}
 In case (1),  the rounding  $t\rightarrow\Gamma(t)$ is {\em positive} at $O$.
In case (2),   the rounding $t\rightarrow\Gamma(t)$ is {\em negative} at $O$. 
Similar statements hold at $O^*$.
\end{definition}
In what follows, we will use the notation $\|A\|$ to denote the number of elements in a finite set $A$.

\begin{proposition}\label{disagreements-proposition}
Let $S$ be an open minimal embedded $Y$-surface in 
$N:=\overline{H^+} \cap \{|z|\le h\}$ bounded by $\Gamma$.
Let $t\mapsto S(t)$ be the family given by theorem~ \ref{bridged-approximations-theorem}, and suppose $S\cap Y$ has
exactly $n$ points.  Then 
\[
     \|S(t)\cap Y\|  =  \|S\cap Y\|  + \delta(S, \Gamma(t))
\]
where $\delta(S,\Gamma(t))$ is $0$, $1$, or $2$ 
according to whether the signs of $S$ and $\Gamma(t)$ agree at both $O$ and $O^*$, 
at one but not both of $O$ and $O^*$, or at neither $O$ nor $O^*$.
(In other words, $\delta(S,\Gamma(t))$ is the number of sign disagreements of $S$ and $\Gamma(t)$.
 See Figure~\ref{signs-figure}.)
\end{proposition}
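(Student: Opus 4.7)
The plan is to compare $\|S(t)\cap Y\|$ to $\|S\cap Y\|$ by using the smooth convergence $S(t)\to S$ away from the corners of $\Gamma$ given by theorem~\ref{bridged-approximations-theorem}, and then analyzing the contributions near the two corners $O$ and $O^*$ separately.

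First I would check that every point of $S\cap Y$ is a transverse intersection. Since $S$ is open, neither $O$ nor $O^*$ lies in $S$, so any $p\in S\cap Y$ is an interior point of $S$ and a fixed point of $\rho_Y$. By definition of a $Y$-surface, $\rho_Y$ acts on $S$ as an orientation-preserving isometry, so $d\rho_Y|_{T_pS}$ is an orientation-preserving linear involution of $T_pS$. The only such involutions in dimension two are $\pm I$, and $+I$ is excluded because the ambient fixed set of $\rho_Y$ is one-dimensional. Thus $d\rho_Y|_{T_pS}=-I$, so $T_pS$ is the normal plane to $Y$ at $p$ and $Y$ meets $S$ transversely. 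Smooth convergence $S(t)\to S$ on compact sets disjoint from the corners then gives, for any small neighborhoods $U_O,U_{O^*}$ of $O,O^*$ and all small $t$, a bijection between the points of $S\cap Y$ and the points of $S(t)\cap Y$ outside $U_O\cup U_{O^*}$.

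Next, $\Gamma(t)\subset H$ and $H\cap Y=\{O,O^*\}\not\subset \Gamma(t)$, so $\partial S(t)\cap Y=\emptyset$, and every point of $S(t)\cap Y$ is an interior point. The problem thus reduces to counting interior points of $S(t)\cap Y$ near $O$ (with $O^*$ analogous). The key geometric input is that $M:=\overline{S\cup\rho_Z S}$ is a smooth embedded surface tangent to $H$ at $O$, and $Y\perp H$ at $O$; hence $Y$ is transverse to $M$ at $O$, the unit normal $n(O)$ to $M$ is tangent to $Y$, and in a local parametrization $s\mapsto Y(s)$ with $Y(0)=O$, the projection $\pi$ onto $M$ satisfies $|\pi(Y(s))-O|=O(s^2)$ while the signed normal distance to $M$ satisfies $d(Y(s))=s+O(s^2)$. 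A point $Y(s)$ lies in $S(t)$ exactly when $\pi(Y(s))\in\Omega(t)$ and $d(Y(s))=f_t(\pi(Y(s)))$, where $f_t$ is the graphing function from theorem~\ref{bridged-approximations-theorem}. I would then split on the sign comparison, using remark~\ref{where-is-O-remark}. If the signs of $S$ and $\Gamma(t)$ disagree at $O$, then $O\in\Omega(t)$, so $\Omega(t)$ contains a fixed-size neighborhood of $O$ in $M$; since $\|f_t\|_0\to 0$, the implicit function theorem applied to $s\mapsto d(Y(s))-f_t(\pi(Y(s)))$ produces a unique small solution, giving exactly one point of $S(t)\cap Y$ near $O$. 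If the signs agree at $O$, then $O\notin\Omega(t)$ and $\Omega(t)$ is separated from $O$ in $M$ by a region of diameter $O(t)$ bounded by $\pi(\Gamma(t))$; for $|s|\le c\sqrt{t}$ the bound $|\pi(Y(s))-O|=O(s^2)$ traps $\pi(Y(s))$ in this excluded region, while for $|s|>c\sqrt{t}$ the equation $s+O(s^2)=f_t(\pi(Y(s)))$ has no solution since $\|f_t\|_0\to 0$ forces the right side to be smaller than $|s|$ for small $t$. Thus $S(t)\cap Y$ has no points near $O$. Adding the contributions $\delta_O$ and $\delta_{O^*}$ (each $0$ or $1$ according to sign agreement or disagreement) to the bijection away from the corners, and noting that $S$ does not contain $O,O^*$, yields $\|S(t)\cap Y\|=\|S\cap Y\|+\delta_O+\delta_{O^*}=\|S\cap Y\|+\delta(S,\Gamma(t))$.

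The main obstacle is the local analysis near the corners in the agreeing-sign case: because $Y$ is normal to $M$ at $O$, the projection $\pi\circ Y$ has vanishing first derivative there, so one must carefully track the $\sqrt{t}$ scale to rule out a spurious intersection of $S(t)$ with $Y$ inside the small excluded region bounded by $\pi(\Gamma(t))$; everything else is either direct smooth convergence or an application of remark~\ref{where-is-O-remark}.
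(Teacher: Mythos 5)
Your approach is genuinely different from the paper's: the paper observes that the normal-graph map $\Omega(t)\to S(t)$, $p\mapsto\exp_p(f_t(p)\,n(p))$, is $\rho_Y$-equivariant (because $\Omega(t)$, $S(t)$ and the normal field $n$ are all $\rho_Y$-invariant, and $f_t\circ\rho_Y=f_t$ by uniqueness of the graphing function), so it carries $\rho_Y$-fixed points to $\rho_Y$-fixed points and hence restricts to a bijection $\Omega(t)\cap Y\leftrightarrow S(t)\cap Y$. From there the count is immediate: $\Omega(t)\cap Y$ is $S\cap Y$ together with whichever of $O,O^*$ lie in $\Omega(t)$, and remark~\ref{where-is-O-remark} converts that into sign disagreements. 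You instead do a local analysis on the graphing equation $d(Y(s))=f_t(\pi(Y(s)))$ at each corner. That gets the right answer, but your version has a gap in the agreeing-sign case.

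The gap is in the step ``for $|s|>c\sqrt{t}$ the equation $s+O(s^2)=f_t(\pi(Y(s)))$ has no solution since $\|f_t\|_0\to 0$ forces the right side to be smaller than $|s|$.'' To rule out a root in the window $c\sqrt{t}<|s|<\delta$ you need $\|f_t\|_0<c\sqrt{t}$, i.e.\ a rate $\|f_t\|_0=o(\sqrt{t})$. Theorem~\ref{bridged-approximations-theorem} only gives $\|f_t\|_0+\|Df_t\|_0\to 0$ with no rate, so this step does not follow. (There is a mirror version of the same issue in the disagreeing-sign case: the IFT root lies at $|s|\approx\|f_t\|_0$, and your $O(s^2)$ bound does not by itself keep $\pi(Y(s))$ inside $\Omega(t)$ if $\|f_t\|_0\gtrsim\sqrt{t}$; and $\Omega(t)$ does not contain a ``fixed-size'' neighborhood of $O$, only a $O(t)$-scale one.) The fix is that your estimate $|\pi(Y(s))-O|=O(s^2)$ can be upgraded to an exact equality $\pi(Y(s))=O$ for $s$ near $0$: the nearest-point projection $\pi$ to $M$ is $\rho_Y$-equivariant since $\rho_Y$ is an isometry preserving $M$, and $Y$ is pointwise $\rho_Y$-fixed, so $\pi(Y(s))\in Y\cap M$, a discrete set whose only point near $O$ is $O$ itself. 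With $\pi(Y(s))\equiv O$ both cases collapse: if $O\in\Omega(t)$ the equation becomes $s+O(s^2)=f_t(O)$, with a unique small root; if $O\notin\Omega(t)$ then $O$ is not in the domain of $f_t$ and there is no local intersection. Once you make that observation your argument is, in substance, the paper's equivariance argument applied along $Y$ rather than globally.

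One small further remark: the sentence in the paper's printed proof that reads ``$O\in\Omega(t)$ if and only if $S$ and $\Gamma(t)$ have the same sign at $O$'' is a typo (it contradicts remark~\ref{where-is-O-remark} and figure~\ref{signs-figure}); the correct statement, which you use, is that $O\in\Omega(t)$ if and only if the signs disagree.
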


\begin{proof}
Recall that $S(t)$ is normal graph over $\Omega(t)$, the region in $M= \overline{S\cup \rho_ZS}$ bounded by
the image of $\Gamma(t)$ under the nearest point projection from a neighborhood of $M$ to $M$.
It follow immediately that 
\[
   \| Y\cap S(t)\| =  \|Y\cap \Omega(t)\|.
\]
Note that $Y\cap \Omega(t)$ consists of $Y\cap S$ 
together with one or both of  the points $O$ and $O^*$.  
(The points $O$ and $O^*$ in $\partial S=\Gamma$ do not belong to $S$ because $S$ is open.)
Recall also (see remark~\ref{where-is-O-remark}) that $O\in \Omega(t)$ if and only if $S$ and $\Gamma(t)$
have the same sign at $O$.  Likewise, $O^*\in \Omega(t)$ if and only if $S$ and $\Gamma(t)$ have
the same sign at $O^*$.  The result follows immediately.
\end{proof}

\begin{definition} 
Let $\Mm(\Gamma)$ be the set of all open, minimal embedded $Y$-surfaces $S\subset N$
  such that $\partial S=\Gamma$.
  (Here $\Gamma=\Gamma_C$ is the curve in the statement of theorem~\ref{main-bumpy-theorem}.)

Let $\Mm^s(\Gamma,n)$ be the set of surfaces $S$ in $\Mm(\Gamma)$ 
  such that $S\cap Y=n$ and such that $S$ has sign $s$ at $O$.

 If $\Gamma'$ is a smooth, $\rho_Y$-invariant curve (e.g., one of the rounded curves $\Gamma(t)$)
 in $\overline{H^+}$ such 
 that $\Gamma'/\rho_Y$ has
  exactly one component, we let $\Mm(\Gamma',n)$ be the set of 
  embedded minimal $Y$-surfaces $S$ in 
    $\overline{H^+}$ such that $\partial S=\Gamma'$ 
    and such that $S\cap Y$ has exactly $n$ points.
\end{definition}

\begin{proposition}\label{double-positive-rounding-proposition}
Suppose the rounding $t\mapsto \Gamma(t)$ is positive at $O$ and at $O^*$.
\begin{enumerate}[\upshape (1)]
\item If $n$ is even and $S\in \Mm^+(\Gamma,n)$, then $S(t)\in \Mm(\Gamma(t),n)$.
\item If $n$ is odd and $S\in \Mm^s(\Gamma,n)$, then $S(t)\in \Mm(\Gamma(t),n+1)$.
\item If $n$ is even and $S\in \Mm^-(\Gamma,n)$, then $S(t)\in \Mm(\Gamma(t),n+2)$.
\end{enumerate}
\end{proposition}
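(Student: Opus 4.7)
The plan is to derive this as an immediate consequence of the two previous results: Proposition~\ref{disagreements-proposition}, which computes $\|S(t)\cap Y\|$ in terms of $\|S\cap Y\|$ and the number $\delta(S,\Gamma(t))$ of sign disagreements between $S$ and $\Gamma(t)$ at $O$ and $O^*$, together with Lemma~\ref{parity-sign-lemma}, which relates the signs of $S$ at $O$ and $O^*$ to the parity of $\|S\cap Y\|$.

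First I would recall that since $S$ is a $Y$-surface with $\partial S = \Gamma$, Lemma~\ref{parity-sign-lemma} tells us that the signs of $S$ at $O$ and $O^*$ agree when $\|Y\cap S\|$ is even and disagree when $\|Y\cap S\|$ is odd. Next, I would use the hypothesis that the rounding is positive at both $O$ and $O^*$, together with the definition of $\delta(S,\Gamma(t))$, to tabulate the three cases:
\begin{enumerate}
\item If $n$ is even and $S\in\MM^{+}(\Gamma,n)$, then $S$ is positive at $O$ and (by parity) also positive at $O^*$, so its signs agree with those of $\Gamma(t)$ at both crossings, giving $\delta=0$.
\item If $n$ is odd and $S\in\MM^{s}(\Gamma,n)$, then the signs of $S$ at $O$ and $O^*$ are opposite, so regardless of $s$ the surface $S$ agrees with $\Gamma(t)$ at exactly one of the two crossings, giving $\delta=1$.
\item If $n$ is even and $S\in\MM^{-}(\Gamma,n)$, then $S$ is negative at both $O$ and $O^*$, so it disagrees with $\Gamma(t)$ at both, giving $\delta=2$.
\end{enumerate}

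Then I would invoke Proposition~\ref{disagreements-proposition} to conclude $\|S(t)\cap Y\|=n+\delta$, which gives $n$, $n+1$, and $n+2$ respectively in the three cases. Finally, since Theorem~\ref{bridged-approximations-theorem} ensures $S(t)$ is an embedded minimal $Y$-surface with boundary $\Gamma(t)$, each $S(t)$ lies in the appropriate $\MM(\Gamma(t),\cdot)$.

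There is essentially no obstacle: all the content is packaged in Proposition~\ref{disagreements-proposition} and Lemma~\ref{parity-sign-lemma}. The only thing to be careful about is correctly reading off the sign agreements from Definition~\ref{PosNegRounding} (positivity of the rounding at $O$ means $\Gamma(t)$ near $O$ has the same pairing as the positive quadrants) and from the definition of the sign of $S$ in section~\ref{sign-section}, so that the tabulation of $\delta$ in each case is justified without appealing to a picture.
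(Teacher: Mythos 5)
Your proposal is correct and takes essentially the same approach as the paper: apply Lemma~\ref{parity-sign-lemma} to determine the sign of $S$ at $O^*$, tabulate the sign disagreements $\delta(S,\Gamma(t))$ in each case, and conclude via Proposition~\ref{disagreements-proposition}.
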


\begin{remark}\label{sign-switch-remark}
Of course, the statement remains true if we switch all the signs.
\end{remark}

\begin{proof}
If $n$ is odd and  $S\in \Mm^s(\Gamma,n)$, then 
 $S$ has different signs at $O$ and $O^*$ by lemma~\ref{parity-sign-lemma}, 
and thus $\delta(S,\Gamma(t))=1$.

Now suppose that $n$ is even and that $S\in \Mm^s(\Gamma,n)$.  
Then by lemma~\ref{parity-sign-lemma}, the surface $S$ has the same
sign at $O^*$ as at $O$, namely $s$.  Thus $\delta(S,\Gamma)$ is $0$ if $s=+$ and is
$2$ if $s=-$. Proposition~\ref{double-positive-rounding-proposition} now follows
immediately from proposition~\ref{disagreements-proposition}.
\end{proof}

\begin{proposition}\label{mixed-sign-rounding-proposition}
Suppose the rounding $t\mapsto \Gamma(t)$ has sign $s$ at $O$ and $-s$ at $O^*$.
\begin{enumerate}[\upshape (1)]
\item If $n$ is odd and $S\in\Mm^s(\Gamma,n)$, then $S(t)\in \Mm(\Gamma(t),n)$.
\item If $n$ is even and $S$ is in $\Mm^+(\Gamma,n)$ or $\Mm^-(\Gamma,n)$, then
 $S(t)\in \Mm(\Gamma(t),n+1)$.
\item If $n$ is odd and $S\in \Mm^{-s}(\Gamma,n)$, then $S(t)\in \Mm(\Gamma(t),n+2)$.
\end{enumerate}
\end{proposition}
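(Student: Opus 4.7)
The proof plan is to deduce this directly from Proposition~\ref{disagreements-proposition} together with Lemma~\ref{parity-sign-lemma}, by a routine case analysis on the signs of $S$ at $O$ and $O^*$. First, I would observe that for any $S\in \Mm^{\sigma}(\Gamma,n)$, Lemma~\ref{parity-sign-lemma} tells us that the sign of $S$ at $O^*$ equals $\sigma$ if $n$ is even and equals $-\sigma$ if $n$ is odd. So the full pair of signs $(\text{sign}_O S,\text{sign}_{O^*}S)$ is determined by $\sigma$ and the parity of $n$.

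Next, I would set up the sign pairs for the rounding. By hypothesis the pair for $\Gamma(t)$ is $(s,-s)$. Writing $\delta(S,\Gamma(t))$ as the number of coordinate positions in which $(\text{sign}_O S,\text{sign}_{O^*}S)$ and $(s,-s)$ disagree, I would then verify the three cases:
\begin{enumerate}
\item If $n$ is odd and $S\in\Mm^s(\Gamma,n)$, then the sign pair of $S$ is $(s,-s)$, which matches that of $\Gamma(t)$ in both entries, so $\delta=0$.
\item If $n$ is even and $S\in\Mm^{\pm}(\Gamma,n)$, then the sign pair of $S$ is $(+,+)$ or $(-,-)$, each of which disagrees with $(s,-s)$ in exactly one entry, so $\delta=1$.
\item If $n$ is odd and $S\in\Mm^{-s}(\Gamma,n)$, then the sign pair of $S$ is $(-s,s)$, which disagrees with $(s,-s)$ in both entries, so $\delta=2$.
\end{enumerate}

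Finally, Proposition~\ref{disagreements-proposition} gives $\|S(t)\cap Y\| = \|S\cap Y\|+\delta(S,\Gamma(t)) = n+\delta$, so $S(t)\in \Mm(\Gamma(t),n+\delta)$ with the appropriate value of $\delta$ in each case. This yields the three assertions. There is no substantive obstacle here: the proposition is essentially the bookkeeping twin of Proposition~\ref{double-positive-rounding-proposition} (as hinted by Remark~\ref{sign-switch-remark}), the only difference being that the rounding now has opposite signs at $O$ and $O^*$, which swaps the roles of the even-$n$ and odd-$n$ cases. The proof is entirely a matter of tracking these two sign pairs and applying the two previous results.
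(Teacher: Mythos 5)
Your proof is correct and follows exactly the approach the paper indicates: the paper states that the argument is ``almost identical to the proof of proposition~\ref{double-positive-rounding-proposition},'' which likewise reduces the claim to Lemma~\ref{parity-sign-lemma} and Proposition~\ref{disagreements-proposition} via a short case analysis on the sign pair at $O$ and $O^*$. Your bookkeeping is accurate in all three cases.
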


The proof is almost identical to the proof of proposition~\ref{double-positive-rounding-proposition}.

\begin{theorem}\label{main-count-theorem}
For every nonnegative integer $n$ and for each sign $s$, the set $\Mm^s(\Gamma,n)$ has
an odd number of surfaces.
\end{theorem}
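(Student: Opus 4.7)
The plan is to translate the mod-$2$ enumeration of $\|\MM^s(\Gamma,n)\|$ into mod-$2$ enumerations of minimal $Y$-surfaces bounded by smooth rounded curves, and then to compute the rounded counts directly. I would use four roundings of $\Gamma$: the doubly-signed $\Gamma^{++}(t)$ and $\Gamma^{--}(t)$, each with two components and yielding equations at even~$m$, and the mixed-sign $\Gamma^{+-}(t)$ and $\Gamma^{-+}(t)=\mu_E\Gamma^{+-}(t)$, each connected and yielding equations at odd~$m$. By Theorem~\ref{all-accounted-for-theorem}, for each $m$ and each sufficiently small $t$, every element of $\MM(\Gamma^\epsilon(t),m)$ has the form $S(t)$ for some $S\in\MM(\Gamma)$; combined with Propositions~\ref{double-positive-rounding-proposition} and~\ref{mixed-sign-rounding-proposition}, this yields for each $\epsilon$ and each admissible $m$ an explicit formula, for example
\[
\|\MM(\Gamma^{++}(t),m)\|=\|\MM^+(\Gamma,m)\|+\|\MM^+(\Gamma,m-1)\|+\|\MM^-(\Gamma,m-1)\|+\|\MM^-(\Gamma,m-2)\|
\]
for $m$ even, with the same right-hand side (at odd $m$) for $\Gamma^{+-}(t)$, and analogous formulas for $\Gamma^{--}(t)$ and $\Gamma^{-+}(t)$ (in which the roles of $+$ and $-$ are swapped at the endpoints). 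The top-index term appears in exactly one of the four equations at level~$m$, so the system is triangular in~$n$.

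Next I would compute the parities $\|\MM(\Gamma^\epsilon(t),m)\|\bmod 2$ directly. At the smallest admissible level, the bounded component(s) of $H\setminus\Gamma^\epsilon(t)$ form a canonical element of $\MM(\Gamma^{\pm\pm}(t),0)$ or of $\MM(\Gamma^{\pm\mp}(t),1)$; strict stability (hypothesis~(2) of Theorem~\ref{main-bumpy-theorem}), bumpiness (hypothesis~(4)), and a maximum-principle sweep-out by suitable nearby minimal surfaces should identify it as the unique such element, giving the parity~$1$. For $m$ above the smallest admissible value, I would show the count vanishes mod~$2$ by a one-parameter continuation: deform $\Gamma^\epsilon(t)$ through a $G$-invariant family of smooth $\rho_Y$-invariant embedded curves to a reference curve that bounds no minimal $Y$-surface of the given fixed-point count, and use bumpiness along the family together with the implicit function theorem and Allard-type boundary regularity to ensure that minimal surfaces of each given topology appear and disappear in cancelling pairs.

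The triangular system is then inverted by induction on~$n$. The base case $n=0$ is immediate from the $m=0$ equations for $\Gamma^{\pm\pm}(t)$. For the inductive step at level~$n$, the two equations from the appropriate pair of roundings ($\Gamma^{\pm\pm}(t)$ if $n$ is even, $\Gamma^{\pm\mp}(t)$ if $n$ is odd) express $\|\MM^+(\Gamma,n)\|$ and $\|\MM^-(\Gamma,n)\|$ mod~$2$ as the known rounded count at level~$n$ plus a sum of terms $\|\MM^\pm(\Gamma,k)\|$ with $k<n$; each such smaller term is odd by the inductive hypothesis, and an elementary parity count yields $\|\MM^\pm(\Gamma,n)\|\equiv 1\pmod 2$, as required.

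The main obstacle is the direct parity computation for the smooth rounded curves. Uniqueness of the canonical minimal surface at the smallest admissible level requires a global obstruction to the existence of competing minimal surfaces in the interior of~$H^+$, beyond the local information provided by strict stability and bumpiness. The pair-cancellation at higher $m$ demands a genuine cobordism-type argument for the one-parameter family of boundary curves; handling degenerate transition events (jacobi-field births, tangencies with $\partial N$, or sheet interactions near the corners $O$ and $O^*$) while preserving the $Y$-symmetry and the corner structure of~$\Gamma$ requires careful use of the bumpiness hypothesis together with the area and curvature estimates exploited in Section~\ref{additional-properties-section}.
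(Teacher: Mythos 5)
Your overall architecture matches the paper's: round the corners, use Theorem~\ref{all-accounted-for-theorem} and Propositions~\ref{double-positive-rounding-proposition} and~\ref{mixed-sign-rounding-proposition} to relate $\|\Mm(\Gamma(t),m)\|$ to sums of $\|\Mm^s(\Gamma,k)\|$, and invert the resulting triangular system by induction on $n$. Your displayed formula and your handling of the two parities per level via two (or four, which is equivalent by $\mu_E$-symmetry) roundings are exactly the paper's recursion.

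Where you diverge is the source of the base data: the parities $\|\Mm(\Gamma^\epsilon(t),m)\|\bmod 2$. The paper does not compute these afresh; it invokes Theorem~\ref{Y-degree-theorem} (proved in~\cite{hoffman-white-number}), which states that for a smooth, suitably bumpy $\rho_Y$-invariant curve $C$ in a piecewise-smooth, weakly mean-convex region $N$ homeomorphic to a ball with no closed minimal surfaces, the number of minimal embedded $Y$-surfaces with $\|S\cap Y\|=n$ is odd precisely when $(n,\text{number of components of }C)$ is $(0,2)$ or $(1,1)$, and even otherwise. The paper then verifies the hypotheses (no closed minimal surfaces, no two adjacent components of $\partial N$ both minimal, strict stability of $\Omega(t)$ via Lemma~\ref{strictly-stable-lemma}).

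Your substitute for that citation is the genuine gap. For the smallest admissible $m$ you appeal to ``strict stability, bumpiness, and a maximum-principle sweep-out'' to identify $\Omega(t)$ as the unique element, but a sweep-out argument establishing uniqueness of a minimal surface with given boundary works in strictly convex regions, not in a region like $N=\overline{H^+}\cap\{|z|\le h\}$, which is only weakly mean-convex, has corners, and has the candidate surface lying \emph{on} $\partial N$ rather than in its interior. There is no elementary reason a second minimal $Y$-surface with the same boundary and $n$ cannot exist, and indeed the correct statement (Theorem~\ref{Y-degree-theorem}) is only about parity, not uniqueness. For higher $m$, your ``one-parameter continuation'' with ``cancelling pairs'' is precisely the content of a mod-$2$ degree theorem for minimal surfaces with boundary; carrying it out requires a global degree-theoretic framework handling bifurcations, boundary regularity, and genus control, which is the substance of~\cite{hoffman-white-number} and is not derivable from the tools in this paper alone. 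You flag this as the main obstacle, correctly; but as written the proposal re-raises rather than resolves it, and without either a correct independent argument or a citation to the degree theorem, the induction has no base data and the proof does not close.
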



\begfig
\centerline{
         \includegraphics[width=3.0in]{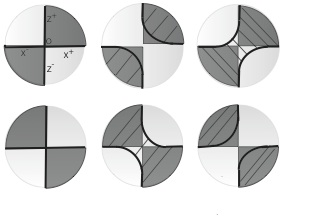}
}
 \caption{\label{signs-figure} 
  {\bf The sign of $S$ and $\Gamma (t)$ at $O$.}\newline 
   The behavior near $O$ of a surface $S\subset H^+$ with boundary $\Gamma$.\newline  {\bf First Column:}
  The surface $S$, here illustrated by
 the darker shading,   is tangent at
 $O$ to either the positive quadrants of $H$ (as illustrated on
 top) or the negative quadrants (on the bottom).  In the sense of section~\ref{sign-section},
  $S$ is positive at $O$ in the top illustration and negative  in the bottom illustration.  \newline {\bf Second column:} A curve $\Gamma (t)$ in a  positive rounding $t\rightarrow \Gamma(t)$ of $\Gamma$.   The  striped regions lie in the projections
 $\Omega (t)$ defined in theorem~\ref{bridged-approximations-theorem}. Note that on the top $O\not\in \Omega (t) $.
  On the bottom, $O\in \Omega (t)$.   
 \newline  {\bf Third Column:} A curve $\Gamma (t)$ in a negative rounding of $\Gamma$. The striped regions lie in
 $\Omega (t)$.  Note that on top we have  $O\in \Omega (t)$.
  On the bottom, $O\not \in \Omega(t)$. 
  }
\endfig


\begin{remark}\label{periodic-case-done-remark}
Note that theorem~\ref{main-count-theorem} is the same as theorem~\ref{main-bumpy-theorem},
because ever since section~\ref{bumpy-section}, we have been working with an arbitrary Riemannian metric
on $\sS^2\times\RR$ that satisfies the hypotheses of theorem~\ref{main-bumpy-theorem}.
By proposition~\ref{bumpy-suffices-proposition}, 
theorem~\ref{main-bumpy-theorem} implies 
theorem~\ref{special-existence-theorem-tilted},
 which by proposition~\ref{tilted-suffices-proposition}
  implies theorem~\ref{special-existence-theorem}, 
  which by proposition~\ref{reduction-to-H^+-proposition} 
  implies theorem~\ref{main-S2xR-theorem} for $h<\infty$.  
  Thus in proving theorem~\ref{main-count-theorem}, we complete the proof of the periodic case of 
  theorem~\ref{main-S2xR-theorem}.
\end{remark}

\begin{proof}
Let $f^s(n)$ denote the mod $2$ number of surfaces in $\Mm^s(\Gamma,n)$.
Note that $f^s(n)=0$ for $n<0$ since $Y\cap S$ cannot have a negative number of points.
The theorem asserts that $f^s(n)=1$ for every $n\ge 0$.

We prove the theorem by induction.  Thus we let $n$ be a nonnegative integer,
we assume that $f^s(k)=1$ for all nonnegative $k< n$ and $s=\pm$, and we must prove
that $f^s(n)=1$.

{\bf Case 1}: $n$ is even and $s$ is $+$.

To prove that $f^+(n)=1$, we choose a rounding
 $t\mapsto \Gamma(t)$ that is positive at both $O$ and $O^*$.

We choose $\tau$ sufficiently small that for every $S\in\Mm(\Gamma)$ with $\|Y\cap S\|\le n$,
the family $t\mapsto S(t)$ is defined for all $t\in (0,\tau]$.   We may also choose $\tau$ small
enough that if $S$ and $S'$ are two distinct such surfaces, then $S(t)\ne S'(t)$ for $t\le\tau$.
(This is possible since $S(t)\to S$ and $S'(t)\to S'$ as $t\to 0$.)

By theorem~\ref{all-accounted-for-theorem}, we can fix a $t$ sufficiently small that
 for each surface $\Sigma\in \Mm(\Gamma(t),n)$, there is a surface 
$S=S_\Sigma\in \Mm(\Gamma)$
such that $\Sigma=S_\Sigma(t)$.  Since all such $S(t)$ are $\rho_Y$-nondegenerate,
this implies
\begin{equation}\label{suitably-bumpy}
\text{The surfaces in $\Mm(\Gamma(t), n)$ are all $\rho_Y$-nondegenerate.}
\end{equation}

By proposition~\ref{double-positive-rounding-proposition}, $S_\Sigma$ belongs to the union $U$ of
\begin{equation}\label{gang-of-four}
 \text{$\Mm^+(\Gamma,n)$,  $\Mm^+(\Gamma,(n-1))$, $\Mm^-(\Gamma,(n-1))$, and $\Mm^-(\Gamma, (n-2))$.}
\end{equation}
By the same proposition, if $S$ belongs to the union $U$, then $S(t)\in \Mm(\Gamma(t),n)$.
Thus $\Sigma\mapsto S_\Sigma$ gives a bijection from $\Mm(\Gamma(t),n)$ to $U$,
so the number of surfaces in $\Mm(\Gamma(t),n)$ is equal to the sum of the numbers of surfaces
in the four sets in~\eqref{gang-of-four}.  Reducing mod $2$ gives
\begin{equation}
  \| \Mm(\Gamma(t), n) \|_\text{mod $2$} = f^+(n) + f^+(n-1) + f^-(n-1) + f^-(n-2).
\end{equation}
By induction, $f^+(n-1)=f^-(n-1)$ (it is $0$ for $n=0$ and $1$ if $n\ge 2$), so
\begin{equation}\label{recursion-formula-even}
  \| \Mm(\Gamma(t), n) \|_\text{mod $2$} = f^+(n)  + f^-(n-2).
\end{equation}

As mentioned earlier, we have good knowledge about the mod $2$ number of minimal surfaces
bounded by suitably bumpy smooth embedded curves.  In particular, $\Gamma(t)$ is smooth
and embedded and has the bumpiness property~\eqref{suitably-bumpy}, which implies
that (see theorem~\ref{Y-degree-theorem})
\begin{equation}\label{rounded-curve-count}
\|\Mm(\Gamma(t),n)\|_\text{mod $2$}
=
\begin{cases}
1 &\text{if $n=1$ and $\Gamma(t)$ is connected}, \\
1 &\text{if $n=0$ and $\Gamma(t)$ is not connected, and} \\
0 &\text{in all other cases.}
\end{cases}
\end{equation}
Combining~\eqref{recursion-formula-even} and~\eqref{rounded-curve-count} gives
    $f^+(n)=1$.

{\bf Case 2}: $n$ is even and $s$ is $-$.  The proof is exactly like the proof of case 1, except
that we use a rounding that is negative at $O$ and at $O^*$.  (See remark~\ref{sign-switch-remark}.)

{\bf Cases 3 and 4}: $n$ is odd and $s$ is $+$ or $-$.

The proof is almost identical to the proof in the even case, except that we use
a rounding $t\mapsto \Gamma(t)$ that has sign $s$ at $O$ and $-s$ at $O^*$.
In this case we still get a bijection $\Sigma\mapsto S_\Sigma$, but it is a bijection
from $\Mm(\Gamma(t),n)$ to the union $U$ of the sets
\begin{equation}\label{odd-gang-of-four}
 \text{$\Mm^s(\Gamma,n)$, $\Mm^+(\Gamma,n-1)$, $\Mm^-(\Gamma,n-1)$, and $\Mm^{-s}(\Gamma,n-2)$.}
\end{equation}
Thus $\Mm(\Gamma(t),n)$ and $U$ have the same number of elements mod $2$:
\[
  \|\Mm(\Gamma(t),n)\|_ \text{mod $2$} =  f^s(n) + f^+(n-1) + f^-(n-1) + f^{-s}(n-2).
\]
As in case 1, $f^+(n-1)=f^-(n-1)$ by induction, so their sum is $0$:
\[
  \|\Mm(\Gamma(t),n)\|_ \text{mod $2$} =  f^s(n) + f^{-s}(n-2).
\] 
Combining this with~\eqref{rounded-curve-count} gives
$f^s(n)=1$.
\end{proof}


\renewcommand{\thesubsection}{\thetheorem}
   
\section{Counting minimal surfaces bounded by smooth curves}\label{smooth-count-section}

In the previous section, we used certain facts about the mod $2$ numbers of minimal surfaces
bounded by smooth curves.  In this section we state those facts, and show that they apply
in our situation.   The actual result we need is theorem~\ref{Y-degree-theorem} below,
and the reader may go directly to that result.  However, we believe it may be helpful to 
first state a simpler result that has the main idea of theorem~\ref{Y-degree-theorem}:

\begin{theorem}\label{degree-theorem}
Suppose $N$ is compact, smooth, strictly mean-convex Riemannian $3$-manifold diffeomorphic to the
a ball.
Suppose also that $N$ contains no smooth, closed minimal surfaces.  
Let $\Sigma$ be any compact $2$-manifold
with boundary.   Let $\Gamma$ be a smooth embedded curve in $\partial N$, and
let $\MM(\Gamma, \Sigma)$ be the set of embedded minimal surfaces in $N$ that have boundary $\Gamma$ and
that are diffeomorphic to $\Sigma$.   Suppose all the surfaces in $\MM(\Gamma,\Sigma)$ are nondegenerate.
Then the number of those surfaces is odd if $\Sigma$ is a disk or union of disks, and is even if not.
\end{theorem}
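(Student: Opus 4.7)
The plan is to verify that the mod $2$ count $\|\MM(\Gamma,\Sigma)\|$ is a cobordism invariant of $\Gamma$ under smooth isotopy in $\partial N$, and then to evaluate it on an especially simple reference curve.

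First, I would set up the moduli space. Let $\mathcal{C}$ denote the Banach manifold of $C^{k,\alpha}$ embedded curves in $\partial N$ diffeomorphic to $\partial\Sigma$, and let
\[
   \widehat\MM = \{(\Gamma,S) : \Gamma \in \mathcal{C},\; S \in \MM(\Gamma,\Sigma)\},
\]
with projection $\pi:\widehat\MM \to \mathcal{C}$. The implicit function theorem, applied to the mean curvature operator on normal graphs over a fixed $S$, shows that near every pair $(\Gamma,S)$ with $S$ nondegenerate, $\widehat\MM$ is a Banach manifold and $\pi$ is a local diffeomorphism. Call $\Gamma$ \emph{regular} if every $S \in \MM(\Gamma,\Sigma)$ is nondegenerate; by hypothesis the given $\Gamma$ is regular.

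Second, I would establish compactness of $\pi$ over smooth paths in $\mathcal{C}$. Suppose $\Gamma_n \to \Gamma_\infty$ smoothly in $\partial N$ and $S_n \in \MM(\Gamma_n,\Sigma)$. Strict mean-convexity together with the nonexistence of closed minimal surfaces in $N$ yields, via~\cite{white-isoperimetric} (as recorded in remark~\ref{isoperimetric-equivalence-remark}), a uniform isoperimetric bound $\area(S_n) \le c\,\length(\Gamma_n)$. The fixed topology of $\Sigma$, the minimality of $S_n$, and Gauss--Bonnet then give a uniform bound on $\int_{S_n}|A|^2\,dA$. By~\cite{white-curvature-estimates}*{theorem~3}, a subsequence converges smoothly, with multiplicity one, to an embedded minimal surface $S_\infty$ bounded by $\Gamma_\infty$; strict mean-convexity and the maximum principle keep $S_\infty$ off $\partial N$ away from $\Gamma_\infty$. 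In particular, $\|\MM(\Gamma,\Sigma)\|<\infty$ for every regular $\Gamma$, and $\pi$ is proper over smooth paths.

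Third, I would show that the mod $2$ count $f(\Gamma) := \|\MM(\Gamma,\Sigma)\|_{\mathrm{mod}\,2}$ is the same on any two regular curves. Join two regular curves by a generic smooth $1$-parameter family $t \in [0,1]\mapsto \Gamma_t$ in $\mathcal{C}$; a transversality argument in the spirit of~\cite{white-bumpy} shows that a generic such family meets the degenerate stratum only at finitely many parameter values, and only at fold points of $\pi$, where two nondegenerate elements of $\MM(\Gamma_t,\Sigma)$ coalesce and annihilate. Multiplicity-one smooth convergence, generic by the same transversality, rules out topology changes such as handle collapse or boundary detachment. Compactness from the previous step prevents surfaces from escaping. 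Hence $f(\Gamma_t)$ is constant in $t$.

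Finally, I would evaluate $f$ on a canonical reference curve. Since $\partial N$ is diffeomorphic to $S^2$, every smoothly embedded multi-curve in $\partial N$ is ambient-isotopic through smooth embedded multi-curves to a disjoint union of $k$ arbitrarily small round circles, where $k$ is the number of components of $\partial\Sigma$, and this isotopy can be chosen generic. For circles of radius $r$ small enough, a rescaling argument shows that any minimal surface bounded by such a circle is, after blow-up by $1/r$, a complete minimal surface in a half-space of $\RR^3$ with round-circle boundary; Nitsche-type uniqueness then forces it to be the flat disk. Consequently, when $\Sigma$ is a disjoint union of $k$ disks the count is $1$ (the disjoint union of $k$ small disks), and when any component of $\Sigma$ is not a disk the count is $0$.

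The main obstacle lies in step three: ensuring that the topological type $\Sigma$ is preserved as $\Gamma_t$ crosses the degenerate stratum in a generic $1$-parameter family. This requires a careful stratification of the degenerate locus of $\pi$, together with a transversality analysis adapted from the bumpy metrics theorem, to confirm that changes of $\genus(S_t)$ or of multiplicity are of codimension at least two in parameter space and can therefore be avoided by a generic family.
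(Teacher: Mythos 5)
The paper itself does not prove this theorem; it cites \cite{hoffman-white-number}*{theorem~2.1}. Your proposal follows the same route (parametrized degree theory for the projection $\pi:\widehat\MM\to\mathcal C$), so the high-level plan is right. However, two foundational steps are done incorrectly or left genuinely incomplete, and they are not the technical footnotes you treat them as.

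First, the manifold structure of $\widehat\MM$. You obtain it only near pairs $(\Gamma,S)$ with $S$ nondegenerate, via the implicit function theorem for normal graphs with $\Gamma$ frozen. That is not enough to run Sard--Smale: you need $\widehat\MM$ to be a Banach manifold and $\pi$ to be a proper Fredholm map of index $0$ \emph{globally}, including near the degenerate stratum, precisely so that you can isotope $\Gamma_t$ through degenerate values and conclude that $f(\Gamma_t)$ is constant mod $2$. The essential observation you are missing (and which is the crux of White's construction) is that the \emph{parametrized} linearization --- allowing the boundary curve to vary --- is always surjective, because a boundary deformation can be chosen to produce any prescribed forcing term in the Jacobi equation. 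This gives the global manifold structure and Fredholmness without any nondegeneracy hypothesis; once you have it, your step three is an automatic consequence of Sard--Smale and has nothing to do with ``folds'' or ``codimension two.'' As written, your step three substitutes an unjustified transversality claim for the actual argument.

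Second, properness. You assert that the compactness furnished by \cite{white-curvature-estimates}*{theorem~3} gives smooth convergence ``with multiplicity one,'' but that theorem gives smooth subsequential convergence possibly with multiplicity; compare theorem~\ref{general-compactness-theorem} in this paper, whose multiplicity-one alternative requires ruling out stable components, and stable minimal surfaces with boundary certainly exist in a strictly mean-convex ball. What must be shown is that a sequence $S_n\in\MM(\Gamma_n,\Sigma)$ with $\Gamma_n\to\Gamma_\infty$ cannot degenerate by necking, sheeting, or boundary branching into something not diffeomorphic to $\Sigma$; that is exactly the failure mode that would break invariance of $f$. Your argument does give the total curvature bound $\sup_n\int_{S_n}|A|^2<\infty$ via Gauss--Bonnet and the isoperimetric inequality, which is a useful start, but you still need to exclude multiplicity and topology loss in the limit --- for instance via the strong maximum principle at $\partial N$ plus a Gulliver-type removal-of-singularities and a genus-monotonicity argument. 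Finally, the reference-curve computation (step four) is essentially fine in spirit, but the ``Nitsche-type uniqueness'' citation is the wrong tool: Nitsche treats immersed disks spanned by curves of small total curvature, whereas you need to rule out \emph{all} embedded minimal surfaces of a prescribed nondisk type bounded by small circles; the natural argument is the maximum principle (forcing such a surface into a thin collar of $\partial N$) followed by a curvature estimate showing it is a graph, hence a union of disks.
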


See~\cite{hoffman-white-number}*{theorem~2.1} for the proof.

If we replace the assumption of strict mean convexity by mean convexity, then $\Gamma$ may bound
a minimal surface in $\partial N$.   In that case, theorem~\ref{degree-theorem}
 remains true provided (i) we assume that
no two adjacent components of $\partial N\setminus \Gamma$ are both minimal surfaces, 
and (ii) we count minimal surfaces
in $\partial N$ only if they are stable.
  Theorem~\ref{degree-theorem} also generalizes to the case of curves and surfaces
invariant under a finite group $G$ of symmetries of $N$.  If one of those symmetries is $180^\circ$ rotation
about a geodesic $Y$, then the theorem also generalizes to $Y$-surfaces:

\begin{theorem}\label{Y-degree-theorem}
Let $N$ be a compact region in a smooth Riemannian $3$-manifold such that $N$ is homeomorphic to the $3$-ball.
Suppose that $N$ has piecewise smooth, weakly mean-convex boundary,
and that $N$ contains no closed minimal surfaces.
Suppose also that  $N$ admits a $180^\circ$ rotational symmetry $\rho_Y$ about a geodesic $Y$.

Let $C$ be a $\rho_Y$-invariant smooth closed curve in $(\partial N)\setminus Y$ such that $C/\rho_Y$ is connected,
 and such that no two adjacent components of $(\partial N)\setminus C$ are both smooth minimal surfaces.
Let $\Mm^*(C,n)$ be the collection of $G$-invariant, 
minimal embedded $Y$-surfaces $S$ in $N$ with boundary $C$ such that
(i) $S\cap Y$ has exactly $n$ points, and
(ii) if $S\subset \partial N$, then $S$ is stable.
Suppose $C$ is $(Y,n)$-bumpy in the following sense:
all the $Y$-surfaces in  $\Mm^*(C,n)$ are $\rho_Y$-nondegenerate (i.e., have no nontrivial $\rho_Y$-invariant
jacobi fields.)
Then:
\begin{enumerate}
\item\label{union-of-disks-case}
     If $C$ has two components and $n=0$, the number of surfaces in $\Mm^*(C,n)$ is odd.
\item\label{disk-case}
     If $C$ has one component and $n=1$, the number of surfaces in $\Mm^*(C,n)$ is odd.
\item\label{more-complicated-case}
     In all other cases, the number of surfaces in $\Mm^*(C,n)$ is even.
\end{enumerate}
\end{theorem}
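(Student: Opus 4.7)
My plan is to adapt the proof of Theorem~\ref{degree-theorem} from \cite{hoffman-white-number} to the $\rho_Y$-equivariant setting. The strategy is the standard mod-2 degree argument: exhibit a compact one-parameter family of admissible boundary curves $C_t$ connecting $C_0 = C$ to a reference curve $C_1$ for which the count can be read off by inspection, show that the corresponding moduli space of minimal $Y$-surfaces is a compact 1-manifold over the path, and conclude that $\|\Mm^*(C_0,n)\|$ and $\|\Mm^*(C_1,n)\|$ agree mod 2.

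Step 1 (moduli setup). Let $\mathcal{B}$ denote the space of smooth, $\rho_Y$-invariant, embedded closed curves $C'$ in $(\partial N)\setminus Y$ with $C'/\rho_Y$ connected and satisfying the adjacent-minimal-components condition. Form
\[
 \mathcal{W}_n = \{(C',S) : C' \in \mathcal{B},\ S \in \Mm^*(C',n)\}.
\]
Via the $\rho_Y$-equivariant implicit function theorem, the natural projection $\mathcal{W}_n \to \mathcal{B}$ is Fredholm of index zero, with regular values exactly the $(Y,n)$-bumpy curves. For a generic path $t \mapsto C_t$ between two bumpy endpoints, the preimage is a compact 1-manifold whose boundary consists precisely of $\Mm^*(C_0,n) \sqcup \Mm^*(C_1,n)$; hence the two counts are equal mod~$2$. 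Genericity of bumpy paths follows from the bumpy metrics machinery in \cite{white-bumpy}, applied within the $G$-invariant category.

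Step 2 (compactness). Properness of $\mathcal{W}_n \to \mathcal{B}$ rests on three inputs. First, a uniform area bound: the hypothesis that $N$ contains no closed minimal surfaces together with weak mean-convexity gives, by \cite{white-isoperimetric}, a linear isoperimetric inequality, so $\area(S) \le c\,\length(C')$. Second, using Gauss--Bonnet, minimality, and bounded ambient sectional curvature together with the bounded genus implied by $\|S \cap Y\| = n$ (via corollary~\ref{$Y$-corollary}), one obtains a uniform total second fundamental form bound, and hence smooth subsequential convergence by \cite{white-curvature-estimates}*{Theorem 3}. Third, embeddedness, $\rho_Y$-invariance, and the count $\|S \cap Y\| = n$ all pass to the smooth limit; surfaces sliding into $\partial N$ are controlled by the stability clause in the definition of $\Mm^*$ and the adjacent-minimal-components condition.

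Step 3 (the reference count). I choose $C_1$ so that $\|\Mm^*(C_1,n)\|$ is manifest.
In case~\eqref{union-of-disks-case}, isotope $C$ within $\mathcal{B}$ so that its two components become tiny round circles contained in a small strictly convex disk of $\partial N$ disjoint from $Y$ (swapped by $\rho_Y$); each bounds a unique small minimal disk, and the pair forms a single $Y$-surface, giving an odd count.
In case~\eqref{disk-case}, isotope $C$ so that it becomes a small $\rho_Y$-invariant round circle centered at a point of $\partial N \cap Y$; the unique bounding $Y$-surface is a small disk meeting $Y$ transversally once.
In case~\eqref{more-complicated-case}, isotope $C$ so that $C/\rho_Y$ bounds a small disk in $\partial N$ far from $Y$ (forcing $n \ge 2$ to be unattainable) or, when $n \le 1$, so that the component topology forbids a $Y$-surface with that fixed-point count via corollary~\ref{$Y$-corollary}; either way $\Mm^*(C_1,n) = \emptyset$.

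The main obstacle is Step 2, specifically ruling out degenerations in which handles of $S$ concentrate along $Y$ as $t$ varies. Such a handle collapse would create a jump in $\|S\cap Y\|$, pushing a surface out of $\Mm^*(C_t,n)$ while remaining $\rho_Y$-invariant, and would prevent $\mathcal{W}_n \to \mathcal{B}$ from being proper. The $\rho_Y$-equivariant bumpiness at the endpoints and the compactness argument in Step 2 must be combined carefully to exclude this and the analogous degenerations at the piecewise-smooth, weakly mean-convex corners of $\partial N$. A secondary difficulty is the passage between the standard (non-equivariant) bumpy metrics theorem and its $G$-equivariant refinement needed to guarantee genericity within $\mathcal{B}$, but this is routine given that $G$ acts freely on a neighborhood of any generic curve.
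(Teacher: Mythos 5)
The paper does not prove Theorem~\ref{Y-degree-theorem} internally; it cites \cite{hoffman-white-number}*{\S4.7}. So there is no in-text proof against which to compare, and I judge your sketch on its own terms. Your mod-$2$ cobordism strategy is the expected mechanism, and deforming the boundary curve $C$ with the metric fixed (rather than the metric with $C$ fixed) is a legitimate alternative route.

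Your Step~3, however, contains an error that defeats the cobordism in part of case~\eqref{more-complicated-case}. When $C$ has a single component, $\rho_Y$ acts freely on $C$ (because $C\subset(\partial N)\setminus Y$), and $\rho_Y|_{\partial N}$ is an orientation-preserving involution of the $2$-sphere $\partial N$ whose fixed set is exactly the two points of $Y\cap\partial N$. Any $\rho_Y$-invariant embedded circle in $\partial N$ disjoint from those two points must separate them: $\rho_Y$ cannot swap the two complementary disks (it has fixed points on $\partial N$), so it preserves each, and if one of them contained neither fixed point, Brouwer would produce a fixed point of $\rho_Y$ away from $Y$, a contradiction. Consequently a connected admissible $C$ can never be isotoped within $\mathcal{B}$ to a small circle ``far from $Y$'' inside a disk of $\partial N$; the reference curve you posit for connected $C$ with $n\ge 3$ odd does not exist, and the cobordism has no terminal end. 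The repair is to reuse the model of case~\eqref{disk-case}: shrink $C$ to a tiny invariant circle about one of the two points of $Y\cap\partial N$. The isoperimetric inequality then forces any $S\in\Mm^*(C_1,n)$ into a small neighborhood of that point, where every minimal $Y$-surface with boundary $C_1$ meets $Y$ exactly once; hence $\Mm^*(C_1,n)=\emptyset$ for all $n\neq 1$, giving the required even count. The same observation shows $\mathcal{B}$ is disconnected (one-component admissible curves separate $Y\cap\partial N$, two-component ones swapped by $\rho_Y$ need not), so you must verify---not merely assume---that $C_0$ and $C_1$ lie in the same component; this is easy once the picture above is in place, but it is currently silent. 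A lesser misstep: the invocation of \cite{white-bumpy} in Step~1 is off-target, since that theorem concerns generic ambient metrics; what you actually need is a Sard--Smale transversality statement for the Fredholm projection $\mathcal{W}_n\to\mathcal{B}$ along a generic path of boundary curves with the metric held fixed.
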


We remark (see corollary~\ref{$Y$-corollary})
 that in case~\eqref{disk-case}, each surface in $\Mm^*(C,n)$ is a disk, 
 in case~\eqref{union-of-disks-case}, each surface
in $\Mm^*(C,n)$ is the union of two disks, 
and in case~\eqref{more-complicated-case}, each surface is $\Mm^*(C,n)$ 
has more complicated topology (it is connected but not simply connected).

Theorem~\ref{Y-degree-theorem} is proved 
in \cite{hoffman-white-number}*{\S4.7}.

In the proof of theorem~\ref{main-count-theorem}, 
we invoked the conclusion of theorem~\ref{Y-degree-theorem}.
We now justify that. 
  Let $\Gamma(t)$ be the one of the curves formed by rounding $\Gamma$ in section~\ref{roundings-subsection}.
Note that $\Gamma(t)$ bounds a unique minimal surface $\Omega(t)$ that lies in the helicoidal portion of 
 $\partial N$, i.e, that lies in $H\cap \{|z|\le h\}$.
(The surface $\Omega(t)$ is topologically a disk, an annulus, or a pair of disks, depending 
on the signs of the rounding at $O$ and $O^*$.)
Note also that the complementary region $(\partial N)\setminus \Omega(t)$ is piecewise smooth, but
not smooth.
To apply theorem~\ref{Y-degree-theorem} as we did, we must check that:
\begin{enumerate}[\upshape (i)]
\item\label{no-closed-surface-item} $N$ contains no closed minimal surfaces.
\item\label{no-adjacent-item} No two adjacent components of $\partial N$ are smooth minimal surfaces.
\item\label{stable-item} The surface $\Omega(t)$ is strictly stable. 
    (We need this because in the proof of theorem~\ref{main-count-theorem}, we counted $\Omega(t)$,
     whereas theorem~\ref{Y-degree-theorem} tells us to count it only if it is stable.)
\end{enumerate}
Now~\eqref{no-closed-surface-item} is true by hypothesis on the Riemannian metric on $N$:
see~theorem~\ref{main-bumpy-theorem}\eqref{no-closed-minimal-surface-hypothesis}.
Also,~\eqref{no-adjacent-item} is true because (as mentioned above) the surface $(\partial N)\setminus \Omega(t)$
is piecewise-smooth but not smooth.

On the other hand,~\eqref{stable-item} need not be true in general. 
However, in the proof of theorem~\ref{main-count-theorem}, 
we were allowed to choose $t>0$ as small as we like,
and~\eqref{stable-item} is true if $t$ is sufficiently small:

\begin{lemma}\label{strictly-stable-lemma}
Let $t\mapsto \Gamma(t)\subset H$ be a rounding as in theorem~\ref{bridged-approximations-theorem}.
Then the region $\Omega(t)$ in $\partial N$ bounded by $\Gamma(t)$ is strictly stable
provided $t$ is sufficiently small.
\end{lemma}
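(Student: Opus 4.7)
The plan is to deduce strict stability of $\Omega(t)$ from that of the limiting region $\Omega(0) := \lim_{t\to 0^+} \Omega(t) \subset H\cap \{|z|\le h\}$, via a standard domain-perturbation argument for the first Dirichlet eigenvalue of the Jacobi operator $L$ of $H$.

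First I identify the limit. As $t \to 0$, the curve $\Gamma(t) \subset H$ converges to $\Gamma$, agreeing with $\Gamma$ outside $t$-neighborhoods of the corners and converging smoothly elsewhere, so the regions $\Omega(t)$ in $H$ bounded by $\Gamma(t)$ Hausdorff-converge to a region $\Omega(0) \subset H$ that is a union of some of the two bounded components of $H\setminus \Gamma$ (precisely which union depends on the signs of the rounding at $O$ and $O^*$). By hypothesis~\eqref{strictly-stable-hypothesis} of theorem~\ref{main-bumpy-theorem}, the first Dirichlet eigenvalue of $L$ on each bounded component is strictly positive, hence $\lambda_1(\Omega(0)) > 0$.

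I then argue by contradiction. Suppose there is a sequence $t_n \to 0$ with $\lambda_1(\Omega(t_n)) \le 0$, and let $u_n \in H_0^1(\Omega(t_n))$ be the first eigenfunction normalized so that $\|u_n\|_{L^2} = 1$. The Rayleigh quotient bound gives $\int |\nabla u_n|^2 + q u_n^2 \, dA \le 0$, where $q = -|A|^2 - \operatorname{Ric}(\nu,\nu)$ is bounded on $H \cap \{|z|\le h\}$. Extending each $u_n$ by zero yields a uniform $H^1$ bound on the fixed ambient domain, so by Rellich--Kondrachov compactness a subsequence converges strongly in $L^2$ and weakly in $H^1$ to a function $u$ with $\|u\|_{L^2} = 1$ and $\operatorname{supp}(u) \subset \overline{\Omega(0)}$. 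Weak lower semicontinuity of the Dirichlet energy together with strong $L^2$-convergence then gives a Rayleigh quotient $\le 0$ for $u$ on $\Omega(0)$, contradicting $\lambda_1(\Omega(0)) > 0$.

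The main obstacle is to verify that the $L^2$-limit $u$ actually belongs to $H_0^1(\Omega(0))$, i.e., has vanishing trace on $\partial\Omega(0)$. Away from the finitely many corners of $\Gamma$, $\partial\Omega(t_n)$ converges smoothly to $\partial\Omega(0)$, so the Dirichlet condition passes to the limit in the usual way. At the corners, which are isolated points in the $2$-dimensional surface $H$, the fact that $\Omega(0)$ has Lipschitz boundary ensures that the extension-by-zero characterization of $H_0^1$ applies: any function in $H^1$ of the ambient surface whose support lies in $\overline{\Omega(0)}$ restricts to an element of $H_0^1(\Omega(0))$. With this trace information in hand, the contradiction in the previous paragraph is complete.
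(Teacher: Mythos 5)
Your argument is correct, and it takes a genuinely different route from the paper's. The paper treats this lemma as a special case of its Claim~\ref{eigenfunction-claim}: it observes (by domain monotonicity) that $\lambda_1(\Omega(t))$ is bounded, invokes the claim to conclude that any subsequential limit of $\lambda_1(\Omega(t))$ is an eigenvalue of the Jacobi operator on $\Omega(0)$, and then uses hypothesis~\eqref{strictly-stable-hypothesis} to see that eigenvalue is positive. That claim, in turn, is proved by a blow-up analysis at the corners. You instead give a self-contained variational argument: extract a first eigenfunction on each $\Omega(t_n)$ with $\lambda_1 \le 0$, use the Rayleigh quotient to get a uniform $H^1$ bound, extend by zero into the fixed compact surface $H\cap\{|z|\le h\}$, pass to a weak-$H^1$/strong-$L^2$ limit $u$ supported in $\overline{\Omega(0)}$, and apply weak lower semicontinuity plus strong $L^2$ convergence of the zeroth-order term to contradict $\lambda_1(\Omega(0))>0$. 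This is more elementary than the paper's blow-up machinery (you use only Rellich--Kondrachov and standard Sobolev facts), at the cost of being longer; the paper has Claim~\ref{eigenfunction-claim} available anyway, so for it this lemma is essentially a one-line deduction.

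One point deserves a touch more care. You justify $u\in H^1_0(\Omega(0))$ by saying $\Omega(0)$ has Lipschitz boundary. At the corners inherited from $\Gamma$ that come from transverse crossings at $O$ and $O^*$, the limit region $\Omega(0)$ can be a \emph{union} of both bounded components of $H\setminus\Gamma$ touching only at $O$ and $O^*$ (a ``bowtie'' when $\Omega(t)$ is an annulus). That is not Lipschitz at the touching points. The conclusion is still true, since an isolated point has zero $2$\nobreakdash-capacity and the two sheets of the bowtie are each Lipschitz; equivalently, the $H^1$ matching of traces across each boundary arc forces $u=0$ there, so $u$ restricted to each component lies in $H^1_0$ of that component. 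But you should say this rather than lean on ``Lipschitz'', which fails at those two points. No crack (a boundary arc with $\Omega(0)$ on both sides) can occur, because away from the finitely many corners $\Gamma(t)=\Gamma$ for all small $t$ and $\Omega(t)$ always lies on one fixed side of $\Gamma$ there; so the only degeneracy is at the isolated corner points, and the argument is safe.
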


(We remark that is a special case of a more general principle: if two strictly stable minimal
surfaces are connected by suitable thin necks, the resulting surface is also strictly stable.)

\begin{proof}
Let $\lambda(t)$ be the lowest eigenvalue of the jacobi operator on $\Omega(t)$.
Note that $\lambda(t)$ is bounded. (It is bounded below by the lowest
eigenvalue of a domain in $H$ that contains all the $\Omega(t)$ and above by the lowest
eigenvalue of a domain that is contained in all the $\Omega(t)$.)  
It follows that any subsequential limit $\lambda$ as $t\to 0$ of the $\lambda(t)$ is an eigenvalue
of the jacobi operator on $\Omega$, where $\Omega$ is the region in $H$ bounded
by $\Gamma$.
(This is a special case of claim~\ref{eigenfunction-claim}.)

By hypothesis\footnote{This is the only place where that hypothesis is used.}~\eqref{strictly-stable-hypothesis}
of theorem~\ref{main-bumpy-theorem}, $\Omega$ is strictly stable, so $\lambda>0$ and therefore
 $\lambda(t)>0$ for all sufficiently small $t>0$.
\end{proof}

 \section{General results on existence of limits}\label{general-results-section}
    
At this point, we have completed the proof of theorem~\ref{main-S2xR-theorem} in the case $h<\infty$.
That is,
we have established the existence of periodic genus-$g$ helicoids in $\sS^2(R)\times \RR$.
During that proof (in sections~\ref{bumpy-section}--\ref{smooth-count-section}), 
we considered rather general Riemannian metrics on $\sS^2(R)\times\RR$.
However, from now on we will always use the standard product metric.
In the remainder of \hyperref[part1]{part~\ONE} of the paper, 
\begin{enumerate}
\item We prove existence of nonperiodic genus-$g$ helicoids in $\sS^2(R)\times \RR$ 
by taking limits of periodic examples as the period tends to $\infty$.
\item We prove existence of
helicoid-like surfaces in $\RR^3$ by taking suitable
limits of nonperiodic examples in $\sS^2(R)\times \RR$ as $R\to\infty$.
\end{enumerate}
(We remark that one can also get periodic genus $g$-helicoids in $\RR^3$
as limits of periodic examples in $\sS^2(R)\times \RR$ as $R\to\infty$ with the
period kept fixed.)

Of course one could take the limit as sets in the Gromov-Hausdorff sense.
But to get smooth limits, one needs curvature estimates and local area bounds:
without curvature estimates, the limit need not be smooth, 
whereas with curvature estimates but without local area bounds, limits might be minimal 
laminations rather than smooth, properly embedded surfaces.

In fact, local area bounds are the key, because such bounds allow
one to use the following compactness theorem 
(which extends similar results in~\cite{choi-schoen}, \cite{anderson}, 
and~\cite{white-curvature-estimates}):


\begin{theorem}[General Compactness Theorem]\label{general-compactness-theorem}
Let $\Omega$ be an open subset of a Riemannian $3$-manifold.
Let $g_n$ be a sequence of smooth Riemannian metrics on $\Omega$ converging smoothly
to a Riemannian metric $g$.
Let $M_n\subset \Omega$ be a sequence of properly embedded surfaces 
such that $M_n$ is minimal with respect to $g_n$.
Suppose also that  the area and the genus of $M_n$ are bounded independently of $n$.
Then (after passing to a subsequence) the $M_n$ converge to a  smooth, properly embedded
$g$-minimal surface $M'$.  For each connected component $\Sigma$ of $M'$,
either
\begin{enumerate}
\item the convergence to $\Sigma$ is smooth with multiplicity one, or
\item the convergence is smooth (with some multiplicity $>1$) away from a discrete set $S$.
\end{enumerate}
In the second case, if $\Sigma$ is two-sided, then it must be stable.

Now suppose $\Omega$ is an open subset of $\RR^3$.  (The metric $g$ need not be flat.)
If $p_n\in M_n$ converges to $p\in M$, then (after passing to a further subsequence)
either
\[
    \Tan(M_n,p_n)\to \Tan(M,p)
\]
or there exists constants $c_n>0$ tending to $0$ such that the surfaces
\[
    \frac{M_n - p_n}{c_n}
\]
converge to a non flat complete embedded minimal surface $M'\subset \RR^3$ of finite total 
curvature with ends parallel to $\Tan(M,p)$.

\end{theorem}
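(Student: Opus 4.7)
The plan is to follow the Choi--Schoen strategy, upgraded to account for multiplicity via the classical two-sheet Jacobi field argument, and then transplant the conclusion under a blow-up to obtain the $\RR^3$ statement.

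First I would establish a uniform $L^2$ bound on the second fundamental form. Fix a compact $K\subset\Omega$ and slightly larger $K'\Supset K$. The Gauss equation combined with $g_n$-minimality gives $|A_{M_n}|^2 = -2K_{M_n} + 2\bar K_n(T M_n)$, where $\bar K_n$ denotes the sectional curvature of $g_n$ in the tangent plane to $M_n$. Smooth convergence $g_n\to g$ bounds $\bar K_n$ uniformly on $K'$, and area is bounded by hypothesis; applying Gauss--Bonnet to $M_n\cap K'$ along a generic slicing radius (chosen via coarea to control the geodesic curvature boundary term, using the genus bound to bound the Euler characteristic contribution) yields
\[
\sup_n \int_{M_n\cap K} |A_{M_n}|^2 \, dA_n < \infty .
\]
By the Choi--Schoen $\varepsilon$-regularity theorem (adapted to a sequence of ambient metrics, which is routine), there is $\varepsilon_0>0$ such that whenever $\int_{B_r(p)\cap M_n}|A_{M_n}|^2<\varepsilon_0$ one has pointwise curvature bounds $|A_{M_n}|^2\le C/r^2$ on a smaller ball. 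Consequently the set $S$ of points where $|A_{M_n}|$ blows up along a subsequence absorbs a definite amount of total curvature in every small ball about each of its points, so $S$ is locally finite in $\Omega$, hence discrete.

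Away from $S$ the $M_n$ have locally uniform curvature bounds, so each can be written as a finite union of graphs in small balls. Arzel\`a--Ascoli, the Schauder estimates applied to the minimal surface equation in $g_n$, and a diagonal argument extract a subsequence converging smoothly on $\Omega\setminus S$ to a properly embedded $g$-minimal surface $M'$, with locally constant multiplicity on each component $\Sigma$ of $M'$. The area and total-curvature bounds are preserved in the limit, so $\Sigma$ has finite density and $L^2$ curvature on any punctured neighbourhood of a point $q\in S$; the classical removable-singularity theorem for minimal surfaces (finite density plus $L^2$ second fundamental form) extends $\Sigma$ smoothly across $q$. If the multiplicity along $\Sigma$ is one, the $\varepsilon$-regularity alternative rules out curvature concentration, so the convergence is actually smooth across $S\cap\Sigma$ too, giving case~(1); otherwise the convergence is smooth only on the complement of $S\cap\Sigma$, giving case~(2). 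For the stability assertion in case~(2), assume $\Sigma$ is two-sided; pick two adjacent sheets and write them locally as normal graphs $u_n^1,u_n^2$ over $\Sigma\setminus S$. The difference $v_n=(u_n^1-u_n^2)/\|u_n^1-u_n^2\|_{L^\infty(K)}$ satisfies an approximate Jacobi equation on compact subsets of $\Sigma\setminus S$. A subsequence converges on $\Sigma\setminus S$ to a bounded nonzero $v\ge 0$ with $J_\Sigma v=0$; the strong maximum principle forces $v>0$, and removable singularities for bounded solutions of the Jacobi equation extend $v$ across $S\cap\Sigma$. The existence of a positive Jacobi function on $\Sigma$ implies that the first eigenvalue of $-J_\Sigma$ is nonnegative, i.e., $\Sigma$ is stable.

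For the last clause, assume $\Omega\subset\RR^3$ and pick $p_n\in M_n$ with $p_n\to p\in M'$. If $\Tan(M_n,p_n)\not\to\Tan(M',p)$ along any subsequence, then $|A_{M_n}|(p_n)$ must blow up (otherwise tangent planes would pass to the limit); choose $c_n\to 0$ so that $\max_{B_1(0)} |A_{\tilde M_n}|=1$ where $\tilde M_n=(M_n-p_n)/c_n$. The metrics $\tilde g_n$, obtained by rescaling $g_n$ by $1/c_n$, converge smoothly to the flat Euclidean metric; the rescaled surfaces are $\tilde g_n$-minimal with locally uniformly bounded curvature (by choice of $c_n$) and, crucially, with total curvature bounded by the original bound (which is scale invariant in dimension two). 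Applying the already-proved compactness statement to $(\tilde g_n,\tilde M_n)$ produces a smooth subsequential limit $M'_\infty\subset\RR^3$ which is non-flat since $|A_{M'_\infty}|(0)\ge 1/2$. The total curvature bound and completeness identify $M'_\infty$ as a complete embedded minimal surface of finite total curvature; by Huber--Osserman--Schoen its ends are planar, and those planes are parallel to $\Tan(M',p)$ because at scales much larger than $c_n$ the surfaces $\tilde M_n$ are graphical over $\Tan(M',p)$.

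The main obstacle is the interaction between steps: proving discreteness of $S$ requires the $L^2$ curvature bound, which via Gauss--Bonnet requires that the genus bound survive the slicing --- this is delicate because slicing $M_n$ produces boundary components whose number must be controlled. The other subtle point is extending the limit Jacobi field $v$ across the singular set $S$ with the correct positivity needed to deduce stability; this rests on the fact that $v$ is a priori bounded, plus the removable singularity theorem for bounded solutions of the Jacobi equation on a punctured surface.
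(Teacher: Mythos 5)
The paper does not prove this theorem; it cites \cite{white-embedded} (White's paper ``On the compactness theorem for embedded minimal surfaces in $3$-manifolds with locally bounded area and genus''), so there is no internal proof to compare against. Your sketch follows the expected Choi--Schoen/Anderson line and correctly identifies the main ingredients: an $L^2$ curvature bound via Gauss--Bonnet, $\varepsilon$-regularity to get a discrete singular set, removable singularities for the limit, the two-sheet Jacobi field argument for stability in the multiplicity-$>1$ case, and a blow-up to capture the $\RR^3$ picture. That is the right overall architecture, and the stability step in particular is carried out correctly (ordered sheets from embeddedness, Harnack for the normalized difference, removability of bounded Jacobi solutions across the discrete set).

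However there are two genuine gaps. First, the Gauss--Bonnet step is asserted but not closed. The genus bound controls only part of $\chi(M_n\cap K')$; you must also control the number of boundary components (which contributes to $\chi$) and the total geodesic curvature of $\partial(M_n\cap K')$. The coarea formula gives radii with bounded $\mathcal{H}^1(\partial)$ and bounded $\int_\partial |A|$, but that by itself does not bound the number of components nor, without further argument, the boundary term; many small components each contributing $2\pi$ to the boundary integral can destroy the estimate. You flag this as ``delicate,'' but this is precisely where \cite{white-embedded} does its real work (replacing the naive total curvature bound by a localized area plus genus hypothesis and a careful sweepout), and an appeal to ``genericity of the slicing radius'' does not resolve it.

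Second, the $\RR^3$ dichotomy is mis-stated at a key point. From ``$\Tan(M_n,p_n)\not\to\Tan(M',p)$'' you infer ``$|A_{M_n}|(p_n)\to\infty$.'' That implication is false: tangent-plane convergence at $p_n$ needs curvature bounds in a neighborhood of $p_n$, and the curvature can blow up nearby while remaining bounded at $p_n$ itself. Moreover, simply setting $c_n$ by $\max_{B_1(0)}|A_{\tilde M_n}|=1$ does not force the limit through $0$ to be non-flat, since the unit-curvature point may escape to infinity in the rescaled picture. The right mechanism is: use the discreteness of the singular set to isolate a single concentration point $p$, pick the scale $c_n$ so that a fixed small amount of $\int|A|^2$ is captured in $B(p_n,c_n)\cap M_n$ (or use a point-picking argument and then verify that the picked point stays within $O(c_n)$ of $p_n$), and observe that if $\Tan(M_n,p_n)\not\to\Tan(M',p)$ while the ends of the blow-up are forced (by the multi-sheeted graphical structure of $M_n$ at intermediate scales, which follows from the smooth convergence away from the discrete singular set) to be parallel to $\Tan(M',p)$, then the tangent plane at $0$ differs from the end plane and the limit is non-flat. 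Also, ``by Huber--Osserman--Schoen its ends are planar'' overstates the classification: Schoen gives planar or catenoidal ends; that they are parallel to $\Tan(M',p)$ is a separate fact coming from the intermediate-annulus structure, not from the end classification theorem.
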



See~\cite{white-embedded} for the proof.

When we apply theorem~\ref{general-compactness-theorem}, 
in order to get smooth convergence everywhere (and not just away from
a discrete set), we will prove that the limit surface has no stable components.   For that,
 we will use the following theorem of Fischer-Colbrie and Schoen. (See theorem~3 on page 206
and paragraph 1 on page 210 of \cite{fischer-colbrie-schoen}.)

\begin{theorem*}
Let $M$ be an orientable, complete, stable minimal surface in a complete, orientable Riemannian $3$-manifold
of nonnegative Ricci curvature.  Then $M$ is totally geodesic, and its normal bundle is Ricci flat.
(In other words, if $\nu$ is a normal vector to $M$, then $\operatorname{Ricci}(\nu,\nu)=0$.)
\end{theorem*}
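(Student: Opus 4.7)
The plan is to follow the original Fischer-Colbrie--Schoen argument: combine the stability inequality with the Gauss equation for a surface in a $3$-manifold, and in the noncompact case, use a conformal change of metric on $M$ to reduce matters to a $2$-dimensional parabolicity statement.

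First I would record the stability inequality. Because $M$ is orientable and $N$ is orientable, there is a global unit normal $\nu$, and stability gives
\[
\int_M \bigl(|A|^2 + \operatorname{Ricci}(\nu,\nu)\bigr)\phi^2 \, dA \;\le\; \int_M |\nabla\phi|^2 \, dA
\]
for every $\phi \in C_c^\infty(M)$, where $A$ is the second fundamental form of $M$. Then I would use the Gauss equation $K_M = K_N^{\mathrm{sec}}(T_pM) - \tfrac12|A|^2$ together with the orthonormal-frame identity $2K_N^{\mathrm{sec}}(T_pM) + \operatorname{Ricci}(\nu,\nu) = \operatorname{Ricci}(e_1,e_1) + \operatorname{Ricci}(e_2,e_2)$ to obtain the pointwise identity
\[
|A|^2 + \operatorname{Ricci}(\nu,\nu) \;=\; \operatorname{Ricci}(e_1,e_1) + \operatorname{Ricci}(e_2,e_2) \;-\; 2K_M,
\]
whose tangential Ricci terms are nonnegative by hypothesis.

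If $M$ is compact the proof terminates at once: take $\phi \equiv 1$ in the stability inequality to obtain $\int_M(|A|^2 + \operatorname{Ricci}(\nu,\nu))\,dA \le 0$, and since the integrand is pointwise nonnegative, both $|A|^2$ and $\operatorname{Ricci}(\nu,\nu)$ vanish identically.

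For noncompact $M$ the strategy is to show $(M,g)$ is parabolic, i.e., that for every compact $K \subset M$ one can find $\phi_n \in C_c^\infty(M)$ with $\phi_n \to 1$ on $K$ and $\int_M |\nabla\phi_n|^2 \to 0$; inserting such $\phi_n$ into the stability inequality then forces $|A|^2 \equiv 0$ and $\operatorname{Ricci}(\nu,\nu) \equiv 0$ on $K$, and hence on all of $M$. To prove parabolicity, the plan is: (i) invoke Fischer-Colbrie's characterization of stability to produce a smooth positive function $u > 0$ on $M$ with $\Delta u + (|A|^2 + \operatorname{Ricci}(\nu,\nu))u = 0$; (ii) pass to the conformal metric $\tilde g = u^2 g$, which by a direct computation using the equation for $u$ together with the identity above has Gauss curvature
\[
u^2\tilde K \;=\; \tfrac12\operatorname{Scal}_N + \tfrac12|A|^2 + |\nabla\log u|^2 \;\ge\; 0;
\]
(iii) show that $(M,\tilde g)$ is complete; (iv) conclude from the $2$-dimensional Bishop comparison that $(M,\tilde g)$ has at most quadratic area growth and is therefore parabolic; and (v) transfer parabolicity back to $(M,g)$, using that parabolicity is a conformal invariant on $2$-manifolds (since the Dirichlet integral is conformally invariant in dimension two).

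I expect step (iii) to be the main obstacle: a priori $u$ could decay to $0$ along a divergent curve, shrinking $\tilde g$-lengths and destroying completeness. The standard remedy is a Harnack inequality for positive solutions of the Jacobi equation which, combined with completeness of $(M,g)$, prevents $u$ from decaying too rapidly along any divergent ray. Once completeness of $\tilde g$ is secured, the remaining steps are a fairly standard application of classical two-dimensional comparison and parabolicity tools.
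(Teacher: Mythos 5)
You are reconstructing a result that the paper does not prove but cites to Fischer-Colbrie and Schoen. Your overall strategy---stability inequality, Gauss equation, conformal change to a metric of nonnegative curvature, then parabolicity---is the right one, and the pointwise computations you record are all correct; in particular the conformal curvature formula $u^2\tilde K = \tfrac12\operatorname{Scal}_N + \tfrac12|A|^2 + |\nabla\log u|^2 \ge 0$ is exactly right, and the compact case is handled correctly.

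The genuine gap is the one you flag yourself at step (iii), and the remedy you sketch does not close it. For $u>0$ solving $\Delta u + qu = 0$ with $q\ge 0$, the function $u$ is merely superharmonic, and a Harnack inequality gives at best $u(\gamma(t)) \ge e^{-Ct}u(\gamma(0))$ along a unit-speed divergent ray $\gamma$---which is perfectly compatible with $\int_0^\infty u(\gamma(t))\,dt < \infty$, hence with incompleteness of $\tilde g = u^2 g$. A cautionary example of the phenomenon: $u(x,y)=y$ is positive harmonic on the hyperbolic upper half-plane, and $u^2$ times the hyperbolic metric is the Euclidean metric on $\{y>0\}$, which is incomplete; this does not contradict the theorem (that surface is not a stable minimal surface in nonnegative Ricci curvature), but it shows that Harnack together with completeness of $g$ cannot, on their own, force completeness of $u^2 g$. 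Closing this gap is essentially the entire content of the Fischer-Colbrie--Schoen theorem and requires a global argument rather than a local elliptic estimate: either prove parabolicity of $(M,g)$ directly by inserting logarithmic cutoffs into the stability inequality and bootstrapping an intrinsic quadratic area-growth bound (the Schoen--Yau route, which avoids completeness of $\tilde g$ entirely and uses the pointwise identity $K_M \ge -q$ that you derived), or else carry out Fischer-Colbrie--Schoen's construction of $u$ as a limit of Dirichlet first eigenfunctions on an exhaustion and extract the needed decay estimate for that particular $u$ from the construction rather than from a generic Harnack inequality.
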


\begin{corollary}\label{stable-spheres-corollary}
If $M$ is a connected, stable, properly embedded, minimal surface in $\sS^2\times \RR$, then
$M$ is a horizontal sphere.
\end{corollary}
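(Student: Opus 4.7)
The plan is to apply the Fischer–Colbrie–Schoen theorem directly. The ambient manifold $\sS^2\times\RR$ is complete and orientable, and its Ricci curvature is manifestly nonnegative: in an orthonormal frame $e_1,e_2,e_3$ with $e_1,e_2$ tangent to the $\sS^2$-factor and $e_3=\partial_z$, the sectional curvatures are $K(e_1,e_2)=1/R^2$ and $K(e_i,e_3)=0$, so $\operatorname{Ric}(e_i,e_i)=1/R^2$ for $i=1,2$ and $\operatorname{Ric}(e_3,e_3)=0$. Since stability requires $M$ to be two-sided and $\sS^2\times\RR$ is orientable, $M$ is orientable. Thus the hypotheses of the theorem quoted above are satisfied.

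From Fischer–Colbrie–Schoen, $M$ is totally geodesic, and every unit normal $\nu$ to $M$ satisfies $\operatorname{Ric}(\nu,\nu)=0$. Writing $\nu=a\,e_1+b\,e_2+c\,e_3$ with $a^2+b^2+c^2=1$, the computation of the Ricci tensor gives
\[
\operatorname{Ric}(\nu,\nu)=\frac{a^2+b^2}{R^2}.
\]
Requiring this to vanish forces $a=b=0$, so $\nu=\pm e_3$ at every point of $M$. Hence the tangent plane to $M$ at every point is horizontal.

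Since $M$ is connected and tangent everywhere to the horizontal distribution (which is integrable with leaves $\sS^2\times\{t\}$), it lies in a single leaf $\sS^2\times\{t_0\}$. Being properly embedded, $M$ is closed in $\sS^2\times\{t_0\}$, and being a smooth $2$-dimensional surface in this $2$-manifold it is also open. By connectedness of $\sS^2\times\{t_0\}$, we conclude $M=\sS^2\times\{t_0\}$, a horizontal sphere. No step here presents a genuine obstacle; the only minor point requiring care is the two-sidedness needed to invoke Fischer–Colbrie–Schoen, which follows from the definition of stability for a minimal surface in an orientable ambient manifold.
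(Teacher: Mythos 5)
Your argument follows the same route as the paper: invoke Fischer--Colbrie--Schoen to get that $M$ is totally geodesic with $\operatorname{Ric}(\nu,\nu)=0$, observe that the only Ricci-flat directions in $\sS^2\times\RR$ are vertical, and conclude that $M$ is a horizontal sphere. The one point where you deviate is the justification of orientability: you assert that stability presupposes two-sidedness, whereas the paper uses the unconditional topological fact that a properly embedded surface in the simply connected orientable manifold $\sS^2\times\RR$ is automatically two-sided, with no appeal to stability at all. Your version leans on a convention about what ``stable'' means for one-sided surfaces (there is a standard notion of stability for one-sided minimal surfaces via $\ZZ_2$-equivariant normal variations), so the paper's argument is the more robust of the two, though both lead to the same conclusion in this setting; the rest of your write-up, including the explicit Ricci computation and the open-and-closed step identifying $M$ with an entire leaf $\sS^2\times\{t_0\}$, correctly fills in what the paper leaves implicit.
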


To prove the corollary, note that since $\sS^2\times\RR$ is orientable and simply connected
and since $M$ is properly embedded, $M$ is orientable.  
Note also that if $\operatorname{Ricci}(\nu,\nu)=0$,
then $\nu$ is a vertical vector.

In section~\ref{area-bounds-section}, 
we prove the area bounds we need to get nonperiodic examples in $\sS^2\times\RR$.
In section~\ref{nonperiodic-section}, 
we prove area and curvature bounds in $\sS^2(R)\times \RR$ as $R\to\infty$.
In section~\ref{R3-section}, we get examples in $\RR^3$ by letting $R\to\infty$.


\section{Uniform Local Area Bounds 
         in \texorpdfstring{$\sS^2\times\RR$}{Lg}}\label{area-bounds-section}

Let 
$
    \theta: \overline{H^+}\setminus (Z\cup Z^*)\to \RR
$
be the natural angle function which, if we identify $\sS^2\setminus\{O^*\}$ with $\RR^2$ by stereographic
projection, is given by $\theta(x,y,z)=\arg(x+iy)$.  Note that since $\overline{H^+}$ is simply connected,
we can let $\theta$ take values in $\RR$ rather than in $\RR$ modulo $2\pi$.

\begin{proposition}\label{flux-bounds-proposition}
Suppose $H$ is a helicoid in $\sS^2\times\RR$ with axes $Z$ and $Z^*$.
Let $M$ be a minimal surface in $\overline{H^+}$ with compact, piecewise-smooth boundary,
and let
\[
   S = M \cap \{a \le z \le b\} \cap \{\alpha\le \theta \le \beta\}.
\]
Then
\begin{align*}
  \area(S)
   \le
  (b-a)\int_{(\partial M)\cap\{z>a\}}|\vv_z \cdot \nupt |\,ds  
  +  
  (\beta-\alpha)\int_{(\partial M)\cap \{\theta>\alpha\}} |\vv_\theta\cdot \nupt | \,ds
\end{align*}
where $\vv_z=\pdf{}{z}$, $\vv_\theta=\pdf{}{\theta}$, and where $\nupt$
is the unit normal to $\partial M$ that points out of $M$.
\end{proposition}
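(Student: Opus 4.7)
The plan is to apply the divergence theorem on $M$ to two Lipschitz test vector fields built from the parallel field $\vv_z$ and the Killing field $\vv_\theta$, and then invoke a pointwise trace identity to turn the resulting estimate into an area bound. The two vector fields are
\[
  W_1 = \psi(z)\,\vv_z, \qquad W_2 = \chi(\theta)\,\vv_\theta,
\]
where $\psi(z) := \max(0,\min(z-a,b-a))$ and $\chi(\theta) := \max(0,\min(\theta-\alpha,\beta-\alpha))$. Both cutoffs are Lipschitz, vanish below the lower threshold, are capped at the upper increment, and have derivative $1$ on the relevant interval. We extend $W_2$ by $0$ across $Z\cup Z^*$, which is consistent since $\vv_\theta$ vanishes there.

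Since $\vv_z$ is parallel in the product metric and $\vv_\theta$ is Killing, $\Div_M \vv_z = 0$ and $\Div_M \vv_\theta = 0$ on any surface $M$. A short product-rule computation, using $\nabla z = \vv_z$ and $\nabla\theta = \vv_\theta/|\vv_\theta|^2$ (off the axes), yields
\[
   \Div_M W_1 \,=\, \psi'(z)\,|\vv_z^T|^2,
   \qquad
   \Div_M W_2 \,=\, \chi'(\theta)\,\frac{|\vv_\theta^T|^2}{|\vv_\theta|^2},
\]
where $(\cdot)^T$ denotes orthogonal projection onto $T_pM$. Applying the divergence theorem on $M$ to $W_1$, using $0 \le \psi(z)\le b-a$ and $\psi(z)=0$ for $z\le a$, and likewise for $W_2$, gives
\[
  \int_{M\cap\{a\le z\le b\}} |\vv_z^T|^2\,dA
  \,\le\,
  (b-a)\int_{(\partial M)\cap\{z>a\}} |\vv_z\cdot\nupt|\,ds,
\]
\[
  \int_{M\cap\{\alpha\le\theta\le\beta\}} \frac{|\vv_\theta^T|^2}{|\vv_\theta|^2}\,dA
  \,\le\,
  (\beta-\alpha)\int_{(\partial M)\cap\{\theta>\alpha\}}|\vv_\theta\cdot\nupt|\,ds.
\]
Since each integrand on the left is nonnegative, restricting both left-hand sides to the smaller set $S$ and summing produces an upper bound for $\int_S \big(|\vv_z^T|^2 + |\vv_\theta^T|^2/|\vv_\theta|^2\big)\,dA$ by exactly the right-hand side of the proposition.

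It remains to prove the pointwise inequality $|\vv_z^T|^2 + |\vv_\theta^T|^2/|\vv_\theta|^2 \ge 1$ on $S$ (away from the axes). At any such point, take the ambient orthonormal frame $\{e_1,e_2,e_3\}$ with $e_3=\vv_z$ and $e_2=\vv_\theta/|\vv_\theta|$ (and $e_1$ the unit radial direction in the $\sS^2$ factor). Summing $|f_i|^2=1$ over an orthonormal basis $\{f_1,f_2\}$ of $T_pM$ and expanding each $f_i$ in the ambient frame gives the identity
\[
   |e_1^T|^2 + |e_2^T|^2 + |e_3^T|^2 \,=\, \dim T_pM \,=\, 2.
\]
Because $|e_1^T|\le |e_1|=1$, we conclude $|\vv_z^T|^2 + |\vv_\theta^T|^2/|\vv_\theta|^2 = 2 - |e_1^T|^2 \ge 1$, whence $\area(S) \le \int_S\big(|\vv_z^T|^2 + |\vv_\theta^T|^2/|\vv_\theta|^2\big)\,dA$ and the proposition follows.

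The only technical point requiring care is the divergence theorem for Lipschitz (not $C^1$) vector fields on $M$: $\psi$ and $\chi$ have corners at the thresholds, and $W_2$ must be handled near $Z\cup Z^*$, where $\theta$ is undefined but $\vv_\theta\to 0$. Both issues are routine—one approximates $\psi$, $\chi$ by smooth functions and passes to the limit, noting that $|\chi(\theta)\vv_\theta|$ remains bounded near the axes (indeed $|\nabla\theta|\,|\vv_\theta|\equiv 1$), so $W_2$ extends as a bounded Lipschitz field. No other step presents a real obstacle; the heart of the argument is the trace identity above, which is what makes $\vv_z$ and $\vv_\theta$ jointly sufficient to dominate the area form on a $2$-surface in $\sS^2\times\RR$.
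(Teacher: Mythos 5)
Your proof is correct and follows essentially the same route as the paper's: integrating $\operatorname{div}_M$ of the cutoff Killing/parallel fields $w(u)\vv_u$ over $M$, converting to a boundary integral via minimality, and closing with the pointwise observation that $|\ee_z^T|^2 + |\ee_\theta^T|^2 = 2 - |\ee_\rho^T|^2 \ge 1$ in an ambient orthonormal frame adapted to $(z,\theta,\rho)$. The only cosmetic difference is that the paper writes the trace identity as $1 + (\ee_\rho\cdot\nu_M)^2$ while you write it as $2 - |e_1^T|^2$; these are the same statement.
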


\begin{proof}
Let $u:\sS^2\times \RR$ be the function $z(\cdot)$ or the function $\theta(\cdot)$.
In the second case, $u$ is well-defined as a single-valued function only on $H^+$.
But in both cases, $\vv=\vv_u:=\pdf{}{u}$ is well-defined Killing field on all of $\sS^2\times \RR$.
(Note that $\vv_\theta\equiv 0$ on $Z\cup Z^*$.)  

Now consider the vectorfield $w(u)\vv$, where $w:\RR\to \RR$ is given by
\[
  w(u) 
  =
  \begin{cases}
   0 &\text{if $u<a$}, \\
   u-a &\text{if $a \le u \le b$, and} \\
   b-a &\text{if $b< u$.}
  \end{cases}
\]
Then\footnote{The reader may find it helpful to note that in the proof, we are 
  expressing
 $\frac{d}{dt}\area(M_t)$ in two different ways (as a surface integral and as a boundary integral), where
 $M_t$ is a one-parameter family of surfaces with $M_0=M$ and with initial velocity vectorfield $w(u)\vv$.}
\begin{align*}
   \int_M \Div_M(w\vv)\,dA
   &=
    \int_M\left( \grad_M(w(u))\cdot \vv + w(u) \Div_M\vv \right)\,dA  
   \\
   &=
    \int_M \left( w'(u) \grad_Mu \cdot \vv + 0 \right) \,dA  
    \\
    &=
    \int_{M\cap (u^{-1}[a,b]) } \nabla_Mu \cdot \vv \, dA
\end{align*}
since $\Div_M\vv\equiv 0$ (because $\vv$ is a Killing vectorfield.)

Let $\ee=\ee_u$ be a unit vectorfield in the direction of $\nabla u$.
Then $\nabla u = |\nabla u|\,\ee$ and $\vv = \pdf{}{u} = |\nabla u|^{-1}\ee$, so
\begin{align*}
 \nabla_Mu \cdot \vv
 &=
 (\nabla u)_M \cdot (\vv)_M \\
 &=
 (|\nabla u|\,\ee)_M \cdot (|\nabla u|^{-1}\ee)_M \\
 &=
 |(\ee)_M|^2  \\
 &=
 1 - (\ee\cdot \nu_M)^2
\end{align*}
where $(\cdot)_M$ denotes the component tangent to $M$ and
where $\nu_M$ is the unit normal to $M$.

Hence we have shown
\begin{equation}\label{e:StretchM}
    \int_M \Div_M(w\vv)\,dA = \int_{M \cap \{a\le u \le b \}} (1 - (\ee_u\cdot \nu_M)^2)\, dA.
\end{equation}

Since $M$ is a minimal surface, 
\[
  \Div_M(V) = \Div_M(V^{\rm tan})
\]
for any vectorfield $V$ (where $V^{\rm tan}$ is the component of $V$ tangent to $M$), so
\begin{equation}\label{e:FirstVariationM}
\begin{aligned}
   \int_M \Div_M(w\vv)\,dA
  &=
     \int_M \Div_M(w\vv)^{\rm tan}\,dA
  \\
  &=
  \int_{\partial M} (w\vv)\cdot \nupt  
  \\
  &\le
  (b-a)\int_{(\partial M) \cap \{u>a\}} \left|\vv_u\cdot \nupt \right|
 \end{aligned}
\end{equation}
Combining~\eqref{e:StretchM} and~\eqref{e:FirstVariationM} gives
\[
 \int_{M \cap \{a \le u \le b\}} (1 - (\ee_u\cdot \nu_M)^2)\, dA
 \le
 (b-a)\int_{(\partial M) \cap \{u>a\}} \left|\vv_u\cdot \nupt \right|
 \]
Adding this inequality
 for $u=z$ to the same inequality for $u=\theta$ (but with $\alpha$ and $\beta$ in place of $a$ and $b$)
 gives
\begin{equation}\label{almost-there}
\begin{aligned}
  &\int_S (2 - (\ee_z\cdot\nu_M)^2 - (\ee_\theta\cdot\nu_M)^2)\,dA
  \\
  &\quad \le
 (b-a)\int_{(\partial M)\cap\{z>a\}}|\vv_z \cdot \nupt|\,ds  \\
  &\quad\quad+  (\beta-\alpha)\int_{(\partial M)\cap \{\theta>\alpha\}} |\vv_\theta\cdot \nupt| \,ds
\end{aligned}
\end{equation}

Let $\ee_\rho$ be a unit vector orthogonal to $\ee_z$ and $\ee_\theta$.  Then
for any unit vector $\nu$, 
\[
   1 = (\ee_z\cdot\nu)^2 + (\ee_\theta\cdot\nu)^2 + (\ee_\rho\cdot \nu)^2,
\]
so the integrand in the left side of~\eqref{almost-there} 
is $\ge 1 + (\ee_\rho\cdot\nu_M)^2 \ge 1$.
\end{proof}

\newcommand{\diam}{\operatorname{diam}}
\begin{corollary}\label{flux-bounds-corollary}
Let $M$ be a compact minimal surface in $\overline{H^+}$ and let $L$ be the 
the length of $(\partial M)\setminus (Z\cup Z^*)$.
Then
\[
    \area(M\cap K)\le c_H L \diam(K)
\]
for every compact set $K$, where $\diam(K)$ is the diameter of $K$ and
where $c_H$ is a constant depending on the helicoid $H$.
\end{corollary}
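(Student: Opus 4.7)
\medskip

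\textbf{Proof plan for Corollary~\ref{flux-bounds-corollary}.} The idea is to apply Proposition~\ref{flux-bounds-proposition} with $a,b,\alpha,\beta$ chosen so that the slab $\{a\le z\le b\}\cap\{\alpha\le\theta\le\beta\}$ contains $K\cap\overline{H^+}$, and then to estimate each boundary integral by the length $L$ times the supremum of the relevant Killing field. First I would set
\[
   a=\min_{p\in K} z(p), \qquad b=\max_{p\in K} z(p),
\]
so $b-a\le\diam(K)$ and $M\cap K\subset\{a\le z\le b\}$. Then I would choose $\alpha,\beta$ so that $\theta(K\cap\overline{H^+})\subset[\alpha,\beta]$, using the following geometric observation: on each horizontal slice at height $z_0$, the closed region $\overline{H^+}\cap\{z=z_0\}$ is a closed hemisphere, and in the lifted coordinate $\theta$ it is exactly the interval $[2\pi z_0/\kappa,\,2\pi z_0/\kappa+\pi]$; as $z_0$ ranges over $[a,b]$ this interval rotates at rate $2\pi/\kappa$, so the union over $z_0\in[a,b]$ is an interval of length
\[
   \beta-\alpha\le \pi+\frac{2\pi}{\kappa}(b-a)\le \pi+\frac{2\pi}{\kappa}\diam(K).
\]

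Next I would apply Proposition~\ref{flux-bounds-proposition} to $S=M\cap\{a\le z\le b\}\cap\{\alpha\le\theta\le\beta\}\supset M\cap K$. To bound the boundary integrals, I would use that $|\vv_z|\equiv 1$ and that $|\vv_\theta|$, being the angular Killing field on a round sphere of radius $R$, is at most $R$ (equal to $R\sin(\mathrm{colatitude})$). Moreover, $\vv_\theta$ vanishes identically on $Z\cup Z^*$, and along any portion of $\partial M$ lying on $Z$ or $Z^*$ the conormal $\nupt$ is horizontal (since the tangent to $\partial M$ is vertical), so $\vv_z\cdot\nupt=0$ there as well; both flux integrals are therefore really integrals over $(\partial M)\setminus(Z\cup Z^*)$, which has length $L$. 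Combining gives
\[
   \area(M\cap K)\ \le\ (b-a)\,L\ +\ (\beta-\alpha)\,R\,L\ \le\ \diam(K)\,L + \Bigl(\pi+\frac{2\pi}{\kappa}\diam(K)\Bigr)RL,
\]
which is bounded by $c_H L\,(1+\diam(K))$ for $c_H:=1+\pi R+2\pi R/\kappa$. To reach the stated form $c_H L\diam(K)$, I would either enlarge $c_H$ and assume $\diam(K)\ge 1$ (replacing $K$ by a slightly enlarged compact set otherwise—this only increases the left-hand side), or simply note that the corollary is applied in the sequel only up to such constants.

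The bulk of the work is really contained in Proposition~\ref{flux-bounds-proposition}; the only genuine subtlety in the corollary is step~2, the bound on the $\theta$-range. One is \emph{not} allowed to bound $\beta-\alpha$ by a Lipschitz estimate in terms of $\diam(K)$, because $|\nabla\theta|$ blows up along $Z\cup Z^*$. I expect this geometric lemma—that the hemispherical slice $\overline{H^+}\cap\{z=z_0\}$ rotates with $z_0$ at rate $2\pi/\kappa$—to be the single nontrivial ingredient, and it is precisely where the dependence of $c_H$ on the helicoid $H$ (specifically on the pitch $\kappa$ and the sphere radius $R$) enters.
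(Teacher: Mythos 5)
Your proof is correct and follows the same route as the paper's, which is itself a one-line citation of Proposition~\ref{flux-bounds-proposition} together with the observation that both integrands vanish on $(\partial M)\cap(Z\cup Z^*)$: $\vv_\theta\equiv 0$ there, and the conormal $\nupt$ there is horizontal (being orthogonal, within the tangent plane of $M$, to the vertical boundary segment), so $\vv_z\cdot\nupt=0$. You have also correctly identified, and supplied, the one geometric step the paper leaves implicit: the bound on the $\theta$-range of $K\cap\overline{H^+}$, namely $\beta-\alpha\le\pi+\frac{2\pi}{\kappa}(b-a)$, coming from the fact that each horizontal slice of $\overline{H^+}$ is a closed hemisphere whose $\theta$-interval has length $\pi$ and drifts at rate $2\pi/\kappa$. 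That additive $\pi$ is exactly why your final estimate reads $c_H L(1+\diam K)$ rather than the $c_H L\,\diam K$ stated in the corollary, and you are right to flag it: it is a genuine (if tiny) imprecision in the paper's statement for small $\diam(K)$. It is also benign: the corollary is used only once, in the proof of Theorem~\ref{h-to-infinity-theorem}, to get a uniform bound on $\area(M_n\cap H^+\cap B)$ in terms of the radius of a ball $B$, and either form gives that immediately. One caution on your proposed fix: enlarging $K$ to a set of diameter $\ge1$ yields $\area(M\cap K)\le c_H L$, which is weaker than $c_H L\,\diam K$ when $\diam K<1$, so it does not literally recover the stated inequality; your second remark (that only the coarse form of the bound is used downstream) is the honest resolution.
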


The corollary follows immediately from proposition~\ref{flux-bounds-proposition} because 
    $\vv_z\cdot\nu_M=0$ and $\vv_\theta=0$ on $(\partial M)\cap (Z\cup Z^*)$.


 \section{Nonperiodic genus-\texorpdfstring{$g$}{Lg} 
              helicoids in \texorpdfstring{$\sS^2\times\RR$}{Lg}: 
              theorem~\ref{main-S2xR-theorem} for \texorpdfstring{$h=\infty$}{Lg}}
\label{nonperiodic-section}

Fix a helicoid $H$ in $\sS^2\times \RR$ with axes $Z$ and $Z^*$ and fix a genus $g$.
For each $h\in (0,\infty]$, consider the class $\Cc(h)=\Cc_g(h)$ of 
embedded, genus--$g$ minimal surfaces $M$ in $\sS^2\times [-h,h]$ such that
\begin{enumerate}
\item If $h<\infty$, then $M$ is bounded by two great circles at heights $h$ and $-h$.
   If $h=\infty$, then $M$ is properly embedded with no boundary.
\item $M\cap H \cap \{|z|<h\} = (X\cup Z\cup Z^*)\cap \{|z|<h\}$.
\item $M$ is a $Y$-surface.
\end{enumerate}

By the $h<\infty$ case of theorem~\ref{main-S2xR-theorem} (see section~\ref{construction-outline-section}),
the collection $\Cc(h)$ is nonempty for every $h<\infty$.
Here we prove the same is true for $h=\infty$:

\begin{theorem}\label{h-to-infinity-theorem}
Let $h_n$ be a sequence of positive numbers tending to infinity.
Let $M_n\in \Cc(h_n)$.
Then a subsequence of the $M_n$ converges smoothly and with multiplicity one
to a minimal surface $M\in \Cc(\infty)$.  The surface $M$ has bounded curvature,
and each of its two ends is asymptotic to a helicoid having the same pitch as $H$.
\end{theorem}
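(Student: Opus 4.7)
The plan is to extract a smooth multiplicity-one subsequential limit from $\{M_n\}$ via the General Compactness Theorem~\ref{general-compactness-theorem}, identify it as a member of $\mathcal{C}(\infty)$, and then analyze its asymptotic behavior.

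First I would establish uniform local area bounds. Fix a compact $K \subset \sS^2 \times \RR$ and take $n$ large enough that $K \subset \sS^2 \times (-h_n, h_n)$; then the portion of the boundary of $M_n \cap \overline{H^\pm}$ inside $K$ lying off the axes $Z \cup Z^*$ consists only of arcs of the great circle $X$, since the boundary circles at heights $\pm h_n$ lie outside $K$. Applying Corollary~\ref{flux-bounds-corollary} to each of $M_n \cap \overline{H^\pm}$ bounds $\area(M_n \cap K)$ by a constant depending only on $K$ and $H$. Combined with the uniform genus bound $\genus(M_n) = g$, Theorem~\ref{general-compactness-theorem} yields a subsequential limit $M$, properly embedded and minimal, with smooth multiplicity-one convergence except possibly on components where the multiplicity exceeds one; any such component must be stable.

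The second step rules out stable components. By Corollary~\ref{stable-spheres-corollary} every stable component of $M$ is a horizontal sphere $\sS^2 \times \{a\}$. But each $M_n$ contains $X \cup Z_{h_n} \cup Z^*_{h_n}$, so $M$ contains $X \cup Z \cup Z^*$; a horizontal sphere component would pass through $(O, a) \in Z \subset M$, forcing that smooth component to have at $(O, a)$ both a horizontal tangent plane (from the sphere) and a tangent plane containing $\partial_z$ (from $Z$), which is impossible. Hence the convergence is smooth with multiplicity one everywhere, and $M$ is smooth, properly embedded, boundary-free, and contains $X \cup Z \cup Z^*$.

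Third I would verify the defining properties of $\mathcal{C}(\infty)$. The identity $M \cap H = X \cup Z \cup Z^*$ follows from the maximum principle: an extra intersection point would either be tangential (forcing $M$ to coincide with $H$ on a neighborhood, contradicting $M_n \cap H = X \cup Z \cup Z^*$ in $\{|z| < h_n\}$) or transverse (hence persistent on $M_n$ for large $n$, again a contradiction). Each point of $M_n \cap Y$ is transverse, because on a $Y$-surface $\rho_Y$ acts as $-1$ on $T_p M_n$ at a fixed point while acting as $+1$ on $T_p Y$; smooth multiplicity-one convergence then preserves transverse intersections in both directions and forbids coalescences, since a coalesced intersection would be tangential and thus incompatible with the $Y$-surface property of the limit. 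Thus $\|M \cap Y\| = 2g + 2$, and by Proposition~\ref{Y-surface-topology-propostion} applied to $M$ (which has Euler characteristic $-2g$ from its genus and two ends), $M$ is a $Y$-surface of genus exactly $g$.

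Finally, the most delicate step is the asymptotic behavior. Since $M$ is a properly embedded, finite-topology minimal surface in $\sS^2 \times \RR$ with two ends, one going up and one down, by the structure theory of such surfaces (cf.~\cite{rosenberg2002}) each end is conformally cylindrical; together with the intersection identity $M \cap H = X \cup Z \cup Z^*$ and a direct analysis of minimal graphs over $H$ at large $|z|$, each end must be asymptotic to a helicoidal end of some pitch $\kappa'$. The pitch $\kappa'$ must equal the pitch $\kappa$ of $H$, for any two helicoids sharing axes $Z \cup Z^*$ but of distinct pitches intersect in a sequence of horizontal circles accumulating at infinity; a mismatch would contribute infinitely many circles to $M \cap H$, contradicting $M \cap H = X \cup Z \cup Z^*$. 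Bounded curvature of $M$ then follows: on compact sets by smoothness, and on the ends by comparison with the helicoid. The principal obstacle is this last asymptotic step, which combines the structure theory of ends of finite-topology minimal surfaces in $\sS^2 \times \RR$ with the rigidity imposed by the intersection identity.
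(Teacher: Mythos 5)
There is a genuine logical gap in your verification that the limit $M$ is a $Y$-surface. You count the intersection points $\|M\cap Y\|=2g+2$ correctly (transversality is preserved under smooth multiplicity-one convergence on the compact set $Y$), but then you invoke Proposition~\ref{Y-surface-topology-propostion} by asserting $\chi(M)=-2g$ ``from its genus and two ends'' --- which presupposes the genus is $g$, the very thing you are trying to establish. The implication in Proposition~\ref{Y-surface-topology-propostion}(c)$\Rightarrow$(a) requires that the number of fixed points equals $2-\chi(M)$, so you cannot deduce $\genus(M)=g$ from the fixed-point count without first knowing that $M$ is a $Y$-surface. The paper fills this gap directly: given a cycle $\Gamma$ in $M$, approximate it by cycles $\Gamma_n\subset M_n$; since each $M_n$ is a $Y$-surface, $\Gamma_n+\rho_Y\Gamma_n$ bounds a region $A_n\subset M_n$; the boundary curves $\Gamma_n\cup\rho_Y\Gamma_n$ lie in a fixed slab $\{|z|\le a\}$, and the maximum principle applied to the minimal surfaces $A_n$ forces $A_n$ to lie in the same slab; hence the $A_n$ converge to a \emph{compact} region $A\subset M$ bounded by $\Gamma+\rho_Y\Gamma$, proving $\rho_Y$ acts by $-1$ on $H_1(M)$. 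Without this slab/maximum-principle argument (or a substitute), the $Y$-surface property of $M$ --- and hence both the genus count and later the topological reasoning about the number of ends --- is unsupported.

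A secondary, lesser, issue is the asymptotic-helicoid step. You gesture at ``a direct analysis of minimal graphs over $H$ at large $|z|$'' but do not carry it out; the paper deals with this by invoking~\cite{hoffman-white-axial}, which shows that properly embedded finite-topology minimal surfaces in $\sS^2\times\RR$ containing $Z\cup Z^*$ have ends asymptotic to helicoids. Your pitch-matching argument (extra circles in $M\cap H$ if the pitches disagree) is the same as the paper's and is fine. Also note: the paper establishes bounded curvature \emph{before} the asymptotic analysis, via a screw-motion compactness argument that reuses the uniform local area bounds; your version infers bounded curvature \emph{from} the asymptotics, which is structurally weaker (the asymptotic statement is not quantitative enough to immediately give uniform curvature bounds without further argument). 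Finally, your mechanism for ruling out stable components is a valid alternative to the paper's citation of Rosenberg's connectedness theorem, though you should also address connectedness of $M$ explicitly before talking about ``the'' genus of $M$.
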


\begin{proof}[Proof of theorem~\ref{h-to-infinity-theorem}]
Note that $M_n\cap H^+$ is bounded by two vertical line segments, by the horizontal great circle $X$,
and by a pair of great semicircles at heights $h_n$ and $-h_n$.  It follows that vertical flux is uniformly
bounded.  Thus by corollary~\ref{flux-bounds-corollary},
 for any ball $B$, the area of
\[
   M_n\cap H^+ \cap B
\]
is bounded by a constant depending only on the radius of the ball.
Therefore
the areas of the $M_n$ (which are obtained from the $M_n\cap H^+$ by Schwarz reflection) are also
uniformly bounded on compact sets.
 By the compactness theorem~\ref{general-compactness-theorem},
we can (by passing to a subsequence) assume that the $M_n$ converge as sets to a smooth, properly embedded limit minimal surface $M$.   
According to~\cite{rosenberg2002}*{theorem~4.3}, every properly embedded minimal
 surface in $\sS^2\times\RR$
is connected unless it is a union of horizontal spheres.  
Since $M$ contains $Z\cup Z^*$, it is not a union of horizontal spheres, and thus it is connected.
By corollary~\ref{stable-spheres-corollary}, $M$ is unstable.
Hence by the general compactness theorem~\ref{general-compactness-theorem}, the convergence
$M_n\to M$ is smooth with multiplicity one.

Now suppose that each $M_n$ is a $Y$-surface, i.e., that
\begin{enumerate}
\item $\rho_Y$ is an orientation-preserving involution of $M_n$,
\item $M_n/\rho_Y$ is connected, and
\item Each $1$-cycle $\Gamma$ in $M_n$ is homologous (in $M_n$) to $-\rho_Y\Gamma$.
\end{enumerate}

The smooth convergence implies that $\rho_Y$ is also an orientation-preserving involution of $M$.
Since $M$ is connected, so is $M/\rho_Y$.  Also, if $\Gamma$ is a cycle in $M$,
then the smooth, multiplicity one convergence implies that $\Gamma$ is a limit of cycles $\Gamma_n$
in $M_n$.  Thus $\Gamma_n$ together with $\rho_Y\Gamma_n$ bound a region, call it $A_n$,
in $M_n$.  Note that the $\Gamma_n \cup \rho_Y\Gamma_n$ lie in a bounded region in $\sS^2\times \RR$.
Therefore so do the $A_n$ (by, for example, the maximum principle applied to the minimal surfaces $A_n$.)
Thus the $A_n$ converge to a region $A$ in $M$ with boundary $\Gamma + \rho_Y\Gamma$.
This completes the proof that $M$ is a $Y$-surface.

Recall that $Y$ intersects any $Y$-surface transversely, and the number of intersection points is equal
to twice the genus plus two.  It follows immediately from the smooth convergence (and the compactness of $Y$)
that $M$ has genus $g$.

The fact that $M\cap H=X\cup Z\cup Z^*$ follows immediately from smooth convergence together
with the corresponding property of the $M_n$.

Next we show that $M$ has bounded curvature.
Let $p_k\in M$ be a sequence of points such that the curvature of $M$ at $p_k$ tends
to the supremum.  Let $f_k$ be a screw motion such that $f_k(H)=H$ and such that $z(f(p_k))=0$.
The surfaces $f_k(M)$ have areas that are uniformly bounded on compact sets. (They inherit those
bounds from the surfaces $M_n$.)  Thus exactly as above, by passing to a subsequence, we get
smooth convergence to a limit surface.  It follows immediately that $M$ has bounded curvature.

Since $M$ is a minimal embedded surface of finite topology containing $Z\cup Z^*$, each of its
two ends is asymptotic to a helicoid by~\cite{hoffman-white-axial}.
Since $M\cap H=Z\cup Z^*$, those limiting helicoids must have the same pitch as $H$.
(If this is not clear, observe that the intersection of two helicoids with the same axes but different pitch
contains an infinite collection of equally spaced great circles.)
\end{proof}

\addtocontents{toc}{\SkipTocEntry}
\subsection{Proof of theorem~\ref{main-S2xR-theorem} for \texorpdfstring{$h=\infty$}{Lg}}
The non-periodic case of theorem~\ref{main-S2xR-theorem} follows immediately from the 
periodic case together with theorem~\ref{h-to-infinity-theorem}.
The various asserted properties of the non-periodic examples follow from the corresponding
properties of the periodic examples together with the smooth convergence in
 theorem~\ref{h-to-infinity-theorem}, except for the noncongruence properties, which are
 proved in section~\ref{noncongruence-appendix}. \qed


\section{Convergence to Helicoidal Surfaces in \texorpdfstring{$\RR^3$}{Lg}}\label{R3-section}

In the section, we study the behavior of genus-$g$ helicoidal surfaces in
 $\sS^2(R)\times\RR$ as $R\rightarrow\infty$.
 The results in this section will be used in section~\ref{Proof_of_DIVERGING-RADII-THEOREM} 
 to prove theorem~\ref{DIVERGING-RADII-THEOREM} of section~\ref{section:main-theorems}.

 We will identify $\sS^2(R)$ with $\RR^2 \cup\{\infty\}$ by stereographic projection, and therefore
  $\sS^2(R)\times \RR$ with 
  \[
     (\RR^2\cup \{\infty\})\times \RR = \RR^3 \cup (\{\infty\}\times\RR) = \RR^3\cup Z^*.
  \]

Thus we are working with $\RR^3$ together with a vertical axis $Z^*$ at infinity.
The Riemannian metric is
\begin{equation}\label{metric}
    \left(\frac{4R^2}{4R^2+ x^2 + y^2}\right)^2(dx^2+dy^2) + dz^2.
\end{equation}
In particular, the metric coincides with the Euclidean metric along the $Z$ axis.
Inversion in the cylinder
\begin{equation} \label{C2r}
C_{2R}=\{(x,y,z)\,:\, x^2+y^2=(2R)^2\}
\end{equation}
 is an isometry of \eqref{metric}. 
Indeed, $C_{2R}$ corresponds to $E\times\RR$, 
where $E$ is the equator of $\sS^2\times\{0\}$ with respect 
to the antipodal points $O$ and $O^*$.
 We also note for further use that \begin{equation} \label{Cyldist}
\dist_R(C_{2R}\,,\,Z)= \pi R/2,
\end{equation}
where $\dist_R(\cdot,\cdot)$ is the distance function associated to the metric \eqref{metric}.

We fix a genus $g$ and choose a helicoid
 $H\subset \RR^3$  with axis $Z$ and containing $X$.
(Note that it is a helicoid for all choices of $R$.)   As usual, let $H^+$ be the component of $\RR^3\setminus H$
containing $Y^+$, the positive part of the $y$-axis.
Let $M$ be one of the nonperiodic, genus-$g$ examples described in 
 theorem~\ref{main-S2xR-theorem}.  Let 
 \[
     S=\interior (M\cap H^+).
 \]
According to theorem~\ref{main-S2xR-theorem}, $M$ and $S$ have the following properties:
\begin{enumerate}
\item\label{example-def-one} $S$  is a smooth, embedded $Y$-surface in $H^+$
that intersects $Y^+$ in exactly $g$ points, 
\item\label{example-def-two} The boundary\footnote{Here we are regarding $M$ and $S$ 
   as  subsets of $\RR^3$
with the metric~\eqref{metric}, so $\partial S$ is $X\cup Z$ and not $X\cup Z\cup Z^*$.}
of $S$ is $X\cup Z$.
\item\label{example-def-three} $M=\overline{S}\cup \rho_Z\overline{S} \cup Z^*$ is a 
  smooth surface that is minimal
  with respect to the metric~\eqref{metric}.
\end{enumerate}

\newcommand{\hh}{\eta}   
\begin{definition}\label{example-definition}
An {\em example} is a triple $(S,\hh,R)$ with $\hh>0$ and $0<R<\infty$
such that $S$ satisfies~\eqref{example-def-one}, 
\eqref{example-def-two}, and~\eqref{example-def-three}, where
$H$ is the helicoid in $\RR^3$ that has axis $Z$, that contains $X$, and
that has vertical distance between successive sheets equal to $\hh$.
In the terminology of the previous sections, $H$ is the helicoid of pitch $2\hh$.
\end{definition}

\stepcounter{theorem}
\addtocontents{toc}{\SkipTocEntry}
\subsection{Convergence Away from the Axes}

Until section~\ref{nearZ} it will  be convenient to work not in $\RR^3$ but rather
in the universal cover of $\RR^3\setminus Z$,  still with the Riemannian metric~\eqref{metric}.
Thus the angle function $\theta(\cdot)$ will be well-defined and single valued. 
However, we normalize the angle function so that $\theta(\cdot)=0$ on $Y^+$.
(In the usual convention for cylindrical coordinates, $\theta(\cdot)$ would be $\pi/2$ on $Y^+$.)
Thus $\theta=-\pi/2$ on $X^+$ and $\theta=\pi/2$ on $X^-$.

Of course $Z$ and $Z^*$ are not in the universal cover, but $\dist(\cdot, Z)$
and $\dist(\cdot, Z^*)$ still make sense.

Since we are working in the universal cover, each vertical line intersects $H^+$ in a single segment
of length $\hh$.

\begin{theorem}[First Compactness Theorem]\label{first-compactness-theorem} 
Consider a sequence $(S_n,\hh_n,R_n)$ of examples with $R_n$ bounded away from $0$ 
and with $\hh_n\to 0$.
Suppose that 

\begin{enumerate}[\upshape (*)]
\item each $S_n$ is graphical in some nonempty, open cylindrical region $U\times \RR$ such that
$\theta(\cdot)>\pi/2$ on $U\times\RR$. In other words, every vertical line in $U\times\RR$ intersects
$M_n$ exactly once.
\end{enumerate}
Then after passing to a subsequence, the $S_n$ converge smoothly away from a discrete set
$K$ to the surface $z=0$.  The convergence is with multiplicity one where $|\theta(\cdot)|>\pi/2$ and with
multiplicity two where $|\theta(\cdot)|<\pi/2$.  

Furthermore, the singular set $K$ lies in the region $|\theta(\cdot)|\le \pi/2$.
\end{theorem}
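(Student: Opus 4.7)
My plan combines a flux-based local area bound, the General Compactness Theorem~\ref{general-compactness-theorem}, and the collapse of the helicoidal slab $H_n^+$ to $\{z=0\}$ as $\eta_n \to 0$, followed by a multiplicity analysis using the graphical hypothesis and the $\rho_Y$-symmetry of $S_n$.

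First I would establish uniform local area bounds by applying Proposition~\ref{flux-bounds-proposition} to each $S_n$. Since $\partial S_n = X \cup Z$ and the Killing fields $\vv_z, \vv_\theta$ are tangent to $Z$ along $Z$ (so their boundary flux contributions vanish there) while $X$ has uniformly bounded length, Corollary~\ref{flux-bounds-corollary} gives a uniform local area bound for $S_n$. Combined with $\genus(S_n) \le g$, the General Compactness Theorem~\ref{general-compactness-theorem} yields a subsequence converging smoothly away from a discrete set $K$ to a minimal limit $S'$. I would identify $S'$ as $\{z=0\}$ as follows: in the universal cover of $\RR^3\setminus Z$, the component $H_n^+$ containing $Y^+$ is a helicoidal slab of vertical thickness $\eta_n$, which on any compact set collapses to $\{z=0\}$ as $\eta_n \to 0$ since it is anchored there by the circle $X \subset \partial H_n^+ \subset \{z=0\}$. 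Hence every $C^0$ subsequential limit of $S_n$ is contained in $\{z=0\}$, and smoothness of $S'$ together with $X \subset \partial S_n \subset \{z=0\}$ forces $S' = \{z=0\}$ on its entire support.

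For the multiplicity, the graphical hypothesis gives a single sheet $z = u_n$ over $U$ with $\|u_n\|_{C^0} \le \eta_n \to 0$, and standard interior elliptic estimates for the minimal graph equation produce smooth multiplicity-one convergence $u_n \to 0$. By the $\rho_Y$-symmetry $(r,\theta,z) \mapsto (r,-\theta,-z)$ of $S_n$, a symmetric graphical sheet lives over $\rho_Y(U) \subset \{\theta < -\pi/2\}$ with multiplicity-one convergence. Since the strips $\theta > \pi/2$ and $\theta < -\pi/2$ are disjoint in the universal cover, at any point with $|\theta| > \pi/2$ exactly one graphical sheet is present, giving multiplicity one and precluding singular points, so $K \subset \{|\theta| \le \pi/2\}$. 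In $|\theta| < \pi/2$ each graphical sheet extends as an analytic minimal surface --- the boundary $\partial S_n = X \cup Z$ meets this open angular region only on the axis --- so both extensions converge smoothly to $\{z=0\}$, giving multiplicity two.

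The main obstacle is the multiplicity analysis in $|\theta| < \pi/2$: specifically, verifying that the two graphical sheets really do extend across $\theta = \pm\pi/2$ and ruling out any additional sheets in this central region. Smooth convergence of $u_n \to 0$ up to $\partial U$ controls the extension, while capping the multiplicity at exactly two uses the uniform area bound (extra sheets contribute positively to area on compact sets) together with the topological constraint that $S_n/\rho_Y$ is a disk (Proposition~\ref{Y-surface-topology-propostion}), which limits how many disjoint graphical components $S_n$ can possess.
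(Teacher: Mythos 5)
Your overall skeleton (area bounds $\to$ General Compactness Theorem $\to$ identify limit as $\{z=0\}$ $\to$ multiplicity analysis using hypothesis (*), $\rho_Y$-symmetry, and flux/topology) is in the right spirit, but two of your steps have genuine gaps, and they happen to be exactly the places where the paper brings in machinery you are not using.

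The first gap is in the area bound. You invoke Corollary~\ref{flux-bounds-corollary}, asserting that ``$X$ has uniformly bounded length.'' But the hypothesis of the theorem is only that $R_n$ is bounded \emph{away from zero}; $R_n\to\infty$ is allowed (and indeed is exactly the regime of interest in section~\ref{nearZ}). In $\sS^2(R_n)$ the great circle $X$ has length $2\pi R_n$, and along $X$ the Killing field $\vv_\theta$ has norm of order $R_n$ as well, so the flux integral $\int_X |\vv_\theta\cdot\nupt|\,ds$ grows like $R_n^2$. Thus the bound you would extract from Corollary~\ref{flux-bounds-corollary} is not uniform in $n$, and the argument collapses precisely when $R_n\to\infty$. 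The paper avoids this by taking a completely different route: it uses the graphical hypothesis (*) to show that the area-blowup set $Q$ is a \emph{proper} subset of the smooth limit manifold $\{z=0\}$, and then applies the constancy theorem for area-blowup sets from \cite{white-controlling-area}*{theorem~4.1} to conclude $Q=\emptyset$. This is where hypothesis (*) first enters the paper's proof, not merely in the multiplicity count; your proposal never uses (*) at this stage, which is a signal something is missing.

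The second gap is in the claim that the multiplicity is $2$ (rather than $0$) where $|\theta|<\pi/2$. You argue by ``extending'' the two graphical sheets, but the surface is not graphical near $Y$ where the catenoidal necks and handles sit, so there is no guarantee of smooth extension; more fundamentally, the task is to rule out multiplicity $0$, while your closing paragraph is entirely devoted to ruling out multiplicities exceeding $2$ (the easy direction, handled by the boundary-of-$X$ flux observation). To rule out $0$, the paper observes that $\tilde S_n := S_n\cap\{|\theta|<\pi/2\}$ is a nonempty minimal surface confined to the thin slab $\{|\theta|\le\pi/2\}\cap\{|z|\le 2\hh_n\}$ with boundary on the cylindrical edge, and then invokes Theorem~\ref{all-or-nothing-theorem} from section~\ref{hemisphere-appendix}: a nonempty minimal surface in such a thin hemispherical slab, anchored on the rim, must project onto almost all of the hemisphere. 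That quantitative ``all-or-nothing'' statement is what forces the multiplicity up to $2$, and without it (or an equivalent barrier/sweepout argument) your proof has no mechanism to prevent $\tilde S_n$ from retreating toward the boundary of the slab as $\hh_n\to 0$.
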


\begin{remark}\label{compactness-remark}
Later (corollary~\ref{hypothesis-satisfied-corollary} and corollary~\ref{singularities-in-Y-corollary}) 
we will show the hypothesis \thetag{*} is not needed 
and  that the singular set $K$ lies in $Y^+$.
\end{remark}

\begin{proof}
By passing to a subsequence and scaling,  we can assume that the $R_n$ converge to a limit $R\in [1,\infty]$.
Note that the $H_n^+$ converge as sets to the surface $\{z=0\}$ in the universal cover of $\RR^3\setminus Z$.
Thus, after passing to a subsequence, the $S_n$ converge as sets to a closed subset 
of the surface $\{z=0\}$.    By standard estimates for minimal graphs, the convergence is smooth
(and multiplicity one) in $U\times\RR$.   Thus the area-blowup set
\[
  Q: = \{ q: \text{$\limsup_n \area(S_n\cap \BB(q,r)) =\infty$ for all $r>0$} \}
\]
is contained in $\{z=0\}\setminus U$ and is therefore a proper subset of $\{z=0\}$.
The constancy theorem for area-blowup sets~\cite{white-controlling-area}*{theorem~4.1} 
states that the area-blowup set of a sequence
of minimal surfaces cannot be a nonempty proper subset of a smooth, connected two-manifold, provided
the lengths of the boundaries are uniformly bounded on compact sets.   Hence $Q$ is empty.
   That is, the areas
of the $S_n$ are uniformly bounded on compact sets.

Thus by the general compactness theorem~\ref{general-compactness-theorem},
after passing
to a subsequence, the $S_n$ converge smoothly away from a discrete set $K$
to a limit surface $S'$ lying in $\{z=0\}$.
The surface $S'$ has some constant multiplicity  in the region where $\theta(\cdot)>\pi/2$.  
Since the $S_n\cap (U\times\RR)$ are graphs,
that multiplicity must be $1$.  By $\rho_Y$ symmetry, the multiplicity is also $1$
where $\theta<-\pi/2$.
  Since each $S_n$ has boundary $X$, the multiplicity
of $S'$ where $|\theta(\cdot)| <\pi/2$ must be $0$ or $2$.

Note that $\tilde S_n:=S_n\cap \{|\theta(\cdot)|<\pi/2\}$ is nonempty and lies in the solid cylindrical region
\begin{equation}\label{the-region}
    \{|\theta(\cdot)|\le \pi/2\} \cap \{ |z|\le 2\hh_n\}
\end{equation} 
and that $\partial \tilde S_n$ lies on the cylindrical, vertical edge of that region.  
It follows (by theorem~\ref{all-or-nothing-theorem} in section~\ref{hemisphere-appendix}) 
that for $\hh_n/R_n$ sufficiently small,
every vertical line that intersects the region~\eqref{the-region} is at distance at most $4\hh_n$
from $S_n$.
Thus the limit of the $\tilde S_n$ as sets is all of $\{z=0\}\cap \{|\theta(\cdot)|\le \pi/2\}$,
and so the multiplicity there is two, not zero.

Since the convergence $S_n\to S'$ is smooth wherever $S'$ has multiplicity $1$ (either
by the General Compactness Theorem~\ref{general-compactness-theorem} 
or by the Allard Regularity Theorem~\cite{allard}), 
     $|\theta(\cdot)|$ must be $\le \pi/2$ at
each point of $K$. 
\end{proof}


\begin{theorem}\label{second-compactness-theorem}
Let $(S_n, \hh_n, R_n)$ be a sequence of examples with $R_n\ge 1$ and with $\hh_n\to 0$.
Let $f_n$ be the screw motion through angle $\alpha_n$ that maps $H_n^+$ to itself, 
and assume that $|\alpha_n|\to \infty$.
Let $S_n' = f_n(S_n)$.   Suppose each $S_n'$ is graphical in some nonempty open cylinder $U\times\RR$
(as in \thetag{*} in Theorem~\ref{first-compactness-theorem}).
Then the $S_n'$ converge smoothly (on compact sets) with multiplicity one to the surface $\{z=0\}$.
\end{theorem}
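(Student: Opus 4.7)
The plan is to apply the General Compactness Theorem (Theorem~\ref{general-compactness-theorem}) to the sequence $\{S_n'\}$ and then use the multiplicity dichotomy combined with the graphicality hypothesis to exclude higher multiplicities. Since $f_n$ is a screw motion preserving $H_n^+$, each $S_n'=f_n(S_n)$ lies in the same strip $H_n^+$, which collapses to $\{z=0\}$ on compact sets as $\hh_n\to 0$; hence any subsequential limit must lie in $\{z=0\}$. The boundary $\partial S_n' = f_n(X)\cup Z$ has its $X$-component at angular positions $\pm\pi/2+\alpha_n$ in the universal cover, and since $|\alpha_n|\to\infty$, this component escapes any fixed compact set for all sufficiently large $n$, so only $Z\cap K$ contributes to the boundary length in a fixed compact $K$.

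I would first establish uniform local area bounds for $S_n'$ by following the proof of Theorem~\ref{first-compactness-theorem}. Graphicality in $U\times\RR$ yields area bounds there via standard minimal graph estimates; the area-blowup set is then a proper subset of $\{z=0\}$; and the constancy theorem for area-blowup sets (\cite{white-controlling-area}, Theorem~4.1) forces it to be empty, using the uniform boundary-length bounds noted above. The General Compactness Theorem then yields, after passing to a subsequence, convergence to a smooth, properly embedded minimal surface $M'\subset\{z=0\}$, with multiplicity one on $U\times\RR$ (by the graphicality).

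To conclude, I would identify $M'=\{z=0\}$ and then invoke the multiplicity dichotomy. In the universal cover of $\RR^3\setminus Z$, $\{z=0\}$ is a connected open half-plane and $M'$ is a smooth codimension-zero submanifold without interior boundary (any potential boundary of $M'$ comes from the limit of $\partial S_n'$ in fixed compact sets, which is empty since $f_n(X)$ escapes and $Z$ is absent in the universal cover), so proper embeddedness forces $M'=\{z=0\}$. The multiplicity dichotomy in the General Compactness Theorem then says the convergence is either smooth with multiplicity one everywhere on $M'$, or smooth with some single multiplicity $k>1$ away from a discrete set; the latter would force multiplicity $k>1$ on the graphical region $U\times\RR$, contradicting multiplicity-one graphicality. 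Hence the former holds, giving smooth multiplicity-one convergence to $\{z=0\}$ on compact sets. The main obstacle is the identification $M'=\{z=0\}$: if $M'$ were a proper open subset of $\{z=0\}$, it would have an interior boundary curve coming from the limit of $\partial S_n'$, but the escape of $f_n(X)$ in the universal cover together with the properly-embedded property of $M'$ rules this out, and once this identification is in hand the dichotomy concludes the proof at once.
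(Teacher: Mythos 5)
Your argument is correct and is essentially the adaptation of the proof of theorem~\ref{first-compactness-theorem} that the paper has in mind (the paper simply remarks that the two proofs are ``almost identical'' and omits this one). You correctly identify the two relevant changes: the boundary $f_n(X)$ and the multiplicity-two region $|\theta(\cdot)-\alpha_n|<\pi/2$ escape every fixed compact set because $|\alpha_n|\to\infty$, so the boundary-length term in the constancy theorem vanishes and the dichotomy in theorem~\ref{general-compactness-theorem}, combined with multiplicity-one graphicality over $U\times\RR$, forces multiplicity one globally; the final identification $M'=\{z=0\}$ would be stated most crisply by observing that $M'$ is a nonempty, boundaryless, properly embedded $2$-submanifold of the connected $2$-manifold $\{z=0\}$, hence open and closed in it.
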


The proof is almost identical to the proof of theorem~\ref{first-compactness-theorem}, so we omit it.

\begin{theorem}\label{graphical-theorem}
For every genus $g$ and  angle $\alpha>\pi/2$, there is a $\lambda<\infty$ with the following
property.  If $(S,\hh,R)$ is a genus-$g$ example (in the sense of definition~\ref{example-definition}) with 
\[
    \dist(Z,Z^*) = \pi R > 4\lambda \hh,
\]
then $S$ is graphical in the region
\[
   Q(\lambda \hh, \alpha):= \{|\theta(\cdot)| \ge \alpha, \, \dist(\cdot, Z\cup Z^*) \ge \lambda \hh \}.
\]
\end{theorem}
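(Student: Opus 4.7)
The plan is to argue by contradiction, using a blow-up/rescaling argument together with the compactness Theorems~\ref{first-compactness-theorem} and~\ref{second-compactness-theorem}. Suppose the conclusion fails for some $g$ and some $\alpha>\pi/2$: then there are genus-$g$ examples $(S_n,\eta_n,R_n)$ with $\pi R_n > 4n\eta_n$, together with points $p_n\in S_n$ satisfying $|\theta(p_n)|\ge \alpha$ and $\dist(p_n,Z\cup Z^*)\ge n\eta_n$, at which $S_n$ fails to be graphical. Non-graphicality here means either that the tangent plane to $S_n$ at $p_n$ is vertical, or that the vertical line through $p_n$ hits $S_n$ at a second point $q_n$ arbitrarily close to $p_n$. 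Since $\dist(p_n,Z)+\dist(p_n,Z^*)=\pi R_n$, after replacing $S_n$ by $\mu_E(S_n)$ if necessary I may assume $r_n:=\dist(p_n,Z)\le \pi R_n/2$. Dilating the ambient metric by $1/r_n$ yields rescaled examples $\tilde S_n$ with pitch $\tilde\eta_n=\eta_n/r_n\le 1/n\to 0$, radius $\tilde R_n\ge 2/\pi$ bounded below, and with $\tilde p_n$ at distance $1$ from $Z$ and $|\theta(\tilde p_n)|\ge\alpha$.

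I then split into two cases according to $\theta(p_n)$. If $|\theta(p_n)|$ stays bounded, then after passing to a subsequence $\tilde p_n\to \tilde p_\infty$ with $|\theta(\tilde p_\infty)|\ge\alpha>\pi/2$ and $\dist(\tilde p_\infty,Z)=1$. Applying Theorem~\ref{first-compactness-theorem} to $\tilde S_n$ yields smooth multiplicity-one convergence to $\{z=0\}$ in a neighborhood of $\tilde p_\infty$, since $\tilde p_\infty$ lies outside the singular set $K\subset\{|\theta|\le\pi/2\}$. The smooth graph limit rules out both forms of non-graphicality near $\tilde p_\infty$, a contradiction. If instead $|\theta(p_n)|\to\infty$, I precompose with a screw motion $f_n$ twisting by $\alpha_n=\theta(\tilde p_n)-\alpha_0$ for some fixed $\alpha_0>\pi/2$, so that $|\alpha_n|\to\infty$ and $\theta(f_n(\tilde p_n))=\alpha_0$. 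Theorem~\ref{second-compactness-theorem} now gives smooth multiplicity-one convergence of $f_n(\tilde S_n)$ to $\{z=0\}$, and since a screw motion preserves the vertical direction, non-graphicality at $\tilde p_n$ is equivalent to non-graphicality at $f_n(\tilde p_n)$, again impossible in the limit.

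The main technical obstacle is verifying hypothesis $(*)$ of Theorems~\ref{first-compactness-theorem} and~\ref{second-compactness-theorem} in the rescaled setting, namely that the $\tilde S_n$ (or their images under $f_n$) are graphical on some nonempty open cylinder $U\times\RR$ with $\theta>\pi/2$. This follows because the two sheets of $\tilde H_n$ bounding $\tilde H_n^+$ have slope of order $\tilde\eta_n/r$ in any region at bounded distance from $Z$, so $\tilde H_n^+$ is a slab of vertical thickness $\tilde\eta_n\to 0$ pinched between nearly-horizontal sheets. The surface $\tilde S_n$ is trapped in this slab, and any vertical line through $U\times\RR$ meets $\tilde H_n^+$ in a single segment of length $\tilde\eta_n$; for large $n$ this forces $\tilde S_n$ to be graphical there. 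Taking, for instance, $U=\{1/2<r<2,\ \theta\in(3\pi/4,\,5\pi/4)\}$ supplies the required cylindrical region, and the contradiction goes through, establishing the desired $\lambda=\lambda(g,\alpha)$.
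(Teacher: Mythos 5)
Your overall strategy — contradiction, rescaling so that $\dist(p_n,Z)=1$, using $\mu_E$ to take the nearer axis, then splitting into bounded and unbounded $\theta(p_n)$ and invoking the two compactness theorems — is exactly the paper's skeleton. The problem is in the step you flag as the ``main technical obstacle," verifying hypothesis~$(*)$: your argument there does not work.

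You assert that because $\tilde H_n^+$ is a slab of vertical thickness $\tilde\eta_n \to 0$ and every vertical line in $U\times\RR$ meets it in a segment of length $\tilde\eta_n$, the inclusion $\tilde S_n \subset \tilde H_n^+$ forces $\tilde S_n$ to be graphical over $U$ for large $n$. This is false. Confinement to a thin slab does not prevent a minimal surface from having vertical tangent planes or from meeting a vertical line more than once inside that slab: small catenoidal necks (of waist radius $\ll \tilde\eta_n$) fit comfortably inside an arbitrarily thin slab. In fact, ruling out necks away from the axes and away from $Y$ is essentially the content of the very theorem you are trying to prove — you cannot get it for free as a hypothesis check. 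Note the paper's Corollary~\ref{hypothesis-satisfied-corollary}, which states precisely that $(*)$ always holds for large $n$, is stated \emph{after} and derived \emph{from} Theorem~\ref{graphical-theorem}; your proposal runs the logical dependency backwards. (One also cannot appeal to Theorem~\ref{limit-is-a-catenoid-theorem} to say necks live on $Y$, since that theorem in turn relies on the compactness framework being applied with $(*)$.)

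The paper sidesteps this by \emph{defining} $\alpha_n \ge \alpha$ to be the largest angle such that $S_n$ fails to be graphical in $Q(\lambda_n\eta_n,\alpha_n)$. Since the ends of $\widehat M_n$ are asymptotically helicoidal, $S_n$ is graphical in $Q(\lambda_n\eta_n,\beta)$ for $\beta$ large, so this $\alpha_n$ exists, and by its maximality $S_n$ \emph{is} graphical on $Q(\lambda_n\eta_n,\beta)$ for every $\beta>\alpha_n$ — which hands you a cylindrical region $U\times\RR$ with $\theta>\pi/2$ on which graphicality holds \emph{by construction}, with no separate estimate needed. The maximality also forces the failure at $\theta=\alpha_n$ to be via a vertical tangent plane (a rescaled tangent-plane argument), so you only ever deal with your first type of non-graphicality, not the ``two nearby intersection points" case. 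To repair your proof, replace your slab argument by this extremal choice of $\alpha_n$; the rest of your structure (rescaling, Cases with Theorems~\ref{first-compactness-theorem} and~\ref{second-compactness-theorem}) then goes through as you outline.
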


\begin{proof}
Suppose the result is false for some $\alpha>\pi/2$, and let $\lambda_n\to\infty$.
Then for each $n$, there is an example $(S_n,\hh_n,r_n)$ such that
\begin{equation}
\dist_n(Z,Z^*) > 4\lambda_n \hh_n
\end{equation}
and such that $S_n$  is not a graphical in $Q(\lambda_n\hh_n,\alpha)$.
Here $\dist_n(\cdot,\cdot)$ denotes distance with 
respect the metric that comes from $\sS^2(R_n)\times \RR$.
However, henceforth we will write $\dist(\cdot, \cdot)$ instead of $\dist_n(\cdot,\cdot)$ to reduce
notational clutter.

Since the ends of $M_n=\overline{S_n\cup\rho_ZS_n}$ 
are asymptotic to helicoids as $z\to\pm\infty$, 
note that $S_n$ is graphical in $Q(\lambda_n \hh_n, \beta)$ for all sufficiently large $\beta$.
Let $\alpha_n\ge \alpha$ be the largest angle such that $S_n$ is 
not graphical in $Q(\lambda_n\hh_n, \alpha_n)$.
Note that there must be a point $p_n\in S_n$
such that
\begin{align}
 \theta(p_n)&=\alpha_n, \\ 
 \dist(p_n,Z\cup Z^*) &> \lambda_n \hh_n, 
 \end{align}
 and such that $\Tan(S_n,p_n)$ is vertical.
 Without loss of generality, we may assume (by scaling) that $\dist(p_n, Z\cup Z^*)=1$.
 In fact, by symmetry of $Z$ and $Z^*$, we may assume that 
 \[
   1 = \dist(p_n,Z) \le \dist(p_n, Z^*),
 \]
 which of course implies that $\pi R_n = \dist(Z,Z^*) \ge 2$, and therefore that
\[
  \lambda_n\hh_n \le \frac12.
\]
By passing to a further subsequence, we can assume that
\[
  \lambda_n \hh_n \to \mu\in [0,\frac12].
\]
Since $\lambda_n\to\infty$, this forces
\[
  \hh_n\to 0.
\]
We can also assume that
\[
\alpha_n\to \talpha \in [\alpha, \infty].
\]

Case 1:  $\talpha<\infty$.
Then the $p_n$ converge to a point $p$ with $\theta(p)=\talpha$
and with $\dist(p,Z)=\dist(p,Z\cup Z^*)=1$.

Note that $M_n$ is graphical in the region $Q(\lambda_n \hh_n, \alpha_n)$,
and that those regions converge to $Q(\mu, \alpha)$.  Thus by the Compactness
Theorem~\ref{first-compactness-theorem}, 
the $S_n$ converge smoothly and with multiplicity one to $\{z=0\}$
in the region $|\theta(\cdot)|>\pi/2$.   But this is a contradiction since $p_n\to p$,
which is in that region, and since $\Tan(S_n,p_n)$ is vertical.

Case 2: Exactly as in case 1, except that we apply a screw motion $f_n$ to $M_n$
such that $\theta(f_n(p_n))=0$. (We then use theorem~\ref{second-compactness-theorem}
 rather than theorem~\ref{first-compactness-theorem}.)
\end{proof}

\begin{corollary}\label{hypothesis-satisfied-corollary}
The hypothesis~(*) in theorems~\ref{first-compactness-theorem} and~\ref{second-compactness-theorem}
 is always satisfied provided $n$ is sufficiently large.
\end{corollary}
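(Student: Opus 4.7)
The plan is to reduce the corollary directly to Theorem~\ref{graphical-theorem}. In each of the two situations, I will exhibit a fixed (independent of $n$) open cylindrical region $U\times \RR$ with $\theta > \pi/2$ on $U$ that is eventually contained in the graphical region supplied by that theorem (or its image under $f_n$). The main ``obstacle'' is therefore purely bookkeeping: once the Graphical Theorem is granted, no further analytic input is needed.

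For hypothesis~\thetag{*} of Theorem~\ref{first-compactness-theorem}, I would fix any $\alpha > \pi/2$ and take $\lambda = \lambda(g,\alpha)$ to be the constant provided by Theorem~\ref{graphical-theorem}. Since $R_n$ is bounded away from $0$ and $\hh_n \to 0$, the hypothesis $\pi R_n > 4\lambda \hh_n$ holds for all large $n$, so each $S_n$ is graphical on
\[
   Q(\lambda \hh_n, \alpha) = \{|\theta(\cdot)|\ge\alpha,\ \dist(\cdot, Z\cup Z^*) \ge \lambda \hh_n\}.
\]
Using the uniform lower bound $R_n\ge R_0 > 0$ to guarantee room away from both axes, I would pick once and for all a small open ball $U$ in a horizontal slice of the universal cover of $\RR^3\setminus Z$ satisfying $\theta \in (\alpha, \alpha+1)$ on $U$ and $\dist(\overline{U}, Z\cup Z^*) \ge 2c$ for some $c>0$. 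Once $\lambda \hh_n < c$ (automatic for $n$ large), $U\times\RR \subset Q(\lambda\hh_n, \alpha)$, so $S_n$ is graphical on this fixed cylinder with $\theta > \pi/2$ on $U$, verifying~\thetag{*}.

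For the hypothesis of Theorem~\ref{second-compactness-theorem} the argument is the same, with one twist. Theorem~\ref{graphical-theorem} again gives $S_n$ graphical on $Q(\lambda\hh_n,\alpha)$ for $n$ large. In the universal cover the screw motion $f_n$ acts by $(r,\theta,z)\mapsto(r,\theta+\alpha_n, z+v_n)$ for some vertical shift $v_n$, and since $Q(\lambda\hh_n,\alpha)$ is invariant under vertical translation, $S_n' = f_n(S_n)$ is graphical on
\[
    \{|\theta(\cdot) - \alpha_n|\ge\alpha,\ \dist(\cdot,Z\cup Z^*)\ge\lambda\hh_n\}.
\]
I would then fix a ball $U$ of the same kind as above (bounded in $\theta$, with $\theta>\pi/2$ on $U$, and $\dist(\overline{U},Z\cup Z^*) \ge 2c$). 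Since $|\alpha_n|\to\infty$ while $\theta$ is bounded on $U$, the condition $|\theta(\cdot)-\alpha_n|\ge\alpha$ holds on $U$ for $n$ large; combined with $\lambda\hh_n<c$ this places $U\times\RR$ inside $f_n(Q(\lambda\hh_n,\alpha))$, as required.
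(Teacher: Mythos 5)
Your proposal is correct and matches what the paper intends: the corollary is stated immediately after Theorem~\ref{graphical-theorem} with no written proof precisely because it is this routine unwinding, and your argument (fix $\alpha>\pi/2$, use the resulting $\lambda$, note $\pi R_n>4\lambda\hh_n$ holds eventually, and choose a fixed compact cylinder at bounded angle clear of $Z\cup Z^*$ that sits inside $Q(\lambda\hh_n,\alpha)$ --- respectively its image under $f_n$, using $|\alpha_n|\to\infty$ --- for $n$ large) is exactly the verification the authors have in mind.
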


\stepcounter{theorem}
\addtocontents{toc}{\SkipTocEntry}
\subsection{Catenoidal Necks} The next theorem shows that, in the compactness theorem~\ref{first-compactness-theorem}, any point away from $Z\cup Z^*$
where the convergence is not smooth must lie on $Y$, and that near such a point,
the  $S_n$ have small catenoidal necks.

\begin{theorem}\label{limit-is-a-catenoid-theorem}
Let $(S_n,\hh_n,R_n)$ be a sequence of examples and
 $p_n\in S_n$ be a sequence of points  such that
\begin{equation}\label{slope-bigger-than-delta-hypothesis}
\slope(S_n,p_n)\ge \delta>0, 
\end{equation}
(where $\slope(S_n,p_n)$ is the slope of the tangent plane to $S_n$ at $p_n$) and such that
\begin{equation}\label{away-from-axes-hypothesis}
\frac{\dist(p_n,{Z\cup Z^*})}{\hh_n}  \to \infty.
\end{equation}
Then there exist positive numbers $c_n$ such that (after passing to a subsequence) the surfaces
\begin{equation}\label{rescaled-catenoidish}
   \frac{S_n-p_n}{c_n}
\end{equation}
converge to a catenoid in $\RR^3$.  
The waist of the catenoid is a horizontal circle, and the line $(Y^+-p_n)/c_n$ converges to a line
that intersects the waist  in two diametrically opposite points.

Furthermore, 
\begin{equation}\label{h_n-relatively-large}
\frac{\hh_n}{c_n} \to \infty. 
\end{equation}
and
\begin{equation}\label{h_n-relatively-large2}
\frac{\dist(p_n, S_n\cap Y^+)}{\hh_n} \to 0.  
\end{equation}
\end{theorem}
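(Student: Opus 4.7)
The proof will proceed by extracting a blow-up limit via the General Compactness Theorem~\ref{general-compactness-theorem} and identifying that limit as a catenoid using Schoen's classification of embedded minimal surfaces of finite total curvature with two parallel ends.

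First, I would establish that tangent planes at $p_n$ fail to converge to the horizontal plane, so that the blow-up alternative of Theorem~\ref{general-compactness-theorem} applies. The hypothesis $\dist(p_n,Z\cup Z^*)/\eta_n\to\infty$, combined with Theorem~\ref{graphical-theorem} and Corollary~\ref{hypothesis-satisfied-corollary}, puts us in the setting of Theorem~\ref{first-compactness-theorem} (possibly after applying a screw motion $f_n$ of $H_n^+$ to keep $\theta(p_n)$ bounded; if instead $|\theta(p_n)|\to\infty$ we use Theorem~\ref{second-compactness-theorem}). Were $p_n$ to stay bounded away from the singular set $K$, the $S_n$ would converge smoothly to $\{z=0\}$ near $p_n$, forcing the limiting tangent plane to be horizontal and contradicting $\slope(S_n,p_n)\ge\delta$. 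Thus $\Tan(S_n,p_n)$ does not converge to $\{z=0\}$, and the second part of Theorem~\ref{general-compactness-theorem} supplies $c_n\to 0$ and a subsequence along which $(S_n-p_n)/c_n$ converges to a complete, non-flat, embedded minimal surface $M'\subset\RR^3$ of finite total curvature whose ends are parallel to $\{z=0\}$.

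Next I would identify $M'$. Away from the singular set, Theorem~\ref{first-compactness-theorem} gives multiplicity-two smooth convergence of $S_n$ to the horizontal plane, so at large scales in the rescaled picture $M'$ is asymptotic to two parallel horizontal planes; in particular $M'$ has exactly two ends, both horizontal. By the Schoen theorem classifying complete embedded minimal surfaces of finite total curvature with two parallel ends, $M'$ must be a catenoid with vertical axis, so its waist is a horizontal circle. That $\eta_n/c_n\to\infty$ follows because, if this ratio stayed bounded along a subsequence, the ambient helicoidal spacing $\eta_n$ would rescale to additional sheets at bounded vertical distances, contradicting the finite total curvature and two-ended character of $M'$.

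To pin down the position of $Y^+$, I would first show $\dist(p_n,Y)/c_n$ and $|z(p_n)|/c_n$ remain bounded. The $\rho_Y$-invariance of $S_n$ pairs the catenoidal necks; since $S_n$ has genus $g$ and $\lVert S_n\cap Y^+\rVert=g$, each neck clusters within $O(c_n)$ of a point of $S_n\cap Y$. Passing to a subsequence, $(Y-p_n)/c_n$ then converges to a horizontal line $\ell$ about which $M'$ is invariant under $180^\circ$ rotation. The horizontal axes of order-two symmetry of a vertical catenoid are precisely the horizontal diameters through its center, so $\ell$ passes through the center of the waist and meets the waist in two diametrically opposite points. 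Finally, some $q_n\in S_n\cap Y^+$ limits to one of those two points, giving $\dist(p_n,q_n)=O(c_n)$ and hence $\dist(p_n,S_n\cap Y^+)/\eta_n\to 0$ via $c_n/\eta_n\to 0$. The main obstacle is the identification of $M'$ as a catenoid: establishing that $M'$ has exactly two ends rather than being a higher-genus, multi-ended embedded minimal surface with parallel ends rests on carefully combining the multiplicity-two convergence of Theorem~\ref{first-compactness-theorem} with the $\rho_Y$-symmetric neck structure and the bounded genus of the $S_n$.
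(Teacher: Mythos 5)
Your proposal follows the same overall arc as the paper's proof: blow up at scale $c_n$ via the general compactness theorem, rule out a flat limit using $\slope(S_n,p_n)\ge\delta$, classify the limit as a catenoid, deduce $\eta_n/c_n\to\infty$ from the slab constraint, and locate $Y^+$ via the $\rho_Y$-symmetry. Two things need attention.

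First, a caveat you elide: the paper treats separately the case $|\theta(p_n)|\to\pi/2$ (where the boundary curve $X$ is within $O(c_n)$ of $p_n$) and the case $|\theta(p_n)|\to\alpha$ with $|\alpha|<\pi/2$. In the boundary case the paper Schwarz-reflects across $X^-$, applies the compactness theorem to the reflected surfaces, invokes proposition~\ref{genus-0-proposition} to deduce genus zero, and cites L\'opez--Ros (genus zero plus finite total curvature implies catenoid); and only in the interior case does it use the monotonicity estimate for density at infinity $\le 2$ and Schoen's two-ends theorem. You use Schoen's theorem exclusively and do not discuss how the blow-up interacts with the boundary $\partial S_n$ when $p_n$ sits within distance $O(c_n)$ of $X$.

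Second, and more seriously, there is a genuine gap in your argument locating $Y^+$. You assert that ``since $S_n$ has genus $g$ and $\lVert S_n\cap Y^+\rVert=g$, each neck clusters within $O(c_n)$ of a point of $S_n\cap Y$'' -- but this is precisely what must be proved, and the $Y$-surface structure alone does not deliver it. The $Y$-surface property is a homological condition ($\rho_Y$ acting by $-1$ on $H_1$); it does not a priori force the geometric waist of a catenoidal neck to lie near the fixed geodesic $Y$. (Also, $S_n$ does not have genus $g$: it is the half of the genus-$g$ surface $M_n$ lying in $H^+$, with $g$ fixed points of $\rho_Y$, so by corollary~\ref{$Y$-corollary} its genus is roughly $g/2$.) The paper's proof supplies exactly the missing step: it exhibits the unique simple closed geodesic $\gamma_n\subset S_n$ approximating the waist, and proves the claim that $\rho_Y\gamma_n=\gamma_n$ for large $n$. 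The claim is established by contradiction using the angle function $\theta$: if $\gamma_n\ne\rho_Y\gamma_n$, those two curves bound a region $A_n\subset S_n$ (since $S_n$ is a $Y$-surface), the restriction of $\theta$ to $\overline{A_n}$ attains its maximum on $\gamma_n$ by the maximum principle (the level sets of $\theta$ being minimal), and in the blow-up limit this forces a linear function with horizontal gradient to attain its maximum on the horizontal waist circle -- impossible. Without this (or a substitute), the convergence of $(Y^+-p_n)/c_n$ to a diameter of the waist, and hence the conclusion \eqref{h_n-relatively-large2}, are not established.
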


\begin{proof}
By scaling and passing to a subsequence, we may assume that
\begin{equation}\label{distance-to-Z-normalized}
   1 = \dist(p_n, Z) \le \dist(p_n, Z^*).
\end{equation}
and that $\theta(p_n)$ converges to a limit $\alpha\in [-\infty,\infty]$.
By~\eqref{away-from-axes-hypothesis} (a statement that is scale invariant)
 and by~\eqref{distance-to-Z-normalized}, $\hh_n\to 0$.
Thus by~theorem~\ref{graphical-theorem} (and standard estimates for minimal graphs), $|\alpha|\le \pi/2$.

First we prove that there exist $c_n\to 0$ such that the surfaces $(S_n-p_n)/c_n$
converge subsequentially to a catenoid with horizontal ends.

{\bf Case 1}: $|\alpha|=\pi/2$.  By symmetry, it suffices to consider the case $\alpha=\pi/2$.
Let $\widetilde S_n$ be obtained from $S_n$ by Schwartz reflection about $X^-$.

By the last sentence of the general compactness theorem~\ref{general-compactness-theorem}, 
there exist numbers $c_n\to 0$
such that (after passing to a subsequence) the surfaces
\[
  \frac{\widetilde{S}_n - p_n}{c_n}
\]
converge smoothly to a complete, non-flat, properly embedded minimal surface $\widetilde{S}\subset \RR^3$ of finite
total curvature whose ends are horizontal.   
By proposition~\ref{genus-0-proposition}, $\widetilde{S}$ has genus $0$.  
By  a theorem of Lopez and Ros\cite{lopez-ros}, the only 
nonflat, properly embedded minimal surfaces in $\RR^3$ with genus zero and finite total curvature are the catenoids.
Thus $\tilde S$ is a catenoid.   Note that 
\[
   \frac{S_n-p_n}{c_n}
\]
converges to a portion $S$ of $\widetilde{S}$.  Furthermore, $S$ is either all of $\widetilde{S}$, or 
it is a portion of $\widetilde S$ bounded by a horizontal line $\widetilde{X}= \lim_n ((X^- - p_n)/c_n$ 
in $\widetilde{S}$.  
Since catenoids contain no lines, 
in fact $S=\widetilde{S}$ is a catenoid.

{\bf Case 2}:  $|\alpha|<\pi/2$.
By the last statement of the general compactness theorem~\ref{general-compactness-theorem}, 
there are $c_n>0$ tending to $0$ such
that (after passing to a subsequence) the surfaces
\[
 \frac{S_n-p_n}{c_n}
\]
converge smoothly to a complete, nonflat, embedded minimal surface $S\subset \RR^3$ of finite
total curvature with ends parallel to horizontal planes.  By monotonicity, 
\begin{equation}\label{monotonicity}
  \limsup_n \frac{\area\left(\frac{S_n-p_n}{c_n} \cap \BB(0,\rho)\right)}{\pi \rho^2} \le 2
\end{equation}
for all $\rho>0$.  Thus $S'$ has density at infinity $\le 2$, so it has at most two ends.  
If it had just one end, it would be a plane. But it is not flat, so that is impossible. Hence it has two ends. 
By a theorem of Schoen~\cite{schoen-uniqueness}, 
a properly embedded minimal surface in $\RR^3$ of 
with finite total curvature and two ends must be  a catenoid.

This completes the proof that (after passing to a subsequence) the surfaces $(S_n-p_n)/c_n$
converge to a catenoid $S$ with horizontal ends.

Note that for large $n$, there is a simple closed geodesic $\gamma_n$ in $S_n$
such that $(\gamma_n-p_n)/c_n$ converges to the waist of the catenoid $S$.
Furthermore, $\gamma_n$ is unique in the following sense: if $\gamma_n'$ is a
simple closed geodesic in $S_n$ that converges to the waist of the catenoid $S$, then
$\gamma_n'=\gamma_n$ for all sufficiently large $n$.  (This follows from the implicit
function theorem and the fact that the waist $\gamma$ of the catenoid is non-degenerate
as a critical point of the length function.)

\begin{claim} $\rho_Y\gamma_n=\gamma_n$ for all sufficiently large $n$.
\end{claim}

\begin{proof}[Proof of claim]
 Suppose not.  
Then (by passing to a subsequence) we can assume that $\gamma_n\ne\rho_Y\gamma_n$ for all $n$.
Thus (passing to a further subsequence) the curves $(\rho_Y\gamma_n-p_n)/c_n$ do one of the following:
(i) they converge to $\gamma$, (ii) they converge to another simple closed geodesic in $S$ having the same
length as $\gamma$, or (iii) they diverge to infinity.  
Now (i) is impossible by the uniqueness of the $\gamma_n$.  Also, (ii) is impossible because the waist
$\gamma$ is the only simple closed geodesic in the catenoid $S$.
Thus (iii) must hold: the curves $(\rho_Y\gamma_n - p_n)/c_n$ diverge to infinity.

Since $S_n$ is a $Y$-surface, $\gamma_n$ together with $\rho_Y\gamma_n$ bound a region $A_n$
in $S_n$.  By the maximum principle, $\theta(\cdot)$ restricted to $A_n$ has its maximum on one of the
two boundary curves $\gamma_n$ and $\rho_Y\gamma_n$ and (by symmetry) its minimum on the other.
(Note that the level sets of $\theta$ are totally geodesic and therefore minimal.)

\newcommand{\hA}{\hat{A}}
By passing to a subsequence, we can assume that the regions $(A_n-p_n)/c_n$ converges to a subset $\hA$
of the catenoid $S$.
Note that $\hA$ is the closure of one of the components of $S\setminus \Gamma$.  (This is because one of the two boundary components of $(A_n-p_n)/c_n$,
namely $(\gamma_n-p_n)/c_n$, converges to the waist of the catenoid, 
whereas the other boundary component, namely $(\rho_Y\gamma_n-p_n)/c_n$, diverges to infinity.)
The fact that $\theta | \overline{A_n}$ attains it maximum on $\gamma_n$ implies that
there is a linear function $L$ on $\RR^3$ with horizontal gradient such that $L|  \hA$
attains its maximum on the waist $\gamma=\partial \hA$.  But that is impossible since the catenoid $S$
has a horizontal waist.  This proves the claim.
\end{proof}

Since each $\gamma_n$ is $\rho_Y$-invariant (by the claim), it follows that 
the waist $\gamma$ is invariant under $180^\circ$ rotation about the line $Y'$,
where $Y'$ is a subsequential limit of the curves $(Y_n-p_n)/c_n$.
Since $\gamma$ is a horizontal circle, $Y'$ must be a line that bisects the circle.
Thus
\begin{equation}\label{normalized-distance-to-Y}
   \frac{\dist(p_n, S_n\cap Y)}{c_n} \to \dist(O, S\cap Y') < \infty.
\end{equation}

Note that $\hh_n/c_n \to \infty$, since if it converged to a finite limit, 
then the regions $(H_n^+ - p_n) /c_n$ would converge to a horizontal slab of finite
thickness and the catenoid $S$ would be contained in that slab, a contradiction.
This completes the proof of~\eqref{h_n-relatively-large}.

Finally,~\eqref{h_n-relatively-large2} follows immediately 
from~\eqref{h_n-relatively-large} and~\eqref{normalized-distance-to-Y}.
\end{proof}

\begin{corollary}\label{singularities-in-Y-corollary}
The singular set $K$ in theorem~\ref{first-compactness-theorem} is a finite subset of $Y^+$.
In fact (after passing to a subsequence), $p\in K$ if and only if there is a sequence $p_n\in Y^+\cap S_n$
such that $p_n\to p$.
\end{corollary}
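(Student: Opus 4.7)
The plan is to separate the statement into two directions: (i) every $p \in K$ is the limit of a sequence $p_n \in Y^+ \cap S_n$, and in particular $p \in Y^+$; (ii) after a subsequence, every limit of a sequence in $Y^+ \cap S_n$ lies in $K$. Finiteness of $K$ will then follow from $|S_n \cap Y^+| = g$. The forward direction (i) will be driven by theorem~\ref{limit-is-a-catenoid-theorem}, while the converse (ii) reduces to a clean linear-algebra observation about $\rho_Y$-fixed points.

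For direction (i), fix $p \in K$. By theorem~\ref{first-compactness-theorem}, $|\theta(p)| \le \pi/2$, so $\theta$ is defined at $p$ and $\dist(p, Z) > 0$. Because smooth convergence fails at $p$, standard curvature estimates for minimal graphs rule out the possibility that every sequence $p_n \in S_n$ with $p_n \to p$ has $\slope(S_n, p_n) \to 0$; after passing to a subsequence we may therefore choose $p_n \to p$ with $\slope(S_n, p_n) \ge \delta > 0$. Since $\hh_n \to 0$ and $\dist(p_n, Z) \to \dist(p, Z) > 0$ (while $Z^*$ is at infinity), we get $\dist(p_n, Z \cup Z^*)/\hh_n \to \infty$, so the hypotheses of theorem~\ref{limit-is-a-catenoid-theorem} are satisfied. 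That theorem yields $\dist(p_n, S_n \cap Y^+)/\hh_n \to 0$, and because $\hh_n \to 0$ this produces $q_n \in S_n \cap Y^+$ with $q_n \to p$. Since $Y^+$ is closed in the working region, $p \in Y^+$. Finiteness is now immediate: each $|S_n \cap Y^+| = g$, so after a further subsequence the $g$ points converge to a set $Q$ of at most $g$ distinct limits, and $K \subset Q$.

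For the converse direction (ii), suppose $q_n \in Y^+ \cap S_n$ with $q_n \to p$. The key observation is that each $q_n$ is a fixed point of $\rho_Y$ acting on $S_n$, and since $S_n$ is a $Y$-surface, $\rho_Y$ is orientation-preserving on $S_n$, so $\det(d\rho_Y|_{T_{q_n}S_n}) = +1$. However, $d\rho_Y$ acts on the ambient tangent space as the identity on the $Y$-direction and as $-I$ on the orthogonal $2$-plane; a $2$-dimensional $d\rho_Y$-invariant subspace must therefore either contain the $Y$-direction (giving determinant $-1$, excluded) or coincide with the $2$-plane orthogonal to $Y$. Hence $T_{q_n}S_n$ is orthogonal to the horizontal direction $Y$, i.e.\ vertical, so $\slope(S_n, q_n) = \infty$. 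If $p \notin K$, the convergence would be smooth at $p$ and the tangent planes of $S_n$ near $p$ would converge to the horizontal plane $\{z=0\}$, contradicting the verticality of $T_{q_n}S_n$. Therefore $p \in K$.

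I expect the forward direction to be the substantive step, because it hinges on the detailed catenoid structure packaged in theorem~\ref{limit-is-a-catenoid-theorem}; but since that theorem is already proved, the remaining work is mostly bookkeeping about subsequences and distances to the axes. The converse is comparatively painless once one notices that on a $Y$-surface the fixed points of $\rho_Y$ automatically have tangent plane perpendicular to $Y$ — a purely linear-algebraic consequence of orientation-preservation that instantly obstructs smooth horizontal convergence.
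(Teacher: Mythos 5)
Your proof is correct and takes the route the paper clearly has in mind: the forward direction falls out of theorem~\ref{limit-is-a-catenoid-theorem} exactly as you describe, and the converse uses the fact that at a $\rho_Y$-fixed point of a $Y$-surface the tangent plane must be the $(-1)$-eigenplane of $d\rho_Y$, hence vertical, which obstructs smooth convergence to the horizontal plane. (The paper records the latter fact directly as ``$M_s(R)$ meets $Y$ orthogonally'' in theorem~\ref{properties-theorem}, so your determinant computation is simply a re-derivation of it, and your appeal to ``curvature estimates for minimal graphs'' for producing a high-slope sequence is more precisely the dichotomy in the General Compactness Theorem~\ref{general-compactness-theorem}; neither point affects the validity of the argument.)
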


The following definition is suggested by theorem~\ref{limit-is-a-catenoid-theorem}:

\begin{definition}\label{neck-definition}
Let $(S,\hh,R)$ be an example (as in definition~\ref{example-definition}).  
Consider the set of points of $S$ at which the tangent plane is vertical.
A {\em neck} of $S$ is a connected component of that set consisting of a simple closed
curve that intersects $Y^+$ in exactly two points.
The {\em radius}
of the neck is half the distance between those two points, and the {\em axis} of the neck
is the vertical line that passes through the midpoint of those two points.
\end{definition}


\begin{theorem}\label{graph-form-theorem}
Suppose that $(S,\hh,R)$ is an example (as in definition~\ref{example-definition}) and that $V$ is a vertical line.
If $V$ is not too close to $Z\cup Z^*$ and also not too close to any neck
axis, then $V$ intersects $M$ in at most two points, and the tangent planes
to $M$ at those points are nearly horizontal.   
Specifically, 
for every $\eps>0$, there is a $\lambda$ (depending only on genus and $\eps$) 
with the following properties.
Suppose that
\[
   \frac{\dist(V,Z\cup Z^*)}{\hh} \ge \lambda,
\]
and that for every neck axis $A$, either
\[
    \frac{\dist(V,A)}{r(A)} \ge \lambda
\]
(where $r(A)$ is the neck radius)
or
\[
    \frac{\dist(V,A)}{\hh} \ge 1.
\]
Then
\begin{enumerate}[\upshape (i)]
\item\label{slope-small-item}The slope of the tangent plane at each point in $V\cap M$ is $<\eps$, and
\item\label{one-or-two-item} $V$ intersects $\overline{S}$ in exactly one point if $\theta(V)>\pi/2$ and in exactly
two points if $\theta(V)\le \pi/2$.
\end{enumerate}
\end{theorem}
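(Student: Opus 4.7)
I would argue by contradiction, mirroring the strategy used to prove theorem~\ref{graphical-theorem} but extended into the multiplicity-$2$ region $|\theta(\cdot)|<\pi/2$. Fix $\eps>0$ and suppose no $\lambda$ works: then for each $n$ there is an example $(S_n,\hh_n,R_n)$ and a vertical line $V_n$ for which both hypotheses hold with $\lambda=n$, yet some point of $V_n\cap M_n$ has slope $\ge\eps$ or the count in~\eqref{one-or-two-item} is wrong. Rescale each example so that $\dist(V_n,Z\cup Z^*)=1$, and WLOG arrange that $\dist(V_n,Z)=1\le\dist(V_n,Z^*)$; since $\dist(V_n,Z)+\dist(V_n,Z^*)=\pi R_n$ for any vertical $V_n$, this gives $R_n\ge 2/\pi$, while the first hypothesis forces $\hh_n\to 0$. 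Compose with a screw motion of $H_n$ so that $\theta(V_n)$ lies in a bounded interval, and pass to a subsequence so $V_n\to V_\infty$, a vertical line at distance $1$ from $Z$.

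By either theorem~\ref{first-compactness-theorem} or theorem~\ref{second-compactness-theorem} (depending on whether the screw-motion angles stay bounded), with the graphicality hypothesis automatic by corollary~\ref{hypothesis-satisfied-corollary}, a further subsequence of $S_n$ converges to $\{z=0\}$ smoothly away from a finite set $K\subset Y^+$, with multiplicity $1$ where $|\theta(\cdot)|>\pi/2$ and multiplicity $2$ where $|\theta(\cdot)|<\pi/2$. By corollary~\ref{singularities-in-Y-corollary}, each $q\in K$ is the limit of $\rho_Y$-invariant catenoidal necks of theorem~\ref{limit-is-a-catenoid-theorem}, with radii $c_n\to 0$, axes $A_n$ converging to the vertical line through $q$, and $\hh_n/c_n\to\infty$. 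If $V_\infty$ meets $\{z=0\}$ at a point off $K$ and $|\theta(V_\infty)|\ne\pi/2$, the smooth multiplicity-$m$ convergence says directly that for all large $n$ the line $V_n$ meets $\overline{S}_n$ in exactly $m$ points with slopes tending to $0$; this matches~\eqref{slope-small-item} and~\eqref{one-or-two-item}, giving a contradiction. The boundary angle $|\theta(V_\infty)|=\pi/2$ is reduced to the generic case by a small rotation of $V_n$ within its angular slot, combined if needed with Schwarz reflection across $X$.

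The remaining and main case is $V_\infty$ passing through some $q\in K$. Here the neck-axis hypothesis yields $\dist(V_n,A_n)\ge\lambda_n c_n$ or $\dist(V_n,A_n)\ge\hh_n$; in either alternative $\dist(V_n,A_n)/c_n\to\infty$, using $\hh_n/c_n\to\infty$ in the second. Rescaling by $1/c_n$ around the neck, the surfaces converge to a catenoid with vertical axis and horizontal waist, while $V_n$ recedes to infinite distance from the axis. A vertical line at distance $d\to\infty$ from such a catenoid meets it in precisely two points whose tangent slopes tend to $0$, so unscaling gives exactly two nearly horizontal intersections of $V_n$ with $\overline{S}_n$ in a neighborhood of $q$. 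Combined with smooth multiplicity-$2$ convergence away from $q$ (which rules out additional intersections, since $\theta(q)=0$ places us in the multiplicity-$2$ region), this furnishes~\eqref{slope-small-item} and~\eqref{one-or-two-item} for $V_n$, a contradiction. The principal obstacle is verifying that the two catenoidal intersections reconcile with the two sheets of the multiplicity-$2$ limit rather than producing extra spurious intersections; this is exactly what the bifurcated hypothesis enforces through $\dist(V_n,A_n)/c_n\to\infty$.
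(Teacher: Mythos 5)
Your overall strategy (rescale, pass to the universal cover, invoke the compactness theorems, reduce to catenoid/planar limits) is the one the paper follows, and Cases 1, 2, and---with a bit more care---3 of your reduction are sound. But Case 4, the case $V_\infty=q\in K$, which you yourself describe as ``the main case,'' has a genuine gap: you rescale by $1/c_n$ \emph{centered at the neck}, obtain a fixed catenoid as the limit, and then argue about the intersection of that catenoid with a vertical line at distance $d_n:=\dist(V_n,A_n)/c_n\to\infty$. Smooth convergence of $(S_n-p_{j,n})/c_n$ to the catenoid is only on compact subsets of $\RR^3$; it gives no control whatsoever over what $S_n$ looks like at distance $d_nc_n$ from the neck, which is precisely where $V_n$ is. The macroscopic multiplicity-$2$ convergence is also of no help there, since it is smooth only on compact sets disjoint from $q$, and $V_n\to q$. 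So there is an entire range of ``intermediate'' scales between $c_n$ and $O(1)$ that neither blow-up sees, and your final sentence, where you assert that $\dist(V_n,A_n)/c_n\to\infty$ ``enforces'' the reconciliation of the two sheets, does not actually close this gap---that divergence just says $V_n$ leaves the compact-set window of the catenoidal limit, which is the problem, not the solution.

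The paper avoids this in two ways, and you should note the order of the logic. First it proves statement (i) alone, by rescaling \emph{at the offending point} $p_n\in V_n\cap S_n$ where the slope is $\ge\eps$. Theorem~\ref{limit-is-a-catenoid-theorem} applies directly (its hypothesis is precisely slope $\ge\delta$ and $\dist(p_n,Z\cup Z^*)/\hh_n\to\infty$) and produces a catenoid with $p_n$ at the origin of the rescaled picture, hence a neck axis $A_n$ with $\dist(V_n,A_n)/r(A_n)$ bounded and, using $\hh_n/c_n\to\infty$, $\dist(V_n,A_n)/\hh_n\to 0$---contradicting \emph{both} alternatives of the neck hypothesis at once. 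There is no intermediate-scale issue because the bad point is always at the origin of the rescaled frame. Second, only having (i) in hand, the paper proves (ii): since all slopes along $V_n$ are $<\eps$, the line $V_n$ is transverse to $M_n$, and one can slide $V_n$ (holding $\dist(V_n,Z)=1$ fixed and increasing $\theta$) without changing the intersection count except at crossings of $X$; the hypotheses \eqref{thing-one}--\eqref{thing-two} only improve as $\theta$ moves away from $Y^+$, so transversality persists throughout. After sliding to $\theta\approx\pi/4$ one is strictly away from $Y^+$ (hence from $K$) and from $X$, and theorem~\ref{first-compactness-theorem} with remark~\ref{compactness-remark} gives the count directly. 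Your ``small rotation within its angular slot'' in the $|\theta(V_\infty)|=\pi/2$ case is a special instance of this sliding idea, but deployed without first securing the transversality that justifies it; and more importantly, the sliding mechanism entirely replaces your Case~4, so the intermediate-scale problem never arises. I would suggest restructuring your argument to establish (i) first, by blowing up at the bad point as above, and then proving (ii) as a corollary via the sliding argument.
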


\begin{proof} Let us first prove that there is a value $\Lambda<\infty$  of $\lambda$ such that 
assertion~\eqref{slope-small-item} holds.  Suppose not.   
Then there exist examples $(S_n,\hh_n,R_n)$
and vertical lines $V_n$ such that
\begin{equation}\label{thing-one}
   \frac{\dist(V_n,Z\cup Z^*)}{\hh_n} \ge \lambda_n\to \infty,
\end{equation}
and such that
\begin{equation}\label{thing-two}
   \frac{\dist(V_n,A)}{r(A)} \ge \lambda_n \quad\text{or}\quad  \frac{\dist(V_n,A)}{\hh_n} \ge 1
\end{equation}
for every neck axis $A$ of $S_n$, 
but such that $V_n\cap S_n$ contains a point $p_n$ at which the slope of $M_n$ is $\ge \eps$.

Note that~\eqref{thing-one} and~\eqref{thing-two} are scale invariant.  We can can choose coordinates
so that $p_n$ is at the origin and, by theorem~\ref{limit-is-a-catenoid-theorem}, 
we can choose scalings so that the $S_n$
converge smoothly to a catenoid in $\RR^3$.  Let $A'$ be the axis of the catenoid, $r(A')$ be the
radius of the waist of the catenoid, and $V'$ be the vertical line through the origin.
Then $\dist(A',V')$ is finite, $r(A')$ is finite and nonzero, and $\hh_n\to\infty$ 
by~\eqref{h_n-relatively-large}.  Thus if $A_n$ is the neck axis of $S_n$ that
converges to $A'$, then
\[
   \lim_n \frac{\dist(V_n,A_n)}{r(A_n)}  <\infty
   \quad\text{and}\quad
   \lim_n\frac{\dist(V_n,A_n)}{\hh_n}  =  0,
\]
contradicting~\eqref{thing-two}.  This proves
that there is a value
of $\lambda$, call it $\Lambda$, that makes assertion~\eqref{slope-small-item} of the theorem true.

Now suppose that there is no $\lambda$ that makes assertion~\eqref{one-or-two-item} true.
Then there is a sequence $\lambda_n\to\infty$, a sequence of examples $(S_n,\hh_n,R_n)$,
and a sequence of vertical lines $V_n$ such that~\eqref{thing-one} and~\eqref{thing-two}
hold, but such that $V_n$ does not intersect $M_n$ in the indicated number of points.
By scaling, we may assume that 
\[
   1 = \dist(V_n, Z)\le \dist(V_n, Z^*),
\]
which implies that $R_n$ is bounded below and (by~\eqref{thing-one}) that $\hh_n\to 0$.
We may also assume that $\theta(V_n)\ge 0$, and
that each $\lambda_n$ is greater than $\Lambda$.
Thus $V_n$ intersects $S_n$ transversely.
For each fixed $n$, if we move $V_n$ in such a way that $\dist(V_n,Z)=1$ stays constant and
that $\theta(V_n)$ increases, then~\eqref{thing-one} and~\eqref{thing-two} remain true, 
so $V_n$ continues to be transverse to $M_n$.
Thus as we move $V_n$ in that way, the number of points in 
   $V_n\cap \overline{S_n}$ does not change unless 
$V_n$ crosses $X$, so we may assume that $\theta(V_n)\ge \pi/4$.
But now theorem~\ref{first-compactness-theorem} and Remark~\ref{compactness-remark} 
imply that $V_n\cap \overline{S_n}$ has
the indicated number of intersections, contrary to our assumption that it did not.
\end{proof}

\begin{corollary}\label{form-of-graph-corollary}
Let $\eps>0$ and $\lambda>1$ be as in theorem~\ref{graph-form-theorem}, and 
let $(S,\hh,R)$ be an example.
Consider the following cylinders: 
vertical solid cylinders of radius $\lambda \hh$ about $Z$ and $Z^*$, 
and for each neck 
axis\,\footnote{See~\ref{neck-definition} for the definition of ``neck axis".} $A$ of $S$
with $\dist(A,Z\cup Z^*)>(\lambda-1) \hh$, a vertical solid cylinder with axis $A$ 
and radius $\lambda r(A)$.
 Let $J$ be the union of those 
cylinders.
Then $S\setminus J$ consists of two components, one
of which can be parametrized as
\[
   \{ (r\cos\theta, r\sin\theta, f(r,\theta)):   r>0,\, \theta\ge -\pi/2\}  \setminus J
\]
where $f(r,-\pi/2) \equiv 0$ and where
\begin{equation}\label{trapped}
       \hh\left( \frac{\theta}{\pi} - \frac{1}2\right) \le f(r,\theta) \le \hh\left( \frac{\theta}{\pi} + \frac{1}2\right).
\end{equation}
\end{corollary}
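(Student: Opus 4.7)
The plan is to deduce this corollary from theorem~\ref{graph-form-theorem} in three steps: verifying its hypotheses for every vertical line outside $J$, reading off pointwise graph and transversality data, and then assembling these into a global graph parametrization.

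For a vertical line $V$ disjoint from $J$, the bound $\dist(V,Z\cup Z^*)\ge\lambda\hh$ is immediate since $V$ avoids the cylinders of radius $\lambda\hh$ about $Z$ and $Z^*$ that make up part of $J$. For each neck axis $A$, I split on the size of $\dist(A,Z\cup Z^*)$. If this exceeds $(\lambda-1)\hh$, then $J$ contains the cylinder of radius $\lambda r(A)$ about $A$, giving $\dist(V,A)\ge\lambda r(A)$. Otherwise the triangle inequality gives
\[
\dist(V,A)\ge\dist(V,Z\cup Z^*)-\dist(A,Z\cup Z^*)\ge\lambda\hh-(\lambda-1)\hh=\hh.
\]
Thus every such $V$ satisfies the hypothesis of theorem~\ref{graph-form-theorem}, so at each intersection of $V$ with $M$ the tangent plane has slope less than $\eps$, and $\|V\cap\overline S\|$ is $1$ or $2$ according to whether $\theta(V)>\pi/2$ or $\theta(V)\le\pi/2$.

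To assemble these pointwise facts into a global graph I pass to the universal cover of $\RR^3\setminus Z$, where $\theta$ ranges over $\RR$. The preimage of $H^+$ there is a disjoint union of slabs bounded by the lifted sheets of $H$, and the distinguished slab containing the lift of $Y^+$ at $z=0$ is precisely $\{\hh(\theta/\pi-1/2)<z<\hh(\theta/\pi+1/2)\}$. Inside it sits a connected piece $\tilde S$ of the lifted $S$ whose boundary contains the lift of $X^+$ at $\theta=-\pi/2$, $z=0$. For each $(r,\theta)$ with $r>0$, $\theta\ge-\pi/2$ whose lifted vertical line avoids $J$, the intersection count from theorem~\ref{graph-form-theorem}(ii) picks out exactly one intersection with $\tilde S$; set $f(r,\theta)$ to be its $z$-coordinate. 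Smoothness of $f$ follows from conclusion (i) of the theorem together with standard elliptic regularity. The trapping inequality then says exactly that the graph lies in the chosen slab, which holds by construction; the boundary condition $f(r,-\pi/2)\equiv 0$ records that the lift of $X^+\subset\partial S$ at $z=0$ is the $\theta=-\pi/2$ boundary of $\tilde S$. The image of this parametrization is one component of $S\setminus J$ in $\RR^3$, and $\rho_Y$ produces the other, yielding the decomposition into two components.

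The main technical obstacle is a consistency check: as $\theta$ sweeps over $[-\pi/2,\infty)$ the point selected by the parametrization must stay on the distinguished component $\tilde S$ and not jump to a neighboring lift in an adjacent slab of the lifted $H^+$. The trapping of $\tilde S$ between its two bounding lifted helicoid sheets, together with the sharp intersection count of theorem~\ref{graph-form-theorem}(ii), rules this out, but it requires careful attention at the values $\theta\equiv\pm\pi/2\pmod{2\pi}$ at which the count flips between $1$ and $2$.
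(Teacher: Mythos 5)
Your proof is correct and follows essentially the route the paper intends (the paper leaves the corollary as an immediate consequence of theorem~\ref{graph-form-theorem}). The core step is the verification that every vertical line $V$ disjoint from $J$ satisfies the theorem's hypotheses, and your case split on $\dist(A,Z\cup Z^*)$ versus $(\lambda-1)\hh$, using the $1$-Lipschitz property of the distance to $Z\cup Z^*$ for the triangle inequality, is exactly right.

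Three small remarks on the assembly. First, there is no universal cover to ``pass to'': the paper has already been working in the universal cover of $\RR^3\setminus Z$ since the paragraph preceding theorem~\ref{first-compactness-theorem}, and theorem~\ref{graph-form-theorem} is stated in that setting. Second, the trapping inequality \eqref{trapped} is not something that needs to be verified from the construction of $f$: it is literally the statement that $S\subset\overline{H^+}$, since the two quarter-helicoid sheets bounding the distinguished slab are $z=\hh(\theta/\pi\pm 1/2)$. Because $S$ never leaves that slab, the worry about the section jumping to ``a neighboring lift in an adjacent slab'' does not actually arise. The genuine content of your consistency check is the more modest claim that the projection $\pi\colon S\setminus J\to\{(r,\theta)\}$, which is a local diffeomorphism by conclusion~(i), has a well-defined continuous section over $\{r>0,\ \theta\ge -\pi/2\}\setminus\pi(J)$ extending the boundary value $f(r,-\pi/2)=0$ on $X^+$; this follows because transverse intersections vary continuously, the fiber count from conclusion~(ii) never drops below one for $\theta\ge -\pi/2$, and the second branch that is present for $|\theta|\le\pi/2$ can be tracked to $X^-$ at $\theta=\pi/2$ and identified as the $\rho_Y$-image. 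Third, elliptic regularity is not needed for smoothness of $f$: since $S$ is a smooth surface and the tangent planes have slope $<\eps<1$, the implicit function theorem already gives $f$ smooth.
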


Of course, by $\rho_Y$ symmetry, the other component of $S\setminus J$ can be written
\[
   \{ (r\cos\theta, r\sin\theta, -f(r,-\theta)):   r>0,\quad \theta\le \pi/2\}  \setminus J.
\]

The inequality~\eqref{trapped} expresses the fact that $S$ lies in $H^+$.
Note that in corollary~\ref{form-of-graph-corollary}, because we are working in the universal cover of
 $\RR^3\setminus Z$,
each vertical cylinder about a neck axis in the collection $J$ 
intersects $H^+$ in a single connected component. (If we were working in $\RR^3$, it would intersect
$H^+$ in infinitely many components.)   Thus the portion of $S$ that lies in such a cylinder is a single
catenoid-like annulus.  If we were working in $\RR^3$, the portion of $S$ in such a cylinder would
be that annulus together with countably many disks above and below it.

\begin{remark}\label{form-of-graph-remark}
In Corollary~\ref{form-of-graph-corollary}, 
the function $f(r,\theta)$ is only defined for $\theta\ge -\pi/2$. 
It is positive
for $\theta> -\pi/2$ and it vanishes where $\theta=\pi/2$.   Note that we can extend $f$ by Schwarz reflection
to get a function $f(r,\theta)$ defined for all $\theta$:
\begin{align*}
  f(r,\theta) &= f(r,\theta) \qquad\text{for $\theta\ge -\pi/2$, and} \\
  f(r,\theta) &= -f(r, -\pi-\theta)  \qquad\text{for $\theta < -\pi/2$.}
\end{align*}
Corollary~\ref{form-of-graph-corollary} 
states that, after removing the indicated cylinders, we can express $S$ (the portion of $M$
in $H^+$) as the union of two multigraphs: the graph of the original, unextended $f$ together with the image of that graph under
$\rho_Y$.  Suppose we remove from $M$ those cylinders together with their images under $\rho_Z$.
Then the remaining portion of $M$ can be expressed as the the union of two multigraphs:
the graph of the extended function $f$ (with $-\infty<\theta<\infty$) together with the image of that graph 
under $\rho_Y$.
\end{remark}

\begin{remark}
Note that $H\setminus (Z\cup X)$ consists of four quarter-helicoids, two of which are
described in the universal cover of $\RR^3\setminus Z$ by
\[
    z =   \frac{\hh}{\pi} \left( \theta + \frac{\pi}2\right), \quad (\theta\ge  -\pi/2)
\]
and
\[
   z =  \frac{\hh}{\pi}\left( \theta - \frac{\pi}2\right), \quad (\theta \le \pi/2).
\]
(As in the rest of this section, we are measuring $\theta$ from $Y^+$ rather than from $X^+$.)
These two quarter-helicoids overlap only in the region $-\pi/2<\theta<\pi/2$:
a vertical line in that region intersects both quarter-helicoids in points that are
distance $\hh$ apart, whereas any other vertical line intersects only one
of the two quarter-helicoids.
Roughly speaking, theorem~\ref{graph-form-theorem} 
and corollary~\ref{form-of-graph-corollary} say
that if $(S,\hh,R)$ is an example with $R/\hh$ large, then $S$ must be obtained from these two
quarter-helicoids by joining them by catenoidal necks away from $Z$ and 
in some possibly more complicated way near $Z$.   The catenoidal necks lie
along the $Y$-axis.

Figure~\ref{unrolled-figure} illustrates the intersection of $M=S\cup\rho_Z S$ with a vertical cylinder with axis  $Z$.  The shaded region is the intersection of the cylinder with $H^+$. The intersections of the cylinder with the quarter-helicoids are represented by halflines  on the boundary of  the shaded region:  $\theta\geq - \pi/$2 on top of the shaded region, and  $\theta\leq \pi/2$ on the bottom. The radius of the cylinder is chosen so that the cylinder passes though a catenoidal neck of S that can be thought of as joining the quarter-helicoids, allowing $S$ to make a transition from approximating one quarter helicoid to approximating to the other. The transition takes place in the region $-\pi/2\leq\theta\leq \pi/2$.
\end{remark}


\begfig
\begin{center}
\includegraphics[height=40mm]{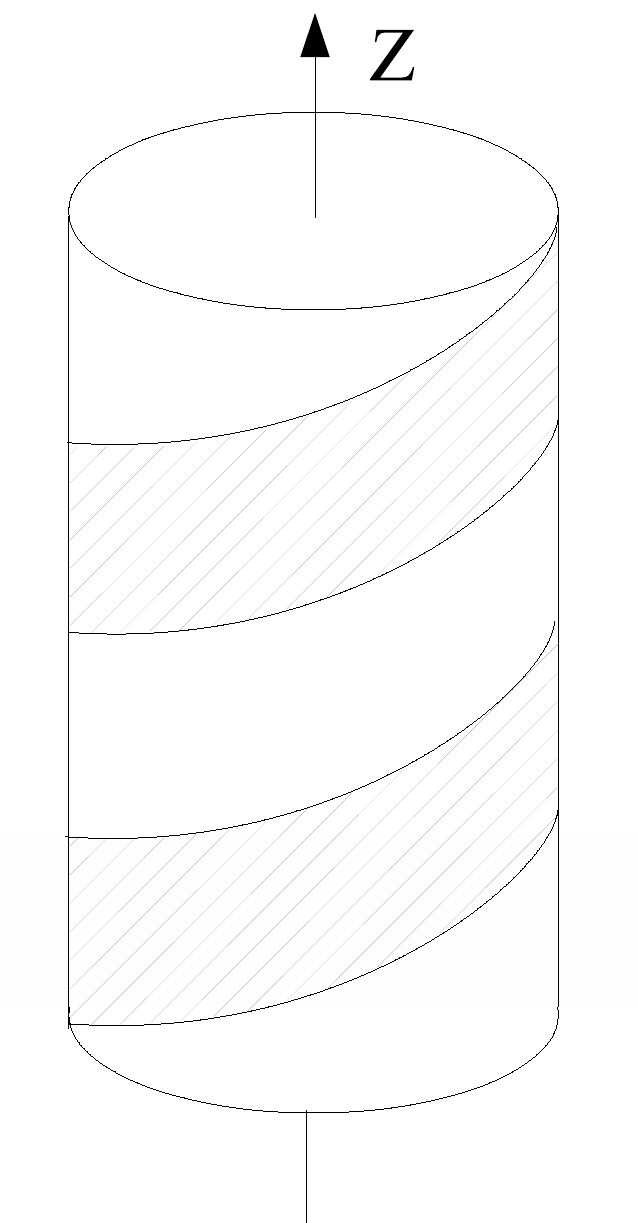}
\hspace{1cm}
\includegraphics[height=40mm]{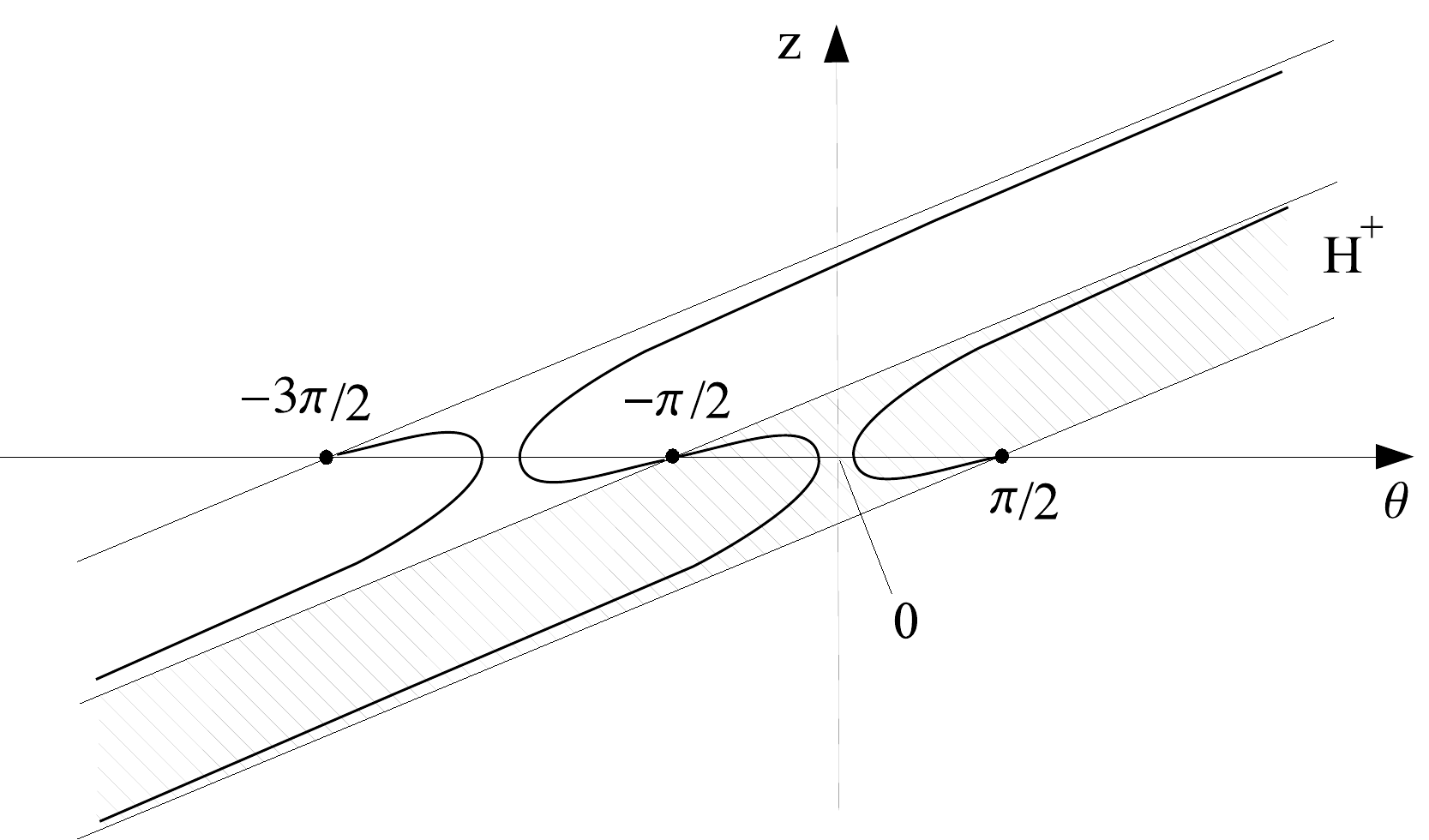}
\end{center}
\caption{{\bf Left}: the shaded region is the intersection of $H^+$ with the vertical cylinder of axis $Z$ and
radius $r$. {\bf Right}: intersection of $M$ with the same cylinder, unrolled in the plane. We use cylindrical coordinates $(r,\theta,z)$, with $\theta=0$ being the positive $Y$-axis.
The radius $r$ is chosen
so that the cylinder intersects a catenoidal neck. The positive $X$-axis intersects the cylinder at the point $(\theta,z)=(-\pi/2,0)$. The negative $X$-axis intersects the cylinder at the point $(\theta,z)=(\pi/2,0)$, which is the same as the point $(-3\pi/2,0)$ on the cylinder.}
\label{unrolled-figure}
\endfig


\stepcounter{theorem}
\addtocontents{toc}{\SkipTocEntry}
\subsection{Behavior near \texorpdfstring{$Z$}{Lg}}\label{nearZ}

In this section, 
we consider examples (see definition~\ref{example-definition}) $(S_n, 1, R_n)$ 
with $\eta=1$ fixed and with $R_n\to\infty$.
We will work in $\RR^3$ (identified with $(\sS^2(R_n)\times \RR)\setminus Z^*$ by stereographic
projection as described at the beginning of section~\ref{R3-section}), 
rather than in the universal cover of $\RR^3\setminus Z$.

\newcommand{\tM}{\tilde M}
\begin{theorem}\label{R-to-infinity-theorem}
Let $(S_n,1, R_n)$ be a sequence of examples with $R_n\to\infty$.
Let $\sigma_n$ be a sequence of screw motions of $\RR^3$ that map $H$ to itself.
Let
\[
  M_n = \sigma_n( \overline{S_n \cup \rho_Z S_n}).
\]
In other words, $M_n$ is the  full genus-$g$ example (of which  $S_n$ is the subset in the interior of $H^+$)  followed by 
the screw motion $\sigma_n$.
Then (after passing to a subsequence), the $M_n$ converge
smoothly on compact sets to a properly embedded, multiplicity-one minimal surface $M$ in $\RR^3$.
 Furthermore, there is a solid cylinder $C$ about $Z$ such that $M\setminus C$ is the union
of two multigraphs.
\end{theorem}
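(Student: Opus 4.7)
The plan is to apply the General Compactness Theorem~\ref{general-compactness-theorem}, using the structural description from Corollary~\ref{form-of-graph-corollary} both to obtain uniform local area bounds and to rule out higher-multiplicity sheeting in the limit.

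First I would establish uniform local area bounds. Fix a compact set $K\subset\RR^3$. Because each $\sigma_n$ is an isometry of the metric $g_n$ on $\RR^3$ pulled back from $\sS^2(R_n)\times\RR$, and because $g_n$ converges smoothly to the Euclidean metric on compact sets, it suffices to bound $\area_{g_n}(\overline{S_n\cup\rho_ZS_n}\cap K')$ for bounded compact $K'\subset\RR^3$. Corollary~\ref{form-of-graph-corollary} (applied with $\hh=1$, $R=R_n$) decomposes $S_n\setminus J_n$ as the union of two multigraphs of bounded slope over $\{z=0\}$, where $J_n$ is the union of a radius-$\lambda$ cylinder about $Z$, a radius-$\lambda$ cylinder about $Z^*$, and at most $g$ cylinders of radii at most $\lambda\,r(A)=O(1)$ about the neck axes along $Y^+$. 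For large $n$ the $Z^*$-cylinder is disjoint from $K'$; the remaining cylinders in $J_n\cap K'$ contain catenoid-like pieces with uniformly bounded area, and the trapping estimate~\eqref{trapped} restricts the number of multigraph sheets passing through any fixed $(r,\theta\!\!\mod 2\pi,z)\in K'$ to be bounded, so the multigraph part also has uniformly bounded area on $K'$.

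Next, theorem~\ref{general-compactness-theorem} (combined with the genus bound $\genus(M_n)\le g$) produces a subsequence along which $M_n$ converges smoothly away from a discrete set to a properly embedded minimal surface $M$ in Euclidean $\RR^3$, with some integer multiplicity on each component. To upgrade to smooth multiplicity-one convergence I would rule out components of multiplicity $\ge 2$: such a component $\Sigma$ would be two-sided and stable, hence by Fischer-Colbrie--Schoen combined with the classification of stable complete minimal surfaces in $\RR^3$ (do Carmo--Peng, Pogorelov), $\Sigma$ would be a plane. But passing Corollary~\ref{form-of-graph-corollary} to the limit shows that outside a sufficiently large cylinder about $Z$, $M$ is the union of two multigraphs whose graphing function $f(r,\theta)$ still satisfies the trapping estimate~\eqref{trapped}, so $f$ grows linearly in $\theta$ and no component of $M$ meeting the exterior of that cylinder can be a plane. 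A component entirely confined to a bounded cylinder would be a complete, properly embedded minimal surface lying in a bounded region of $\RR^3$, which is ruled out by the maximum principle (or the Hoffman--Meeks half-space theorem). Hence every component has multiplicity one and the convergence is smooth throughout.

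Finally, after a further subsequence, each of the at most $g$ neck axes of $M_n$ either converges to a vertical line at finite Euclidean distance from $Z$ or recedes to Euclidean infinity. Choose $C$ to be a solid vertical cylinder about $Z$ containing (with margin) both the $\lambda$-cylinder about $Z$ and all limiting neck axes; then the multigraph portion of $M_n\setminus(\text{enlarged }C)$ converges smoothly and with multiplicity one to a union of two multigraphs on $M\setminus C$, which is the desired conclusion. The step I expect to require the most care is the uniform local area estimate: one must verify that the assembly of multigraph pieces with the various cylinders, together with the $\rho_Z$-reflection and the $g_n$-isometry $\sigma_n$ (whose rotation and translation can be arbitrarily large), really does produce bounded area on each fixed Euclidean compact set — though once this is in hand, the remaining arguments follow standard lines.
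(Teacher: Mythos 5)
Your plan is close to the paper's in outline --- corollary~\ref{form-of-graph-corollary} for the multigraph structure outside a cylinder, the general compactness theorem~\ref{general-compactness-theorem}, and stability to exclude multiplicity $\ge 2$ --- and your multiplicity-one argument and choice of $C$ are substantially what the paper does. But there is a genuine gap at precisely the step you flagged: the uniform local area bound.

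You write that ``the remaining cylinders in $J_n\cap K'$ contain catenoid-like pieces with uniformly bounded area.'' That is correct for the small neck cylinders along $Y^+$, whose radii are $O(1)$ and whose contents really are close to catenoids by theorem~\ref{limit-is-a-catenoid-theorem}. It is neither correct nor proved for the solid cylinder of radius $\lambda$ about $Z$: that cylinder contains the whole axial portion of $M_n$ (the line $Z$, the circle $X$, up to $O(g)$ handles, and an infinitely winding helicoidal sheet of pitch $2$), which is not catenoid-like, and you give no bound on $\area\bigl(M_n\cap\{d(\cdot,Z)\le\lambda,\ |z|\le A\}\bigr)$. I do not see how to produce such a bound directly: the flux/isoperimetric estimate that gives local area bounds in the proof of theorem~\ref{h-to-infinity-theorem} is at fixed $R$ and does not obviously carry over uniformly as $R_n\to\infty$. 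The paper circumvents this. It observes that the area-blowup set
\[
Q=\bigl\{p : \limsup_n \area(M_n\cap\BB(p,r))=\infty\text{ for every }r>0\bigr\}
\]
is contained in $C$ (since outside $C$ convergence is smooth multigraph convergence), hence in a halfspace; the halfspace theorem for area-blowup sets, \cite{white-controlling-area}*{7.5}, then says a nonempty blowup set contained in a halfspace must contain a plane, which does not fit inside $C$, so $Q=\varnothing$. Only after that do the local area bounds --- and with them the applicability of theorem~\ref{general-compactness-theorem} --- become available. To repair your proof you must either supply the missing area estimate in the $Z$-cylinder from scratch or replace it, as the paper does, with this blowup-set argument. (A small secondary point: a component of $M$ confined to $C$ lies in an \emph{unbounded} region, not a bounded one; the contradiction comes from the halfspace theorem, not from the maximum principle in a bounded set.)
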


Thus the family $\FF$ of all such subsequential limits~$M$ (corresponding to arbitrary sequences of $M_n$ and 
$\sigma_n$) is compact with respect to smooth
convergence.  It is also closed under screw motions that leave $H$ invariant.  Those two
facts immediately imply the following corollary:

\begin{corollary}\label{R-to-infinity-corollary} 
Let $\FF$ be the family of all such subsequential limits.
For each solid cylinder $C$ around $Z$, each $M\in \FF$, and each $p\in C\cap M$, the
curvature of $M$ at $p$ is bounded by a constant $k(C)<\infty$ depending
only on $C$ (and genus).
\end{corollary}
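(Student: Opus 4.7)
The plan is to argue by contradiction using only the two properties of $\FF$ asserted in the paragraph immediately preceding the corollary: that $\FF$ is compact under smooth convergence on compact sets, and that $\FF$ is invariant under the one-parameter group $\Sigma$ of screw motions of $\RR^3$ that preserve $H$. Suppose the conclusion fails. Then there exist a solid cylinder $C$ about $Z$, a sequence $M_n\in \FF$, and points $p_n\in C\cap M_n$ such that the norm of the second fundamental form of $M_n$ at $p_n$ tends to infinity.

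First I would use the screw-motion invariance to pull the points $p_n$ into a fixed compact set. The group $\Sigma$ acts by rotations about $Z$ combined with the matching vertical translation, so $\Sigma$ preserves $C$, and the $\Sigma$-orbit of any point in $C$ is a helical curve whose quotient by $\Sigma$ is a circle. Consequently we may choose a compact fundamental domain $K\subset C$ (say, any slab $C\cap\{a\le z\le a+\eta\}$, where $\eta$ is the vertical separation between successive sheets of $H$). For each $n$ pick $\tau_n\in \Sigma$ with $q_n:=\tau_n(p_n)\in K$, and let $M_n':=\tau_n(M_n)$. By closure of $\FF$ under $\Sigma$ we have $M_n'\in\FF$, and because $\tau_n$ is an isometry of $\RR^3$ the second fundamental form of $M_n'$ at $q_n$ has the same norm as that of $M_n$ at $p_n$, hence still blows up.

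Now I would invoke the compactness property. After passing to a subsequence, $M_n'$ converges smoothly on compact subsets of $\RR^3$ to some $M'\in \FF$, and $q_n$ converges to some $q\in K$. Since $K$ is compact and the convergence $M_n'\to M'$ is smooth there, the second fundamental forms converge: $|A_{M_n'}|(q_n)\to |A_{M'}|(q)<\infty$, contradicting blow-up.

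The argument really is essentially a one-line consequence of the two cited properties, so I do not expect a genuine obstacle; the only thing to verify is the existence of the compact fundamental domain $K$, which follows from the fact that $\Sigma$ is a closed one-parameter subgroup of the isometry group acting freely on $C$ with circular quotient orbits.
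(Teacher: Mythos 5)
Your proposal is correct and is precisely the argument the paper has in mind: the paper offers no separate proof, only the two remarks preceding the corollary (compactness of $\FF$ under smooth convergence on compact sets, and closure of $\FF$ under the screw motions preserving $H$) together with the claim that they ``immediately imply'' the curvature bound. Your contradiction argument -- pull the putative blow-up points into a compact fundamental domain via a screw motion, then invoke smooth subsequential convergence to a limit in $\FF$ -- is the standard way to unpack that implication, and every step (screw motions preserve $C$, isometries preserve $|A|$, smooth convergence of surfaces plus convergence of points implies convergence of $|A|$) is sound.
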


\begin{proof}[Proof of theorem~\ref{R-to-infinity-theorem}]
Let $0<d_1^n < d_2^n < \dots < d_g^n$ be the distances of the points in $S_n\cap Y^+$ to the origin.
By passing to a subsequence, we may assume that the limit
\[
 d_k:= \lim_{n\to\infty} d_k^n  \in [0,\infty]
\]
exists for each $k$.  Let $d$ be the largest finite element of $\{d_k: k=1,\dots, g\}$.
By passing to a further subsequence, we may assume that the $\sigma_nM_n$ converge
as sets to a limit set $M$.

Let $C$ be a solid cylinder of radius $\ge (\lambda+1)(1+d)$ around 
  $Z$ where $\lambda$ is as in
corollary~\ref{form-of-graph-corollary}
for $\eps=1$.   Let $\hat C$ be any larger solid cylinder around $Z$.
By corollary~\ref{form-of-graph-corollary} (see also remark~\ref{form-of-graph-remark}),
for all sufficiently large $n$,  $M_n\cap (\hat C\setminus C)$ is the union of two
 smooth multigraphs, and for each vertical line $V$
in $\hat C\setminus C$, each connected component of $V\setminus H$ intersects $\tM_n$ at most twice.
In fact, all but one such component must intersect $M_n$ exactly once.

By standard estimates for minimal graphs, the convergence $M_n\to M$ is smooth
and multiplicity $1$ in the region $\RR^3\setminus C$.  
It follows immediately that 
$M\setminus C$
is the union of two multigraphs, and that
 the area blowup set
\[
    Q:= \{ p:\text{$ \limsup_{n\to\infty} \area(\sigma_nM_n\cap \BB(p,r))=\infty$ for every $r>0$} \}
\]
is contained in $C$.

The halfspace theorem for area blowup sets \cite{white-controlling-area}*{7.5} says that if an area blowup set is contained in a halfspace
of $\RR^3$, then that blowup set must contain a plane.  Since $Q$ is contained in the cylinder $C$,
it is contained in a halfspace but does not contain a plane.
Thus $Q$ must be empty.
Consequently, the areas of the $M_n$ are uniformly bounded locally.  Since the genus is also bounded,
we have, by the General Compactness Theorem~\ref{general-compactness-theorem}, that $M$ is a smooth embedded minimal hypersurface, and that 
either
\begin{enumerate}
\item the convergence $M_n\to M$ is smooth and multiplicity $1$, or
\item the convergence $M_n\to M$ is smooth with some multiplicity $m>1$ away from a discrete set.
   In this case, $M$ must be stable.
\end{enumerate}

Since the multiplicity is $1$ outside of the solid cylinder $C$, it follows that the convergence
$M_n\to M$ is everywhere smooth with multiplicity $1$.
\end{proof}

\begin{theorem}\label{asymptotic-to-H-theorem}
Suppose that in theorem~\ref{R-to-infinity-theorem}, the screw motions $\sigma_n$ are all the identity map.
Let $M$ be a subsequential limit of the $M_n$, and suppose that $M\ne H$.
Then $M\cap H=X\cup Z$ and $M$ is asymptotic to $H$ at infinity.
\end{theorem}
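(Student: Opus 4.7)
The plan is to split the proof into two parts: first establish $M\cap H = X\cup Z$, then establish that $M$ is asymptotic to $H$ at infinity.

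\textbf{The intersection equality.} The inclusion $X\cup Z \subseteq M\cap H$ is immediate from the smooth multiplicity-one convergence $M_n\to M$ guaranteed by Theorem~\ref{R-to-infinity-theorem}, since each $M_n$ contains $X\cup Z$. For the reverse inclusion, I argue by contradiction. Suppose there is a point $p\in (M\cap H)\setminus (X\cup Z)$ and choose a small Euclidean ball $U$ about $p$ disjoint from $X\cup Z$. For all $n$ large the recessional axis $Z^*$ is also far from $U$, so $M_n\cap H\cap U=\emptyset$, and consequently each connected component of $M_n\cap U$ lies entirely on one side of $H$. The smooth, multiplicity-one convergence transfers this one-sided property to the limit disk $M\cap U$, so $M\cap U$ lies weakly on one side of $H$. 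Since $p\in M\cap H$ is a touching point of the two minimal surfaces $M$ and $H$, the strong maximum principle forces $M=H$ in a neighborhood of $p$; by real-analytic unique continuation for minimal surfaces, the entire connected component of $M$ containing $p$ then coincides with $H$. In particular $H\subseteq M$.

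\textbf{Deriving a contradiction from $H\subseteq M$.} Here I combine Corollary~\ref{form-of-graph-corollary} with the $\rho_Y$-symmetry inherited from the $M_n$'s. Outside a sufficiently large solid cylinder $C$ about $Z$, each $M_n$ decomposes into two $\rho_Y$-related multigraphs trapped in the vertical strip~\eqref{trapped} of width $\eta$ adjacent to $H$. If $H\subseteq M$, then in the limit both multigraphs must push against $H$, and since $\rho_Y(H)=H$ in $\RR^3$ the two $\rho_Y$-related multigraphs both cover a common portion of $H$, which would give limit multiplicity at least two along that overlap and contradict the multiplicity-one convergence asserted by Theorem~\ref{R-to-infinity-theorem}. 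Equivalently, the sheet count of exactly two provided by Corollary~\ref{form-of-graph-corollary} forces $M\cap(\RR^3\setminus C)$ to coincide with $H\cap(\RR^3\setminus C)$, and then $M$ and $H$ are two smooth embedded minimal surfaces agreeing on a nonempty open set, whence $M=H$ globally by unique continuation, contradicting the hypothesis $M\ne H$. Therefore no such point $p$ exists, which proves $M\cap H=X\cup Z$.

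\textbf{Asymptotic behavior.} The limit $M$ is a properly embedded minimal surface in $\RR^3$ of finite genus (bounded by $g$ by semicontinuity of genus under smooth multiplicity-one convergence) and contains the axis $Z$, so each of its ends is asymptotic to some helicoid $\widetilde H$ with axis $Z$, by the axial asymptotic theorem~\cite{hoffman-white-axial} together with the results of Bernstein-Breiner and Meeks-P\'erez cited in the introduction. The pitch of $\widetilde H$ must equal that of $H$, for otherwise $\widetilde H\cap H$ would contain a sequence of horizontal lines tending to infinity, contradicting the finiteness of $M\cap H=X\cup Z$ established above. Finally, the trapping inequality~\eqref{trapped} passed to the limit confines $M$ to within vertical distance $\eta$ of $H$ outside $C$, and this together with $X\subseteq M$ at height $z=0$ forces $\widetilde H=H$ rather than any nontrivial vertical translate. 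The main obstacle is the multiplicity bookkeeping in the second paragraph: one must combine the identity $\rho_Y(H)=H$ in $\RR^3$ with the precise two-sheet count of Corollary~\ref{form-of-graph-corollary} to see that the hypothesis $H\subseteq M$ is incompatible with the smooth multiplicity-one convergence of Theorem~\ref{R-to-infinity-theorem}.
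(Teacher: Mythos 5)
Your intersection argument follows the paper's: smooth multiplicity-one convergence forbids transverse crossings of $M$ and $H$ away from $X\cup Z$, and then the strong maximum principle together with unique continuation rules out tangential contact since $M\ne H$. Two remarks on your presentation. The one-sidedness claim for $M\cap U$ requires the extra observation that a small disk $M\cap U$ is connected, so multiplicity-one convergence makes $M_n\cap U'$ connected (for a slightly smaller $U'$ and $n$ large) and hence wholly on one side of $H$; without that, the $M_n\cap U$ could have pieces on both sides and the limit need not be one-sided. And your second paragraph is superfluous: once local coincidence of $M$ and $H$ propagates by unique continuation, the standing hypothesis $M\ne H$ already gives the contradiction, so the elaborate multiplicity bookkeeping via corollary~\ref{form-of-graph-corollary} buys nothing.

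Your asymptotics paragraph has genuine gaps. The citation to~\cite{hoffman-white-axial} is misplaced: that theorem is about $\sS^2\times\RR$, not $\RR^3$, and does not directly apply here. More seriously, the theorems of Bernstein--Breiner and Meeks--P\'erez concern complete, non-flat, properly embedded minimal surfaces of finite genus with \emph{one end}; you invoke only finite genus, leaving the single-end, finite-topology, and non-flatness conditions unverified. That verification is precisely where the substantive work in the paper's proof lies: one needs to show $M$ has exactly one end and that its height-zero slice is two arcs outside a compact set (Lemmas~\ref{one-end-lemma} and~\ref{z=0-level-lemma}), both of which use the double-multigraph structure of $M$ outside a cylinder about $Z$ from theorem~\ref{R-to-infinity-theorem} and corollary~\ref{form-of-graph-corollary}. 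The paper in fact presents this as a second, more elementary route to the asymptotic statement — via theorem 4.1 of~\cite{hoffman-white-geometry} together with those two lemmas — precisely to avoid the Colding--Minicozzi machinery behind Bernstein--Breiner and Meeks--P\'erez; you give only the first route and without checking its hypotheses.
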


\begin{proof}
Since $M_n\cap H=X\cup Z$ for each $n$, the smooth convergence implies that $M$ cannot
intersect $H$ transversely at any point not in $X\cup Z$.  It follows from the strong maximum principle
that $M$ cannot touch $H\setminus (X\cup Z)$.

Since $M$ is embedded, has finite topology, and has infinite total curvature, it follows
from work by Bernstein and Breiner~\cite{bernstein-breiner-conformal} 
 or by Meeks and Perez~\cite{meeks-perez-end} that $M$ is asymptotic to some helicoid $H'$
at infinity.  The fact that $M\cap H=X\cup Z$ implies that $H'$ must be $H$.

The works of Bernstein-Breiner and Meeks-Perez quoted in the previous
paragraph rely on many deep results of Colding and Minicozzi.
We now give a more elementary proof that $M$ is asymptotic to a helicoid at infinity.

According to theorem~4.1 of~\cite{hoffman-white-geometry}, a properly immersed nonplanar
minimal surface in $\RR^3$ with finite genus, one end, and bounded curvature must be asymptotic
to a helicoid and must be conformally a once-punctured Riemann surface provided it contains $X\cup Z$
and provided it intersects some horizontal plane $\{x_3=c\}$ in a set that, outside of a compact region
in that plane, consists of two disjoint smooth embedded curves tending to $\infty$.
Now $M$ contains $X\cup Z$ and has 
bounded curvature (by corollary~\ref{R-to-infinity-corollary}).   
Thus to prove theorem~\ref{asymptotic-to-H-theorem}, it suffices to prove lemmas~\ref{one-end-lemma} 
and~\ref{z=0-level-lemma} below.
\end{proof}

\begin{lemma}\label{one-end-lemma}
Let $M$ be as in theorem~\ref{asymptotic-to-H-theorem}.  Then $M$ has exactly one end.
\end{lemma}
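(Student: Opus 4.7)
The plan is to apply corollary~\ref{$Y$-corollary}(i), which says that a $Y$-surface with an odd number of $\rho_Y$-fixed points has exactly one end. So I must verify both that $M$ is a $Y$-surface in $\RR^3$ and that $k:=\|M\cap Y\|$ is odd.

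First I would check that $M$ is a $Y$-surface. Each $M_n$ is a $Y$-surface in $\sS^2(R_n)\times\RR$ by theorem~\ref{main-S2xR-theorem}, so the smooth multiplicity-one convergence $M_n\to M$ from theorem~\ref{R-to-infinity-theorem} gives $\rho_Y$-invariance of $M$ immediately, and $M/\rho_Y$ is connected (the $M_n$ are connected and $M$ contains the connected set $Z\cup X$). To get the $-1$ action on $H_1(M,\ZZ)$, one mimics the argument in the proof of theorem~\ref{h-to-infinity-theorem}: any compact $1$-cycle $\Gamma\subset M$ lifts by the smooth convergence to cycles $\Gamma_n\subset M_n$, and the $2$-chain $A_n\subset M_n$ with $\partial A_n=\Gamma_n+\rho_Y\Gamma_n$ (which exists by the $Y$-surface property of $M_n$) lies in a bounded region by the maximum principle for minimal surfaces, hence converges to a $2$-chain in $M$ exhibiting $\rho_Y\Gamma\sim-\Gamma$.

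Second, I would count $\|M\cap Y\|$ and show it is odd. Property~(5) of theorem~\ref{main-S2xR-theorem} gives $\|M_n\cap Y\|=2g+2$ in $\sS^2(R_n)\times\RR$. Under the stereographic identification $\sS^2(R_n)\times\RR=\RR^3\cup Z^*$, the point $(O^*,0)$ lies in $M_n\cap Y$ because $X\subset M_n$ and both $X$ and $Y$ pass through $O^*$. Hence $\|M_n\cap Y\|=2g+1$ in $\RR^3$. Now $M_n$ is $\rho_Z$-invariant (by Schwarz reflection across $Z$), and $\rho_Z|Y$ is reflection through $O$. Therefore $(M_n\cap Y)\setminus\{O\}\subset\RR^3$ consists of $g$ pairs $\{p,\rho_Zp\}$. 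Passing to the limit: $O\in X\subset M$ remains an intersection, while each $\rho_Z$-pair either stays in a bounded region of $\RR^3$ (contributing two intersections to $M\cap Y$) or escapes to infinity together as a $\rho_Z$-pair. Hence $\|M\cap Y\|=2m+1$ for some $0\le m\le g$, which is odd.

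Finally, corollary~\ref{$Y$-corollary}(i) with $k=\|M\cap Y\|>0$ odd gives that $M$ has exactly one end, proving the lemma. The main subtlety is ensuring in the second step that intersections escape to infinity in $\rho_Z$-orbit pairs rather than as singletons; this is secured by the exact $\rho_Z$-symmetry of each $M_n$, which preserves the parity of the intersection count in the limit.
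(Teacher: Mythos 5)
Your plan is to show $M$ is a $Y$-surface and that $\|M\cap Y\|$ is odd, then invoke corollary~\ref{$Y$-corollary}(i). The second step (the $\rho_Z$-pairing count) is sound, but the first step has a gap, and it is in fact a circularity.

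You propose to establish the $Y$-surface property of $M$ by mimicking the proof of theorem~\ref{h-to-infinity-theorem}, asserting that the bounding $2$-chain $A_n$ ``lies in a bounded region by the maximum principle.'' The maximum principle only confines $A_n$ to a slab $\{|z|\le a\}$. In theorem~\ref{h-to-infinity-theorem} the ambient manifold $\sS^2\times\RR$ is fixed, so that slab is compact and the $A_n$ stay uniformly bounded. Here, however, $R_n\to\infty$: the slab $\sS^2(R_n)\times[-a,a]$ has diameter of order $R_n$, so the slab bound by itself does not prevent $A_n$ from stretching out towards $Z^*$, nor the limit region from being noncompact in $\RR^3$. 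This is exactly the subtlety the paper flags in its own proof of statement~\eqref{2:Y-surface-property} of theorem~\ref{DIVERGING-RADII-THEOREM} (``but a priori that region might not be compact''), and it is resolved there by using that $M_s$ has one helicoidal end---facts supplied by lemma~\ref{one-end-lemma} and theorem~\ref{asymptotic-to-H-theorem}. So in the paper's logical order, the $Y$-surface property of $M$ is a \emph{consequence} of lemma~\ref{one-end-lemma}, not an ingredient of its proof; using it to derive the lemma is circular.

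For comparison, the paper's argument for lemma~\ref{one-end-lemma} is a different, self-contained one: it works directly with the multigraph decomposition of $M\setminus Z(R)$ from theorem~\ref{R-to-infinity-theorem}, the $\rho_Z$-invariance of $M$, and a maximum-principle argument with catenoidal barriers to show that $M$ minus a compact piece of the solid cylinder has a single unbounded component. No $Y$-surface topology enters.
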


\begin{proof}
Let $Z(R)$ denote the solid cylinder with axis $Z$ and radius $R$.
By theorem~\ref{R-to-infinity-theorem}, for all sufficiently large $R$, the set 
\[
   M \setminus Z(R)
\]
is the union of two connected components (namely multigraphs) that are related to each other by $\rho_Z$.
We claim that for any such $R$, the set
\[
   M\setminus (Z(R)\cap \{|z|\le R\}) \tag{*}
\]
contains exactly one connected component. 
To see that is has exactly one component, 
let $\mathcal{E}$ be the component of \thetag{*} containing $Z^+\cap\{z>R\}$. 
 Note that $\mathcal{E}$ is invariant under
$\rho_Z$.  Now $\mathcal{E}$ cannot be contained in $Z(R)$ by the maximum principle (consider
catenoidal barriers).    Thus $\mathcal{E}$ contains one of the two connected components of 
  $M\setminus Z(R)$.
By $\rho_Z$ symmetry, it must then contain both components of $M\setminus Z(R)$. 
 It follows that if the set~\thetag{*} had
a connected component other than $\mathcal{E}$, that component would have to lie
in $Z(R) \cap \{z<-R\}$.  But such a component would violate
the maximum principle.
\end{proof}

\begin{lemma}\label{z=0-level-lemma}
Let $M$ be as in theorem~\ref{asymptotic-to-H-theorem}.  Then $M\cap \{z=0\}$ is the union
of $X$ and a compact set.
\end{lemma}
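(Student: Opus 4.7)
The strategy combines the multigraph description of $M$ at infinity coming from theorem~\ref{R-to-infinity-theorem} with the trap estimate from corollary~\ref{form-of-graph-corollary}, together with a maximum-principle / monotonicity argument to pin down the nodal set of $z|_M$ beyond a compact piece.

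By theorem~\ref{R-to-infinity-theorem}, I first pick $R_0$ so that $M\setminus Z(R_0)$ is the union of two multigraphs, related by $\rho_Z$.  In the universal cover of $\RR^3\setminus Z$, each multigraph is a graph $z=f(r,\theta)$, $\theta\in \RR$, via the Schwarz extension of remark~\ref{form-of-graph-remark}.  Since $M$ is the smooth multiplicity-one limit of the approximating examples $(S_n,1,R_n)$, the trap estimate~\eqref{trapped} from corollary~\ref{form-of-graph-corollary} passes to the limit, giving $\theta/\pi - 1/2 \le f(r,\theta) \le \theta/\pi + 1/2$ uniformly.  In particular $f(r,\theta)=0$ forces $\theta\in[-\pi/2,\pi/2]$: for $|\theta|>\pi/2$, $f$ is bounded strictly away from $0$.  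Translating back to $\RR^3$, $\{z=0\}\cap(M\setminus Z(R_0))$ lies in a bounded angular neighborhood of $X$, with exactly one sheet of each multigraph possibly crossing $\{z=0\}$.

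On the $H^+$-side sheet with $\theta\ge -\pi/2$, the boundary condition $f(r,-\pi/2)=0$ (since $X^+\subset M$) contributes $X^+\setminus Z(R_0)$ to the zero set; symmetrically, the $\rho_Z$-image contains $X^-\setminus Z(R_0)$.  To exclude any additional components of $M\cap\{z=0\}$ outside a compact set, I argue as follows.  A closed-loop component cannot occur: $z|_M$ is harmonic on the minimal surface $M$, so a loop $\gamma$ bounding a domain $U\subset M$ with $z|_{\partial U}\equiv 0$ would force $z\equiv 0$ on $U$ by the maximum principle; unique continuation would then give $M\subset\{z=0\}$, contradicting $Z\subset M$.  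The remaining possibility --- an extra noncompact arc living strictly in the angular interior $\theta\in(-\pi/2,\pi/2)$ --- is ruled out by establishing $\partial_\theta f(r,\theta)>0$ for all sufficiently large $r$, which makes $\theta\mapsto f(r,\theta)$ strictly increasing and hence with at most one zero per $r$-slice; the boundary condition then forces that unique zero to be $\theta=-\pi/2$, i.e.\ on $X$.

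The main obstacle is the quantitative monotonicity $\partial_\theta f>0$ at infinity, which does not follow directly from the $C^0$ trap.  The idea is a blow-down argument: around a point $p_n=(r_n,\theta_n,f(r_n,\theta_n))\in M$ with $r_n\to\infty$, rescale $M$ by $1/r_n$ and apply theorem~\ref{general-compactness-theorem}; the limit surface is a minimal surface whose height oscillation tends to $0$ by the trap, so Bernstein-type rigidity for complete minimal graphs forces it to be horizontal, and smooth convergence combined with interior elliptic estimates for the minimal-surface equation yields an asymptotic expansion of the form $\partial_\theta f(r,\theta)=1/\pi+o(1)$ as $r\to\infty$, uniformly on compact $\theta$-intervals.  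Combined with the mirror analysis on the $H^-$-side multigraph containing $X^-$, this completes the identification $M\cap\{z=0\}=X\cup K$ with $K$ compact.
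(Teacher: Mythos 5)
Your multigraph decomposition, the trap-estimate argument constraining where zeros of $z|_M$ can occur, and the reduction to monotonicity $\partial_\theta f>0$ at infinity all follow the paper's strategy.  The auxiliary observation that $z|_M$ is harmonic, so that $M\cap\{z=0\}$ cannot contain null-homotopic closed loops, is correct but becomes redundant once monotonicity is available.  The substance of the lemma is in justifying that monotonicity, and the blow-down you sketch does not deliver it.

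The step ``rescale $M$ by $1/r_n$ and apply theorem~\ref{general-compactness-theorem}'' fails for two reasons.  First, the area hypothesis of that theorem is violated: after rescaling, the multigraph has on the order of $r_n$ sheets crossing any fixed compact ball (successive sheets are spaced about $2\pi/r_n$ apart in the rescaled picture), so the area in a fixed compact set diverges.  Second, and more fundamentally: even restricting attention to a single sheet, rescaling both horizontal and vertical coordinates by $1/r_n$ collapses the height, and the smooth convergence of $\tfrac{1}{r_n}f(r_n r,\theta)$ to $0$ yields only $\partial_\theta f(r_n r,\theta)=o(r_n)$, which is vacuous.  Identifying the blow-down limit as a horizontal plane (whether by theorem~\ref{general-compactness-theorem} or by a Bernstein-type argument) imparts no further information about $\partial_\theta f$, because you have divided the quantity you actually care about by $r_n$.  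To extract a nontrivial asymptotic you must renormalize the height.  The paper does this by studying $f_s(r,\theta)=\tfrac{1}{s}f(sr,\theta)$ --- a genuine dilation, so that the slope of its graph goes to $0$ and the minimal surface equation linearizes --- and then passing to $sf_s(r,\theta)=f(sr,\theta)$, for which the trap bound $\theta-\pi<f(sr,\theta)<\theta+\pi$ persists uniformly in $s$.  Schauder estimates then give subsequential smooth convergence of $sf_s$ to a harmonic function $g(r,\theta)$ satisfying the same two-sided bound together with $g(r,0)\equiv 0$; writing $G(t,\theta)=g(e^t,\theta)$, the difference $G-\theta$ is a bounded entire harmonic function, hence constant, hence zero, so $g\equiv\theta$.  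Smooth convergence of $sf_s$ to $g$ then gives strict monotonicity of $\theta\mapsto f(r,\theta)$ for all sufficiently large $r$, which is exactly what pins the zero set to $\theta=0$, i.e.\ to $X$.  That PDE-level compensation is the piece your sketch is missing.
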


\begin{proof}
In the following argument, it is convenient to choose the angle function $\theta$
on $H^+$ so that $\theta=0$ on $X^+$, $\theta=\pi/2$ on $Y^+$, and $\theta=\pi$
on $X^-$.  

By theorem~\ref{R-to-infinity-theorem}, for all sufficiently large $R$, the set 
\[
   M \setminus Z(R)
\]
is the union of two multigraphs that are related to each other by $\rho_Z$.

By the smooth convergence $M_n\to M$ together with
 corollary~\ref{form-of-graph-corollary} and remark~\ref{form-of-graph-remark}, 
 one of the components of $M\setminus Z(R)$ 
 can be parametrized
as
\[
   (r\cos\theta, r\sin\theta, f(r,\theta)) \qquad (r\ge R,\,  \theta \in \RR)
\]
where
\begin{equation}\label{boundary-condition}
f(r,0)\equiv 0
\end{equation}
and
\begin{equation}\label{new-bound}
    \theta-\pi < f(r,\theta) < \theta + \pi.
\end{equation}
(The bound~\eqref{new-bound} looks different from the bound~\eqref{trapped}
 in corollary~\ref{form-of-graph-corollary} 
because there we were measuring $\theta$ from $Y^+$ whereas here we are measuring it from $X^+$.)

Of course $f$ solves the minimal surface equation in polar coordinates.

For $0<s<\infty$, define a function $f_s$ by
\[
    f_s(r,\theta) =  \frac1{s} f( s r, \theta).
\]
Going from $f$ to $f_s$ corresponds to dilating $S$ by $1/s$.  
(To be more precise, $(r,\theta) \mapsto (r\cos\theta, r\sin\theta, f_s(r,\theta))$
parametrizes the dilated surface.)
Thus the function $f_s$ will also solve the polar-coordinate minimal surface equation.
By~\eqref{new-bound},
\begin{equation}\label{f_s-bound}
   \theta - \pi \le  s f_s(r,\theta) \le \theta+ \pi.
\end{equation}
By the Schauder estimates for $f_s$ and by the bounds~\eqref{f_s-bound},
the functions $s f_s$ converge smoothly (after passing to a subsequence) as $s\to 0$ to a harmonic
function $g(r,\theta)$ defined for all $r>0$ and satisfying
\begin{equation}\label{g-bound}
   \theta - \pi \le g \le \theta + \pi.
\end{equation}
Here ``harmonic'' is with respect to the standard conformal structure on $\sS^2$ (or equivalently
on $\RR^2$), so $g$ satisfies the equation
\[
  g_{rr} + \frac1r g_{r} + \frac1{r^2}g_{\theta\theta} = 0.
\]
Now define $G: \RR^2\to \RR$ by
\[
  G(t,\theta) = g(e^t, \theta).
\]
Then $G$ is harmonic in the usual sense: $G_{tt} + G_{\theta\theta}=0$.

By~\eqref{g-bound}, $G(t,\theta)-\theta$ is a bounded, entire harmonic function, and therefore is constant.
Also, $G-\theta$ vanishes where $\theta=0$, so it vanishes everywhere.  Thus
\[
  g(r,\theta)\equiv \theta,
\]
and therefore $\pdf{}{\theta}g \equiv 1$.

The smooth convergence of $s f_s$ to $g$ implies that
\[
  \lim_{r\to\infty} r \pdf{}{\theta}f(r,\theta) = 1,
\]
where the convergence is uniform given bounds on $\theta$.
Thus there is a $\rho<\infty$ such that for each $r\ge \rho$,
the function
\[
    \theta\in [-2\pi, 2\pi] \mapsto f(r,\theta)
\]
is strictly increasing.  Thus it has exactly one zero in this interval,
namely $\theta=0$.  But by the bounds~\eqref{new-bound}, $f(r,\theta)$ never
vanishes outside this interval.   Hence for $r\ge \rho$, $f(r,\theta)$
vanishes if and only if $\theta=0$.

So far we have only accounted for one component of $M\setminus Z(R)$.
But the behavior of the other component follows by $\rho_Z$ symmetry.
\end{proof}


\section{The proof of theorem~\ref{DIVERGING-RADII-THEOREM}}
 \label{Proof_of_DIVERGING-RADII-THEOREM}

We now prove theorem~\ref{DIVERGING-RADII-THEOREM} 
in section~\ref {section:main-theorems}.

\begin{proof}
Smooth convergence to a surface asymptotic to $H$ was proved in
theorems~\ref{R-to-infinity-theorem}
and~\ref{asymptotic-to-H-theorem}.
The other geometric properties of the surfaces $M_s$ in
statements~\eqref{2:helicoidlike} and \eqref{2:intersection-property} 
follow from the smooth convergence and 
the corresponding properties of the surfaces $M_s(R_n)$ in theorem~\ref{DIVERGING-RADII-THEOREM}.

Next we prove statement~\eqref{2:Y-surface-property}, i.e., that $M_s$ is a $Y$-surface.
The $\rho_Y$ invariance of $M_s$ follows immediately
from the smooth convergence and the $\rho_Y$ invariance of the $M_s(R_n)$.
We must also show that $\rho_Y$
acts on the first homology group of $M_s$ by multiplication by $-1$.  
Let $\gamma$  be a closed curve in $M_s$.  We must  show
that $\gamma\cup \rho_Y \gamma$ bounds a region of
$M^\infty _s$.  The curve $\gamma$ is approximated by curves
$\gamma_n \subset M_s(R_n)$. Since each $M_s(R_n)$ is a
$Y$-surface, $\gamma_n \cup \rho_Y \gamma_n$ bounds a compact
region in $W_n\subset M_s(R_n)$. These regions converge uniformly
on compact sets to a region $W\subset M_s$ with boundary
$\gamma \cup \rho_Y \gamma$ but a priori that region might not be compact.
By the maximum principle, each $W_n$ is contained in the smallest slab
of the form $\{|z|\le a\}$ containing $\gamma_n\cup\rho_Y\gamma_n$.
Thus $W$ is also contained in such a slab.  Hence $W$ is compact, since
 $M_s$ contains only one end and that end is helicoidal (and therefore is not
 contained in a slab.)  This completes the proof of 
 statement~\eqref{2:Y-surface-property}.


Next we prove statement~\eqref{2:point-count}:
$\|M_s\cap Y\| = 2\, \|M_s\cap Y^+\| +1 = 2\,\genus(M_s)+1$,
where $\|\cdot\|$ denotes the number of points in a set.
Because
$M_s$ is a $Y$-surface,  
\[
   \|  M_s \cap Y \| =2 -\chi(M_s)
\]
by proposition~\ref{Y-surface-topology-propostion}\eqref{exactly-fixed-points}.
Also,
\[
   \chi(M_s)= 2-2\genus(M_s)-1
\]
since $M_s$ has exactly one end.  Combining the
last two identities gives statement~\eqref{2:point-count}.
(Note that $M_s\cap Y$ consists of the points of $M_s\cap Y^+$,
the corresponding points in $M_s\cap Y^-$, and the origin.)

Statement~\eqref{2:genus-bound} gives bounds on the genus of $M_+$ and of $M_-$
 depending on the parity of $g$.  To prove these bounds, note
that $M_{+}(R_n)\cap Y^+$ contains exactly $g$ points.
By passing to a subsequence, we can assume (as $n\to\infty$)
that $a$ of those points stay a bounded distance from $Z$,
that $b$ of those points stay a bounded distance from $Z^*$, and
that for each of the remaining $g-a-b$ points, the distance from the point to $Z\cup Z^*$
  tends to infinity.

By smooth convergence,
\[
  \|M_{+}\cap Y^+\| = a.
\]
so the genus of $M_{+}$ is $a$ by statement~\eqref{2:point-count}.

If $g$ is even, then $M_{+}(R_n)$ is symmetric by reflection $\mu_E$ in the totally
geodesic cylinder $E\times \RR$ of points equidistant from $Z$ and $Z^*$.
It follows that $a=b$, so $\genus(M_{+})\le a \le g/2$.
The proof for $M_{-}$ and $g$ even is identical.

If $g$ is odd, then $M_{-}(R_n)$ is obtained from $M_{+}(R_n)$ by the reflection $\mu_E$.
 Hence exactly $b$ of the points
of $M_{-}(R_n)\cap Y^+$ stay a bounded distance from $Z$.
It follows that $M_{-}\cap Y^+$ has exactly $b$ points, and therefore
that $M_{-}$ has genus $b$.  Hence
\[
 \genus(M_{+}) + \genus(M_{-}) = a + b \le g,
\]
which completes the proof of statement~\eqref{2:genus-bound}.

It remain only to prove statement~\eqref{2:genus-parity}: the genus of $M_{+}$
is even and the genus of $M_{-}$ is odd. 
By statement~\eqref{2:point-count}, this is equivalent to showing that $\|M_s\cap Y^+\|$ is even or odd
according to whether $s$ is $+$ or $-$.
Let
\[
   S = S_s = M_s\cap H^+.
\]
Then $M_s\cap Y^+= S\cap Y$, so it suffices to show that $\|S\cap Y\|$
is even or odd according to whether $s$ is $+1$ or $-1$.
By proposition~\ref{Y-surface-topology-propostion}, 
this is equivalent to showing that $S$ has two ends
if $s$ is $+$ and one end if $s$ is $-$.

Let $Z(R)$ be the solid cylinder of radius $R$ about $Z$.  
By~corollary~\ref{form-of-graph-corollary} (see also the first three paragraphs 
of the proof of Lemma~\ref{z=0-level-lemma}), we can choose
$R$ sufficiently large that $S\setminus Z(R)$ has two components.
One component is a multigraph on which $\theta$ goes from $\theta(X^+)$ to $\infty$, and on which $z$ is unbounded above.  The other component is a multigraph
on which $\theta$ goes from $\theta(X^-)$ to $-\infty$ and on which $z$ is unbounded
below.    In particular, if we remove a sufficiently large finite solid cylinder
\[
   C: = Z(R) \cap \{|z|\le A\}  
\]
from $S$, then the resulting surface $S\setminus C$ has two components.
(We choose $A$ large enough that $C$ contains all points of $H\setminus Z$
at which the tangent plane is vertical.)
One component has in its closure $X^+\setminus C$ and $Z^+\setminus C$,
and the other component has in its closure $X^-\setminus C$ 
and $Z^-\setminus C$.   Consequently $Z^+$ and $X^+$ belong to an end
of $S$, and $X^-$ and $Z^-$ also belong to an end of $S$.

Thus $S$ has one or two ends according to whether $Z^+$ and $X^-$ belong
to the same end of $S$ or different ends of $S$.  Note that they belong to the same
end of $S$ if and only if every neighborhood of $O$ contains a path in $S$
with one endpoint in $Z^+$ and the other endpoint in $X^-$, i.e., if and only
if $M_s$ is negative at $O$.  
By the smooth convergence, $M_s$ is negative at $O$ if and only if the $M_s(R_n)$
are negative at $O$, i.e., if and only if $s=-$.
\end{proof}


\section{Minimal surfaces in a thin hemispherical shell}\label{hemisphere-appendix}

If $D$ is a hemisphere of radius $R$ and $J$ is an interval with 
 $|J|/R$ is sufficiently small, we show that the projection of a minimal surface $M\subset D\times J$ with 
$\partial M\subset \partial D\times J$ onto $D$ covers most of $D$. This result is used in the proof of
theorem~\ref{first-compactness-theorem} to show that a certain
multiplicity is $2$, not $0$.

\begin{theorem}\label{all-or-nothing-theorem}
Let $D=D_R$ be an open hemisphere in the sphere $\sS^2(R)$
and let $J$ be an interval with length $|J|$.  If $|J|/R$ is sufficiently small,
the following holds.
Suppose that $M$
is a nonempty minimal surface in $D\times J$ with $\partial M$ in $(\partial D)\times J$.
Then every point $p\in D$ is within distance $4 |J|$ of $\Pi(M)$,
where $\Pi: D\times J \to D$ is the projection map.
\end{theorem}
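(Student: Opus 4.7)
The proof will be by contradiction using a catenoidal barrier and the strong maximum principle. Suppose some $p \in D$ satisfies $\dist_{\sS^2}(p, \Pi(M)) > 4|J|$, so the cylindrical region $B_{4|J|}(p) \times J$ in $\sS^2(R) \times \RR$ is disjoint from $M$. After a reduction to the case where $p$ is also at distance $> 4|J|$ from $\partial D$ (so $B_{4|J|}(p)$ is a full geodesic ball in $D$), the plan is to place a minimal annulus inside this empty cylinder, slide it until it first touches $M$, and reach a contradiction.

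For the barrier, let $z_0$ be the midheight of $J$, and consider the Euclidean catenoid with vertical axis, waist at $(0,0,z_0)$, and waist radius $r_0 := |J|/8$; its outer radius at the top and bottom of the slab $\{|z-z_0|\le |J|/2\}$ is $r_0\cosh(|J|/(2r_0)) = (|J|/8)\cosh(4) < 3.5|J|$. Because $|J|/R$ is small, the product metric on $B_{4|J|}(p)\times J$ is $C^\infty$-close to a Euclidean slab metric, and since the Euclidean catenoid is a nondegenerate minimal annulus for the prescribed Dirichlet data on $\sS^2\times\{z_0\pm|J|/2\}$, the implicit function theorem for the minimal surface equation produces a true minimal annulus $\tilde C$ in $\sS^2(R)\times\RR$ close to it, with $\tilde C \subset B_{3.6|J|}(p)\times J$ and hence $\tilde C\cap M = \emptyset$. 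Now slide $\tilde C$ along a unit-speed minimizing geodesic $\gamma$ in $\sS^2(R)$ from $p$ toward a nearest point $q\in\Pi(M)$: for $t\in[0,\dist(p,q)]$ let $\sigma_t$ be the rotation of $\sS^2(R)\times\RR$ (identity on the $\RR$ factor) that moves $p$ to $\gamma(t)$, and set $\tilde C_t := \sigma_t(\tilde C)$. Each $\tilde C_t$ is a minimal annulus; $\tilde C_0\cap M=\emptyset$ while $\tilde C_t\cap M \ne \emptyset$ for $t$ close to $\dist(p,q)$ (its footprint eventually contains $q$), so there is a first contact time $t^*>0$ with a tangency point $x^*\in \tilde C_{t^*}\cap M$.

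At $x^*$, the strong maximum principle for minimal surfaces forces $\tilde C_{t^*}$ and $M$ to coincide in a neighborhood, and by real analyticity the connected component of $M$ through $x^*$ equals $\tilde C_{t^*}$ globally; but $\partial \tilde C_{t^*}\subset \sS^2\times\partial J$ while $\partial M\subset \partial D\times J$, giving the desired contradiction. The main technical obstacle is arranging that $x^*$ is an interior point of both surfaces. That $\partial \tilde C_{t^*}$ cannot meet $\interior(M)$ is clear because $\interior(M)\subset D\times\interior(J)$: if $\interior(M)$ touched $\sS^2\times\partial J$, the strong maximum principle against the totally geodesic horizontal sphere would force $M$ to coincide with $\sS^2\times\{\pm|J|/2\}$, incompatible with $M\subset D\times J$. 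That $\partial M$ does not meet $\interior(\tilde C_{t^*})$ is ensured by the choice of $p$ far from $\partial D$, which keeps the sliding footprint $B_{3.6|J|}(\gamma(t))$ inside $D$ throughout $[0,t^*]$. The constant $4|J|$ in the theorem statement is precisely what is required to accommodate the outer catenoidal radius of $\approx 3.5|J|$ with enough slack for the sliding argument to run to completion.
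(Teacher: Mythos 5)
Your proposal follows the paper's opening move — the catenoid barrier of waist radius $|J|/8$ and outer radius $\approx 3.5|J|$, fitting inside the empty cylinder around $p$, is essentially the same barrier the paper slides — but it then claims a direct contradiction from a first interior tangency, and this step has a genuine gap. The nearest point $q\in \Pi(M)$ to $p$ may well lie on $\partial D$ (indeed $\Pi(\partial M)\subset\partial D$, and nothing prevents $q$ from being a boundary point). In that case the geodesic $\gamma$ from $p$ to $q$ runs all the way to $\partial D$, and the footprint $B_{3.6|J|}(\gamma(t))$ \emph{leaves} $D$ once $\gamma(t)$ gets within $3.6|J|$ of the great circle $\partial D$; after that, the solid region $\overline{U}_t$ bounded by $\tilde C_t$ and the two horizontal caps meets $\partial D\times J$, so $\partial M$ can enter it and the ``first contact is an interior-interior tangency'' dichotomy fails. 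You assert that ``the choice of $p$ far from $\partial D$ keeps the sliding footprint inside $D$ throughout $[0,t^*]$,'' but this is exactly the point that needs an argument and it is not true in general: $t^*$ is only bounded above by $\dist(p,q)$, and that bound allows the footprint to exit $D$ before first contact. The announced ``reduction to $p$ at distance $>4|J|$ from $\partial D$'' is also not carried out, and a naive replacement of $p$ by a point $4|J|$ further in would only give the weaker bound $\dist(p,\Pi(M))\le 8|J|$, not $4|J|$.

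The paper's proof handles this by \emph{not} trying to get a contradiction from the sliding alone. It uses the same first-contact maximum-principle argument, but only to conclude that $M$ is disjoint from the union $K$ of all congruent catenoid barriers whose footprints stay inside $D_R$; this sliding is intrinsically safe because the barriers are constrained to lie in $D_R\times[0,1]$. The contradiction is then produced in a second stage: among all surfaces of revolution with boundary $(\partial D_R)\times\partial[0,1]$ that are disjoint from $M$ and from $K$, there is one of least area, which must be a catenoid $S$ lying within distance $2$ of the cylinder $\Sigma=(\partial D_R)\times[0,1]$. Sending $R\to\infty$, the catenoids $S_n$ and the flat minimal strips $\Sigma_n$ share their boundary and converge to the same flat strip, producing a nonzero, translation-invariant Jacobi field on that strip which vanishes on the boundary; but such a field would have an interior maximum and hence be constant, a contradiction. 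This second stage is precisely what your proof omits and what closes the case $q\in\partial D$ that your sliding cannot reach.
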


\begin{proof}
By scaling, it suffices to prove that if $J=[0,1]$ is an interval of length $1$, then every point
$p$ of $D_R$ is within distance $4$ of $\Pi(M)$ provided $R$ is sufficiently large.

For the Euclidean cylinder $\BB^2(0,2)\times [0,1]$, 
the ratio of the area of the cylindrical side (namely $4\pi$) to the sum of the areas
of top and bottom (namely $8\pi$) is less than $1$.  Thus the same is true
for all such cylinders of radius $2$ and height $1$ in $\sS^2(R)\times \RR$, provided $R$ is sufficiently large.
It follows that there is a catenoid in $\sS^2(R)\times \RR$ whose boundary consists of
the two circular edges of the cylinder.

Let $\Delta\subset D_R$ be a disk of radius $2$ containing the point $p$.
By the previous paragraph, we may suppose that $R$ is sufficiently large
that there is a catenoid $C$ in $\Delta\times[0,1]$ whose boundary consists of the two circular
edges of $\Delta\times [0,1]$.  In other words, $\partial C= (\partial \Delta)\times(\partial [0,1])$.

Now suppose $\dist(p, \Pi(M))> 4$.  Since $\Delta$ has diameter $4$,
it follows that $\Pi(M)$ is disjoint from $\Delta$ and therefore that $M$ is disjoint from $C$.
 If we slide $C$ around in $D_R\times [0,1]$, it can never
bump into $M$ by the maximum principle.  That is, $M$ is disjoint from the union $K$ of
all the catenoids in $D_R\times [0,1]$ that are congruent to $C$.  Consider all surfaces
of rotation with boundary $(\partial D_R)\times (\partial [0,1])$ that are disjoint from $M$ and from $K$.
The surface $S$ of least area in that collection is a catenoid, and (because it is disjoint from $K$)
it lies with distance $2$ of $(\partial D_R)\times [0,1]$.

We have shown: if $R$ is sufficiently large and if the theorem is false for $D_R$ and $J=[0,1]$,
then there is a catenoid $S$ such that $S$ and $\Sigma:=(\partial D_R)\times [0,1]$ have the same boundary
and such that $S$ lies within distance $2$ of $\Sigma$.

Thus if the theorem were false, there would be a sequence of radii $R_n\to \infty$
and a sequence of catenoids $S_n$ in $D_{R_n}\times[0,1]$ such that
\begin{equation*}
   \partial S_n = \partial \Sigma_n
\end{equation*}
and such that $S_n$ lies within distance $2$ of $\Sigma_n$, where
\[
 \Sigma_n = (\partial D_{R_n})\times [0,1].
\]
We may use coordinates in which the origin is in $(\partial D_{R_n})\times \{0\}$.
As $n\to \infty$, the $\Sigma_n$ converge smoothly to an infinite flat strip $\Sigma=L\times[0,1]$ in $\RR^3$
(where $L$ is a straight line in $\RR^2$),
and the $S_n$ converges smoothly to a minimal surface $S$ such that (i) $\partial S=\partial \Sigma$,
(ii) $S$ has the translational invariance that $\Sigma$ does, and (iii) $S$ lies within a bounded
distance of $\Sigma$.   It follows that $S=\Sigma$.

Now both $\Sigma_n$ and $S_n$ are minimal surfaces.  
(Note that $\Sigma_n$ is minimal, and indeed totally
geodesic, since $\partial D$ is a great circle.)
Because $\Sigma_n$ and $S_n$ have the boundary and converge smoothly to the same limit $S$,
it follows that there is a nonzero Jacobi field $f$ on $S$ that vanishes at the boundary. Since $S$ is flat,
$f$ is in fact a harmonic function.
The common rotational symmetry of $\Sigma_n'$ and $S_n'$ implies that the Jacobi field is translationally
invariant along $S$, and thus that it achieves its maximum somewhere.  (Indeed, it attains its maximum
on the entire line $L\times\{1/2\}$.)  But then $f$ must be constant,
which is impossible since $f$ is nonzero and vanishes on $\partial S$.
\end{proof}


\section{Noncongruence Results}\label{noncongruence-appendix}

In this section, we prove the noncongruence results in theorem~\ref{main-S2xR-theorem}:
that $M_+$ and $M_-$ are not congruent by any orientation-preserving isometry of $\sS^2\times\RR$,
and that if the genus is even, they are not congruent by any isometry of $\sS^2\times\RR$.
For these results, we have to assume that $H$ does not have infinite pitch, i.e., that $H\ne X\times \RR$.
For simplicity, we consider only the non-periodic case ($h=\infty$).  The periodic case is similar.

Let $s\in \{+,-\}$.
Since $M_s\cap H= Z\cup Z^*\cup X$ and since every vertical line intersects $H$ infinitely many
times, we see that $Z$ and $Z^*$ are the only vertical lines contained in $M_s$. 

Let $C$ be a horizontal great circle in $M_s$ that intersects $Z$ and therefore also $Z^*$.
We claim that $C=X$.  To see this, first note that $C$ must lie in $\sS^2\times \{0\}$,
since otherwise $M_s$ would be invariant under the screw motion $\rho_X\circ \rho_C$
and would therefore have infinite genus, a contradiction.  It follows that $C$ contains $O$
and $O^*$.  But then $C$ must be $X$, since otherwise the tangents to $C$, $X$, and $Z$
at $O$ would be three linearly independent vectors all tangent to $M_s$ at $O$.

Now suppose $\phi$ is an isometry of $\sS^2\times \RR$ that maps $M_+$ to $M_-$.
We must show that the genus is odd and that $\phi$ is orientation reversing on $\sS^2\times\RR$.
By composing with $\rho_Y$ if necessary, we can assume that $\phi$ does not switch the two
ends of $\sS^2\times \RR$.
Also, by the discussion above, $\phi$ must map $Z\cup  Z^* \cup X$ to itself.

The symmetries of $Z\cup Z^*\cup X$ that do not switch up and down are:
$I$, 
$\mu_E$, 
$\mu_Y$ (reflection in $Y\times \RR$), and 
$\mu_E\circ \mu_Y$.

The ends of $M_+$ and $M_-$ are asymptotic to helicoids of positive pitch
with axes $Z$ and $Z^*$,
which implies that the ends of $\mu_Y(M_+)$ and of $(\mu_E\circ \mu_Y)(M_+)$
are asymptotic to helicoids of negative pitch.  Thus $\phi$ cannot be $\mu_Y$ or $\mu_E\circ \mu_Y$.
Since $M_+$ and $M_-$ have different signs at $O$, $\phi$ cannot be the identity.
Thus $\phi$ must be $\mu_E$, which is orientation reversing on $\sS^2\times\RR$.
 Since $M_-=\phi(M_+)$ and $M_+$ have different signs at $O$, 
 we see that $\phi(M_+)$ and $M_+$ are not equal, which implies
 (by~\eqref{even-genus-congruence-assertion} of theorem~\ref{main-S2xR-theorem}) 
 that the genus is odd. \QED

\bigskip

\section*{\bf Part~\TWO: Genus-\texorpdfstring{$g$}{Lg} Helicoids in \texorpdfstring{$\RR^3$}{Lg}}
\label{part2}

\bigskip

\renewcommand{\Im}{\operatorname{Im}}  
\renewcommand{\Re}{\operatorname{Re}}

\newcommand{\Res}{\operatorname{Res}}

\renewcommand{\div}{\operatorname{div}}

\def\wtu{\widetilde{u}}
\def\whM{\widehat{M}}
\def\whc{\widehat{c}}
\def\whf{\widehat{f}}
\def\whu{\widehat{u}}
\def\whp{\widehat{p}}
\def\wht{\widehat{t}}
\def\whU{\widehat{U}}
\def\whphi{\widehat{\phi}}
\def\whOmega{\widehat{\Omega}}

\def\wtC*{\widetilde{\CC^*}}

%

By theorem~\ref{main-S2xR-theorem} (\hyperref[part1]{part~\ONE}, section~\ref{section:main-theorems}),  
for every radius $R>0$ and positive integer $g$, there exist
two distinct helicoidal minimal surfaces of genus $2g$ in $\sS^2(R)\times\rR$,
with certain additional properties described below (see theorem~\ref{properties-theorem}).
We denote those surfaces
$M_+(R)$ and $M_-(R)$.
In \hyperref[part1]{part~\ONE},  the genus of $M_s(R)$ was denoted by $g$ and could be even or 
odd.  
In \hyperref[part2]{part~\TWO}, we
only use the examples of even genus, and it is more convenient to denote that genus by $2g$.

Let $R_n$ be a sequence of radii diverging to infinity.
By theorem~\ref{DIVERGING-RADII-THEOREM} (\hyperref[part1]{part~\ONE}, section~\ref{section:main-theorems}),
for each $s\in\{+,-\}$, a subsequence of $M_s(R_n)$ converges to a helicoidal
minimal surface
 $M_s$ in $\rR^3$.
The main result of \hyperref[part2]{part~\TWO} is the following:

\begin{THEOREM}
\label{PART2-MAIN-THEOREM}
If $g$ is even, $M_+$ has genus $g$. If $g$ is odd,  $M_-$ has genus $g$.
\end{THEOREM}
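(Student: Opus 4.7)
The plan is to sharpen the inequality $\genus(M_s)\le g$ from Theorem~\ref{DIVERGING-RADII-THEOREM}\eqref{2:genus-bound} to the equality $\genus(M_s)=g$ when the parity of $s$ matches the parity of $g$ (identifying $+$ with even and $-$ with odd). Write $a_s:=\genus(M_s)=\|M_s\cap Y^+\|$. The $2g$ points of $M_s(R_n)\cap Y^+$ split under the $\mu_E$-symmetry of the even-genus examples into $g$ points on the $Z$-side of $Y^+$ and their mirror images on the $Z^*$-side; the latter all drift to infinity automatically in the ambient metric, so the theorem reduces to showing that, for $s$ matching the parity of $g$, no $Z$-side point drifts past bounded distance from $Z$.

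First I would analyze any hypothetical drifting $Z$-side point $p_n\in M_s(R_n)\cap Y^+$ by a bubbling argument: apply an $H$-preserving screw motion bringing $p_n$ into a bounded region, extract a subsequential smooth limit using Theorem~\ref{R-to-infinity-theorem} together with the curvature estimates of Corollary~\ref{R-to-infinity-corollary}, and, if this ambient-scale limit is merely the helicoid, rescale by the curvature scale at $p_n$ and invoke Theorem~\ref{limit-is-a-catenoid-theorem} to recover a genuine catenoidal neck along $Y$. Zooming back out from the neck produces a non-trivial complete properly embedded minimal surface in $\RR^3$ that contains $X\cup Z$, intersects $H$ in exactly $X\cup Z$, and is asymptotic to $H$ at infinity by Theorem~\ref{asymptotic-to-H-theorem}. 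In other words, the bubble is itself one of the helicoidal surfaces produced by Theorem~\ref{DIVERGING-RADII-THEOREM}, of smaller genus, and equipped with its own sign at its new origin inherited from the local geometry around $p_n$.

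The crux of the argument would then be a signed ledger of handles. Each bubble's genus has a parity dictated by its inherited sign via Theorem~\ref{DIVERGING-RADII-THEOREM}\eqref{2:genus-parity}, and the $\mu_E$-symmetry forces bubbles to appear in mirror pairs whose combined parity contributions are constrained. Iterating the bubbling along the tree of drift events and summing the parity constraints produces an obstruction: for $s$ matching the parity of $g$, the ledger admits no non-trivial drift, forcing $a_s=g$; for the opposite sign, one handle pair is allowed to drift, producing a subleading contribution consistent with $a_{-s}\le g-1$.

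The hard part will be the bubbling step together with the unambiguous assignment of signs. Theorem~\ref{first-compactness-theorem} shows the naive ambient-scale limit of the $S_n$ is the flat doubly-covered surface $\{z=0\}$, which carries no handle information, so the rescaling must be done at precisely the neck scale identified by the graphical decomposition of Corollary~\ref{form-of-graph-corollary} and by Theorem~\ref{limit-is-a-catenoid-theorem}. Even more delicate is tracking how the positive/negative classification of Section~\ref{sign-section} transforms under translation along $Y^+$ and under rescaling at a catenoidal neck; it is this sign transformation law, applied inductively through the bubble tree and combined with the $\mu_E$-pairing, that should ultimately close the parity ledger and yield the theorem.
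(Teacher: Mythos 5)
There is a genuine gap, and it is located exactly where you say the ``hard part'' is: the signed ledger does not close, and parity alone cannot close it.

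The parity information you propose to exploit is already contained in theorem~\ref{DIVERGING-RADII-THEOREM}\eqref{2:genus-parity}: if $g$ is even then $\genus(M_+)$ is even and $\genus(M_-)$ is odd, hence $N:=g-\genus(M_s)$ is even when $s=+$ and odd when $s=-$ (and vice versa when $g$ is odd). But this only constrains $N$ modulo $2$. To obtain $\genus(M_s)=g$ one still needs $N\le 1$, which is the content of theorem~\ref{maintheorem-N} and is the entire analytic burden of Part~\TWO. Without an upper bound on $N$, the parity argument is equally consistent with $N=0,2,4,\ldots$, and the theorem does not follow.

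Your bubbling step also does not deliver what the ledger would require. When a point $p_n\in M_s(R_n)\cap Y^+$ escapes both $Z$ and $Z^*$ and one rescales at curvature scale, theorem~\ref{limit-is-a-catenoid-theorem} shows the blow-up limit is a \emph{catenoid}, not a genus-$g'$ helicoidal surface with a well-defined positive/negative classification at an origin. The sign classification in section~\ref{sign-section} is defined only for surfaces meeting $H$ along $X\cup Z$ near $O$; a catenoidal neck sitting on $Y^+$ far from both axes carries no such structure, and there is no intermediate scale at which a ``sub-helicoid of smaller genus'' appears (zooming out from the neck recovers only the double sheet $z=0$ per theorem~\ref{first-compactness-theorem}). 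So the purported inheritance of a sign by each bubble has no definition, and the inductive bubble-tree ledger cannot be set up.

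What actually closes the argument in the paper is an analytic force-balance identity, not a combinatorial one: the flux of the horizontal Killing field $\chi_Y$ around each catenoidal neck vanishes by $\rho_Y$-symmetry (lemma~\ref{flux-zero-lemma}), and a careful asymptotic expansion of $u_n$ (propositions~\ref{first-case-1-proposition}--\ref{third-case-1-proposition}, with the barrier construction, the height estimate, and the residue computations of the appendix) shows that if $N\ge 2$, or if $N=1$ but the neck is not pinned to the equator $E$, the forces cannot all vanish. This is genuinely quantitative. If you want to salvage your outline, you would have to replace the ``signed ledger'' by some substitute for this balancing argument; I do not see how parity bookkeeping alone could do it.
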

Thus $\RR^3$ contains genus-$g$ helicoids for every $g$.

\section{Preliminaries}
In this section, we recall the notations and results that we need from \hyperref[part1]{part~\ONE}.
Some notations are slightly different and better
suited to the arguments of \hyperref[part2]{part~\TWO}.

Our model for $\sS^2(R)$ is $\CC\cup\{\infty\}$ with the conformal metric obtained by stereographic projection:
\begin{equation}
\label{equation-metric-S2}
\lambda^2 |dz|^2 \quad\mbox{ with }
\lambda=\frac{2 R^2}{R^2+|z|^2},
\end{equation}
In this model, the equator is the circle $|z|=R$.
Our model for $\sS^2(R)\times\rR$ is $(\CC\cup\{\infty\})\times\rR$ with the 
metric 
\begin{equation}
\label{equation-metric-S2xR}
\lambda^2 |dz|^2+dt^2,\quad (z,t)\in(\CC\cup\{\infty\})\times\rR.
\end{equation}
When $R\to\infty$, this metric converges to the Euclidean metric 
$4|dz|^2+dt^2$ on $\CC\times\rR=\rR^3$. (This metric is isometric to
the standard Euclidean metric by the map $(z,t)\mapsto (2z,t)$.)

The real and imaginary axes in $\CC$ are denoted $X$ and $Y$.
The circle $|z|=R$ is denoted $E$ (the letter $E$ stands for ``equator").
Note that $X$, $Y$ and $E$ are geodesics for the metric \eqref{equation-metric-S2}.
We identify $\sS^2$ with $\sS^2\times\{0\}$, so $X$, $Y$ and
$E$ are horizontal geodesics in $\sS^2\times\rR$.
The antipodal points $(0,0)$ and $(\infty,0)$ are denoted $O$ and $O^*$, respectively.
The vertical axes through $O$ and $O^*$ in $\sS^2\times\rR$ are denoted $Z$ and
$Z^*$, respectively.
If $\gamma$ is a horizontal or vertical geodesic in $\sS^2\times\rR$, the $180^{\circ}$
rotation around $\gamma$ is denoted $\rho_{\gamma}$. This is an isometry of
$\sS^2\times\rR$.
The reflection in the vertical cylinder $E\times\rR$ is denoted $\mu_E$. This is an
isometry of $\sS^2\times\rR$. In our model,
\[
\mu_E(z,t)=\left(\frac{R^2}{\overline{z}},t\right).
\]

Let $H$ be the standard helicoid in $\rR^3$, defined by the equation
\[
x_2\cos x_3=x_1\sin x_3.
\]
It turns out that $H$ is minimal for the metric \eqref{equation-metric-S2xR} for any value of $R$, although not complete anymore (see section~\ref{part-1-preliminaries} in \hyperref[part1]{part~\ONE}).
We complete it by adding the vertical line $Z^*=\{\infty\}\times\rR$, and still denote it $H$.
This is a complete, genus zero, minimal surface in $\sS^2\times\rR$.
It contains the geodesic $X$, the axes $Z$ and $Z^*$ and
meets the geodesic $Y$ orthogonally at the points $O$ and $O^*$.
 It is invariant by
$\rho_X$, $\rho_Z$, $\rho_{Z^*}$, $\mu_E$ (which reverse its orientation) and
$\rho_Y$ (which preserves it).

In the following two theorems, we summarize what we need to know about
the genus-$2g$ minimal surfaces $M_+(R)$ and $M_-(R)$ in $\sS^2(R)\times\rR$
(see  theorems~\ref{main-S2xR-theorem} 
and \ref{DIVERGING-RADII-THEOREM} in section~\ref{section:main-theorems} of \hyperref[part1]{part~\ONE}):
\begin{THEOREM}
\label{properties-theorem}
Let $s\in\{+,-\}$. Then:
\begin{enumerate}[\upshape (1)]
\item $M_s(R)$ is a complete, properly embedded  minimal surface with genus $2g$, and  it 
has a top end and a bottom end, each asymptotic to $H$ or a vertical translate of $H$,
\item $M_s(R)\cap H=X\cup Z\cup Z^*$. In particular, $M_s(R)$ is invariant by $\rho_X$,
$\rho_Z$ and $\rho_{Z^*}$, each of which reverses its orientation.
\item $M_s(R)$ is invariant by the reflection $\mu_E$, which reverses its orientation,
\item $M_s(R)$ meets the
geodesic $Y$
orthogonally at $4g+2$ points and is invariant under $\rho_Y$, which preserves its orientation. Moreover, 
$(\rho_Y)_*$ acts on $H_1(M_s(R),\ZZ)$ by multiplication by $-1$.
\end{enumerate}
\end{THEOREM}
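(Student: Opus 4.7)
The plan is to obtain each of the four assertions by directly invoking Theorem~\ref{main-S2xR-theorem} of section~\ref{section:main-theorems}, applied with genus $2g$ (rather than $g$) and with $h=\infty$. That theorem furnishes a pair of embedded minimal surfaces, which we name $M_+(R)$ and $M_-(R)$, having all the structural properties we shall need; the task here is essentially bookkeeping, translating the conclusions of Theorem~\ref{main-S2xR-theorem} into the language used in \hyperref[part2]{part~\TWO}.

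First I would dispatch assertion~(1): completeness, properness, and the asymptotic behavior of the two ends are precisely statement~(1) of Theorem~\ref{main-S2xR-theorem}, with the genus being $2g$ by choice. For assertion~(2), statement~(3) of Theorem~\ref{main-S2xR-theorem} (with $h=\infty$) gives $M_s(R)\cap H=Z\cup Z^*\cup X$; since $M_s(R)$ contains the geodesics $X$, $Z$, and $Z^*$, the Schwarz reflection principle (applied to smooth embedded minimal surfaces containing a geodesic) forces invariance under each of $\rho_X$, $\rho_Z$, and $\rho_{Z^*}$, each of which reverses orientation because it fixes a $1$-dimensional subset of a $2$-manifold.

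For assertion~(3), since the genus $2g$ is even, statement~\eqref{even-genus-congruence-assertion} of Theorem~\ref{main-S2xR-theorem} ensures that $M_s(R)$ is invariant under the reflection $\mu_E$; orientation-reversal again follows because $\mu_E$ fixes the geodesic $E\times\{0\}\cap M_s(R)$ pointwise (or, alternatively, because $\mu_E$ is itself orientation-reversing on $\sS^2\times\rR$ while mapping the orientable surface $M_s(R)$ to itself). For assertion~(4), statement~(4) of Theorem~\ref{main-S2xR-theorem} says $M_s(R)$ is a $Y$-surface, so by Definition~\ref{Y-Surface} the involution $\rho_Y$ restricts to an orientation-preserving isometry acting by multiplication by $-1$ on $H_1(M_s(R),\ZZ)$. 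The count $4g+2$ of fixed points of $\rho_Y$ on $M_s(R)$ follows from statement~(5) of Theorem~\ref{main-S2xR-theorem} applied with genus $2g$ (which yields $2(2g)+2=4g+2$ points), and orthogonality at these points is automatic from $\rho_Y$-invariance: any tangent direction at a fixed point lying in $Y$ itself would force $\rho_Y$ to act as the identity on $T_pM_s(R)$, contradicting the discreteness of the fixed point set.

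Since every clause of Theorem~\ref{properties-theorem} is a direct consequence of Theorem~\ref{main-S2xR-theorem} once the genus is specialized to $2g$, there is no substantive obstacle; the only point requiring mild care is verifying that the orientation-reversing (respectively, preserving) character of each symmetry matches what is claimed, which in each case is dictated by whether the symmetry fixes a $1$-dimensional curve in $M_s(R)$ (orientation-reversing on the surface) or a $0$-dimensional set (orientation-preserving). No genuinely new argument is required.
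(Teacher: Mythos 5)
Your proposal takes the same approach as the paper: Theorem~\ref{properties-theorem} is stated in part~\TWO{} explicitly as a \emph{summary} of Theorem~\ref{main-S2xR-theorem} applied with genus $2g$ and $h=\infty$, and the paper gives it no independent proof. Your bookkeeping is correct and your orientation-reversal argument for $\rho_X,\rho_Z,\rho_{Z^*}$ (fixing a curve in a surface forces a sign flip in the normal direction within the surface) is fine. Two small inaccuracies are worth noting, though. First, your "alternative" argument for $\mu_E$ reversing orientation on $M_s(R)$ — that an orientation-reversing ambient isometry preserving an orientable surface must reverse the surface's orientation — is false in general (reflection of $\RR^3$ in the $xy$-plane preserves the $xy$-plane with its orientation); stick to the fixed-curve argument, which does work here since $M_s(R)\cap(E\times\RR)\supset X\cap E\ne\emptyset$ and the intersection is a curve. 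Second, in the orthogonality argument you claim that if $T_pM_s(R)\supset T_pY$ then $\rho_Y$ acts as the identity on $T_pM_s(R)$; this is not right. Since $d\rho_Y$ has a one-dimensional $+1$-eigenspace (tangent to $Y$) and $d\rho_Y$ is an involution preserving $T_pM_s(R)$, the assumption $T_pY\subset T_pM_s(R)$ forces $d\rho_Y|_{T_pM_s(R)}$ to be $\operatorname{diag}(1,-1)$, not the identity. That is still enough to finish: the $+1$-eigendirection yields a one-dimensional fixed locus of $\rho_Y|_{M_s(R)}$ near $p$, contradicting the discreteness of $M_s(R)\cap Y$, which is guaranteed by statement~(5) of Theorem~\ref{main-S2xR-theorem}. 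Equivalently one can observe that $\operatorname{diag}(1,-1)$ reverses orientation, contradicting that $\rho_Y$ is orientation-preserving on a $Y$-surface. Either fix closes the gap, and after that the proposal is complete.
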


\begin{THEOREM}
\label{PART2-DIVERGING-RADII}
Let $s\in\{+,-\}$. Let $R_n$ be a sequence of radii diverging to infinity.
Let $M_s(R_n)$ be a surface having the properties
 listed in theorem~\ref{properties-theorem}.
Then a subsequence of $M_s(R_n)$ converges to a minimal surface $M_s$ in $\rR^3$
asymptotic to the helicoid $H$. The convergence is smooth convergence on compact
sets.
Moreover,
\begin{itemize}
\item the genus of $M_s$ is at most $g$,
\item the genus of $M_+$ is even,
\item the genus of $M_-$ is odd,
\item the number of points in $M_s\cap Y$ is $2\,\text{\rm genus}(M_s)+1$.
\end{itemize}
\end{THEOREM}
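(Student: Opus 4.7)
The plan is to observe that theorem~\ref{PART2-DIVERGING-RADII} is a direct specialization of theorem~\ref{DIVERGING-RADII-THEOREM} from section~\ref{section:main-theorems} to the even-genus examples $M_s(R)$ of theorem~\ref{properties-theorem}, together with the smooth-convergence results of section~\ref{R3-section}. Writing $G = 2g$ for the genus of the $\sS^2\times\RR$ examples, I would verify that the four bulleted conclusions match parts (5), (6), and (4) of theorem~\ref{DIVERGING-RADII-THEOREM}, and that the preamble about smooth convergence to a surface asymptotic to $H$ is exactly theorems~\ref{R-to-infinity-theorem} and~\ref{asymptotic-to-H-theorem}.

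First, I would extract the limit. Since each $M_s(R_n)$ satisfies the properties of theorem~\ref{properties-theorem}, theorem~\ref{R-to-infinity-theorem} (applied with $\sigma_n$ equal to the identity) provides a subsequence along which $M_s(R_n)$ converges smoothly on compact sets, with multiplicity one, to a properly embedded minimal surface $M_s \subset \rR^3$. By theorem~\ref{asymptotic-to-H-theorem}, either $M_s = H$ or $M_s \cap H = X \cup Z$ and $M_s$ is asymptotic to $H$ at infinity; in either case $M_s$ is a helicoidal surface in the sense stated.

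Second, I would read off the genus bound and the parity. The genus $G = 2g$ is even, so statement~\eqref{2:genus-bound} of theorem~\ref{DIVERGING-RADII-THEOREM} gives $\genus(M_s) \le G/2 = g$, and statement~\eqref{2:genus-parity} gives that $\genus(M_+)$ is even and $\genus(M_-)$ is odd. This takes care of three of the four bullets.

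Third, the final bullet, $\|M_s \cap Y\| = 2\,\genus(M_s) + 1$, is precisely statement~\eqref{2:point-count} of theorem~\ref{DIVERGING-RADII-THEOREM}, whose proof uses that $M_s$ is a $Y$-surface (statement~\eqref{2:Y-surface-property} there, itself proved from the $\rho_Y$-invariance of the approximants and the smoothness of the convergence) together with the $Y$-surface Euler-characteristic identity from proposition~\ref{Y-surface-topology-propostion} and the fact that $M_s$ has exactly one end (since it is asymptotic to a helicoid). There is no genuine obstacle here: the entire statement is already contained in theorem~\ref{DIVERGING-RADII-THEOREM}, and the work of \hyperref[part2]{part~\TWO} will be to prove the \emph{lower} bound on $\genus(M_s)$ asserted in theorem~\ref{PART2-MAIN-THEOREM}, not the upper bound and parity recorded here.
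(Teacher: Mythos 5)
Your proposal is correct and follows essentially the same route as the paper, which establishes theorem~\ref{PART2-DIVERGING-RADII} simply by citing theorems~\ref{main-S2xR-theorem} and~\ref{DIVERGING-RADII-THEOREM} of \hyperref[part1]{part~\ONE} with the genus relabeled as $2g$. Your reading of statements \eqref{2:genus-bound}, \eqref{2:genus-parity}, and \eqref{2:point-count}, and your tracing of the smooth convergence back to theorems~\ref{R-to-infinity-theorem} and~\ref{asymptotic-to-H-theorem}, match the paper's proof of theorem~\ref{DIVERGING-RADII-THEOREM} in section~\ref{Proof_of_DIVERGING-RADII-THEOREM} exactly.
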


%

\section{The setup}
\label{section-setup}
Let $R_n \to  \infty$ be a sequence such that $M_s(R_n)$ converges smoothly to a limit $M_s$
as in theorem~\ref{PART2-DIVERGING-RADII}.
Let $g'$ be the
genus of $M_s$.
By the last point of theorem~\ref{PART2-DIVERGING-RADII},
$M_s\cap Y$ has exactly $2g'+1$ points.
It follows that
$2g'+1$ points of $M_s(R_n)\cap Y$ stay at bounded distance from the origin $O$.
By $\mu_E$-symmetry, $2g'+1$ points of
$M_s(R_n)\cap Y$ stay at bounded distance from the antipodal point $O^*$.
 There remains $4(g-g')$ points in $M_s(R_n)\cap Y$ whose distance to $O$ and $O^*$ is 
 unbounded. Let
 \[
 N=g-g'.
 \]

 We shall prove
 \begin{theorem}
 \label{maintheorem-N}
 In the above setup, $N\leq 1$.
 \end{theorem}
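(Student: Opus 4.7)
The plan is to argue by contradiction, assuming $N \geq 2$. The overall strategy is to extract, from the sequence $M_s(R_n)$ recentered near the midpoint of $Y^+$, a blow-up limit minimal surface in $\RR^3$ whose inherited symmetries and asymptotic structure are too rigid to accommodate two drifting pairs. Recall that by the $\rho_Z$- and $\mu_E$-symmetries, the $4N$ drifting points on $Y$ decompose into $N$ pairs on $Y^+$ swapped by $\mu_E$ across the midpoint $m_n := Y^+ \cap E$; since $|M_s(R_n) \cap Y^+|$ is even, $m_n$ itself lies off $M_s(R_n)$. If $N \geq 2$, we aim to produce in the limit a surface carrying at least two distinct pairs of $\rho_{Y_\infty}$-fixed intersection points, which will be the target of the contradiction.

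To set up the blow-up, note that in any fixed metric ball around $m_n$ the Gaussian curvature of $\sS^2(R_n)$ is $1/R_n^2 \to 0$ and the cylinder $E \times \RR$ has extrinsic curvature $1/R_n \to 0$, so the ambient metric \eqref{equation-metric-S2xR} restricted to such a ball converges smoothly to the Euclidean metric on a Euclidean ball in $\RR^3$. The flux estimate proposition~\ref{flux-bounds-proposition} together with corollary~\ref{flux-bounds-corollary} provides uniform local area bounds for the $M_s(R_n)$, and then the General Compactness Theorem~\ref{general-compactness-theorem}, combined with the nonexistence of nontrivial complete stable minimal surfaces in $\RR^3$ (the Euclidean analogue of corollary~\ref{stable-spheres-corollary}), yields a subsequence whose recenterings converge smoothly and with multiplicity one to a properly embedded minimal surface $\Sigma \subset \RR^3$. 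The limit $\Sigma$ inherits three pieces of structure: invariance under $\rho_{Y_\infty}$, where $Y_\infty$ is the horizontal line limit of $Y$; invariance under the reflection $\mu_{P_\infty}$, where $P_\infty$ is the vertical plane limit of $E \times \RR$, with $Y_\infty \perp P_\infty$ at a single point; and asymptotics at horizontal infinity to a finite stack of horizontal planes separated by $\tfrac12$, the half-pitch of $H$, so that $\Sigma$ lies in a horizontal slab of $\RR^3$. By proposition~\ref{Y-surface-topology-propostion} applied to $\Sigma$, each drifting pair that stays at bounded metric distance from $m_n$ contributes two points to $\Sigma \cap Y_\infty$; by a preliminary rescaling to equalize the metric distances of the $N$ pairs to $m_n$ (passing to a subsequence if necessary) we may assume all pairs survive, so that $|\Sigma \cap Y_\infty| \geq 2N \geq 4$.

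The heart of the argument, and the main obstacle, is a rigidity statement: any complete, properly embedded, slab-contained minimal surface $\Sigma \subset \RR^3$ carrying both $\rho_{Y_\infty}$- and $\mu_{P_\infty}$-symmetry and having horizontal planar ends separated by $\tfrac12$ must satisfy $|\Sigma \cap Y_\infty| \leq 2$. The proof of this step should combine three ingredients: a flux balance along the planar ends, computed via proposition~\ref{flux-bounds-proposition} applied to the vertical Killing field $\partial_z$, which forces the vertical flux through any bridging annular piece of $\Sigma$ to match a prescribed multiple of the half-pitch; the catenoidal-barrier technique used in the proof of lemma~\ref{z=0-level-lemma}, which severely restricts how $\Sigma$ can bridge adjacent horizontal sheets without intersecting itself; and the extra $\mu_{P_\infty}$-symmetry, which forces bridges on the two sides of $P_\infty$ to be mirror images of each other, collapsing what would otherwise be independent topological contributions on each side. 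Together these would force $|\Sigma \cap Y_\infty| \leq 2$ and hence contradict the existence of a blow-up limit with $N \geq 2$ surviving pairs, completing the proof that $N \leq 1$.
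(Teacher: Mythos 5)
Your proposal takes a genuinely different route from the paper---you aim for a topological rigidity obstruction in a blow-up limit, while the paper's argument is a force-balancing computation using the flux of the horizontal Killing field $\chi_Y$ around each catenoidal neck (Lemma~\ref{flux-zero-lemma} versus the explicit residue computations in Propositions~\ref{third-case-1-proposition} and \ref{proposition-case1}). Unfortunately there is a fundamental gap in the blow-up step that undoes the whole plan.

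The difficulty is that there is \emph{no single scale at which the catenoidal necks persist in a limit}. Proposition~\ref{Setup-proposition} and the remark after Proposition~\ref{second-case-1-proposition} show that the neck radii satisfy $r_{j,n} = O\bigl(t_n/|\log t_n|\bigr)$, so they are asymptotically smaller than the helicoid pitch $t_n$, which is in turn much smaller than the mutual distances between distinct necks and the distance from any neck to $m_n$. If you blow up around $m_n$ at fixed metric scale in $\sS^2(R_n)\times\RR$ (pitch fixed at $1$), the necks collapse because $r_{j,n}\to 0$; and whenever $p_j\ne i$ the neck even escapes to infinity, since its spherical distance to $m_n$ grows like $R_n\bigl(\tfrac{\pi}{2}-\arg p_j\bigr)$. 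If instead you rescale by $1/r_{j,n}$ to make one neck nondegenerate (as in Theorem~\ref{limit-is-a-catenoid-theorem}), all the other necks are pushed to infinity because $d(p_{j,n},p_{j+1,n})\gg r_{j,n}$, so you see exactly one catenoid. The ``preliminary rescaling to equalize the metric distances'' cannot be carried out: the distances $d(p_{j,n},m_n)$ and the radii $r_{j,n}$ may have mutually incommensurable asymptotics, and Cases~1, 2, and~3 in \eqref{the-3-cases} are exactly the enumeration of where they go. As a result, the key inequality $|\Sigma\cap Y_\infty|\geq 2N$ is never established: the limit $\Sigma$ at any of these scales is either a multigraph over a plane (Theorem~\ref{first-compactness-theorem}) or a single catenoid, not a connected $Y$-surface recording all $N$ handles.

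This is precisely why the paper cannot pass to a geometric blow-up and has to assemble the obstruction \emph{across scales}, using harmonic barriers (Lemma~\ref{lemma1}--\ref{H_t-properties-lemma}), Laurent expansions (Proposition~\ref{proposition-laurent}), and height estimates (Propositions~\ref{proposition-height}, \ref{proposition-height2}) to extract the leading-order flux of $\chi_Y$. You should also note that even the proposed rigidity statement---that a slab-contained minimal surface with $\rho_{Y_\infty}$- and $\mu_{P_\infty}$-symmetry and horizontal planar ends has $|\Sigma\cap Y_\infty|\leq 2$---is not a known theorem and the sketch of how the three ingredients would combine is not a proof; but this is a secondary concern compared with the blow-up failure, which means the rigidity step would never have a hypothesis to act on.
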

 Theorem~\ref{PART2-MAIN-THEOREM} is a straightforward consequence of this theorem:
 Indeed if $g$ is even and $s=+$, we know by theorem~\ref{PART2-DIVERGING-RADII} that
 $g'$ is even so $N=0$ and $g=g'$. If $g$ is odd and $s=-$, then $g'$ is odd
 so again $N=0$.



 To prove theorem~\ref{maintheorem-N},
 assume that $N\geq 1$. We want to prove that $N=1$ by studying the $4N$ points whose
 distance to $O$ and $O^*$ is unbounded.
To do this, it is necessary to work on a different scale. 
Fix a sign $s\in \{+,-\}$ and define
 \[
 M_n=\frac{1}{R_n} M_s(R_n)\subset \sS^2(1)\times\rR.  
 \]
 This is a minimal surface in $\sS^2(1)\times\rR$.
 Each end of $M_n$ is asymptotic to a vertical translate of a helicoid of pitch
\[
t_n=\frac{2\pi}{R_n}.
\]
(The pitch of a helicoid with counterclockwise rotation is twice the distance between consecutive sheets.
The standard helicoid has pitch $2\pi$.)
Observe that $t_n\to 0$.
By the definition of $N$, the intersection $M_n\cap Y$ has $4N$ points whose distance to $O$ and $O^*$ is $\gg t_n$.
 Because $M_n$ is symmetric with respect to $180^{\circ}$ rotation $\rho_X$ around $X$, there are $2N$ points on the positive $Y$-axis.
 We order these by increasing
 imaginary part:
 \[
 p'_{1,n}, p''_{1,n}, p'_{2,n}, p''_{2,n}, \dots,p'_{N,n}, p''_{N,n}.
 \]
 Because of the $\rho_X$-symmetry,
 the $2N$ points on the negative $Y$-axis are the conjugates of these points.
 Define $p_{j,n}$ to be the midpoint of the interval $[p'_{j,n},p''_{j,n}]$ and
 $r_{j,n}$ to be half the distance from $p'_{j,n}$ to
 $p''_{j,n}$, both with respect to the spherical metric.
   We have
  \[
  0 < \Im p_{1,n} < \Im p_{2,n} < \dots < \Im p_{N,n}.
  \]
By $\mu_E$-symmetry, which corresponds to inversion in the unit circle,
  \begin{equation}
  \label{equation**}
  p_{N+1-i,n}=\frac{1}{\overline{p_{i,n}}}.
  \end{equation}
  In particular, in case $N$ is odd, $p_{\frac{N+1}{2},n}=i$.
  


\begin{figure}
 \begin{center}
 \includegraphics[height=35mm]{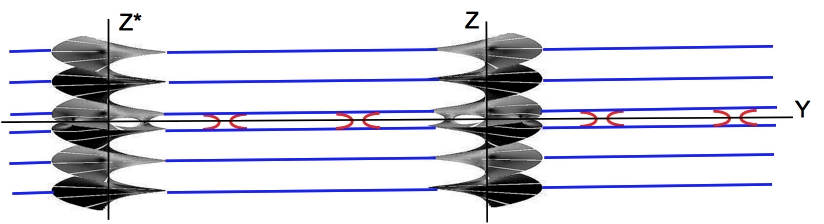}
 \end{center}
 \caption{
 Here is a schematic illustration of what $M_s(R)$ could look like for large values of $R$
($g=4$ and $s=+$ in this picture).
The distance between the two vertical axes $Z$ and $Z^*$ is $\pi R$ and should be
thought of as being very large.
Inside a vertical cylinder of large but fixed radius around the $Z$-axis,
the surface is very close to the limit helicoidal surface $M_s$ (a genus-$2$
helicoid in this picture). By $\mu_E$-symmetry, the same happens around the
$Z^*$-axis. Inside these two cylinders, we see the handles that stay
at bounded distance from the axes as $R\to\infty$.
Outside the cylinders, the surface is close to the helicoid (represented
schematically by horizontal lines) whose two sheets
are connected by $2N$ small necks placed along the $Y$-axis ($N=2$ in this picture).
The distance of each neck to the axes $Z$ and $Z^*$ is diverging
as $R\to\infty$. These are the handles that are escaping from both $Z$
and $Z^*$. These necks are getting smaller and smaller as $R\to\infty$
and have asymptotically catenoidal shape.
Note that the handles that stay at bounded distance from the axes do not
converge to catenoids as $R\to\infty$. (They do look like catenoids in
this picture.)
}
\end{figure}

  For $\lambda>1$ sufficiently large,  let  $\mathcal Z_n(\lambda)$ be the part of $M_n$ lying inside of the vertical cylinders  of radius $\lambda t_n$ around $Z$ and $Z^*$:
 \begin{equation}\mathcal Z_n(\lambda)=\{q=(z,t)\in M_n\,:\, d(Z\cup Z^*, q)<\lambda t_n\}.
\end{equation}
Also define
 $D_{j,n}(\lambda) =\{z\,:\, d(z, p_{j,n})<\lambda r_{j,n}\}$. Consider the intersection of $M_n$ with the vertical cylinder over $D_{j,n}(\lambda)$, and let $C_{
j,n}(\lambda)$ denote the component of this intersection that contains
 the points $\{p'_{j,n}, p''_{j,n}\}$.
Define
\begin{equation}\label{All_Necks_1}
\mathcal C_n(\lambda)=\bigcup_{j=1}^N C_{j,n}(\lambda)\cup \overline{C_{j,n}(\lambda)}.
\end{equation}
The following proposition is key in showing that at most one handle is lost in taking the limit as $R_n\rightarrow\infty$. In broad terms, it says that  near the points $p_{j,n}$, catenoidal necks are forming on a small scale, and after removing these necks and a neighborhood of the axes,  what is left is a pair of symmetric surfaces which are vertical graphs over a half-helicoid.

 \begin{proposition}\label{Setup-proposition} Let  $N=g-g'$, $t_n$,  
 and  $M_n \subset \sS^2(1)\times \rR$  be as above.
Then
\begin{enumerate}[\upshape i.]
\item  For each $j$, $1 \leq j \leq N$,  the surface $\frac{1}{r_{j,n}}(M_n-p_{j,n})$ converges to  the standard  catenoid $\mathbf C$ with vertical
 axis  and waist circle of radius $1$ in $\rR^3$. In particular, the distance (in the spherical metric) $d(p_{j,n},p_{j+1,n})$ is $\gg r_{j,n}$. Moreover, $t_n\gg r_{j,n}$, and the $C_{j,n}(\lambda)$ are close to catenoidal necks with collapsing radii.        
 \item  Given $\epsilon>0$, there exists a $\lambda >0$ such that
  \[
  M_n'=M_n\setminus (\mathcal Z_n(\lambda)\cup\mathcal C_n(\lambda))
  \]
  has the following properties:
                 \begin{enumerate}[\upshape (a)]
                         \item The slope of the tangent plane at any point of $M'_n$ is less than $\epsilon$.
                         \item $M'_n$ consists of two components related by
 the symmetry $\rho _Y$, rotation by $180^\circ$ around $Y$.
                        \item $M'_n$ intersects $t_nH$ in a subset of the axis $X$ and nowhere else, with one of its components intersecting in a ray of the positive $X$-axis, the other in a ray of $X^-$. Each component is graphical over its projection onto the half-helicoid (a component of $t_nH\setminus (Z\cup Z^*)$) that it intersects.
\end{enumerate}
\end{enumerate}
 \end{proposition}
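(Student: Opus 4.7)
The plan is to reduce Proposition~\ref{Setup-proposition} to the compactness machinery developed in Section~\ref{R3-section}. After rescaling (or, equivalently, working directly with the examples of vanishing pitch $\eta_n = t_n/2$ in the sense of Definition~\ref{example-definition}), the hypothesis that each $p_{j,n}$ escapes both axes translates into the divergence condition~\eqref{away-from-axes-hypothesis} of Theorem~\ref{limit-is-a-catenoid-theorem}.

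For part (i), I first locate a point $q_{j,n}$ on the subarc of $Y^+$ between $p'_{j,n}$ and $p''_{j,n}$ at which $M_n$ has vertical tangent plane. Since $M_n$ meets $Y$ orthogonally at both $p'_{j,n}$ and $p''_{j,n}$ and is $\rho_Y$-invariant, the nearby structure is that of a neck in the sense of Definition~\ref{neck-definition}, and such a $q_{j,n}$ exists on the neck curve. Applying Theorem~\ref{limit-is-a-catenoid-theorem} at $q_{j,n}$ provides blow-up scales $c_n$ so that $(M_n - q_{j,n})/c_n$ converges to a catenoid with horizontal waist whose waist meets the limit of $(Y^+ - q_{j,n})/c_n$ in two diametrically opposite points. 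These must be the limits of $(p'_{j,n} - q_{j,n})/c_n$ and $(p''_{j,n} - q_{j,n})/c_n$, so $c_n$ is comparable to $r_{j,n}$ and the rescaled surface $(M_n - p_{j,n})/r_{j,n}$ converges to the unit-waist catenoid $\mathbf{C}$. Conclusions~\eqref{h_n-relatively-large} and~\eqref{h_n-relatively-large2} of that theorem then yield $t_n \gg r_{j,n}$ and $d(p_{j,n}, p_{j+1,n}) \gg r_{j,n}$.

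For part (ii), I apply Corollary~\ref{form-of-graph-corollary} together with Remark~\ref{form-of-graph-remark} and Theorem~\ref{graph-form-theorem} directly to $M_n$. The work is in identifying the neck axes appearing in Corollary~\ref{form-of-graph-corollary} (those at distance $>(\lambda-1) t_n$ from $Z \cup Z^*$) with the axes of the $C_{j,n}(\lambda)$. By Definition~\ref{neck-definition} each such neck axis meets $Y^+$ in exactly two points, and the only candidates escaping both $Z$ and $Z^*$ are $\{p'_{j,n}, p''_{j,n}\}_{j=1}^N$; so the question reduces to ruling out cross-pairings such as $\{p''_{j,n}, p'_{j+1,n}\}$. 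The separation estimate $d(p_{j,n}, p_{j+1,n}) \gg r_{j,n}$ established in part~(i) excludes these: rescaling at an alleged cross-pair neck by its radius would force the limit catenoid's waist to have two distinguished points on $Y$ separated by a diverging distance, contradicting the unit waist circle. Once the identification is made, the slope bound (a) follows from Theorem~\ref{graph-form-theorem}, the two-component structure (b) follows from Corollary~\ref{form-of-graph-corollary} combined with $\rho_Y$-symmetry, and the graphicality over the half-helicoid (c) follows from Remark~\ref{form-of-graph-remark} together with the trapping inequality~\eqref{trapped} and the fact that $M_n$ contains $X$.

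The main obstacle I anticipate is the pairing argument: verifying that the necks supplied by the compactness results of Section~\ref{R3-section} pair the intersection points along $Y^+$ exactly as $\{p'_{j,n}, p''_{j,n}\}$, rather than across distinct indices. Unlike the rest of the proof, this is not a direct citation but a short combination of the local catenoidal blow-up near each $p_{j,n}$ with the separation of distinct $p_{j,n}$ obtained in part~(i).
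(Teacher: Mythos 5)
Your plan is essentially the paper's: the statement is reduced to Theorem~\ref{limit-is-a-catenoid-theorem} and Corollary~\ref{form-of-graph-corollary}, exactly the two results you cite. However, you correctly identify the pairing of the $p'_{j,n}$/$p''_{j,n}$ as the one nontrivial step, and your proposed resolution of it is circular: in part~(i) you derive the separation estimate $d(p_{j,n},p_{j+1,n})\gg r_{j,n}$ from the \emph{assumption} that the two waist points on $Y'$ are the limits of $p'_{j,n}$ and $p''_{j,n}$, and then in part~(ii) you invoke that same separation estimate to justify the pairing.

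The fix is to establish the pairing directly from the smooth convergence in Theorem~\ref{limit-is-a-catenoid-theorem}, before any separation estimate. Blow up at $p'_{j,n}$ itself (its tangent plane is vertical because $M_n$ meets $Y$ orthogonally by $\rho_Y$-equivariance, and the escape hypothesis gives~\eqref{away-from-axes-hypothesis}); the line $Y'$ meets the limiting catenoid in exactly two points, so smooth convergence forces $M_n\cap Y^+$ to have exactly two points within $O(c_n)$ of $p'_{j,n}$. Any such nearby point is itself escaping, since non-escaping points remain at distance $O(1)$ from $O$ or $O^*$ while $c_n\to 0$; hence the second point is the next consecutive escaping point in the ordering, namely $p''_{j,n}$, and the necks cannot pair across indices without forcing two distinct necks (as connected components of the vertical-tangent set) to share a point of $Y^+$. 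Only after this does $c_n\sim r_{j,n}$ follow, together with the separation and the identification with $\mathbf C$.

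A minor expository slip: there is no point of $M_n\cap Y^+$ strictly between $p'_{j,n}$ and $p''_{j,n}$ on the arc of $Y^+$; the intended blow-up center $q_{j,n}$ lies on the neck curve in $M_n$, not on $Y^+$. The simplest choice is to take $q_{j,n}=p'_{j,n}$, which both lies on $Y^+$ and has vertical tangent plane.
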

 This proposition is proved in theorem~\ref{limit-is-a-catenoid-theorem} 
 and 
 corollary~\ref{form-of-graph-corollary} of \hyperref[part1]{part~\ONE}  
 (with slightly different notation).

Passing to a subsequence, $p_j=\lim p_{j,n}\in i\rR^+\cup\{\infty\}$
exists for all $j\in[1,N]$. We have $p_1\in[0,i]$, and we will consider the following three cases:
\begin{equation}
\label{the-3-cases}  
\begin{array}{ll}
\bullet & \text{Case 1: } p_1\in(0,i),\\
\bullet & \text{Case 2: } p_1=0,\\
\bullet & \text{Case 3: }  p_1=i.
\end{array}
\end{equation}
We will see that Case 1 and Case 2 are impossible, and that $N=1$ in Case 3.

\stepcounter{theorem}
\addtocontents{toc}{\SkipTocEntry}
\subsection{The physics behind the proof of theorem~\ref{maintheorem-N}}
\label{section-physics}
Theorem~\ref{maintheorem-N} is proved by evaluating the surface tension in the
$Y$-direction on each catenoidal neck.
Mathematically speaking, this means the flux of the horizontal Killing field tangent to the 
$Y$-circle in $\sS^2\times\rR$.
On one hand, this flux vanishes at each neck by $\rho_Y$-symmetry
(see Lemma~\ref{flux-zero-lemma}).
On the other hand, we can compute the limit $F_i$ of the surface tension on the $i$-th catenoidal neck (corresponding to $p_i=\lim p_{i,n}$) as $n\to\infty$, after suitable scaling.


Assume for simplicity that the points $O$, $p_1, \dots, p_N$ and $O^*$ are
distinct. Recall that the points $p_1,\dots p_N$ are on the positive imaginary $Y$-axis.
For $1\leq j\leq N$, let $p_j=i y_j$, with $0<y_j<\infty$.
Then we will compute that
\[
F_i=c_i^2\frac{1-y_i^2}{1+y_i^2}+\sum_{j\neq i} c_i c_j f(y_i,y_j)
\]
where the numbers $c_i$ are positive and proportional to the size of
the catenoidal necks and
\[
f(x,y)=\frac{-2\pi^2}{(\log x-\log y)|\log x-\log y+i\pi|^2}.
\]
Observe that $f$ is antisymmetric and $f(x,y)>0$ when $0<x<y$.
We can think of the point $p_i$ as a particle with mass $c_i$ and interpret $F_i$ as a force of gravitation type.
The particles $p_1,\dots,p_N$ are attracted to each other
and we can interpret the first term by saying that each particle
$p_i$ is repelled from the fixed antipodal points $O$ and $O^*$.
All forces $F_i$ must vanish. It is physically clear that no equilibrium is possible unless
$N=1$ and $p_1=i$. Indeed in any other case, $F_1>0$.


This strategy is similar to the one followed in \cite{traizet-balancing} and \cite{traizet-convex}.
The main technical difficulty is that we cannot guarantee that the 
points $O$, $p_1,\dots, p_N$ and $O^*$ are distinct. 
The distinction between Cases 1, 2 and 3 in \eqref{the-3-cases} stems from this problem.

\stepcounter{theorem}
\addtocontents{toc}{\SkipTocEntry}
\subsection{The space \texorpdfstring{$\wtC*$}{Lg}}
To compute forces we need to express $M_n$ as a graph.
For this, we need to express the helicoid itself as a graph, away from its axes $Z$ and
$Z^*$.
Let $\wtC*$ be the universal cover of $\CC^*$. Of course, one can identify $\wtC*$ with $\CC$ by mean of the exponential function. It will be more convenient to see $\wtC*$ as the covering space obtained by analytical continuation of $\log z$, so each point of $\wtC*$ is a point of $\CC^*$ together with a determination of its argument : points are couples $(z,\arg(z))$,
although in general we just write $z$.
The following two involutions of $\wtC*$ will be of interest:
\begin{itemize}
\item $(z,\arg(z))\mapsto (\overline{z},-\arg(z))$, which we write simply as $z\mapsto \overline{z}$. The fixed points are $\arg z=0$.
\item $(z,\arg(z))\mapsto (1/\overline{z},\arg(z))$, which we write simply as
$z\mapsto 1/\overline{z}$. The fixed points are $|z|=1$.
\end{itemize}
The graph of the function
$\frac{t}{2\pi}\arg z$ on $\wtC*$ is one half of a helicoid of pitch $t$.

\stepcounter{theorem}
\addtocontents{toc}{\SkipTocEntry}
\subsection{The domain \texorpdfstring{$\Omega_n$}{Lg} 
                and the functions \texorpdfstring{$f_n$}{Lg} and \texorpdfstring{$u_n$}{Lg}}
  By proposition~\ref{Setup-proposition}, away from the axes $Z\cup Z^*$ and the points
  $p_{j,n}$, we  may consider
   $M_n$ to be the union of two  multigraphs. We wish to express  this part of $M_n$ 
as a  pair of graphs over a subdomain of $\wtC*$.
We will allow ourselves the freedom to write  $z$ for a point $(z,\arg z)\in \wtC*$ when its argument is clear from the context. Thus we will write $p_{j,
n}$ for the point $(p_{j,n}, \pi/2)$ in $\wtC*$ corresponding to the points on $M_n\cap Y
$ in proposition~\ref{Setup-proposition}.  Define
\begin{equation}\label{D_n_Lambda}
D_n(\lambda)=\{\,(z,\arg z)\,:\, |z|<\lambda t_n \text{ or } |z|>\frac{1}{\lambda t_n}\},
\end{equation}
\begin{equation}\label{D_j_n}
D_{j,n}(\lambda)=\{\,(z,\arg z)\,:\, d(p_{j,n}, z)<\lambda r_{j,n}\text{ and } 0<\arg z <\pi\}
\end{equation}
and
\begin{equation}\label{Domain_1}
\Omega_n=\Omega_n(\lambda)=\wtC*\setminus \left(D_n(\lambda) \cup \bigcup_{j=1}^{N}D_{j,n}(\lambda)\cup\overline{D_{j,n}(\lambda)}\right ).
\end{equation}
According to statement~$ii.$ of proposition~\ref{Setup-proposition},  there exists a $\lambda>0$
such that for sufficiently large $n$,   
\[M'_n=M_n\cap (\Omega_n(\lambda)\times\rR)
\]
  is the union of two graphs related by $\rho_Y$-symmetry, and each graph intersects  the helicoid of pitch $t_n$ in a subset of the  $X$-axis. Only one of these graphs can contain points on the positive $X$-axis. We choose this component and write it
 as the graph of a function $f_n$ on the domain $\Omega_n$. We may write
 \begin{equation}\label{eq:def-of-u_n}  
 f_n(z)=\frac{t_n}{2\pi}\arg z-u_n(z).
 \end{equation}
The function $u_n$ has the following properties:
\begin{equation}
\label{eq:u_n-properties}
\begin{array}{ll}
\bullet & \text{ $u_n(\overline z) =-u_n(z)$. In particular, $u_n=0 $ on $\arg z=0$.}\\
\bullet & \text{$u_n(1/\overline z) =u_n(z)$ In particular, $\partial u_n/\partial \nu =0$ on $|z|=1$.}\\
\bullet & \text{ $0<u_n<t_n/2$ when $\arg z >0$.}
\end{array}
\end{equation}
The first two assertions follow from the symmetries of $M_n$. 
See theorem~\ref{properties-theorem} (statements 2 and 3),  and the discussion preceding it.  The third assertion follows from proposition~\ref{Setup-proposition}, statement ~$ii.c$, which implies that
\[
0<|u_n|<t_n/2
\]
  when $\arg z>0$, since the vertical distance between the sheets of $t_nH$ is equal to $t_n/2$.
Now choose a point $z_0$ in the domain of $f_n$  that is near a point $p_{j,n}$. Then $|f_n(z_0)|$ is small,
and $\arg z_0$ is near $\pi/2$. Hence $f_n(z_0)\sim t_n/4 -u_n(z_0)$, which implies that $u_n(z_0)>0$. We conclude that
$0<u_n<t_n/2$ when $\arg z >0$, as claimed.

\begin{figure}
\label{figure-domain}
\begin{center}
\includegraphics[width=60mm]{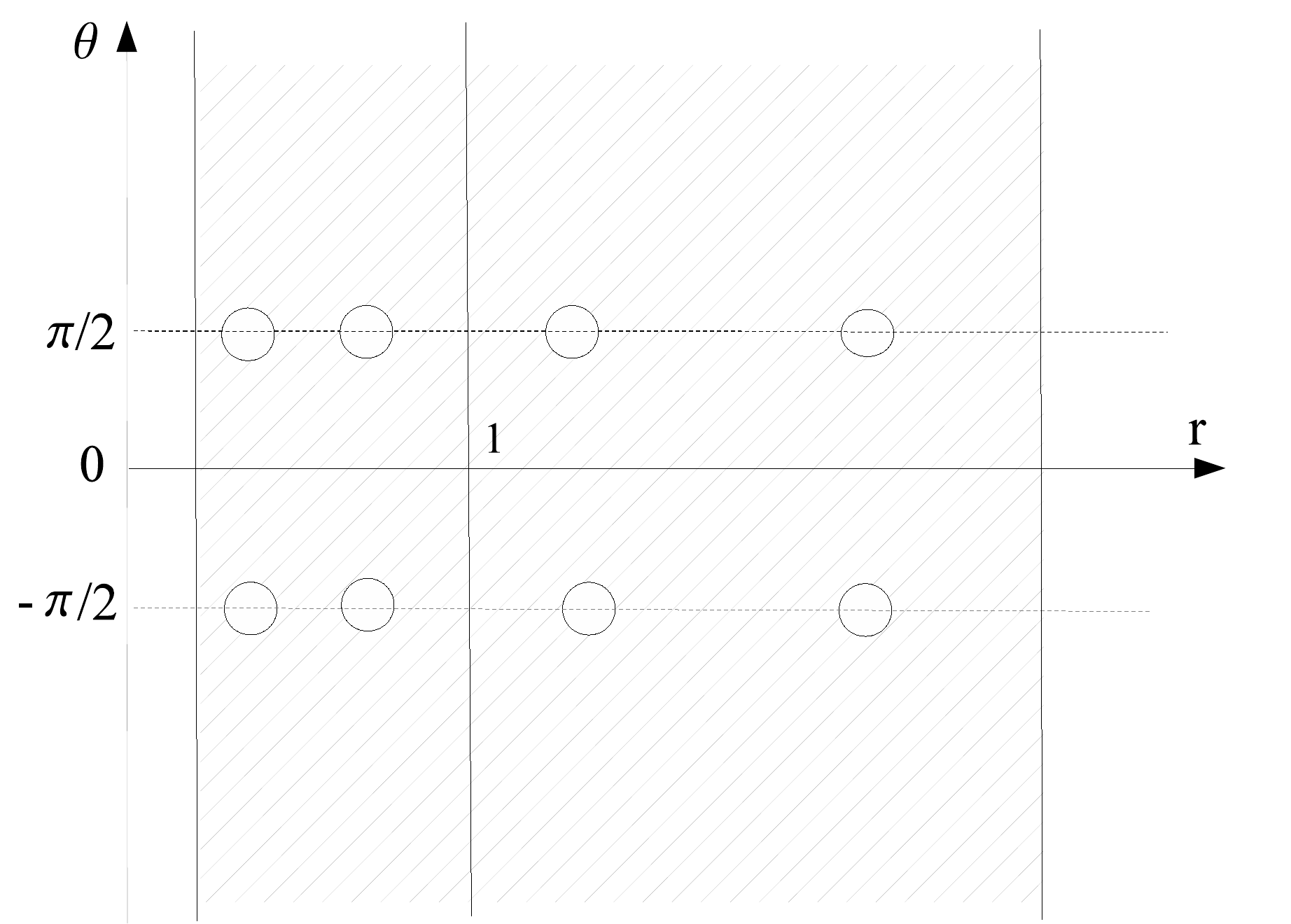}
\end{center}
\caption{The domain $\Omega_n$ in polar coordinates, $z=re^{i\theta}$.
The function $u_n$ is positive for $\theta>0$. 
 The line $r=1$ corresponds to the unit circle $|z|=1$. The white strip on the left corresponds to the projection of the vertical cylinder of radius $\lambda t_n$ about the $Z$-axis, and the region to the right of the shaded domain is its image by the inversion through the unit circle. The small disks correspond to the vertical cylinders of radius $\lambda r_{j,n}$ (in the spherical metric).
}
\end{figure}

\addtocontents{toc}{\SkipTocEntry}
\subsection*{Organization of \hyperref[part2]{part~\TWO}}
We deal with Cases 1, 2 and 3, as listed in \eqref{the-3-cases}, separately. In each case, we first state, without proof, a proposition
which describes the asymptotic behavior of the function $u_n$ defined by \eqref{eq:def-of-u_n} as $n\to\infty$.
We use this result to compute forces and obtain the required result (namely, $N=1$
or a contradiction). Then, we prove the proposition.
Finally, an appendix contains analytic and geometric results that are relevant to minimal surfaces
 in $\sS^2\times\rR$ and that are used in \hyperref[part2]{part~\TWO}.
\section{Case 1: \texorpdfstring{$p_1\in (0,i)$}{Lg}}  
\label{case-1-section}
For $p\in\wtC*$, let $h_p$ be the harmonic function defined on  
$\wtC*\setminus\{p,\overline{p}\}$ by
\begin{equation*}
h_p(z)=-\log\left|\frac{\log z-\log p}
{\log z-\log\overline{p}}\right|.
\end{equation*}
Note that since $p$ and $z$ are in $\wtC*$, both come with a determination of their logarithm, so the function $h_p$ is well defined.
This function has the same symmetries as $u_n$:
\begin{equation}
\label{eq:h_p-symmetries}
\begin{array}{ll}
\bullet & h_p(\overline{z})=-h_p(z),\\
\bullet & h_p(1/\overline{z})=h_{1/\overline{p}}(z).\\
\bullet &\text{Moreover, if $\arg p$ and $\arg z$ are positive then $h_p(z)>0$.}
\end{array}
\end{equation}

\begin{remark*}
The function $(z,p)\mapsto -h_p(z)$ is the Green's function for the domain $\arg z>0$ of $\wtC*$.
\end{remark*}
Recall that 
\begin{equation}\label{eq:limits}
p_i=\lim_n p_{i,n}. 
\end{equation}
It might happen that several points $p_j$ are equal to $p_i$.
In this case, we say that we have a cluster at $p_i$.
Let $m$ be the number of distinct points amongst $p_1, \dots, p_N$.
For each $n$, relabel the points $p_{i,n}$ 
(by permuting the indices)
so that the points $p_1,\dots,p_m$ defined by~\eqref{eq:limits}
are distinct and so that
\[
\Im p_1 < \Im p_2 < \dots < \Im p_m.
\]
(Consequently, for each $j$ with $1\le j\le N$, there is exactly
one $i$ with $1\le i\le m$ such that $p_j=p_i$.)

We define
\begin{equation}
\label{eq:def-of-tilde-u_n}
\wtu_n=
\frac{|\log t_n|}{t_n} u_n.
\end{equation}
\begin{proposition}
\label{first-case-1-proposition}
Assume that $p_1\neq 0$. Then, after passing to a subsequence, 
there exist non-negative real numbers
    $c_0,\dots,c_m$ such that
\begin{equation}
\label{equation-limit-un}
\wtu(z):=\lim \wtu_n(z)=c_0\arg z+\sum_{i=1}^m
c_ih_{p_i}(z).
\end{equation}
The convergence is the usual smooth uniform convergence on compact subsets of $\wtC*$ minus the points
$p_i$, $-p_i$ for $1\leq i\leq m$.
Moreover, for $1\leq i\leq m$,
\begin{equation}
\label{equation-ci}
c_i=\lim\frac{|\log t_n|}{t_n}\frac{\phi_{i,n}}{2\pi}
\end{equation}
where $\phi_{i,n}$ is the vertical flux of $M_n$ on
the graph of $f_n$ restricted to the circle $C(p_i,\varepsilon)$ for a fixed, small enough $\varepsilon$.
\end{proposition}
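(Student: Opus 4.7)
The plan is to show via an elliptic compactness argument that $\wtu_n$ subsequentially converges on compact subsets of $\wtC*\setminus\bigcup_i\{p_i,\bar p_i\}$ to a harmonic function which must coincide with the right-hand side of \eqref{equation-limit-un}. I would first linearize the minimal surface equation around the helicoid: since $\frac{t_n}{2\pi}\arg z$ parametrizes the helicoid (itself minimal) and proposition~\ref{Setup-proposition}(ii.a) gives $|\nabla f_n|\to 0$ uniformly on compact subsets of $\wtC*$, the deviation $u_n$ satisfies a quasilinear elliptic equation whose linearization is the Euclidean Laplace equation on $\wtC*$ (equivalent to the Laplace--Beltrami equation for the spherical metric by conformal invariance). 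Multiplying through by $|\log t_n|/t_n$ yields a PDE for $\wtu_n$ converging to $\Delta\wtu=0$ on compact subsets of $\wtC*\setminus\bigcup_i\{p_i,\bar p_i\}$.

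Next I would establish the uniform bound needed to pass to a harmonic limit. Near each $p_{j,n}$, theorem~\ref{limit-is-a-catenoid-theorem} gives a vertical-catenoid model with waist radius $r_{j,n}$, so on the annulus $r_{j,n}\ll|z-p_{j,n}|\ll 1$ the upper graph satisfies $f_n(z)-t_n/4\approx r_{j,n}\log(|z-p_{j,n}|/r_{j,n})$, which, using $\arg p_{j,n}\to\pi/2$, becomes
\[
u_n(z)\approx -r_{j,n}\log\bigl(|z-p_{j,n}|/r_{j,n}\bigr),
\]
so $\wtu_n$ develops a logarithmic singularity at $p_{j,n}$ with coefficient $|\log t_n|\,r_{j,n}/t_n$. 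The catenoidal vertical flux $2\pi r_{j,n}$ agrees with $\phi_{j,n}$ to leading order, so this coefficient is $|\log t_n|\phi_{j,n}/(2\pi t_n)$; it is bounded because otherwise the logarithmic profile would force $u_n$ to exceed $t_n/2$ at a definite distance from $p_{j,n}$, contradicting \eqref{eq:u_n-properties}. Using these catenoidal barriers on small circles around each $p_{j,n}$, together with the trivial bound $|u_n|\le t_n/2$ on the components of $\partial\Omega_n$ near $Z$ and $Z^*$, the maximum principle produces the desired uniform bound on $\wtu_n$ on compact subsets of $\wtC*\setminus\bigcup_i\{p_i,\bar p_i\}$. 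Elliptic regularity then extracts a subsequential smooth harmonic limit $\wtu$ inheriting the symmetries $\wtu(\bar z)=-\wtu(z)$ and $\wtu(1/\bar z)=\wtu(z)$ from \eqref{eq:u_n-properties}.

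To identify $\wtu$, I would pull back by $\zeta=\log z$: then $\wtu$ is harmonic on $\{\Im\zeta>0\}\setminus\{\log p_i\}$, vanishes on $\{\Im\zeta=0\}$, and carries a $-c_i\log|\zeta-\log p_i|$ singularity at $\log p_i$, where $c_i$ is the sum over $j$ with $p_j=p_i$ of the limit coefficients from step two (so $c_i=\lim|\log t_n|\phi_{i,n}/(2\pi t_n)$, giving \eqref{equation-ci}). Subtracting $\sum_i c_ih_{p_i}$ removes every singularity and produces a function harmonic on the entire upper half-plane, vanishing on its boundary, with at most linear growth at infinity — the linear growth follows from $|\wtu_n|\le|\log t_n|/2$ combined with $\wtu(1/\bar z)=\wtu(z)$, which equates the behaviors as $\Re\zeta\to\pm\infty$. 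By the Liouville theorem for such half-plane harmonic functions, the remainder equals $c_0\,\Im\zeta=c_0\arg z$ for some $c_0\in\RR$, and $c_0\ge 0$ because $\wtu\ge 0$ on $\{\arg z>0\}$ by \eqref{eq:u_n-properties}.

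The most delicate step is the uniform bound in the clustering situation: when several $p_{j,n}$ collapse onto a single $p_i$ with $|p_{j,n}-p_i|\gg r_{j,n}$, the individual logarithmic bumps must combine without cancellation into one $-c_i\log$ singularity at $p_i$ in the limit. Handling this requires a two-scale blow-up whose outer scale is the cluster diameter and whose inner limit is a planar problem on $\RR^2$ with catenoidal necks at distinct positions; showing that the inner contribution is subleading relative to the outer logarithmic profile supplies the needed bound. In the generic non-clustering case ($m=N$) each $p_{j,n}$ is isolated and the argument reduces to a direct application of the maximum principle.
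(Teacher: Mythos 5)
Your architecture matches the paper's — barrier bounds, subsequential limit, identification of singularities via half-plane harmonic theory, and a flux computation for the $c_i$ — but there is a genuine gap in your step establishing the uniform bound on $\wtu_n$.

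The issue is the boundary components of $\Omega_n$ near $Z$ and $Z^*$, i.e., the circles $|z|=t_n^\alpha$ (and their images under $z\mapsto 1/\bar z$). On those circles the only available a priori information is $0<u_n<t_n/2$, which after renormalization by $|\log t_n|/t_n$ becomes $|\wtu_n|\le |\log t_n|/2 \to\infty$. Your catenoidal barriers control the neck boundary circles $C(p_{j,n},\cdot)$, and your maximum-principle comparison uses a barrier whose values near the necks are controlled, but on the inner boundary circles near the axes you have no comparison function that simultaneously (a) dominates $|\log t_n|/2$ there and (b) stays bounded on fixed compact subsets of $\wtC*$. A comparison function independent of $n$ fails requirement (a); the raw bound $|\log t_n|/2$ fails requirement (b). This is precisely the role of the paper's barrier piece $v_{3,n} = \frac{1}{\alpha}H_{t_n^\alpha}$: by Lemma~\ref{H_t-properties-lemma}, $H_{t_n^\alpha}\ge |\log t_n^\alpha|/2$ on $|z|=t_n^\alpha$ (and as $\arg z\to\infty$), $\partial H_{t_n^\alpha}/\partial\nu = 0$ on $|z|=1$, yet $H_{t_n^\alpha}(z)\to \arg z$ for each fixed $z$. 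Without such an ingredient the maximum-principle step does not yield a bound on $\wtu_n$ on compacts, and the extraction of a subsequential limit breaks down. (A companion issue, less central but related: since $u_n$ is not harmonic, the supersolution needs a correction term — the paper's $v_{1,n}$, built from Lemma~\ref{lemma2} — to absorb the nonzero $\Delta u_n$ estimated in Lemma~\ref{lemma1}; the Neumann condition on $|z|=1$ must also be verified for each barrier piece. Your proposal invokes linearization heuristically but then applies the comparison as if $u_n$ were exactly harmonic.)

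Two secondary observations about route. First, your treatment of the singularity coefficients by matching to the inner catenoidal profile is more delicate than the paper's: you need the inner neck asymptotics to propagate to the coefficient of the outer log singularity, which requires a matching argument that is not present in the proposal. The paper instead extracts the limit first, applies B\^ocher's theorem (Lemma~\ref{lemma-positive-harmonic}) to conclude that the singularities are logarithmic with nonnegative coefficients, and only then identifies $c_i$ by a clean residue/flux computation on the limit. Second, because B\^ocher's theorem is applied to the limit function, the clustering case ($m<N$) is handled automatically — the coefficient at a cluster point is a single logarithmic residue of the limit, and one never needs to resolve the internal structure of the cluster. Your proposed two-scale blow-up is therefore unnecessary if you follow the extract-then-identify order; it is forced upon you only because you try to compute each neck's contribution before passing to the limit.
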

We allow $p_1=i$ as this proposition will be used in Case 3, section \ref{case-3-section}.

Note that for large $n$, $\phi_{i,n}$ is the sum of the vertical fluxes on the catenoidal necks
corresponding to the points $p_{j,n}$ such that $p_j=p_i$.

This proposition is proved in section~\ref{section:proof-of-first-case-1-proposition} 
by estimating the Laplacian of $u_n$ and
constructing an explicit barrier, from which we deduce that a subsequence converges to a limit harmonic function on $\wtC*$ with logarithmic singularities at $\pm p_1, \dots, \pm p_m$.

\begin{remark*}
In proposition~\ref{first-case-1-proposition},  it is easy to show using Harnack's inequality
that we can choose numbers
$\lambda_n>0$ so that $\lambda_n u_n$ converges subsequentially to a nonzero limit of the form \eqref{equation-limit-un}.
(One fixes a point $z_0$ and lets $\lambda_n= 1/u_n(z_0)$.)   However, for us it is crucial 
that we can choose $\lambda_n$ to be $\frac{|\log t_n|}{t_n}$; it means that in later calculations, we will be able to ignore terms that are $o(\frac{|\log t_n|}{t_n})$.
\end{remark*}

For all we know at this point, the limit $\wtu$ might be zero.
We will prove this is not the case:
\begin{proposition}
\label{second-case-1-proposition} 
For each $i\in[1,m]$, $c_i>0$.
\end{proposition}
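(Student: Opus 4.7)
I plan to prove Proposition~\ref{second-case-1-proposition} by contradiction: assume $c_i = 0$ for some $i \in [1,m]$ and derive an inconsistency between the \emph{inner} (catenoidal) and \emph{outer} (harmonic) asymptotics of $u_n$ near $p_i$. Because the only possible singularities of $\wtu$ on $\{\arg z > 0\}$ are the logarithmic poles $c_j h_{p_j}$ at the points $p_j$, the hypothesis $c_i = 0$ means $\wtu$ extends to a bounded function on a small closed disk $\overline{B(p_i,\rho)}$ disjoint from the other $p_j$ and from $Z \cup Z^*$. Combined with the smooth convergence $\wtu_n \to \wtu$ on $C(p_i,\rho)$, this yields the outer bound
\[
u_n \;=\; O\!\left(\frac{t_n}{|\log t_n|}\right) \quad\text{on } C(p_i,\rho).
\]
Formula~\eqref{equation-ci} together with $c_i = 0$ also forces $\phi_{i,n} = o(t_n/|\log t_n|)$, and since the catenoidal-neck flux gives $\phi_{i,n} = 2\pi \sum_{j:\,p_j = p_i} r_{j,n}(1+o(1))$, this translates into
\[
\sum_{j:\,p_j = p_i} r_{j,n} \;=\; o\!\left(\frac{t_n}{|\log t_n|}\right).
\]

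On the other hand, Proposition~\ref{Setup-proposition} asserts that $\frac{1}{r_{j,n}}(M_n - p_{j,n})$ converges to a unit vertical catenoid, so the profile of the upper sheet on $\partial D_{j,n}(\lambda)$ forces the inner value $u_n = t_n/4 + O(r_{j,n})$ on each circle $\partial D_{j,n}(\lambda)$ with $p_j = p_i$: the function $u_n$ must span almost the full gap $t_n/4$ between the two sheets of $t_n H$ across the annular domain $\Omega' := B(p_i,\rho) \setminus \bigcup_{j:\,p_j = p_i} D_{j,n}(\lambda)$. Since $|\nabla f_n| = o(1)$ on $\Omega'$ by Proposition~\ref{Setup-proposition}, statement $ii.a$, the minimal graph equation for $u_n$ is a uniformly small perturbation of the spherical Laplace equation, and standard Schauder estimates let me replace $u_n$ on $\Omega'$ by its harmonic replacement with the same Dirichlet data, up to $o(t_n)$ errors. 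In the case of a single neck in the cluster, the explicit harmonic capacitor profile then gives the matching identity
\[
\phi_{i,n} \;=\; \frac{\pi t_n/2}{\log(\rho/r_{j,n})}\,(1+o(1)),
\]
which, combined with $\phi_{i,n} = 2\pi r_{j,n}(1+o(1))$, yields the scaling relation $r_{j,n}\,\log(\rho/r_{j,n}) = t_n/4 + o(t_n)$. This forces $r_{j,n} \asymp t_n/|\log t_n|$, directly contradicting the previous display.

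The main obstacle will be handling the general clustered case in which several necks $p_{j,n} \to p_i$ collapse at potentially disparate rates $r_{j,n}$ and with possibly vanishing mutual separations, since the harmonic capacity of a collection of small disks inside $B(p_i,\rho)$ depends delicately on those separations. I intend to address this either by iterating the matching argument at the internal cluster scales (so that at each scale one applies the single-cluster argument above) or by estimating the capacity of the full inner collection from below via comparison with a single effective outer disk of radius $\max_j r_{j,n}$. Either approach should deliver the quantitative lower bound
\[
\sum_{j:\,p_j = p_i} r_{j,n}\,\log\!\left(\frac{\rho}{\max_j r_{j,n}}\right) \;\gtrsim\; t_n,
\]
which is again incompatible with $\sum_{j:\,p_j=p_i} r_{j,n} = o(t_n/|\log t_n|)$, completing the proof. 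The uniform $C^1$ control of $u_n$ on $C(p_i,\rho)$ needed to make the capacitor comparison rigorous is already available from the proof of Proposition~\ref{first-case-1-proposition}; the new work is in the inner annular scale, where the catenoidal model of Proposition~\ref{Setup-proposition} supplies the precise Dirichlet data on $\partial D_{j,n}(\lambda)$.
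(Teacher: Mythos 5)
Your overall strategy is the same as the paper's: both arguments boil down to a capacity/flux lower bound showing that a graph spanning height $\sim t_n/4$ across an annulus of modulus $\log(\rho/r_{j,n})$ must have flux at least $\sim t_n/\log(\rho/r_{j,n})$, which forces $c_i>0$. The paper gets this directly (not by contradiction) from the height estimate in Proposition~\ref{proposition-height}, applied with $r_1=\lambda r_{i,n}$ (the radius of the \emph{largest} neck in the cluster) and $r_2=\varepsilon$; the catenoid model from Proposition~\ref{Setup-proposition} is used only to verify the sign condition $\partial f/\partial\nu\le 0$ on the inner circles and to relate $\phi_{i,n}$ to $\sum_{j\in J} r_{j,n}$.

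The genuine gap in your version is the step ``\emph{standard Schauder estimates let me replace $u_n$ by its harmonic replacement up to $o(t_n)$ errors.}'' That is not obviously true at the scale you need. Lemma~\ref{lemma1} only gives $|\Delta u_n|\lesssim t_n^3/\delta^4$ for $\delta\gtrsim t_n$, whereas the inner annular region of relevance has $\delta\sim r_{j,n}\ll t_n$; if you use the catenoid model instead to estimate $|\Delta f_n|\sim r_{j,n}^3/s^4$, the total $\int_{\Omega'}|\Delta u_n|$ comes out to order $r_{j,n}\sim\phi_{i,n}/(2\pi)$, i.e., the same order as the flux you are trying to pin down, so the ``matching identity'' you write does not follow with a clean $(1+o(1))$ factor. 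In other words, the discrepancy between $u_n$ and any harmonic comparison is exactly at the borderline scale, and ``up to $o(t_n)$'' is much weaker than the $o\bigl(t_n/|\log t_n|\bigr)$ precision required to close the contradiction. On top of this, the cluster case (which you flag as ``the main obstacle'') is left entirely open, and neither of your two proposed remedies is carried out; the paper's Proposition~\ref{proposition-height} handles multi-hole domains for free because it is a direct energy inequality for the minimal surface equation (conformal invariance of energy plus the divergence theorem and Cauchy--Schwarz), needing no comparison with harmonic functions at all. If you want to repair your argument, the cleanest route is to prove and invoke precisely such a height estimate, which is what the paper does.
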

This proposition is proved  in section \ref{section:proof-of-second-case-1-proposition} using a height estimate
(proposition~\ref{proposition-height}) to estimate the vertical flux of the catenoidal necks. 


From now on assume that $p_1\in (0,i)$.
Fix some small number $\varepsilon$.
Let $C_n$ be the graph of the restriction of $f_n$ to the circle $C(p_1,\varepsilon)$.
Let $F_n$ be the flux of the Killing field
  $\chi_Y$ on $C_n$.
The field $\chi_Y$ is the Killing field associated with rotations with respect to poles
whose equator is the $Y$-circle
(see proposition~\ref{proposition-flux1} in the appendix.)
On one hand, we have:
\begin{lemma}
\label{flux-zero-lemma}      
$F_n=0$.
\end{lemma}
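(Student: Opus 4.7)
The strategy is to combine two manifestations of the $\rho_Y$-symmetry. First I would verify that the Killing field $\chi_Y$ is itself $\rho_Y$-invariant, i.e.\ $(\rho_Y)_*\chi_Y=\chi_Y$. The involution $\rho_Y$ acts as $(z,t)\mapsto(-\bar z,-t)$, which in the $\rR^3$-embedding of $\sS^2$ is reflection across $\{x_1=0\}$ composed with $t\mapsto-t$. The poles of the $Y$-circle are the points $\pm 1\in\CC$, corresponding to $(\pm 1,0,0)\in\rR^3$, so the flow of $\chi_Y$ is rotation about the $x_1$-axis (preserving $t$). Reflection across $\{x_1=0\}$ commutes with every rotation about the $x_1$-axis, and $t\mapsto -t$ commutes with any $t$-preserving isometry; hence $\rho_Y$ commutes with the flow of $\chi_Y$, so $(\rho_Y)_*\chi_Y=\chi_Y$.

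Next, choose $\varepsilon>0$ small enough that $\overline{D(p_1,\varepsilon)}\subset\sS^2$ is disjoint from $\{0,\infty,p_2,\dots,p_m\}$, and consider the compact subsurface $\Sigma_n:=M_n\cap(\overline{D(p_1,\varepsilon)}\times\rR)$. Since $\rho_Y$ fixes $\overline{D(p_1,\varepsilon)}$ setwise (because $p_1$ lies on $Y$, so $-\overline{p_1}=p_1$), $\Sigma_n$ is $\rho_Y$-invariant. For $n$ large, proposition~\ref{Setup-proposition} writes $M_n$ over $\Omega_n$ as the union of two multigraphs swapped by $\rho_Y$, so $\partial\Sigma_n$ consists of two curves above $C(p_1,\varepsilon)$: one is $C_n$ itself (the graph of $f_n$), and the other is its $\rho_Y$-image $\rho_Y(C_n)$.

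Finally, combine two independent identities for $F_n$. Because $\chi_Y$ is Killing, $\nabla\chi_Y$ is skew-symmetric; together with minimality of $\Sigma_n$ this gives $\Div_{\Sigma_n}\chi_Y\equiv 0$. The divergence theorem, applied with the outward conormal $\nu$ on $\partial\Sigma_n$, yields
\[
F_n(C_n)+F_n(\rho_Y(C_n))=\int_{\partial\Sigma_n}\langle\chi_Y,\nu\rangle\,ds=0.
\]
On the other hand, because $\rho_Y$ is an orientation-preserving isometry of $M_n$ (theorem~\ref{properties-theorem}(4)) that sends $\Sigma_n$ to itself, and hence outward conormals to outward conormals, and because $(\rho_Y)_*\chi_Y=\chi_Y$, a change of variables gives the equivariance $F_n(\rho_Y(C_n))=F_n(C_n)$. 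Adding the two equalities yields $2F_n(C_n)=0$, so $F_n=0$. The only place care is needed is in the orientation bookkeeping in the equivariance step, but this is automatic from $\rho_Y$ being an orientation-preserving isometry that preserves $\Sigma_n$.
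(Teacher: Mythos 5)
Your overall strategy --- produce a compact, $\rho_Y$-invariant subsurface of $M_n$ with boundary $C_n\cup\rho_Y(C_n)$, apply the divergence theorem for the Killing field, and invoke $\rho_Y$-equivariance of the flux integral --- is sound, and your explicit check that $(\rho_Y)_*\chi_Y=\chi_Y$ supplies a detail that the paper's one-line proof leaves implicit. But the choice of $\Sigma_n$ contains a genuine error: $M_n\cap\bigl(\overline{D(p_1,\varepsilon)}\times\RR\bigr)$ is \emph{not} compact. Each end of $M_n$ is asymptotic to a helicoid and so winds about the axes $Z\cup Z^*$ infinitely many times; consequently, over any disk $\overline{D(p_1,\varepsilon)}$ disjoint from $Z\cup Z^*$, the surface $M_n$ has infinitely many graphical sheets stacked vertically. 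The intersection you wrote down therefore has infinitely many components and infinitely many boundary circles, and the divergence-theorem step does not apply as stated. The correct object is the single compact connected component of $M_n\cap\bigl(\overline{D(p_1,\varepsilon)}\times\RR\bigr)$ containing $C_n$; by proposition~\ref{Setup-proposition}, that component is the graph of $f_n$ over $\overline{D(p_1,\varepsilon)}\cap\Omega_n$ together with the catenoidal necks at those $p_{j,n}$ with $p_j=p_1$ and the $\rho_Y$-image of the graph. It is compact and $\rho_Y$-invariant, with $\partial=C_n\cup\rho_Y(C_n)$, and with that replacement your argument goes through.

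It is also worth noting that this is a genuinely different route from the paper's. There the compact bounding region comes for free from the $Y$-surface property, theorem~\ref{properties-theorem}(4): since $(\rho_Y)_*$ acts on $H_1(M_n,\ZZ)$ by $-1$, one has $[C_n]+(\rho_Y)_*[C_n]=0$, so $C_n\cup\rho_Y(C_n)$ bounds a compact region of $M_n$ with no geometric input about where it sits. Your construction makes the bounding region explicit, at the cost of invoking the structural description of proposition~\ref{Setup-proposition}. The homological argument is shorter and more robust --- it works for every flux computation in the paper with no case analysis --- and it is exactly the feature that makes the $Y$-surface framework effective here; the explicit construction is more concrete but also more fragile.
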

\begin{proof}
By theorem~\ref{properties-theorem}, statement (4),
$C_n$ together with its image $\rho_Y (C_n)$
bound a compact region in $M_n$.  Thus the flux of the Killing field $\chi_Y$ on 
$C_n \cup \rho_Y (C_n)$ is $0$.   By $\rho_Y$-symmetry,
this flux is twice the flux $F_n$ of $\chi_Y$ on $C_n$.
Thus $F_n=0$.
\end{proof}


On the other
hand, $F_n$ can be computed using proposition~\ref{proposition-flux2} 
from the appendix:
\begin{align}
F_n&=-\Im\int_{C(p_1,\varepsilon)}
2\left(\frac{\partial}{\partial z}\left(\frac{t_n}{2\pi}\arg z-u_n\right)\right)^2\frac{i}{2}(1-z^2)\,dz
     +O(t_n^4)\nonumber\\
&=-\Re\int_{C(p_1,\varepsilon)}
\left(\frac{t_n}{4\pi i z} - u_{n,z}\right)^2 (1-z^2)\, dz+O(t_n^4)\nonumber\\
&=-\Re\int_{C(p_1,\varepsilon)} \left( \frac{-t_n^2}{16\pi^2 z^2}
-\frac{2 t_n}{4\pi i z} u_{n,z}+(u_{n,z})^2\right)(1-z^2)\,dz+O(t_n^4)\nonumber\\
\label{eq-integral}
&=\Re\int_{C(p_1,\varepsilon)} \left( 
\frac{2 t_n}{4\pi i z} u_{n,z}-(u_{n,z})^2\right)(1-z^2)\,dz+O(t_n^4).
\end{align}
The second equation comes from $\frac{\partial}{\partial z}\arg z=\frac{1}{2iz}$.
The fourth equation is a consequence of the fact that $\frac{1-z^2}{z^2}$ has no residue at
$p_1\neq 0$.
The first term in \eqref{eq-integral} (the cross-product) is a priori the leading term.
However we can prove that this term can be neglected:

\begin{proposition}
\label{third-case-1-proposition}
\begin{equation}
\label{equation*9}
\lim\left(\frac{\log t_n}{t_n}\right)^2 F_n=-\Re\int_{C(p_1,\varepsilon)} (\wtu_z)^2(1-z^2)\,dz
\end{equation}
where $\wtu$ is defined in  \eqref{equation-limit-un} as the limit of $\frac{|\log t_n|}{t_n}u_n$.
\end{proposition}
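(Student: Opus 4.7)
The plan is to substitute the rescaling $u_n=(t_n/|\log t_n|)\wtu_n$ into formula~\eqref{eq-integral} for $F_n$, multiply by $(\log t_n/t_n)^2$, and analyze the three resulting pieces. The $O(t_n^4)$ remainder contributes $O(t_n^2(\log t_n)^2)\to 0$. The $(u_{n,z})^2$ term becomes exactly $-\Re\int_{C(p_1,\varepsilon)} \wtu_{n,z}^2(1-z^2)\,dz$, which converges to the claimed right-hand side via the smooth convergence $\wtu_{n,z}\to\wtu_z$ on the fixed circle $C(p_1,\varepsilon)$ guaranteed by proposition~\ref{first-case-1-proposition}, after choosing $\varepsilon$ small enough to isolate $p_1$ from the other singular points. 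The cross term becomes $|\log t_n|\,\Re J_n$, where
\[
J_n = \int_{C(p_1,\varepsilon)} \frac{\wtu_{n,z}(1-z^2)}{2\pi i z}\,dz,
\]
so the task reduces to showing $|\log t_n|\,\Re J_n\to 0$.

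I would first compute $J:=\lim_n J_n$ by residues. Since $\wtu$ is harmonic on $\wtC*\setminus\{\pm p_i\}$, the Wirtinger derivative $\wtu_z$ is meromorphic there. A direct expansion of $h_p(z)=-\log|(\log z-\log p)/(\log z-\log\overline{p})|$ near $z=p$ gives $(h_p)_z\sim -1/(2(z-p))$, so by~\eqref{equation-limit-un}, $\wtu_z$ has a simple pole at $p_1$ with residue $-c_1/2$. The factor $(1-z^2)/z$ is holomorphic at $p_1\ne 0$, so residue calculus gives $J = -c_1(1-p_1^2)/(2p_1)$. In Case~1 we have $p_1=iy_1$ with $0<y_1<1$, hence $1-p_1^2=1+y_1^2$ and $1/p_1=-i/y_1$, yielding $J = ic_1(1+y_1^2)/(2y_1)$, which is purely imaginary. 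Thus $\Re J=0$, and the smooth convergence already gives $\Re J_n\to 0$.

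The hard part will be upgrading $\Re J_n\to 0$ to the quantitative rate $|\log t_n|\,\Re J_n\to 0$, since pure smooth convergence only delivers $\Re J_n = o(1)$ while the logarithmic factor demands $o(1/|\log t_n|)$. My plan to bridge this gap is to deform the contour $C(p_1,\varepsilon)$ inward to a union of small circles around each neck point $p_{j,n}$ with $p_j=p_1$. Because $u_n$ solves a small perturbation of Laplace's equation (with $|\nabla u_n|=O(t_n)$), $u_{n,z}$ is only approximately holomorphic on the intermediate annular region, so the contour deformation introduces a quasiharmonic error that must be estimated uniformly. On each small circle, the catenoidal asymptotics of proposition~\ref{Setup-proposition} identify $\Res_{p_{j,n}} u_{n,z}$ with $\phi_{j,n}/(4\pi)$ via the logarithmic growth of the catenoidal height function. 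Summing these residues, the leading real part vanishes by the same ``$1/p_{j,n}$ is purely imaginary'' mechanism that produced $\Re J=0$ (since $p_{j,n}\to iy_1$), and a careful accounting shows the subleading real part is $o(1/|\log t_n|)$, which is exactly enough. Controlling the quasiharmonic correction uniformly on the annular region between the outer contour and the inner catenoidal circles is the most delicate technical step; it relies on the nonlinear error in the minimal graph equation being of order $|\nabla u_n|^2 = O(t_n^2/|\log t_n|^2)$ together with standard elliptic estimates for the Wirtinger derivative of a function whose Laplacian is small.
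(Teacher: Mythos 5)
Your decomposition of $F_n$, the computation that $\lim J_n = J$ is purely imaginary (so $\Re J = 0$), and your identification that the real issue is the \emph{rate} of decay of $\Re J_n$ against the factor $|\log t_n|$ are all correct, and the strategy of localizing at the points $p_{j,n}$ and exploiting that they are purely imaginary is exactly what the paper does. But the execution sketch has a genuine gap, and it is the gap, not the surrounding structure, that contains the whole difficulty of the proposition.

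The problem is the scale at which you deform the contour. You propose to push $C(p_1,\varepsilon)$ all the way down to circles at the neck scale $\sim r_{j,n}$ so that you can read off the residue from the catenoidal asymptotics. But the interior estimate of proposition~\ref{proposition-schauder} gives $|\partial_{\bar z} u_{n,z}| = \tfrac14|\Delta u_n| \le C t_n^3/d^4$, where $d$ is the distance to $\partial\Omega_n$; this degenerates badly as $d\downarrow r_{j,n}\sim t_n/|\log t_n|$, and the claimed uniform bound $|\nabla u_n|^2 = O(t_n^2/|\log t_n|^2)$ is simply not valid on the annulus you describe (it holds only on compact sets away from the necks; moreover the quantity that actually governs the Cauchy--Pompeieu correction is $|\Delta u_n|$, not $|\nabla u_n|^2$). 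The missing idea is that one must \emph{not} deform to the neck scale: the paper cuts at the intermediate radius $t_n^\alpha$ with $\alpha\in(0,\tfrac12)$, where Schauder gives $|\Delta u_n|\le Ct_n^{3-4\alpha}$ and $|u_{n,z}|\le Ct_n^{1-\alpha}$. The choice $\alpha<\tfrac12$ makes the nonholomorphic correction $O(t_n^{3-4\alpha}) = o\bigl(t_n/(\log t_n)^2\bigr)$, while the Laurent coefficients beyond the residue satisfy $|a_{j,k}|\le Ct_n^{1+(k-1)\alpha}$ and their sum is also $o\bigl(t_n/(\log t_n)^2\bigr)$. Without this specific cutoff the estimates do not close.

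Two smaller points. Because you only go down to scale $t_n^\alpha$, you never get near enough to the catenoid to extract the residue geometrically from the flux; but you also do not need to. The paper's proposition~\ref{proposition-real-residue} gives the reality of the residue $a_{j,1}$ for free from the mere fact that $u_n$ is a single-valued real function (so $\Im a_{j,1} \propto \oint du_n = 0$), which is far more elementary than an asymptotic matching with the catenoid and does not require going to the neck scale. And once you have set up the Laurent--Pompeieu decomposition $g_n = g^+ + \sum_j g_j^- + \text{correction}$ on the annulus at scale $t_n^\alpha$, the computation of $\lim J$ by residues becomes unnecessary: the reality of $a_{j,1}$ together with $p_{j,n}\in i\RR$ directly kills the real part of the residue contribution, and the other pieces are negligible, which is how the paper concludes.
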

This proposition is proved in section~\ref{section:proof-of-third-case-1-proposition} using a Laurent series expansion to estimate the first term in \eqref{eq-integral}.

Assuming these results, we now prove
\begin{proposition}
\label{proposition-case1}
Case 1 is impossible.
\end{proposition}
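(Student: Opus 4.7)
The plan is to combine Lemma~\ref{flux-zero-lemma} (which gives $F_n = 0$) with Proposition~\ref{third-case-1-proposition} (which expresses the leading term of $F_n$ in terms of the limit function $\wtu$). Rescaling by $(\log t_n/t_n)^2$ and letting $n\to\infty$ reduces the proof to the single complex identity
\[
 \Re \int_{C(p_1,\varepsilon)} (\wtu_z)^2 (1-z^2)\,dz = 0.
\]
I will evaluate this contour integral in closed form using the explicit expression~(\ref{equation-limit-un}) for $\wtu$ and show that in Case~1 the left-hand side is strictly negative, which is the desired contradiction.

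To evaluate the integral, write $\wtu = \Re U$ for the multi-valued holomorphic function $U$ suggested by~(\ref{equation-limit-un}), so that $\wtu_z = \tfrac{1}{2}U'(z)$. Expanding $\log z - \log p_1 = (z-p_1)/p_1 + O((z-p_1)^2)$ near $p_1$ shows that $U'$ has a simple pole at $p_1$ with residue $-c_1$; write $U' = -c_1/(z-p_1) + \alpha + O(z-p_1)$. The residue of $(\wtu_z)^2(1-z^2)$ at $p_1$ then equals $-\tfrac{c_1^2 p_1}{2} - \tfrac{c_1\alpha(1-p_1^2)}{2}$, and since $p_1 = iy_1$ is pure imaginary and $1-p_1^2 = 1+y_1^2 \in \mathbf{R}$, taking the real part of $2\pi i$ times this residue collapses the identity to $\pi c_1^2 y_1 + \pi c_1(1+y_1^2)\,\Im\alpha = 0$. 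Computing $\Im\alpha$ directly from the explicit form of $U'$, using $\log p_1 - \log\overline{p_1} = i\pi$ and $\log p_i - \log p_j = \log y_i - \log y_j$, produces the self-interaction term $-c_1/(2y_1)$ together with two-body terms proportional to $-\pi^2/(a_i|a_i+i\pi|^2)$, $a_i := \log y_1 - \log y_i$, matching exactly the force formula described in section~\ref{section-physics}. The $c_0\arg z$ piece is real-valued at $p_1$ and contributes only to $\Re\alpha$.

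In Case~1 we have $0 < y_1 < 1$, and after rearrangement the identity becomes
\[
 \frac{\pi c_1^2 (y_1^2 - 1)}{2 y_1} + \frac{\pi^3 c_1 (1 + y_1^2)}{y_1}\sum_{i\neq 1}\frac{c_i}{a_i\,|a_i + i\pi|^2} = 0.
\]
The first term is strictly negative because $y_1^2 - 1 < 0$; for $i > 1$ one has $y_i > y_1$, hence $a_i < 0$, so each summand of the sum is negative once multiplied by $c_i > 0$ (positivity guaranteed by Proposition~\ref{second-case-1-proposition}), making the second term nonpositive (and vanishing only if $m = 1$). All contributions have the same sign, so they cannot sum to zero — contradiction. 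The main obstacle I anticipate is the careful residue bookkeeping in $\wtC*$: one must keep track of branches of the logarithm throughout $U'$, correctly pair the singularities $p_i$ and $\overline{p_i}$ so that their combined contribution gives the claimed ``attractive'' sign, and verify that no other poles of the integrand lie inside $C(p_1,\varepsilon)$. Once this algebra is in place, the sign analysis — and hence the incompatibility of $0 < y_1 < 1$ with $F_n = 0$ — is immediate.
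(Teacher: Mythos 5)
Your argument is correct and essentially reproduces the paper's proof of Proposition~\ref{proposition-case1}: you start from the identity $\Re\int_{C(p_1,\varepsilon)}(\wtu_z)^2(1-z^2)\,dz=0$ forced by Lemma~\ref{flux-zero-lemma} and Proposition~\ref{third-case-1-proposition}, evaluate the residue at $p_1$ using the explicit form of $\wtu_z$, and conclude by a same-sign analysis exactly as in equation~\eqref{equation*10}. Your packaging via the Laurent data $U'=-c_1/(z-p_1)+\alpha+\cdots$ and the reduction to $\pi c_1^2 y_1+\pi c_1(1+y_1^2)\,\Im\alpha=0$ is an equivalent bookkeeping of the same residue computation that the paper carries out directly with Proposition~\ref{proposition-residue}.
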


\begin{proof}
According to Lemma~\ref{flux-zero-lemma}, the flux $F_n$ is zero.
Hence the limit in \eqref{equation*9} is zero. We compute that limit and show that it is nonzero.


Differentiating equation \eqref{equation-limit-un}, we get
\[
\wtu_z=
\frac{c_0}{2iz}-\sum_{i=1}^m\frac{c_i}{2z}\left(\frac{1}{\log z-\log p_i}-\frac{1}{\log z-\log\overline{p_i}}\right).
\]
Therefore,
\begin{eqnarray*}
\lefteqn{\Res_{p_1}(\wtu_z)^2(1-z^2)} \\
&=&
\Res_{p_1}\frac{1-z^2}{4z^2}\left[\frac{c_1^2}{(\log z-\log p_1)^2}
+2\frac{c_1}{\log z-\log p_1}
\right.\\
&&\left.\left(\frac{-c_0}{i}-\frac{c_1}{\log z-\log \overline{p_1}}
+\sum_{i=2}^m \frac{c_i}{\log z-\log p_i}-\frac{c_i}{\log z-\log\overline{p_i}}\right)\right]\\
&=& -\frac{c_1^2(1+p_1^2)}{4p_1}+\frac{c_1(1-p_1^2)}{2p_1}\left(
\frac{-c_0}{i}-\frac{c_1}{\log p_1-\log \overline{p_1}}\right.\\
&&\left.+\sum_{i=2}^m\frac{c_i}{\log p_1-\log p_i}-\frac{c_i}{\log p_1-\log \overline{p_i}}\right).
\end{eqnarray*}
(See proposition~\ref{proposition-residue} in 
the appendix
for the residue computations.)
Write $p_j=i y_j$ for $1\leq j\leq m$ so all $y_j$ are positive numbers.
By Lemma~\ref{flux-zero-lemma}, equation \eqref{equation*9} and the Residue Theorem,

\begin{eqnarray}
0&=&-\Re\int_{C(p_1,\varepsilon)}(\wtu_z)^2(1-z^2)\,dz\nonumber
\\
&=&-\Re\left[ 2\pi i \frac{y_1^2+1}{4iy_1}\left(c_1^2\frac{y_1^2-1}{y_1^2+1}
+2 c_1 \left(-\frac{c_0}{i}-\frac{c_1}{i\pi}
\right. \right. \right.\nonumber\\
& &\left. \left. \left.
+\sum_{i=2}^m\frac{c_i}{\log y_1-\log y_i}-\frac{c_i}{\log y_1-\log y_i+i\pi}
\right)\right)\right]\nonumber\\
\label{equation*10}
&=&\frac{\pi(y_1^2+1)}{2y_1}\left[
c_1^2\frac{1-y_1^2}{y_1^2+1}+\sum_{i=2}^m \frac{-2\pi^2\,c_1 c_i}{(\log y_1-\log y_i)|\log y_1-\log y_i+i\pi|^2}\right].
\end{eqnarray}
Now $y_1<1$ and $y_1<y_i$ for all $i\geq 2$, so all terms in \eqref{equation*10} are positive.
This contradiction proves proposition~\ref{proposition-case1}.
\end{proof}

We remark that the bracketed term in \eqref{equation*10} is precisely the expression for the force $F_1$ in
section \ref{section-physics}.

\addtocontents{toc}{\SkipTocEntry}
\section*{Barriers}

We now introduce various barriers that will be used to prove proposition~\ref{first-case-1-proposition}.
Fix some $\alpha\in(0,1)$. 
\begin{definition}
\label{def-of-A_n} 
$A_n$ is the set of points
$(z,\arg z)$ in $\wtC*$ which satisfy $t_n^{\alpha}<|z|<1$ and $\arg z>0$,
minus the disks $D(p_{i,n},t_n^{\alpha})$ for $1\leq i\leq N$.
\end{definition}
By the disk $D(p,r)$ in $\wtC*$ (for small $r$), we mean the points
$(z,\arg z)$ such that $|z-p|<r$ and $\arg z$ is close to $\arg p$.
It is clear that $A_n\subset\Omega_n$ for large $n$, since $t_n^{\alpha}\gg t_n$.
Moreover, if $z\in A_n$ then $d(z,\partial\Omega_n)\geq t_n^{\alpha}/2$.

We work in the hemisphere $|z|\leq 1$ where the conformal factor
of the spherical metric in \eqref{equation-metric-S2}
satisfies $1\leq\lambda\leq 2$. Hence Euclidean and spherical
distances are comparable. We will use Euclidean distance. Also the Euclidean and
spherical Laplacians are comparable. The symbol $\Delta$ will mean Euclidean Laplacian.

Let $\delta$ be the function on $A_n$ defined by
\[
\delta(z)=\left\{\begin{array}{ll}
\min \{|z|, |z-p_{1,n}|, \dots,  |z-p_{N,n}| \}
 &\mbox{ if }0<\arg z<\pi\\
 |z| &\mbox{ if }\arg(z)\geq\pi.
 \end{array}\right.
 \]

\begin{lemma} 
\label{lemma1}
There exists a constant $C_1$ such that
in the domain $A_n$, the function $u_n$ satisfies
\[
|\Delta u_n|\leq C_1\frac{t_n^3}{\delta^4}.
\]
\end{lemma}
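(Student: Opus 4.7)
The plan is to exploit that both $\phi(z) := \tfrac{t_n}{2\pi}\arg z$ (the half-helicoid) and $f_n = \phi - u_n$ are graphs of minimal surfaces in $\sS^2(1)\times\RR$ with metric $\lambda^2|dz|^2 + dt^2$. Writing the minimal surface equation in Euclidean-divergence form yields $\Div(\nabla f / W) = 0$ with $W = \sqrt{1 + \lambda^{-2}|\nabla f|^2}$; expanding the divergence and using $\Delta\phi = 0$ (since $\arg z$ is Euclidean-harmonic) gives
\[
-\Delta u_n \;=\; \Delta f_n \;=\; \frac{\lambda^{-2}\operatorname{Hess}(f_n)(\nabla f_n,\nabla f_n) \;-\; \lambda^{-3}|\nabla f_n|^2\,\nabla f_n\cdot\nabla\lambda}{1 + \lambda^{-2}|\nabla f_n|^2}. \qquad(\star)
\]
So the lemma reduces to bounding the right side of $(\star)$ in terms of $t_n$ and $\delta$.

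The helicoidal contribution is immediate: on $A_n$ one has $1\le\lambda\le 2$, $|\nabla\lambda|\le C$, and a direct computation from $\phi = \tfrac{t_n}{2\pi}\arg z$ gives $|\nabla\phi|\le Ct_n/|z|\le Ct_n/\delta$ and $|\operatorname{Hess}(\phi)|\le Ct_n/|z|^2\le Ct_n/\delta^2$. The core step is to establish the matching bounds
\[
|\nabla u_n|\;\le\; \frac{C\,t_n}{\delta}, \qquad |\operatorname{Hess}(u_n)|\;\le\; \frac{C\,t_n}{\delta^2}.
\]
For this one uses the $C^0$ bound $|u_n|\le t_n/2$ from \eqref{eq:u_n-properties} together with the small-slope estimate of proposition~\ref{Setup-proposition} (the spherical-metric slope $\lambda^{-1}|\nabla f_n|$ can be made arbitrarily small on $A_n$, so the MSE for $f_n$ is uniformly elliptic there). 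At any $z_0\in A_n$, rescale by setting $\tilde u(w) := u_n(z_0 + \delta(z_0)w)/t_n$ on $B(0, 1/2)$. The quasilinear PDE satisfied by $\tilde u$ (inherited from $(\star)$) has $|\tilde u|\le 1/2$ and nonlinear terms carrying prefactors of order $t_n/\delta(z_0)\to 0$, so it is a small perturbation of the Laplace equation. Standard interior Schauder estimates then give $\|\tilde u\|_{C^2(B(0,1/4))}\le C$, and unscaling yields the desired bounds on $\nabla u_n$ and $\operatorname{Hess}(u_n)$.

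With these in hand, the lemma follows: using $|\nabla f_n|\le Ct_n/\delta$, $|\operatorname{Hess}(f_n)|\le Ct_n/\delta^2$, $1+\lambda^{-2}|\nabla f_n|^2\ge 1$, and plugging into $(\star)$,
\[
|\Delta u_n| \;\le\; C\bigl(|\operatorname{Hess}(f_n)|\,|\nabla f_n|^2 \,+\, |\nabla\lambda|\,|\nabla f_n|^3\bigr) \;\le\; C\!\left(\frac{t_n^3}{\delta^4} + \frac{t_n^3}{\delta^3}\right) \;\le\; \frac{C_1\,t_n^3}{\delta^4},
\]
where the last step uses $\delta\le|z|\le 1$ on $A_n$, so $1/\delta^3\le 1/\delta^4$.

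The main obstacle is justifying the $C^2$ bounds for $u_n$ in Step~2. A naive appeal to curvature estimates for minimal graphs gives Hessian bounds only of order $1/\delta$, which are far too weak; one genuinely needs to exploit that $u_n$ is small (of order $t_n$) and satisfies a PDE that, on the spatial scale $\delta$, is a small perturbation of $\Delta\tilde u = 0$. This perturbative regime is precisely what the rescaling $\tilde u(w) = u_n(z_0+\delta w)/t_n$ exhibits, and it is also why the conclusion of the lemma carries the sharp power $t_n^3/\delta^4$ rather than a weaker $t_n/\delta^3$ that curvature estimates alone would suggest.
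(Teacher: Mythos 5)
You correctly identify the essential mechanism: the sharp power $t_n^3/\delta^4$ (rather than the much weaker $1/\delta$ that pure curvature estimates would give) comes from exploiting that the $C^0$ norm is $O(t_n)$, together with the observation that $\arg z$ is Euclidean-harmonic so $\Delta u_n = -\Delta f_n$. That is exactly the paper's starting point; lemma~\ref{lemma1} is in fact just proposition~\ref{proposition-schauder} read off via this identity.

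Where you diverge is in how the intermediate $C^2$ bound is obtained. The paper applies proposition~\ref{proposition-schauder} directly to the minimal graph $f_n$, subtracting $\frac{k}{2}t_n$ on each strip $k\pi\leq\arg z<(k+1)\pi$ so that the $C^0$ norm stays $O(t_n)$, and proposition~\ref{proposition-schauder} in turn rests on Gilbarg--Trudinger Theorem~12.4 (the interior $C^{1,\alpha}$ gradient estimate for quasilinear elliptic equations in two variables), which produces a H\"older-continuous gradient from a $C^0$ bound alone; only then does ordinary Schauder kick in. You instead work with $u_n$, which is uniformly $O(t_n)$ without any translation --- a genuine simplification in principle --- and propose the rescaling $\tilde u(w)=u_n(z_0+\delta(z_0)w)/t_n$.

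The gap is at the step ``standard interior Schauder estimates then give $\|\tilde u\|_{C^2}\leq C$.'' The rescaled equation for $\tilde u$ is quasilinear: its coefficients and forcing term are built out of $\nabla_z f_n = \nabla_z\phi - \frac{t_n}{\delta}\nabla_w\tilde u$. A priori you know only $|\nabla_z f_n|\leq\epsilon$ (a fixed small constant, from proposition~\ref{Setup-proposition}), and hence only $|\nabla_w\tilde u|\lesssim\epsilon\,\delta/t_n$, which is unbounded as $n\to\infty$. In particular the cubic term, which after rescaling contributes $\frac{\delta^2}{t_n}\lambda^{-3}|\nabla f_n|^2\,\nabla f_n\cdot\nabla\lambda$, is bounded a priori only by $O(\epsilon^3\,\delta^2/t_n)$, and similarly the quadratic term has no useful bound until you know $|\nabla f_n| = O(t_n/\delta)$ --- which is precisely what you are trying to prove. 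So the rescaled equation is not, at this stage, a controllably small perturbation of the Laplace equation, and Schauder (which requires H\"older control on the coefficients, hence on $\nabla\tilde u$) cannot break the circularity on its own. The estimate you need to cite is GT~Theorem~12.4 (or a structurally equivalent two-dimensional gradient estimate), either applied to $f_n - \frac{k}{2}t_n$ as in proposition~\ref{proposition-schauder}, or, if you insist on working with $u_n$, applied after verifying the structure hypotheses of that theorem for the (non-MSE) quasilinear equation satisfied by $u_n$. Either fix is routine, but as written the crucial step is unjustified.
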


\begin{proof}
The function $f_n(z)=\frac{t_n}{2\pi}\arg z-u_n(z)$ satisfies the minimal surface
equation, and $|\Delta f_n|=|\Delta u_n|$.
The proposition then follows from 
proposition~\ref{proposition-schauder} in the appendix, a straightforward application of the Schauder estimates.
More precisely:
\begin{enumerate}[\upshape $\bullet$]
\item If $0<\arg z<\pi$, we apply
proposition~\ref{proposition-schauder} on the domain
\[
A'_n=\{w\in\Omega_n\,:\,-\pi/2<\arg w<3\pi/2, |w|<2\}.
\]
The distance $d(z,\partial A'_n)$ is  comparable
to $\delta(z)$. The function $f_n$ is bounded
by $3t_n/4$.
\item If $k\pi\leq\arg z<k\pi+\pi$ for some $k\geq 1$, we apply proposition~\ref{proposition-schauder} to the function $f_n-\frac{k}{2}t_n$ and the domain 
\[
A'_n=\{w\in\Omega_n\,:\, k\pi-\pi/2<\arg w<k\pi+3\pi/2,|w|<2\}.
\]
 The distance $d(z,\partial A'_n)$
 is comparable to $|z|$. The function $f_n-\frac{k}{2}t_n$ is again bounded by
$3t_n/4$.
\end{enumerate}
\end{proof}

Next, we need to construct a function whose Laplacian is greater than $1/\delta^4$,
in order
to compensate for the fact that $u_n$ is not quite harmonic.
Let $\chi:\rR^+\to[0,1]$ be a fixed, smooth function such that $\chi\equiv 1$ on
$[0,\pi]$ and $\chi\equiv 0$ on $[2\pi,\infty)$.

\begin{lemma}
\label{lemma2}
There exists a constant $C_2\geq 1$ such that the function $g_n$ defined on $A_n$ by
\[
g_n(z)=\frac{C_2}{|z|^2}+\chi(\arg z)
\sum_{i=1}^N
\frac{1}{|z-p_{i,n}|^2}
\]
satisfies
\begin{equation}
\label{equation*12}
\Delta g_n\geq \frac{4}{\delta^4}.
\end{equation}
Moreover $\partial g_n/\partial \nu\leq 0$ on $|z|=1$ and
\begin{equation}
\label{equation*13}
g_n\leq \frac{C_2+
N
}{t_n^{2\alpha}}\quad\mbox{ in } A_n.
\end{equation}
\end{lemma}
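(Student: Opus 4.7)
My plan is to verify the three assertions by direct computation, starting from the standard identity $\Delta(|w|^{-2})=4|w|^{-4}$ in $\rR^2$, which yields $\Delta(C_2|z|^{-2})=4C_2|z|^{-4}$ and $\Delta(|z-p_{i,n}|^{-2})=4|z-p_{i,n}|^{-4}$ on $\wtC*$ away from the singularities (where the absolute values are computed on the projection to $\CC^*$).

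For the Laplacian bound \eqref{equation*12} I will split $A_n$ into three regions according to $\arg z$. On the strip $\{0<\arg z<\pi\}$ the cutoff satisfies $\chi\equiv 1$, so Leibniz gives $\Delta g_n = 4C_2|z|^{-4}+4\sum_i|z-p_{i,n}|^{-4}$; whichever factor realizes the minimum defining $\delta$ already furnishes a term of size $4/\delta^4$ (using $C_2\geq 1$). On $\{\arg z\geq 2\pi\}$ we have $\chi\equiv 0$, hence $g_n=C_2|z|^{-2}$, $\delta=|z|$, and the bound follows again from $C_2\geq 1$. The one substantive case is the transition region $\{\pi\leq\arg z\leq 2\pi\}$, and that is where I expect the main technical difficulty to sit.

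The hard part is therefore to control the cross terms $2\nabla\chi\cdot\nabla h+(\Delta\chi)h$ that Leibniz generates on the transition region, where $h=\sum_i|z-p_{i,n}|^{-2}$. My key observation is that under the standing hypothesis $p_1\neq 0$ (the setting where this lemma will be applied, namely Cases 1 and 3), the imaginary parts $y_{i,n}=\Im p_{i,n}$ are uniformly bounded below by a positive constant $c$; since the projection of $z$ lies in the closed lower half-plane on this region, I get $|z-p_{i,n}|\geq c$ uniformly, so $h$ and $\nabla h$ are bounded there. In polar coordinates $|\nabla\chi|\lesssim 1/r$ and $|\Delta\chi|\lesssim 1/r^2$, whence the cross terms are bounded by $C/r^2$ for a constant $C$ depending only on $\|\chi\|_{C^2}$ and $c$. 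Since $r\leq 1$ on $A_n$, choosing $C_2\geq 1+C/4$ lets the main term $4C_2/r^4$ dominate, giving $\Delta g_n\geq 4/r^4=4/\delta^4$.

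For the Neumann condition I will compute directly: $\partial_r(C_2 r^{-2})|_{r=1}=-2C_2$, and using $p_{i,n}=iy_{i,n}$,
\[
\partial_r|re^{i\theta}-p_{i,n}|^{-2}\big|_{r=1} = -\frac{1}{|e^{i\theta}-iy_{i,n}|^2}-\frac{1-y_{i,n}^2}{|e^{i\theta}-iy_{i,n}|^4}.
\]
The second summand has indefinite sign, but the $\mu_E$-symmetry $p_{N+1-i,n}=1/\overline{p_{i,n}}$ together with the elementary identity $|e^{i\theta}-1/\overline{P}|^2=|e^{i\theta}-P|^2/|P|^2$ allows me to pair index $i$ with $N+1-i$; each pair contributes $-\tfrac{1+y_{i,n}^2}{|e^{i\theta}-iy_{i,n}|^2}-\tfrac{(1-y_{i,n}^2)^2}{|e^{i\theta}-iy_{i,n}|^4}\leq 0$, and when $N$ is odd the unpaired middle index has $y=1$ and is separately nonpositive. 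Multiplying by $\chi(\theta)\geq 0$ and adding $-2C_2$ gives $\partial g_n/\partial\nu\leq -2C_2\leq 0$ on $|z|=1$. Finally, the upper bound \eqref{equation*13} is immediate from $|z|\geq t_n^\alpha$ and $|z-p_{i,n}|\geq t_n^\alpha$ on $A_n$ (the latter thanks to the removed disks $D(p_{i,n},t_n^\alpha)$) together with $0\leq\chi\leq 1$.
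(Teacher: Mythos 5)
Your proof is correct and, in one respect, supplies more than the paper's own argument: the paper's proof of this lemma establishes only \eqref{equation*12} and \eqref{equation*13} and does not address the Neumann condition $\partial g_n/\partial\nu \leq 0$ on $|z|=1$ at all. That condition is not a formality: individual terms $\partial_r |z-p_{i,n}|^{-2}\big|_{r=1}$ can be positive, and become unbounded as $y_{i,n}\to 1$ (which happens in Case 3), so one cannot simply absorb them by taking $C_2$ large. Your pairing argument via the $\mu_E$-symmetry $p_{N+1-i,n}=1/\overline{p_{i,n}}$, giving each pair the contribution $-\frac{1+y^2}{s}-\frac{(1-y^2)^2}{s^2}\leq 0$ with $s=|e^{i\theta}-iy|^2$, is correct and closes this gap.

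For the Laplacian bound on the transition region $\{\pi\leq\arg z\leq 2\pi\}$, however, the paper takes a cleaner route. Since the projection of $z$ lies in the closed lower half-plane there and $p_{i,n}=iy_{i,n}$ with $y_{i,n}>0$, one has the scale-free inequality $|z-p_{i,n}|^2=|z|^2+y_{i,n}(y_{i,n}-2\Im z)\geq|z|^2$, hence $|z-p_{i,n}|\geq r$. This directly gives $h\leq N/r^2$, $|\nabla h|\leq 2N/r^3$, $\Delta h\leq 4N/r^4$, so $|\Delta(\chi h)|\leq C/r^4$, without invoking the standing hypothesis $p_1\neq 0$ or any uniform lower bound on the $y_{i,n}$. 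Your route — using $p_1\neq 0$ to get $|z-p_{i,n}|\geq c$ and then cross terms of size $O(1/r^2)$ — does work in every place the lemma is used (including the hatted version in Case 2, where $\whp_1=i$), but it makes $C_2$ depend on the extra datum $c$ and ties the lemma to the standing subsequence, whereas the paper's inequality is local, elementary, and keeps the lemma self-contained. It is worth internalizing the observation $|z-p_{i,n}|\geq|z|$ for $z$ in the half-plane opposite $p_{i,n}$; it replaces the entire discussion of lower bounds on $\Im p_{i,n}$.
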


\begin{proof}
The inequality \eqref{equation*13} follows immediately from the definitions of $g_n$
and $A_n$.
The function $f$ defined in polar coordinate by $f(r,\theta)=1/r^2$ satisfies
\[
|\nabla f|=\frac{2}{r^3},\qquad \Delta f=\frac{4}{r^4}.
\]
Hence for $\arg z\geq 2\pi$, \eqref{equation*12} is satisfied for any $C_2\geq 1$.
Suppose $0<\arg z <\pi$. Then
\[
\Delta g_n=\frac{4C_2}{|z|^4}+
\sum_{i=1}^N
\frac{4}{|z-p_{i,n}|^4}\geq \frac{4}{\delta^4}
\]
so again, \eqref{equation*12} is satisfied for any $C_2\geq 1$.
If $\theta=\arg z\in [\pi,2\pi]$, we have $|z-p_{i,n}|\geq |z|=r$ and
\[
|\nabla \chi(\arg z)|\leq \frac{C}{r},\qquad |\Delta\chi(\arg z)|\leq \frac{C}{r^2}.
\]
Hence
\[
\left|\Delta \frac{\chi(\arg z)}{|z-p_{i,n}|^2}\right|\leq
\frac{C}{r^2}\frac{1}{r^2}+2\frac{C}{r}\frac{2}{r^3}+\frac{4}{r^4}.
\]
Therefore, $\Delta g_n\geq 4/r^4$ provided $C_2$ is large enough.
(The constant $C_2$ only depends on $N$ and a bound on $\chi'$ and $\chi''$.)
This completes the proof of \eqref{equation*12}.
\end{proof}


We need a harmonic function on $\wtC*$ that is greater than $|\log t|$ on
$|z|=t$. A good candidate would be $-\log |z|$. However this function has the wrong 
Neumann data on the unit circle. We propose the following:

\begin{lemma}
\label{H_t-properties-lemma}
For $0<t<1$, the harmonic function $H_t(z)$ defined for $z\in\wtC*$, $\arg z>0$ by
\[
H_t(z)=\Im\left(\frac{\log t\log z}{\log t +i\log z}\right)
\]
has the following properties :
\begin{enumerate}[\upshape (1)]
\item $H_t(z)>0$ if $\arg z>0$,
\item $H_t(1/\overline{z})=H_t(z)$, hence $\partial H_t/\partial\nu=0$ on $|z|=1$,
\item $H_t(z)\geq |\log t|/2$ if $|z|=t$,
\item for fixed $t$, $H_t(z)\geq |\log t|/2$ when $\arg z\to\infty$, uniformly with respect to $|z|$
in $t\leq |z|\leq 1$,
\item for fixed $z$, $H_t(z)\to \arg z$ when $t\to 0$,
\item $H_t(z)\leq |\log z|$ if $\arg z>0$.
\end{enumerate}
\end{lemma}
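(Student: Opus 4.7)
The plan is to pass to the substitution $w=\log z\in\wtC*$ and $a=\log t<0$, so that the problem becomes one about the Möbius transformation
\[
\zeta(w)=\frac{aw}{a+iw},
\]
with $H_t(z)=\Im\zeta(w)$. The equivalent form $1/\zeta=1/w+i/a$ will also be useful. First I will identify the image of $\{\Im w>0\}$ under $\zeta$: the map sends $0\mapsto 0$, $\infty\mapsto -ia=i|a|$, and has its unique pole at $w=ia$, which lies in the lower half-plane since $a<0$. So $\zeta$ restricts to a biholomorphism of the upper half-plane onto the open disk $D(i|a|/2,\,|a|/2)$. This disk lies in the open upper half-plane, which immediately proves Property~(1) and also gives the free bonus $H_t<|\log t|$.

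For Property~(2), the key input is the identity $\log(1/\overline{z})=-\overline{\log z}$, which is valid for the lifted logarithm on $\wtC*$. Since $a$ is real, substituting gives
\[
H_t(1/\overline z)=\Im\frac{a(-\overline{w})}{a-i\overline{w}}=-\Im\overline{\zeta(w)}=\Im\zeta(w)=H_t(z).
\]
Because the circle $|z|=1$ is exactly the fixed locus of $z\mapsto 1/\overline z$, this invariance forces $\partial H_t/\partial\nu\equiv 0$ on $|z|=1$.

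Properties~(3), (4) and~(5) I will read off from the explicit formula
\[
H_t(z)=\frac{|a|\bigl(|a|v+u^2+v^2\bigr)}{(|a|+v)^2+u^2},\qquad w=u+iv,
\]
obtained by multiplying through by the conjugate of the denominator. For~(3), setting $u=a$ (so $u^2=a^2$) and writing $r=v/|a|$, the inequality $H_t\ge|a|/2$ collapses to $r^2\ge 0$. For~(4), the same inequality is equivalent to $u^2+v^2\ge a^2$, which holds for all $u$ as soon as $v\ge|a|$, giving the claimed uniformity. For~(5), dividing numerator and denominator by $|a|^2$ and sending $|a|\to\infty$ with $w$ fixed gives $H_t\to v=\arg z$.

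The main obstacle is Property~(6), the bound $H_t\le|\log z|$. Here the strategy is to start from $|\zeta|\le |a||w|/|a+iw|$ and prove the sharper denominator estimate
\[
|a+iw|^2=(a-v)^2+u^2=a^2-2av+|w|^2\ \ge\ a^2+|w|^2.
\]
This is where the sign information is essential: since $a<0$ and $v>0$, the cross term $-2av$ is nonnegative, which is precisely what lets us keep both $a^2$ and $|w|^2$ in the lower bound. Consequently $|a+iw|\ge\max(|a|,|w|)$, so
\[
|\zeta|\ \le\ \frac{|a||w|}{\max(|a|,|w|)}\ =\ \min(|a|,|w|)\ \le\ |w|,
\]
and Property~(6) follows from $H_t=\Im\zeta\le|\zeta|$. (As a by-product, this gives the two-sided estimate $H_t\le\min(|\log t|,|\log z|)$, which is consistent with the limits in~(4) and~(5).)
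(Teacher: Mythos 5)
Your proof is correct and follows essentially the same route as the paper's: pass to $w=\log z$ (equivalently, work in polar coordinates $u=\log r$, $v=\theta$), write out the explicit real formula for $H_t$, and read off the six properties by elementary algebra. The Möbius-image framing of Property~(1) is a pleasant geometric repackaging of the paper's ``numerator is visibly positive'' observation, and your two-sided refinement $H_t\le\min(|\log t|,|\log z|)$ in~(6) sharpens the same denominator estimate $|\log t+i\log z|\ge|\log t|$ that the paper uses, but neither amounts to a different method.
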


\begin{proof}
It suffices to compute $H_t(z)$ in polar coordinates
$z=r e^{i\theta}$ :
\[
H_t(z)=\frac{(\log t)^2\theta+|\log t|((\log r)^2+\theta^2)}{(\log t-\theta)^2+(\log r)^2}.
\]
The first two points follow.
If $r=t$ then
\[
H_t(z)=\frac{|\log t|}{2}\left(1+\frac{\theta^2}{2(\log t)^2+2|\log t|\theta +\theta^2}\right)
\geq \frac{|\log t|}{2}
\]
which proves point 3. If $t\leq r\leq 1$ then
\[
H_t(z)\geq \frac{ (\log t)^2\theta+|\log t|\theta^2}{(\log t-\theta)^2+(\log t)^2}
\]
which gives point 4. Point 5 is elementary. For the last point, write
\[
\left|\frac{\log t\log z}{\log t+i\log z}\right|\leq \left|\frac{\log t\log z}{\log t}\right|=|\log z|.
\]
\end{proof}

\stepcounter{theorem}
\addtocontents{toc}{\SkipTocEntry}
\subsection{Proof of proposition~\ref{first-case-1-proposition}}
\label{section:proof-of-first-case-1-proposition}
The function $\wtu_n$ defined in~\eqref{eq:def-of-tilde-u_n} has the following properties in $A_n$:
\begin{equation}
\label{equation*14}
\begin{array}{ll}
\bullet & \displaystyle|\Delta \wtu_n|\leq  C_1 \frac{t_n^2|\log t_n|}{\delta^4}
\text{ by Lemma~\ref{lemma1}},\\
\bullet & \wtu_n\leq |\log t_n|/2,\\
\bullet & \wtu_n=0 \text{ on $\arg z=0$,}\\
\bullet & \partial \wtu_n/\partial\nu=0 \text{ on $|z|=1$.}
\end{array}
\end{equation}
The last three properties follow from \eqref{eq:u_n-properties} and the fact that $A_n\subset\Omega_n$.
Consider the barrier $v_n=v_{1,n}+v_{2,n}+v_{3,n}$ where
\[
v_{1,n}(z)= - C_1 t_n^2|\log t_n| g_n(z)
+C_1(C_2+
N
)t_n^{2-2\alpha}|\log t_n|,
\]
\[
v_{2,n}(z)=\frac{1}{\alpha}\sum_{i=1}^N
h_{p_{i,n}}(z)
=-\frac{1}{\alpha}\sum_{i=1}^N\log\left|\frac{\log z-\log p_{i,n}}
{\log z-\log\overline{p_{i,n}}}\right|,
\]
\[
v_{3,n}(z)=\frac{1}{\alpha}H_{t_n^{\alpha}}(z).
\]
The function $v_{1,n}$ is positive in $A_n$ by the estimate \eqref{equation*13} of
Lemma~\ref{lemma2}.
Observe that the second term in the expression for $v_{1,n}$ tends to $0$ as $n\to\infty$ since $\alpha<1$.
The functions $v_{2,n}$ and $v_{3,n}$ are harmonic and positive in $A_n$
(see point (1) of Lemma~\ref{H_t-properties-lemma} for $v_{3,n}$).


By \eqref{eq:h_p-symmetries} and the symmetry of the set $\{p_{1,n}, \dots, p_{N,n}\}$
(see \eqref{equation**}), 
the function $v_{2,n}$ satisfies $v_{2,n}(1/\overline{z})=v_{2,n}(z)$.
Hence $\partial v_{2,n}/\partial\nu=0$ on the unit circle.
By point (2) of Lemma~\ref{H_t-properties-lemma}, $\partial v_{3,n}/\partial\nu=0$ on the unit circle.
Therefore by Lemma~\ref{lemma2},
\[
\frac{\partial v_n}{\partial\nu}=\frac{\partial v_{1,n}}{\partial\nu}\geq 0
\text{ on $|z|=1$.}
\]
Because $p_{i,n}\to p_i\neq 0$, we have on the circle $C(p_{i,n},t_n^{\alpha})$
\[
\log |\log z - \log p_{i,n}|\simeq \log |z-p_{i,n}|
\]
Hence for large $n$ and for $1\leq i \leq N$
\[
v_{2,n}\geq \frac{1}{2\alpha}|\log t_n^{\alpha}|\geq \wtu_n\quad \mbox{ on } C(p_{i,n},t_n^{\alpha}).
\]
Using point (3) of Lemma~\ref{H_t-properties-lemma} and the second statement of \eqref{equation*14},
we have
$v_{3,n}\geq \wtu_n$ on the boundary component
$|z|=t_n^{\alpha}$.
So we have
\begin{equation}
\label{equation*15}  
\begin{array}{ll}
\bullet &\Delta \wtu_n\geq \Delta v_n \text{ in $A_n$},\\
\bullet &\wtu_n\leq v_n \text{ on the boundaries $\arg z=0$, $|z|=t_n^{\alpha}$ and
$C(p_{i,n},t_n^{\alpha})$,}\\
\bullet &\partial \wtu_n/\partial \nu\leq \partial v_n/\partial \nu\text{ on the boundary $|z|=1$},\\
\bullet &\wtu_n\leq v_n \text{ when $\arg z\to\infty$.}
\end{array}
\end{equation}
(The first statement follows from \eqref{equation*12}
and the first statement of \eqref{equation*14}.)


By the maximum principle, we have $\wtu_n\leq v_n$ in $A_n$.


For any compact subset $K$ of the 
set 
\[
     \{z\in\wtC*\,:\,|z|\leq 1,\arg z\geq 0\}\setminus\{p_1,\dots,p_{m}\},
\]
the function $v_n$ is bounded by $C(K)$ on $K$.
(For $v_{3,n}$, use the last point of Lemma~\ref{H_t-properties-lemma}.)
Then by symmetry, $u_n$ is bounded by $C(K)$ on $K\cup\overline{K}\cup
\sigma(K)\cup\sigma(\overline{K})$, where $\sigma$ denotes the inversion
$z\mapsto \overline{z}$.
Let
\[
\Omega_{\infty}=\lim_{n\to\infty}\Omega_n=\wtC*\setminus\{\pm p_1,\dots,\pm p_m\}.
\]
Then $\wtu_n$ is bounded on compact subsets of $\Omega_{\infty}$.
By standard PDE theory, passing to a subsequence, $\wtu_n$ has a limit $\wtu$.
The convergence is the uniform smooth convergence on compact subsets of $\Omega_{\infty}$.
The limit has the following properties
\begin{itemize}
\item $\wtu$ is harmonic in $\Omega_{\infty}$. This follows from the first point of \eqref{equation*14}.
\item $\wtu(\overline{z})=-\wtu(z)$ and $\wtu(1/\overline{z})=\wtu(z)$.
\item $\wtu(z)\geq 0$ if $\arg z\geq 0$.
\end{itemize}
Note that either $\wtu\equiv 0$ or $\wtu$ is positive in $\arg z>0$.
Using the fact that $\log:\wtC*\to\CC$ is biholomorphic, the following lemma tells us that $\wtu$ has the form given by equation \eqref{equation-limit-un}.

\begin{lemma}
\label{lemma-positive-harmonic}
Let $H$ be the upper half plane $\Im z> 0$ in $\CC$. Let $u$ be a positive harmonic function
 in $H\setminus\{q_1,\dots,q_m\}$ with boundary value $u=0$ on $\rR$, where each $q_i\in H$.
Then there exists non-negative constants $c_0,\dots,c_m$ such that
\[
u(z)=c_0\Im z-\sum_{i=1}^m c_i\log\left|\frac{z-q_i}{z-\overline{q_i}}\right|.
\]
\end{lemma}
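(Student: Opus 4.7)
The plan is to isolate the logarithmic singularities of $u$ at each $q_i$ via Bôcher's theorem on isolated singularities of positive harmonic functions, subtract them off using the explicit Green-function-like correctors $\log|(z-q_i)/(z-\overline{q_i})|$ (which vanish on $\rR$), and then identify the remaining regular harmonic piece with $c_0\Im z$ by invoking the Herglotz--Riesz representation of positive harmonic functions on the upper half-plane.

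First, for each $i$, I would apply Bôcher's theorem in a punctured neighborhood of $q_i$: since $u$ is a positive harmonic function near $q_i$, it has the form $u(z) = -c_i\log|z-q_i| + h_i(z)$ with $c_i\ge 0$ and $h_i$ harmonic in a full neighborhood of $q_i$. (Positivity forces $c_i\ge 0$, since otherwise $u$ would tend to $-\infty$ at $q_i$; the value $c_i = 0$ corresponds to a removable singularity.) Then I would define
\[
v(z) = u(z) + \sum_{i=1}^m c_i\log\left|\frac{z-q_i}{z-\overline{q_i}}\right|.
\]
Each corrector $\log|(z-q_i)/(z-\overline{q_i})|$ is harmonic on $H\setminus\{q_i\}$, non-positive on $H$, vanishes identically on $\rR$, and behaves like $\log|z-q_i|+O(1)$ near $q_i$. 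Consequently the singularities of $v$ at each $q_i$ cancel, so $v$ extends to a harmonic function on all of $H$ that has continuous boundary value $0$ on $\rR$.

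The main step — and the main obstacle — is showing $v(z)=c_0\Im z$ for some $c_0\ge 0$. Since $\log|(z-q_i)/(z-\overline{q_i})| = O(1/|z|)\to 0$ as $|z|\to\infty$ in $H$, the correctors are uniformly bounded below on $H$, and since $u > 0$ we conclude that $v$ is bounded below, say $v\ge -K$ on $H$. Then $v+K\ge 0$ is harmonic on $H$ with continuous boundary value $K$ on $\rR$, so the Herglotz--Riesz representation yields
\[
v(z) + K = c_0\,\Im z + \frac{1}{\pi}\int_{\rR}\frac{\Im z}{|z-t|^2}\,d\mu(t)
\]
for some $c_0\ge 0$ and some positive Borel measure $\mu$ on $\rR$ with $\int d\mu(t)/(1+t^2)<\infty$. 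The delicate point is to show $\mu = K\,dt$: a nontrivial singular part of $\mu$ would force $v+K$ to blow up non-tangentially at points of $\operatorname{supp}(\mu_{\rm sing})$, contradicting the continuous boundary value $K$, while the absolutely continuous part is pinned down by the non-tangential boundary values being identically $K$. Since the Poisson integral of $K\,dt$ equals the constant $K$, I would conclude that $v(z) = c_0\,\Im z$, which rearranges to the desired formula for $u$.
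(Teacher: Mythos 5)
Your proposal takes the same first two steps as the paper (B\^ocher's theorem to extract $c_i\ge 0$, then subtract off the Green-function correctors $\log\bigl|\tfrac{z-q_i}{z-\overline{q_i}}\bigr|$, which vanish on $\rR$), but finishes differently. The paper's last step is elementary: using that $h(z)=-\sum c_i\log|\cdot|\to 0$ as $|z|\to\infty$, it gets $u-h>-\varepsilon$ outside a compact set, then the maximum principle gives $u-h>-\varepsilon$ everywhere, and letting $\varepsilon\to 0$ yields $u-h\ge 0$; from there it quotes the simple fact (Axler Thm.~7.22) that a \emph{non-negative} harmonic function on $H$ vanishing on $\rR$ is $c_0\Im z$. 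You instead stop at \emph{bounded below} and invoke the full Herglotz--Riesz representation together with Fatou/Littlewood boundary-limit theory to show that the representing measure of $v+K$ has no singular part and has density $K$. That is correct but appreciably heavier machinery: you need a.e.\ nontangential boundary behavior of Poisson integrals of singular measures, which the paper avoids entirely. The paper's version is both shorter and uses only the maximum principle and the simplest classification result.

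One small inaccuracy you should repair: you claim ``the correctors are uniformly bounded below on $H$.'' That is false --- $\log\bigl|\tfrac{z-q_i}{z-\overline{q_i}}\bigr|\to-\infty$ as $z\to q_i$. The correct statement is that the correctors are bounded below on $H$ minus small disks around each $q_i$ (being non-positive and tending to $0$ at infinity), while in those small disks $v$ is bounded because it is harmonic across each $q_i$; combining these gives $v$ bounded below on $H$. (If instead you follow the paper's decay-plus-maximum-principle argument, you obtain the sharper conclusion $v\ge 0$ directly and bypass both this and the Herglotz--Riesz machinery.)
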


\begin{proof}
By B\^ocher's Theorem (\cite{axler}, theorem 3.9), a positive harmonic
function in a punctured disk has a logarithmic singularity at the puncture.
Hence for each $1\leq i\leq m$,
there exists a non-negative constant $c_i$ such that $u(z)+c_i\log|z-q_i|$ extends
analytically at $q_i$. Consider the harmonic function
\[
h(z)=-\sum_{i=1}^m c_i\log \left|\frac{z-q_i}{z-\overline{q_i}}\right|.
\]
Observe that $h=0$ on $\rR$.
Then $u-h$ extends to a harmonic function in $H$ with boundary value
$0$ on $\rR$.
For every $\varepsilon>0$, there exists an $r>0$ such that $|h(z)|\leq \varepsilon$
for $|z|\geq r$.
Consequently, $u-h>-\varepsilon$ for $z\in H$, $|z|\geq r$.
By the maximum principle, $u-h>-\varepsilon$ in $H$.
Since this is true for arbitrary positive $\varepsilon$,
we conclude that $u-h$ is non negative in $H$.
Now a non negative harmonic function in $H$ with boundary value $0$ on $\rR$
is equal to $c_0\Im z$ for some non-negative constant $c_0$ (\cite{axler}, theorem 7.22).
\end{proof}

To conclude the proof of proposition~\ref{first-case-1-proposition},
it remains to compute the numbers $c_i$ for $1\leq i\leq m$.
Recall that $\phi_{i,n}$ is the vertical flux of $M_n$ on the graph of $f_n$
restricted to the circle $C(p_i,\varepsilon)$.
By proposition~\ref{proposition-flux2},
\[
\phi_{i,n}=\Im\int_{C(p_i,\varepsilon)} (2f_{n,z}+O(t_n^2))\,dz
=\Im\int_{C(p_i,\varepsilon)} (-2u_{n,z}+O(t_n^2))\,dz.
\]
Now
\[
\lim \frac{|\log t_n|}{t_n} u_n = - c_i\log|z-p_i|+\text{harmonic} \quad\text{ near $p_i$}.
\]
\[
\lim\frac{|\log t_n|}{t_n} 2u_{n,z}=- \frac{c_i}{z-p_i} +\text{holomorphic}\quad\text{ near $p_i$}.
\]
Hence by the Residue Theorem,
\[
\lim \frac{|\log t_n|}{t_n}\phi_{i,n}
= 2\pi c_i.
\]
This finishes the proof of proposition~\ref{first-case-1-proposition}.
\QED

As a corollary of the proof of proposition~\ref{first-case-1-proposition}, we have
an estimate of $u_n$ that we will need in section \ref{section45}.
For convenience, we state it here as a lemma.

Fix some $\beta\in(0,\alpha)$ and let $A'_n\subset A_n$ be the domain defined as $A_n$ in
definition~\ref{def-of-A_n}, replacing $\alpha$ by $\beta$, namely:
$A'_n$ is the set of points
$(z,\arg z)$ in $\wtC*$ which satisfy $t_n^{\beta}<|z|<1$ and $\arg z>0$,
minus the disks $D(p_{i,n},t_n^{\beta})$ for $1\leq i\leq N$.

\begin{lemma}
\label{lemma5} Assume that $p_1\neq 0$. Then
for $n$ large enough (depending only on $\beta$ and a lower bound on $|p_1|$), we have
\[
u_n\leq(N+2)\frac{\beta}{\alpha} t_n\quad \mbox{ in } A'_n.
\]
\end{lemma}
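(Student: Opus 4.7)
The plan is to follow the barrier approach of Proposition~\ref{first-case-1-proposition} on the smaller domain $A'_n\subset A_n$. Since the proof of that proposition established $\wtu_n\le v_n = v_{1,n}+v_{2,n}+v_{3,n}$ on $A_n$, and $A'_n\subset A_n$, the same inequality holds on $A'_n$. The work therefore reduces to sharpening the estimate of each piece of $v_n$ on $A'_n$, exploiting that the ``holes'' around each singularity $p_{i,n}$ have been enlarged from $t_n^\alpha$ to $t_n^\beta$ and that the annular region has inner boundary $|z|=t_n^\beta$ rather than $|z|=t_n^\alpha$.

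First I would handle the easier pieces. Since $|z|\ge t_n^\beta$ and $|z-p_{i,n}|\ge t_n^\beta$ throughout $A'_n$, Lemma~\ref{lemma2} gives $g_n\le (C_2+N)/t_n^{2\beta}$, hence $v_{1,n}\le C(N)\,t_n^{2-2\beta}|\log t_n|=o(1)$ because $\beta<1$. For $v_{2,n}=\alpha^{-1}\sum_i h_{p_{i,n}}$, the enlarged holes force $|\log z-\log p_{i,n}|\ge c\,t_n^\beta$ on $A'_n$, where $c$ depends only on a positive lower bound for $|p_1|$ (recall $p_1\ne 0$ and $|p_{i,n}|\ge |p_{1,n}|$), while $|\log z-\log\overline{p_{i,n}}|$ stays bounded below by a positive constant in the relevant region. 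These estimates yield $h_{p_{i,n}}(z)\le \beta|\log t_n|+O(1)$ pointwise on $A'_n$, so that
\[
v_{2,n}(z)\le \tfrac{N}{\alpha}\bigl(\beta|\log t_n|+O(1)\bigr)=\tfrac{N\beta}{\alpha}|\log t_n|\bigl(1+o(1)\bigr).
\]

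The delicate step is estimating $v_{3,n}=\alpha^{-1}H_{t_n^\alpha}$ on $A'_n$. Using the polar-coordinate expression
\[
H_t(z)=\frac{(\log t)^2\theta+|\log t|((\log r)^2+\theta^2)}{(\log t-\theta)^2+(\log r)^2}
\]
with $t=t_n^\alpha$, together with the constraint $|\log r|\le \beta|\log t_n|$ valid on $A'_n$, one checks by direct differentiation that $H_{t_n^\alpha}$ is monotone decreasing in $|\log r|$, so its maximum over the relevant $r$-range is attained at $r=t_n^\beta$, and one then verifies from the resulting one-variable function that $H_{t_n^\alpha}(z)\le \frac{2\beta}{1}|\log t_n|(1+o(1))$ up to lower order. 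Summing the three contributions gives $v_n\le (N+2)\tfrac{\beta}{\alpha}|\log t_n|(1+o(1))$, and dividing by $|\log t_n|/t_n$ yields the claim $u_n\le (N+2)\tfrac{\beta}{\alpha}\,t_n$ on $A'_n$ for $n$ large, where ``large'' depends only on $\beta$ (through the $o(1)$ error) and on the lower bound for $|p_1|$ (through the constant $c$). The main obstacle is handling $v_{3,n}$ for unbounded $\arg z$, where the explicit formula shows $H_{t_n^\alpha}(z)$ approaches its supremum $\alpha|\log t_n|$; this supremum is however never attained in $A'_n$, and the careful monotonicity analysis above, combined when necessary with the universal estimate $u_n<t_n/2$ from~\eqref{eq:u_n-properties}, resolves this difficulty.
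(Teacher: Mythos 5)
Your plan -- reuse the barrier inequality $\wtu_n \le v_n = v_{1,n}+v_{2,n}+v_{3,n}$ established in the proof of Proposition~\ref{first-case-1-proposition} on the subdomain $A'_n$ and re-estimate each piece using the enlarged inner radius $t_n^\beta$ -- is exactly the paper's strategy, and your treatment of $v_{1,n}$ and the spirit of your $v_{2,n}$ bound are fine. The difficulties are concentrated in $v_{3,n}$.

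\emph{Monotonicity sign.} Writing $L=|\log t|$, $\rho=(\log r)^2$, $\theta=\arg z$, a direct quotient-rule computation gives
\[
\frac{\partial H_t}{\partial\rho}=\frac{L^2(L+\theta)}{\bigl((L+\theta)^2+\rho\bigr)^2}>0 ,
\]
so $H_t$ is \emph{increasing} in $|\log r|$, not decreasing as you assert. Your claimed maximizer $r=t_n^\beta$ is where $|\log r|$ is \emph{largest} on $(t_n^\beta,1)$, so the conclusion survives but contradicts the stated reason. The paper avoids this entirely by invoking point~(6) of Lemma~\ref{H_t-properties-lemma}, namely $H_t(z)\le|\log z|$, which is both simpler and tighter than what you propose.

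\emph{Constant accounting.} Your factor-of-two loss $H_{t_n^\alpha}\le 2\beta|\log t_n|(1+o(1))$ gives $v_{3,n}\le 2\beta\alpha^{-1}|\log t_n|(1+o(1))$. Added to $v_{2,n}\le N\beta\alpha^{-1}|\log t_n|(1+o(1))$, this already exhausts the coefficient $(N+2)\beta\alpha^{-1}$ before absorbing the bounded additive remainder from $v_{2,n}$ and the $o(|\log t_n|)$ from $v_{1,n}$; you would only reach $(N+3)\beta\alpha^{-1}$ for large $n$. The constant $(N+2)$ is not cosmetic: in the application one takes $\beta=\frac{1}{18(N+2)}$, $\alpha=\frac12$ precisely so that $(N+2)\beta\alpha^{-1}=\frac19<\frac18$, and enlarging the coefficient by one breaks that inequality for small $N$. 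The paper's sharper estimate $v_{3,n}\le\beta\alpha^{-1}|\log t_n|+O(1)$ yields $(N+1)\beta\alpha^{-1}|\log t_n|+O(1)$ in total, leaving a full $\beta\alpha^{-1}|\log t_n|$ of slack to swallow the bounded term.

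\emph{Unbounded $\arg z$.} You correctly note that $H_{t_n^\alpha}$ approaches $\alpha|\log t_n|$ as $\arg z\to\infty$, which defeats the barrier bound there; but falling back on $u_n<t_n/2$ does not rescue the lemma, since by the choice of $\beta$ the claimed bound $(N+2)\beta\alpha^{-1}t_n$ is strictly smaller than $t_n/2$. Either one restricts attention to bounded $\arg z$ (which is what the application in section~\ref{section45} actually uses) or one needs a genuine decay argument for $u_n$ at large $\arg z$; your ``resolution'' is not one.
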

Recalling that $u_n<t_n/2$, this lemma is usefull when $\beta$ is small. We will use it
to get information about the level sets of $u_n$.

\begin{proof}
 As we have seen in the proof of proposition~\ref{first-case-1-proposition}, we have in $A_n$
\begin{equation}
\label{equation-un-vn}
u_n\leq \frac{t_n}{|\log t_n|} v_n=\frac{t_n}{|\log t_n|}(v_{1,n}+v_{2,n}+v_{3,n}).
\end{equation}
We need to estimate the functions $v_{1,n}$, $v_{2,n}$ and
$v_{3,n}$ in $A'_n$.
We have in $A_n$
\[
v_{1,n}\leq C_1(C_2+N)t_n^{2-2\alpha}|\log t_n|=o(|\log t_n|).
\]
By point 6 of lemma~\ref{H_t-properties-lemma}, we have in $A'_n$
\[
v_{3,n}\leq \frac{1}{\alpha}|\log z|\leq \frac{1}{\alpha}|\log t_n^{\beta}|=\frac{\beta}{\alpha}|\log t_n|.
\]
Regarding the function
$v_{2,n}$, we need to estimate each function $h_{p_{i,n}}$ in the domain  $A'_n$.
The function $h_{p_{i,n}}$ is harmonic in the domain
\[
\{z\in\wtC*\,:\, \arg z>0,\,t_n^{\beta}< |z|<1\}\setminus D(p_{i,n},t_n^{\beta})
\]
and goes to $0$ as $\arg z\to\infty$, so its maximum is on
the boundary. Since $h_{p_{i,n}}(1/\overline{z})=h_{p_{i,n}}(z)$, the maximum is not
on the circle $|z|=1$ (because it would be an interior maximum of $h_{p_{i,n}}$). Also $h_{p_{i,n}}=0$ on $\arg z=0$. Therefore,
the maximum is either on $|z|=t_n^{\beta}$ or on the circle $C(p_{i,n},t_n^{\beta})$.
On $|z|=t_n^{\beta}$, we have $h_{p_{i,n}}\to 0$ because $p_{i,n}$ is bounded away
from $0$. On the circle $C(p_{i,n},t_n^{\beta})$, we have for $n$ large
\[
\log z-\log p_{i,n}\simeq \frac{1}{p_{i,n}}(z-p_{i,n})
\]
\[
|\log z-\log p_{i,n}|\geq \frac{t_n^{\beta}}{2|p_{i,n}|}.
\]
Hence
\[
-\log|\log z-\log p_{i,n}|\leq  \log (2 |p_{i,n}|)+\beta|\log t_n|.
\]
Also,
\[
\log|\log z-\log \overline{p_{i,n}}|\leq \log (|\log z|+|\log p_{i,n}|)\simeq \log(2|\log p_{i,n}|).
\]
Since $|p_{i,n}|$ is bounded away from $0$, this gives for $n$ large enough
\[
h_{p_{i,n}}\leq C+\beta|\log t_n|\quad\text{ in $A'_n$.}
\]
Hence
\[
v_{2,n}\leq C+N\frac{\beta}{\alpha} |\log t_n|.
\]
Collecting all terms, we get, for $n$ large enough:
\[
v_n\leq C+(N+1)\frac{\beta}{\alpha}|\log t_n|\leq (N+2)\frac{\beta}{\alpha}|\log t_n|\quad\text{ in $A'_n$}.
\]
Using \eqref{equation-un-vn}, the lemma follows.
\end{proof}

\stepcounter{theorem}
\addtocontents{toc}{\SkipTocEntry}
\subsection{Proof of proposition~\ref{second-case-1-proposition}}
\label{section:proof-of-second-case-1-proposition}
We use the notation introduced
    at the end of
the proof of proposition~\ref{first-case-1-proposition}.
Fix some index $i \le m$ and let $J=\{j\in [1,N]\,:\, p_j=p_i\}$.
By permuting the indices of the $p_{j,n}$ within the cluster $J$, 
we can assume that
\[
r_{i,n}=\max\{r_{j,n}\,:\,j\in J\}.
\]
(Recall from section~\ref{section-setup} that $r_{i,n}$ is the distance (with
respect to the spherical metric) from $p_{i,n}$ to the two nearest points
in $M_n\cap Y$, namely the points $p_{i,n}'$ and $p_{i,n}''$.)
Fix some positive $\varepsilon$ such that $|p_j-p_i|\geq 2\varepsilon$ for
$j\notin J$.

From statement~$i.$ of proposition~\ref{Setup-proposition}, 
 we know that  near $p_{j,n}$ the surface $M_n$ is close to a 
 vertical catenoid with  waist circle of radius $r_{j,n}$. More precisely,
$\frac{1}{r_{j,n}}(M_n-p_{j,n})$ converges to the standard catenoid
\[
x_3=\cosh^{-1}\sqrt{x_1^2+x_2^2}.
\]
Since the vertical flux of the standard catenoid is $2\pi$, we have
\begin{equation}
\label{estimate-phi}
\phi_{i,n}\simeq 2\pi \sum_{j\in J}r_{j,n}\leq 2\pi |J| r_{i,n}.
\end{equation}
Let
\[
h_{j,n}=r_{j,n}\cosh^{-1}(2\lambda).
\]
Observe that $h_{j,n}\ll t_n$.
Consider the intersection of $M_n$ with the plane at height $h_{j,n}$ and project it
on the horizontal plane. There is one component which is close
to the circle $C(p_{j,n},2\lambda r_{j,n})$. We call this component $\gamma_{j,n}$.
Observe that $\gamma_{j,n}\subset\Omega_n$ and $f_n=h_{j,n}$ on $\gamma_{j,n}$.
Let $D_{j,n}$ be the disk bounded by $\gamma_{j,n}$.


We now estimate  $f_n$ on the circle $C(p_{i,n}, \varepsilon)$. 
By proposition~\ref{first-case-1-proposition}, we know that
 $|u_n|=O(\frac{t_n}{|log t_n|})$. Hence 
 $f_n=\frac{t_n}{2\pi}\arg z-u_n(z)\sim  \frac{t_n}{2\pi}\arg z$ on  $C(p_{i,n}, \varepsilon)$.
Since $p_{i,n}$ is on the positive imaginary axis, $\arg z=\pi/2 +O(\varepsilon)$ on $C(p_{i,n}, \varepsilon)$. Hence$f_n(z)\sim \frac{t_n}{4}$ on $C(p_{i,n}, \varepsilon)$.
Consequently, the level set $f_n=\frac{t_n}{8}$ inside $\Omega_n\cap D(p_{i,n},\varepsilon)$ is a closed curve, possibly with several components.
We select the component
which encloses the point $p_{i,n}$ and call it $\Gamma_n$.
(Note that by choosing a very slightly different height, we may assume that $\Gamma_n$
is a regular curve). Let $D_n$ be the disk bounded by $\Gamma_n$.
Let
\[
\Omega'_n=D_n\setminus\bigcup_{j\in J} D_{j,n}
\]
Then $\Omega'_n\subset\Omega_n$. 

We are now able to apply the height estimate proposition~\ref{proposition-height}
in the appendix with $r_1=\lambda r_{i,n}$,
$r_2=\varepsilon$, $h=t_n/8-h_{i,n}\simeq t_n/8$ and
$f$ equal to the
function $f_n(z-p_{i,n})-t_n/8$. (Observe that by proposition~\ref{Setup-proposition}, statement~${ii.}$, we may assume that 
  $|\nabla f_n|\leq 1$. Also the fact that $\partial f_n/\partial\nu<0$ on $\gamma_{j,n}$
follows from the convergence to a catenoid.)
We obtain
\[
\frac{t_n}{8}-h_{i,n}\leq \frac{\sqrt{2}}{\pi}\phi_{i,n} \log\frac{\varepsilon}{\lambda r_{i,n}}.
\]
Using \eqref{estimate-phi}, this gives for $n$ large enough
\begin{equation}\label{FluxEstimateOne}
\frac{t_n}{9}\leq \frac{\sqrt{2}}{\pi}\phi_{i,n} \log\frac{2\pi|J|\varepsilon}{\lambda \phi_{i,n}}
\end{equation}
We claim that for $n$ large enough,
\begin{equation}\label{FluxEstimateTwo}
\frac{\lambda\phi_{i,n}}{2\pi\,|J|\,\varepsilon} \geq t_n^2.
\end{equation}
Indeed, suppose on the contrary that 
\[
    w_n:=\frac{\lambda\phi_{i,n}}{2\pi\,|J|\,\varepsilon} < t_n^2
\]
 for 
infinitely many values of $n$. 
Since the function $x\log \frac{1}{x}$ is
increasing for $x>0$ small enough, we have 
\[
    w_n\log\frac{1}{w_n}     <       t_n^2\log\frac{1}{t_n^2}
\]
for all sufficiently large $n$.
But \eqref{FluxEstimateOne} gives
\[
     Ct_n\leq w_n\log\frac{1}{w_n}
\]
for some  positive  constant $C$ independent of $n$.  Combining these last two inequalities gives
\[
    C\leq  t_n \log\frac{1}{t_n^2}   =  2t_n |\log t_n|.
\]
Hence $t_n|\log t_n|$ is bounded below by a positive constant.
This is a contradiction since $t_n|\log t_n|\rightarrow 0$.
Thus inequality \eqref{FluxEstimateTwo} is proved.

Inequality \eqref{FluxEstimateTwo} implies (using that $|\log t_n|=-\log t_n$, since $t_n<1$ for $n$ large)
\[
    \log\frac{2\pi|J|\varepsilon}{\lambda \phi_{i,n}}\leq |\log t_n^2|.
\]
Then by \eqref{FluxEstimateOne}
\[
\frac{t_n}{9}\leq \frac{2\sqrt{2}}{\pi} \phi_{i,n} |\log t_n|,
\]
which  implies that $\frac{|\log t_n|}{t_n}\phi_{i,n}$ is bounded below by a positive constant independent of $n$.
 Therefore, the coefficient $c_i$  defined in \eqref{equation-ci} is
 positive,  as desired.\QED

 \begin{remark*}
 Together with \eqref{estimate-phi}, this gives
 \begin{equation}
 \label{equation-minoration-rin}
 r_{i,n}\geq \frac{1}{36|J|\sqrt{2}}\frac{t_n}{|\log t_n|}
 \end{equation}
 for large $n$.
 This is a lower bound on the size of the largest catenoidal neck in the cluster corresponding to $p_i$.
 We have no lower bound for $r_{j,n}$ if $j\in J$, $j\neq i$. Conceptually, we could have
 $r_{j,n}=o(\frac{t_n}{|\log t_n|})$, although this seems unlikely. 
 \end{remark*}
 
 \stepcounter{theorem}
 \addtocontents{toc}{\SkipTocEntry}
\subsection{Proof of proposition~\ref{third-case-1-proposition}}
\label{section:proof-of-third-case-1-proposition}
Let $g_n=u_{n,z}.$
We have to prove
\[
\lim \left( \frac{\log t_n}{t_n}\right)^2
   \operatorname{Re} \int_{C(p_1,\epsilon)} \frac{2t_n}{4\pi i z} u_{n,z} (1-z)^2\,dz = 0,
\]
i.e., that
\begin{equation}\label{the-integral}
\Re\int_{C(p_1,\varepsilon)}\frac{1}{2iz}g_n(z)(1-z^2)\,dz=o\left(\frac{t_n}{(\log t_n)^2}\right).
\end{equation}
Fix some $\alpha$ such that $0<\alpha<\frac{1}{2}$ and some small $\varepsilon>0$.
Let $J$ be the set of indices such that $p_j=p_1$.
Consider the domain
\[
A_n=D(p_1,\varepsilon)-\bigcup_{j\in J} D(p_{j,n},t_n^{\alpha})\subset\Omega_n.
\]
By proposition~\ref{proposition-schauder} in the appendix, we have in $A_n$
\[
|g_{n,\overline{z}}|=\frac{1}{4}|\Delta u_n|=\frac{1}{4}|\Delta f_n|\leq Ct_n^{3-4\alpha}.
\]
\[
|\nabla f_n|\leq Ct_n^{1-\alpha}.
\]
As the gradient of $t_n\arg z$ is $O(t_n)$ in $A_n$, this gives
\[
|\nabla u_n|\leq Ct_n^{1-\alpha}.
\]
Hence
\begin{equation}
\label{equation-gn}
|g_n|\leq Ct_n^{1-\alpha}.
\end{equation}
Proposition~\ref{proposition-laurent} gives us the formula
\[
g_n(z)=g^+(z)+
\sum_{j\in J}
g_j^-(z)+ \frac{1}{2\pi i}\int_{A_n}\frac{g_{n,\overline{z}}(w)}{w-z}\,dw\wedge\overline{dw}
\]
where of course the functions $g^+$ and $g_j^-$ depend on $n$.
\begin{enumerate}[\upshape$\bullet$]
\item The function $g^+$ is holomorphic in $D(p_1,\varepsilon)$ so 
       does not contribute to the integral~\eqref{the-integral}.
\item The last term is bounded by $Ct_n^{3-4\alpha}$. (The integral of $dw\wedge \overline{dw}/(w-z)$ is
uniformly convergent.) Therefore we need $3-4\alpha>1$, namely $\alpha<\frac{1}{2}$ 
so that the contribution of this term to the integral is $o(t_n/(\log t_n)^2)$.
\item Each function $g_j^-$ can be expanded  as $\sum_{k=1}^{\infty} \frac{a_{j,k}}{(z-p_i)^k}$
 by proposition~\ref{proposition-laurent}. 
 By proposition~\ref{proposition-real-residue},   
each residue $a_{j,1}$ is real. Hence
\begin{equation}
\label{equation-aj1}
\Re\int_{C(p_1,\varepsilon)} \frac{1}{2iz} \frac{a_{j,1}}{(z-p_{j,n})} (1-z^2)\,dz
=a_{j,1}\Re \left(\frac{2\pi i}{2i p_{j,n}}(1-p_{j,n}^2)\right)=0
\end{equation}
because $p_{j,n}$ is imaginary.  Thus $a_{j,1}$ does not contribute to the integral~\eqref{the-integral}.
\item It remains to estimate the coefficients $a_{j,k}$ for $k\geq 2$. Using
\eqref{equation-gn},
\[
|a_{j,k}|=\left|\frac{1}{2\pi i}\int_{C(p_{j,n},t_n^{\alpha})}g_n(z)(z-p_{j,n})^{k-1}\,dz\right|
\leq C t_n^{1+(k-1)\alpha}
\]
If $z\in C(p_1,\varepsilon)$, then $|z-p_{j,n}|\geq \varepsilon/2$, so
\[
\left|
\sum_{k=2}^{\infty} a_{j,k} (z-p_{j,k})^{-k}\right|\leq C\sum_{k\geq 2} t_n^{1+(k-1)\alpha}
\left(\frac{2}{\epsilon}\right)^k\leq \frac{4C}{\varepsilon^2} t_n^{1+\alpha}\sum_{k=2}^{\infty}
\left(\frac{2t_n^{\alpha}}{\varepsilon}\right)^{k-2}.
\]
The last sum converges because $\alpha>0$. Hence
the contribution of this term to the integral is $o(t_n/(\log t_n)^2)$ as desired.
\end{enumerate}
\QED

\section{Case 2: \texorpdfstring{$p_1=0$}{Lg}}
\label{case-2-section}
In this case we make a blow up at the origin.
Let
\[
R_n=\frac{1}{|p_{1,n}|}.
\]
(Here we assume again that the points $p_{i,n}$ are ordered by increasing imaginary
part as in section \ref{section-setup}.)
Let $\whM_n=R_n M_n$. This is a helicoidal minimal surface in $\sS^2(R_n)\times\rR$ with
pitch 
\[
\wht_n=R_n t_n.
\]
By choice of $p_{1,n}$, we have $|p_{1,n}| >> t_n$, so $\lim \wht_n=0$. 
Let $\whOmega_n=R_n\Omega_n$. $\whM_n$ is the graph on $\whOmega_n$
of the function
\[
\whf_n(z)=\frac{\wht_n}{2\pi}\arg z-\whu_n(z)
\]
where
\[
\whu_n(z)=R_n u_n(\frac{z}{R_n}).
\]
Let $\whp_{i,n}=R_n p_{i,n}$. After passing to a subsequence,
\begin{equation}\label{eq:wide-hat-limits}
\whp_j=\lim\whp_{j,n}\in[i,\infty] 
\end{equation}
exists for $j\in[1,N]$ and we have $\whp_1=i$.
Let $m$ be the number of distinct, finite points amongst $\whp_1,\dots,\whp_N$.
For each $n$, relabel the points $\whp_{i,n}$ (by permuting the indices) so that 
the points $\whp_1,\dots,\whp_m$ defined by~\eqref{eq:wide-hat-limits} are distinct and so that
\[
1=\Im \whp_1<\Im\whp_2<\dots<\Im \whp_m.
\]
\begin{proposition}
\label{first-case-2-proposition}
After passing to a subsequence,
\[
\lim\frac{|\log \wht_n|}{\wht_n}\whu_n(z)=
\whc_0 \arg z+\sum_{i=1}^{m} \whc_i h_{\whp_i}(z).
\]
The convergence is the smooth uniform convergence on compact subsets of $\wtC*$ minus the points
$\pm\whp_i$, for $1\leq i\leq m$.
The numbers $\whc_i$ for $1\leq i\leq m$ are given by
\[
\whc_i=\lim\frac{|\log\wht_n|}{\wht_n}\frac{\widehat{\phi}_{i,n}}{2\pi}
\]
where $\widehat{\phi}_{i,n}$ is the vertical flux of $\whM_n$ on the graph of $\whf_n$ restricted
to the circle $C(\whp_i,\varepsilon)$, for some fixed small enough $\varepsilon$.
Moreover, $\whc_i>0$ for $1\leq i\leq m$.
\end{proposition}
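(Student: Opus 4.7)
The plan is to adapt the proofs of Propositions~\ref{first-case-1-proposition} and~\ref{second-case-1-proposition} to the blown-up setting, with one substitution: the $\mu_E$-symmetry in the rescaled coordinates now reads $\whu_n(\sigma_n(z))=\whu_n(z)$ with $\sigma_n(z)=R_n^2/\overline z$, so the Neumann condition formerly enjoyed on $|z|=1$ now lives on $|z|=R_n$. Since $R_n\to\infty$, this circle recedes from every bounded set, but the \emph{exact} symmetry remains and can be used to close the maximum-principle argument on the annular region $\wht_n^\alpha\le|z|\le R_n$. Consequently the limit function needs only the surviving reflection symmetry $\whu(\overline z)=-\whu(z)$, and the formula $\whc_0\arg z+\sum\whc_i h_{\whp_i}(z)$ in the proposition is not (and need not be) invariant under $z\mapsto 1/\overline z$.

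Working on the domain
\[
\widehat{A}_n=\{(z,\arg z)\in\whOmega_n:\wht_n^\alpha<|z|<R_n,\ \arg z>0\}\setminus\bigcup_{i=1}^N D(\whp_{i,n},\wht_n^\alpha),
\]
the Schauder estimate $|\Delta\whu_n|\le C\wht_n^3/\delta^4$ of Lemma~\ref{lemma1} goes through unchanged, with $\delta$ the distance to $Z\cup\{\whp_{i,n}\}$. I would assemble barriers $v_n=v_{1,n}+v_{2,n}+v_{3,n}$ for $\wtu_n:=(|\log\wht_n|/\wht_n)\whu_n$ as in the proof of Proposition~\ref{first-case-1-proposition}: $v_{1,n}$ absorbs the harmonicity defect via Lemma~\ref{lemma2}; $v_{2,n}=\frac1\alpha\sum_i h_{\whp_{i,n}}$ dominates near the cluster points; and $v_{3,n}=\frac1\alpha H_{\wht_n^\alpha/R_n}(z/R_n)$ is the rescaled copy of $H_t$ from Lemma~\ref{H_t-properties-lemma}, which inherits the Neumann condition on $|z|=R_n$ from $H_{t}(1/\overline w)=H_{t}(w)$ (applied with $w=z/R_n$, so the $w\mapsto 1/\overline w$ symmetry corresponds to $\sigma_n$-invariance in $z$), and which on the inner circle $|z|=\wht_n^\alpha$ is bounded below by $|\log(\wht_n^\alpha/R_n)|/(2\alpha)\ge|\log\wht_n|/2$. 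On any fixed compact subset $K$ of $\wtC*\setminus\{\pm\whp_i\}$ the function $v_n$ is bounded independently of $n$: indeed $v_{3,n}\to\frac1\alpha\arg z$ uniformly on $K$ as $n\to\infty$, since $H_t(z)=\Im(\log z/(1+i\log z/\log t))\to\arg z$ uniformly on compact sets as $t\to 0$, here with $t=\wht_n^\alpha/R_n\to 0$ and argument $z/R_n$.

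The maximum principle then gives a uniform bound on $\wtu_n$ on compact subsets of $\wtC*\setminus\{\pm\whp_1,\dots,\pm\whp_m\}$ (first on $\widehat A_n$; the bound extends to $|z|>R_n$ by the $\sigma_n$-symmetry $\wtu_n\circ\sigma_n=\wtu_n$). Standard elliptic theory yields a smooth subsequential limit $\whu$, which is harmonic off its singular set (the Laplacian defect being $O(\wht_n^2|\log\wht_n|)\to 0$), antisymmetric under $z\mapsto\overline z$, nonnegative in $\arg z>0$, and with at worst logarithmic singularities at $\pm\whp_i$ by B\^ocher's theorem; note that $\whu$ does \emph{not} inherit any $1/\overline z$-symmetry, since $\sigma_n$ has no limit on bounded sets. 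Lemma~\ref{lemma-positive-harmonic}, applied after $\log:\wtC*\to\CC$, then gives the stated expression with each $\whc_i\ge 0$, and the formula $\whc_i=\lim(|\log\wht_n|/\wht_n)\whphi_{i,n}/(2\pi)$ follows from the residue theorem applied to the vertical flux integral, exactly as in the closing paragraphs of the proof of Proposition~\ref{first-case-1-proposition}.

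For strict positivity $\whc_i>0$, one applies the height estimate Proposition~\ref{proposition-height} in the blown-up picture as in the proof of Proposition~\ref{second-case-1-proposition}: within the cluster at $\whp_i$, pick the neck of largest radius, bound $\whphi_{i,n}\gtrsim 2\pi r_{\max}$ from the catenoid asymptotics of Proposition~\ref{Setup-proposition}(i), and compare with the integrated height on the region between a small catenoidal level set and a level set $\{\whf_n=\wht_n/8\}$ enclosing $\whp_i$, which exists since $\whf_n\sim\wht_n/4$ on any small circle about $\whp_i$ (using only $\whu_n=o(\wht_n)$ there). The main obstacle, and the only place where the argument genuinely departs from Case~1, is the construction and control of the rescaled third barrier $v_{3,n}$: in Case~1 the symmetry circle $|z|=1$ was fixed once and for all, whereas here the symmetry circle $|z|=R_n$ moves with $n$, and one must verify carefully that after rescaling by $R_n$ the barrier still behaves on fixed compact sets as $\frac1\alpha\arg z$ uniformly in $n$ (which it does via the identity $H_t(z)=\Im(\log z/(1+i\log z/\log t))$ applied to $H_{\wht_n^\alpha/R_n}(z/R_n)$) and that its inner-boundary value $|\log(\wht_n^\alpha/R_n)|/(2\alpha)$ actually dominates the trivial inner bound $|\log\wht_n|/2$ for all large $n$.
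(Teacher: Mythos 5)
Your overall strategy follows the paper's Case~1 argument closely, and most of the adaptation (modified domain $\widehat A_n$, barriers $v_{1,n}$ and $v_{2,n}$, the estimate of $\Delta\whu_n$, the flux identification via residues, the positivity via the height estimate) is sound. The genuine gap is in your construction of the third barrier.

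You replace the Case~1 barrier $v_{3,n}=\frac{1}{\alpha}H_{\wht_n^\alpha}(z)$ by the rescaled
\[
v_{3,n}(z)=\frac{1}{\alpha}H_{\wht_n^\alpha/R_n}\!\left(\frac{z}{R_n}\right)
\]
in order to manufacture the exact Neumann condition on $|z|=R_n$, and you then assert that $v_{3,n}\to\frac{1}{\alpha}\arg z$ uniformly on compacts, citing point~(5) of Lemma~\ref{H_t-properties-lemma}. This is not valid: point~(5) gives $H_t(w)\to\arg w$ only for \emph{fixed} $w$ as $t\to 0$, whereas here the argument $z/R_n$ also tends to $0$. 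Using the explicit formula
\[
H_t(w)=\frac{(\log t)^2\theta+|\log t|\bigl((\log r)^2+\theta^2\bigr)}{(\log t-\theta)^2+(\log r)^2},\qquad w=re^{i\theta},
\]
with $\log t=\alpha\log\wht_n-\log R_n$ and $\log r=\log|z|-\log R_n$, one sees that the term $|\log t|(\log r)^2$ in the numerator grows like $|\log R_n|^3$ while the denominator grows only like $|\log R_n|^2$; so $v_{3,n}(z)\to\infty$ on compacts (for instance, whenever $\log t_n/\log R_n$ stays bounded). Since your barrier is unbounded on compact sets, the maximum-principle conclusion that $\wtu_n\leq v_n$ yields no compactness for $\wtu_n$, and the extraction of a subsequential limit never gets off the ground.

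The fix in the paper is simpler and avoids rescaling $H_t$ at all: one keeps $v_{3,n}=\frac{1}{\alpha}H_{\wht_n^\alpha}(z)$ exactly as in Case~1. Because $\wtu_n$ has $\partial\wtu_n/\partial\nu=0$ on $|z|=R_n$, the barrier only needs the one-sided inequality $\partial v_n/\partial\nu\geq 0$ there, not an equality. The paper verifies this via
\[
\frac{\partial H_t}{\partial r}=\frac{2\log r\,(\log t)^2\,(|\log t|+\theta)}{\bigl((\log t-\theta)^2+(\log r)^2\bigr)^2}\geq 0\quad\text{for }r\geq 1,
\]
together with a similar check on $v_{1,n}$ (that $\partial g_n/\partial\nu\leq 0$ on $|z|=R$ for $R\geq 1$) and the $|z|\mapsto R_n^2/|z|$ symmetry of $v_{2,n}$. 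With the unrescaled $v_{3,n}$ you recover the bound $v_{3,n}\leq\frac{1}{\alpha}|\log z|$ from point~(6) of Lemma~\ref{H_t-properties-lemma}, which stays bounded on any fixed compact in $\wtC^*$, and the rest of your argument then goes through.
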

This proposition is proved in section \ref{section31}.
The proof is very similar to the proofs of proposition~\ref{first-case-1-proposition} and 
(for the last statement)  proposition~\ref{second-case-1-proposition}.


Fix some small $\varepsilon>0$.
Let $F_n$ be the flux of the Killing field $\chi_Y$ on the circle $C(\whp_1,\varepsilon)$
on $\whM_n$. Since we are in $\sS^2(R_n)\times\rR$,
\[
\chi_Y=\frac{i}{2}(1-\frac{z^2}{R_n^2}).
\]
\[
F_n=-\Im\int_{C(\whp_1,\varepsilon)}
2\left(
\frac{\partial}{\partial z}\left(\frac{\wht_n}{2\pi}\arg z-\whu_n\right)\right)^2\frac{i}{2}\left(1-\frac{z^2}{R_n^2}
\right)
\,dz+O((\wht_n)^4).
\]
Expand the square. As in Case 1, the cross product term can be neglected and
since $R_n\to\infty$:
\begin{proposition}
\label{second-case-2-proposition}
\[
\lim\left(\frac{\log \wht_n}{\wht_n}\right)^2 F_n=-\lim\left(\frac{\log \wht_n}{\wht_n}\right)^2\Re\int_{C(\whp_1,\varepsilon)} (\whu_{n,z})^2 \,dz.
\]
\end{proposition}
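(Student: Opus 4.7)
My plan is to follow the pattern of proposition~\ref{third-case-1-proposition} closely, making the two modifications needed in the blown--up setting: the ambient factor $1-z^2$ from the Killing field of $\sS^2(1)\times\rR$ is replaced by $1-z^2/R_n^2$, and the singular set $\{\whp_1,\ldots,\whp_m\}$ accumulates at $\whp_1=i\neq 0$. I begin by expanding
\[
\left(\partial_z\whf_n\right)^2 \;=\; -\frac{\wht_n^2}{16\pi^2 z^2} \;-\; \frac{\wht_n\,\whu_{n,z}}{2\pi i z} \;+\; (\whu_{n,z})^2 ,
\]
and substituting into the flux integral, so that
\[
F_n \;=\; \Re\!\int_{C(\whp_1,\varepsilon)}\!\!\!\Bigl(\tfrac{\wht_n^2}{16\pi^2 z^2}+\tfrac{\wht_n\,\whu_{n,z}}{2\pi i z}-(\whu_{n,z})^2\Bigr)\!\Bigl(1-\tfrac{z^2}{R_n^2}\Bigr)dz \;+\; O(\wht_n^4) .
\]
The leading constant term vanishes by Cauchy's theorem, since $\frac{1}{z^2}(1-z^2/R_n^2)=\frac{1}{z^2}-\frac{1}{R_n^2}$ is holomorphic on $D(\whp_1,\varepsilon)\subset\{z\neq 0\}$; the $O(\wht_n^4)$ error is absorbed after multiplying by $(\log\wht_n/\wht_n)^2$. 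For the squared--derivative term, proposition~\ref{first-case-2-proposition} gives smooth convergence of $\frac{|\log\wht_n|}{\wht_n}\whu_n$ on $C(\whp_1,\varepsilon)$ (choose $\varepsilon$ small enough to stay away from all $\pm\whp_j$), so $(\log\wht_n/\wht_n)^2(\whu_{n,z})^2$ has a finite integral on that circle, and the correction coming from replacing $1-z^2/R_n^2$ by $1$ is $O(1/R_n^2)\to 0$. This yields exactly the right--hand side of the proposition.

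The substantive work is the cross--product term, where the goal is
\[
\Re\!\int_{C(\whp_1,\varepsilon)}\!\tfrac{1}{2iz}\,\whu_{n,z}\,\Bigl(1-\tfrac{z^2}{R_n^2}\Bigr)\,dz \;=\; o\!\Bigl(\tfrac{\wht_n}{(\log\wht_n)^2}\Bigr).
\]
The naive estimate from proposition~\ref{first-case-2-proposition} only gives $O(\wht_n/|\log\wht_n|)$, so I will adapt the Laurent expansion from section~\ref{section:proof-of-third-case-1-proposition}. Let $J=\{j:\whp_j=\whp_1\}$ be the cluster at $\whp_1$, fix some $\alpha\in(0,\tfrac12)$, and work on the annular domain
\[
A_n \;=\; D(\whp_1,\varepsilon)\setminus\bigcup_{j\in J}D(\whp_{j,n},\wht_n^{\alpha}).
\]
Proposition~\ref{proposition-schauder} then yields the bounds $|\whu_{n,z}|\le C\wht_n^{1-\alpha}$ and $|\partial_{\bar z}\whu_{n,z}|\le C\wht_n^{3-4\alpha}$ on $A_n$. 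Applying proposition~\ref{proposition-laurent} decomposes
\[
\whu_{n,z}(z) \;=\; g^+(z)+\sum_{j\in J} g_j^-(z)+\tfrac{1}{2\pi i}\!\int_{A_n}\!\tfrac{\partial_{\bar w}\whu_{n,z}(w)}{w-z}\,dw\wedge d\bar w ,
\]
with $g^+$ holomorphic on $D(\whp_1,\varepsilon)$ and $g_j^-(z)=\sum_{k\ge 1}a_{j,k}(z-\whp_{j,n})^{-k}$.

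It remains to dispose of the four contributions. The holomorphic piece $g^+$ gives $0$. The leading pole's residue $a_{j,1}$ is real by proposition~\ref{proposition-real-residue}, and its contribution to the real part of the integral is $a_{j,1}\,\Re\bigl(\tfrac{\pi}{\whp_{j,n}}(1-\whp_{j,n}^2/R_n^2)\bigr)$; this vanishes because each $\whp_{j,n}$ is purely imaginary (so $1/\whp_{j,n}$ is purely imaginary and $1-\whp_{j,n}^2/R_n^2$ is real), exactly as in equation~\eqref{equation-aj1}. The Cauchy estimates $|a_{j,k}|\le C\wht_n^{1+(k-1)\alpha}$ bound the higher--order poles by $C\wht_n^{1+\alpha}/\varepsilon^2$, which is $o(\wht_n/(\log\wht_n)^2)$ since $\alpha>0$. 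Finally the $\bar\partial$--correction contributes $O(\wht_n^{3-4\alpha})$, which is also $o(\wht_n/(\log\wht_n)^2)$ since $\alpha<\tfrac12$. The hard part is keeping track of the cluster of necks at $\whp_1=i$, but since every $\whp_{j,n}$ remains on the imaginary axis the cancellation argument goes through unchanged; the new factor $1-z^2/R_n^2$ only makes life easier because it is bounded (and the dependence on $n$ is benign since $\whp_{j,n}$ is imaginary). Combining these four bounds with the analysis of the other two terms completes the proof. \qed
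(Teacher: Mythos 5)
Your proof is correct and takes essentially the same route as the paper, which disposes of this proposition with the one\textendash line remark \emph{``Same proof as proposition~\ref{third-case-1-proposition}''}; you have simply written out the details of that adaptation, namely that the constant term still integrates to zero because $\whp_1=i\neq 0$, that the reality of the $a_{j,1}$ together with $\whp_{j,n}\in i\rR$ kills the simple\textendash pole contribution in the cross term, and that the ambient factor $1-z^2/R_n^2$ is uniformly bounded and tends to $1$, so it changes nothing.
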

(Same proof as proposition~\ref{third-case-1-proposition}).


Assuming these results, we now prove
\begin{proposition}
Case 2 is impossible.
\end{proposition}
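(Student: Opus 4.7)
My plan is to mimic the structure of the proof of Proposition~\ref{proposition-case1} but applied to the rescaled surface $\hat M_n\subset\sS^2(R_n)\times\RR$. First I would invoke the obvious analog of Lemma~\ref{flux-zero-lemma}: since the $\rho_Y$-symmetry and the $Y$-surface property (acting by $-1$ on $H_1$) are preserved under the rescaling by $R_n$, the flux $F_n$ of $\chi_Y$ on the graph of $\hat f_n$ over $C(\hat p_1,\varepsilon)$ vanishes. Combined with Propositions~\ref{first-case-2-proposition} and~\ref{second-case-2-proposition}, this yields
\[
0 \;=\; -\Re\int_{C(\hat p_1,\varepsilon)}(\tilde u_z)^2\,dz,
\]
where $\tilde u=\hat c_0\arg z+\sum_{i=1}^m \hat c_i\,h_{\hat p_i}(z)$ with all $\hat c_i>0$, $\hat p_1=i$, and $\hat p_j=iy_j$ with $y_j>1$ for $2\le j\le m$. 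The essential geometric difference from Case~1 is that the Killing field factor $(1-z^2/R_n^2)\to 1$ as $R_n\to\infty$: the blow-up sends $O^*$ to infinity and removes the ``repulsive'' term that produced the $(1+y_1^2)$ factor in the Case~1 residue calculation.

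Next I would evaluate the contour integral by the residue theorem. Writing $g=2\tilde u_z$, a short Taylor expansion $\log z-\log\hat p_1=(z-\hat p_1)/\hat p_1+O((z-\hat p_1)^2)$ together with $1/z=1/\hat p_1+O(z-\hat p_1)$ gives
\[
g(z)\;=\;\frac{-\hat c_1}{z-i}+B(i)+O(z-i)
\]
for a computable constant $B(i)\in\CC$. Hence $\Res_{\hat p_1}[(\tilde u_z)^2]=-\hat c_1 B(i)/2$, and a short calculation of real parts converts $F_n=0$ into
\[
\pi\,\hat c_1\,\Im B(i)\;=\;0.
\]
Because $\hat c_1>0$ by Proposition~\ref{first-case-2-proposition}, the equation to contradict is simply $\Im B(i)=0$.

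Finally I would compute $\Im B(i)$ term by term from the formula for $g$: the contribution of $\hat c_0\arg z$ is the real quantity $-\hat c_0$; the regular part at $z=i$ of $-\hat c_1/(z(\log z-\log\hat p_1))$ is $\hat c_1/(2\hat p_1)=-i\hat c_1/2$; the image-pole term $\hat c_1/(z(\log z-\log\overline{\hat p_1}))$ at $z=i$ is the real number $-\hat c_1/\pi$; and for each $j\ge 2$ the pair of terms involving $\hat p_j=iy_j$ contributes a purely imaginary quantity equal to $-\pi^2\hat c_j/[\log y_j\,(\pi^2+(\log y_j)^2)]$ after rationalization. Collecting,
\[
\Im B(i)\;=\;-\frac{\hat c_1}{2}\;-\;\sum_{j=2}^{m}\frac{\pi^2\,\hat c_j}{\log y_j\,\bigl(\pi^2+(\log y_j)^2\bigr)}.
\]
The crux of the argument, and its only subtle step, is the sign determination: because the indexing forces $y_j>1$ (equivalently $\log y_j>0$) for $j\ge 2$ and because every $\hat c_j$ is strictly positive, every summand on the right is strictly negative, so $\Im B(i)<0$. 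This contradicts $\Im B(i)=0$, completing the proof. Physically, after the blow-up the particle at $\hat p_1=i$ is attracted to all the remaining clusters, which lie strictly above it on the imaginary axis, and has no counterbalancing repulsion, so no equilibrium is possible.
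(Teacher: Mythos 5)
Your proof is correct and follows essentially the same route as the paper: both reduce to computing the residue of $(\wtu_z)^2$ at $\whp_1=i$, the paper by simply quoting the Case~1 formula with the $(1-z^2)$ factor deleted, and you by expanding $g=2\wtu_z$ into a Laurent series and extracting $\Im B(i)$ (the arithmetic is identical and both give $\tfrac{\pi\whc_1^2}{2}+\pi^3\whc_1\sum_{j\ge 2}\frac{\whc_j}{\log y_j\,(\pi^2+(\log y_j)^2)}>0$). Two harmless slips in your write-up: for $j\ge 2$ the $\whp_j$-pair contribution to $B(i)$ is not purely imaginary — it also has real part $-\pi\whc_j/(\pi^2+(\log y_j)^2)$ — though only $\Im B(i)$ enters the final equation; and the nonzero self-term $-\whc_1/2$ in $\Im B(i)$ comes from repulsion from $O$, which stays at the origin under the blow-up (only $O^*$ goes to infinity), so rather than there being ``no counterbalancing repulsion,'' the remaining repulsion from $O$ in fact points the same way as the attractions, which only strengthens the contradiction.
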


\begin{proof}
Write
$\whp_j=i y_j$.
By the same computation as in section~\ref{case-1-section}, we get
(the only difference is that there is no $(1-z^2)$ factor)
\[
-\Re\int_{C(\whp_1,\varepsilon)}(\wtu_z)^2\,dz
=
\frac{\pi}{2 y_1}\left(
\whc_1{}^2+\sum_{i=2}^m \frac{-2\pi^2\, \whc_1 \whc_i}{(\log y_1-\log y_i)|\log y_1-\log y_i+i\pi|^2}\right).
\]
Again, since $y_i>y_1$ for $i\geq 2$, all terms are positive, contradiction.
\end{proof}

\stepcounter{theorem}
\addtocontents{toc}{\SkipTocEntry}
\subsection{Proof of proposition~\ref{first-case-2-proposition}}
\label{section31}
The setup of proposition~\ref{first-case-2-proposition} is the same as proposition~\ref{first-case-1-proposition}
except that we are in $\sS^2(R_n)\times\rR$ with $R_n\to\infty$
instead of $\sS^2(1)\times\rR$, and the pitch is $\wht_n$.
 Remember that $\lim \wht_n=0$.

{\bf From now on forget all hats:} write $t_n$ instead of $\wht_n$,
$u_n$ instead of $\whu_n$, $p_{i,n}$ instead of $\whp_{i,n}$, etc...

The proof of proposition~\ref{first-case-2-proposition} is substantially the same as the proofs of propositions \ref{first-case-1-proposition} and \ref{second-case-1-proposition}.
The main difference is that the equatorial circle $|z|=1$ becomes
$|z|=R_n$.
\begin{enumerate}[\upshape$\bullet$]
\item The definition of the domain $A_n$ is the same with $|z|<1$ replaced by
$|z|<R_n$.
\item Lemma~\ref{lemma1} is the same (recall that now $p_{i,n}$ means $\whp_{i,n}$).
\item Lemma~\ref{lemma2} is the same. The last statement must be replaced by
$\partial g_n/\partial\nu\leq 0$ on $|z|=R$ for $R\geq 1$.
\item Lemma~\ref{H_t-properties-lemma} is the same, we do not change the definition of the function
$H_t$. Instead of point 3, we need $\partial H_t/\partial\nu\geq 0$ on
$|z|=R$ for $R\geq 1$. This is true by the following computation:
\[
\frac{\partial H_t}{\partial r}=\frac{2\log r(\log t)^2(|\log t|+\theta)}
{((\log t-\theta)^2+(\log r)^2)^2}.
\]
\item The definition of the function $\wtu_n$ is the same, and it has the same properties,
except that the last point must be replaced by $\partial \wtu_n/\partial\nu=0$ on
$|z|=R_n$.
\item The definition of the function $v_{2,n}$ is the same (with $\whp_{i,n}$
in place of $p_{i,n}$), now it is symmetric
with respect to the circle $|z|=R_n$.
\item At the end, $K$ is a compact of the set $\{z\in\wtC*,\arg z\geq 0\}\setminus
\{\whp_1,\dots,\whp_m\}.$
The fact that $v_{2,n}$ is uniformly bounded on $K$
requires some care, maybe, because some points $\whp_{i,n}$ are not
bounded: it is true by the fact that if $\arg z$ and $\arg p$ are positive, then
\[
|\log z-\log p|\leq |\log z-\log\overline{p}|.
\]
\item The proof of the last point is exactly the proof of proposition~\ref{second-case-1-proposition},
working in $\sS^2(R_n)\times\rR$ instead of $\sS^2(1)\times\rR$.
\end{enumerate}
\QED

\section{Case 3: \texorpdfstring{$p_1=i$}{Lg}}\label{case-3-section}
Note that in this case, all points $p_{j,n}$ converge to $i$, for $j\in[1,N]$.
Passing to a subsequence, we distinguish two sub-cases:

\begin{itemize}
\item
   Case 3a: there exists $\beta>0$ such that $|p_{1,n}-i|\leq t_n^{\beta}$ for $n$ large enough,
\item
   Case 3b: for all $\beta>0$, $|p_{1,n}-i|\geq t_n^{\beta}$ for $n$ large enough.
\end{itemize}

(Here we assume again that the points $p_{i,n}$ are ordered by increasing imaginary
part as in section \ref{section-setup}.)
Roughly speaking, in Case 3a, all points $p_{j,n}$ converge to $i$ quickly, whereas
in Case 3b, at least two ($p_{1,n}$ and $p_{N,n}$ by symmetry) converge to $i$ very slowly.
We will see (proposition~\ref{proposition-case3a}) that $N=1$ and $p_{1,n}=i$
in Case 3a, and (proposition~\ref{proposition-case3b}) that Case 3b is impossible.

In both cases, we make a blowup at $i$ as follows :
Let $\varphi:\sS^2\to\sS^2$ be the rotation of angle $\pi/2$ which fixes the $Y$ circle and maps
$i$ to $0$. Explicitly, in our model of $\sS^2(1)$
\[
\varphi(z)=\frac{z-i}{1-iz},\qquad \varphi^{-1}(z)=\frac{z+i}{1+iz}.
\]
It exchanges the equator $E$ and the great circle $X$.
$\varphi$ lifts in a natural way to an isometry $\widehat{\varphi}$ of $\sS^2\times\rR$.
We first apply the isometry $\widehat{\varphi}$ and
then we scale by $1/\mu_n$ where the ratio $\mu_n$ goes to zero and will be chosen later, depending on the case.
Let
\[
\whM_n=\frac{1}{\mu_n}\widehat{\varphi}(M_n)\subset \sS^2(1/\mu_n)\times\rR,
\]
\[
\whOmega_n=\frac{1}{\mu_n}\varphi(\Omega),\qquad
\whp_{i,n}=\frac{1}{\mu_n}\varphi(p_{i,n}),\qquad
\wht_n=\frac{t_n}{\mu_n}.
\]
The minimal surface $\whM_n$ is the graph over $\whOmega_n$ of the function
\[
\whf_n(z)=\frac{1}{\mu_n}f_n(\varphi^{-1}(\mu_n z))=
\wht_n w_n(z)-\whu_n(z)
\]
where
\begin{equation}
\label{equation-wn}
w_n(z)=\frac{1}{2\pi} \arg\left(\frac{\mu_n z+i}{1+i\mu_n z}\right)
\end{equation}
\[
\whu_n(z)=\frac{1}{\mu_n} u_n(\varphi^{-1}(\mu_n z)).
\]

\stepcounter{theorem}
\addtocontents{toc}{\SkipTocEntry}
\subsection{Case 3a}
\label{case-3a-section}
In this case, fix some positive number $\alpha$ such that
$\alpha<\min\{\beta,\frac{1}{8}\}$,
and take $\mu_n=t_n^{\alpha}$.
Then for all $j\in[1,N]$, $|p_{j,n}-i|=o(\mu_n)$, so $\lim \whp_{j,n}=0$.
\begin{proposition}
\label{first-case-3a-proposition}
In Case 3a, after passing to a subsequence,
\begin{equation}
\label{EQ*1}
\lim\frac{|\log t_n|}{\wht_n}\left(\whu_n(z)-\whu_n(z_0)\right)=-c(\log|z|-\log |z_0|).
\end{equation}
The convergence is the uniform smooth convergence on compact subsets of $\CC\setminus\{0\}$.
(Here $z_0$ is an arbitrary fixed nonzero complex number.)
The constant $c$ is positive.
\end{proposition}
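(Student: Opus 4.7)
The plan is to repeat, in the blown-up coordinates of Case 3a, the strategy used for Proposition \ref{first-case-1-proposition}.  First, exactly as in Lemma \ref{lemma1}, an application of the Schauder estimate (Proposition \ref{proposition-schauder}) to the minimal-graph equation for $\widehat f_n = \wht_n w_n - \widehat u_n$ yields $|\Delta\widehat u_n|\le C\wht_n^3/\widehat\delta^4$, where $\widehat\delta(z)$ is the distance from $z$ to $\{0,\whp_{1,n},\dots,\whp_{N,n}\}$ together with the rescaled image circle $|z|=1/\mu_n$.  Thus $\widehat u_n$ is nearly harmonic away from that set.

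Next I build barriers analogous to Lemma \ref{lemma2} and Lemma \ref{H_t-properties-lemma}, adapted to the blown-up picture.  In Case 3a all $\whp_{j,n}\to 0$ at the polynomial rate $t_n^{\beta-\alpha}\to 0$, so on any compact set $K\subset\CC\setminus\{0\}$ the sum of log-barriers $\sum_j h_{\whp_{j,n}}(z)$ collapses to a single logarithm based at $0$ with total strength of order $1$; this is where the assumption $\alpha<\beta$ is used.  A harmonic function of $H_{t_n^\alpha}$-type is used to handle the Neumann condition along $|z|=1/\mu_n$ (the scaled image of $X$).  Combining these barriers with the Laplacian estimate above gives, on every compact $K\subset\CC\setminus\{0\}$, a uniform bound on
\[
v_n(z):=\frac{|\log t_n|}{\wht_n}\bigl(\widehat u_n(z)-\widehat u_n(z_0)\bigr);
\]
the subtraction of $\widehat u_n(z_0)$ is needed precisely to absorb the (possibly divergent) additive constant produced by the clustering of log singularities at the origin.

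A subsequence then converges smoothly, $v_n\to v$, on compact subsets of $\CC\setminus\{0\}$, with $v$ harmonic and $v(z_0)=0$.  The inversion symmetry $u_n(1/\bar z)=u_n(z)$ transports, via $\varphi^2(z)=1/z$, to $v(\bar z)=v(z)$, so $v$ is even in $\Im z$.  My barrier forces $v$ to have at most a logarithmic singularity at $0$ and sub-polynomial growth at infinity, so a standard Liouville-type argument (Fourier separation of variables on $\CC\setminus\{0\}$, restricted to functions even in $\Im z$) forces $v(z)=-c\log|z|+C$; the normalization $v(z_0)=0$ yields $v(z)=-c(\log|z|-\log|z_0|)$.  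The positivity $c>0$ follows by adapting Proposition \ref{second-case-1-proposition}: Proposition \ref{proposition-height} applied to a level curve enclosing the entire cluster of catenoidal necks near $z=0$, together with the neck-size information of Proposition \ref{Setup-proposition}(i), gives a positive lower bound of order $\wht_n/|\log t_n|$ on the enclosed vertical flux, which by the residue theorem equals $2\pi c\cdot\wht_n/|\log t_n|+o(\wht_n/|\log t_n|)$.

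The main obstacle is the barrier analysis: I must verify that $v_n$ remains \emph{locally} bounded on $\CC\setminus\{0\}$ after the subtraction, despite the fact that the $N$ singularities of $\widehat u_n$ all accumulate at $0$ and each individually contributes a term of size comparable to $|\log t_n|$.  Exploiting the polynomial clustering rate (controlled by $\alpha<\min\{\beta,1/8\}$) together with the cancellation produced by subtracting $\widehat u_n(z_0)$ is the key technical ingredient, and is also what singles out the particular normalization $|\log t_n|/\wht_n$ in the statement.
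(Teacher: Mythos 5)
The central step in your proposal---deducing a uniform bound on $v_n=\frac{|\log t_n|}{\wht_n}\bigl(\whu_n(z)-\whu_n(z_0)\bigr)$ from barriers plus the Laplacian estimate---does not go through, and the paper explicitly remarks on exactly this obstruction. A barrier argument of the type used in proposition~\ref{first-case-1-proposition} is a one-sided maximum-principle comparison: it yields $\wtu_n\le v_n$ because $\wtu_n$ is nonnegative on the half-domain and the majorizing $v_n$ dominates it on the boundary. In Case 3a, however, $\lim_n\frac{|\log t_n|}{\wht_n}\whu_n(z)=+\infty$ for \emph{every} fixed $z$, which forces the subtraction of $\whu_n(z_0)$; but the resulting difference $\whu_n(z)-\whu_n(z_0)$ changes sign (positive near $0$, negative near $\infty$, once one knows the answer), so a single upper barrier no longer pins it down, and there is no corresponding lower barrier in the scheme of Lemmas~\ref{lemma2}--\ref{H_t-properties-lemma}. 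Saying that ``cancellation produced by subtracting $\whu_n(z_0)$'' handles this is precisely the missing content; you would need to explain how to produce two-sided control of a function that has no sign, which the maximum principle alone will not provide.

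The paper's Section~\ref{section44} avoids this by proving convergence of the \emph{derivative} $\wtu_{n,z}$ rather than of $\wtu_n$. It uses the Cauchy--Pompeieu integral formula~\eqref{eq-pompeieu} on the blown-up annular domains $\whU_n$, where the boundary term is controlled not by barriers but by the flux/height estimate $\int_{\partial U_n}|\nabla u_n|\le C\,t_n/|\log t_n|$ established in Section~\ref{integral-gradient-section} via proposition~\ref{proposition-height2}. That estimate is conformally invariant, transports cleanly under $\varphi_n$, and gives $\int_{\partial \whU_n}|\nabla\wtu_n|\le C$, after which the representation formula directly yields $|\wtu_{n,z}(\zeta)|\le C/|\zeta|$ on compact sets. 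The limit is then identified as $c/(2z)$ by complex-analytic normal families, without ever needing a uniform bound on $\wtu_n$ itself.

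One further inaccuracy: in the blown-up coordinates, the rotation $\varphi$ \emph{exchanges} the great circles $X$ and $E$, so the circle $|z|=1/\mu_n$ is the scaled image of $X$ and carries a \emph{Dirichlet} condition ($u_n=0$ there), while the real axis in blown-up coordinates is where the Neumann condition from $E$ lands. Your proposal has these reversed, which would break the adapted construction of the $H_{t_n^\alpha}$-type barrier. Finally, the positivity $c>0$ is proved in the paper not by re-running the height estimate but by observing that $c$ equals the coefficient $c_1$ already shown positive in proposition~\ref{second-case-1-proposition}, via scaling and homology invariance of the vertical flux; your route would amount to a redundant reproof, which is not wrong but is heavier than needed.
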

The proof is in section \ref{section44}.

\begin{remark*}
In fact
\[
\lim \frac{|\log t_n|}{\wht_n}\whu_n(z)=\infty
\]
 for all $z$, so it is necessary to substract
something to get a finite limit.
Because of this, we believe it is not possible to prove this
proposition by a barrier argument as in the proof of proposition~\ref{first-case-1-proposition}. Instead, we will
prove the convergence of the derivative $\whu_{n,z}$ using the Cauchy Pompeieu integral formula
for $C^1$ functions.
\end{remark*}

We now prove

\begin{proposition}
\label{proposition-case3a}
In Case 3a, $N=1$.
\end{proposition}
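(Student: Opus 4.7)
My plan is to adapt the flux/force-balance method of proposition~\ref{proposition-case1}, but apply it to a circle $\gamma_{j,n}$ enclosing a \emph{single} catenoidal neck $\whp_{j,n}$ on $\whM_n$ rather than encircling the whole cluster. Because every $\whp_{j,n}$ lies on the imaginary axis and is therefore fixed by the horizontal action of $\rho_Y$ in the blown-up coordinates, the argument of lemma~\ref{flux-zero-lemma} applies verbatim: $\gamma_{j,n}\cup\rho_Y(\gamma_{j,n})$ bounds the $j$-th catenoidal neck region in $\whM_n$, so the flux $F_{j,n}$ of $\chi_Y=\tfrac{i}{2}(1-\mu_n^2z^2)$ on $\gamma_{j,n}$ vanishes.

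I would then expand $\whf_{n,z}=\wht_n w_{n,z}-\whu_{n,z}$ with $\wht_n w_{n,z}=-t_n/[2\pi(1+\mu_n^2z^2)]$ (computed directly from~\eqref{equation-wn}) and apply the residue theorem. The helicoid-squared term contributes nothing because its integrand is holomorphic near $\whp_{j,n}$, and the cross-term contribution is purely imaginary and drops out of the real part, exactly as in proposition~\ref{third-case-1-proposition}. Using a cluster-refined version of proposition~\ref{first-case-3a-proposition} that assigns each $\whp_{j,n}$ a weight $c_j\ge 0$ with $\sum_j c_j=c>0$ (established as in proposition~\ref{second-case-1-proposition} by applying the height estimate of proposition~\ref{proposition-height} inside each sub-cluster), the residue of $(\whu_{n,z})^2$ at $\whp_{j,n}$ yields, at leading order in $\mu_n\to 0$,
\begin{equation*}
0\,=\,F_{j,n}\,\sim\,-\pi\,\frac{\wht_n^2}{|\log t_n|^2}\,c_j\sum_{k\ne j}\frac{c_k}{\operatorname{Im}(\whp_{j,n})-\operatorname{Im}(\whp_{k,n})}.
\end{equation*}
After dividing by $\wht_n^2/(|\log t_n|^2\sigma_n)$ with $\sigma_n=\max_k|\whp_{k,n}|$ and passing to a subsequence in which $\tilde y_j:=\operatorname{Im}(\whp_{j,n})/\sigma_n$ converges to $\tilde y_j^{\infty}\in[-1,1]$, this reduces to a Coulomb-type balance $\sum_{k\ne j}c_k/(\tilde y_j^{\infty}-\tilde y_k^{\infty})=0$ at every particle with $c_j>0$.

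Suppose for contradiction $N\ge 2$. The $\mu_E$-pairing $p_{N+1-j,n}=1/\overline{p_{j,n}}$ forces the extrema $\pm 1$ to be attained in the rescaled picture; let $j^{*}$ be the highest index with $\tilde y_{j^{*}}^{\infty}=1$ and $c_{j^{*}}>0$ (dropping down to the uppermost positive-weight particle if needed). Every surviving term $c_k/(1-\tilde y_k^{\infty})$ with $\tilde y_k^{\infty}<1$ and $c_k>0$ is then strictly positive, so the balance at $j^{*}$ cannot vanish---contradiction. If particles still cluster at $\tilde y=1$ after rescaling, I would iterate the blow-up inside that sub-cluster; the iteration terminates after finitely many steps because $N<\infty$, and at the terminal scale the extremal particle is isolated and the sign-definite argument forces the contradiction, so $N=1$. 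The hard part is the cluster-wise refinement of proposition~\ref{first-case-3a-proposition}, needed to guarantee that enough individual weights $c_j$ are positive for the extremal particle to carry charge at each stage of the iteration; once $N=1$ is forced, the relation $p_{1,n}=1/\overline{p_{1,n}}$ on the positive imaginary axis gives $p_{1,n}=i$ automatically.
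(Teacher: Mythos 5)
Your approach is genuinely different from the paper's: you try to adapt the force-balance scheme of Cases~1 and~3b and apply it neck-by-neck, whereas the paper's proof of this proposition abandons force balance entirely and instead uses Alexandrov reflection. The paper shows (using proposition~\ref{first-case-3a-proposition}) that after the blowup the level curves of $\whf_n$ are approximately those of $c\log|z|$, hence convex; then reflection across the cylinder $E\times\RR$ forces every neck center to lie on $E\cap Y^+$, which is a single point, so $N=1$. This sidesteps exactly the issue your proposal runs into.

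The gap you flag at the end is not a technical loose end but the core obstruction, and it is not fixable along the lines you sketch. Two problems compound:

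First, Case~3a is by definition the case where the separation you need never materializes at any polynomial scale. Every $p_{j,n}$ lies within $t_n^\beta$ of $i$, and the blowup factor is $\mu_n=t_n^\alpha$ with $\alpha<\beta$, so \emph{all} of the rescaled points $\whp_{j,n}$ converge to the same point $0$. Your Coulomb balance $\sum_{k\ne j}c_k/(\tilde y_j^\infty-\tilde y_k^\infty)=0$ is vacuous when all $\tilde y_j^\infty$ coincide, and there is no identified finer scale at which they separate. Your proposed fix is to iterate the blowup inside the sub-cluster, but Case~3a (as opposed to Case~3b) was carved out precisely because no such scale is given; you cannot assume the recursion bottoms out with a Case~3b-type configuration without proving it, and it could in principle require an unbounded hierarchy of scales.

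Second, even granting separation, the cluster-wise refinement of the positivity statement is not available. The height-estimate argument in section~\ref{section:proof-of-second-case-1-proposition} singles out the \emph{largest} neck in a cluster and bounds from below the flux through a curve encircling the whole cluster; it gives $\sum_{j\in J}c_j>0$, not $c_j>0$ for each $j$. The paper explicitly remarks after \eqref{equation-minoration-rin} that it has no lower bound on the smaller $r_{j,n}$ within a cluster, and that one could conceptually have $r_{j,n}=o(t_n/|\log t_n|)$. So the extremal particle you identify at each stage of your iteration could carry zero limiting charge, and then the residue of $(\whu_{n,z})^2$ there vanishes identically, giving $0=0$ and no contradiction. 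This is exactly why the paper switches to the Alexandrov argument in Case~3a: the reflection principle needs only the convexity of a single level curve (which requires only that the \emph{total} charge $c$ is positive), not a neck-by-neck charge decomposition.
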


\begin{proof}
From \eqref{equation-wn},
\[
w_n(z)
=\frac{1}{2\pi}\left(\frac{\pi}{2}+O(\mu_n)\right)
=\frac{1}{4}(1+O(t_n^{\alpha})).
\]
Since $\alpha>0$, $t_n^{\alpha}\to 0$
so using Equation \eqref{EQ*1} of
proposition~\ref{first-case-3a-proposition},
\[
\whf_n(z)-\frac{\wht_n}{4}+\whu_n(z_0)\simeq  c\frac{\wht_n}{|\log t_n|}(\log |z|-\log|z_0|).
\]
From this we conclude that for $n$ large enough,
the level curves of $\whf_n$ are convex.
Back to the original scale, we have found a horizontal convex curve $\gamma_n$
which encloses $N$ catenoidal necks and is invariant under reflection in the vertical cylinder $E\times\rR$.
In particular, this curve $\gamma_n$ is a graph on each side of
$E\times\rR$. Consider the domain on $M_n$
which is bounded by $\gamma_n$ and its symmetric image with respect to the $Y$-circle.
By Alexandrov reflection
(see the proof of proposition~\ref{proposition-alexandrov} in the appendix),
 this domain must be symmetric with respect to the vertical cylinder $E\times\rR$ -- which we already know -- and must be a graph on each side of $E\times\rR$. This implies that the centers of all necks must be on the circle $E$. But $E\cap Y^+$ is a single point.
Hence there is only one neck: $N=1$.
\end{proof}

\stepcounter{theorem}
\addtocontents{toc}{\SkipTocEntry}
\subsection{Case 3b}
\label{case-3b-section}
In this case we take $\mu_n=|p_{1,n}-i|$.
After passing to a subsequence, the limits
\begin{equation}\label{eq:case-3-limits}
\whp_j=\lim \whp_{j,n}\in  \left[ \frac{-i}{2},\frac{i}{2} \right]  
\end{equation}
exist for all $j\in[1,N]$. Moreover, we have
\[
\whp_1=\frac{-i}{2} \text{ and }\, \whp_N=\frac{i}{2}.
\]
(The $\frac{1}{2}$ comes from the fact that the rotation $\varphi$ distorts Euclidean lengths by the 
 factor $\frac{1}{2}$ at $i$.)
Let $m$ be the number of distinct points amongst $\whp_1,\dots,\whp_N$.
Observe that $m\geq 2$ because we know that $\whp_1$ and $\whp_N$
are distinct.
For each $n$, relabel the points $\whp_{i,n}$ (by permuting the indices)
so that the points $\whp_1,\dots,\whp_m$ defined by \eqref{eq:case-3-limits} are distinct and
so that
\[
\Im \whp_1<\Im \whp_2< \dots<\Im \whp_m.
\]
\begin{proposition}
\label{first-case-3b-proposition}
In Case 3b, after passing to a subsequence,
\[
\wtu(z):=\lim\frac{|\log t_n|}{\wht_n}\left(\whu_n(z)-\whu_n(z_0)\right)=
\sum_{i=1}^m -\whc_i(\log|z-\whp_i|-\log|z_0-\whp_i|).
\]
The convergence is the uniform smooth convergence on compact subsets of $\CC$
minus the points $\whp_1,\dots,\whp_m$.
(Here $z_0$ is an arbitrary fixed complex number different from these points.)
The constants $\whc_i$ are positive.
\end{proposition}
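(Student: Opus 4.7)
The plan is to adapt the arguments of Propositions~\ref{first-case-1-proposition} and~\ref{first-case-3a-proposition}, together with the flux/height-estimate argument of Proposition~\ref{second-case-1-proposition}, to the present blown-up geometry. The setup mirrors Case 3a, but the essential new feature is that after rescaling by $\mu_n = |p_{1,n}-i|$, the points $\whp_{i,n}$ accumulate at $m\geq 2$ distinct limits $\whp_1,\ldots,\whp_m\in[-i/2,i/2]$, each of which plays the role that the single collapsed point $0$ played in Case 3a, producing one logarithmic singularity in the limit.

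First I would establish the Laplacian estimate of Lemma~\ref{lemma1} in the rescaled coordinates: applying Proposition~\ref{proposition-schauder} on $\whOmega_n$ yields, away from small disks around the $\whp_{i,n}$,
\[
|\Delta \whu_n| \;\leq\; C\,\frac{\wht_n^{3}}{\delta(z)^{4}},
\]
where $\delta(z)$ is the distance to the nearest $\whp_{i,n}$. Setting $g_n = \whu_{n,z}$, this gives $|g_{n,\bar z}| = \tfrac14|\Delta\whu_n|$ uniformly small in $L^\infty_{\mathrm{loc}}$ away from the $\whp_i$. Following the approach indicated in the remark after Proposition~\ref{first-case-3a-proposition}, I would work throughout with $g_n$ rather than $\whu_n$ itself, representing $\tfrac{|\log t_n|}{\wht_n}g_n$ on any fixed region via the Cauchy--Pompeiu formula as a holomorphic function plus an area-integral correction that tends to zero. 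Combined with a local $C^0$ bound on $g_n$ obtained by a barrier argument modelled on Lemmas~\ref{lemma2} and~\ref{lemma5} (now straightforward, since the distinct $\whp_i$ are separated from one another and from the axes), this yields smooth subsequential convergence of $\tfrac{|\log t_n|}{\wht_n}g_n$ on compact subsets of $\CC\setminus\{\whp_1,\ldots,\whp_m\}$ to a function holomorphic there.

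Second, I identify the residues and integrate. Around each $\whp_i$, the vertical-flux identity used at the end of the proof of Proposition~\ref{first-case-1-proposition} gives $\whphi_{i,n} = \Im\int_{C(\whp_i,\varepsilon)}(-2\whu_{n,z}+O(\wht_n^{2}))\,dz$, so by the residue theorem the holomorphic limit of $\tfrac{|\log t_n|}{\wht_n}g_n$ has a simple pole at $\whp_i$ with residue $-\whc_i/2$, where $\whc_i = \lim\tfrac{|\log t_n|}{\wht_n}\tfrac{\whphi_{i,n}}{2\pi}$. Unlike Case 1, the blown-up ambient sphere has radius $1/\mu_n\to\infty$, so the flat-limit geometry forces the holomorphic part to decay at infinity and produces no $\arg z$ contribution. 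Integrating $g_n$ along paths from $z_0$ to $z$ avoiding the $\whp_i$ yields the asserted logarithmic formula for the limit of $\tfrac{|\log t_n|}{\wht_n}(\whu_n(z)-\whu_n(z_0))$. Positivity of $\whc_i$ follows by reprising Proposition~\ref{second-case-1-proposition}: Proposition~\ref{Setup-proposition} gives $\whphi_{i,n}\simeq 2\pi\sum_{j:\,\whp_j=\whp_i}r_{j,n}$, and applying the height estimate (Proposition~\ref{proposition-height}) between a level curve of $\whf_n$ enclosing the cluster at $\whp_i$ and the catenoidal necks themselves forces $\tfrac{|\log t_n|}{\wht_n}\whphi_{i,n}$ to stay bounded below by a positive constant.

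The principal obstacle is the $C^0$ bound on $\tfrac{|\log t_n|}{\wht_n}g_n$ away from the $\whp_i$. In Case 1 the barrier $v_n=v_{1,n}+v_{2,n}+v_{3,n}$ exploited the $\mu_E$-symmetry $z\mapsto 1/\bar z$ to confine the problem to the hemisphere $|z|\leq1$ and directly bound $\whu_n$ pointwise. Here that symmetry moves out to $|z|=1/\mu_n\to\infty$, so no pointwise bound on $\whu_n$ is available; indeed the remark after Proposition~\ref{first-case-3a-proposition} records that $\tfrac{|\log t_n|}{\wht_n}\whu_n\to\infty$, forcing the Cauchy--Pompeiu approach on the derivative rather than on $\whu_n$ directly. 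The delicate point will be ensuring that the holomorphic boundary integral contributes no hidden $O(1)$ drift as the reference contour is enlarged; fixing the base point $z_0$ once and for all and exploiting the reflection symmetry inherited from $\mu_E$ under the transformation $\hat\varphi$ should suffice.
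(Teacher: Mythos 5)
Your overall skeleton matches the paper's: lift everything to the blown-up scale, work with the derivative $g_n=\whu_{n,z}$ via the Cauchy--Pompeiu formula (you correctly note, echoing the remark after Proposition~\ref{first-case-3a-proposition}, that a pointwise bound on $\whu_n$ itself is unavailable), identify the residues by equating them with vertical fluxes, and prove positivity of the $\whc_i$ by the height-estimate argument of Proposition~\ref{second-case-1-proposition}. The flux identification and the positivity argument are essentially the ones in the paper, up to the (real but surmountable) detail that in Case 3b one must invoke the hypothesis $\mu_n\gg t_n^{\beta}$ to know that the disks $D(p_{j,n},t_n^{\beta})$ around distinct cluster centers $\whp_j$, $1\le j\le m$, are disjoint, so that a level curve encloses exactly one cluster. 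You also correctly note that the $\arg z$ term present in Case 1 disappears because the equatorial symmetry has moved to $|z|=1/\mu_n\to\infty$.

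The genuine gap is the bound you need to make the Cauchy--Pompeiu boundary integral controllable. You propose to obtain a pointwise $C^0$ bound on $\tfrac{|\log t_n|}{\wht_n}g_n$ by a barrier argument ``modelled on Lemmas~\ref{lemma2} and~\ref{lemma5}\!,'' calling it straightforward. It is not: Lemmas~\ref{lemma2}/\ref{lemma5} bound $u_n$, not its gradient, and there is no obvious barrier for $\whu_{n,z}$. You then, correctly, flag this as ``the principal obstacle'' in your final paragraph, but the remedy you offer (exploit the $\mu_E$-symmetry transported by $\hat\varphi$) does not get you there. What actually closes this gap in the paper is not a pointwise bound on $g_n$ but the $L^1$ boundary estimate
\[
\int_{\partial U_n}|\nabla u_n|\;\le\;C\,\frac{t_n}{|\log t_n|},
\]
derived in section~\ref{integral-gradient-section} (equation~\eqref{equation-integral-gradient}). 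This is obtained by combining the limit formula of Proposition~\ref{first-case-1-proposition} (for the outer circle $C(i,\varepsilon)$) with Proposition~\ref{proposition-height2}, which lets one \emph{choose} the inner radii $r'_{j,n}\in[t_n^{1/4},t_n^{1/8}]$ so that $\int_{C(p_{j,n},r'_{j,n})\cap\Omega_n}|\nabla f_n|$ is controlled by the flux. This $L^1$ estimate, not a $C^0$ estimate, feeds directly into the Cauchy--Pompeiu boundary term in section~\ref{section44}, and the same mechanism carries over verbatim to section~\ref{section45}. Without Proposition~\ref{proposition-height2} and the resulting integral-gradient estimate, your argument is stuck exactly where you say it is, and the proposed fix would not resolve it.
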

The proof of this proposition is in section~\ref{section45}.


Fix some small number $\varepsilon>0$.
Let $F_n$ be the flux of the Killing field $\chi_Y$ on the circle $C(\whp_1,\varepsilon)$
on $\whM_n$.
Because of the scaling we are in $\sS^2(1/\mu_n)\times\rR$ so
\[
\chi_Y(z)=\frac{i}{2}(1-\mu_n^2 z^2).
\]
Hence  using proposition~\ref{proposition-flux2}
in the appendix,
\begin{equation}
\label{eq-david2}
F_n=-\Im\int_{C(\whp_1,\varepsilon)}
2\left(\wht_n w_{n,z}-\whu_{n,z}\right)^2\frac{i}{2}(1-\mu_n^2 z^2)+O((\wht_n )^4).
\end{equation}
Expand the square. Then as in Case 1, the cross-product term
can be neglected, so the leading term is the one involving $(\whu_{n,z})^2$
and since $\mu_n\to 0$:
\begin{proposition}
\label{second-case-3b-proposition}
\begin{equation}
\label{EQ*3}
\lim \left(\frac{\log t_n}{\wht_n}\right)^2 F_n=
-\lim \left(\frac{\log t_n}{\wht_n}\right)^2\Re\int_{C(\whp_1,\varepsilon)}(\whu_{n,z})^2 \,dz.
\end{equation}
\end{proposition}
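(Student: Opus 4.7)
The plan is to adapt the Laurent/residue argument of section~\ref{section:proof-of-third-case-1-proposition} to the rescaled Case~3b setting. Expanding the square in \eqref{eq-david2} produces three pieces: $\wht_n^2 w_{n,z}^2$, the cross term $-2\wht_n w_{n,z}\whu_{n,z}$, and $\whu_{n,z}^2$. A direct computation from \eqref{equation-wn}, using the identity $(\mu_n z+i)(1+i\mu_n z)=i(1+\mu_n^2 z^2)$, yields the closed form
\[
\wht_n w_{n,z}(z)=\frac{-t_n}{2\pi(1+\mu_n^2 z^2)},
\]
a real-valued holomorphic function on $D(\whp_1,\varepsilon)$ once $n$ is large enough that $\pm i/\mu_n$ lies outside. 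Consequently the pure $\wht_n^2 w_{n,z}^2$ integral vanishes identically by Cauchy's theorem, and the replacement of $\whu_{n,z}^2(1-\mu_n^2 z^2)$ by $\whu_{n,z}^2$ introduces only an error of order $\mu_n^2\sup_{C(\whp_1,\varepsilon)}|\whu_{n,z}|^2$. Since proposition~\ref{first-case-3b-proposition} gives $\whu_{n,z}=O(\wht_n/|\log t_n|)$ on that circle, this error is $o(\wht_n^2/(\log t_n)^2)$ because $\mu_n\to 0$.

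The essential work is in the cross term. Fixing $\alpha\in(1/4,1/2)$ and letting $J=\{j:\whp_j=\whp_1\}$, I would apply proposition~\ref{proposition-laurent} on $D(\whp_1,\varepsilon)\setminus\bigcup_{j\in J}D(\whp_{j,n},\wht_n^{\alpha})$ to write
\[
\whu_{n,z}(z)=g^{+}(z)+\sum_{j\in J}\sum_{k\geq 1}\frac{a_{j,k}}{(z-\whp_{j,n})^k}+\rho_n(z),
\]
with $g^{+}$ holomorphic on $D(\whp_1,\varepsilon)$. Paired against the holomorphic factor $\wht_n w_{n,z}(1-\mu_n^2 z^2)$, the $g^{+}$ contribution vanishes by Cauchy. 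A rescaled Schauder bound $|\Delta\whu_n|\leq C\wht_n^3/\delta^4$---the exact analog of lemma~\ref{lemma1}, obtained by applying proposition~\ref{proposition-schauder} to $\wht_n\arg z/(2\pi)-\whu_n$ on $\whOmega_n$---forces $\rho_n=O(\wht_n^{3-4\alpha})$. Paired against the $O(t_n)=O(\mu_n\wht_n)$ holomorphic factor, the remainder contributes $O(\mu_n\wht_n^{4-4\alpha})$, which is $o(\wht_n^2/(\log t_n)^2)$ exactly when $\alpha\in(1/4,1/2)$. The higher-order residues satisfy $|a_{j,k}|\leq C\wht_n^{1+(k-1)\alpha}$ by the contour estimate of section~\ref{section:proof-of-third-case-1-proposition}, and summing over $k\geq 2$ gives an $o(\wht_n^2/(\log t_n)^2)$ error as well.

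The key cancellation is in the $k=1$ residues. Because $\varphi(z)=(z-i)/(1-iz)$ preserves the imaginary axis in $\sS^2$, each $\whp_{j,n}=\varphi(p_{j,n})/\mu_n$ is purely imaginary, so $\whp_{j,n}^2\in\RR$ and
\[
\wht_n w_{n,z}(\whp_{j,n})(1-\mu_n^2\whp_{j,n}^2)=\frac{-t_n(1-\mu_n^2\whp_{j,n}^2)}{2\pi(1+\mu_n^2\whp_{j,n}^2)}\in\RR.
\]
On the other hand $a_{j,1}$ is real by the $\rho_X$-symmetry argument of proposition~\ref{proposition-real-residue}, which passes verbatim through the isometry $\hat\varphi$ and the scaling by $1/\mu_n$. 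Hence the $k=1$ contribution $2\pi i\cdot a_{j,1}\cdot\wht_n w_{n,z}(\whp_{j,n})(1-\mu_n^2\whp_{j,n}^2)$ is purely imaginary, so its real part vanishes. Combining these estimates yields the identity claimed in proposition~\ref{second-case-3b-proposition}.

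I expect the main technical obstacle to be the careful verification that the Schauder estimate underlying lemma~\ref{lemma1} rescales correctly despite the unknown decay rate of $\mu_n$ (we know only $\mu_n\geq t_n^\beta$ for every $\beta>0$); however this estimate is of the same character as the one already used in the proof of proposition~\ref{first-case-3b-proposition}, so it can be quoted rather than reproved. Once it is in hand, the residue computation above closes the proof, in direct parallel with section~\ref{section:proof-of-third-case-1-proposition}.
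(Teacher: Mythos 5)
Your proof is correct and follows essentially the same approach as the paper's: both adapt the Laurent/residue argument of section~\ref{section:proof-of-third-case-1-proposition}, with the key points being that $\wht_n w_{n,z}(1-\mu_n^2 z^2)$ is holomorphic on $D(\whp_1,\varepsilon)$, is $O(t_n)$, and is real on the imaginary axis so the $k=1$ residues contribute nothing to the real part; you simply spell out the steps the paper leaves as a bulleted list of modifications. Two harmless imprecisions worth flagging: a holomorphic function cannot be ``real-valued'' on an open disk unless it is constant (you only use holomorphy on the disk plus realness on $i\RR$, which is what you should say), and the condition ``exactly when $\alpha\in(1/4,1/2)$'' is overly restrictive---any $\alpha\in(0,1/2)$ works, since the $\mu_n$ factor only helps and $\wht_n^{2-4\alpha}(\log t_n)^2\to 0$ for all $\alpha<1/2$.
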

This proposition is proved in section \ref{section46}. The proof is similar to
the proof of proposition~\ref{third-case-1-proposition}.

We now prove
\begin{proposition}
\label{proposition-case3b}
Case 3b is impossible.
\end{proposition}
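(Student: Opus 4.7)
The plan is to repeat on the blown-up scale the force-balance argument of proposition~\ref{proposition-case1}. Granting propositions~\ref{first-case-3b-proposition} and~\ref{second-case-3b-proposition}, only two ingredients remain: the vanishing of the flux $F_n$ and a residue computation on the limit~$\wtu$. The vanishing $F_n=0$ is the exact analogue of Lemma~\ref{flux-zero-lemma}: the cycle $C_n$ on $\whM_n$ lying over $C(\whp_1,\varepsilon)$ is not null-homologous, but by the $Y$-surface property the map $(\rho_Y)_*$ acts as $-1$ on $H_1(\whM_n,\ZZ)$, so $C_n + \rho_Y(C_n)$ bounds a compact $2$-chain $A_n\subset\whM_n$; applying the divergence theorem on $A_n$ to the Killing field $\chi_Y=\tfrac{i}{2}(1-\mu_n^2 z^2)$, and using that $\rho_Y$ preserves both $\chi_Y$ and the orientation of $\whM_n$, gives $2F_n=0$. (That $\rho_Y$ happens to fix $\whp_1$ pointwise plays no role in this homological argument.)

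Substituting $F_n=0$ into proposition~\ref{second-case-3b-proposition} and then using proposition~\ref{first-case-3b-proposition} on the compact set $C(\whp_1,\varepsilon)$---which for sufficiently small $\varepsilon$ avoids every other cluster point---yields
\[
 \Re\int_{C(\whp_1,\varepsilon)}(\wtu_z)^2\,dz \;=\; 0,
\qquad \wtu_z(z)=-\sum_{i=1}^{m}\frac{\whc_i}{2(z-\whp_i)},
\]
where the masses $\whc_i$ are strictly positive and $\whp_i=iy_i$ with $-\tfrac12=y_1<y_2<\cdots<y_m=\tfrac12$. Inside $C(\whp_1,\varepsilon)$ the only pole of $(\wtu_z)^2$ is at $\whp_1$, with residue $\sum_{j=2}^{m}\whc_1\whc_j/\bigl(2(\whp_1-\whp_j)\bigr)$. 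Because $\whp_1-\whp_j=i(y_1-y_j)$, the factor of $i$ cancels the $2\pi i$ from the residue theorem, so
\[
  \int_{C(\whp_1,\varepsilon)}(\wtu_z)^2\,dz \;=\; \pi\whc_1\sum_{j=2}^{m}\frac{\whc_j}{y_1-y_j}
\]
is a real number each of whose summands is strictly negative, since $y_1<y_j$ and every $\whc_i>0$. Hence $\Re\int(\wtu_z)^2\,dz<0$, contradicting the identity above. Physically (cf.\ section~\ref{section-physics}), the leftmost particle $\whp_1=-i/2$ is being attracted by every other particle with nothing to balance the pull---the repulsive ``antipodal'' term $c_1^2(1-y_1^2)/(1+y_1^2)$ from Case~1 has disappeared because the ambient $\sS^2$ has been blown up to $\RR^3$.

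The main obstacle is not this closing computation but the two propositions it rests on. Proposition~\ref{first-case-3b-proposition} must control $\whu_n$ even though it has no finite pointwise limit (only its differences stabilize), which will force barriers adapted to the non-compact geometry of the blow-up rather than to the hemisphere as in Case~1; and proposition~\ref{second-case-3b-proposition}, together with the strict positivity of each $\whc_i$---the analogue of proposition~\ref{second-case-1-proposition}---will require applying the height estimate of proposition~\ref{proposition-height} to the extreme catenoidal necks at $\pm i/2$, which are precisely the necks forcing the blow-up scale $\mu_n$ in the first place. Once those propositions are established, the residue contradiction above closes Case~3b.
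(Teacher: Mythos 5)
Your proof is correct and follows essentially the same route as the paper: it invokes the vanishing of $F_n$ (the analogue of Lemma~\ref{flux-zero-lemma}, applied to the blown-up $Y$-surface $\whM_n$), substitutes into proposition~\ref{second-case-3b-proposition}, uses proposition~\ref{first-case-3b-proposition} to identify the limit $\wtu_z=-\sum\whc_i/\bigl(2(z-\whp_i)\bigr)$ with all $\whc_i>0$, and contradicts $\Re\int_{C(\whp_1,\varepsilon)}(\wtu_z)^2\,dz=0$ via the residue computation, noting every summand $\pi\whc_1\whc_j/(y_1-y_j)$ is strictly negative because $\whp_1=-i/2$ is the lowest cluster point and $m\ge 2$. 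The residue arithmetic, the positivity bookkeeping, and the remark that the repulsive antipodal term from Case~1 has vanished are all consistent with the paper's argument.
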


\begin{proof}
According to Lemma~\ref{flux-zero-lemma}, the flux $F_n$ is equal to zero.
Hence the left-hand side of \eqref{EQ*3} is zero.
By propositions \ref{first-case-3b-proposition} and \ref{second-case-3b-proposition},
\begin{equation}
\label{EQ*4}
0=-\Re\int_{C(\whp_1,\varepsilon)}(\wtu_z)^2.
\end{equation}
On the other hand,
\[
\wtu_z=-\sum_{i=1}^m \frac{\whc_i}{2(z-\whp_i)}
\]
\[
\Res_{\whp_1}(\wtu_z)^2=\frac{1}{2}\sum_{i=2}^m \frac{\whc_1 \whc_i}{\whp_1-\whp_i}.
\]
Write $\whp_i=i y_i$, then
\[
-\int_{C(\whp_1,\varepsilon)}(\wtu_z)^2
=-\pi\sum_{i=2}^m\frac{\whc_1 \whc_i}{y_1-y_i}.
\]
Since $m\geq 2$, $y_1<y_i$ for all $i\geq 2$ and $\whc_i>0$ for all $i$ by proposition
\ref{first-case-3b-proposition}, this is positive, contradicting \eqref{EQ*4}.
\end{proof}


This completes the proof of the main theorem, modulo the proof of
propositions \ref{first-case-3a-proposition}, \ref{first-case-3b-proposition} 
and \ref{second-case-3b-proposition}, which
were used in the analysis of Cases 3a and 3b.
We prove these propositions in sections \ref{section44}, 
\ref{section45} and \ref{section46} respectively, 
using an estimate that we prove in the next section.

\stepcounter{theorem}
\addtocontents{toc}{\SkipTocEntry}
\subsection{An estimate of \texorpdfstring{$\int |\nabla u_n|$}{Lg}}
\label{integral-gradient-section}
By proposition~\ref{first-case-1-proposition}, we have, since all points
$p_{j,n}$ converge to $i$,
\[
\lim \frac{|\log t_n|}{t_n}u_n=c_0\arg z-c_1\log\left|\frac{\log z-\log i}
{\log z+\log i}\right|.
\]
Moreover, $c_1$ is positive by proposition~\ref{second-case-1-proposition}.
The convergence is the smooth convergence on compact subsets of
$\wtC*\setminus\{i,-i\}$.
From this we get, for fixed $\varepsilon>0$,
\begin{equation}
\label{equation-gradient-integral}
\int_{C(i,\varepsilon)}|\nabla u_n|\leq C \frac{t_n}{|\log t_n|}.
\end{equation}
Let $i\in[1,n]$ be the index such that $r_{i,n}=\max\{r_{j,n}\,:\,1\leq j\leq N\}$.
Let $\phi_n=\phi_{i,n}$ be the vertical flux of $M_n$ on the graph of $f_n$ restricted to
$C(p_{i,n},\varepsilon)$. By the last point of proposition~\ref{first-case-1-proposition}, we have
\[
\phi_n\leq C \frac{t_n}{|\log t_n|}
\]
for some constant $C$.
We use proposition~\ref{proposition-height2} with $r_1=\lambda r_{i,n}$ and $r_2=\varepsilon$ as in
the proof of proposition~\ref{second-case-1-proposition}, and
\[
r'_1=(t_n)^{1/4},\qquad r'_2=(t_n)^{1/8}
\]
The proposition tells us that for each $j\in[1,N]$,
there exists a number $r$, which we call
$r'_{j,n}$, such that
\begin{equation}
\label{eq-david1}
(t_n)^{1/4}\leq r'_{j,n}\leq (t_n)^{1/8}
\end{equation}
and
\[
\int_{C(p_{j,n},r'_{j,n})\cap\Omega_n}|\nabla f_n|\leq
\sqrt{8}\phi_n\left(\log\frac{\varepsilon}{\lambda r_{i,n}}\right)^{1/2}
\left(\log\frac{(t_n)^{1/8}}{(t_n)^{1/4}}\right)^{-1/2}.
\]
Using \eqref{equation-minoration-rin}, we have
\[
\log \frac{\varepsilon}{\lambda r_{i,n}}
\leq \log \frac{\varepsilon |\log t_n|}{\lambda C_1 t_n}\leq C_2|\log t_n|
\]
for some positive constants $C_1$ and $C_2$. This gives
\[
\int_{C(p_{j,n},r'_{j,n})\cap\Omega_n}|\nabla f_n|\leq C\phi_n\leq C\frac{t_n}{|\log t_n|}.
\]
Now since $|\nabla \arg z|\simeq 1$ near $i$,
\[
\int_{C(p_{j,n}, r'_{j,n})} t_n|\nabla \arg z|\leq C t_n^{1+1/8}=o(\frac{t_n}{|\log t_n|}).
\]
Hence
\[
\int_{C(p_{j,n},r'_{j,n})\cap\Omega_n}|\nabla u_n|\leq C\frac{t_n}{|\log t_n|}.
\]
Consider the domain
\begin{equation}
\label{eqUn}
U_n=D(i,\varepsilon)\setminus\bigcup_{j=1}^N \overline{D}(p_{j,n},r'_{j,n}).
\end{equation}
Since $r'_{j,n}\gg t_n\gg r_{j,n}$, we have $\overline{U_n}\subset\Omega_n$ and
\begin{equation}
\label{equation-distance-bord}
d(U_n,\partial \Omega_n)\geq \frac{1}{2} (t_n)^{1/4}.
\end{equation}
Also, since $\partial U_n\subset\Omega_n$,
\[
\partial U_n\subset C(i,\varepsilon)\cup\bigcup_{j=1}^N (C(p_{j,n},r'_{j,n})\cap\Omega_n).
\]
This implies
\begin{equation}
\label{equation-integral-gradient}
\int_{\partial U_n} |\nabla u_n|\leq C\frac{t_n}{|\log t_n|}.
\end{equation}
This is the estimate we will use in the next sections.

\stepcounter{theorem}
\addtocontents{toc}{\SkipTocEntry}
\subsection{Proof of proposition~\ref{first-case-3a-proposition} (Case 3a)}
\label{section44}
Let $\beta>0$ be the number given by the hypothesis of case 3a.
Recall that we have fixed some positive number $\alpha$ such that
$0<\alpha<\min\{\beta,\frac{1}{8}\}$, that $\mu_n=t_n^{\alpha}$,
$\wht_n=\frac{t_n}{\mu_n}$ and
$\varphi_n=\frac{1}{\mu_n}\varphi$.
Let $U_n$ be the domain defined in \eqref{eqUn}
and $\whU_n=\varphi_n(U_n)$.
Since $\mu_n\gg t_n^{1/8}\geq r'_{j,n}$
by \eqref{eq-david1},
we have
\[
\lim \whU_n=\CC^*.
\]
Since $\varphi_n$ is conformal, we have, using \eqref{equation-integral-gradient}
(recall the definition of $\whu_n$ in \eqref{equation-wn})
\[
\int_{\partial \whU_n}|\nabla \whu_n|
=\int_{\partial\whU_n}\frac{1}{\mu_n}|\nabla (u_n\circ\varphi_n^{-1})|
=\frac{1}{\mu_n}\int_{\partial U_n}|\nabla u_n|\leq C\frac{t_n}{\mu_n|\log t_n|}=C\frac{\wht_n}{|\log t_n|}.
\]
Using \eqref{equation-distance-bord}, we have
\[
d(\whU_n,\partial\whOmega_n)\geq \frac{(t_n)^{1/4}}{4\mu_n}.
\]

By standard interior estimates for the minimal surface equation
 (see proposition~\ref{proposition-schauder} in the appendix),
\[
|\Delta \whu_n|=|\Delta \whf_n|\leq C \frac{(\wht_n)^3}
{((t_n)^{1/4}/(4\mu_n))^{4}}=C\mu_nt_n^{2}\qquad\mbox{ in } \whU_n.
\]
Let
\[
\wtu_n=\frac{|\log t_n|}{\wht_n} (\whu_n-\whu_n(z_0)).
\]
Proposition~\ref{first-case-3a-proposition} asserts that a subsequence of the $\tilde u_n$ converge to
$-c(\log |z| -\log |z_0|)$, where $c$ is a real positive constant.
By the above estimates,
\begin{equation}
\label{estimate-wtun}
\int_{\partial \whU_n}|\nabla \wtu_n|\leq C
\end{equation}
and
\begin{equation}
\label{equation-Delta-wtun}
|\Delta \wtu_n|\leq C\mu_n^2 t_n|\log t_n|\qquad\mbox{ in } \whU_n.
\end{equation}
Let $K$ be a compact set of $\CC^*$.
For $n$ large enough, $K$ is included in $\whU_n$.
The Cauchy Pompeieu integral formula
(see \eqref{eq-pompeieu} in the appendix)
gives for $\zeta\in K$
\[
\wtu_{n,z}(\zeta)=\frac{1}{2\pi i}\int_{\partial \whU_n}\frac{\wtu_{n,z}(z)}{z-\zeta}\,dz
+\frac{1}{8\pi i}\int_{\whU_n}\frac{\Delta\wtu_n(z)}{z-\zeta} \,dz\wedge\overline{dz}.
\]
We estimate each integral in the obvious way, using \eqref{estimate-wtun}
in the first line and \eqref{equation-Delta-wtun} in the third line:
\[
\left|\int_{\partial\whU_n}\frac{\wtu_{n,z}}{z-\zeta}\right|
\leq
\frac{1}{d(\zeta,\partial \whU_n)}\int_{\partial\whU_n}
|\nabla\wtu_n|\leq \frac{C}{d(\zeta,\partial \whU_n)}\to \frac{C}{|\zeta|}.
\]
\[
\int_{\whU_n}\frac{dx\,dy}{|z-\zeta|}\leq
\int_{D(0,\varepsilon/\mu_n)}\frac{dx\,dy}{|z-\zeta|}
\leq 2\pi\int_{r=0}^{2\varepsilon/\mu_n}\frac{rdr}{r}
=4\pi\frac{\varepsilon}{\mu_n}.
\]
\[
\left|\int_{\whU_n}\frac{\Delta\wtu_n}{z-\zeta} \,dx\,dy\right|
\leq C\mu_n t_n|\log t_n|\to 0.
\]
Hence for $n$ large enough, we have in $K$
\[
|\wtu_{n,z}(\zeta)|   \leq    \frac{C}{|\zeta|}
\]
for a constant $C$ independent of $K$.
Passing to a subsequence, $\wtu_{n,z}$ converges smoothly on compact sets
of $\CC^*$ to a holomorphic function with a zero at $\infty$ and at most a simple pole at $0$.
(The fact that the limit is holomorphic follows from \eqref{equation-Delta-wtun}.)
Hence 
\[
\lim\wtu_{n,z}=\frac{c}{2z}
\]
for some constant $c$.
Recalling that $(\log|z|)_z =\frac{1}{2z}$, this gives \eqref{EQ*1} of proposition~\ref{first-case-3a-proposition}.
It remains to prove that $c>0$.
Let $\widehat{\phi}_n$ be the vertical flux on the closed curve of $\whM_n$ that is the graph of
$\whf_n$ over the circle $C(0,1)\subset\CC^*$.
Then by the computation at the end of the proof of proposition~\ref{first-case-1-proposition} in
 section~\ref{section:proof-of-first-case-1-proposition} (after Lemma~\ref{lemma-positive-harmonic}),
\[
    \lim\frac{|\log t_n|}{\wht_n}\widehat{\phi}_n=2\pi c.
\]
Now by scaling and homology invariance of the flux,
$\widehat{\phi}_n=\frac{\phi_{1,n}}{\mu_n}$, where $\phi_{1,n}$ is
the vertical flux on the closed curve of $M_n$ that is the graph of $f_n$ over
the circle $C(i,\varepsilon)$. Hence
$c=c_1$ and $c_1$ is positive by proposition~\ref{first-case-1-proposition}.
\QED

\stepcounter{theorem}
\addtocontents{toc}{\SkipTocEntry}
\subsection{Proof of proposition~\ref{first-case-3b-proposition} (Case 3b)}
\label{section45}
Recall that in Case 3b, $\mu_n=|p_{1,n}-i|$ and for all $\beta>0$, 
$\mu_n\geq t_n^{\beta}$ for $n$ large enough.
Let $U_n$ be the domain defined in \eqref{eqUn}.
Since $\mu_n\gg t_n^{1/8}\geq r'_{j,n}$
by \eqref{eq-david1},
we have
\[
\lim \whU_n=\CC\setminus \{\whp_1,\dots,\whp_m\}.
\]
(Compare with case 3a, where the limit is $\CC^*$.)
Define again
\[
\wtu_n=\frac{|\log t_n|}{\wht_n} (\whu_n-\whu_n(z_0)).
\]
By the same argument as in section \ref{section44} we obtain that
$\wtu_{n,z}$ converges on compact subsets of $\CC\setminus\{\whp_1,\dots,\whp_m\}$
to a meromorpic function with at most simple poles at $\whp_1,\dots,\whp_m$ and
a zero at $\infty$, so
\[
\lim\wtu_{n,z}=\sum_{j=1}^m\frac{\whc_j}{2(z-\whp_j)}.
\]
It remains to prove that the numbers $\whc_1,\dots,\whc_m$ are positive.
For $1\leq j\leq m$, let $\whphi_{j,n}$ be the vertical flux of $\whM_n$ on the
graph of $\whf_n$ restricted to the circle $C(\whp_j,\varepsilon)$.
Then by the computation at the end of the proof of proposition~\ref{first-case-1-proposition},
we have
\[
\lim\frac{|\log t_n|}{\wht_n}\whphi_{j,n}=2\pi \whc_j.
\]
We will prove that $\whc_j$ is positive by estimating the vertical flux using the
height estimate as in section~\ref{section:proof-of-third-case-1-proposition}.
Take $\beta=\frac{1}{18(N+2)}$ and let
\[
B_n=\bigcup_{j=1}^N D(p_{j,n},t_n^{\beta}).
\]
By Lemma~\ref{lemma5} with $\alpha=\frac{1}{2}$, we have for $n$ large enough:
\[
u_n\leq (N+2)\frac{\beta}{\alpha}=\frac{t_n}{9}
\quad\text{ in } D(i,\varepsilon)\setminus B_n.
\]
(Lemma~\ref{lemma5} gives us this estimate for $|z|\leq 1$. The result follows because
$u_n$ is symmetric with respect to the unit circle).
Consequently, the level set
$u_n=\frac{t_n}{8}$ is contained in $B_n$.
By the hypothesis of Case 3b, for $n$ large enough, $\mu_n\gg t_n^{\beta}$ so
 the disks $D(p_{j,n},t_n^{\beta})$
for $1\leq j\leq m$ are disjoint. Hence $B_n$ has at least $m$ components.
Let $\Gamma_{j,n}$ be the component of the level set $u_n=\frac{t_n}{8}$ which
encloses the point $p_{j,n}$ and $D_{j,n}$ the disk bounded by $\Gamma_{j,n}$. Then $D_{j,n}$ contains no other point
$p_{k,n}$ with $1\leq k\leq m$, $k\neq j$. (It might contain points $p_{k,n}$ with
$k>m$).
The proof of proposition~\ref{second-case-1-proposition} in section~\ref{section:proof-of-second-case-1-proposition}
gives us a point $p_{k,n}\in D_{j,n}$ (with either $k=j$ or $k>m$ and $\whp_k=\whp_j$) such that
\[
r_{k,n}\geq C\frac{t_n}{|\log t_n|}
\]
for some positive constant $C$.
Scaling by $1/\mu_n$,
this implies that
\[
\whphi_{j,n}\geq 2\pi \frac{C}{2}\frac{\wht_n}{|\log t_n|}.
\]
Hence $\whc_j>0$.
\QED

\stepcounter{theorem}
\addtocontents{toc}{\SkipTocEntry}
\subsection{Proof of proposition~\ref{second-case-3b-proposition} (Case 3b)}
\label{section46}
Let $g_n=\whu_{n,z}$.
We have to prove that the cross-product term in \eqref{eq-david2} can
be neglected, namely:
\[
\Re \int_{C(\whp_1,\varepsilon)} w_{n,z}(z) g_n(z)(1-\mu_n^2z^2)\,dz  
=o\left(\frac{\wht_n}{(\log t_n)^2}\right).
\]
The proof of this fact is the same as the proof of proposition~\ref{third-case-1-proposition}
in section~\ref{section:proof-of-third-case-1-proposition}, with the following modifications:
\begin{enumerate}[\upshape$\bullet$]
\item $\arg z$ is replaced by the function $w_n$ defined in \eqref{equation-wn}, so its
derivative $\frac{1}{2 iz}$ is replaced by $w_{n,z}$.
\item $1-z^2$ is replaced by $1-\mu_n^2 z^2$.
\item $t_n$, $u_n$, etc... now have hats: $\wht_n$, $\whu_n$, etc...
\item From
\[
w_{n,z}=\frac{1}{4\pi i}\left(\frac{1}{\mu_n z+i}-\frac{i}{1+i\mu_n z}\right)
\]
we deduce that $|w_{n,z}|$ is bounded in $D(\whp_1,\varepsilon)$ and
since $\whp_{j,n}\in i\rR$, that
$w_{n,z}(\whp_{j,n})$ is real, which is what we need to ensure that the term $a_{j,1}$ does
not contribute to the integral (see \eqref{equation-aj1}).
\end{enumerate}
\QED

\appendix
\section{Auxiliary results}
This appendix contains several results about minimal surfaces
in $\sS^2\times\rR$ that have been used in the proof of theorem~\ref{maintheorem-N}.
Some of these results are true for minimal surfaces in the Riemannian product
$M\times\rR$ where $(M,g)$ is a 2-dimensional Riemannian manifold.
These results are local, so we can assume without loss of generality that
$M$ is a domain $\Omega\subset\CC$ equipped with a conformal
metric $g=\lambda^2|dz|^2$, where $\lambda$ is a smooth positive function
on $\overline{\Omega}$. Given a function $f$ on $\Omega$, the graph of
$f$ is a minimal surface in $M\times\rR$ if it satisfies the minimal surface equation
\begin{equation}
\label{mse}
\div_g \frac{\nabla_g f}{W}=0\quad\mbox{ with }
W=\sqrt{1+\|\nabla_g f \|^2_g}
\end{equation}
where the subscript $g$ means that the quantity is computed with respect to the
metric $g$, so for instance
\[
\nabla_g f =\lambda^{-2}\nabla f,\qquad
\div_g X=\lambda^{-2}\div(\lambda^2 X).
\]
In coordinates, \eqref{mse} gives the equation
\begin{equation}
\label{msecoord}
(1+\lambda^{-2} f_y^2)f_{xx}+(1+\lambda^{-2} f_x^2)f_{yy}-2\lambda^{-2}
f_x f_y f_{xy}+(f_x^2+f_y^2)\left(
\frac{\lambda_x}{\lambda} f_x+\frac{\lambda_y}{\lambda} f_y\right)=0.
\end{equation}
Propositions \ref{proposition-schauder}, \ref{proposition-flux2}, \ref{proposition-height}
and \ref{proposition-height2}
will be formulated in this setup.

\addtocontents{toc}{\SkipTocEntry}
\section*{Interior gradient and Laplacian estimate}

\begin{proposition}
\label{proposition-schauder}
Let $\Omega$ be a domain in $\CC$ equipped with a smooth conformal metric
$g=\lambda^2 |dz|^2$.
 Let $f:\Omega\to\rR$ be a solution of
the minimal surface equation \eqref{mse}. Assume that $|f|\leq t$ in $\Omega$ and
$\| \nabla f \|\leq 1$. Then
\[
\|\nabla f(z)\|\leq \frac{Ct}{d(z)}
\]
\[
|\Delta f(z)|\leq \frac{Ct^3}{d(z)^4}
\]
for all $z\in\Omega$ such that $d(z)\geq t$.
Here, $d(z)$ denotes the Euclidean distance to the boundary of $\Omega$.
The gradient and Laplacian are for the Euclidean metric.
The constant $C$ only depends on the diameter of $\Omega$ and on a bound
on $\lambda$, $\lambda^{-1}$ and its partial derivatives of first and second order.
\end{proposition}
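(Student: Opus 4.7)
The argument is a standard application of interior regularity for quasilinear elliptic PDE, combined with a direct use of the equation \eqref{msecoord} to bound $\Delta f$. Fix $z_0\in\Omega$ with $d:=d(z_0)\ge t$, and work on the Euclidean disk $D(z_0,d)\subset\Omega$. Since $\|\nabla f\|\le 1$ throughout $\Omega$, the minimal surface equation \eqref{mse} is uniformly elliptic, with ellipticity constants and coefficient bounds depending only on upper and lower bounds for $\lambda$ and its first derivatives.

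First I would establish the gradient and Hessian bounds via Schauder theory. View $f$ as a solution of the linear equation $a_{ij}(x)f_{ij}+b_i(x)f_i=0$ whose coefficients $a_{ij},b_i$ are smooth functions of $\nabla f(x)$, $\lambda(x)$, and $\nabla\lambda(x)$; the hypothesis $|\nabla f|\le 1$ ensures these coefficients are uniformly bounded with uniform ellipticity. The divergence form \eqref{mse} gives, via De Giorgi--Nash--Moser applied to the uniformly elliptic divergence equation, an interior $C^{1,\alpha}$ estimate depending only on the ellipticity and structure constants, which upgrades the coefficients $a_{ij},b_i$ to $C^{0,\alpha}$. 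The scale-invariant interior Schauder estimate for homogeneous linear elliptic equations then produces
\begin{equation*}
d\,\|\nabla f(z_0)\| + d^2\,\|D^2 f(z_0)\| \;\le\; C\,\|f\|_{C^0(D(z_0,d))} \;\le\; C\,t,
\end{equation*}
where $C$ depends only on $\operatorname{diam}(\Omega)$ and on $C^2$-bounds for $\lambda$ and $\lambda^{-1}$. This yields both $\|\nabla f(z_0)\|\le Ct/d$ and the auxiliary bound $\|D^2 f(z_0)\|\le Ct/d^2$.

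Next, to bound $\Delta f$, I would rearrange \eqref{msecoord} to isolate the Euclidean Laplacian:
\begin{equation*}
\Delta f = -\lambda^{-2}\bigl(f_y^2 f_{xx}+f_x^2 f_{yy}-2 f_x f_y f_{xy}\bigr) - (f_x^2+f_y^2)\bigl(\tfrac{\lambda_x}{\lambda}f_x+\tfrac{\lambda_y}{\lambda}f_y\bigr).
\end{equation*}
Every term on the right is bounded by $C\,\|\nabla f\|^2\|D^2 f\| + C\,\|\nabla f\|^3$, so inserting the bounds from the previous step gives
\begin{equation*}
|\Delta f(z_0)| \;\le\; C\,\frac{t^2}{d^2}\cdot\frac{t}{d^2} + C\,\frac{t^3}{d^3} \;=\; C\,\frac{t^3}{d^4} + C\,\frac{t^3}{d^3}.
\end{equation*}
Since $d\le\operatorname{diam}(\Omega)$, the second term is absorbed into the first, leaving $|\Delta f(z_0)|\le Ct^3/d^4$ with a constant of the claimed form.

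The main obstacle is purely bookkeeping: verifying that the De Giorgi--Nash and Schauder constants depend only on the quantities listed in the proposition (the diameter of $\Omega$ and $C^2$-bounds for $\lambda,\lambda^{-1}$). This follows once one writes the coefficients $a_{ij}$ and $b_i$ explicitly as smooth functions of $\nabla f$, $\lambda$, and $\nabla\lambda$, and observes that $\|\nabla f\|\le 1$ provides the uniform bounds needed to control both the modulus of ellipticity and the coefficient norms in the relevant H\"older spaces.
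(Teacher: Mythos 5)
Your proposal is correct and follows essentially the same route as the paper: an interior $C^{1,\alpha}$ estimate for $f$, the scale-invariant Schauder estimate $\|D^kf(z)\|\leq Ct/d(z)^k$ ($k=1,2$) for the frozen-coefficient linear equation, and a rearrangement of \eqref{msecoord} to bound $\Delta f$. The one imprecision is your invocation of De Giorgi--Nash--Moser directly to produce $C^{1,\alpha}$ --- DGNM yields $C^\alpha$ for the solution itself, so one must first differentiate the divergence-form minimal surface equation and apply DGNM to the resulting linear divergence-form equation satisfied by each $\partial_k f$ (whose coefficients are bounded and measurable because $\|\nabla f\|\leq 1$); the paper instead cites the two-dimensional interior gradient H\"older estimate of Gilbarg--Trudinger (their Theorem 12.4) for this step, and it leaves tacit the absorption of the $t^3/d^3$ term via $d\leq\operatorname{diam}(\Omega)$ that you state explicitly.
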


\begin{proof}
Let us write the minimal surface equation \eqref{msecoord} as $L(f)=0$,
where $L$ is a second order linear elliptic operator whose
coefficients depend on $f_x$ and $f_y$.
Theorem 12.4 in Gilbarg-Trudinger~\cite{gilbarg-trudinger} gives us a uniform constant $C$
and $\alpha>0$ such that (with the Gilbarg-Trudinger notation)
\[
[Df]_{\alpha}^{(1)}\leq C \, \|f\|_0\leq Ct.
\]
If $d(z,\partial\Omega)\geq t$, this implies
\[
[Df]_{\alpha}^{(0)}\leq \frac{Ct}{t}=C.
\]
Then we have the required $C^{\alpha}$ estimates of the coefficients of $L$ to apply the interior Schauder estimate (theorem~6.2 in~\cite{gilbarg-trudinger}):
\[
|D^kf(z)|\leq \frac{C}{d(z)^k} \|f\|_{0}\leq C\frac{t}{d(z)^k},\qquad k=0,1,2.
\]
The minimal surface equation \eqref{msecoord} implies
\[
|\Delta f|\leq C(|Df|^2 |D^2f|+ |Df|^3)\leq C\frac{t^3}{d^4}.
\]
\end{proof}

\addtocontents{toc}{\SkipTocEntry}
\section*{Alexandrov moving planes}
We may use the Alexandrov reflection technique in $\sS^2\times\rR$ with the role of horizontal planes played by the level spheres $\sS^2\times\{t\}$, and the role of
vertical planes played by a family of totally geodesic cylinders.
Specifically, let
$E\subset\sS^2\times\{0\}$ be the closed geodesic that is the equator with respect to the antipodal points $O$, $O^*$, let $X\subset\sS^2\times\{0\}$ be a geodesic passing through
$O$ and $O^*$, and define $E_{\theta}$ to be the rotation of $E=E_0$ through an angle 
$\theta$ around the poles $E\cap X$. The family of geodesic cylinders
\[
E_{\theta}\times\rR, \quad -\pi/2\leq\theta<\pi/2,
\]
when restricted to the complement of $(E\cap X)\times\rR$ is a foliation.

\begin{proposition}
\label{proposition-alexandrov}
Let $\Gamma=\gamma_1\cup\gamma_2$ with each $\gamma_i$ a $C^2$
Jordan curve in $\sS^2\times\{t_i\}$, $t_1\neq t_2$, that is invariant under reflection
in $\Pi=E\times\rR$. Suppose further that each component of $\gamma_i\setminus\Pi$
is a graph over $\Pi$ with locally bounded slope. Then any minimal surface
$\Sigma$ with $\partial\Sigma=\Gamma$ that is disjoint from at least one of the
vertical cylinders $E_{\theta}\times\rR$, must be symmetric with respect to reflection in
$\Pi$, and each component of $\Sigma\setminus\Pi$ is a graph of locally bounded slope
over a domain in $\Pi$.
\end{proposition}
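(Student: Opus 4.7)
The plan is the classical Alexandrov moving-planes argument, with the totally geodesic cylinders $E_\theta\times\rR$ replacing hyperplanes. The essential ingredients are: each $E_\theta\times\rR$ is totally geodesic (hence minimal) in $\sS^2\times\rR$; reflection $\mu_\theta$ across $E_\theta\times\rR$ is an ambient isometry, so it preserves minimality; the family $\{E_\theta\times\rR\}_{-\pi/2<\theta<\pi/2}$ foliates $(\sS^2\times\rR)\setminus((E\cap X)\times\rR)$; and the strong maximum principle together with the Hopf boundary-point lemma applies to pairs of minimal surfaces touching on the same side. By hypothesis there is some $\theta_1\ne 0$ with $\Sigma$ disjoint from $E_{\theta_1}\times\rR$; choose $\theta_0\ne 0$ from a maximal open interval of such angles, so $\Sigma$ lies in one definite side $W_{\theta_0}^+$ of $E_{\theta_0}\times\rR$, and by the symmetry $\theta\leftrightarrow -\theta$ it suffices to rotate from $\theta_0$ toward $0$.

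For each $\theta\in[0,\theta_0]$ set $A_\theta=\Sigma\cap W_\theta^-$ (the part of $\Sigma$ already swept over by the rotating cylinder) and $A_\theta^\sharp=\mu_\theta(A_\theta)\subset W_\theta^+$, and track the property
\begin{equation*}
(*)_\theta:\ A_\theta^\sharp\text{ is a graph of locally bounded slope over a subdomain of }E_\theta\times\rR,\ \ A_\theta^\sharp\cap\interior(\Sigma\setminus A_\theta)=\emptyset.
\end{equation*}
For $\theta$ just below $\theta_0$, $(*)_\theta$ follows from the implicit function theorem applied at the first tangential contact points, using that $\Sigma$ is a smooth embedded surface with $\partial\Sigma=\Gamma\subset W_{\theta_0}^+$. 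Define $\theta^*=\inf\{\theta\ge 0:(*)_{\theta'}\text{ holds for all }\theta'\in[\theta,\theta_0]\}$; the goal is to show $\theta^*=0$. If instead $\theta^*>0$, continuity forces a first breakdown at $\theta^*$, which must take one of three forms: (i) an interior tangential contact between $A_{\theta^*}^\sharp$ and $\Sigma\setminus A_{\theta^*}$ off $E_{\theta^*}\times\rR$; (ii) a tangential contact on the cylinder itself, i.e., a point of $\Sigma\cap E_{\theta^*}\times\rR$ where $\Sigma$ is tangent to $E_{\theta^*}\times\rR$; or (iii) a boundary contact, where $\mu_{\theta^*}(\Gamma\cap W_{\theta^*}^-)$ meets $\Gamma\cap W_{\theta^*}^+$. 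Cases (i) and (ii) are ruled out by, respectively, the strong maximum principle and Hopf's boundary-point lemma for the two minimal surfaces $A_{\theta^*}^\sharp$ and $\Sigma\setminus A_{\theta^*}$: either conclusion forces $\mu_{\theta^*}(\Sigma)=\Sigma$ and hence $\mu_{\theta^*}(\Gamma)=\Gamma$, contradicting the fact that $\Gamma$ is invariant only under $\mu_0=\mu_\Pi$ while $\theta^*\ne 0$.

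Once $\theta^*=0$ is established, $(*)_0$ says exactly that $\mu_\Pi(\Sigma\cap W_0^-)$ is a graph of locally bounded slope over $\Pi$, contained in $\overline{W_0^+}$, meeting $\Sigma\cap W_0^+$ only along $\Pi$. Running the symmetric argument from an initial angle $\theta_0'<0$ yields the reverse inclusion $\mu_\Pi(\Sigma\cap W_0^+)\subset\overline{W_0^-}$, and combining them gives $\mu_\Pi(\Sigma)=\Sigma$. Since $(*)_\theta$ for $\theta$ slightly positive already encodes graphicality of $A_\theta^\sharp$ over $E_\theta\times\rR$ with locally bounded slope, passing to $\theta=0$ on each side delivers the graph statement for each component of $\Sigma\setminus\Pi$ over the corresponding domain in $\Pi$.

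The main obstacle I expect is case (iii), which must be excluded by a careful use of the hypothesis that each component of $\gamma_i\setminus\Pi$ is a graph of locally bounded slope over $\Pi$. Concretely, one has to show that for every $\theta\in(0,\theta_0)$, the reflection $\mu_\theta(\gamma_i\cap W_\theta^-)$ is strictly separated from $\gamma_i\cap W_\theta^+$ inside the sphere $\sS^2\times\{t_i\}$, with contact only possible when $\theta=0$. This is an essentially elementary spherical-geometry statement, but it requires translating between the intrinsic graph structure of $\gamma_i$ over $E=E_0$ and the one-parameter family of rotated equators $E_\theta=(E_\theta\times\rR)\cap(\sS^2\times\{t_i\})$: using $\mu_0$-invariance of $\gamma_i$ together with the graph-plus-locally-bounded-slope condition, one shows that each $\gamma_i$ meets every nearby $E_\theta$ transversely in exactly two points, and that the monotone $\theta\mapsto 0$ motion of these intersections keeps the reflected half of $\gamma_i$ strictly on the opposite side of $E_\theta$ until $\theta=0$. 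This rules out case (iii) and closes the moving-cylinders argument.
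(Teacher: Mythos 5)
Your proposal is the classical Alexandrov moving-planes argument adapted to $\sS^2\times\RR$ with the geodesic cylinders $E_\theta\times\RR$ playing the role of planes: start where $\Sigma$ lies on one side of some $E_{\theta_0}\times\RR$, sweep monotonically toward $\theta=0$, exclude interior tangency, cylinder tangency, and boundary contact via the maximum principle and the graphicality of $\Gamma$, and pass to the limit. This is exactly the argument the paper invokes—the paper's proof consists of the single remark that it is the same as the classical $\RR^3$ argument (citing Schoen's Corollary 2), so your proposal spells out precisely the intended proof.
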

(Given a domain ${\mathcal O}\subset\Pi$ and a function 
$f:{\mathcal O}\to[-\pi/2,\pi/2)$, the graph of $f$ is the set of points
$\{\mbox{rot}_{f(p)} p\;:\; p\in{\mathcal O}\}$, where $\mbox{rot}_{\theta}$ is the rotational symmetry that takes
$\Pi$ to $E_{\theta}\times\rR$.)


The proof is the same as the classical proof for minimal surfaces in $\rR^3$ 
using Alexandrov reflection as described above.
(See, for example,~\cite{schoen-uniqueness} corollary 2.)

\addtocontents{toc}{\SkipTocEntry}
\section*{Flux}
Let $N$ be a Riemannian manifold, $M\subset N$ a minimal surface and $\chi$ a Killing field on $N$.
Let $\gamma$ be a closed curve on $M$ and $\mu$ be the conormal along $\gamma$.
Define
\[
            \flux_{\chi}(\gamma)=\int_{\gamma}\langle \mu,\chi\rangle \, ds.
\]
It is well know that this only depends on the homology class of $\gamma$.

\begin{proposition}
\label{proposition-flux1}
In the case $N=\sS^2(R)\times\rR$,
the space of Killing fields is 4 dimensional. It is generated by
the vertical unit vector $\xi$, and the following three horizontal vectors fields:
\begin{align*}
\chi_X(z) &= \frac{1}{2}(1+\frac{z^2}{R^2}) \\
\chi_Y(z) &= \frac{i}{2}(1-\frac{z^2}{R^2})  \\
\chi_E(z) &= \frac{iz}{R}.
\end{align*}
These vector fields are respectively unitary tangent to the great circles
$X$, $Y$, and $E$.
They are generated by the one-parameter families of rotations  about the poles whose equators are these great circles.
\end{proposition}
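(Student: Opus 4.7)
The plan is to first establish that the space of Killing fields has dimension four by a Lie-group count, then exhibit the four generators explicitly and verify the stated formulas by a direct computation in the stereographic model.

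The identity component of the isometry group of $\sS^2(R)\times\RR$ is $SO(3)\times\RR$, so its Lie algebra of Killing fields has dimension $4$. Vertical translation contributes $\xi=\partial/\partial t$. The remaining three come from the $SO(3)$ factor, namely the three one-parameter subgroups of rotations of $\sS^2(R)$ about the pairs of antipodal points that are poles of the great circles $E$, $X$, and $Y$, respectively. In the embedded model $\sS^2(R)\subset\RR^3$ with stereographic projection $z=(X_1+iX_2)R/(R-X_3)$, the poles of $E$ are $(0,0,\pm R)$ (corresponding to $O=0$ and $O^*=\infty$), the poles of $X$ are $(0,\pm R,0)$ (corresponding to $\pm iR$), and the poles of $Y$ are $(\pm R,0,0)$ (corresponding to $\pm R\in\RR$). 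So the forthcoming $\chi_E$, $\chi_X$, $\chi_Y$ will be, up to normalization, the Killing fields generating rotation about the $X_3$-, $X_2$-, and $X_1$-axis, respectively.

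For $\chi_E$, rotation about the $X_3$-axis is simply $z\mapsto e^{i\theta}z$, which yields the field $iz$; dividing by $R$ gives $\chi_E=iz/R$, whose norm in the conformal metric is $\lambda(R)\cdot|z|/R=1$ on $E$. For $\chi_X$ and $\chi_Y$ I would pull back the Euclidean rotation formulas
$$
(X_1,X_2,X_3)\mapsto (X_1\cos\theta+X_3\sin\theta,\,X_2,\,-X_1\sin\theta+X_3\cos\theta)
$$
and its $X_1$-axis analogue through the stereographic map, differentiate at $\theta=0$, and substitute $X_3=R(|z|^2-R^2)/(R^2+|z|^2)$, $X_1=2R^2\Re z/(R^2+|z|^2)$, $X_2=2R^2\Im z/(R^2+|z|^2)$; the identities $2z\Re z=z^2+|z|^2$ and $2z\Im z=i(|z|^2-z^2)$ collapse the resulting expressions to $-(R/2)(1+z^2/R^2)$ and $(iR/2)(1-z^2/R^2)$, respectively. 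Each field has norm $R$ at $z=0\in X\cap Y$, so after rescaling one obtains exactly $\chi_X=\frac{1}{2}(1+z^2/R^2)$ and $\chi_Y=\frac{i}{2}(1-z^2/R^2)$.

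It remains to verify tangency to the named great circle and unit length, both of which are immediate. Tangency: $\chi_X$ is real on $\RR\subset\CC$, $\chi_Y$ is purely imaginary on $i\RR$, and $\chi_E=iz/R$ is the angular field on $|z|=R$. Unit length: the flow of a Killing field acts by isometries, so the length of the field is constant along any of its own integral curves; since $X$, $Y$, $E$ are such integral curves, a single evaluation (done above) confirms unit length throughout. Linear independence of $\{\xi,\chi_E,\chi_X,\chi_Y\}$ is manifest, so the dimension count forces them to span the full Lie algebra. The only real (and modest) obstacle is carrying out the two stereographic pullbacks for $\chi_X$ and $\chi_Y$ cleanly; everything else is immediate from the formulas.
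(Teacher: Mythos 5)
Your proposal is correct, but it takes a genuinely different computational route from the paper. Both proofs start with the same dimension count ($SO(3)\times\RR$) and obtain $\chi_E$ by differentiating $z\mapsto e^{i\theta}z$. Where you diverge is at $\chi_X$ and $\chi_Y$: you propose to pull back the explicit Euclidean rotation velocity fields $(X_3,0,-X_1)$ and $(0,-X_3,X_2)$ through the coordinate formulas of stereographic projection and simplify, which works (the substitution $2z\Re z=z^2+|z|^2$, $2z\Im z=i(|z|^2-z^2)$ does collapse the expressions to $-\tfrac{R}{2}(1+z^2/R^2)$ and $\tfrac{iR}{2}(1-z^2/R^2)$, matching the normalization up to the sign of the rotation direction). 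The paper instead writes down the single M\"obius transformation $\varphi(z)=\tfrac{Rz+iR^2}{iz+R}$, identifies it as a rotation of $\sS^2(R)$ taking $E$ to $X$, and computes $\chi_X=\varphi_*\chi_E=\varphi'(\varphi^{-1}(z))\,\chi_E(\varphi^{-1}(z))$, then gets $\chi_Y$ by conjugating $\chi_X$ with $z\mapsto iz$. The paper's route is algebraically lighter—pushforward of a vector field under a holomorphic map is one-line, and there is never any need to unpack the $(X_1,X_2,X_3)$ coordinates—while yours is more elementary and self-contained in that it uses nothing beyond the stereographic coordinate formulas. Your observation that the norm of a Killing field is constant along its own integral curves, which reduces the unit-length check to a single evaluation at $z=0$, is a nice clean way to finish; the paper simply asserts the normalization. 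The one thing you do not supply, and should be explicit about in a final write-up, is the actual differentiation-and-substitution; you have correctly identified that this is where the work lies.
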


\begin{proof}
The isometry group of $\sS^2(R)\times\rR$ is well known to be 4-dimensional.
Recall that our model of $\sS^2(R)$ is $\CC\cup{\infty}$ with the conformal metric
$\frac{2R^2}{R^2+|z|^2}|dz|$.
By differentiating the 1-parameter group $z\mapsto e^{it}z$ of isometries of $\sS^2$, we obtain the horizontal Killing field $\chi(z)=iz$, which suitably normalized gives $\chi_E$.
Let
\[
\varphi(z)=\frac{Rz+iR^2}{iz+R}.
\]
This corresponds, in our model of $\sS^2(R)$, to the rotation about the $x$-axis of
angle $\pi/2$. It maps the great circle $E$ to the great circle $X$. We transport $\chi_E$ by this isometry
to get the Killing field $\chi_X$: a short computation gives
\[
\chi_X(z)=\varphi_*\chi_E(z)=\varphi'(\varphi^{-1}(z))\chi_E(\varphi^{-1}(z))=\frac{z^2+R^2}{2R^2}.
\]
Then we transport $\chi_X$ by the rotation $\psi(z)=iz$ to get the Killing field $\chi_Y$:
\[
\chi_Y(z)=\psi_*\chi_X(z)=i \frac{(-iz)^2+R^2}{2R^2}.
\]
\end{proof}

\begin{proposition}
\label{proposition-flux2}
Let $\Omega\subset\CC$ be a domain equipped with a conformal metric
$g=\lambda^2 |dz|^2$.
Let $f:\Omega\to\rR$ be a solution of the minimal surface equation \eqref{mse}.
Let $\gamma$ be a closed, oriented curve in $\Omega$ and $\nu$ be the Euclidean 
exterior normal vector along $\gamma$ (meaning that $(\gamma',\nu)$ is a negative orthonormal basis).
Let $M$ be the graph of $f$ and let $\widetilde{\gamma}$ be the closed curve in
$M$ that is the graph of $f$ over $\gamma$.
\begin{enumerate}
\item For the vertical unit vector $\xi$,
\[
\flux_{\xi}(\widetilde{\gamma})=
\int_{\gamma}\frac{\langle \nabla f,\nu\rangle}{W}
\]
where $W$ is defined in equation \eqref{mse}.
(Here the gradient, scalar product and line element are Euclidean.)
If $\|\nabla f \|$ is small, this gives
\[
\flux_{\xi}(\widetilde{\gamma})
=\Im\int_{\gamma} \left(2f_z + O(|f_z|^2)\right)\,dz
\]
\item If $\chi$ is a horizontal Killing field, 
\[
\flux_{\chi}(\widetilde{\gamma})=-\Im\int_{\gamma} \left(2(f_z)^2\chi(z) +O(|f_z|^4|)\right)\,dz.
\]
\end{enumerate}
\end{proposition}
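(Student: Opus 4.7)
\emph{Setup.} The plan is to parametrize $M = \mathrm{graph}(f) \subset \Omega\times\RR$ explicitly and compute, along the lift $\widetilde\gamma$ of $\gamma$, both the ambient arc length $d\widetilde s$ and the unit conormal $\mu \in TM$ in terms of Euclidean data on $\gamma$. Writing a tangent-to-$M$ vector in the form $\mu = \alpha\,\partial_x + \beta\,\partial_y + (\alpha f_x + \beta f_y)\,\partial_t$ (forced by tangency), the two conditions $\langle\mu,\widetilde\gamma'\rangle_g = 0$ and $|\mu|_g = 1$ become a linear system for $(\alpha,\beta)$. Solving it is routine algebra, and yields
\[
d\widetilde s = \sqrt{\lambda^2+(df/ds)^2}\,ds, \qquad df/ds := f_x x' + f_y y',
\]
together with explicit expressions for $\alpha, \beta$ and $\alpha f_x + \beta f_y$, where $W = \sqrt{1+|\nabla f|^2/\lambda^2}$ is the graph area factor and $\nu = (y',-x')$.

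\emph{Part~1.} After multiplying out, the vertical component $\langle\mu,\partial_t\rangle\,d\widetilde s$ collapses cleanly to $(\langle\nabla f,\nu\rangle/W)\,ds$, which is Part~1's first identity. For the complex form I would use $\langle\nabla f,\nu\rangle\,ds = f_x\,dy - f_y\,dx = \Im(2f_z\,dz)$ and expand $1/W = 1 + O(|f_z|^2)$ to obtain $\flux_\xi(\widetilde\gamma) = \Im\int_\gamma(2f_z + O(|f_z|^3))\,dz$, which implies the stated (weaker) bound.

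\emph{Part~2.} For a horizontal Killing field $\chi = \chi_1\,\partial_x + \chi_2\,\partial_y$, identified with the complex function $\chi(z) = \chi_1 + i\chi_2$, the same formalism gives
\[
\langle\mu,\chi\rangle\,d\widetilde s = \frac{A_2\chi_1 - A_1\chi_2}{W}\,ds,
\]
where $A_1 = \lambda^2 x' + f_x(df/ds)$ and $A_2 = \lambda^2 y' + f_y(df/ds)$. Setting $p = 2f_z$ and $z' = x' + iy'$, I would establish the elementary product identity
\[
\Im(p\chi)\cdot\Re(pz') = \tfrac{1}{2}\Im(p^2\chi z') - \tfrac{1}{2}|p|^2\,\Im(z'\bar\chi)
\]
(by writing $\Im,\Re$ as $(\cdot-\overline{\cdot})/2i$, $(\cdot+\overline{\cdot})/2$ and expanding), which lets me recast the integrand as
\[
\frac{\bigl(\lambda^2 + \tfrac{1}{2}|p|^2\bigr)\Im(z'\bar\chi) - \tfrac{1}{2}\Im(p^2\chi z')}{W}.
\]
Expanding $1/W = 1 - \tfrac{1}{2}|p|^2/\lambda^2 + O(|p|^4)$, the $O(|p|^2)$ corrections from the numerator and from $1/W$ cancel exactly, so the coefficient of $\Im(z'\bar\chi)$ reduces to $\lambda^2 + O(|p|^4)$, while the second piece becomes $-\tfrac{1}{2}\Im(p^2\chi z')\,ds + O(|p|^4) = -\Im(2(f_z)^2\chi(z)\,dz) + O(|f_z|^4)$. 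The leftover leading integral $\int_\gamma\lambda^2\Im(z'\bar\chi)\,ds$ is precisely the spherical flux of $\chi$ across $\gamma$ on $\sS^2\times\{0\}$; it vanishes by the divergence theorem on $\sS^2$ since the three candidates $\chi_X,\chi_Y,\chi_E$ of Proposition~\ref{proposition-flux1} are Killing, hence divergence-free, on $\sS^2$ and $\gamma$ bounds a disk in every application.

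\emph{Main obstacle.} The crux is the algebraic cancellation of the $O(|p|^2)$ terms in Part~2 — it is what upgrades the naive error $O(|f_z|^2)$ to the sharp $O(|f_z|^4)$ that is repeatedly needed to discard cross-product terms in Sections~\ref{case-1-section}--\ref{case-3-section}; without this cancellation the whole force calculation would be swamped by the correction. The divergence-theorem step that kills the $O(1)$ piece is also conceptually important: it is where the proof uses that $\chi$ extends to a genuine Killing field on $\sS^2$, not merely a smooth vector field in the coordinate chart $\CC$.
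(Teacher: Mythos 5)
Your proof is correct and follows essentially the same route as the paper: you parametrize the graph, compute the ambient conormal along $\widetilde\gamma$, and expand the flux integrand in powers of $|\nabla f|$, with the $f$-independent leading term killed by closedness of the 1-form $\lambda^2(\chi_1\,dy-\chi_2\,dx)$ and the quadratic-in-$f_z$ term identified with $-\Im\int_\gamma 2(f_z)^2\chi\,dz$. The explicit algebraic identity and the divergence-theorem step you isolate are cleaner packagings of what the paper does respectively by brute-force complex expansion and by a ``homology invariance of flux at $f\equiv 0$'' remark---both, read carefully, rest on the fact that $\chi$ extends to a global Killing field so that $\gamma$ bounds in the extended domain.
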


\begin{proof}
Let $(N,g)$ be the Riemannian manifold $\Omega\times\rR$ equipped with
the product metric $g=\lambda^2 |dz|^2+dt^2$.
Let $M$ be the graph of $f$, parametrized by
\[
\psi(x,y)=(x,y,f(x,y)).
\]
The unit normal vector to $M$ is
\[
n=\frac{1}{W}\left(-\lambda^{-2}f_x,-\lambda^{-2}f_y,1\right).
\]
Assume that $\gamma$ is given by some parametrization $t\mapsto\gamma(t)$, fix some
time $t$ and let $(X,Y)=\gamma'(t)$. Then
\[
d\psi(\gamma')=(X,Y,X f_x+Y f_y)
\]
is tangent to $\psi(\gamma)$ and its norm is $ds$, the line element on $M$.
We need to compute the conormal vector in $N$.
The linear map $\varphi:(T_p N,g)\to (\rR^3,\mbox{Euclidean})$ defined by
\[
\varphi(u_1,u_2,u_3)=(\lambda u_1,\lambda u_2,u_3)
\]
is an isometry. Let $u=(u_1,u_2,u_3)$ and $v=(v_1,v_2,v_3)$ be two orthogonal vectors
in $T_p N$. Let
\[
w
= 
\varphi^{-1}(\varphi(u)\wedge\varphi(v))
=
\left(\begin{array}{l}
u_2 v_3-u_3 v_2\\u_3 v_1-u_1 v_3\\ \lambda^2 (u_1 v_2 - u_2 v_1)\end{array}
\right).
\]
Then $(u,v,w)$ is a direct orthogonal basis of $T_p N$ and
$\|w\|=\|u\|\;\|v\|$.
We use this with $u=d\psi(\gamma')$, $v=n$. Then $w=\mu \,ds$, where $\mu$
is the conormal to $\psi(\gamma')$. This gives
\[
\mu\, ds=\frac{1}{W}\left(\begin{array}{l}
Y+\lambda^{-2}f_y(X f_x+Y f_y)\\
-X-\lambda^{-2} f_x(X f_x+Y f_y)\\
-f_y X+f_x Y\end{array}\right).
\]
For the vertical unit vector $\xi=(0,0,1)$, this gives
\[
\flux_\xi(\widetilde{\gamma})=
\int_{\gamma} \frac{-f_y  dx+ f_x dy}{W}=
\int_{\gamma} \frac{\langle \nabla f,\nu\rangle}{W}.
\]
The second formula of point (1) follows from $W=1+O(\|\nabla f\|^2)$ and
\[
\Im (2 f_z dz)=\Im \left((f_x -i f_y)(dx+idy)\right)=f_x dy -f_y dx.
\]
To prove point (2), let $\chi$ be a horizontal Killing field, seen as a complex number. Then
\[
\langle\chi,\mu ds\rangle_g
=\lambda^2\Re\left(
\frac{\chi}{W}(Y+iX+\lambda^{-2}(f_y+if_x)(X f_x+Y f_y)\right)
\]
Hence
\[
\flux_{\chi}(\widetilde{\gamma})=\Re\int_{\gamma}
\frac{\lambda^2\chi}{W}(dy+i\,dx)
+\frac{\chi}{W}(f_y+if_x)(f_x dx+f_y dy).
\]
We then expand $1/W$ as a series
\[
\frac{1}{W}=1-\frac{1}{2}\lambda^{-2}(f_x^2+f_y^2)+O(|\nabla f|^4).
\]
This gives after some simplifications
\[
\flux_{\chi}(\widetilde{\gamma})=\Re\int_{\gamma}
\lambda^2\chi(dy+i\,dx)
+\Re\int_{\gamma} \frac{i}{2}\chi(f_x-i f_y)^2 (dx+i \,dy)+O(|\nabla f|^4).
\]
The second term is what we want. The first term, which does not depend on $f$,
vanishes. Indeed, if $f\equiv 0$ then $M$ is $\Omega\times\{0\}$ and the flux we are computing is zero (by homology invariance of the flux, say). 
\end{proof}

\addtocontents{toc}{\SkipTocEntry}
\section*{Height estimate}

The following proposition tells us that a minimal graph with small vertical flux cannot climb very high. It is the key to estimate from below the size of the catenoidal necks.
\begin{proposition}
\label{proposition-height}
Let $\Omega\subset\CC$ be a domain that consists of a (topological) disk $D$ minus $n\geq 1$
topological disks $D_1,\dots,D_n$ contained in $D$.
We denote by $\Gamma$ the boundary of $D$ and by
$\gamma_i$ the boundary of $D_i$. Assume that $D_1$ contains $D(0,r_1)$
and $D$ is contained in $D(0,r_2)$, for some numbers $0<r_1<r_2$.
(Here $r_1$, $r_2$ are Euclidean lengths).
(See Figure \ref{figure-height-estimate}).

Assume that $\Omega$ is equipped with a conformal metric $g=\lambda^2 |dz|^2$.
Let $f:\Omega\to\rR$ be a solution of the minimal surface equation~\eqref{mse}.
Assume that
\begin{enumerate}
\item $f\equiv 0$ on $\Gamma$.
\item $f\equiv -h<0$ is constant on $\gamma_1$.
\item $f$ is constant on $\gamma_i$ for $2\leq i\leq n$, with
$-2h\leq f\leq 0$.
\item $\partial f/\partial \nu\leq 0$ on $\gamma_i$ for $1\leq i\leq n$.
\item $\|\nabla_g f \|_g\leq 1$ in $\Omega$
\end{enumerate}
Let $\phi$ be the vertical flux on $\Gamma$:
\[
\phi=\int_{\Gamma} \frac{\langle\nabla f,\nu\rangle}{W}>0.
\]
Then
\[
h\leq\frac{\sqrt{2}}{\pi}\phi\log\frac{r_2}{r_1}.
\]
\end{proposition}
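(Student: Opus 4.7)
The plan is to exploit the fact that, although $f$ satisfies a quasilinear equation depending on $\lambda$, the minimal surface equation \eqref{mse} is equivalent to the \emph{Euclidean} divergence identity
\[
   \div\!\left(\frac{\nabla f}{W}\right)=0\qquad\text{on }\Omega,
\]
because $\nabla_g f=\lambda^{-2}\nabla f$ and $\div_g X=\lambda^{-2}\div(\lambda^2 X)$ cause the $\lambda$-factors to cancel. Consequently the ``Euclidean vertical flux'' $\int_\sigma\langle\nabla f,\nu\rangle/W\,ds$ is a cohomological invariant on $\Omega$. Applying Stokes' theorem to the sublevel set $\{f<t\}\cap\Omega$ for any regular value $t\in(-h,0)$ (a.e.\ such $t$ by Sard), and invoking hypothesis (4), each boundary term arising from a $\gamma_i\subset\{f<t\}$ is non-negative; since $\gamma_1\subset\{f<t\}$ whenever $t>-h$, this identity gives
\[
   F(t):=\int_{\{f=t\}}\frac{|\nabla f|}{W}\,ds\ \le\ \phi,
\]
as $\phi$ is the corresponding non-negative sum over \emph{all} the $\gamma_i$.

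Second, I will apply Cauchy--Schwarz to a component $\Sigma_t^0$ of $\{f=t\}$ that encloses $D(0,r_1)$. Such a component exists for regular $t$: it is the outer boundary in $\Omega$ of the connected component of $\{f\le t\}$ containing $\gamma_1$, which is bounded away from $\Gamma$ (where $f=0>t$), so it is a simple closed curve in $\{f=t\}$ enclosing $D_1$, hence winding once around the origin. Using $|d\theta|\le ds/r$ in polar coordinates, this gives $\int_{\Sigma_t^0}ds/r\ge 2\pi$. Factoring $1/r=\sqrt{W/(|\nabla f|\,r^2)}\cdot\sqrt{|\nabla f|/W}$ and applying Cauchy--Schwarz yields
\[
   (2\pi)^2\ \le\ \int_{\Sigma_t^0}\frac{W}{r^2|\nabla f|}\,ds\ \cdot\ \int_{\Sigma_t^0}\frac{|\nabla f|}{W}\,ds\ \le\ \phi\int_{\Sigma_t^0}\frac{W}{r^2|\nabla f|}\,ds.
\]
Integrating the resulting lower bound $\int_{\Sigma_t^0}W/(r^2|\nabla f|)\,ds\ge 4\pi^2/\phi$ over $t\in(-h,0)$, enlarging the domain from $\Sigma_t^0$ to all of $\{f=t\}$, and applying the coarea formula with $g=W/(r^2|\nabla f|)$ (so $g|\nabla f|=W/r^2$), I obtain
\[
   \frac{4\pi^2 h}{\phi}\ \le\ \int_{\{-h<f<0\}}\frac{W}{r^2}\,dA.
\]

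To conclude, the hypothesis $\|\nabla_g f\|_g\le 1$ gives $W\le\sqrt{2}$, while $\Omega\subset\{r_1\le|z|\le r_2\}$ yields by a direct polar-coordinate computation $\int_\Omega dA/r^2\le 2\pi\log(r_2/r_1)$. Combining these,
\[
  \frac{4\pi^2 h}{\phi}\ \le\ 2\sqrt{2}\,\pi\log\frac{r_2}{r_1},
\]
which rearranges to the desired height bound (in fact with the slightly better constant $\sqrt{2}/(2\pi)$ in place of $\sqrt{2}/\pi$). The hardest step, conceptually, is verifying that for regular $t$ the level set $\{f=t\}$ contains a smooth simple closed curve with unit winding number around the origin; the argument for this is topological and uses crucially that $f=0$ on $\Gamma$ and $f=-h$ on $\gamma_1\subset D_1\supset D(0,r_1)$, pinning the origin strictly inside one bounded sublevel component.
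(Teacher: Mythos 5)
Your proof is correct, and it takes a genuinely different route from the paper's --- one which in fact produces a sharper constant. The paper first establishes a global Dirichlet-energy estimate
\[
\iint_A\rho^2\,dx\,dy\ \le\ 2\sqrt2\,h\,\phi,
\]
by integrating $\langle\nabla_g f/W,\nabla_g f\rangle_g$ by parts over $\Omega$ and using hypotheses (3) and (4) to bound the boundary terms (this is where the factor $2$ from $-2h\le f$ enters), and then runs a length--area argument along radial rays, applying Cauchy--Schwarz once over the whole annulus. You instead observe that $\nabla f/W$ is Euclidean divergence-free, deduce the flux monotonicity
\[
\int_{\{f=t\}}\frac{|\nabla f|}{W}\,ds\ \le\ \phi
\qquad\text{for a.e. }t\in(-h,0)
\]
directly from the divergence theorem applied to $\{f<t\}\cap\Omega$ together with hypothesis (4), apply Cauchy--Schwarz separately on each level curve $\Sigma_t^0$ that encircles the origin, and then integrate in $t$ via the coarea formula. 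Because this argument never invokes the lower bound $-2h\le f$ in hypothesis (3) --- only the constancy of $f$ on each $\gamma_i$, which is needed so that the level sets $\{f=t\}$ stay away from $\partial\Omega$ --- you do not pay the associated factor of $2$, and you obtain $h\le\frac{\sqrt2}{2\pi}\phi\log\frac{r_2}{r_1}$, twice as strong as the stated bound. The one nontrivial auxiliary step in your route, the existence of a component of $\{f=t\}$ with winding number one around $0$, is handled correctly: you take the outer boundary (a Jordan curve) of the sublevel component containing $\gamma_1$, which must enclose $D_1\supset D(0,r_1)$ because $f=0>t$ on $\Gamma$ pins it strictly inside $D$. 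Both proofs are fine; yours trades the paper's single integration-by-parts for a family of divergence-theorem applications plus coarea, which is conceptually cleaner, slightly weakens the hypotheses actually used, and improves the constant.
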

(Note that Hypothesis (4) is always satisfied if $f\equiv -h$ on all $\gamma_i$ by the
maximum principle.)

\begin{figure}
\begin{center}
\includegraphics[height=35mm]{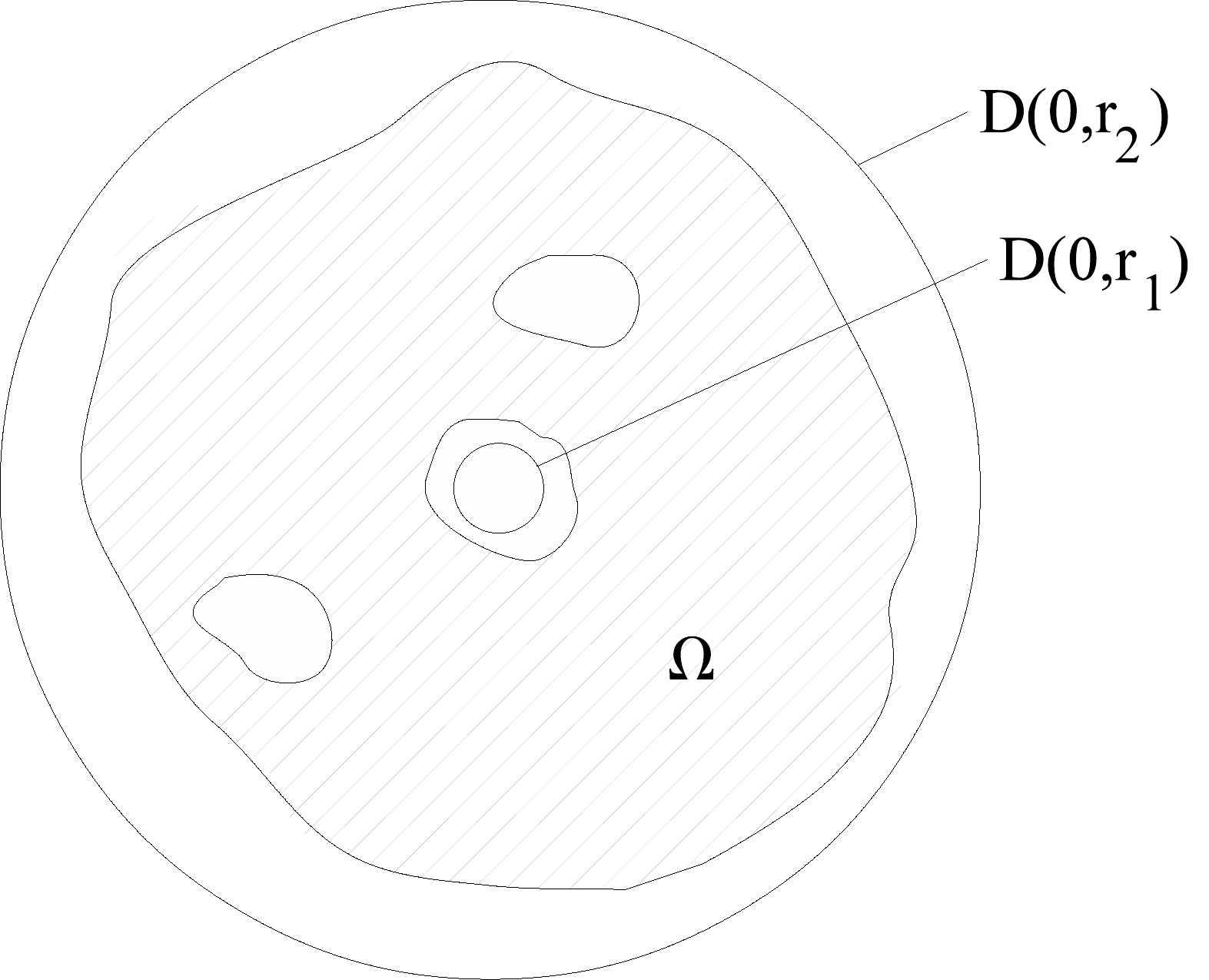}
\end{center}
\caption{}
\label{figure-height-estimate}
\end{figure}

\begin{proof}
 Let $A$ be the annulus $D(0,r_2)\setminus D(0,r_1)$.
Write $|df|$ for the norm of the Euclidean gradient of $f$.
Let $\rho$ be the function equal to $|d f|$
 on $\Omega$ and
$0$ on $\CC\setminus\Omega$. 
Then
\begin{align*}
\iint_{A} \rho^2\, dxdy
&=
\iint_{\Omega} \|\nabla_g f \|_g^2 \,d\mu_{g}\quad\mbox{ by conformal invariance of the energy}\\
&\leq \sqrt{2}\iint_{\Omega} \langle\frac{\nabla_g f}{W},\nabla_g f\rangle_g\, d\mu_g
\quad\mbox{ because $W\leq \sqrt{2}$ by hypothesis (5)}\\
&= \sqrt{2}\iint_{\Omega}\div_g(f\frac{\nabla_g f}{W})\,d\mu_g
\quad\mbox{ by the minimal surface equation \eqref{mse}}\\
&= \sqrt{2}\int_{\partial \Omega} \frac{f}{W}\langle\nabla_g f,\nu_g\rangle_g \,ds_g
\quad\mbox{ by the divergence theorem}\\
&= \sqrt{2}\int_{\partial \Omega} \frac{f}{W}\langle\nabla f,\nu\rangle
\quad\mbox{ where now all quantities are Euclidean}\\
&= \sqrt{2}\sum_{i=1}^n \int_{\gamma_i}\frac{f}{W}\langle\nabla f,\nu\rangle
\quad\mbox{ by hypothesis (1)}\\
&\leq  -2\sqrt{2}h \sum_{i=1}^n\int_{\gamma_i}\frac{\langle\nabla f,\nu\rangle}{W}
\quad\mbox{ by hypotheses (3) and (4).}
\end{align*}
Hence by homology invariance of the flux,
\begin{equation}
\label{eqq1}
\iint_{A} \rho^2 \,dx\,dy\leq 2\sqrt{2}h\phi.
\end{equation}
Consider the ray from $r_1 e^{i\theta}$ to $r_2 e^{i\theta}$.
The integral of $df$ along this ray, intersected with $\Omega$, is equal to $h$.
(If the ray happens to enter one of the disks $D_i$,
then this is true because $f$ is constant on $\partial D_i$.)
Integrating for $\theta\in[0,2\pi]$ we get
\begin{align*}
2\pi h&\leq \int_{r=r_1}^{r_2}\int_{\theta=0}^{2\pi} \rho\, dr\,d\theta\\
&= \int_A \frac{\rho}{r} \,dx\,dy\\
&\leq \left(\iint_A \rho^2 dx\,dy\right)^{1/2}\left(\iint_A \frac{1}{r^2}dx\,dy\right)^{1/2}
\quad\mbox{ by Cauchy Schwarz}\\
&\leq \left(2\sqrt{2}h\phi\right)^{1/2} \left( 2\pi \log\frac{r_2}{r_1}\right)^{1/2}
\quad\mbox{ using \eqref{eqq1}}
\end{align*}

The proposition follows.
\end{proof}


The next proposition is useful to find circles on which we have a good estimate of $\int |df|$.
\begin{proposition}
\label{proposition-height2}
Under the same hypotheses as proposition~\ref{proposition-height},
consider some point $p\in\Omega$. Given $0<r'_1<r'_2$, there exists
$r\in[r'_1,r'_2]$ such that
\[
\int_{C(p,r)\cap\Omega}\,|df|\leq \sqrt{8}\phi\left(\log\frac{r_2}{r_1}\right)^{1/2}
\left(\log\frac{r'_2}{r'_1}\right)^{-1/2}.
\]
\end{proposition}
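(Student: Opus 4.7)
\smallskip

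The plan is to extract a bound on the Dirichlet energy of $f$ from the proof of proposition~\ref{proposition-height}, and then apply a coarea–Cauchy–Schwarz averaging argument over the family of circles $C(p,r)$ for $r\in[r_1',r_2']$. More precisely, let $\rho=|df|$ on $\Omega$, extended by $0$ outside $\Omega$. The computation in the proof of proposition~\ref{proposition-height} already establishes
\[
 \iint_A \rho^2\, dx\, dy \;\le\; 2\sqrt{2}\, h\,\phi.
\]
Since proposition~\ref{proposition-height} itself gives $h\le \tfrac{\sqrt{2}}{\pi}\phi\,\log(r_2/r_1)$, substituting this into the line above yields the clean energy estimate
\[
 \iint_\Omega |df|^2\, dx\, dy \;\le\; \tfrac{4}{\pi}\,\phi^2\,\log\tfrac{r_2}{r_1}.
\]
(Note that $\rho$ is supported in $\Omega$, which is disjoint from $D(0,r_1)$ by hypothesis, so there is no issue restricting to any annulus containing $\Omega$.)

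Next, for each $r>0$, set $I(r)=\int_{C(p,r)\cap\Omega}|df|\,ds$. By the Cauchy–Schwarz inequality on the circle $C(p,r)$ (of length at most $2\pi r$),
\[
 I(r)^2 \;\le\; 2\pi r \int_{C(p,r)\cap\Omega} |df|^2\, ds.
\]
Dividing by $r$ and integrating over $r\in[r_1',r_2']$, the coarea identity in polar coordinates $dx\,dy = r\,dr\,d\theta$ gives
\[
 \int_{r_1'}^{r_2'} \frac{I(r)^2}{r}\, dr
 \;\le\; 2\pi \iint_{\{r_1'<|z-p|<r_2'\}\cap\Omega} |df|^2\, dx\,dy
 \;\le\; 8\,\phi^2\,\log\tfrac{r_2}{r_1},
\]
using the energy bound above.

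Finally, the averaging step: if $I(r)^2$ exceeded $8\phi^2\log(r_2/r_1)\bigl/\log(r_2'/r_1')$ for every $r\in[r_1',r_2']$, then integrating $I(r)^2/r$ over that interval would strictly exceed $8\phi^2\log(r_2/r_1)$, contradicting the displayed inequality. Hence some $r\in[r_1',r_2']$ satisfies
\[
 I(r) \;\le\; \sqrt{8}\,\phi\,\bigl(\log\tfrac{r_2}{r_1}\bigr)^{1/2}\bigl(\log\tfrac{r_2'}{r_1'}\bigr)^{-1/2},
\]
which is the desired conclusion. There is no real obstacle here; the only small subtlety is making sure to combine the conformal-invariance/divergence-theorem energy bound from proposition~\ref{proposition-height} with its own final conclusion in order to replace the factor $h\phi$ by $\phi^2\log(r_2/r_1)$, so that the estimate is phrased purely in terms of the flux $\phi$ and the geometric ratios.
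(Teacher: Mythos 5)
Your proof is correct and takes essentially the same route as the paper's. The only difference is cosmetic: you apply Cauchy--Schwarz on each circle $C(p,r)$ separately and then average $I(r)^2/r$ over $[r_1',r_2']$, whereas the paper applies Cauchy--Schwarz once to $\int_{r_1'}^{r_2'} F(r)\,dr/r$ viewed as a double integral in $r\,dr\,d\theta$; both variants feed \eqref{eqq1} together with the conclusion of proposition~\ref{proposition-height} into the same computation and land on the constant $\sqrt{8}$.
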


\begin{proof}
Consider the function
\[
F(r)=\int_{C(p,r)\cap\Omega}|df|=\int_{\theta=0}^{2\pi} \rho(p+re^{i\theta})r \,d\theta.
\]
Then
\begin{eqnarray*}
\lefteqn{\min_{r'_1\leq r\leq r'_2}F(r)\log\frac{r'_2}{r'_1}}\\
&\leq&
\int_{r=r'_1}^{r'_2} \frac{F(r)}{r} \,dr\\
&=&
\int_{r=r'_1}^{r'_2}\int_{\theta=0}^{2\pi} \frac{\rho(p+re^{i\theta})}{r} r\,dr\,d\theta\\
&\leq&
\left(\int_{r'_1}^{r'_2}\int_{0}^{2\pi} \rho(p+re^{i\theta})^2 r\,dr\,d\theta\right)^{1/2}
\left(\int_{r'_1}^{r'_2}\int_{0}^{2\pi} \frac{1}{r^2} r\,dr\,d\theta\right)^{1/2}\\
&\leq&\left(\int_A\rho^2 \,dx \,dy\right)^{1/2}\left(2\pi\log\frac{r'_2}{r'_1}\right)^{1/2}\\
&\leq&\left(8\phi^2\log\frac{r_2}{r_1}\log\frac{r'_2}{r'_1}\right)^{1/2}
\quad\mbox{ using \eqref{eqq1} and proposition~\ref{proposition-height}}.
\end{eqnarray*}
The proposition follows.
\end{proof}

\addtocontents{toc}{\SkipTocEntry}
\section*{A Laurent-type formula for \texorpdfstring{$C^1$}{Lg} functions}

 \begin{proposition}
 \label{proposition-laurent}
 Let $\Omega\subset\CC$ be a domain of the form
 \[
 \Omega=D(0,R)\setminus \bigcup_{i=1}^n \overline{D}(p_i,r_i).
 \]
 Here we assume that the closed disks $\overline{D}(p_i,r_i)$ are disjoint and are included
 in $D(0,R)$.
 Let $g$ be a $C^1$ function on $\overline{\Omega}$. Then in $\Omega$,
 \[
 g(z)=g^+(z) + \sum_{i=1}^n g^-_i(z)+\frac{1}{2\pi i}\int_{\Omega}
 \frac{g_{\overline{z}}(w)}{w-z} \,dw\wedge\overline{dw}
 \]
 where
 $g^+$ is holomorphic in $D(0,R)$ and each $g^-_i$ is holomorphic
 in $\CC\setminus \overline{D}(p_i,r_i)$.
 Moreover, these functions have the following series expansions:
 \begin{align*}
 g^+(z) 
 &= \sum_{k=0}^{\infty} a_k z^k\quad 
                \mbox{ with } a_k=\frac{1}{2\pi i}\int_{C(0,R)}\frac{g(z)}{z^{k+1}}\,dz,
 \\
 g^-_i(z)
 &=  \sum_{k=1}^{\infty} \frac{a_{i,k}}{(z-p_i)^k}
        \quad \mbox{ with } a_{i,k}=\frac{1}{2\pi i}\int_{C(p_i,r_i)}g(z)(z-p_i)^{k-1}\,dz.
 \end{align*}
 The series converge uniformly in compact subsets of $\Omega$.  
 \end{proposition}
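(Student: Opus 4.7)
The plan is to apply the Cauchy--Pompeiu formula to $g$ on the multiply connected domain $\Omega$ and then split the boundary integral into pieces indexed by the connected components of $\partial\Omega$, recognizing each piece as the holomorphic function advertised in the statement. Concretely, for a $C^1$ function $g$ on $\overline\Omega$ with piecewise-smooth boundary (oriented so that $\Omega$ lies on the left) we have
\begin{equation*}
  g(z) \;=\; \frac{1}{2\pi i}\int_{\partial\Omega}\frac{g(w)}{w-z}\,dw \;+\; \frac{1}{2\pi i}\iint_{\Omega}\frac{g_{\overline w}(w)}{w-z}\,dw\wedge\overline{dw}
\end{equation*}
for every $z\in\Omega$. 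With the standard orientations, $\partial\Omega = C(0,R) \,-\, \sum_i C(p_i,r_i)$, so the boundary integral splits as
\begin{equation*}
  g^+(z)\;:=\;\frac{1}{2\pi i}\int_{C(0,R)}\frac{g(w)}{w-z}\,dw, \qquad g_i^-(z)\;:=\;-\,\frac{1}{2\pi i}\int_{C(p_i,r_i)}\frac{g(w)}{w-z}\,dw.
\end{equation*}

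Each of these is holomorphic in the open region where the pole $w=z$ is avoided: differentiating under the integral sign shows $g^+$ is holomorphic for $|z|<R$ and $g_i^-$ is holomorphic for $|z-p_i|>r_i$ (and extends holomorphically through $\infty$, vanishing there, since the integrand decays like $1/z$). So the first part of the statement --- the decomposition into $g^+$, the $g_i^-$, and the area integral --- is immediate from Cauchy--Pompeiu.

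The series expansions come from the geometric series expansion of the Cauchy kernel in each region. For $g^+$ and $|z|<R$, expand $(w-z)^{-1}=\sum_{k\ge 0}z^k/w^{k+1}$ uniformly in $w\in C(0,R)$ for $z$ in any compact subset of $D(0,R)$; interchanging sum and integral gives $g^+(z)=\sum_{k\ge 0}a_k z^k$ with $a_k$ as stated. For $g_i^-$ and $|z-p_i|>r_i$, write
\begin{equation*}
  -\frac{1}{w-z}\;=\;\frac{1}{z-p_i}\cdot\frac{1}{1-(w-p_i)/(z-p_i)}\;=\;\sum_{k=0}^\infty\frac{(w-p_i)^k}{(z-p_i)^{k+1}},
\end{equation*}
which converges uniformly in $w\in C(p_i,r_i)$ for $z$ in any compact subset of $\{|z-p_i|>r_i\}$. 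Interchanging sum and integral and relabeling $k\mapsto k-1$ yields $g_i^-(z)=\sum_{k\ge 1}a_{i,k}(z-p_i)^{-k}$ with the stated formula for $a_{i,k}$.

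Uniform convergence of both series on compact subsets of $\Omega$ follows from standard geometric-series estimates: on a compact $K\subset\Omega$ we have $|z|\le R-\delta$ and $|z-p_i|\ge r_i+\delta$ for some $\delta>0$, so the tails are bounded by $\|g\|_{C^0(\partial\Omega)}$ times a convergent geometric series. There is no real obstacle here --- the only thing to be careful about is the orientation of the boundary circles (giving the sign that distinguishes $g^+$ from $g_i^-$) and the fact that the area integral in Cauchy--Pompeiu really does converge absolutely because $g_{\overline w}$ is continuous on $\overline\Omega$ and $1/(w-z)$ is locally integrable in $w$. \qed
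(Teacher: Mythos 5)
Your proposal is correct and follows essentially the same route as the paper: apply the Cauchy--Pompeiu formula, split the boundary integral over $C(0,R)$ and the inner circles $C(p_i,r_i)$ to define $g^+$ and $g_i^-$, and then expand each Cauchy kernel as a geometric series. The paper simply cites Conway for the Laurent-expansion step that you spell out explicitly, so the two arguments are the same in substance.
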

 
 \begin{remark*}
 This is the same as the Laurent series theorem except that there is a correction term which vanishes when $g$ is holomorphic. The integration circles in the formula
for $a_n$ and $a_{i,n}$ cannot be changed (as in the classical Laurent series theorem) since $g$ is not 
assumed to be holomorphic.
\end{remark*}

\begin{proof}
By the Cauchy Pompeieu integral formula for $C^1$ functions:
\begin{equation}
\label{eq-pompeieu}
 g(z)=\frac{1}{2\pi i}\int_{\partial \Omega} \frac{g(w)}{w-z}\,dw
 +
 \frac{1}{2\pi i}\int_{\Omega}\frac{g_{\overline{z}}(w)}{w-z}\,dw\wedge \overline{dw}.
\end{equation}
Define
\[
g^+(z) =  \frac{1}{2\pi i}\int_{C(0,R)}\frac{g(w)}{w-z}\,dw,
\qquad
g_i^-(z) = -\frac{1}{2\pi i}\int_{C(p_i,r_i)}\frac{g(w)}{w-z}\,dw.
\]
The function $g^+$ is holomorphic in $D(0,R)$. The function $g_i^-$ is holomorphic in
$\CC\setminus D(p_i,r_i)$ and extends at $\infty$ with $g_i^-(\infty)=0$. These two functions are expanded in power series exactly as
in the proof of the classical theorem on Laurent series (see e.g. \cite{conway}, page 107).
\end{proof}

\begin{proposition}
\label{proposition-real-residue}
Let $\Omega\subset\CC$ be a domain as in proposition~\ref{proposition-laurent}.
Let $u:\Omega\to\rR$ be a real-valued function of class $C^2$. Take
$g=\partial u/\partial z$. Then the coefficients $a_{i,1}$ which appear in
the conclusion of proposition~\ref{proposition-laurent}
are real.
\end{proposition}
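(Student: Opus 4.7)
The plan is to unpack the formula for $a_{i,1}$ directly and use the fact that $u$ being real-valued forces $u_z\, dz$ to decompose into a real exact form plus $i$ times a real closed form. Concretely, I would write
\[
2\, u_z\, dz = (u_x - i u_y)(dx + i\, dy) = du + i\, {*du},
\]
where ${*du} = u_x\, dy - u_y\, dx$ is the conjugate differential, a real $1$-form on $\Omega$. Thus along the circle $C(p_i, r_i)$,
\[
\int_{C(p_i,r_i)} u_z\, dz = \tfrac12 \int_{C(p_i,r_i)} du + \tfrac{i}{2}\int_{C(p_i,r_i)} {*du}.
\]

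The first term vanishes because $u$ is single-valued and $C(p_i, r_i)$ is a closed loop, so $\int_{C(p_i,r_i)} du = 0$. The second term is real because ${*du}$ is a real $1$-form integrated over a real curve. Hence $\int_{C(p_i,r_i)} u_z\, dz$ is purely imaginary. Dividing by $2\pi i$ as in the formula
\[
a_{i,1} = \frac{1}{2\pi i}\int_{C(p_i,r_i)} g(z)\, dz = \frac{1}{2\pi i}\int_{C(p_i,r_i)} u_z\, dz
\]
turns a purely imaginary quantity into a real one, yielding the claim.

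There is really no obstacle here: the entire content is the observation that the real part of $u_z\, dz$ is exact (since $u$ is single-valued and real-valued) while the imaginary part is a real closed $1$-form. The factor of $1/(2\pi i)$ in front of the contour integral then converts ``purely imaginary'' into ``real.'' No analytic estimates or geometric input are required.
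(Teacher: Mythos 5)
Your proof is correct and uses essentially the same idea as the paper's: both arguments rest on the observation that for real-valued $u$ the real part of $u_z\,dz$ equals $\tfrac12\,du$, which integrates to zero over the closed circle, leaving $\int u_z\,dz$ purely imaginary and hence $a_{i,1}=\frac{1}{2\pi i}\int u_z\,dz$ real. The paper phrases it as $\operatorname{Im} a_{i,1}=\frac{-1}{4\pi}\int (u_z\,dz+u_{\bar z}\,d\bar z)=\frac{-1}{4\pi}\int du=0$, which is the same computation written with $du=u_z\,dz+u_{\bar z}\,d\bar z$ in place of your $2u_z\,dz=du+i\,{*du}$.
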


\begin{proof}
\begin{align*}
\Im a_{i,1}
&=\frac{-1}{2\pi}\Re\int_{C(p_i,r_i)} u_z \,dz\\
&=\frac{-1}{4\pi}\int_{C(p_i,r_i)} u_z dz+u_{\overline{z}}\,d\overline{z}\quad
\mbox{ because $u$ is real valued}\\
&=\frac{-1}{4\pi}\int_{C(p_i,r_i)}\,du=0\quad
\mbox{ because $u$ is well defined in $\Omega$.}
\end{align*}
\end{proof}

\addtocontents{toc}{\SkipTocEntry}
\section*{Residue computation}

\begin{proposition}
\label{proposition-residue}
 \[
  \Res_p (\log z - \log p)^{-1}=p,\qquad
 \Res_p\left(\frac{1-z^2}{4z^2}\right)(\log z-\log p)^{-2} = -\frac{1+p^2}{4p}
.\]
\end{proposition}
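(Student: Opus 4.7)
The plan is simply to Taylor-expand everything around $z=p$ and read off the residues. Setting $w = z-p$, the basic input is the expansion
\[
\log z - \log p \;=\; \log\!\left(1 + \tfrac{w}{p}\right) \;=\; \frac{w}{p} - \frac{w^2}{2p^2} + O(w^3).
\]
Inverting this series gives
\[
(\log z - \log p)^{-1} \;=\; \frac{p}{w}\cdot\frac{1}{1 - w/(2p) + O(w^2)} \;=\; \frac{p}{w} + \tfrac12 + O(w),
\]
from which the first residue $\Res_p(\log z - \log p)^{-1} = p$ is immediate.

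For the second identity I would square the previous expansion to obtain, to the order needed,
\[
(\log z - \log p)^{-2} \;=\; \frac{p^{2}}{w^{2}} + \frac{p}{w} + O(1),
\]
and separately Taylor-expand the meromorphic prefactor at $z=p$:
\[
\frac{1-z^2}{4z^2} \;=\; \frac{1}{4z^2} - \frac14 \;=\; \frac{1-p^2}{4p^2} - \frac{w}{2p^{3}} + O(w^2).
\]
Multiplying the two expansions and collecting the $1/w$ coefficient, the contributions are
\[
\frac{1-p^{2}}{4p^{2}}\cdot\frac{p}{w} \;+\; \Bigl(-\frac{w}{2p^{3}}\Bigr)\cdot\frac{p^{2}}{w^{2}} \;=\; \frac{1-p^{2}}{4p\,w} - \frac{1}{2p\,w} \;=\; -\frac{1+p^{2}}{4p}\cdot\frac1w,
\]
which yields the stated residue.

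There is no real obstacle here; the proposition is a direct bookkeeping exercise. The only care required is to expand $(\log z - \log p)^{-2}$ through the $1/w$ term (not just the leading $p^2/w^2$) and, correspondingly, to carry the Taylor expansion of $\frac{1-z^2}{4z^2}$ out to order $w$, since the $w^{-1}$-coefficient of the product mixes the constant term of one factor with the $1/w$ term of the other, and vice versa.
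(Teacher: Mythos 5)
Your computation is correct and follows essentially the same route as the paper: expand $\log z-\log p$ in $w=z-p$, invert/square the series through the $p/w$ term, and combine with the Taylor expansion of $(1-z^2)/(4z^2)$ at $p$. The paper merely packages the final step as $f'(p)p^2+f(p)p$ for $f=(1-z^2)/(4z^2)$ rather than writing out $f$'s Taylor series explicitly, but the arithmetic is identical.
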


\begin{proof}
\[
\log z - \log p=\log \left (1+\frac{z-p}{p}\right)
=\frac{z-p}{p}-\frac{1}{2}\left(\frac{z-p}{p}\right)^2+O(z-p)^3
\]
The first residue follows. 
By the binomial theorem,
\[
(\log z-\log p)^{-2}
=\frac{p^2}{(z-p)^2} + \frac{p}{z-p} + O(1).
\]
Let 
\[
f(z)=\frac{1-z^2}{4z^2}= \frac1{4z^2} - \frac14.
\]
Then
\begin{align*}
\Res_p\left(\frac{1-z^2}{4z^2}(\log z-\log p)^{-2}\right)
&=
\Res_p\left(\frac{f(z)p^2}{(z-p)^2}\right) + \Res_p\left( \frac{f(z)p}{(z-p)} \right)
\\
&=
f'(p)p^2 + f(p)p  \qquad(\text{by the Taylor expansion for $f$ at $p$})
\\
&=
-\frac1{2p^3}\, p^2 + \frac{1-p^2}{4p}
\\
&=
-\frac{1+p^2}{4p}.
\end{align*}
\end{proof}

%

\nocite{hoffman-wei}
\newcommand{\hide}[1]{}

\begin{bibdiv}

\begin{biblist}

\bib{allard}{article}{
  author={Allard, William K.},
  title={On the first variation of a varifold},
  journal={Ann. of Math. (2)},
  volume={95},
  date={1972},
  pages={417--491},
  issn={0003-486X},
  review={\MR {0307015},
  Zbl 0252.49028.}}
  \hide{(46 \#6136)}

\bib{allard-boundary}{article}{
   author={Allard, William K.},
   title={On the first variation of a varifold: boundary behavior},
   journal={Ann. of Math. (2)},
   volume={101},
   date={1975},
   pages={418--446},
   issn={0003-486X},
   review={\MR{0397520 (53 \#1379)}},
}

\bib{anderson}{article}{
   author={Anderson, Michael T.},
   title={Curvature estimates for minimal surfaces in $3$-manifolds},
   journal={Ann. Sci. \'Ecole Norm. Sup. (4)},
   volume={18},
   date={1985},
   number={1},
   pages={89--105},
   issn={0012-9593},
   review={\MR{803196 (87e:53098)}},
}

\bib{axler}{book}{
   author={Axler, Sheldon},
   author={Bourdon, Paul},
   author={Ramey, Wade},
   title={Harmonic function theory},
   series={Graduate Texts in Mathematics},
   volume={137},
   edition={2},
   publisher={Springer-Verlag, New York},
   date={2001},
   pages={xii+259},
   isbn={0-387-95218-7},
   review={\MR{1805196 (2001j:31001)}},
   doi={10.1007/978-1-4757-8137-3},
}

\bib{bernstein-breiner-conformal}{article}{
    AUTHOR = {Bernstein, Jacob and Breiner, Christine},
     TITLE = {Conformal structure of minimal surfaces with finite topology},
   JOURNAL = {Comment. Math. Helv.},
  FJOURNAL = {Commentarii Mathematici Helvetici. A Journal of the Swiss
              Mathematical Society},
    VOLUME = {86},
      YEAR = {2011},
    NUMBER = {2},
     PAGES = {353--381},
      ISSN = {0010-2571},
   MRCLASS = {53A10 (49Q05)},
  MRNUMBER = {2775132 (2012c:53009)},
MRREVIEWER = {Fei-Tsen Liang},
       DOI = {10.4171/CMH/226},
       URL = {http://dx.doi.org/10.4171/CMH/226},
}

\bib{bobenko}{article}{
   author={Bobenko, Alexander I.},
   title={Helicoids with handles and Baker-Akhiezer spinors},
   journal={Math. Z.},
   volume={229},
   date={1998},
   number={1},
   pages={9--29},
   issn={0025-5874},
   review={\MR{1649381 (2001d:53009)}},
   doi={10.1007/PL00004652},
}

\bib{choi-schoen}{article}{
   author={Choi, Hyeong In},
   author={Schoen, Richard},
   title={The space of minimal embeddings of a surface into a
   three-dimensional manifold of positive Ricci curvature},
   journal={Invent. Math.},
   volume={81},
   date={1985},
   number={3},
   pages={387--394},
   issn={0020-9910},
   review={\MR{807063 (87a:58040)}},
   doi={10.1007/BF01388577},
}

\bib{conway}{book}{
   author={Conway, John B.},
   title={Functions of one complex variable},
   series={Graduate Texts in Mathematics},
   volume={11},
   edition={2},
   publisher={Springer-Verlag, New York-Berlin},
   date={1978},
   pages={xiii+317},
   isbn={0-387-90328-3},
   review={\MR{503901 (80c:30003)}},
}

\bib{fischer-colbrie-schoen}{article}{
   author={Fischer-Colbrie, Doris},
   author={Schoen, Richard},
   title={The structure of complete stable minimal surfaces in $3$-manifolds
   of nonnegative scalar curvature},
   journal={Comm. Pure Appl. Math.},
   volume={33},
   date={1980},
   number={2},
   pages={199--211},
   issn={0010-3640},
   review={\MR{562550 (81i:53044)}},
   doi={10.1002/cpa.3160330206},
}

\bib{gilbarg-trudinger}{book}{
   author={Gilbarg, David},
   author={Trudinger, Neil S.},
   title={Elliptic partial differential equations of second order},
   series={Classics in Mathematics},
   note={Reprint of the 1998 edition},
   publisher={Springer-Verlag, Berlin},
   date={2001},
   pages={xiv+517},
   isbn={3-540-41160-7},
   review={\MR{1814364}},
}

\bib{gulliver-removability}{article}{
   author={Gulliver, Robert},
   title={Removability of singular points on surfaces of bounded mean
   curvature},
   journal={J. Differential Geometry},
   volume={11},
   date={1976},
   number={3},
   pages={345--350},
   issn={0022-040X},
   review={\MR{0431045 (55 \#4047)}},
}

\bib{hildebrandt}{article}{
   author={Hildebrandt, Stefan},
   title={Boundary behavior of minimal surfaces},
   journal={Arch. Rational Mech. Anal.},
   volume={35},
   date={1969},
   pages={47--82},
   issn={0003-9527},
   review={\MR{0248650 (40 \#1901)}},
}

\bib{hoffman-karcher-wei}{incollection}{
AUTHOR={Hoffman, David},
AUTHOR = {Karcher, Hermann},
AUTHOR={ Wei, Fu Sheng},
     TITLE = {The genus one helicoid and the minimal surfaces that led to
              its discovery},
 BOOKTITLE = {Global analysis in modern mathematics ({O}rono, {ME}, 1991;
              {W}altham, {MA}, 1992)},
     PAGES = {119--170},
 PUBLISHER = {Publish or Perish},
   ADDRESS = {Houston, TX},
      YEAR = {1993},
   MRCLASS = {53A10 (30F30)},
  MRNUMBER = {1278754 (95k:53011)},
MRREVIEWER = {Udo Hertrich-Jeromin},
}

\bib{hoffman-wei}{article}{
    AUTHOR = {Hoffman, David},
    AUTHOR = {Wei, Fusheng},
     TITLE = {Deforming the singly periodic genus-one helicoid},
   JOURNAL = {Experiment. Math.},
  FJOURNAL = {Experimental Mathematics},
    VOLUME = {11},
      YEAR = {2002},
    NUMBER = {2},
     PAGES = {207--218},
      ISSN = {1058-6458},
   MRCLASS = {53A10 (53C42)},
  MRNUMBER = {1959264 (2004d:53012)},
MRREVIEWER = {Rafael L{\'o}pez},
       URL = {http://projecteuclid.org/getRecord?id=euclid.em/1062621216},
}

\bib{hoffman-white-genus-one}{article}{
   author={Hoffman, David},
   author={White, Brian},
   title={Genus-one helicoids from a variational point of view},
   journal={Comment. Math. Helv.},
   volume={83},
   date={2008},
   number={4},
   pages={767--813},
   issn={0010-2571},
   review={\MR{2442963 (2010b:53013)}},
}

\bib{hoffman-white-geometry}{article}{
   author={Hoffman, David},
   author={White, Brian},
   title={The geometry of genus-one helicoids},
   journal={Comment. Math. Helv.},
   volume={84},
   date={2009},
   number={3},
   pages={547--569},
   issn={0010-2571},
   review={\MR{2507253 (2010f:53013)}},
   doi={10.4171/CMH/172},
}

\bib{hoffman-white-number}{article}{
   author={Hoffman, David},
   author={White, Brian},
   title={On the number of minimal surfaces with a given boundary},
   language={English, with English and French summaries},
   note={G\'eom\'etrie diff\'erentielle, physique math\'ematique,
   math\'ematiques et soci\'et\'e. II},
   journal={Ast\'erisque},
   number={322},
   date={2008},
   pages={207--224},
   issn={0303-1179},
   isbn={978-285629-259-4},
   review={\MR{2521657 (2010h:53007)}},
}

\bib{hoffman-white-axial}{article}{
   author={Hoffman, David},
   author={White, Brian},
   title={Axial minimal surfaces in $S^2\times\mathbf{R}$ are helicoidal},
   journal={J. Differential Geom.},
   volume={87},
   date={2011},
   number={3},
   pages={515--523},
   issn={0022-040X},
   review={\MR{2819547}},
}

\bib{lopez-ros}{article}{
   author={L{\'o}pez, Francisco J.},
   author={Ros, Antonio},
   title={On embedded complete minimal surfaces of genus zero},
   journal={J. Differential Geom.},
   volume={33},
   date={1991},
   number={1},
   pages={293--300},
   issn={0022-040X},
   review={\MR{1085145 (91k:53019)}},
}

\bib{meeks-perez-end}{article}{
   author={Meeks, William H., III},
   author={P{\'e}rez, Joaqu{\'{\i}}n},
 title={Embedded minimal surfaces of finite topology},
 date={2015},
 note={Preprint at arXiv:1506.07793 [math.DG]},
}

\bib{MeeksRosenbergUniqueness}{article}{
   author={Meeks, William H., III},
   author={Rosenberg, Harold},
   title={The uniqueness of the helicoid},
   journal={Ann. of Math. (2)},
   volume={161},
   date={2005},
   number={2},
   pages={727--758},
   issn={0003-486X},
   review={\MR{2153399 (2006f:53012)}},
   doi={10.4007/annals.2005.161.727},
}

\bib{MeeksRosenbergTheory}{article}{
  author={Meeks, William H.},
  author={Rosenberg, Harold},
  title={The theory of minimal surfaces in ${\mathbb {M}}\times \mathbb {R}$},
  journal={Comment. Math. Helv.},
  volume={80},
  date={2005},
  number={4},
  pages={811--858},
  issn={0010-2571},
  review={\MR {2182702},
  Zbl 1085.53049.}
} \hide{ (2006h:53007)}

\bib{rosenberg2002}{article}{
  author={Rosenberg, Harold},
  title={Minimal surfaces in ${\mathbb {M}}\sp 2\times \mathbb {R}$},
  journal={Illinois J. Math.},
  volume={46},
  date={2002},
  number={4},
  pages={1177--1195},
  issn={0019-2082},
  review={\MR {1988257},
  Zbl 1036.53008.}
} \hide{ (2004d:53015)}

\bib{schoen-uniqueness}{article}{
   author={Schoen, Richard M.},
   title={Uniqueness, symmetry, and embeddedness of minimal surfaces},
   journal={J. Differential Geom.},
   volume={18},
   date={1983},
   number={4},
   pages={791--809 (1984)},
   issn={0022-040X},
   review={\MR{730928 (85f:53011)}},
}

\bib{schmies}{thesis}{
  author={Schmies, Markus},
  title={Computational methods for Riemann surfaces and helicoids with handles},
  type={Ph.D. Thesis, Technische Universit\"at Berlin},
  date={2005},
}

\bib{smale-bridge}{article}{
   author={Smale, Nathan},
   title={A bridge principle for minimal and constant mean curvature
   submanifolds of ${\bf R}^N$},
   journal={Invent. Math.},
   volume={90},
   date={1987},
   number={3},
   pages={505--549},
   issn={0020-9910},
   review={\MR{914848 (88i:53101)}},
   doi={10.1007/BF01389177},
}

\bib{traizet-balancing}{article}{
   author={Traizet, Martin},
   title={A balancing condition for weak limits of families of minimal
   surfaces},
   journal={Comment. Math. Helv.},
   volume={79},
   date={2004},
   number={4},
   pages={798--825},
   issn={0010-2571},
   review={\MR{2099123 (2005g:53017)}},
   doi={10.1007/s00014-004-0805-1},
}

\bib{traizet-convex}{article}{
   author={Traizet, Martin},
   title={On minimal surfaces bounded by two convex curves in parallel
   planes},
   journal={Comment. Math. Helv.},
   volume={85},
   date={2010},
   number={1},
   pages={39--71},
   issn={0010-2571},
   review={\MR{2563680 (2010k:53012)}},
   doi={10.4171/CMH/187},
}

\bib{weber-hoffman-wolf}{article}{
    AUTHOR = {Weber, Matthias},
    AUTHOR= {Hoffman, David },
    AUTHOR={Wolf, Michael},
     TITLE = {An embedded genus-one helicoid},
   JOURNAL = {Ann. of Math. (2)},
  FJOURNAL = {Annals of Mathematics. Second Series},
    VOLUME = {169},
      YEAR = {2009},
    NUMBER = {2},
     PAGES = {347--448},
      ISSN = {0003-486X},
     CODEN = {ANMAAH},
   MRCLASS = {53A10 (49Q05)},
  MRNUMBER = {2480608 (2010d:53011)},
MRREVIEWER = {Antonio Alarc{\'o}n},
       DOI = {10.4007/annals.2009.169.347},
       URL = {http://dx.doi.org/10.4007/annals.2009.169.347},
}

\bib{white-curvature-estimates}{article}{
   author={White, Brian},
   title={Curvature estimates and compactness theorems in $3$-manifolds for
   surfaces that are stationary for parametric elliptic functionals},
   journal={Invent. Math.},
   volume={88},
   date={1987},
   number={2},
   pages={243--256},
   issn={0020-9910},
   review={\MR{880951 (88g:58037)}},
   doi={10.1007/BF01388908},
}

\bib{white-stable-bridge}{article}{
   author={White, Brian},
   title={The bridge principle for stable minimal surfaces},
   journal={Calc. Var. Partial Differential Equations},
   volume={2},
   date={1994},
   number={4},
   pages={405--425},
   issn={0944-2669},
   review={\MR{1383916 (97d:49043)}},
   doi={10.1007/BF01192091},
}

\bib{white-unstable-bridge}{article}{
   author={White, Brian},
   title={The bridge principle for unstable and for singular minimal
   surfaces},
   journal={Comm. Anal. Geom.},
   volume={2},
   date={1994},
   number={4},
   pages={513--532},
   issn={1019-8385},
   review={\MR{1336893 (96k:49063)}},
}

\bib{white-isoperimetric}{article}{
   author={White, Brian},
   title={Which ambient spaces admit isoperimetric inequalities for
   submanifolds?},
   journal={J. Differential Geom.},
   volume={83},
   date={2009},
   number={1},
   pages={213--228},
   issn={0022-040X},
   review={\MR{2545035}},
}

\bib{white-controlling-area}{article}{
   author={White, Brian},
   title={Controlling area blow-up in minimal or bounded mean curvature
   varieties},
   journal={J. Differential Geom.},
   volume={102},
   date={2016},
   number={3},
   pages={501--535},
   issn={0022-040X},
   review={\MR{3466806}},
}

\bib{white-embedded}{article}{
 author={White, Brian},
 title={On the compactness theorem for embedded minimal surfaces in $3$-manifolds with locally bounded
                          area and genus},
 date={2017},
 journal={Comm. Anal. Geom.},
 note={To appear. Preprint  at arXiv:1503.02190 [math.DG]},
}

\bib{white-bumpy}{article}{
 author={White, Brian},
 title={On the bumpy metrics theorem for minimal submanifolds},
 journal={American J. Math.}
 date={2017},
 note={To appear. Preprint at arXiv:1503.01803 [math.DG]},
}

\end{biblist}

\end{bibdiv}

\end{document}